\documentclass[a4paper]{amsart}
\usepackage[english]{babel}
\usepackage[utf8]{inputenc}
\usepackage[T1]{fontenc}
\usepackage{csquotes} 
\usepackage{amssymb}
\usepackage{float} 
\usepackage{amsthm}
\usepackage[top=2cm, bottom=2cm, left=2cm, right=2cm,twoside=false]{geometry}
\usepackage{mathtools} 
\usepackage[usenames,x11names]{xcolor}
\usepackage[all]{xy} 
\usepackage[normalem]{ulem} %\uline{...}, \sout{...}
\usepackage{multirow} 
\usepackage{multicol}

\usepackage{nicefrac}

\usepackage[shortlabels]{enumitem}
\setlist[itemize]{leftmargin=2em}
\setlist[1]{leftmargin=*}
%enumerate: (a), (a.i).. ; (b), ...
\setlist[enumerate,1]{label=(\alph*), topsep=0.2em}
\setlist[enumerate,2]{label=(\roman*), ref=(\alph{enumi}.\roman*)}
\newlist{enumerate-alt}{enumerate}{1}
\setlist[enumerate-alt,1]{label=(\roman*)}
%longlist: (1),(2),...
\newlist{longlist}{enumerate}{1}
\setlist[longlist]{label=\small{(\arabic*)}, itemsep=0.2em}
%shortlist: (A), (B), ....
\newlist{shortlist}{enumerate}{1}
\setlist[shortlist]{label=\small{(\Alph*)}}
%parlist: (a), ... formatted like a paragraph
\newlist{parlist}{enumerate}{2}
\setlist[parlist,1]{leftmargin=0cm, itemindent=2\parindent, label=(\alph*), itemsep=0.2em}
\setlist[parlist,2]{label=(\roman*), ref=(\alph{parlisti}.\roman*), itemindent=\parindent}
%parts (1),(2),  as in Theorem A
\newlist{parts}{enumerate}{3}
\setlist[parts,1]{leftmargin=0cm, itemindent=2\parindent, label=(\textbf{\arabic*}), ref=(\arabic*), itemsep=0.2em}
\setlist[parts,2]{label=(\alph*), ref=(\arabic{partsi}.\alph*), topsep=0.2em}
\setlist[parts,3]{label=(\roman*), ref=(\arabic{partsi}.\alph{partsii}.\roman*)}
\newlist{steps}{enumerate}{1}
\setlist[steps]{align=left, listparindent=\parindent, parsep=\parskip, leftmargin=0em, labelwidth=0pt, itemindent=1em,labelsep=.4em, topsep=.4em, itemsep=.4em}	
\setlist[steps,1]{label={\textbf{Step~\arabic*.}},  ref=\textbf{\arabic*}}
\newcounter{foo} %dummy counter for enumerated lists
%
%%%Case splitting
\newlist{casesp}{enumerate}{5} 
\setlist[casesp]{align=left, %% alignment of labels
	listparindent=\parindent, %% same indentation as in normal text
	parsep=\parskip, %% same parskip as in normal text
	font=\normalfont\bfseries, %% font used for labels
	leftmargin=0pt, %% total amount by which text is indented
	labelwidth=0pt, %% width of labels (=how much they stick out on the left because align=left)
	itemindent=.4em,labelsep=.4em, %% space between label and text
	topsep=.4em, %% vertical space above and below list
	%partopsep=.4em, %% extra vertical space above and below if separate paragraph
	itemsep=.4em, %% vertical space after each item
}
%% Setting labels (and reference formats) for each depth separately
%% N.B. ref is needed even if you don't want to refer to cases because \thecasespi(i) uses it
\setlist[casesp,1]{label=Case~\arabic*:,ref=\arabic*}
\setlist[casesp,2]{label=Subcase~\thecasespi.\arabic*:,ref=\thecasespi.\arabic*}
\setlist[casesp,3]{label=Subcase~\thecasespii.\arabic*:,ref=\thecasespii.\arabic*}
\setlist[casesp,4]{label=Subcase~\thecasespiii.\arabic*:,ref=\thecasespiii.\arabic*}
\setlist[casesp,5]{label=Subcase~\thecasespiv.\arabic*:,ref=\thecasespiv.\arabic*}
\newcommand\litem[1]{\item{\bfseries #1.\enspace}}

\newlist{casesp*}{enumerate}{2}
\setlist[casesp*]{align=left, listparindent=\parindent, parsep=\parskip, font=\normalfont\bfseries, leftmargin=0pt, 	labelwidth=0pt, itemindent=.4em,labelsep=.4em, topsep=.6em, itemsep=.6em }
\setlist[casesp*,1]{label=Case,ref=\arabic*}
\setlist[casesp*,2]{label=Subcase,ref=\arabic*}
\newlist{types}{enumerate}{1}
\setlist[types]{align=left, listparindent=\parindent, parsep=\parskip, font=\normalfont\bfseries, leftmargin=0pt, 	labelwidth=0pt, itemindent=.4em,labelsep=.4em, topsep=.6em, itemsep=.6em }
\setlist[types,1]{label=Type,ref=\arabic*}

\usepackage{subcaption} %convenient subfigures: \subcaptionbox{caption \label{label}}[width]{source}
\captionsetup[subfigure]
{labelformat=simple, labelfont=rm, font=normal,justification=centering}

\usepackage{tikz}
\usepackage{tikz-cd}
\usepackage{pgfplots}
\usetikzlibrary{calligraphy,decorations}
\tikzset{%
	symbol/.style={%
		draw=none,
		every to/.append style={%
			edge node={node [sloped, allow upside down, auto=false]{$#1$}}
		}
	}
}

\usepackage{hyperref} 
\hypersetup{
	linkcolor=DodgerBlue3, 
	citecolor  = teal,
	urlcolor   = DodgerBlue4,
	colorlinks = true, 
	bookmarksdepth=3,
	linktoc=all
}
\urlstyle{same}

%\numberwithin{equation}{section}

\makeatletter
\@namedef{subjclassname@2020}{%
	\textup{2020} Mathematics Subject Classification}
\makeatother

\makeatletter
\newcommand{\myitem}[1]{%
	\item[#1]\protected@edef\@currentlabel{#1}%
}
\makeatother

% Subsection 
\makeatletter 
\renewcommand\subsection{\@startsection{subsection}{3}
	\z@{.5\linespacing\@plus.7\linespacing}{.5\linespacing}
	{\bfseries\itshape}} 
\renewcommand\paragraph{\@startsection{paragraph}{4}%
	\z@{.5\linespacing\@plus.7\linespacing}{-.5\linespacing}%
	{\normalfont\bfseries}}
\makeatother

\usepackage{indentfirst}

%Proof enviroment with bold 'proof'
\makeatletter \renewenvironment{proof}[1][\proofname]{
	\par\pushQED{\qed}\normalfont
	\topsep6\p@\@plus6\p@\relax
	\trivlist\item[\hskip\labelsep\bfseries#1\@addpunct{.}]
	\ignorespaces}{
	\popQED\endtrivlist\@endpefalse} \makeatother
%%%%%%%%%%%%%%%%%%%%%%%%%%%5

%%% Theorem styles %%%
\theoremstyle{plain}
\newtheorem{theorem}{Theorem}[section]
\newtheorem{theoremA}{Theorem} 
\newtheorem{propositionA}[theoremA]{Proposition} 
\newtheorem*{theorem*}{Theorem}
\newtheorem{conjecture}[theorem]{Conjecture}

\theoremstyle{definition}

\newtheorem{corollary}[theorem]{Corollary}
\newtheorem{definition}[theorem]{Definition}
\newtheorem{lemma}[theorem]{Lemma}
\newtheorem{proposition}[theorem]{Proposition}
\newtheorem{example}[theorem]{Example}
\newtheorem{notation}[theorem]{Notation}
\newtheorem{observation}[theorem]{Observation}
\newtheorem{remark}[theorem]{Remark}
\newtheorem{claim}{Claim}
\newtheorem*{claim*}{Claim}

%%%%%%%%%%%%%%%%%%%%%%%%%%%%%

\let\sec\S % \sec - paragraph character; \S=\mathbb{S} - sphere 

%%%% FONTS %%%%
\newcommand{\A}{\mathbb{A}}
\newcommand{\C}{\mathbb{C}}
\newcommand{\F}{\mathbb{F}}
\newcommand{\G}{\mathbb{G}}

\renewcommand{\P}{\mathbb{P}}
\newcommand{\Q}{\mathbb{Q}}

\newcommand{\Z}{\mathbb{Z}}

\newcommand{\cA}{\mathcal{A}}

\newcommand{\cC}{\mathcal{C}}
\newcommand{\cD}{\mathcal{D}}
\newcommand{\cE}{\mathcal{E}}
\newcommand{\cF}{\mathcal{F}}

\newcommand{\cH}{\mathcal{H}}

\newcommand{\cL}{\mathcal{L}}
\newcommand{\cM}{\mathcal{M}}
\newcommand{\cMst}{\mathcal{M}_{\star}}
\newcommand{\cN}{\mathcal{N}}
\newcommand{\cO}{\mathcal{O}}
\newcommand{\cP}{\mathcal{P}}
\newcommand{\Pht}{\cP_{\height\leq 2}}
\newcommand{\Phtt}{\cP_{\height=1}}
\newcommand{\Phtw}{\cP_{\height=2}^{\width=2}}
\newcommand{\Phtsep}{\cP_{\height=2}^{\textnormal{sep}}}
\newcommand{\Phtinsep}{\cP_{\height=2}^{\textnormal{insep}}}
\newcommand{\Pdeb}{\cP_{\textnormal{deb}}}
\newcommand{\Pcusp}{\cP_{\textnormal{cusp}}}

\newcommand{\Pnode}{\cP_{\textnormal{node}}}

\newcommand{\PKM}{\cP_{\textnormal{\ref{ex:ht=2_twisted_cha=2}}}}
\newcommand{\PKMres}{\tilde{\cP}_{\textnormal{\ref{ex:ht=2_twisted_cha=2}}}}
\newcommand{\Fib}{\operatorname{Fib}_{+}}

\newcommand{\Phttres}{\tilde{\cP}_{\height=1}}
\newcommand{\Phtwres}{\tilde{\cP}_{\height=2}^{\width=2}}

\newcommand{\Phtinsepres}{\tilde{\cP}_{\height=2}^{\textnormal{insep}}}

\newcommand{\cQ}{\mathcal{Q}}
\newcommand{\cR}{\mathcal{R}}
\newcommand{\cS}{\mathcal{S}}
\newcommand{\cT}{\mathcal{T}}

\newcommand{\cV}{\mathcal{V}}
\newcommand{\cW}{\mathcal{W}}
\newcommand{\cX}{\mathcal{X}}
\newcommand{\cY}{\mathcal{Y}}
\newcommand{\cZ}{\mathcal{Z}}

\newcommand{\rA}{\mathrm{A}}
\newcommand{\rD}{\mathrm{D}}
\newcommand{\rE}{\mathrm{E}}
\newcommand{\rC}{\mathrm{C}}
\newcommand{\rN}{\mathrm{N}}

%alternative calligrphic font \mathsrc{...}, also for lowercase letters
\usepackage[scr=boondoxo]{mathalfa}

\renewcommand{\ll}{\mathscr{l}}
\newcommand{\cc}{\mathscr{c}}
\newcommand{\qq}{\mathscr{q}}
\newcommand{\pp}{\mathscr{p}}
\newcommand{\hh}{\mathscr{h}}
\newcommand{\kk}{\mathscr{k}}

\newcommand{\rr}{\mathscr{r}}

%nice Greek letters
\renewcommand{\epsilon}{\varepsilon}
\renewcommand{\phi}{\varphi}
\renewcommand{\theta}{\vartheta}
%%%%%%%%%%%%

%wider check and hat
%%%%%%%%%%%%%%
\DeclareFontFamily{U}{mathx}{}
\DeclareFontShape{U}{mathx}{m}{n}{<-> mathx10}{}
\DeclareSymbolFont{mathx}{U}{mathx}{m}{n}
\DeclareMathAccent{\widehat}{0}{mathx}{"70}
\DeclareMathAccent{\widecheck}{0}{mathx}{"71}
%%%%%%%%%%%%%%

%%%% wider tilde, hat %%%
\renewcommand{\tilde}{\widetilde}
\renewcommand{\check}{\widecheck}
\renewcommand{\hat}{\widehat}
\renewcommand{\bar}{\overline}
 %alternative

 %arxiv note for bibliography

%nice inequality signs
\renewcommand{\leq}{\leqslant}
\renewcommand{\geq}{\geqslant}

% some standard symbols
\renewcommand{\to}{\longrightarrow}
\newcommand{\map}{\dashrightarrow}

\newcommand{\into}{\hookrightarrow}

\newcommand{\sqto}{\rightsquigarrow}

\newcommand{\cha}{\operatorname{char}} %characteristic of the field
\newcommand{\hot}{\textnormal{h.o.t.}} %higher order terms

\newcommand{\Proj}{\operatorname{Proj}}

\newcommand{\Aut}{\operatorname{Aut}}
\newcommand{\Sing}{\operatorname{Sing}}

\newcommand{\NS}{\operatorname{NS}}
\newcommand{\Exc}{\operatorname{Exc}}
\newcommand{\Supp}{\operatorname{Supp}}

\newcommand{\redd}{_{\mathrm{red}}} %reduced structure
\newcommand{\Bs}{\operatorname{Bs}} %base points
\newcommand{\reg}{^{\mathrm{reg}}} %smooth locus
\newcommand{\trp}{^{\scriptscriptstyle{\top}}} %transpose

\newcommand{\PGL}{\mathrm{PGL}}

 %log discrepancy
\newcommand{\cf}{\operatorname{cf}}

\newcommand{\lts}[2]{\mathcal{T}_{#1}(-\log #2)} %logarithmic tangent sheaf
 %topological Euler number

\newcommand{\hor}{_{\mathrm{hor}}} %horizontal part
\renewcommand{\vert}{_{\mathrm{vert}}} %vertical part
\newcommand{\lt}{_{\mathrm{lt}}} %lt part
\newcommand{\ftip}[1]{\mathrm{tip}^{+}(#1)} %first tip
\newcommand{\ltip}[1]{\mathrm{tip}^{-}(#1)} %last tip
 % (( ... )) - notation for circular divisors

 % A*-type  singularity
 %inductance
%\newcommand{\rev}[1]{#1^{t}} %alternative notation for reverse chains, we use \trp
\newcommand{\cp}[1]{^{(#1)}} %T(i) - i-th component

\newcommand{\am}{_{\textnormal{am}}}

\newcommand{\height}{\operatorname{ht}} 
\newcommand{\width}{\operatorname{wd}} 

\renewcommand{\d}{\partial}
\newcommand{\id}{\mathrm{id}}
\newcommand{\pt}{\mathrm{pt}}

\newcommand{\toin}[1]{\overset{#1}{\to}} %label over an arrow

%%%%% some specific commands for del Pezzos %%%% 
\newcommand{\Sec}{\Xi}%negative section on F_n, to be changed, but I don't like KM's C_n.
%contraction of \Sec
\newcommand{\bs}[1]{\boldsymbol{#1}} %section
\newcommand{\ub}[1]{\uline{\bs{#1}}} %2-section
 %3-section
\newcommand{\fh}[1]{\bs{#1^{+}}} %decorated first tip
\newcommand{\lh}[1]{\bs{#1^{-}}} %decorated last tip
\newcommand{\gforkd}[2]{\prescript{| #1 |}{} \!\langle\!\langle #2 \rangle\!\rangle} %generalized fork, decorated
\newcommand{\gfork}[1]{\! \langle\!\langle #1 \rangle\!\rangle} %generalized fork

 %notation for swaps
\newcommand{\dec}[1]{^{|#1|}}
\newcommand{\adec}[1]{^{\langle #1 \rangle}} %alternative decoration for elliptic tie
\newcommand{\ldec}[1]{\prescript{|#1|}{}} %notation for ou decorated singularity types
\newcommand{\decb}[2]{^{|#1|}_{|#2|}}
\newcommand{\ldecb}[2]{\prescript{| #1 |}{| #2 |}}
\newcommand{\lbr}{[\![}
\newcommand{\rbr}{]\!]}
%%%%%%%%%%%%%%%%%%%%%%%%%%%%%%%%%
\newcommand{\pr}{\operatorname{pr}}

 %null entry for tables

\newcommand{\de}{\coloneqq} %:=

%\renewcommand{\.}{\cdot}

%Proofreading

 %\begin{bluetext} BLUE TEXT \end{bluetext}

\begin{document}
	\title[del Pezzo surfaces I \& II]{Classification of del {P}ezzo surfaces of rank one \\  I. Height 1 and 2, \\ II. Descendants with elliptic boundaries}
	
	\author{Karol Palka}
	\address{Institute of Mathematics, Polish Academy of Sciences, \'{S}niadeckich 8, 00-656 Warsaw, Poland}
	\email{palka@impan.pl}
	
	\author{Tomasz Pe{\l}ka}
	\address{University of Warsaw, Faculty of Mathematics, Informatics and Mechanics, Banacha 2, 02-097 Warsaw, Poland}
	\email{tpelka@mimuw.edu.pl}

	\subjclass[2020]{14J10; 14D06, 14J45, 14R05}

	\thanks{This project was funded by the National Science Centre, Poland, grant no.\ 2021/41/B/ST1/02062. For the purpose of Open Access, the authors have applied a CC-BY public copyright license to any Author Accepted Manuscript version arising from this submission.}

\begin{abstract}
This is the first article in a series aimed at classifying normal del Pezzo surfaces of Picard rank one over algebraically closed fields of arbitrary characteristic up to an isomorphism. Our guiding invariant is the \emph{height} of a del Pezzo surface, defined as the minimal intersection number of the exceptional divisor of the minimal resolution and a fiber of some $\P^1$-fibration. The geometry of del Pezzo surfaces gets more constrained as the height grows; in characteristic $0$ no example of height bigger than $4$ is known.

In this article, we classify del Pezzo surfaces of Picard rank one and height at most $2$; in particular we describe the non-log terminal ones. We also describe a natural class of del Pezzo surfaces which have \emph{descendants with elliptic boundary}, i.e.\ whose minimal resolution has a birational morphism onto a canonical del Pezzo surface of rank one mapping the exceptional divisor to an anti-canonical curve. 
\end{abstract}
%%%%%%%%%%%%%%%%%%%%%%%%%%%%%%%%%%%%%%%%%%%%%%%%%%%%%%%%%%%%%%%%%%%%
%%%%%%% abstract for the arxiv %%%%%%%%%%%%%%%%%%%%%%%%%%%%%%%%%%%%%
%%%%%%%%%%%%%%%%%%%%%%%%%%%%%%%%%%%%%%%%%%%%%%%%%%%%%%%%%%%%%%%%%%%%
%	This is the first article in a series aimed at classifying normal del Pezzo surfaces of Picard rank one over algebraically closed fields of arbitrary characteristic up to an isomorphism. Our guiding invariant is the height of a del Pezzo surface, defined as the minimal intersection number of the exceptional divisor of the minimal resolution and a fiber of some $\mathbb{P}^1$-fibration. The geometry of del Pezzo surfaces gets more constrained as the height grows; in characteristic 0 no example of height bigger than 4 is known.
%	
%	In this article, we classify del Pezzo surfaces of Picard rank one and height at most 2; in particular we describe the non-log terminal ones. We also describe a natural class of del Pezzo surfaces which have descendants with elliptic boundary, i.e.\ whose minimal resolution has a birational morphism onto a canonical del Pezzo surface of rank one mapping the exceptional divisor to an anti-canonical curve. 
%%%%%%%%%%%%%%%%%%%%%%%%%%%%%%%%%%%%%%%%%%%%%%%%%%%%%%%%%%%%%%%%%%%%
%%%%%%%%%%%%%%%%%%%%%%%%%%%%%%%%%%%%%%%%%%%%%%%%%%%%%%%%%%%%%%%%%%%%
%%%%%%%%%%%%%%%%%%%%%%%%%%%%%%%%%%%%%%%%%%%%%%%%%%%%%%%%%%%%%%%%%%%%

\maketitle
\setcounter{tocdepth}{1}
\tableofcontents
\section{Introduction}

A normal surface $\bar{X}$ is \emph{del Pezzo} if its anti-canonical divisor $-K_{\bar{X}}$ is ample. This is the first article in a series aimed at a complete classification of del Pezzo surfaces of Picard rank one over any algebraically closed field $\kk$ up to an isomorphism. We will provide an explicit construction of every such surface; and a list of singularity types of all log canonical ones. For each log canonical type, we will compute the number of non-isomorphic del Pezzo surfaces realizing it, or, in case this number is infinite, their moduli dimension, see Definition \ref{def:moduli}\ref{item:def-family-faithful}. Here, by the \emph{singularity type} we mean the weighted graph of the exceptional divisor of the minimal resolution, see Section \ref{sec:log_surfaces}. Non--log terminal del Pezzo surfaces are described in Proposition \ref{prop:non-log-terminal}.
\smallskip

Del Pezzo surfaces are possible  outcomes of the two-dimensional Minimal Model Program, and as such they play an important role in the birational classification of varieties. They were investigated by many authors from various points of view. For example, in case $\kk=\C$, Keel--McKernan \cite{Keel-McKernan_rational_curves} proved that that the smooth locus of any log terminal del Pezzo surface is  dominated by images of $\A^{1}$, and Gurjar--Zhang \cite{GurZha_1} showed that its fundamental group is finite.

The proof of Keel--McKernan's result provides in fact a description of \enquote{all but a bounded family} of del Pezzo surfaces of rank one. This description was recently completed by Lacini \cite{Lacini} in case $\cha\kk\neq 2,3$, who arranged all log terminal del Pezzo surfaces of rank one into 24 series. Nonetheless, these series are non-disjoint and rather broad, hence questions such as the existence of a del Pezzo surface of rank one with prescribed singularities are still not easy to answer. In general the problem of uniqueness remained untouched. 
Many other partial classification results were achieved by various authors, see e.g.\  \cite{Zhang_dP3,Nakayama_delPezzo-index-2,Kojima_index-2,Liu-Shokurov,Belousov_4-sings}. We compare some of them with our classification in Section \ref{sec:comparison}.

\subsection{The height and the classification program}

In a series of articles we pursue a new approach, which gives a new geometric insight into the structure of del Pezzo surfaces. Our guiding invariant is the \emph{height}. 

 \begin{definition}[The height]
 	\label{def:height}
 	Let $\bar{X}$ be a normal projective surface and $\bar{D}$ a reduced Weil divisor on it. Let $(X,D)$ be the minimal log resolution of $(\bar{X},\bar{D})$.  The \emph{height} of $(\bar{X},\bar{D})$, denoted by $\height(\bar{X},\bar{D})$, is the infimum of the set of integers $\hh$ such that $X$ admits a $\P^1$-fibration whose fiber $F$ satisfies $F\cdot D=\hh$. We put $\height(\bar{X})\de \height(\bar{X},0)$.
\end{definition}

We note that the height of a normal surface is finite only if $\bar{X}$ has negative Kodaira dimension; in particular the height of a singular del Pezzo surface is finite and nonzero, see Remark \ref{rem:ht_finite_nonzero}. The key property which makes the height important is that, contrary to other invariants (like the Fano index), it is bounded. This has been observed by the first author several years ago, who, to his surprise, noticed that the bound seems to be very small, and proposed a classification program based of studying the 
$\P^1$-fibrations realizing the infimum in Definition \ref{def:height}. The following result will be proved in a forthcoming article. 

\begin{conjecture}[Palka]
Let $\bar{X}$ be a singular del Pezzo surface. 
If $\cha\kk\neq 2,3$ then $\height(\bar{X})\leq 4$. 
\end{conjecture}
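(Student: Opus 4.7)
The plan is to reduce the conjecture to Lacini's classification after first handling the non-log terminal case. By Proposition \ref{prop:non-log-terminal} of the present paper, every singular del Pezzo surface of rank one which fails to be log terminal already satisfies $\height(\bar X)\leq 2$, so it suffices to treat the log terminal case. For this, Lacini \cite{Lacini} organizes log terminal del Pezzo surfaces of rank one, in characteristic other than $2$ and $3$, into 24 explicit series, each realizing the minimal resolution $\pi\:X\to\bar X$ as a blowup of a Hirzebruch surface $\Sigma_n$ at a prescribed configuration of centers. The plan is then to verify, series by series, that $X$ admits a $\P^1$-fibration $f\:X\to\P^1$ whose general fiber $F$ satisfies $F\cdot D\leq 4$, where $D=\Exc(\pi)$.

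For each series, the natural candidate fibration is the pull-back to $X$ of the Hirzebruch ruling of $\Sigma_n$. The horizontal components $D\hor$ of $D$ with respect to $f$ are exactly the proper transforms of those sections of $\Sigma_n\to\P^1$ which pass through the blowup centers defining $X$, and the quantity $F\cdot D\hor=F\cdot D$ can be read off from the multiplicities of these centers. When the pull-back gives $F\cdot D>4$, one should try to reduce it by elementary transformations centered at smooth points of the fibers, or, when $n=0$, by swapping the two rulings of $\P^1\times\P^1$. In each case, compatibility of the modification with the del Pezzo hypothesis is controlled by the ampleness of $-K_{\bar X}$ via a Mori-theoretic analysis of the $K_X$-negative extremal rays of $\NE(X)$.

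The main obstacle lies in the series whose exceptional divisor $D$ has a fork shape or long chains: here many sections of $\Sigma_n$ may simultaneously pass through the blowup centers, potentially producing $F\cdot D>4$ under every natural fibration. Ruling this out requires a systematic intersection-theoretic argument bounding the number of disjoint multi-sections a Hirzebruch surface can support while keeping $-K_{\bar X}$ ample after the sequence of blowups and contractions; this, combined with a Hodge-index estimate on the minimal resolution, would be the key technical tool. The exclusion of characteristics $2$ and $3$ is essential: additional families of larger height may arise there from quasi-elliptic fibrations or purely inseparable constructions not captured by Lacini's list, and a uniform extension of the bound to those characteristics is expected to require a separate classification beyond the scope of this plan.
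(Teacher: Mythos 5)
The paper does not actually prove this statement: it is recorded as a conjecture whose proof is explicitly deferred to a forthcoming article, so there is no in-paper argument to compare yours against. Judged on its own terms, your proposal is a plan rather than a proof, and it has two concrete gaps.

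First, the reduction to the log terminal case is wrong as stated. Proposition \ref{prop:non-log-terminal} does \emph{not} say that every non--log terminal del Pezzo surface of rank one has $\height(\bar{X})\leq 2$: case \ref{item:non-lt_descendant} allows $\cha\kk=5$, where $\bar{X}$ has a descendant with elliptic boundary and a priori only $\height(\bar{X})\leq 6$; Example \ref{ex:non-lt-descendant} produces such non--log canonical surfaces of height at least $3$ in characteristic $5$. To salvage the reduction you must invoke Theorem \ref{thm:GK}\ref{item:GK-ht}, which gives $\height(\bar{X})\leq 4$ for these surfaces when $\cha\kk\neq 2,3$. You also silently assume $\rho(\bar{X})=1$ throughout, whereas the conjecture is stated for arbitrary singular del Pezzo surfaces, so some reduction to rank one has to be supplied.

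Second, and more seriously, the entire content of the log terminal case --- the series-by-series verification that each of Lacini's $24$ families admits a $\P^1$-fibration of height at most $4$ --- is not carried out; you yourself flag the fork-shaped and long-chain series as the obstacle and say the key intersection-theoretic tool ``would be'' the needed ingredient. Until that is done there is no proof. A smaller but real inaccuracy: for the pulled-back ruling, $F\cdot D=F\cdot D\hor$ is the sum of the degrees of the multisections of $\Sigma_n\to\P^1$ contained in the configuration defining $D$, not something read off from the multiplicities of the blowup centers. That said, the overall route --- deduce the bound from Lacini's list rather than from a direct classification by height --- is a legitimate alternative to the program of this paper, and Section \ref{sec:comparison} indicates that the series LDP $1$--$19$ (and part of LDP $23$) indeed have height $3$ or $4$; but establishing this is precisely the missing hard part.
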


The exceptions with $\height(\bar{X})>4$ in $\cha\kk=2,3$, cf.\ Theorem \ref{thm:GK}\ref{item:GK-ht}, are related to the exceptional geometry of canonical del Pezzo surfaces in low characteristics, and will be classified. Crucially, the geometry of del Pezzo surfaces gets more constrained as the height increases, and our knowledge on $\P^1$-fibrations gives strong control over the configurations of possible degenerate fibers and various auxiliary curves, including the cases of low field characteristics. For this reason the height is a natural uniform invariant for the purpose of classification.

\smallskip
In \cite{PaPe_MT} we have already shown that for singular \emph{open} del Pezzo surfaces, i.e.\ the ones admitting a nonzero reduced boundary $\bar{D}$ such that $-K_{\bar {X}}-\bar{D}$ remains ample, the height of $(\bar{X},\bar{D})$, and hence of $\bar{X}$, is bounded by~$2$, with some exceptions if $\cha\kk=2,3,5$. In this article we obtain the following results.
\begin{itemize}
\item In Theorem \ref{thm:ht=1,2} we describe all normal del Pezzo surfaces of rank 1 and height at most $2$.
\item In Proposition \ref{prop:non-log-terminal} we show that with some exceptions if $\cha\kk=2,3,5$ the above class contains all non--log terminal del Pezzo surfaces of rank $1$, and we describe their structure.
\item In Propositions \ref{prop:moduli} and \ref{prop:moduli-hi} we discuss the issues of uniqueness, rigidity, and moduli for log canonical ones.
\item In Theorem \ref{thm:GK} we describe a notable class of del Pezzo surfaces of rank $1$ which can be constructed directly from canonical ones and singular members of anticanonical linear systems of the latter.
\end{itemize}

The classification results we achieve are complete, in the sense that we not only concentrate on studying possible singularities of del Pezzo surfaces, but we also carefully control their constructions. This allows us to discuss moduli of surfaces of fixed singularity type or to prove their uniqueness up to an isomorphism. 

We warn the reader that the price paid for including in the analysis fields of low characteristic, especially $\cha\kk=2$, are additional calculations, which sometimes become tedious. There seems to be no royal road here, as the resulting classification contains many exotic cases, see Tables \ref{table:ht=1}--\ref{table:exceptions-to-moduli}. 

\subsection{Del Pezzo surfaces of height at most two}

The main part of this article treats the case $\height(\bar{X})\leq 2$, so 
our $\P^1$-fibrations are relatively simple, but flexible enough to produce a lot of examples, sometimes with moduli. 
Describing their degenerate fibers allows us to arrange the minimal log resolutions of all del Pezzo surfaces of rank 1 and height at most 2 into series whose subsequent elements are connected by 
birational transformations called \emph{elementary vertical swaps}.
\begin{definition}[Swap, see Figure \ref{fig:swap}]\label{def:vertical_swap}
	Let $X$ be a smooth projective surface, let $D$ be a reduced divisor on $X$ and let $p\colon X\to B$ be a $\P^1$-fibration.
	\begin{parlist}
		\item\label{item:def-swap-elementary} 
		An \emph{elementary swap} (respectively, an \emph{elementary swap over $B$}) is a blowdown $\phi\colon X\to X'$ such that the $(-1)$-curve $A\de \Exc\phi$ meets $D$ normally and satisfies $A\cdot D\leq 2$ and $A\cdot C=1$ for exactly one $(-2)$-curve $C\subseteq D$ (and, respectively, $p_{*}A=p_{*}C=0$).  We write $\phi$ as $(X,D)\sqto (X',D')$, where $D'=\phi_{*}(D-C)$.
\begin{figure}[htbp]
	\begin{tikzpicture}
		\begin{scope}
			\draw (-0.3,3.1) -- (1,3.1);
			\draw (0.2,3.2) -- (0,2.4);
			\node at (-0.2,2.8) {\small{$-2$}};
			\draw (0,2.6) -- (0.2,1.8);
			\node at (-0.2,2.15) {\small{$-3$}};
			\draw[dashed] (0.2,2) -- (0,1);
			\node at (-0.2,1.5) {\small{$-1$}};
			\node at (0.3,1.5) {\small{$A$}};
			\draw (0,1.2) -- (0.2,0.4);
			\node at (-0.2,0.8) {\small{$-2$}};
			\node at (0.3,0.85) {\small{$C$}};
			\draw (0.2,0.6) -- (0,-0.2);
			\node at (-0.2,0.2) {\small{$-3$}};
			\draw (-0.3,-0.1) -- (1,-0.1);
			\draw[->] (1.2,1.5) -- (2.2,1.5);
			\node at (1.7,1.7) {\small{$\phi$}}; 
			%
		%	\node at (0.4,-0.5) {\small{$(X,D)$}};
		\end{scope}
		\begin{scope}[shift={(3,0)}]
			\draw (-0.3,3.1) -- (1,3.1);
			\draw (0.2,3.2) -- (0,2.2);
			\node at (-0.2,2.7) {\small{$-2$}};
			\draw (0,2.4) -- (0.2,1.4);
			\node at (-0.2,1.8) {\small{$-2$}};
			\draw[dashed] (0.2,1.6) -- (0,0.6);
			\node at (-0.2,1.1) {\small{$-1$}};
			\node at (0.55,1.1) {\small{$\phi(C)$}};
			\filldraw (0.18,1.5) circle (0.05);
			\draw (0,0.8) -- (0.2,-0.2);
			\node at (-0.2,0.2) {\small{$-3$}};
			\draw (-0.3,-0.1) -- (1,-0.1);
			%
		%	\node at (0.4,-0.5) {\small{$(X',D')$}};
		\end{scope}
	\end{tikzpicture}
	\vspace{-1em}
	\caption{Example of an elementary vertical swap $\phi\colon (X,D)\sqto(X',D')$, see Definition \ref{def:vertical_swap}.}
\label{fig:swap}
\vspace{-1em}
\end{figure}		
		\item\label{item:def-swap-general} A \emph{swap} (respectively, a \emph{swap over $B$}) is a composition of a sequence $(X,D)\sqto\dots\sqto (X',D')$ of elementary swaps (respectively, elementary swaps over $B$). We write it shortly as $(X,D)\sqto (X',D')$. Whenever the fixed $\P^1$-fibration $X\to B$ is clear from the context, we refer to a swap over $B$ as a \emph{vertical swap}.
		\item\label{item:def-swapping}  We say that 
		$(X,D)$ \emph{swaps vertically} to $(X',D')$ if there is a  $\P^1$-fibration $X\to B$ and a swap $(X,D)\sqto (X',D')$ over $B$.
		\item\label{item:def-swapping-normal}  Given two normal surfaces $\bar{X}$, $\bar{X}'$, we say that $\bar{X}$ \emph{swaps vertically to} $\bar{X}'$ if the minimal log resolution of $\bar{X}$ swaps vertically to the minimal log resolution of $\bar{X}'$.				
		\item\label{item:def-swap-basic} We say that $(X,D)$ is \emph{primitive} (over $B$) if it does not admit any elementary vertical swaps (over $B$). 
		A normal surface $\bar{X}$ is \emph{primitive} if so is its minimal log resolution.
	\end{parlist} 
\end{definition}
If a log surface $(X,D)$ with a fixed $\P^1$-fibration $p\colon X\to B$ is primitive over $B$ (and $(X,D)$ is a minimal log resolution of a normal surface $\bar{X}$), we say that $(X,D)$ (and $\bar{X}$) are \emph{vertically primitive}, keeping the fixed $\P^1$-fibration $p$ implicit. This way $\bar{X}$ can be vertically primitive but not primitive, see Figure \ref{fig:2D4-intro} for an example; it can also be vertically primitive with respect to one $\P^1$-fibration but not another, see Example \ref{ex:ht=2_twisted}.
\begin{theoremA}[Classification of del Pezzo surfaces of rank one and height $\leq 2$]\label{thm:ht=1,2}
	Let $\bar{X}$ be a normal del Pezzo surface of rank $1$ and height at most $2$, and let $(X,D)$ be its minimal log resolution. Then the following hold.
	\begin{parts}
		\item\label{part:swaps} 
		The log surface 
		$(X,D)$ swaps vertically to a vertically primitive log surface $(Y,D_Y)$ with a $\P^1$-fibration shown in Figure \ref{fig:basic-intro}. Moreover, if  $\height(\bar{X})=2$ then $(Y,D_Y)$ is a minimal log resolution of a del Pezzo surface $\bar{Y}$ which is either canonical, or, in case $\cha\kk=2$, is a surface constructed in  Example \ref{ex:ht=2_twisted_cha=2}, see Figure \ref{fig:KM_surface-intro}. 
\begin{figure}[htbp]
	\subcaptionbox{$e>2g-2+\nu$ \\ Example \ref{ex:ht=1} \label{fig:ht=1-intro}}[.23\linewidth]{
		\begin{tikzpicture}[scale=0.9]
			\draw[very thick] (0,2) -- (3.4,2);
			\node at (2.2,2.2) {\small{$-e$}};
			\node at (2.2,1.75) {\small{genus $g$}};
			\draw (0.2,2.5) -- (0,1.5);
			\draw[dashed] (0,1.7) -- (0.2,0.7);
			\draw (0.2,0.9) -- (0,-0.1);
			\draw (1.2,2.5) -- (1,1.5);
			\draw[dashed] (1,1.7) -- (1.2,0.7);
			\draw (1.2,0.9) -- (1,-0.1);
			\node at (2.2,1.2) {\Large{$\cdots$}};
			\draw (3.4,2.5) -- (3.2,1.5);
			\draw[dashed] (3.2,1.7) -- (3.4,0.7);
			\draw (3.4,0.9) -- (3.2,-0.1);
			\draw [decorate, decoration = {calligraphic brace}, thick] (0.1,2.7) -- (3.5,2.7);
			\node at (2,3) {\scriptsize{$\nu$ fibers}};			
		\end{tikzpicture}
	}	
	\subcaptionbox{ $3\rA_{2}$ \\ Example  \ref{ex:ht=2}\ref{item:3A2_construction} \label{fig:3A2-intro}}[.14\linewidth]{\centering
		\begin{tikzpicture}[scale=0.9]
			\draw (0.1,3) -- (1.3,3);
			\draw[dashed] (0.2,3.1) -- (0,2.1);
			%			\node at (0.3,2.6) {\small{$L_1$}};
			\draw (0,2.3) -- (0.2,1.4);
			\draw (0.2,1.6) -- (0,0.7);
			\draw[dashed] (0,0.9) -- (0.2,-0.1);
			%			\node at (0.3,0.4) {\small{$A_1$}};
			%
			\draw (1.2,3.1) -- (1,1.9);
			\draw[dashed] (1,2.1) -- (1.2,0.9);
			%			\node at (1.3,1.5) {\small{$A_0$}};
			\draw (1.2,1.1) -- (1,-0.1);
			%			\node at (1.3,0.5) {\small{$L_2$}};
			%	
			\draw (0.1,0.05) -- (1.1,0.05);
			%			\node at (0.7,0.2) {\small{$C$}};
		\end{tikzpicture}
	}
	\subcaptionbox{$\rA_{1}+2\rA_{3}$\\ Example \ref{ex:ht=2}\ref{item:A1+2A3_construction} \label{fig:2A3+A1-intro}}[.19\linewidth]{\centering
		\begin{tikzpicture}[scale=0.9]
			\draw (0.1,3) -- (2.3,3);
			\draw (-0.1,0) -- (2.1,0);
			%			\node at (1.6,0.2) {\small{$C$}};
			%
			\draw[dashed] (0.2,3.1) -- (0,1.9);
			%			\node at (0.3,2.5) {\small{$A_0$}};
			\draw (0,2.1) -- (0.2,0.9);
			%			\node at (0.3,1.5) {\small{$L_1$}};
			\draw[dashed] (0.2,1.1) -- (0,-0.1);
			%			\node at (0.3,0.5) {\small{$A_1$}};
			%
			\draw (1.2,3.1) -- (1,1.9);
			%			\node at (1.3,2.5) {\small{$L_2$}};
			\draw[dashed] (1,2.1) -- (1.2,0.9);
			%			\node at (1.3,1.5) {\small{$A_2$}};
			\draw (1.2,1.1) -- (1,-0.1);
			\draw (2.2,3.1) -- (2,1.9);
			%			\node at (2.3,2.5) {\small{$L_3$}};
			\draw[dashed] (2,2.1) -- (2.2,0.9);
			%			\node at (2.3,1.5) {\small{$A_3$}};
			\draw (2.2,1.1) -- (2,-0.1);			
		\end{tikzpicture}
	}
	\subcaptionbox{$\rA_1+\rA_2+\rA_5$\\ Example \ref{ex:ht=2}\ref{item:A1+A2+A5_construction} \label{fig:A5+A2+A1-intro}}[.2\linewidth]{\centering
		\begin{tikzpicture}[scale=0.9]
			\draw (0.1,3) -- (2.4,3);
			\draw (0,0) -- (1,0) to[out=0,in=180] (2.3,2.8) -- (2.4,2.8);
			%			\node at (1.7,0.4) {\small{$C$}};
			%
			\draw[dashed] (0.2,3.1) -- (0,2.1);
			%			\node at (0.3,2.6) {\small{$A_0$}};
			\draw (0,2.3) -- (0.2,1.4);
			%			\node at (0.3,1.9) {\small{$L_1$}};
			\draw (0.2,1.6) -- (0,0.7);
			\draw[dashed] (0,0.9) -- (0.2,-0.1);
			%			\node at (0.3,0.4) {\small{$A_1$}};
			%
			\draw (1.2,3.1) -- (1,1.9);
			%			\node at (1.3,2.5) {\small{$L_2$}};
			\draw[dashed] (1,2.1) -- (1.2,0.9);
			%			\node at (1.3,1.5) {\small{$A_2$}};
			\draw (1.2,1.1) -- (1,-0.1);
			\draw (2.3,3.1) -- (2.1,1.9);
			%			\node at (2.4,2.5) {\small{$L_3$}};
			\draw[dashed] (2.1,2.1) -- (2.3,0.9);
			%			\node at (2.4,1.5) {\small{$A_{3}$}};
			\draw (2.3,1.1) -- (2.1,-0.1);			
		\end{tikzpicture}
	}	
	\subcaptionbox{$2\rD_4$\\ Example \ref{ex:ht=2}\ref{item:2D4_construction} \label{fig:2D4-intro}}[.21\linewidth]{\centering
		\begin{tikzpicture}[scale=0.9]
			\draw (0.1,3) -- (2.9,3);
			\draw (-0.1,0) -- (2.9,0);
			%			\node at (2.5,0.2) {\small{$C$}};
			%
			\draw (0.2,3.1) -- (0,1.9);
			\draw[dashed] (0,2.1) -- (0.2,0.9);
			%			\node at (0.3,1.5) {\small{$A_0$}};
			\draw (0.2,1.1) -- (0,-0.1);
			%			\node at (0.3,0.5) {\small{$L_1$}};
			%
			\draw (1.2,3.1) -- (1,1.9);
			%			\node at (1.3,2.5) {\small{$L_2$}};
			\draw[dashed] (1,2.1) -- (1.2,0.9);
			%			\node at (1.3,1.5) {\small{$A_2$}};
			\draw (1.2,1.1) -- (1,-0.1);
			\draw (2.2,3.1) -- (2,1.9);
			%			\node at (2.3,2.5) {\small{$L_3$}};
			\draw[dashed] (2,2.1) -- (2.2,0.9);
			%			\node at (2.3,1.5) {\small{$A_3$}};
			\draw (2.2,1.1) -- (2,-0.1);
			\draw[dashed] (2.8,3.1) -- (3.1,1.4);
			%			\node at (3.1,2.5) {\small{$L_4$}};	
			\draw[dashed] (2.8,-0.1) -- (3.1,1.6);
			%			\node at (3.1,0.5) {\small{$A_4$}};			
		\end{tikzpicture}			
	}	
	%\smallskip
	%\end{figure}
	%\begin{figure}[htbp]\ContinuedFloat
	\subcaptionbox{$2\rA_4$\\ Example \ref{ex:ht=2_meeting} \label{fig:2A4-intro}}[.2\linewidth]{\centering
		\begin{tikzpicture}[scale=0.9]
			\draw (0,3) -- (1.2,3) to[out=0,in=120] (2,1.4);
			\draw (0,0) -- (1,0) to[out=0,in=-120] (2,1.6);
			%			\node at (1.75,0.4) {\small{$C$}};
			%
			\draw[dashed] (0.2,3.1) -- (0,2.4);
			%			\node at (0.3,2.75) {\small{$A_0$}};
			\draw (0,2.55) -- (0.2,1.95);
			%			\node at (0.3,2.3) {\small{$L_1$}};
			\draw (0.2,2.05) -- (0,1.45);
			\draw (0,1.55) -- (0.2,0.95);
			\draw (0.2,1.05) -- (0,0.45);
			\draw[dashed] (0,0.55) -- (0.2,-0.1);
			%			\node at (0.3,0.3) {\small{$A_1$}};
			%
			\draw (1.1,3.1) -- (0.9,1.9);
			%			\node at (1.2,2.5) {\small{$L_2$}};
			\draw[dashed] (0.9,2.1) -- (1.1,0.9);
			%			\node at (1.2,1.5) {\small{$A_2$}};
			\draw (1.1,1.1) -- (0.9,-0.1);	
		\end{tikzpicture}
	}	
	\subcaptionbox{$\rA_3+\rD_5$\\ Example \ref{ex:ht=2_twisted}\ref{item:A3+D5_construction} \label{fig:D5+A3_twisted-intro}}[.22\linewidth]{
		\begin{tikzpicture}[scale=0.9]
			\draw (0.1,3) -- (0.3,3) to[out=0,in=170] (1.2,1.56) to[out=-10,in=-135] (1.3,1.61);
			\draw (-0.1,0) -- (0.2,0) to[out=0,in=170] (1.2,1.56) to[out=-10,in=135] (1.3,1.49);
			%			\node at (0.8,0.5) {\small{$C$}};
			%
			\draw (0.2,3.1) -- (0,1.9);
			%			\node at (0.3,2.5) {\small{$L_1$}};
			\draw[dashed] (0,2.1) -- (0.2,0.9);
			%			\node at (0.3,1.5) {\small{$A_1$}};
			\draw (0.2,1.1) -- (0,-0.1);
			\draw[dashed] (1,1.6) -- (2.1,1.4);
			%			\node at (1.65,1.65) {\small{$A_2$}};
			\draw (1.9,1.4) -- (3,1.6);
			\draw (2.8,1.6) -- (3.9,1.4);
			\draw (3.9,3.1) -- (3.7,1.9);
			%			\node at (4,2.5) {\small{$L_2$}};
			\draw (3.7,2.1) -- (3.9,0.9);
			\draw (3.9,1.1) -- (3.7,-0.1);
		\end{tikzpicture}
	}
	\subcaptionbox{$2\rA_1+2\rA_3$, $\cha\kk\neq 2$\\ Example \ref{ex:ht=2_twisted}\ref{item:2A1+2A3_construction} \label{fig:2A3+2A1-intro}}[.27\linewidth]{
		\begin{tikzpicture}[scale=0.9]
			\draw (0.2,3.1) -- (0,1.9);
			%		\node at (0.3,2.5) {\small{$L_2$}};
			\draw[dashed] (0,2.1) -- (0.2,0.9);
			%			\node at (-0.1,1.35) {\small{$A_2$}};
			\draw (0.2,1.1) -- (0,-0.1);
			\draw (1.2,3.1) -- (1,1.9);
			%			\node at (1.3,2.5) {\small{$L_1$}};
			\draw[dashed] (1,2.1) -- (1.2,0.9);
			%			\node at (1.3,1.5) {\small{$A_1$}};
			\draw (1.2,1.1) -- (1,-0.1);
			\draw[dashed] (1.7,1.6) -- (2.8,1.4);
			%			\node at (2.35,1.65) {\small{$A_3$}};
			%		
			\draw (2.8,3.1) -- (2.6,1.9);
			%			\node at (2.9,2.5) {\small{$L_3$}};
			\draw (2.6,2.1) -- (2.8,0.9);
			\draw (2.8,1.1) -- (2.6,-0.1);
			\draw (-0.05,1.55) to[out=0,in=180] (0.05,1.5) to[out=0,in=180] (1.1,2.9) -- (1.3,2.9) 
			to[out=0,in=170] (1.9,1.56) to[out=-10,in=-135] (2,1.61);
			\draw (-0.05,1.45) to[out=0,in=180] (0.05,1.5) to[out=0,in=180] (0.9,0.1) -- (1.1,0.1) 
			to[out=0,in=170] (1.9,1.56) to[out=-10,in=135] (2,1.51);
			%			\node at (1.6,0.5) {\small{$C$}};
		\end{tikzpicture}
	}
	\subcaptionbox{$2\nu\rA_1+[2\nu-4]$, $\cha\kk=2$\\ Example \ref{ex:ht=2_twisted_cha=2} \label{fig:KM_surface-intro}}[.27\linewidth]{
		\begin{tikzpicture}[scale=0.9]
			\draw[very thick] (0,1.2) -- (3.1,1.2);
			\node at (2.1,1.4) {\small{$4-2\nu$}};
			\draw (0.2,2.5) -- (0,1.5);
			\draw[dashed] (0,1.7) -- (0.2,0.7);
			\draw (0.2,0.9) -- (0,-0.1);
			\draw (1.2,2.5) -- (1,1.5);
			\draw[dashed] (1,1.7) -- (1.2,0.7);
			\draw (1.2,0.9) -- (1,-0.1);
			\node at (2.1,2) {\Large{$\cdots$}};
			\draw (3,2.5) -- (2.8,1.5);
			\draw[dashed] (2.8,1.7) -- (3,0.7);
			\draw (3,0.9) -- (2.8,-0.1);
			\draw [decorate, decoration = {calligraphic brace}, thick] (0,2.7) -- (3.2,2.7);
			\node at (1.6,3) {\scriptsize{$\nu$ fibers}};			
		\end{tikzpicture}
	}
	\caption{Vertically primitive log surfaces $(Y,D_Y)$ in  Theorem \ref{thm:ht=1,2}, see Examples \ref{ex:ht=2}--\ref{ex:ht=2_twisted_cha=2}. \\ Solid lines correspond to components of $D_Y$: except for the horizontal ones in Figures \ref{fig:ht=1-intro} and \ref{fig:KM_surface-intro}, they are $(-2)$-curves. Dashed lines correspond to vertical $(-1)$-curves, see Notation \ref{not:figures}.
	}
	\label{fig:basic-intro}
\end{figure}
	 \item\label{part:uniqueness} Assume that all singularities of $\bar{X}$ are rational and log canonical. Then the singularity type of $\bar{X}$ determines $\bar{X}$ uniquely up to an isomorphism, unless $\bar{X}$ is one of the exceptions described in Proposition  \ref{prop:moduli}, see Table \ref{table:exceptions}. In the latter case we have either $\height(\bar{X})=1$, or $\height(\bar{X})=2$ and the surface $\bar{Y}$ from \ref{part:swaps} can be chosen as in Example \ref{ex:ht=2}\ref{item:3A2_construction},\ref{item:2D4_construction} or \ref{ex:ht=2_twisted_cha=2}, see Figure \ref{fig:3A2-intro}, \ref{fig:2D4-intro} or \ref{fig:KM_surface-intro}. 
	 \item\label{part:types} One of the following holds.
	\begin{parts}
		\item\label{item:ht=1} We have $\height(\bar{X})=1$. 
		If $\bar{X}$ is log canonical then its singularity type is listed in Lemma  \ref{lem:ht=1_types}, see Table \ref{table:ht=1}.
		\item\label{item:ht=2_width=2} We have $\height(\bar{X})=2$, and $X$ admits a $\P^1$-fibration such that the horizontal part of $D$ consists of two $1$-sections. The surface $\bar{Y}$ from \ref{part:swaps} can be chosen as in Example \ref{ex:ht=2} or \ref{ex:ht=2_meeting}, see Figures \ref{fig:3A2-intro}--\ref{fig:2A4-intro}. If $\bar{X}$ is log canonical then its singularity type is listed in Lemma \ref{lem:ht=2,untwisted}, see Table \ref{table:ht=2_char-any}. 
		\item\label{item:ht=2_width=1} We have $\height(\bar{X})=2$, and $X$ has no $\P^1$-fibration as in \ref{item:ht=2_width=2}. 
		Then one of the following holds. 
		\begin{parts}
		\item\label{item:ht=2_char-neq-2} The surface $\bar{Y}$ from \ref{part:swaps} can be chosen as in Example  \ref{ex:ht=2_twisted}, see Figures \ref{fig:D5+A3_twisted-intro}, \ref{fig:2A3+2A1-intro}. The surface $\bar{X}$ is log terminal, of singularity type listed in Lemma \ref{lem:ht=2_twisted-separable}, see Tables \ref{table:ht=2_char-any} and \ref{table:ht=2_char-neq-2}.
		\item\label{item:ht=2_char=2} We have $\cha\kk=2$. The surface $\bar{Y}$ from \ref{part:swaps} can be chosen as  in Example \ref{ex:ht=2_twisted_cha=2} for some $\nu\geq 3$, see Figure \ref{fig:KM_surface-intro}. If $\bar{X}$ is log canonical then its singularity type is listed in Lemma \ref{lem:ht=2_twisted-inseparable}, see Table \ref{table:ht=2_char=2}.
		\end{parts}
	\end{parts} 
\end{parts}
\end{theoremA}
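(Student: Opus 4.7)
The plan is to run the entire analysis on the minimal log resolution $\pi\colon (X,D)\to \bar X$ together with a $\P^1$-fibration $p\colon X\to B$ realizing the infimum in Definition~\ref{def:height}. Since $\rho(\bar X)=1$ forces $\rho(X)=1+\#(\text{components of }D)$ and $-K_{\bar X}$ is ample, every $\pi$-exceptional curve is a component of $D$, while every vertical $(-1)$-curve not in $D$ must meet $D$. A general fiber $F$ is irreducible with $F^{2}=0$, so $D\hor\cdot F=\height(\bar X)\in\{1,2\}$ forces $D\hor$ to have one of three shapes: a single $1$-section (giving \ref{item:ht=1}), two disjoint $1$-sections (giving \ref{item:ht=2_width=2}), or a single $2$-section (giving \ref{item:ht=2_width=1}); in the last case separability of the cover $D\hor\to B$ distinguishes \ref{item:ht=2_char-neq-2} from \ref{item:ht=2_char=2}. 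This already proves the trichotomy in part~\ref{part:types}.

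For part~\ref{part:swaps} I would perform elementary vertical swaps greedily. Each such swap contracts a vertical $(-1)$-curve meeting a unique $(-2)$-curve $C\subseteq D$ and meeting $D-C$ in at most one further point; because the number of components of $D$ in degenerate fibers strictly drops at each step, the process terminates at a vertically primitive $(Y,D_Y)$. Each degenerate fiber of $Y\to B$ then contains exactly one non-$\pi$-exceptional component, and imposing primitivity together with the constraints $F^{2}=0$ and $F\cdot D_Y=\height(\bar X)$ restricts the admissible weighted fiber trees to a short, explicit list. A finite combinatorial case analysis then matches them precisely against Figures~\ref{fig:ht=1-intro}--\ref{fig:KM_surface-intro}. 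For height $2$ I would then check by direct intersection theory and ampleness that contracting $D_Y$ on $Y$ produces either a canonical del Pezzo $\bar Y$ or, when $\cha\kk=2$ and an inseparable $2$-section is present, the surface of Example~\ref{ex:ht=2_twisted_cha=2}.

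Part~\ref{part:types} is then obtained by walking the swaps backwards from a primitive $(Y,D_Y)$. Each inverse elementary swap replaces a marked $(-1)$-curve meeting $D_Y$ by a chain of $(-2)$-curves attached at a prescribed component of $D_Y$; log canonicity of $\bar X$ translates into the $\rA_n/\rD_n/\rE_n$-type restrictions on these attachments. Running this back along each of the figures~\ref{fig:ht=1-intro}--\ref{fig:KM_surface-intro}, and separating by horizontal shape and by the separable/inseparable dichotomy of the $2$-section, would exactly reproduce the tables of singularity types cited in Lemmas~\ref{lem:ht=1_types}, \ref{lem:ht=2,untwisted}, \ref{lem:ht=2_twisted-separable} and \ref{lem:ht=2_twisted-inseparable}.

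Finally, for uniqueness (part~\ref{part:uniqueness}) I would show that a primitive $(Y,D_Y)$ of fixed combinatorial type, together with the data of the marked blow-up centers reconstructing $(X,D)$, is unique up to isomorphism except for the configurations of Proposition~\ref{prop:moduli}; this reduces to studying the action of $\Aut(\bar Y)$ on the finite set of admissible configurations of centers, and the exceptions arise precisely when this action fails to be transitive. The main obstacle will be \ref{item:ht=2_char=2}: since the surface of Example~\ref{ex:ht=2_twisted_cha=2} is not a canonical del Pezzo, neither the classification nor the uniqueness statement can be imported from the canonical-model theory and both must be carried out by explicit equations in suitable local coordinates. A secondary difficulty is the height-$1$ case, where the primitive model of Figure~\ref{fig:ht=1-intro} carries an $e$-section of arbitrary genus $g$, producing genuine moduli that need to be tracked along the reverse swaps to identify which singularity types are actually rigid.
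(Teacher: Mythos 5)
Your overall strategy coincides with the paper's: fix a witnessing $\P^1$-fibration, split by the shape of $D\hor$, swap down to explicit primitive models, reverse the swaps to enumerate singularity types, and control uniqueness through the automorphism group of the primitive model acting on blow-up centers. Two places in your outline, however, hide genuine gaps rather than routine verifications. The first concerns the trichotomy in part (c): it is not determined by the fibration you happen to start with. Case (c.2) asserts the \emph{existence} of some witnessing fibration whose horizontal part consists of two $1$-sections (possibly meeting once, as for $2\rA_4$, so "two disjoint $1$-sections" is already inaccurate), and case (c.3) asserts that \emph{no} such fibration exists. If your chosen fibration has an irreducible $2$-section $H$ you cannot yet place the surface in (c.3); one must show, as in Lemmas \ref{lem:ht=2_twisted-models} and \ref{lem:ht=2_twisted-swaps}, that whenever at most two of the lines through the distinguished point are tangent to the conic $\psi(H)$, a second pencil produces a fibration with two $1$-sections, so that only the residual configurations (which force $\cha\kk=2$ when three or more tangent lines occur, or the two separable shapes of Example \ref{ex:ht=2_twisted}) genuinely land in (c.3). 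Without this step your separable/inseparable dichotomy does not yet partition the surfaces as the theorem claims.

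The second gap is in part (b): the set of "admissible configurations of centers" is not finite. Each outer blowup in the reverse swap has a one-parameter family of possible centers on a curve $G^{\circ}\cong \A^1$ or $\A^{1}_{*}$, and the whole content of the uniqueness statement is whether $\Aut(Y,D_Y)$ acts on $G^{\circ}$ transitively, with an open orbit, or with finite orbits, and whether the derivative of that action vanishes — which is what governs $h^{1}(\lts{X}{D})$ and hence the moduli. All of the characteristic-dependent exceptions of Table \ref{table:exceptions} (for instance those occurring exactly when $\cha\kk$ divides $d(T)$) arise from this analysis, which the paper carries out by explicit vector-field computations in local coordinates in Lemma \ref{lem:ht=1_uniqueness}; a finite orbit count cannot detect them. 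Relatedly, the moduli in the rational log canonical height-$1$ case do not come from the genus of the horizontal section (which is $0$ there) but from configurations of four or more points on a rational curve, as in the bench case and its height-$2$ analogue $2\rD_4$; your closing paragraph misattributes the source of these moduli, although you correctly single out Example \ref{ex:ht=2_twisted_cha=2} as requiring a separate explicit treatment.
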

\begin{remark}[Cascades of log canonical del Pezzo surfaces]\label{rem:swap-lt}
	Assume that $(X,D)$ is a minimal log resolution of a log canonical del Pezzo surface of rank one, and $(X,D)\sqto(X_1,D_1)\sqto\dots\sqto (X_n,D_n)$ are elementary swaps. Then by Lemma \ref{lem:swap_lc}\ref{item:swap_lc_dP} each $(X_i,D_i)$ is a minimal log resolution of a log canonical del Pezzo surface, too. Thus Theorem \ref{thm:ht=1,2} arranges all log canonical del Pezzo surfaces of height at most two in series constructed inductively by blowing up within the degenerate fibers of simple $\P^1$-fibrations shown in Figure \ref{fig:basic-intro}. 	Similar natural cascades were constructed e.g.\ by Hwang \cite{Hwang_cascades} in the toric case, see Section \ref{sec:Hwang}.
\end{remark}
Theorem  \ref{thm:ht=1,2}\ref{part:uniqueness} gives the following description of moduli for log terminal del Pezzo surfaces of rank one and height at most $2$ in non-exceptional field characteristics. For a complete result see Proposition \ref{prop:moduli} and Table \ref{table:exceptions}. 

\begin{notation}\label{not:PS}
	For a singularity type $\cS$ 
	we denote by $\cP(\cS)$ the set of isomorphism classes of  del Pezzo surfaces of rank one and type $\cS$. We denote by $\Pht(\cS)$ the subset of $\cP(\cS)$ consisting of surfaces of height at most $2$.
\end{notation}

\begin{corollary}[Moduli, see Proposition \ref{prop:moduli} and Table \ref{table:exceptions}]\label{cor:moduli}
	Assume $\cha\kk\neq 2,3,5$. Let $\cS$ be a log terminal singularity type. Then any two surfaces in $\Pht(\cS)$ are equivalent under an equisingular deformation. Moreover, if $\#\Pht(\cS)\geq 2$ 
	 then one of the following holds. 
\begin{enumerate}
	\item\label{item:cor-2D4} We have $\#\Pht(\cS)=\infty$, and there is a one-parameter family such that $\bar{X}\in \Pht(\cS)$ if and only if $\bar{X}$ is isomorphic to one of its fibers. Moreover, every surface $\bar{X}\in \Pht(\cS)$ swaps vertically to a surface of type $2\rD_4$, see Figure \ref{fig:2D4-intro} and Example \ref{ex:ht=2}\ref{item:2D4_construction}. 
	\item\label{item:cor-2} We have $\#\Pht(\cS)=2$, and $\cS$ is one of the types shown in Figure \ref{fig:cor-2}, see Table \ref{table:exceptions}. 
\end{enumerate}
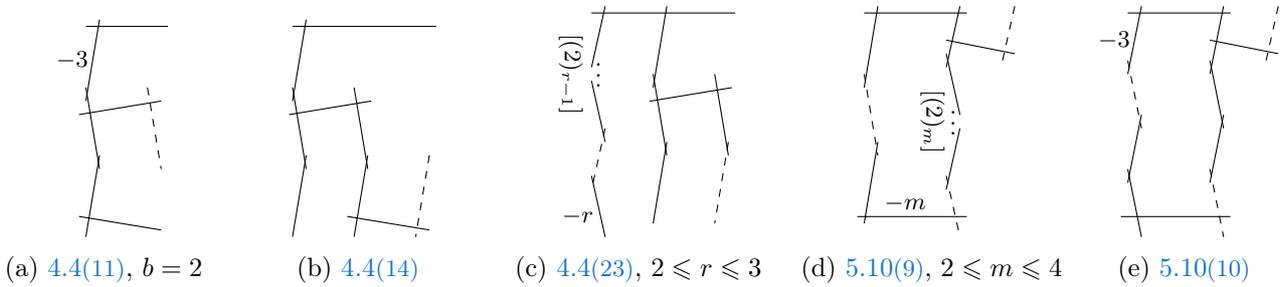
\begin{figure}[htbp]\vspace{-0.5em}
	\subcaptionbox{\ref{lem:ht=1_types}\ref{item:T2*=[2,2]_b>2}, $b=2$ \label{fig:T2*=[2,2]_b>2}}[.18\linewidth]{
	\begin{tikzpicture}[scale=0.9]
		\draw (0,3) -- (1.2,3);
		\draw (0.2,3.1) -- (0,1.9);
		\node at (-0.2,2.5) {\small{$-3$}};
		\draw (0,2.1) -- (0.2,0.9);
		\draw (0.2,1.1) -- (0,-0.1);
		\draw (-0.1,0.2) -- (1.1,0);
		\draw (-0.1,1.7) -- (1.1,1.9);
		\draw[dashed] (0.9,2.1) -- (1.1,0.9);
	\end{tikzpicture}
}	
	\subcaptionbox{\ref{lem:ht=1_types}\ref{item:TH=[2,2]_b>2} %\\ (type $\rE_8$)  
	\label{fig:E8-intro}
	}
	[.2\linewidth]
	{
		\begin{tikzpicture}[scale=0.9]
			\draw (0,3) -- (2.1,3);
			\draw (0.2,3.1) -- (0,1.9);
			\draw (0,2.1) -- (0.2,0.9);
			\draw (0.2,1.1) -- (0,-0.1);
			\draw (-0.05,1.7) -- (1.15,1.9);
			\draw (0.9,2.1) -- (1.1,0.9);
			\draw (1.1,1.1) -- (0.9,-0.1);
			\draw (0.8,0.2) -- (2,0);
			\draw[dashed] (2,1.1) -- (1.8,-0.1);
		\end{tikzpicture}
	}
	\subcaptionbox{\ref{lem:ht=1_types}\ref{item:T2=[2]_b>2}, $2\leq r \leq 3$ \label{fig:A1+E7-intro} 
}
	[.22\linewidth]
	{
		\begin{tikzpicture}[scale=0.9]
			\draw (0,3) -- (2.1,3);
			\draw (0.2,3.1) -- (0,2.2);
			\node[rotate=-90] at (-0.3,2.1) {\small{$[(2)_{r-1}]$}};
			\node at (0.1,2.2) {\small{$\vdots$}};
			\draw (0,2) -- (0.2,1.1);
			\draw[dashed] (0.2,1.3) -- (0,0.4);
			\draw (0,0.6) -- (0.2,-0.3);
			\node at (-0.2,0) {\small{$-r$}};
			\draw (1.1,3.1) -- (0.9,1.9);
			\draw (0.9,2.1) -- (1.1,0.9);
			\draw (1.1,1.1) -- (0.9,-0.1);
			\draw (0.85,1.7) -- (2.05,1.9);
			\draw (1.8,2.1) -- (2,0.9);
			\draw[dashed] (2,1.1) -- (1.8,-0.1);
		\end{tikzpicture}
	}
	\subcaptionbox{\ref{lem:ht=2,untwisted}\ref{item:[T_1,n]=[2,2]_r=2}, $2\leq m\leq 4$ \label{fig:A2+E6-intro}}[.22\linewidth]{
		\begin{tikzpicture}[scale=0.9]
			\draw (0,3) -- (1.5,3);
			\draw (0.2,3.1) -- (0,1.9);
			\draw[dashed] (0,2.1) -- (0.2,0.9);
			\draw (0.2,1.1) -- (0,-0.1);
			\draw (1.2,2.6) -- (2.2,2.4); 
			\draw[dashed] (2.2,3.1) -- (2,2.2);
			\draw (1.4,3.1) -- (1.2,2.2);
			\draw (1.2,2.4) -- (1.4,1.5);
			\node[rotate=-90] at (1,1.4) {\small{$[(2)_{m}]$}};
			\node at (1.3,1.5) {\small{$\vdots$}};
			\draw (1.4,1.3) -- (1.2,0.4);
			\draw[dashed] (1.2,0.6) -- (1.4,-0.3);
			\draw (-0.1,0) -- (1.5,0);
			\node at (0.6,0.2) {\small{$-m$}};
		\end{tikzpicture}	
	}	
	\subcaptionbox{\ref{lem:ht=2,untwisted}\ref{item:[T_1,n]=[3,2]_r=2}\label{fig:[T_1,n]=[3,2]_r=2}}[.15\linewidth]{
		\begin{tikzpicture}[scale=0.9]
			\draw (0,3) -- (1.5,3);
			\draw (0.2,3.1) -- (0,2.1);
			\node at (-0.2,2.6) {\small{$-3$}};
			\draw[dashed] (0,2.3) -- (0.2,1.3);
			\draw (0.2,1.5) -- (0,0.5);
			\draw (0,0.7) -- (0.2,-0.3);
			\draw (1.2,2.6) -- (2.2,2.4); 
			\draw[dashed] (2.2,3.1) -- (2,2.2);
			\draw (1.4,3.1) -- (1.2,2.1);
			\draw (1.2,2.3) -- (1.4,1.3);
			\draw (1.4,1.5) -- (1.2,0.5);
			\draw[dashed] (1.2,0.7) -- (1.4,-0.3);
			\draw (-0.1,0) -- (1.5,0);
		\end{tikzpicture}
	}
	\caption{Corollary \ref{cor:moduli}\ref{item:cor-2}: log terminal singularity types realized regardless of the characteristic of the base field by exactly two non-isomorphic del Pezzo surfaces, cf.\ Table \ref{table:exceptions}.
}
	\label{fig:cor-2}\vspace{-0.5em}
\end{figure}
\end{corollary}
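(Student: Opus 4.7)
The plan is to derive Corollary~\ref{cor:moduli} as a direct consequence of Theorem~\ref{thm:ht=1,2}\ref{part:uniqueness} together with the explicit classification provided by Proposition~\ref{prop:moduli} and Table~\ref{table:exceptions}, restricted to the characteristic assumption $\cha\kk\neq 2,3,5$.

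First, I would apply Theorem~\ref{thm:ht=1,2}\ref{part:uniqueness}: if $\cS$ is log terminal (in particular, rational and log canonical) and $\bar{X}\in\Pht(\cS)$, then $\cS$ determines $\bar{X}$ up to isomorphism unless $\bar{X}$ is one of the exceptions enumerated in Proposition~\ref{prop:moduli}. Hence, whenever $\#\Pht(\cS)\geq 2$, the type $\cS$ must appear in Table~\ref{table:exceptions}. Next, I would scan that table under the hypothesis $\cha\kk\neq 2,3,5$, which rules out all exceptional cases tied to low-characteristic phenomena: in particular, the families associated with Example~\ref{ex:ht=2_twisted_cha=2} (which exists only in $\cha\kk=2$), the characteristic-restricted entries arising from Example~\ref{ex:ht=2_twisted}\ref{item:2A1+2A3_construction}, and the moduli coming from Example~\ref{ex:ht=2}\ref{item:3A2_construction} subject to characteristic constraints. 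What remains is exactly (i) the cascade above $2\rD_4$ from Example~\ref{ex:ht=2}\ref{item:2D4_construction}, which provides the one-parameter family in part~\ref{item:cor-2}, and (ii) the finitely many isolated types with $\#\Pht(\cS)=2$ drawn in Figure~\ref{fig:cor-2}. The surfaces in (i) all swap vertically to a $2\rD_4$-surface by construction, as already guaranteed by Theorem~\ref{thm:ht=1,2}\ref{part:swaps}\ref{item:ht=2_width=2}.

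For the equisingular-deformation statement, the plan is to use the uniform construction underlying Theorem~\ref{thm:ht=1,2}\ref{part:swaps}: every minimal log resolution $(X,D)$ of a surface in $\Pht(\cS)$ is obtained from one of the vertically primitive log surfaces $(Y,D_Y)$ of Figure~\ref{fig:basic-intro} by a prescribed sequence of elementary vertical swaps, each of which consists in blowing up a point lying on a fixed fiber of the $\P^1$-fibration and contracting a determined $(-1)$-curve. In the cascade case \ref{item:cor-2} this blow-up locus varies in a one-dimensional algebraic family (the affine line minus the positions forbidden by general position), and the resulting total space is a flat family of log pairs whose fibers are minimal log resolutions of surfaces in $\Pht(\cS)$; contracting the boundary fiberwise yields the asserted equisingular deformation. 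In the finitely-many-fibers case \ref{item:cor-2} the two surfaces arise from two different admissible choices of blow-up points on the same base $(Y,D_Y)$ and are connected by the corresponding family over $\P^1$ through an intermediate degeneration.

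The main obstacle, and where the bulk of the work actually lies, is the preceding Proposition~\ref{prop:moduli}: verifying, using the explicit descriptions in Examples~\ref{ex:ht=1}--\ref{ex:ht=2_twisted_cha=2}, that no other log terminal type admits two non-isomorphic realizations and that Table~\ref{table:exceptions} is complete. Once that proposition is in hand, Corollary~\ref{cor:moduli} is essentially a pruning of its statement to the range $\cha\kk\neq 2,3,5$, together with the observation above that the families produced by the construction are equisingular.
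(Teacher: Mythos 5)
Your overall route coincides with the paper's: Corollary~\ref{cor:moduli} carries no independent argument there and is obtained by reading off Proposition~\ref{prop:moduli} and Table~\ref{table:exceptions} under the stated hypotheses, with the equisingular-deformation claim supplied by the representing families of Proposition~\ref{prop:moduli}, contracted fiberwise via Lemma~\ref{lem:blowdown} (connectedness of the base is built into Definition~\ref{def:moduli}\ref{item:def-family-smooth}, so a stratified or almost faithful family over a connected base is already an equisingular deformation linking all members of $\Pht(\cS)$).

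There is, however, a concrete gap in your table scan: you attribute every exclusion to the hypothesis $\cha\kk\neq 2,3,5$, but that filter alone does not reduce Table~\ref{table:exceptions} to the $2\rD_4$ cascade plus the five isolated types of Figure~\ref{fig:cor-2}. Several entries carry the symbol $\cM^{1}$ in \emph{every} characteristic, namely \ref{lem:ht=1_types}\ref{item:beta=3_other_3} (a fork with twig discriminants $\{2,4,4\}$), \ref{lem:ht=1_types}\ref{item:F'_fork_T=[2,2]} (discriminants $\{2,3,6\}$), and the bench types \ref{lem:ht=1_types}\ref{item:ht=1_bench} and \ref{lem:ht=2,untwisted}\ref{item:ht=2_bench}. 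The first three are of height one and do not swap vertically to a surface of type $2\rD_4$, so if they survived your scan the dichotomy \ref{item:cor-2D4}/\ref{item:cor-2} would fail. What removes them is the \emph{log terminality} of $\cS$: by Lemma~\ref{lem:admissible_forks}\ref{item:lc-fork} a fork with $\delta_{F}=1$, and likewise every bench, is log canonical but not log terminal (Section~\ref{sec:singularities}), so these types lie outside the corollary's scope; the same filter is needed inside entry \ref{lem:ht=2,untwisted}\ref{item:c=3}, where only the instances with admissible (rather than merely log canonical) forks are retained. Separately, two of the exclusions you do name are off target: the surfaces built from Example~\ref{ex:ht=2_twisted}\ref{item:2A1+2A3_construction} are unique in their type (Table~\ref{table:ht=2_char-neq-2}) and never enter Table~\ref{table:exceptions}, while the $3\rA_2$-related entries that do appear there, \ref{lem:ht=2,untwisted}\ref{item:[T_1,n]=[2,2]_r=2} and \ref{item:[T_1,n]=[3,2]_r=2}, have $\#\Pht(\cS)=2$ in every characteristic and must be kept as part of conclusion \ref{item:cor-2} rather than discarded.
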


\begin{remark}[The number of singularities]\label{rem:number-of-sing}
Let $\bar{X}$ be a log terminal del Pezzo surface of rank one. Theorem~\ref{thm:ht=1,2} gives a complete list of possible singularity types of $\bar{X}$ under the assumption $\height(\bar{X})\leq 2$. If $\cha\kk\neq 2$ it implies that such $\bar{X}$ has at most $4$ singularities. For $\cha\kk\neq 2,3,5$ this so-called \emph{Bogomolov bound} is known for any log terminal del Pezzo surface of rank one, regardless of the height \cite[Corollary 1.2]{Lacini}. If $\kk=\C$, it can be proved using Bogomolov-type inequalities  \cite[Corollary 1.8.1]{Keel-McKernan_rational_curves}, \cite{Belousov_Bogomolov-bound}. 

In turn, if $\cha\kk=2$ then in \cite[p.\ 68]{Keel-McKernan_rational_curves} the authors construct del Pezzo surfaces of rank one, height $2$ and type  $[2\nu-4]+2\nu \cdot \rA_1$ for any $\nu\geq 3$, see Figure \ref{fig:KM_surface-intro} and Example \ref{ex:ht=2_twisted_cha=2}. Theorem \ref{thm:ht=1,2} implies that in fact every del Pezzo surface of height at most two and at least $5$ singular points swaps vertically to one of those surfaces. 

For del Pezzo surfaces $\bar{X}$ of bigger height the inequality $\#\Sing\bar{X}\leq 4$ fails for $\cha\kk\in \{2,3,5\}$. Indeed, in Theorem \ref{thm:GK} below we will see that if $\cha\kk=3$ or $5$ there exist del Pezzo surfaces of rank one and type $4\rA_2+[5]+[3]+[2]$ or $2\rA_4+[5]+[3]+[2]$, respectively (the latter was also constructed in \cite[6.23]{Lacini}). In our forthcoming articles we will see that these surfaces have the maximal possible number of singular points if $\cha\kk=3$ or $5$, respectively; while if $\cha\kk=2$, every del Pezzo surface of rank $1$ with at least $12$ singularities swaps vertically to a surface from \cite[p.\ 68]{Keel-McKernan_rational_curves}.
\end{remark}

There are lots of del Pezzo surfaces of rank $1$ which are not log terminal. Even if $\kk=\C$ they can have arbitrarily many singularities with complicated minimal resolutions, see Section \ref{sec:non-lt-examples}. Nevertheless, their height is low, and their structure is easy to understand in terms of explicit primitive models and swaps. Indeed, in Section \ref{sec:non-lt-proof} we use the known description of \emph{open} del Pezzo surfaces \cite{Miy_Tsu-opendP,PaPe_MT} to prove the following result. For a more detailed version see Proposition \ref{prop:non-lt-more}.

\begin{propositionA}[Non--log terminal del Pezzo surfaces]\label{prop:non-log-terminal}
	Let $\bar{X}$ be a normal del Pezzo surface of rank one. Assume that $\bar{X}$ is not log terminal. Then the non-lt singularity of $\bar{X}$ is unique, and one of the following holds.
	\begin{enumerate}
		\item\label{item:non-lt_ht} We have $\height(\bar{X})\leq 2$. If $\height(\bar{X})=2$ then $\bar{X}$ swaps vertically to one of the canonical surfaces  from Examples \ref{ex:ht=2} or \ref{ex:ht=2_meeting}, see Figures \ref{fig:3A2-intro}--\ref{fig:2A4-intro}, or, if $\cha\kk=2$, to a surface from Example \ref{ex:ht=2_twisted_cha=2}, see Figure \ref{fig:KM_surface-intro}. 
		\item\label{item:non-lt_descendant} We have $\cha\kk\in \{2,3,5\}$. The surface $\bar{X}$ has a descendant $(\bar{Y},\bar{T})$ with elliptic boundary (see Definition \ref{def:GK}) such that $\bar{T}\in |-K_{\bar{Y}}|$ is cuspidal, and $\bar{Y}$ is a canonical del Pezzo surface of rank one, not of type $\rE_6$, $\rE_7$ or $\rE_8$, such that the minimal resolution of $\bar{Y}$ has at least $6$ exceptional components, cf.\ Table \ref{table:canonical}. In this case we have  $\height(\bar{X})\leq 6$,  see Theorem \ref{thm:GK}.
		\item\label{item:non-lt-swap} We have $\cha\kk=2$, $\height(\bar{X})=3$, and $\bar{X}$ swaps vertically to the surface $\bar{Y}$ from Example \ref{ex:ht=3}, see Figure \ref{fig:ht=3-intro}.
	\end{enumerate}	
	Moreover, if $\bar{X}$ is a non-rational surface then $\height(\bar{X})=1$, the resolution of the non--log terminal singularity of $\bar{X}$ is a tree with exactly one non-rational curve, and the resolutions of the log terminal ones  are admissible chains.
\end{propositionA}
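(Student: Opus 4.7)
The overall plan is to convert the non-log-terminal data into a nonzero reduced boundary $\bar T$ on $\bar X$ making $(\bar X,\bar T)$ a singular open del Pezzo surface of Picard rank one, and then invoke the height bound from \cite{PaPe_MT}, which asserts that such a pair has height at most $2$ outside the explicit exceptions in $\cha\kk\in\{2,3,5\}$ classified there. Since the same $\P^1$-fibration on the minimal resolution computes both heights and only the fiber-intersection number changes when the boundary is shrunk, $\height(\bar X)\leq \height(\bar X,\bar T)$, so the height bound for the pair transfers to $\bar X$.

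I would first settle uniqueness of the non-lt singularity. Let $(X,D)$ be the minimal log resolution of $\bar X$ and write $D=\bigsqcup_i D_i$ for its decomposition over the singular points. If two distinct components $D_1,D_2$ each contained a curve of discrepancy $\leq -1$, the pullback of $-K_{\bar X}$ yields an effective boundary $\Delta$ on $X$ with non-klt locus meeting both $D_1$ and $D_2$; but $\bar X\to\mathrm{pt}$ is a Mori fiber space (as $\rho(\bar X)=1$ and $-K_{\bar X}$ is ample), so the Kollár--Shokurov connectedness principle forces the non-klt locus to be connected, a contradiction. Next, using the standard normal forms for non-lt surface singularities---elliptic Gorenstein, cusp, or a fork with a distinguished central non-lt component---I would construct on $\bar X$ a reduced curve $\bar T\ni\Sing\bar X$ from the image of the non-lt core: the elliptic curve or the cycle in the elliptic cases, and a carefully chosen member of $|-K_{\bar X}|$ singular at the non-lt point in the fork case. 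The combinatorics of the resolution together with ampleness of $-K_{\bar X}$ is then used to verify that $-K_{\bar X}-\bar T$ is big and nef, qualifying $(\bar X,\bar T)$ as an open del Pezzo with nonzero reduced boundary.

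The first conclusion is then $\height(\bar X)\leq 2$ in the generic case, which is (a). Appealing to Theorem \ref{thm:ht=1,2}\ref{part:swaps} I would match the primitive model: the \emph{twisted} configurations of Figures \ref{fig:D5+A3_twisted-intro} and \ref{fig:2A3+2A1-intro} are entirely log terminal and thus incompatible with a non-lt component, leaving only the untwisted Figures \ref{fig:3A2-intro}--\ref{fig:2A4-intro}, or Figure \ref{fig:KM_surface-intro} in characteristic $2$. The exceptions of \cite{PaPe_MT} in $\cha\kk\in\{2,3,5\}$ are precisely those for which $\bar T$ must be taken as a cuspidal member of $|-K_{\bar Y}|$ on a canonical del Pezzo surface $\bar Y$ of rank one of the type listed in Table \ref{table:canonical}; these land in Definition \ref{def:GK} and Theorem \ref{thm:GK}, yielding (b) together with the height bound $\height(\bar X)\leq 6$. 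The single remaining characteristic-$2$, height-$3$ anomaly is the one whose primitive model appears in Example \ref{ex:ht=3}, producing (c).

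The main obstacle will be the fork case of the boundary construction: in low characteristics the naive candidate for $\bar T$ may fail to be reduced or to preserve ampleness of $-K_{\bar X}-\bar T$, and one must work inside an explicit anticanonical pencil---this is exactly where the descendant-with-elliptic-boundary formalism of Theorem \ref{thm:GK} takes over and produces case (b). For the non-rational addendum, observe that a non-rational exceptional curve cannot lie in any fiber of a $\P^1$-fibration on $X$, hence must be horizontal for every fibration realizing $\height(\bar X)$; inspection of the primitive models of Figure \ref{fig:basic-intro} shows that only the height-$1$ configuration of Figure \ref{fig:ht=1-intro} accommodates a non-rational horizontal section, so $\height(\bar X)=1$ and the non-rational curve is the unique such component inside the non-lt fork. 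Finally, Lemma \ref{lem:ht=1_types} then forces the remaining (log terminal) singularities to have admissible-chain resolutions, since any branching vertical configuration would raise the fiber-intersection with $D$ beyond one.
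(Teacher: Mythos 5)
Your reduction to the open del Pezzo classification is the right target, but the way you propose to reach it has a genuine gap. You want to manufacture a reduced curve $\bar T$ \emph{on $\bar X$ itself} with $-K_{\bar X}-\bar T$ ample (the paper's definition of an open del Pezzo requires ampleness, not just ``big and nef''). This fails in general: when the non-lt singularity is a log canonical fork, bench or cycle, the non-lt core of the resolution is contracted to a point of $\bar X$, so there is no ``image of the non-lt core'' to take as a curve; and your fallback of choosing $\bar T\in|-K_{\bar X}|$ gives $-K_{\bar X}-\bar T\sim 0$, which is neither ample nor big, so the pair $(\bar X,\bar T)$ is not an open del Pezzo and the height bound of \cite{PaPe_MT} does not apply to it. The paper's mechanism is different and avoids this entirely: it runs an MMP on the minimal log resolution $(X,D)$ with the \emph{full reduced exceptional divisor} as boundary. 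A Picard-number count shows $\psi_{\min}$ contracts exactly $\#D$ curves, and since $\bar X$ is not log terminal these cannot all lie in $D$; hence the minimal model $(X_{\min},D_{\min})$ is a dlt log del Pezzo pair of rank one with $D_{\min}\neq 0$ --- precisely the object classified in \cite[Proposition 3.2]{PaPe_MT}. The proposition is then proved by reversing the MMP (controlled by Lemma \ref{lem:one-base-point}) case by case; the boundary never needs to live on $\bar X$. Your treatment of case \ref{item:non-lt-swap} is also only an assertion: in the paper this arises from case (5) of \cite[Proposition 3.2]{PaPe_MT} and requires constructing a second, specific $\P^1$-fibration to exhibit the swap to Example \ref{ex:ht=3}.

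Two secondary remarks. Your connectedness argument for uniqueness of the non-lt point (writing $\pi^*K_{\bar X}=K_X+\Delta$ with $\Delta\geq 0$ and applying Koll\'ar--Shokurov to $-(K_X+\Delta)$ nef and big) is a legitimate alternative to the paper's route, which instead extracts uniqueness from the structure of $\Exc\psi$ via Lemma \ref{lem:one-base-point}; the paper itself notes Kojima proved uniqueness this way. Also, the inequality $\height(\bar X)\leq\height(\bar X,\bar T)$ is not automatic from ``the same fibration computes both heights'', since the minimal log resolution of the pair is a further blowup of the minimal log resolution of $\bar X$ and a witnessing fibration need not descend; this is citable from \cite{PaPe_MT} but should not be presented as immediate.
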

\begin{figure}[htbp]
	\begin{tikzpicture}[scale=0.85]
		\path[use as bounding box] (-0.2,3) rectangle (2.6,0);
		\draw (0,3) -- (2.8,3);
		\draw[dashed] (0.2,3.2) -- (0,2.2);
		\draw (0,2.4) -- (0.2,1.4);
		\node at (-0.2,1.8) {\small{$-3$}};
		%			\node at (0.35,1.9) {\small{$L_1$}};
		\draw[dashed] (0.2,1.6) -- (0,0.6); 
		\draw (0,0.8) -- (0.2,-0.2);
		\draw (1.4,3.2) -- (1.1,1.9);
		\draw[dashed] (1.1,2.1) -- (1.4,0.8);
		%			\node at (1.45,2.4) {\small{$L_3$}};				
		\draw (1.4,1) -- (1.1,-0.3); 
		\draw (2.6,3.2) -- (2.3,1.9);
		\draw[dashed] (2.3,2.1) -- (2.6,0.8);
		%			\node at (2.65,2.4) {\small{$L_2$}};				
		\draw (2.6,1) -- (2.3,-0.3); 
		\draw[thick] (0,1.1) -- (0.4,1.1) to[out=0,in=180] (1,1.4) -- (2.6,1.4);
	\end{tikzpicture}
	\caption{Del Pezzo surface $\bar{Y}$ from Proposition \ref{prop:non-log-terminal}\ref{item:non-lt-swap}, $\cha\kk=2$, see  Example  \ref{ex:ht=3}.}
\vspace{-0.5em}
	\label{fig:ht=3-intro}
\end{figure}
We note that the uniqueness of a non-lt singularity for log canonical del Pezzo surfaces was proved by Kojima in \cite{Kojima_non-lt}, our more detailed result follows a similar approach.

\begin{remark}[Non-rational del Pezzo surfaces]\label{rem:non-rational}
	\begin{parlist}\ 
	\item The last statement of Proposition \ref{prop:non-log-terminal}, concerning non-rational surfaces, is known, see \cite{Badescu_non-rational,Schroer_non-rational}. 
	It leads to combinatorial descriptions of such surfaces, given in \cite[Theorem 4.9]{Fujisawa}, \cite[\sec 5]{Cheltsov_non-rational} for $\kk=\C$ and in \cite[\sec 5]{Schroer_non-rational} for an arbitrary perfect field $\kk$. 
	
	\item We note that a del Pezzo surface of rank one has a non-rational singularity if and only if $\bar{X}$ itself is non-rational \cite[Theorem 2.2]{Schroer_non-rational}, cf.\ \cite[Lemma 2.7]{PaPe_MT}. If furthermore $\bar{X}$ is log canonical then, as we will see in Lemma \ref{lem:ht=1_types}, $\bar{X}$ is an elliptic cone as in \cite[\sec 14.38]{Badescu}, see Example \ref{ex:ht=1}.
	
	\item In fact, Proposition \ref{prop:non-log-terminal}, as well as more detailed descriptions cited above, hold also for \emph{numerically del Pezzo surfaces}, i.e.\ without the assumption that $K_{\bar{X}}$ is $\Q$-Cartier, see \cite{Cheltsov_non-rational}. In this article, however, we focus mostly on rational or log canonical singularities, for which $K_{\bar{X}}$ is automatically $\Q$-Cartier \cite[17.1]{Lipman}. 
	\end{parlist}
\end{remark}

\subsection{The question of uniqueness of del Pezzo surfaces of a  given singularity type}

Corollary \ref{cor:moduli} implies that usually two log terminal del Pezzo surfaces of rank $1$ and height at most $2$ which share the  singularity type are usually isomorphic. Proposition \ref{prop:moduli} below gives a more precise description of the exceptions, showing in particular that they are equivalent under an equisingular deformation. 
\begin{definition}[Representing families]\label{def:moduli} \ 
	\begin{parlist}
		\item \label{item:def-family-smooth} A \emph{smooth family} $f\colon (\cX,\cD)\to B$ consists of a smooth surjective morphism $f\colon \cX\to B$ between connected smooth varieties, and a reduced divisor $\cD$ on $\cX$ such that for every component $\cD_0$ of $\cD$, the restriction $f|_{\cD_0}\colon \cD_0\to B$ is a smooth surjective morphism with irreducible fibers, cf.\ \cite[Definition 2.13]{CTW}. 
		\item \label{item:def-family-fiber} A \emph{fiber} of $f$ over $b\in B$ is a pair $(X_b,D_b)$, where $X_{b}$ is the scheme-theoretic fiber $f^{-1}(b)$, and $D_{b}$ is a reduced divisor on $X_{b}$ given by the restriction of $\cD$.
		\item \label{item:def-family-representing} 
		Let $\cC$ be a set consisting of isomorphism classes of some log smooth log surfaces. We say that a smooth family  $f\colon (\cX,\cD)\to B$ represents $\cC$ if $\cC$ equals the set of isomorphism classes of fibers of $f$.
		\item \label{item:def-family-faithful} Let $f\colon (\cX,\cD)\to B$ be a smooth family. 
		We say that $f$ is \emph{almost faithful} if it is equivariant with respect to the action of some finite group $G$ such that two fibers $(X_b,D_b)$ and $(X_{b'},D_{b'})$ of $f$ are isomorphic if and only if $b$ and $b'$ 
		lie in the same orbit. In this case, we say that $f$ \emph{has dimension} $\dim B$. We call the image of $G$ in $\Aut(B)$ the \emph{symmetry group} of $f$.
		\item \label{item:def-family-stratified} Let $f\colon (\cX,\cD)\to B$ be a smooth family. We say that $f$ is \emph{stratified}, by a stratification $B=\bigsqcup_{i=0}^{d}B_i$, if
		\begin{parlist}
			\item $\dim B=d$, and each closure $\bar{B}_{i}=\bigsqcup_{j\geq i} B_i$ is a smooth connected subvariety of codimension $j$ in $B$,
			\item  two fibers $(X_{b},D_{b})$ and $(X_{b'},D_{b'})$ of $f$ are isomorphic if and only if $b$ and $b'$ lie in the same stratum.
		\end{parlist}
		\item \label{item:def-family-bar} We say that a set $\bar{\cC}$ of isomorphism classes of normal projective surfaces or log surfaces is  \emph{represented by an (almost faithful, or stratified) family} (of dimension $d$), if the same holds for the set of isomorphism classes of minimal log resolutions of surfaces in $\bar{\cC}$, cf.\ Lemma \ref{lem:blowdown}.
	\end{parlist}
\end{definition}

\begin{propositionA}[Exceptions to uniqueness]\label{prop:moduli}
	Let $\cS$ be a rational log canonical singularity type. Let ${\cP=\Pht(\cS)}$. 
		\begin{enumerate} 
			\item\label{item:moduli-finite} If $\cP$ is finite then it has at most three elements and it is represented by a stratified family.
			\item\label{item:moduli-infinite} If $\cP$ is infinite then it is represented by an almost faithful family.
			\item\label{item:moduli-table} If $\#\cP\geq 2$ then the type $\cS$, the number $\#\cP$ in case $\#\cP<\infty$, and the dimension of some almost faithful family representing $\cP$ in case $\#\cP=\infty$, are listed in Table~\ref{table:exceptions}.
		\end{enumerate}
\end{propositionA}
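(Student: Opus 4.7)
The plan is to use Theorem \ref{thm:ht=1,2}\ref{part:swaps} to reduce the problem to a classification of admissible blow-up configurations over a small list of primitive models. Concretely, every $\bar{X}\in\Pht(\cS)$ is obtained by inverting a vertical swap from a vertically primitive log surface $(Y,D_Y)$ in Figure \ref{fig:basic-intro}; by Remark \ref{rem:swap-lt}, running the swap process backwards corresponds to blow-ups inside degenerate fibers compatible with the prescribed singularity type $\cS$. Thus I would parametrize $\Pht(\cS)$ by pairs $((Y,D_Y),\bs{c})$, where $\bs{c}$ is a choice of blow-up centers producing $\cS$, taken modulo the natural action of $\Aut(Y,D_Y)$ augmented by the symmetries of the $\P^1$-fibration (such as the involution swapping the two horizontal sections in Figures \ref{fig:3A2-intro}--\ref{fig:2D4-intro}).

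First I would enumerate, for each rational log canonical type $\cS$ arising in Tables \ref{table:ht=1}--\ref{table:ht=2_char=2}, which primitive models from Examples \ref{ex:ht=1}--\ref{ex:ht=2_twisted_cha=2} can produce it, and for each such primitive model the set of admissible configurations $\bs{c}$. The combinatorics of elementary swaps forces each blow-up tower to sit over a specific degenerate fiber and to match a prescribed chain length, so the datum of $\bs{c}$ reduces to a finite collection of points on $\P^1$'s (the $(-1)$-components of $D_Y$) avoiding the finite trace of the other components of $D_Y$. I would then compute $\Aut(Y,D_Y)$ for the primitive models in Figure \ref{fig:basic-intro}: in every case this action is finite on the configuration space except in the Hirzebruch case of Example \ref{ex:ht=1} with few marked fibers, and in the $2\rD_4$-case of Example \ref{ex:ht=2}\ref{item:2D4_construction} where a residual $\G_m$-action on the degenerate fibers survives.

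Parts \ref{item:moduli-finite} and \ref{item:moduli-infinite} follow from observing that the quotient of a finite-dimensional configuration space by the (finite or algebraic) symmetry group is either a finite set (yielding a stratified family whose strata are the orbits) or admits a natural almost faithful family structure in the sense of Definition \ref{def:moduli}\ref{item:def-family-faithful}. The bound $\#\cP\leq 3$ and the list of exceptions in \ref{item:moduli-table} come from a direct verification: for each primitive model and each rigid singularity type the symmetry action has at most three orbits. The construction of the representing family is produced as a blow-up of the trivial family $(Y,D_Y)\times B\to B$, where $B$ parametrises the admissible centers; log smoothness of $(X,D)$ ensures that the resulting family is smooth with smooth boundary components, hence fits Definition \ref{def:moduli}\ref{item:def-family-smooth}.

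The main obstacle is the case-by-case enumeration completing Table \ref{table:exceptions}. For every producible pair $((Y,D_Y),\cS)$ one must identify the symmetry action on the configuration space, count its orbits, and, in the infinite case, exhibit the almost faithful family and compute its dimension. The most delicate situations are the $2\rD_4$ family of Example \ref{ex:ht=2}\ref{item:2D4_construction}, where the surviving $\G_m$-symmetry must be analysed carefully to pin down the exact moduli dimension, and the $\cha\kk=2$ surfaces arising from Example \ref{ex:ht=2_twisted_cha=2}, where the primitive model itself is no longer canonical and its nontrivial continuous automorphisms act on the configuration space in a way that requires separate bookkeeping; the remaining entries of the table are then assembled by running through the finite classification of Lemmas \ref{lem:ht=1_types}--\ref{lem:ht=2_twisted-inseparable} and comparing.
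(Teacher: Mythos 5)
Your overall strategy — reduce via Theorem \ref{thm:ht=1,2}\ref{part:swaps} to a vertically primitive model and then classify blow-up configurations modulo automorphisms — is essentially the strategy the paper follows (there the intermediate objects are the snc-minimalized models of Lemma \ref{lem:ht=1_reduction} rather than the primitive surfaces of Figure \ref{fig:basic-intro}, and the formal bookkeeping is done with the sets $\cP_{+}(\cS)$ and the inner/outer blowup lemmas \ref{lem:inner}, \ref{lem:outer}). However, there are two concrete gaps. First, your claim that the symmetry action on the configuration space is finite ``in every case except'' the Hirzebruch case with few fibers and the $2\rD_4$ case is false, and with it the assertion that the remaining entries of Table \ref{table:exceptions} follow by ``direct verification.'' Infinite families occur in height one for the bench type \ref{lem:ht=1_types}\ref{item:ht=1_bench}, for \ref{item:beta=3_other_3} and \ref{item:F'_fork_T=[2,2]} in every characteristic, and for a whole list of types exactly when $\cha\kk$ divides a certain discriminant (see Table \ref{table:bases-primitive}); and in $\cha\kk=2$ the families coming from Example \ref{ex:ht=2_twisted_cha=2} have unbounded moduli dimension $\nu-3$. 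More importantly, the finite counts $\#\cP=2$ or $3$ (e.g.\ $\rE_8$ having $2$, $3$, $3$ elements according to $\cha\kk\neq 2,3$, $=3$, $=2$) are not visible from the combinatorics of the configuration at all: they are decided by whether the residual automorphism group acts transitively, with a fixed point, or trivially on the $\A^1$ or $\A^1_{*}$ of admissible centers, and this hinges on explicit intersection-multiplicity computations with germs and on derivatives of group actions that degenerate in small characteristic (this is the entire content of Lemma \ref{lem:ht=1_uniqueness}). A proof that does not carry out these computations cannot produce the exceptional entries, which are the whole point of the proposition.

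Second, you do not address the identification problem: two surfaces built from different configurations, or via different witnessing $\P^1$-fibrations, could a priori be isomorphic by an isomorphism that does not respect the chosen fibration or the chosen set of vertical $(-1)$-curves, so the orbit count on the configuration space only gives an upper bound for $\#\cP$ a priori in one direction and a lower bound requires an argument. The paper handles this by passing to the decorated sets $\cP_{+}(\check{\cS})$ and then descending via Lemmas \ref{lem:adding-1} and \ref{lem:adding-1-criterion} (a N\'eron--Severi argument showing any isomorphism must preserve the fibration class), plus an ad hoc analysis in the $\cha\kk=2$ width-one case where the $2$-section can be moved. Without some version of this step the claim that your family is almost faithful, and hence the counts in \ref{item:moduli-table}, is not justified.
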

If $\#\cP=\infty$ then the base and symmetry group of some almost faithful family representing $\cP$ is described in Table \ref{table:bases-primitive} (case $\height=1$) and Remark \ref{rem:ht=2_bases} (case $\height=2$). For instance, in Corollary \ref{cor:moduli}\ref{item:cor-2D4} the base $B\cong \P^1\setminus \{0,1,\infty\}$ parametrizes the fourth fiber in Figure \ref{fig:2D4-intro}, and the symmetry group is a subgroup of $\Aut(B)\cong S_3$.
\smallskip

Note that in Proposition \ref{prop:moduli} we do not claim that the representing family is unique. Proposition \ref{prop:moduli-hi} below shows that (up to some minor exceptions in positive characteristic, listed in Table \ref{table:exceptions-to-moduli}) one can choose such a family so that it satisfies certain natural properties, related to infinitesimal deformations of minimal log resolutions of surfaces in $\Pht(\cS)$. In case $\#\Pht(\cS)=\infty$ it allows to intrinsically define the \emph{moduli dimension} of $\Pht(\cS)$, see Definition \ref{def:moduli-hi}\ref{item:def-moduli}. To state this result, we need to introduce some further terminology.

We denote by $\lts{X}{D}$ the logarithmic tangent sheaf $\lts{X}{D}$, i.e.\  the sheaf of vector fields on $X$ which are tangent to $D$, see \cite{Kawamata_deformations,FZ-deformations}. We write $h^{i}(\lts{X}{D})=\dim H^{i}(X,\lts{X}{D})$. The number $h^{1}(\lts{X}{D})$ is the dimension of the tangent space to the base of a semiuniversal formal deformation of $(X,D)$, see \cite[Definition 2.2.6]{Sernesi_deformations} for a definition (and \cite{Kawamata_deformations} for analytic version). If $h^{2}(\lts{X}{D})=0$ then this base is smooth, see  \cite[Proposition 3.4.17]{Sernesi_deformations} or \cite[8.6]{Kato_deformation-theory}.

\begin{definition}[$h^{1}$-stratified and almost universal families]
	\label{def:moduli-hi}
	Let $f\colon (\cX,\cD)\to B$ be a smooth family.
	\begin{parlist}
		\item\label{item:def-h1-stratified} We say that $f$ is \emph{$h^{1}$-stratified} if it is stratified by $B=\bigsqcup_{i=0}^{d}B_i$, see Definition \ref{def:moduli}\ref{item:def-family-stratified}, and
		\begin{parlist}
			\item for every $i\in \{0,\dots, d\}$ and $b\in B_i$ we have $h^{1}(\lts{X_b}{D_b})=i$,
			\item the formal germ of $f$ at the deepest stratum $B_{d}=\{\bar{b}\}$ is a semiuniversal deformation of $(X_{\bar{b}},D_{\bar{b}})$. 
		\end{parlist} 	
		\item\label{item:def-universal} We say that $f$ is \emph{almost universal} if the following hold:
	\begin{parlist}
		\item\label{item:def-universal-af} $f$ is almost faithful, see Definition \ref{def:moduli}\ref{item:def-family-faithful}, 
		\item\label{item:def-universal-su} for every $b\in B$ the formal germ of $f$ at $b$ is a semiuniversal deformation of $(X_b,D_b)$.
\end{parlist}
	\item\label{item:def-moduli} We say that a set $\cC$ of isomorphism classes of log smooth surfaces \emph{has moduli dimension $d$} if it is represented by an almost universal family of dimension $d$. In this case, $d=h^{1}(\lts{X}{D})$ for every $(X,D)\in \cC$. We say that a set of isomorphism classes of normal projective surfaces, or log surfaces, \emph{has moduli dimension $d$} if the same holds for the set of isomorphism classes of their minimal log resolutions, cf.\ Definition \ref{def:moduli}\ref{item:def-family-bar}.
	\end{parlist}
\end{definition}

\begin{propositionA}[Moduli and rigidity]\label{prop:moduli-hi}
	Let $\cS$ be a rational log canonical singularity type, ${\cP=\Pht(\cS)}$. 
	\begin{enumerate} 
		\item\label{item:moduli-hi-infinite} If $\cP$ is infinite then $\cP$ has moduli dimension as in Table~\ref{table:exceptions}, unless $\cha\kk=2$ and $\cS$ is as in Table \ref{table:exceptions-to-moduli}. 
		\item\label{item:moduli-hi-finite} If $\cP$ is finite and $\#\cP\geq 2$ then $\cP$ is represented by an $h^{1}$-stratified family, unless $\cha\kk=2$ and $\cS=\rE_8$.
		\item\label{item:moduli-hi-1} Let $(X,D)$ be the minimal log resolution of a surface in $\cP$, and let $h^{i}\de h^{i}(\lts{X}{D})$. Then
		\begin{enumerate}
			\item\label{item:moduli-h1} If $\#\cP=1$ then $h^{1}=0$, unless $\cha\kk>0$, $\cS$ is as in Table \ref{table:exceptions-to-moduli}, and $h^1=1$. 
			\item\label{item:moduli-h2} We have $h^{2}=0$, unless $\cha\kk=2$, $\cS$ is as in Theorem \ref{thm:ht=1,2}\ref{item:ht=2_char=2}, and $h^2=\nu-2$.
		\end{enumerate}
	\end{enumerate}
\end{propositionA}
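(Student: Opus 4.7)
The proof starts from Proposition \ref{prop:moduli}, which already supplies an almost faithful family $f\colon (\cX,\cD)\to B$ representing $\cP$ (stratified when $\cP$ is finite). To upgrade $f$ to an almost universal or $h^{1}$-stratified family I need two cohomological checks for every fiber $(X_{b},D_{b})$: that $h^{2}(\lts{X_b}{D_b})$ equals the value asserted in \ref{item:moduli-h2}, so that the base of the semiuniversal deformation has the predicted smoothness by \cite[Proposition 3.4.17]{Sernesi_deformations}; and that the Kodaira--Spencer map $T_{b}B\to H^{1}(X_{b},\lts{X_b}{D_b})$ is an isomorphism. Once these hold, parts \ref{item:moduli-hi-infinite} and \ref{item:moduli-hi-finite} follow by matching $\dim B$, or the codimension of each stratum, with $h^{1}$, while part \ref{item:moduli-hi-1} is just the direct cohomological calculation on a fixed fiber.

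The core task is thus to compute $h^{i}(\lts{X}{D})$ for $i=1,2$. My plan is to induct on the number of elementary swaps connecting $(X,D)$ to its primitive model $(Y,D_Y)$ from Theorem \ref{thm:ht=1,2}\ref{part:swaps}. For the base case I exploit the chosen $\P^{1}$-fibration $p\colon Y\to B$ and the exact sequence relating $\lts{Y}{D_Y}$ to its relative and pulled-back counterparts; this reduces the computation to $\P^{1}$-fibers with marked points for the vertical part and to the base curve for the horizontal one, both of which are elementary. For $h^{2}$, Serre duality gives $H^{2}(X,\lts{X}{D})\cong H^{0}(X,(\lts{X}{D})\dual\otimes \omega_{X})\dual$, which on a rational surface with sufficiently rich and connected boundary is expected to vanish. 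For the inductive step, an elementary swap $(X,D)\sqto (X',D')$ modifies $\lts{X}{D}$ through a controlled short exact sequence supported at the blown-up node; I verify case by case that $h^{1}$ is either preserved or increases by one according to whether the swap absorbs a genuine moduli parameter, and that $h^{2}$ is unchanged.

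The main obstacle will be the exceptional behavior in characteristic $2$. For the types from Example \ref{ex:ht=2_twisted_cha=2}, the underlying $\P^{1}$-fibration of $Y$ is inseparable, so vertical logarithmic vector fields on each of the $\nu$ degenerate fibers form a larger space than in odd characteristic. This produces extra global sections of the dual sheaf, yielding $h^{2}=\nu-2$ as claimed in \ref{item:moduli-h2}, and obstructs the smoothness of the semiuniversal base in the precise way that accounts for the discrepancies recorded in Tables \ref{table:exceptions} and \ref{table:exceptions-to-moduli}. A parallel but simpler analysis handles the $\rE_{8}$ type in characteristic $2$, where a wild automorphism of the dual graph yields a single nontrivial infinitesimal deformation, forcing $h^{1}=1$ despite $\#\cP=1$. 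Assembling the three items of the proposition is then a matter of collating these computations with Proposition \ref{prop:moduli} and Table \ref{table:exceptions}.
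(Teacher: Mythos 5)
Your skeleton (reduce to a primitive model, track $h^{i}$ through the intermediate blowups, use duality for $h^{2}$) matches the paper's strategy, but the proposal has concrete gaps at exactly the points where the content lies. First, the decisive mechanism is never identified: an outer blowup at $p$ raises $h^{1}$ by one precisely when every global section of $\lts{X}{D}$ vanishes at $p$ (Lemma \ref{lem:blowup-hi}\ref{item:blowup-hi-outer}), and deciding this is equivalent to deciding whether the derivative of the $\Aut(X,D)$-action at $p$ vanishes. This is what forces the exceptions: the action can be transitive on the blowup locus (so the surface is unique, $\#\cP=1$) while its derivative is identically zero, giving $h^{1}=1$. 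Crucially, this happens whenever $\cha\kk$ divides a discriminant $d(T)$ of an attached chain (see Table \ref{table:exceptions-to-moduli}), so the exceptional behavior in part \ref{item:moduli-h1} occurs in \emph{every} positive characteristic, not only $\cha\kk=2$ as your proposal assumes. Saying you will "verify case by case that $h^{1}$ is preserved or increases by one" without this criterion leaves the entire case analysis (the germ and vector-field computations of Lemma \ref{lem:ht=1_uniqueness}) unaddressed.

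Second, two of your explanations of the exceptional cases are wrong. For $\cS=\rE_{8}$ in characteristic $2$ one has $\#\cP=3$, not $1$; the failure of $h^{1}$-stratification is that the three surfaces have $h^{1}=0,1,3$, so the deepest stratum jumps by two, and this comes from an iterated outer blowup where all logarithmic vector fields vanish along the relevant $(-1)$-tip --- there is no "wild automorphism of the dual graph" in play. For the inseparable types of Example \ref{ex:ht=2_twisted_cha=2}, the correct reason $h^{2}=\nu-2$ is not an enlarged space of vertical vector fields: it is that the general fiber is everywhere tangent to the $2$-section $C$, so the vanishing criterion of Lemma \ref{lem:h1}\ref{item:h1-h2-fibration} (which needs $F$ to meet $D$ normally) fails, and the exact sequence $0\to\lts{Y}{D_Y}\to\lts{Y}{(D_Y-C)}\to\cN_{C}\to 0$ with $\cN_{C}=\cO_{\P^1}(4-2\nu)$ and $h^{1}(\cN_{C})=2\nu-5$ produces the excess $h^{2}$. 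Your Serre-duality heuristic that $h^{2}$ vanishes "with sufficiently rich boundary" would not by itself distinguish these cases.
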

For most of the exceptions in  Proposition \ref{prop:moduli-hi}\ref{item:moduli-hi-infinite} the number $h^{1}(\lts{X}{D})$ varies among surfaces in $\cP$, see Table \ref{table:exceptions-to-moduli}, so $\cP$ is not represented by an almost universal family. In the exceptional case of Proposition \ref{prop:moduli-hi}\ref{item:moduli-hi-finite}, the set $\cP(\rE_8)$ has three elements, with $h^1=0,1,3$, so it is not represented by an $h^1$-stratified family. Note that by the Riemann--Roch theorem the Euler characteristic $\chi\de h^0-h^1+h^2$ depends only on the singularity type~$\cS$. It is listed in Tables \ref{table:ht=1}--\ref{table:ht=2_char=2}: together with the above formulas for $h^1$ and $h^2$ it allows to compute $h^0$.

\subsection{Del Pezzo surfaces admitting descendants with elliptic boundary}\label{sec:intro_GK}

We now introduce another broad class of del Pezzo surfaces with relatively simple geometry. Just like in case $\height\leq 2$ discussed above, these surfaces are constructed from simple (in fact canonical) del Pezzo surfaces of rank one by a sequence of blowups. The difference is that now, instead of resolving a particular $\P^{1}$-pencil on $\bar{Y}$, we resolve the singularity of an elliptic boundary $\bar{T}\subseteq \bar{Y}\reg$; see Figure \ref{fig:GK} for an example. We say that a boundary $\bar{T}$ of a log surface $(\bar{Y},\bar{T})$ is  \emph{elliptic} if $\bar{T}$ is an irreducible curve of arithmetic genus one (usually rational singular) contained in the smooth locus of $\bar{Y}$. 

\begin{definition}[Descendant]\label{def:GK}
	Let $\bar{X}$ be a normal projective surface. Let $(X,D)$ be its minimal log resolution. A log surface $(\bar{Y},\bar{T})$ is a \emph{descendant} of $\bar{X}$ if there is a birational morphism $\phi\colon X\to \bar{Y}$ such that $\phi_{*}D=\bar{T}$.
\end{definition}

Classification of del Pezzo surfaces of rank one admitting descendants with elliptic boundary is summarized in Theorem \ref{thm:GK} below. It is based on the following elementary lemma, proved in Section \ref{sec:GK}, which shows that the geometry of their descendants is highly restricted.

\begin{lemma}[Descendants with elliptic boundary for surfaces of rank one]\label{lem:GK_intro}
	Let $\bar{X}$ be a normal projective surface of rank one which admits a descendant $(\bar{Y},\bar{T})$ with elliptic boundary. If $\bar{T}$ is smooth then $\bar{X}$ is del Pezzo if and only if it is an elliptic cone. 
	Assume that $\bar{T}$ is singular. Let $\pi\colon (X,D)\to (\bar{X},0)$ be the minimal log resolution, let $\phi\colon (X,D)\to (\bar{Y},\bar{T})$ be as in Definition \ref{def:GK}, and let $T=\phi^{-1}_{*}\bar{T}$. Then the  following hold.
	\begin{enumerate}
		\item\label{item:GK-intro-Y} $\bar{Y}$ is a canonical del Pezzo surface of rank one and $\bar{T}\in |-K_{\bar{Y}}|$. 
		\item\label{item:GK-intro-L} $\Exc\phi=D-T+L$ for some $(-1)$-curve $L\not\subseteq D$; and $\phi(L)=\Sing\bar{T}$ is the only base point of $\phi^{-1}$ in $\bar{Y}\reg$.
		\item\label{item:GK-intro-ht} If $\bar{Y}\not\cong \P^2$ then $\height(\bar{X})\leq \height(\bar{Y})+2\leq 6$ (and $\leq 4$ if $\cha\kk\not\in \{2,3\}$). If $\bar{Y}\cong \P^2$ then $\height(\bar{X})\leq 2$.
		\item\label{item:GK-intro-cf} The surface $\bar{X}$ is del Pezzo if and only if $\cf(T)<1$, where $\cf(T)$ is the coefficient of $T$ in $\pi^{*}K_{\bar{X}}-K_{X}$. 
	\end{enumerate}
	In particular, if $\bar{X}$ is log terminal then $\bar{X}$ is a del Pezzo surface and $\bar{T}$ is singular.
\end{lemma}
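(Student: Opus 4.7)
The plan is to separate two cases according to whether $\bar{T}$ is smooth or singular, handle the smooth case by invoking the classification of non-rational rank one del Pezzo surfaces, and handle the singular case by careful bookkeeping of the blowups comprising $\phi$.

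If $\bar{T}$ is smooth of genus one, then since $\phi_*D = \bar{T}$ is irreducible, the unique component $T$ of $D$ mapping birationally onto $\bar{T}$ is itself smooth elliptic. Thus $D$ contains a non-rational component, which by Remark \ref{rem:non-rational} forces $\bar{X}$ itself to be non-rational. When $\bar{X}$ is del Pezzo, the known structure of non-rational rank-one (log canonical) del Pezzo surfaces identifies $\bar{X}$ as an elliptic cone; conversely, any elliptic cone is del Pezzo by construction, giving the equivalence in the first assertion.

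Assume from now on that $\bar{T}$ is singular. The genus formula for an irreducible curve with $p_a = 1$ shows that the singularity is a node or a cusp and that $T \cong \P^1$. For \ref{item:GK-intro-Y} and \ref{item:GK-intro-L}, I would factor $\phi$ as a sequence of blowdowns and classify each one as either $\pi$-exceptional (so its contracted curve lies in $D$) or not. The inclusion $D - T \subseteq \Exc \phi$ is immediate from $\phi_*D = \bar{T}$ and the birationality of $T \to \bar{T}$. Using that $\bar{X}$ has rank one, write $\rho(X) = 1 + \#D$; comparing with $\rho(X) = \rho(\bar{Y}) + \#\Exc \phi$, the claim that $\Exc \phi = (D - T) + L$ with $L$ a single $(-1)$-curve is equivalent to $\rho(\bar{Y}) = 1$. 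I would establish this by arguing that any $\phi$-exceptional curve disjoint from $T$ is already $\pi$-exceptional (hence in $D - T$), so only the curve meeting $T$ can be extra; the minimality of the log resolution $(X,D)$ together with the shape of the embedded resolution of the singular point of $\bar{T}$ then forces this extra curve $L$ to be a single $(-1)$-curve with $\phi(L) = \Sing \bar{T}$. Once rank one of $\bar{Y}$ is in hand, adjunction for the Cartier divisor $\bar{T} \subseteq \bar{Y}\reg$ gives $(K_{\bar{Y}} + \bar{T}) \cdot \bar{T} = 2p_a(\bar{T}) - 2 = 0$; combined with rank one and the effectivity of $\bar{T}$, this forces $\bar{T} \sim -K_{\bar{Y}}$, hence $\bar{Y}$ is Gorenstein del Pezzo. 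Finally, the singularities of $\bar{Y}$ lie off $\bar{T}$ and are resolved by the restriction of $\phi$ to $\Exc\phi \setminus (L + T\text{'s vicinity})$; this resolution is minimal, forcing each such singularity to be Du Val, so $\bar{Y}$ is canonical.

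For \ref{item:GK-intro-ht}, I would lift a $\P^1$-fibration on the minimal resolution $\tilde{Y}$ of $\bar{Y}$ achieving $\height(\bar{Y})$ via the factorization $X \to \tilde{Y}$ to a fibration on $X$; a general fiber meets $D$ in at most $\height(\bar{Y}) + 2$ points, picking up at most two extra contributions from $T$ and $L$. The known bound $\height(\bar{Y}) \leq 4$ for canonical rank-one del Pezzo surfaces (with $\cha\kk \neq 2,3$), asserted as the Conjecture following Definition \ref{def:height}, then gives $\height(\bar{X}) \leq 6$, and for $\bar{Y} \cong \P^2$ the pencil of lines through a suitable point realizes height at most $2$. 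For \ref{item:GK-intro-cf}, I would combine the pullback formulas $\pi^*K_{\bar{X}} = K_X + \sum \cf(D_i) D_i$ and $\phi^*K_{\bar{Y}} = K_X + \Delta$ (with $\Delta$ effective vanishing on the Du Val exceptional curves of $\phi$) with $\bar{T} \sim -K_{\bar{Y}}$, and apply the Nakai criterion on $\bar{X}$: by rank one, ampleness of $-K_{\bar{X}}$ is controlled by a single test intersection, which reduces to $\cf(T) < 1$. The last sentence of the lemma follows: log terminality means $\cf(D_i) < 1$ for all $i$, in particular $\cf(T) < 1$, so $\bar{X}$ is del Pezzo by \ref{item:GK-intro-cf}, and if $\bar{T}$ were smooth then $T \subseteq D$ would be elliptic, producing a non-rational (hence non-log-terminal) singularity of $\bar{X}$, a contradiction.

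The main obstacle will be the proof that $\Exc \phi = (D - T) + L$ with $L$ a single $(-1)$-curve, i.e., pinning down $\rho(\bar{Y}) = 1$ and the uniqueness of $L$ without circular reasoning. This requires distinguishing which components of the embedded resolution of $\Sing \bar{T}$ end up in $D$ versus outside it, using both the rank-one hypothesis on $\bar{X}$ and the minimality of the log resolution $(X,D)$ in a coordinated way; the subsequent identification of $\bar{T}$ with $-K_{\bar{Y}}$ and the canonicity of $\bar{Y}$ are then comparatively formal consequences.
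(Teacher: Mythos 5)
Your overall architecture parallels the paper's, and parts \ref{item:GK-intro-ht}, \ref{item:GK-intro-cf} and the final sentence are essentially right, but there are two genuine gaps. The first is in the smooth case: you reduce to ``the known structure of non-rational rank-one (log canonical) del Pezzo surfaces,'' but the lemma makes no log canonicity assumption, and Proposition \ref{prop:non-log-terminal} explicitly allows non-rational del Pezzo surfaces of rank one whose non-lt singularity resolves to an elliptic curve \emph{with rational branches attached} --- these are not elliptic cones, and the classification you invoke (Lemma \ref{lem:ht=1_types}) does not cover them. Excluding them requires the descendant hypothesis: the paper passes to the height-one fibration with $T$ a $1$-section over an elliptic base and shows a degenerate fiber would meet $T$ in a connected component $C$ of $D\vert$ with no $(-1)$-curve, while $\phi(C)\in\bar{T}\subseteq\bar{Y}\reg$ forces $\phi^{-1}(\phi(C))$ to contract to a smooth point, hence to contain a $(-1)$-curve, necessarily horizontal --- impossible over a base of genus one. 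Your sketch contains no substitute for this step.

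The second gap is at what you yourself call the main obstacle, and it stems from attacking the wrong inequality. Writing $L'=\Exc\phi-(D-T)$, your own Picard count gives $\#L'=2-\rho(\bar{Y})$, so $\#L'\leq 1$ is \emph{automatic} from $\rho(\bar{Y})\geq 1$; the geometric argument you propose for it (``any $\phi$-exceptional curve disjoint from $T$ is already $\pi$-exceptional'') is both unsubstantiated and unnecessary. What actually carries content is $\#L'\geq 1$: since $T$ is smooth but $\bar{T}$ is singular, $\Sing\bar{T}$ is a base point of $\phi^{-1}$ lying in $\bar{Y}\reg$, so its preimage contracts to a smooth point and therefore contains a $(-1)$-curve, which cannot lie in $D$. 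The two inequalities together give $\#L'=1$ and $\rho(\bar{Y})=1$ simultaneously, after which \ref{item:GK-intro-Y} follows as you indicate (though note that ``the resolution is minimal, forcing each singularity to be Du Val'' is not a valid deduction --- minimality never forces $(-2)$-curves; the paper outsources this to an external lemma, and your adjunction identity $(K_{\bar{Y}}+\bar{T})\cdot\bar{T}=0$ yields $\bar{T}\sim -K_{\bar{Y}}$ only once $\rho(\bar{Y})=1$ is in hand). Finally, in \ref{item:GK-intro-ht} the bound $\height(\bar{Y})\leq 4$ for \emph{canonical} $\bar{Y}$ should be cited from the established classification (Table \ref{table:canonical}), not from the Conjecture, which is unproven here; and $L\not\subseteq D$, so the only extra contribution to $F\cdot D$ is $F\cdot T=2$, computed by adjunction from $\bar{T}\in|-K_{\bar{Y}}|$ for a general fiber avoiding $\Sing T_Y$.
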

\begin{figure}[htbp]
	\centering
	\begin{tikzpicture}[scale=0.9]
		\begin{scope}[shift={(-6.8,0)}]
	\draw[dashed] (-0.03,1) -- (0.2,2.6) to[out=80,in=180] (1,2.8) -- (1.6,2.8) to[out=0,in=60] (1.1,1.5) -- (0.9,1.1);
	\filldraw (1.1,2.8) circle (0.08);
	\node[below right] at (1.1,2.8) {\small{$\rA_2$}}; 
	\filldraw (0.05,1.5) circle (0.08);
	\node[right] at (0.05,1.5) {\small{$\rA_2$}};
	\filldraw (1.1,0.2)  circle (0.08);
	\node[above right] at (1.1,0.1) {\small{$\rA_2$}};
	\draw[dashed] (-0.03,2) -- (0.2,0.4) to[out=-80,in=180] (1,0.2) -- (1.6,0.2) to[out=0,in=-60] (1.1,1.5) -- (0.9,1.9);
	\draw[dashed] (1.1,0) -- (1.1,3);
	\filldraw (1.02,1.42) rectangle (1.18,1.58);
	\node[above right] at (2.2,0.1) {\small{$[3,2,3,2]$}};
	\draw[->, gray] (2.5,0.7) to[out=135,in=-45] (1.25,1.35);
	\draw[thick, densely dashed] (0.8,1.6) to[out=-30,in=180] (1.1,1.5) to[out=0,in=90] (2.7,1.5) to[out=-90,in=0] (1.1,1.5) to[out=180,in=30] (0.8,1.4);
	\node at (2.4,1.95) {\small{$L$}};
%	
%
%	\node at (1.1,-0.3) {\small{$\bar{X}$}};
	%
	\node at (3.6,1.75) {\scriptsize{min.\ res.}};
	\node at (3.6,1.25) {\scriptsize{of $\bar{X}$}};
	\draw[<-] (3.2,1.5) -- (4,1.5);
\end{scope}
		\begin{scope}
			\draw (0.1,3) -- (1.3,3);
			\draw[dashed] (0.2,3.1) -- (0,2.1);
			\draw (0,2.3) -- (0.2,1.4);
			\draw (0.2,1.6) -- (0,0.7);
			\draw[dashed] (0,0.9) -- (0.2,-0.1);
			\draw (1.2,3.1) -- (1,1.9);
			\draw[dashed] (1,2.1) -- (1.2,0.9);
			\draw (1.2,1.1) -- (1,-0.1);
			\draw (0.1,0.05) -- (1.1,0.05);
			\draw (-0.8,3.1) -- (-1,2.1);
			\draw (-1,2.3) -- (-0.8,1.4);
			\node at (-1.2,1.8) {\small{$-3$}};
			\draw[thick, densely dashed] (-0.8,1.6) -- (-1,0.7);
			\node at (-1.1,1.2) {\small{$L$}};
			\draw (-1,0.9) -- (-0.8,-0.1);
			\draw[thick] (1.3,1.6) to[out=-150,in=0] (1.1,1.5) to[out=180,in=0] (0.1,2.7)
			-- (-1,2.7) to[out=180,in=90] (-1.9,1.5) to[out=-90,in=180] (-1,0.4) -- (0.1,0.4) to[out=0,in=180] (1.1,1.5) to[out=0,in=150] (1.3,1.4);
			\node at (-2.2,1.5) {\small{$-3$}};
			\node at (-1.7,1.5) {\small{$T$}};
%			\node at (-0.3,-0.4) {\small{$(X,D)$}};
		\end{scope}
		\begin{scope}[shift={(4.7,0)}]
			\draw (0.1,3) -- (1.3,3);
			\draw[dashed] (0.2,3.1) -- (0,2.1);
			\draw (0,2.3) -- (0.2,1.4);
			\draw (0.2,1.6) -- (0,0.7);
			\draw[dashed] (0,0.9) -- (0.2,-0.1);
			\draw (1.2,3.1) -- (1,1.9);
			\draw[dashed] (1,2.1) -- (1.2,0.9);
			\draw (1.2,1.1) -- (1,-0.1);
			\draw (0.1,0.05) -- (1.1,0.05);
			\draw[thick] (1.3,1.6) to[out=-150,in=0] (1.1,1.5) to[out=180,in=0] (0.1,2.7) to[out=180,in=0] (-1.1,1.3) to[out=180,in=-90] (-1.3,1.5) to[out=90,in=180] (-1.1,1.7) to[out=0,in=180] (0.1,0.4) to[out=0,in=180] (1.1,1.5) to[out=0,in=150] (1.3,1.4);
			\node at (-1,1.9) {\small{$T$}};
			\node at (-1,1.1) {\small{$3$}};
			\draw[<-] (-2,1.5) -- (-2.8,1.5);
		\end{scope}
		\begin{scope}[shift={(9.5,0)}]
			\draw[dashed] (0,1.3) -- (0.2,2.6) to[out=80,in=180] (1,2.8) -- (1.4,2.8);
			\filldraw (1.1,2.8) circle (0.08);
			\node[below right] at (1.1,2.8) {\small{$\rA_2$}}; 
			%\draw (0,2.3) -- (0.2,1.4);
			\filldraw (0.05,1.5) circle (0.08);
			\node[right] at (0.05,1.5) {\small{$\rA_2$}};
			%\draw (0.2,1.6) -- (0,0.7);
			\filldraw (1.1,0.2)  circle (0.08);
			\node[above right] at (1.1,0.1) {\small{$\rA_2$}};
			\draw[dashed] (0,1.7) -- (0.2,0.4) to[out=-80,in=180] (1,0.2) -- (1.4,0.2);
			\draw[dashed] (1.1,0) -- (1.1,3);
			\draw[thick] (1.3,1.6) to[out=-150,in=0] (1.1,1.5) to[out=180,in=0] (0.1,2.7) to[out=180,in=0] (-1.1,1.3) to[out=180,in=-90] (-1.3,1.5) to[out=90,in=180] (-1.1,1.7) to[out=0,in=180] (0.1,0.4) to[out=0,in=180] (1.1,1.5) to[out=0,in=150] (1.3,1.4);
			\node at (-1,1.9) {\small{$\bar{T}$}};
			\node at (-1,1.1) {\small{$3$}};
%			\node at (-0.2,-0.3) {\small{$(\bar{Y},\bar{T})$}};
			%
			\node at (-2.4,1.75) {\scriptsize{min.\ res.}};
			\node at (-2.4,1.25) {\scriptsize{of $\bar{Y}$}};
			\draw[<-] (-2,1.5) -- (-2.8,1.5);
		\end{scope}
	\end{tikzpicture}
%	\vspace{-0.5em}
	\caption{Del Pezzo surface $\bar{X}$ of rank one and type $3\rA_2+[3,2,3,2]$ with a descendant $(\bar{Y},\bar{T})$ of type $3\rA_2$, cf.\ Figure \ref{fig:3A2-intro}. The elliptic boundary $\bar{T}$ is nodal.}	\vspace{-0.5em}
\label{fig:GK}
\end{figure}
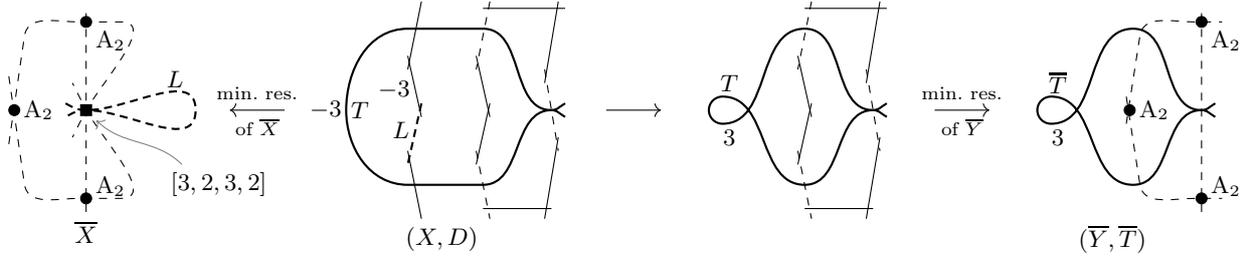

Log surfaces $(\bar{Y},\bar{T})$ as in Lemma \ref{lem:GK_intro}\ref{item:GK-intro-Y} 
are classified in \cite[Proposition 1.5]{PaPe_MT} summarized in Table \ref{table:canonical} as follows. In the row corresponding to a canonical singularity type $\cS$, the entry in the last column contains $\rN$ (respectively, $\rC$ or $\rC_d$) if and only if some del Pezzo surface $\bar{Y}$ of rank 1 and type $\cS$ contains a nodal (respectively, cuspidal) curve $\bar{T}$ with $p_{a}(\bar{T})=1$ in its smooth locus. Moreover, the log surface $(\bar{Y},\bar{T})$ is unique up to an isomorphism unless this entry is $\rC_{d}$. In the latter case, the set $\Pcusp(\cS)$ of isomorphism classes of log surfaces $(\bar{Y},\bar{T})$ where $\bar{Y}$ is a canonical del Pezzo surface of rank 1 and type $\cS$ and $\bar{T}\subseteq \bar{Y}\reg$ is a cuspidal member of $|-K_{\bar{Y}}|$ has moduli dimension $d$. If $\#\Pcusp(\cS)=1$ 
then $\Pcusp(\cS)$ has moduli dimension $0$, see Lemma \ref{lem:GK_hi-3}.

Lemma \ref{lem:GK_intro}\ref{item:GK-intro-L} shows that to reconstruct $\bar{X}$ from its descendant $(\bar{Y},\bar{T})$, we need to blow up over the singular point of $\bar{T}$ in such a way that the exceptional divisor has exactly one $(-1)$-curve $L$, and the total transform of $\bar{T}$, minus $L$, contracts to normal surface singularities. In the most important case, when the resulting surface $\bar{X}$ is log terminal, Lemma \ref{lem:GK_intro}\ref{item:GK-intro-cf} shows that it is automatically del Pezzo. We note that an analogous result holds for surfaces of height at most $2$, see Lemma \ref{lem:delPezzo_criterion}, but outside these two classes the ampleness of $-K_{\bar{X}}$ becomes a much stronger restriction.

The above strategy leads to a classification summarized in Theorem \ref{thm:GK} below. Given Theorem \ref{thm:ht=1,2} and Proposition \ref{prop:non-log-terminal} we may restrict our attention to log canonical del Pezzo surfaces of height at least $3$. For a normal surface $\bar{X}$ we denote by $\cS(\bar{X})$ its singularity type, and we write $\#\cS(\bar{X})$ for the number of exceptional components of its minimal resolution. For a singularity type $\cS$ we denote by $\Pdeb(\cS)$ the set of isomorphism classes of  of del Pezzo surfaces $\bar{X}$ of singularity type $\cS$ which have a descendant with elliptic boundary.

\begin{theoremA}[Classification of del Pezzo surfaces having descendants with elliptic boundary]\label{thm:GK}
	Let $\bar{X}$ be a log canonical del Pezzo surface of rank one, having a descendant $(\bar{Y},\bar{T})$ with elliptic boundary, see Definition \ref{def:GK}. Let $(X,D)$ be its minimal log resolution, and let $\cS$ be its singularity type. 
	If $\height(\bar{X})\geq 3$ then the following hold.
	\begin{enumerate}
		\item\label{item:GK-unique} The descendant $(\bar{Y},\bar{T})$ with elliptic boundary of the surface $\bar{X}$ is unique up to an isomorphism. Moreover, the singularity type $\cS(\bar{Y})$ of $\bar{Y}$, and the singularity type of $\bar{T}$, are uniquely determined by $\cS$.
		\item\label{item:GK-Y} 
		We have $\#\cS(\bar{Y})\geq 6$, see the classification in Table~\ref{table:canonical}. Moreover, if $\bar{T}$ is cuspidal then $\cS(\bar{Y})\not\in \{\rE_{6},\rE_7,\rE_8\}$.
		\item\label{item:GK-types} We have $\cS=\cS(\bar{Y})+\cT$, where $\cT$ is one of the types listed in  Lemma \ref{lem:cuspidal_resolution}.
		\item\label{item:GK_uniqueness-nodal} Assume that $\bar{T}$ is nodal. Then $\#\Pdeb(\cS)=1$, i.e.\ $\bar{X}$ is the unique del Pezzo surface of rank one and singularity type $\cS$ admitting a descendant with elliptic boundary.
		Moreover, $h^{i}(\lts{X}{D})=0$. 
		\item\label{item:GK_uniqueness-cuspidal} Assume that $\bar{T}$ is cuspidal, so $\cha\kk\in \{2,3,5\}$. 
		Then $\Pdeb(\cS)$ is represented by an almost faithful family of dimension $d+t$ (whose bases and symmetry groups are described in Remark \ref{rem:GK-bases}), where 
		\begin{enumerate}
			\item\label{item:GK-d} $d$ is the moduli dimension of $\Pcusp(\cS(\bar{Y}))$, listed in Table \ref{table:canonical} (i.e.\ $d$ for types with \enquote{$\rC_{d}$} and $0$ otherwise),
			\item\label{item:GK-t} $t=1$ if the type $\cT$ is as in Lemma \ref{lem:cuspidal_resolution}\ref{item:C_smooth}, and $t=0$ otherwise.
		\end{enumerate}
		Moreover, if $\#\cT\geq 3$ then $\Pdeb(\cS)$ has moduli dimension $d+t$ and we have $h^{i}(\lts{X}{D})=0,d+t,d+1$ for $i=0,1,2$, respectively. For the corresponding result in case $\#\cT\leq 2$ see Proposition \ref{prop:GK-small}.
		\item\label{item:GK-ht} We have $\height(\bar{X})=\height(\bar{Y})+2$, unless $\cS$ is as in Proposition \ref{prop:GK-ht-exceptions}, in which case $\height(\bar{X})=3$, $\height(\bar{Y})=2$. In particular, we have optimal bounds $\height(\bar{X})\leq 6$, and $\height(\bar{X})\leq 4$ if $\cha\kk\neq 2,3$.
	\end{enumerate}
	Conversely, if $\#\cS(\bar{Y})\geq 6$ then either $\height(\bar{X})\geq 3$ or $\bar{X}$ is one of the exceptions listed in Lemma \ref{lem:GK_exceptions}.
\end{theoremA}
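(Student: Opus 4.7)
The plan is to route the entire classification through the descendant morphism $\phi\colon X\to\bar{Y}$ supplied by Lemma \ref{lem:GK_intro}. Given $\bar{X}$ of height at least $3$ with minimal log resolution $(X,D)$ and descendant $(\bar{Y},\bar{T})$ (whose boundary must be singular because $\height(\bar{X})\geq 3$ rules out the elliptic cone case), Lemma \ref{lem:GK_intro}\ref{item:GK-intro-L} identifies $\phi$ as the contraction of $D-T+L$ for a uniquely determined $(-1)$-curve $L$. The classification then reduces, via Lemma \ref{lem:cuspidal_resolution}, to enumerating the local blowup configurations which resolve the singular point of $\bar{T}$ to a chain of normal crossings so that $D-T+L$ contracts to log canonical points and $\cf(T)<1$; and dually, using Table \ref{table:canonical} of canonical del Pezzo surfaces of rank one carrying an elliptic boundary. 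The converse in the last sentence will follow by running this construction forward from each $(\bar{Y},\bar{T})$ with $\#\cS(\bar{Y})\geq 6$ and verifying the height bound, with the exceptions collected in Lemma \ref{lem:GK_exceptions}.

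Parts \ref{item:GK-unique}, \ref{item:GK-Y}, \ref{item:GK-types} are then essentially combinatorial. For \ref{item:GK-unique}, the weighted dual graph of $D$ together with the decomposition forced by Lemma \ref{lem:cuspidal_resolution} pins down $L$, hence $\phi$, hence $(\bar{Y},\bar{T})$ up to isomorphism solely from the abstract singularity type $\cS$. For \ref{item:GK-Y}, the inequality $\height(\bar{X})\leq\height(\bar{Y})+2$ of Lemma \ref{lem:GK_intro}\ref{item:GK-intro-ht} applied to $\height(\bar{X})\geq 3$ forces $\height(\bar{Y})\geq 1$, and inspecting Table \ref{table:canonical} shows $\height(\bar{Y})\geq 1$ on a canonical del Pezzo of rank one requires $\#\cS(\bar{Y})\geq 6$; the exclusion of $\rE_6,\rE_7,\rE_8$ in the cuspidal case is read off the same table (these rows do not admit the $\rC$ marking in $\bar{Y}\reg$). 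For \ref{item:GK-types}, the decomposition $\cS=\cS(\bar{Y})+\cT$ is exactly the output of Lemma \ref{lem:cuspidal_resolution} applied to the local resolution of $\Sing\bar{T}$.

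For \ref{item:GK_uniqueness-nodal} and \ref{item:GK_uniqueness-cuspidal}, the nodal case is rigid on both factors: each $\cT$ appearing in Lemma \ref{lem:cuspidal_resolution} resolves the two smooth transverse branches of $\bar{T}$ at its node with no continuous parameter, and the canonical pair $(\bar{Y},\bar{T})$ with nodal $\bar{T}$ is rigid by Table \ref{table:canonical}, which delivers $\#\Pdeb(\cS)=1$ and the vanishing of $h^i(\lts{X}{D})$ via logarithmic Bogomolov--Kodaira vanishing on the rational surface $X$ combined with Riemann--Roch. In the cuspidal case, $\Pcusp(\cS(\bar{Y}))$ contributes a family of dimension $d$ by Lemma \ref{lem:GK_hi-3}, while the local blowup over the cusp contributes a one-parameter choice of tangent direction exactly in case \ref{lem:cuspidal_resolution}\ref{item:C_smooth}, producing $t\in\{0,1\}$; gluing these with the control on automorphisms described in Remark \ref{rem:GK-bases} yields the representing almost faithful family of dimension $d+t$. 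The cohomology statements for $\#\cT\geq 3$ follow by identifying infinitesimal deformations of $(X,D)$ with those of $(\bar{Y},\bar{T})$ plus the local parameter $t$, with $h^2$ vanishing outside characteristic $2$ by a direct vanishing computation, and the characteristic-$2$ pathologies analyzed separately. Part \ref{item:GK-ht} combines the upper bound $\height(\bar{X})\leq\height(\bar{Y})+2$ from Lemma \ref{lem:GK_intro} with the matching lower bound obtained by pushing $\P^1$-fibrations on $X$ down through $\phi$: a fiber $F$ on $X$ with $F\cdot D=\height(\bar{X})$ descends to a fiber $\phi_{*}F$ on $\bar{Y}$ meeting $\bar{T}$ at least $\height(\bar{X})-2$ times, the deficit $2$ coming from the fact that $L$ gets contracted into a point of $\bar{T}$; the listed exceptions in Proposition \ref{prop:GK-ht-exceptions} arise precisely when this pushforward produces a fibration of strictly smaller degree against $\bar{T}$ than expected.

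The main obstacle is part \ref{item:GK_uniqueness-cuspidal}, the simultaneous control of moduli and cohomology in the cuspidal case. The combinatorics of $\cT$ is enumerated, but verifying that the $(d+t)$-dimensional representing family is almost universal when $\#\cT\geq 3$ requires matching infinitesimal deformations on both sides without spurious automorphisms creeping in, and the $h^2$ computation is delicate because $\lts{X}{D}$ sits in a twist that is neither nef nor anti-nef; in particular the exceptional cases of Proposition \ref{prop:moduli-hi} in characteristic $2$ where $h^2$ jumps must be detected and excluded by explicit local computation near the $(-1)$-curve $L$ and the cuspidal tangent direction.
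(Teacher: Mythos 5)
Your overall architecture matches the paper's (descendant morphism from Lemma \ref{lem:GK_intro}, Lemma \ref{lem:cuspidal_resolution} for $\cT$, Table \ref{table:canonical} for $\cS(\bar Y)$, and the cusp tangent direction producing the extra parameter $t$), but two of your key steps would fail as stated. First, your derivation of $\#\cS(\bar Y)\geq 6$ is broken: from $\height(\bar X)\leq\height(\bar Y)+2$ and $\height(\bar X)\geq 3$ you only get $\height(\bar Y)\geq 1$, which holds for \emph{every} singular canonical del Pezzo surface of rank one (e.g.\ type $\rA_1$, with a single exceptional component), so "inspecting Table \ref{table:canonical}" yields nothing. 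The actual mechanism is numerical: $\bar T^2=K_{\bar Y}^2=9-\#\cS(\bar Y)$ by Noether's formula, and if $\bar T^2\geq 4$ one blows up $\bar T^2-3$ times over $\Sing\bar T$ so that the proper transform becomes a $0$-curve meeting the exceptional locus at most twice, whose pullback is a $\P^1$-fibration of height at most $2$ on $X$ --- contradicting $\height(\bar X)\geq 3$. This is Lemma \ref{lem:GK_exceptions}, and it has nothing to do with $\height(\bar Y)$.

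Second, your proof of the lower bound $\height(\bar X)\geq\height(\bar Y)+2$ by "pushing a fibration down through $\phi$" only makes sense when the entire exceptional locus $D-T+L$ of $\phi$ is vertical; if the elliptic tie $L$ is horizontal then $\phi_{*}F$ has positive self-intersection and is not a fiber of anything on $\bar Y$, and $F\cdot D_{T}=2-F\cdot L\leq 1$, so the "deficit $2$" disappears. The paper's proof (Lemmas \ref{lem:GK-ht-T=1}--\ref{lem:GK-ht-cuspidal}) shows that for any fibration of height at most $\height(\bar Y)+1$ the tie $L$ is in fact forced to be horizontal --- exactly the case your mechanism cannot treat --- and then rules this out by an induction on $\#\cT$ with explicit analysis of degenerate fibers; the exceptions in Proposition \ref{prop:GK-ht-exceptions} emerge from that analysis, not from a "smaller degree against $\bar T$" phenomenon. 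Relatedly, the uniqueness of the descendant in \ref{item:GK-unique} is asserted rather than proved: you must exclude a second elliptic tie $L'$ and a second decomposition $\cS=\cS(\bar Y)'+\cT'$, which the paper does via the extraction of $T$ and its two extremal contractions (Lemma \ref{lem:leash-is-unique}), including a nontrivial case $T\neq T'$ that relies on transitivity of $\Aut(\bar X)$ on $\Sing\bar X$ for types $4\rA_2$ and $8\rA_1$. Finally, invoking "logarithmic Bogomolov--Kodaira vanishing" for $h^{i}(\lts{X}{D})=0$ in the nodal case cannot work uniformly in positive characteristic; the paper instead reduces to explicit line configurations on $\P^2$ and $\P^1\times\P^1$ by inner morphisms (Lemma \ref{lem:nodal}).
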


\begin{remark}[Cascades, cf.\ Remark \ref{rem:swap-lt} and Proposition \ref{prop:GK_swaps}]\label{rem:GK-cascades}
		Let $\bar{X}$ be a log canonical del Pezzo surface of rank one, having a descendant with elliptic boundary. If $\bar{X}$ is non-primitive then by Proposition \ref{prop:GK_swaps}\ref{item:GK_swaps-lc} it admits an elementary swap  onto a log canonical del Pezzo surface $\bar{X}'$ of rank one, which either has a (possibly different) descendant with elliptic boundary or is canonical or, if $\cha\kk=2$, is one of the primitive surfaces from Example \ref{ex:ht=2_twisted_cha=2}, see Figure \ref{fig:KM_surface-intro}. In turn, if $\bar{X}$ is primitive then  it is either canonical, or its singularity type is listed in Lemma \ref{lem:GK_prim}; in particular if $\cha\kk\not\in \{2,3\}$ then it is of type $2\rA_4+[3]$ or $\rA_1+\rA_2+\rA_5+[3]$.
		
		In most cases, the above elementary swap $\bar{X}\sqto \bar{X}'$ is given by the contraction of the $(-1)$-curve $L$ from Lemma \ref{lem:GK_intro}\ref{item:GK-intro-L}. Then $\bar{X}'$ has the same descendant with elliptic boundary as $\bar{X}$. For example, if $\bar{X}$ is as in Figure \ref{fig:GK} then contracting $L$ we get an elementary swap onto $\bar{X}'$ of type $3\rA_2+[2,2,3]$. In the remaining cases, when this particular swap is not possible then by  Proposition \ref{prop:GK_swaps}\ref{item:GK_swaps-L} $\bar{X}$ is rather simple: either it is already canonical, or its minimal log resolution is birationally dominated by the minimal log resolution of its descendant.
\end{remark} 

\begin{remark}\label{rem:Pht=P}
	Let $\bar{X}_1,\bar{X}_2$ be del Pezzo surfaces of rank one and the same log canonical singularity type $\cS$. In forthcoming articles we will show that $\height(\bar{X}_1)=\height(\bar{X}_2)$, and if $\bar{X}_1$ has a descendant with elliptic boundary then so does $\bar{X}_2$. This result implies that for each singularity type $\cS$ as in Theorem \ref{thm:ht=1,2} and \ref{thm:GK}, the sets $\Pht(\cS)$ and $\Pdeb(\cS)$ described there are equal to the sets of isomorphism classes of \emph{all} del Pezzo surfaces of rank one and singularity type $\cS$, i.e.\ $\Pht(\cS)=\cP(\cS)$ and $\Pdeb(\cS)=\cP(\cS)$.
\end{remark}

\paragraph{Acknowledgment} We thank Joachim Jelisiejew for helpful discussions concerning deformation theory.

\clearpage
\section{Preliminaries}

We  now settle some notation and we recall some known results which will be useful in this and upcoming articles. For more general statements and further motivation see \cite{Fujita-noncomplete_surfaces} and references therein.

\subsection{Divisors on surfaces}\label{sec:log_surfaces}

Curves and surfaces are understood to be irreducible. A \emph{log surface} is a pair $(X,D)$, where $X$ is a normal projective surface and the \emph{boundary} $D$ is a Weil $\Q$-divisor on $X$ with coefficients between $0$ and $1$ such that $K_{X}+D$ is $\Q$-Cartier. It is \emph{log smooth} if $X$ is smooth and $D$ has simple normal crossings. We identify $(X,0)$ with $X$. Given a birational morphism of surfaces $\phi\:X'\to X$ we write $\phi\colon (X',D')\to (X,D)$ if $D=\phi_{*}D'$. We denote the reduced exceptional divisor of $\phi$ by $\Exc\phi$, the proper transform of a curve $C$ on $X'$ by $\phi^{-1}_{*}C$, and the base locus of a birational map $\psi$ by $\Bs\psi$. 

A \emph{log resolution} of a log surface $(\bar{X},\bar{D})$ is a proper birational morphism $\phi\colon (X,D)\to(\bar{X},\bar{D})$, where $D=\phi^{-1}_{*}\bar{D}+\Exc\phi$. It is \emph{minimal} if it is not birationally dominated by any other log resolution.

In this article we consider only log surfaces with reduced boundary. 
\smallskip

Let $X$ be a smooth projective surface. A curve $C$ on $X$ is called an \emph{$n$-curve} if $C\cong \P^1$ and $C^2=n$. 
Let $D$ be a divisor on $X$. We say that $D$ is \emph{connected} if $\Supp D$ is connected in the Zariski topology. We write $D\redd$ for $D$ with reduced structure. By a \emph{component} of $D$ we mean an \emph{irreducible} component of $D\redd$ (not a connected one); we denote their number by $\#D$. If divisors $D,T$ and $D-T$ are effective, we say that $T$ is  a \emph{subdivisor} of $D$. For two effective divisors $D_1,D_2$ we write $D_1\cap D_2$ for the intersection of their supports, and, if they are reduced, we write $D_1\wedge D_2$ for the sum of their common components. 
\smallskip

We now recall the definition of a weighted graph of a reduced divisor on a smooth projective surface. Abstractly, by a \emph{weighted graph} we mean a graph $\Gamma$ together with a weight function associating a pair of integers $(v\cdot v,p_{a}(v))$ to each vertex $v$; and an integer $(v\cdot w)_{e}$ to each edge $e$ between vertices $v$ and $w$. We write $v\cdot w=\sum_{e}(v\cdot w)_{e}$, where the sum runs over all edges $e$ between $v$ and $w$. The \emph{intersection matrix} of $\Gamma$ is $M_{\Gamma}\de [v_i\cdot v_j]_{1\leq i,j\leq n}$, where $v_{1},\dots,v_n$ are the vertices of $\Gamma$. We say that $\Gamma$ is \emph{negative definite} if so is $M_{\Gamma}$. The \emph{discriminant} of $\Gamma$ is $d(\Gamma)\de \det(-M_{\Gamma})$, or $1$ if $\Gamma$ is empty; see \cite[\S 3]{Fujita-noncomplete_surfaces} for its elementary properties. 
	
Let $D$ be a reduced divisor on a smooth projective surface. Its \emph{weighted graph} $\Gamma(D)$ is a weighted graph together with a bijection $\epsilon$ from the set of its vertices to the set of components of $D$ such that for each component $C$ of $D$, the weight of the vertex $\epsilon^{-1}(C)$ is $(C^2,p_{a}(C))$, and for two different components $C$, $C'$ of $D$, the vertices $\epsilon^{-1}(C)$, $\epsilon^{-1}(C')$ are connected by an edge of weight $(C\cdot C')_{p}$ for each $p\in C\cap C'$.  We write $D^v\de \epsilon(v)$, $v_C\de\epsilon^{-1}(C)$.  We often skip the edge weight if it is equal to $1$, and we skip $p_a(C)$ if it is equal to $0$. A morphism $\phi\colon (X',D')\to (X,D)$ induces a morphism of weighted graphs $\Gamma(D')\to \Gamma(D)$ by $v_{C'}\mapsto v_{\phi(C')}$.
		
We say that $D$ is \emph{negative definite} if so is its weighted graph; we put $d(D)\de d(\Gamma(D))$.  The \emph{branching number} of a component  $C$ of $D$ is 
	$\beta_{D}(C)\de C\cdot (D-C)$, 
i.e.\ the sum of weights of all edges of $\Gamma$ adjacent to $v_C$. 
We say that $C$  is a \emph{tip} of $D$ if $\beta_{D}(C)\leq 1$, and is \emph{branching} in $D$ if $\beta_{D}(C)\geq 3$.
\smallskip

A connected snc divisor  with no branching component is a \emph{chain} if it has a tip and \emph{circular} if it does not. A \emph{tree} is a connected snc divisor with no circular subdivisor. A tree $T$ whose unique branching component $B$ has  $\beta_{T}(B)=3$ is called a \emph{fork}. A \emph{$(-2)$-chain} (\emph{$(-2)$-fork}) is a chain (fork) whose all components are $(-2)$-curves.

Let $T$ be a chain with a chosen \emph{first} tip of $T$, call it $\ftip{T}$. Then the components $T\cp{1},\dots,T\cp{\#T}$  of $T$ are ordered in a unique way such that $T\cp{1}=\ftip{T}$ and $T\cp{i}\cdot T\cp{i+1}=1$, $1\leq i\leq \#T-1$. Then $\ltip{T}\de T\cp{\#T}$ is the \emph{last} tip of $T$. We write $T\trp$ for the same chain $T$ with an  opposite order.

A \emph{type} of an ordered rational chain $T$ is a sequence of integers $[a_1,\dots,a_{\#T}]$, where $a_{i}=-(T\cp{i})^{2}$. We  often abuse notation and identify a chain with its type. Moreover, for types $T_1=[a_1,\dots, a_k]$, $T_2=[b_1,\dots, b_l]$ we write $[T_1,T_2]=[T_1,b_1,\dots,b_l]=[a_1,\dots,a_k,b_1,\dots,b_l]$ etc. We warn the reader about a slight abuse of notation: if $T$ is a zero divisor ($T=0$), then its type is \emph{empty}, as opposed to $[0]$, which refers to a single $0$-curve. Hence if $T=0$ then we have $[a,T,b]=[a,b]$, \emph{not} $[a,0,b]$. 

We write $(m)_{k}$ for an integer $m$ repeated $k$ times. Following \cite{Tono_nie_bicuspidal}, we define the type $T_1*T_2$ as 
\begin{equation} \label{eq:convention-Tono_star}
	[a_{1},\dots,a_{k}]*[b_{1},\dots,b_{l}]\de [a_{1},\dots, a_{k-1},a_{k}+b_{1}-1,b_{2},\dots, b_{l}].
\end{equation}
If $T_1$ is empty, we put $0 *[b_{1},\dots,b_{l}]=[b_{2},\dots,b_{l}]$. The following convention will be useful:
\begin{equation}\label{eq:convention_2-1}
	[(2)_{-1}]*[b_{1},\dots, b_{l}]\de [b_2+1,\dots, b_{l}],\quad [(2)_{-1},b_{1},\dots,b_{l}]\de [b_{2},\dots, b_{l}].
\end{equation}

We say that a divisor $D$ \emph{contracts to} a divisor $D'$ (or to a singularity $p$), if there is a birational morphism $\phi$ such that $\Exc\phi\subseteq D$ and $\phi_{*}D=D'$ (or $\phi(D)=p$).

Let $T$ be an ordered rational chain on a surface $X$. Assume that $T=[1]$ or  $T$ is \emph{admissible}, i.e.\ $T=[a_1,\dots, a_{k}]$ with $a_j\geq 2$ for all $j$. Then there is a unique type $T^{*}$ such that $[T,1,T^{*}]$ contracts to $[0]$, see \cite[Proposition 4.7]{Fujita-noncomplete_surfaces}. For an admissible chain $[a_1,\dots,a_{k}]$, \cite[Lemma 5.ii]{Tono_nie_bicuspidal} gives:
\begin{equation}\label{eq:adjoint_chain}
	[a_1,\dots,a_k]^{*}=[(2)_{a_{k}-1}]*\dots*[(2)_{a_{1}-1}],
\end{equation}
and, clearly, $[1]^{*}$ is an empty chain. A chain $[T,1,T']$ contracts to a smooth point if and only if $T'=T^{*}*[(2)_{k}]$ for some $k\geq -1$. If a curve $C$ meets  a chain $[T,1,T^{*}]*[(2)_{k}]$ once, in the first tip, then the contraction of that chain increases the self-intersection number of $C$ by $k+2$.
\smallskip

Let again $D$ be an snc divisor. An ordered subchain $T\leq D$ is a \emph{twig} of $D$ if $\ftip{T}$ is a tip of $D$, and no component of $T$ is branching in $D$. In this case, $\ltip{T}$ meets $D-T$, or $T$ is a connected component of $D$. A \emph{$(-2)$-twig} is a twig whose all components are $(-2)$-curves. We say that a twig of $D$ is \emph{maximal} if it is not properly contained in any other twig of $D$. 
\smallskip

Let $T$ be a rational fork. Then $T=B+T_1+T_2+T_3$, where $B$ is the branching component of $T$, and $T_{j}$ are maximal twigs of $T$.  A \emph{type} of the fork $T$ consists of an integer $b\de -B^2$ and an (unordered) set of types $T_1$, $T_2$, $T_3$. We denote this type by 
	$\langle b;T_1,T_2,T_3 \rangle.$

\subsection{Log terminal and log canonical surface singularities}\label{sec:singularities}

Let $\pi\colon X\to \bar{X}$ be a resolution of a normal surface. Let $D\de\Exc\pi$ be its reduced exceptional divisor, and let $C_1,\dots, C_{k}$ be all components of $D$. Since the intersection matrix of $D$ is negative definite, the formula
\begin{equation}\label{eq:discrepancy}
	\pi^{*}K_{\bar{X}}=K_{X}+\sum_{i=1}^{k}\cf(C_i)\, C_i 
\end{equation}
uniquely defines rational numbers $\cf(C_i)$ called the \emph{coefficients} of $C_i$. We note that $\cf(C_i)$ depends only on the valuation of $\kk(\bar{X})$ associated to $C_i$, and not on $\pi$ \cite[Remark 2.23]{KollarMori-bir_geom}. More generally, if $\Gamma$ is a negative definite weighted graph, see Section \ref{sec:log_surfaces}, then for each vertex $v$ of $\Gamma$ we define $\cf_{\Gamma}(v)\in \Q$ by the formula 
\begin{equation*}
	0=2p_{a}(v)-2-v^2+\sum_{w} \cf_{\Gamma}(w)\, w\cdot v, 
\end{equation*}
where the sum runs over all vertices $w$ of $\Gamma$. Explicit formulas for these coefficients are given e.g.\ in \cite[\sec 3.1.10, \sec 3.2]{Flips_and_abundance}. If $D$ is a reduced divisor with weighted graph $\Gamma$, and $C$ is a component of $D$  corresponding to a vertex $v$, we write $\cf_{D}(C)\de \cf_{\Gamma}(v)$. This way, if $D$ is the exceptional divisor of a resolution of a normal surface $\bar{X}$, we have $\cf_{D}(C)=\cf(C)$ for every component $C$ of $D$. 

A normal surface is called \emph{canonical}, \emph{log terminal} (\emph{lt}), or \emph{log canonical} (\emph{lc}) if for every exceptional divisor $E$ of some (equivalently, any) resolution we have $\cf(E)\leq 0$, $\cf(E)<1$ or $\cf(E)\leq 1$, respectively. 

A surface singularity is canonical if and only if it is du Val, i.e.\ its minimal resolution is a $(-2)$-chain or fork \cite[3.26]{Kollar_singularities_of_MMP}. We now recall the classification of lt and lc surface singularities and fix some notation and terminology, see 3.39 and 3.40 loc.\ cit. A chain $T$ on $X$ is \emph{admissible} if it is rational, and all its components have self-intersection numbers at most $-2$. Let $F$ be rational fork on $X$ of type $\langle b;T_1,T_2,T_3\rangle$, with $b\geq 2$ and admissible twigs $T_1,T_2,T_3$. Put $\delta_{F}=\sum_{i=1}^3 d(T_{i})^{-1}$. We say that $F$ is \emph{admissible} if $\delta_{F}>1$; and $F$ is \emph{log canonical} if $\delta_{F}=1$ and $F$ is not a $(-2)$-fork. Every admissible chain or fork contracts to a log terminal singularity, and conversely, every log terminal singularity is of this type \cite[3.40]{Kollar_singularities_of_MMP}. Every log canonical fork contracts to a log canonical singularity, which is not log terminal because the branching component has coefficient $1$. Solving the inequality $\delta_{F}\geq 1$, one gets the following well-known description.

\begin{lemma}[{Admissible forks, cf.\ \cite[I.5.3.4]{Miyan-OpenSurf}}] \label{lem:admissible_forks}
	Let $F$ be a rational fork of type $\langle b;T_1,T_2,T_3\rangle$, with $b\geq 2$. 
	\begin{enumerate}
		\item\label{item:lt-fork} The fork $F$ is admissible if and only if the triple $\{d(T_1),d(T_2),d(T_3)\}$ is one of the following: $\{2,2,k\}$ for some $k\geq 2$; $\{2,3,3\}$, $\{2,3,4\}$, or $\{2,3,5\}$. 
		\item\label{item:lc-fork} The fork $F$ is log canonical if and only if  $\{d(T_1),d(T_2),d(T_3)\}=\{2,3,6\},\{2,4,4\}$ or $\{3,3,3\}$.
		\item\label{item:d=3} Let $T$ be an admissible chain, and let $d\de d(T)$. Then $d\geq 2$ and:
		\begin{enumerate}
			\item if $d\in \{2,3,4,6\}$ then $T=[d]$ or $[(2)_{d-1}]$.
			\item if $d=5$ then $T=[5]$, $[3,2]$, $[2,3]$ or $[2,2,2,2]$.
		\end{enumerate}
		\setcounter{foo}{\value{enumi}}
	\end{enumerate}
	In particular, the following hold.
	\begin{enumerate}\setcounter{enumi}{\value{foo}}		
		\item\label{item:has_-2} If the fork $F$ is admissible then one of its maximal twigs is $[2]$.
		\item\label{item:long-twig} Assume that $F$ is an admissible (respectively, log canonical) fork, and some twig $T$ of $F$ has $\#T\geq 3$. Then either $F=\langle b,[2],[2],T\rangle$, or $T$ is a $(-2)$-twig and $\#T\leq 4$ (respectively, $\#T=5$).
	\end{enumerate}
\end{lemma}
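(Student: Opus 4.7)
The plan is to dispose of (a) and (b) by a classical Diophantine enumeration, then handle (c) by a length-versus-discriminant induction, and finally combine the two for (d) and (e).

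For (a) and (b), I would sort $d_1\leq d_2\leq d_3$ and split on $d_1$. If $d_1\geq 3$, then $\delta_F=\sum 1/d_i\leq 1$, with equality only at $(3,3,3)$. Otherwise $d_1=2$, and splitting on $d_2$ the inequality $\sum_{i\geq 2}1/d_i\geq \tfrac12$ forces $d_2\leq 4$, producing the triples $\{2,2,k\}$ (any $k\geq 2$, admissible), $\{2,3,3\},\{2,3,4\},\{2,3,5\}$ (admissible), together with the equality cases $\{2,3,6\},\{2,4,4\}$ (log canonical). The exclusion of $(-2)$-forks for log canonicity plays no role in this arithmetic enumeration; it is already built into the definition.

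For (c), I would use the standard continued-fraction recursion $d([a_1,\ldots,a_k])=a_1\,d([a_2,\ldots,a_k])-d([a_3,\ldots,a_k])$ with $d(\emptyset)=1$. A short induction yields $d(T)\geq \#T+1$ for every admissible chain $T$, with equality if and only if $T=[(2)_{\#T}]$. This bounds $\#T\leq d-1$, so for each $d\in\{2,3,4,5,6\}$ only finitely many admissible chains remain to be checked, and a direct computation with the recursion produces the listed forms --- in particular, the four chains $[5],[3,2],[2,3],[(2)_4]$ for $d=5$.

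Parts (d) and (e) then follow by combining (a)/(b) with (c). For (d), admissibility forces a $2$ in the triple, so some twig has $d=2$, hence equals $[2]$ by (c). For (e), assume a twig $T$ has $\#T\geq 3$; by the inequality from (c), $d(T)\geq 4$. Cross-referencing with the triples of (a): in the admissible case either the triple is $\{2,2,k\}$ and the other two twigs are $[2]$ (giving $F=\langle b;[2],[2],T\rangle$), or the triple is $\{2,3,4\}$ or $\{2,3,5\}$, and (c) forces $T=[(2)_3]$ or $T=[(2)_4]$ respectively, both $(-2)$-chains of length at most $4$. The log canonical case is analogous, using the triples from (b): the long twig arises only from $d=6$ in $\{2,3,6\}$, where $T=[(2)_5]$, or from $d=4$ in $\{2,4,4\}$, where $T=[(2)_3]$.

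The entire argument is routine arithmetic; the main organizational issue is merely keeping the case analysis in (e) tidy by running the classification of (c) separately over each admissible (or log canonical) triple from (a)/(b).
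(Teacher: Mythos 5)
Your argument is correct and is exactly the computation the paper has in mind: the paper gives no proof of this lemma, merely remarking that ``solving the inequality $\delta_F\geq 1$'' yields the list, and your Diophantine enumeration of the triples plus the induction $d(T)\geq \#T+1$ (with equality only for $(-2)$-chains) is the standard way to carry that out. One small point worth flagging: your own case analysis in (e) for log canonical forks produces, besides $T=[(2)_5]$ from $\{2,3,6\}$, the twig $T=[(2)_3]$ from $\{2,4,4\}$ (e.g.\ $F=\langle 2;[2],[4],[2,2,2]\rangle$), which shows that the paper's ``$\#T=5$'' in \ref{item:long-twig} should be read as ``$\#T\leq 5$''; you reproduce the correct classification but do not note this discrepancy with the literal statement.
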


We say that an snc divisor $D$ is a \emph{bench} if $D=T+T_1^{+}+T_2^{+}+T_{1}^{-}+T_{2}^{-}$, where $T$ is a rational chain, called the \emph{central chain}, and $T_{i}^{\pm}=[2]$ is a twig of $D$ meeting $\textnormal{tip}^{\pm}(T)$. If $T=[a_1,\dots,a_n]$ we say that $D$ is \emph{a bench of type $\lbr a_1,\dots,a_n \rbr$}.  A bench is \emph{log canonical} if its central chain is admissible, but not a $(-2)$-chain. Every log canonical bench contracts to a log canonical singularity; in this case $\cf(C)=1$ for each component $C$ of the central chain, and $\cf(T_{i}^{\pm})=\frac{1}{2}$ for each tip. Conversely, if $E$ is the exceptional divisor of a minimal resolution of a log canonical, but not log terminal surface singularity, then $E$ is one of the following: a log canonical fork, a log canonical bench, a smooth elliptic curve, or a circular divisor whose components are rational and have self intersection numbers at most $-2$, with at least one strict inequality  \cite[3.39]{Kollar_singularities_of_MMP}. In the last two cases, we have $\cf(C)=1$ for each component $C$ of $E$.

A normal surface singularity $\bar{X}$ is \emph{rational} if some (equivalently, any) resolution $\pi\colon X\to \bar{X}$ satisfies $R^{1}\pi_{*}\cO_{X}=0$. By Artin's criterion  \cite{Artin-Contractibility}, cf.\ \cite[Theorem 7.1.2]{Nemethi_book}, a negative definite snc divisor $D$ contracts to a rational singularity if and only if its \emph{fundamental cycle}  $Z$ satisfies $p_{a}(Z)=0$. We recall that $Z$ is the minimal divisor with support equal to the support of $D$ such that $Z\cdot E\leq 0$ for every component $E$ of $D$. It can be computed from the weighted graph of $D$ by the Laufer algorithm, see \cite[Corollary 6.6.7(b)]{Nemethi_book}. This criterion implies that every log terminal surface singularity is rational, and, more generally, a log canonical surface singularity is rational if and only if the exceptional divisor of its minimal log resolution is an admissible or log canonical chain, fork, or bench, 
see \cite[Example 7.1.5(g),(h)]{Nemethi_book}. 

A \emph{singularity type} of a point $p\in \bar{X}$ is the weighted graph of the exceptional divisor of its minimal resolution. 
If the latter is a chain, fork or bench of type $T$, we say that $p$ is \emph{of type $T$}. For $(-2)$-chains and forks we also use notation for Dynkin types $\rA_{k}$, $\rD_{k}$, $\rE_{k}$. Hence we have the following equalities of singularity types:
\begin{equation*}
	\rA_{k}=[(2)_{k}],\ \rD_{k}=\langle 2;[(2)_{k-3}],[2],[2]\rangle,\ \rE_{k}=\langle 2;[(2)_{k-4}],[2,2],[2]\rangle.
\end{equation*}
We remark that if $\cha\kk=0$, the type of a log terminal singularity determines the isomorphism type of its local ring \cite[4.9(2)]{KollarMori-bir_geom}, but it is not so if $\cha\kk>0$, see \cite{Artin_coindices}. 
A \emph{singularity type} of a normal surface $\bar{X}$ is a formal sum  of singularity types of all singular points of $\bar{X}$. 

More abstractly, by a \emph{singularity type} we mean a negative definite weighted graph, see Section \ref{sec:log_surfaces}. We say that a singularity type $\Gamma$ is canonical (respectively, lt, lc) if $\cf_{\Gamma}(v)\leq 0$ (respectively, $\cf_{\Gamma}(v)<1$, $\cf_{\Gamma}(v)\leq 1$) for every vertex $v$ of $\Gamma$; and is rational if and only if it satisfies Artin's criterion. This way, if $\Gamma$ is a singularity type of a normal surface $\bar{X}$, then all singularities of $\bar{X}$ are canonical,  lt, lc, or rational if and only if so is $\Gamma$.

\subsection{\texorpdfstring{$\P^1$}{P1}-fibrations}\label{sec:P1-fibrations}

Let $(X,D)$ be a log smooth surface. A \emph{$\P^{1}$-fibration} of $X$ is a morphism $p\colon X\to B$ whose general fiber $F$ is isomorphic to $\P^1$. We call $F\cdot D$ the \emph{height} of $p$ (with respect to $D$). Recall from Definition \ref{def:height} that the \emph{height} of $(X,D)$ is the infimum of the heights of all $\P^1$-fibrations of $X$ with respect to $D$. A \emph{witnessing} $\P^1$-fibration is any $\P^1$-fibration realizing this infimum. Height of a normal surface is the height of its minimal log resolution.
\smallskip

Fix a $\P^1$-fibration $p$ of $X$. A curve $C\subseteq X$ is \emph{vertical} if $p(C)$ is a point; otherwise $C$ is horizontal. Any divisor $T$ decomposes uniquely as a sum of its vertical and horizontal parts, i.e.\ $T=T\vert+T\hor$, where all components of $T\vert$ are vertical and all components of $T\hor$ are horizontal. We say that $T$ is \emph{horizontal} if $T=T\hor$. 

A curve $H$ is an $n$-section if $H\cdot F=n$ for a fiber $F$ of $p$. In the course of our classification, whenever $D$ contains such a multi-section, we need to consider two cases: when $p|_{H}$ is separable, so the Hurwitz formula applies; and, if $\cha\kk | n$, when it is not. To avoid such case-splitting, we prefer $\P^1$-fibrations with as few multi-sections in $D$ as possible. Therefore, we first study cases of large width, defined as follows.

\begin{definition}[Width]\label{def:width}
	Let $(X,D)$ be a log smooth surface of finite height.  
	We define the \emph{width} of $(X,D)$ as the maximal value of $\#D\hor$ among all witnessing $\P^{1}$-fibrations. We denote it by $\width(X,D)$. 
\end{definition}
Clearly, $\width(X,D)\leq \height(X,D)$, and the equality holds if and only if $X$ admits a $\P^1$-fibration such that $D\hor$ consists of $\height(X,D)$ $1$-sections.
\smallskip

A fiber $F$ of a $\P^1$-fibration is \emph{degenerate} if it is not isomorphic to $\P^1$. Any such $F$ contracts to a $0$-curve, i.e.\ can be recovered by inductively blowing up over a $0$-curve. This leads to the following description of $F$.

\begin{lemma}[{Degenerate fibers, cf.\ \cite[\S 4]{Fujita-noncomplete_surfaces}}]\label{lem:degenerate_fibers}
	Let $F$ be a degenerate fiber of a $\P^{1}$-fibration of a smooth projective surface. Then $F\redd$ is a rational tree with no branching $(-1)$-curve, and the following hold.
	\begin{enumerate}
		\item\label{item:unique_-1-curve} If a $(-1)$-curve $L$ has multiplicity $1$ in $F$ then  $\beta_{F\redd}(L)=1$ and $F\redd-L$ contains another $(-1)$-curve.
		\item\label{item:adjoint_chain}
		Assume that $F$ contains exactly one $(-1)$-curve $L$. Then $F$ has exactly two components of multiplicity $1$, they are tips of $F$, and one of the following holds. 
		\begin{enumerate}
			\item \label{item:columnar} The fiber $F$ is \emph{columnar}, i.e.\ $F\redd=[T,1,T^{*}]$ for some admissible chain $T$. 
			\item \label{item:not_columnar} Both components of multiplicity one belong to the same connected component of $F\redd-L$, say $C$. The divisor $F\redd-L-C$ is empty or a chain.
		\end{enumerate}
	\end{enumerate}
\end{lemma}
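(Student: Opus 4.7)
My approach is to prove the three assertions in order — tree structure of $F\redd$, no branching $(-1)$-curve, then Parts (a) and (b) — using the standard technique of contracting $(-1)$-curves in the fiber and inducting on $\#F$.

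First, numerical equivalence of fibers and adjunction give $F^{2}=0$, $F\cdot K_X=-2$, hence $p_{a}(F)=0$. The surjection $\cO_{F}\onto \cO_{F\redd}$ has nilpotent kernel, so the associated long exact cohomology sequence yields $p_{a}(F\redd)\leq p_{a}(F)=0$, whence $p_{a}(F\redd)=0$ by non-negativity. Each component $C_i$ of $F\redd$ therefore satisfies $p_{a}(C_i)=0$ (arithmetic genus of a subdivisor is bounded by that of the whole), so $C_i\cong \P^1$. Ordering the components so that each $C_k$ meets $C_1+\dots+C_{k-1}$ (possible since $F$ is connected by Zariski's lemma) and iterating $p_{a}(A+B)=p_{a}(A)+p_{a}(B)+A\cdot B-1$ forces $C_k\cdot (C_1+\dots +C_{k-1})=1$, so $F\redd$ is an snc rational tree. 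For the no-branching claim, suppose $L\subseteq F$ is a $(-1)$-curve with $\beta_{F\redd}(L)\geq 3$. Castelnuovo contraction $\phi\:X\to X'$ yields a $\P^{1}$-fibration $X'\to B$ whose fiber over $p(F)$ is $\phi_{*}F$; applying the previous paragraph to $X'\to B$ shows that $(\phi_{*}F)\redd$ is snc, but by construction at least three components meet at the smooth point $\phi(L)$, a contradiction.

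For Part (a), let $L$ be a $(-1)$-curve with $m_L=1$. From $F\cdot L=0$ and snc we get $0=-m_L+\sum_{C\neq L}m_C(C\cdot L)=-1+\sum_{C\text{ meets }L}m_C$, and positivity of multiplicities together with connectedness of $F$ forces $\beta_{F\redd}(L)=1$ with the unique neighbor $C$ having $m_C=1$. Contract $L$ to obtain $\phi\:X\to X'$ and $F'=\phi_{*}F$ with one fewer component; the component $C'=\phi(C)$ satisfies $(C')^{2}=C^{2}+1$ while all other self-intersections and multiplicities are preserved. Induct on $\#F$: if $\#F=2$ then $F=L+C$ and $F^{2}=0$ gives $C^{2}=-1$, so $C$ is the sought second $(-1)$-curve; otherwise $F'$ is degenerate, hence contains a $(-1)$-curve $L'$, which pulls back to a $(-1)$-curve in $F-L$ if $L'\neq C'$, and if $L'=C'$, the inductive hypothesis applied to the mult-one $(-1)$-curve $C'\subseteq F'$ yields a further $(-1)$-curve in $F'-C'$, which again pulls back to $F-L-C$.

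For Part (b), Part (a) immediately gives $m_L\geq 2$, ruling out that $L$ is adjacent to a mult-one $(-1)$-curve; and the relation $\sum_{C\text{ meets }L}m_C=m_L$ together with $\beta(L)\in\{1,2\}$ (from the no-branching result) pins down the possible local shapes of $F$ at $L$. Contracting $L$, the only components of $F$ whose images could become $(-1)$-curves in $F'$ are the (at most two) neighbors of $L$ whose self-intersection in $X$ equals $-2$; all other components keep self-intersection $\leq -2$. Induct on $\#F$: the inductive hypothesis applied to $F'$ supplies the list of mult-one tips of $F'$ and tells us whether $F'$ is columnar; blowing back up via the rule that blowing up an intersection point adds the two adjacent multiplicities (creating a new internal component with multiplicity $\geq 2$) while blowing up a smooth point of a component copies its multiplicity, one verifies that the mult-one components of $F$ are exactly the mult-one components of $F'$ plus possibly a new mult-one tip adjacent to $L$, still totalling two tips. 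Case analysis on $\beta(L)$ and the self-intersections of $L$'s neighbors then separates (i) the symmetric situation where both neighbors of $L$ lie on chains leading to mult-one tips, forcing the chains to be mutually adjoint and producing columnar type $[T,1,T^{*}]$, from (ii) the situation where one of the two sides of $L$ fails to terminate in a mult-one tip, in which case both mult-one tips land in the same connected component of $F\redd-L$ while the other side is empty or a chain.

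The main obstacle will be the bookkeeping in Part (b), where the inductive step involves translating the ``two mult-one tips of $F'$'' statement through the blowup while carefully distinguishing the columnar and non-columnar cases; in particular, the claim that columnar structure corresponds precisely to both mult-one tips lying on opposite sides of $L$ with adjoint chains on the two sides must be verified by inspecting how the adjoint-chain relation behaves under the inverse of a $(-1)$-curve contraction. The rest of the argument is straightforward induction once the correct inductive hypothesis is set up.
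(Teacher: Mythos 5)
First, note that the paper itself does not prove this lemma: it is imported from Fujita's work, and the only justification offered is the remark immediately preceding the statement, namely that a degenerate fiber is recovered by inductively blowing up over a $0$-curve. Your proposal reconstructs exactly that standard argument, so there is no divergence of method. The preliminary part (snc rational tree, no branching $(-1)$-curve) and Part (a) are correct and essentially complete as written: the genus bookkeeping, the contraction argument against a branching $(-1)$-curve, the identity $\sum_{C\cdot L=1}m_{C}=m_{L}$, and the induction in (a) all check out.

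Part (b) contains one genuine gap in the inductive step. You invoke the inductive hypothesis for $F'=\phi_{*}F$, but that hypothesis requires $F'$ to contain \emph{exactly one} $(-1)$-curve, whereas — as you yourself note — the $(-1)$-curves of $F'$ are precisely the images of the $(-2)$-neighbours of $L$, of which there could be $0$, $1$ or $2$. Both exceptional counts must be dispatched before the induction is legitimate. If the count is $0$, then $F'$ is a reducible fiber with no $(-1)$-curve; this is impossible, since $F'\cdot K_{X'}=-2$ forces some component $D$ with $D\cdot K_{X'}<0$, and $D^{2}<0$ then gives $D^{2}=-1$. Hence $L$ always has at least one $(-2)$-neighbour. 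If the count is $2$, the two new $(-1)$-curves meet at $\phi(L)$; summing the two relations $F'\cdot C_{i}'=0$ shows that two intersecting $(-1)$-curves force the fiber to equal $[1,1]$, whence $F=[2,1,2]$ and you are in the base case of (b.i). A smaller slip: since $m_{L}\geq 2$, the exceptional curve of the blow-up recovering $F$ from $F'$ never has multiplicity one, so the multiplicity-one components of $F$ and of $F'$ correspond bijectively; your ``plus possibly a new mult-one tip adjacent to $L$'' never occurs, and this, together with $m_{C}=m_{L}\geq 2$ for the unique neighbour when $\beta_{F\redd}(L)=1$, is what keeps the count at two and ensures that multiplicity-one tips of $F'$ remain tips of $F$. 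With these points supplied your plan closes; the adjoint-chain verification you flag for the columnar case is the content of \eqref{eq:adjoint_chain} and behaves as you expect under a single contraction.
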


Contracting all fibers to $0$-curves yields the following formula, see \cite[4.16]{Fujita-noncomplete_surfaces}, cf.\  \cite[Lemma 2]{Palka-AMS_LZ}.

\begin{lemma}
	\label{lem:fibrations-Sigma-chi}
	Let $(X,D)$ be a log smooth surface. Fix a $\P^1$-fibration of $X$. Let $\nu_{\infty}$ be the number of fibers contained in $D$. For a fiber $F$ let $\sigma(F)$ be the number of components of $F\redd$ not contained in $D$, i.e.\ $\sigma(F)\de \#(F\redd-D\wedge F\redd)$. Put  $\Sigma=\sum_{F}(\sigma(F)-1)$, where the sum runs over all fibers  not contained in $D$. Then
	\begin{equation*}\#D\hor+\nu_{\infty}+\rho(X)=\#D+2+\Sigma.\end{equation*}
\end{lemma}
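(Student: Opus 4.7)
The plan is to derive the claimed identity as a consequence of the standard Picard-number formula for a $\P^1$-fibered smooth projective surface, together with a partition of the fibers according to their position relative to $D$.

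First I would record the auxiliary identity
\[
\rho(X) \;=\; 2 + \sum_{F}\bigl(\#F\redd - 1\bigr),
\]
where $F$ ranges over all fibers of the chosen fibration $p\colon X\to B$ (smooth fibers contributing zero). This is standard, compare \cite[\S 4]{Fujita-noncomplete_surfaces}: one contracts $(-1)$-curves inside degenerate fibers until the fibration becomes relatively minimal, that is, a $\P^1$-bundle $X_0\to B$. By Tsen's theorem such a bundle admits a section, so $\rho(X_0) = \rho(B)+1 = 2$, since $\rho(B) = 1$ for any smooth projective curve. A degenerate fiber with $n$ reduced components requires exactly $n-1$ such contractions, each lowering the Picard number by one.

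Next I would partition the fibers into those contained in $D$ (there are $\nu_{\infty}$ of them, each with $\sigma(F)=0$ and $\alpha(F)\coloneqq \#(F\redd\wedge D)=\#F\redd$) and those not contained in $D$, for which $\#F\redd = \sigma(F)+\alpha(F)$. Bookkeeping then gives
\[
\sum_F\bigl(\#F\redd - 1\bigr)
\;=\; \sum_{F\subseteq D}\#F\redd \;-\; \nu_{\infty} \;+\; \sum_{F\not\subseteq D}\bigl(\sigma(F)-1\bigr) \;+\; \sum_{F\not\subseteq D}\alpha(F)
\;=\; \#D\vert \;-\; \nu_{\infty} \;+\; \Sigma,
\]
where the last equality uses the decomposition $\#D\vert = \sum_{F\subseteq D}\#F\redd + \sum_{F\not\subseteq D}\alpha(F)$. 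Substituting into the Picard-number formula and using $\#D = \#D\hor + \#D\vert$ gives the identity after moving $\nu_\infty$ to the left-hand side.

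The argument is entirely formal, so there is no real obstacle; the only small care needed is to separate correctly the three categories of fibers (entirely in $D$, partially in $D$, disjoint from $D$) and to observe that smooth fibers outside $D$ contribute trivially on both sides, since for such an $F$ one has $\#F\redd - 1 = 0$ and $\sigma(F)-1 = 0$.
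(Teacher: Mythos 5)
Your proof is correct and is essentially the argument the paper has in mind: the paper gives no detailed proof, only the hint "contracting all fibers to $0$-curves yields the following formula" together with citations to Fujita and Palka, and that contraction argument is precisely your derivation of $\rho(X)=2+\sum_F(\#F\redd-1)$ followed by the same bookkeeping over the partition of fibers. The accounting identity $\#D\vert=\sum_{F\subseteq D}\#F\redd+\sum_{F\not\subseteq D}\alpha(F)$ and the final substitution are exactly as expected, so there is nothing to add.
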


\subsection{Del Pezzo surfaces}

We now summarize basic properties of $\P^1$-fibrations of the minimal log resolutions of del Pezzo surfaces of rank one. Those properties will be used frequently in our series of articles. Crucially for this one, Lemma \ref{lem:delPezzo_criterion} implies that a log terminal surface of rank one and height at most two is automatically del Pezzo.
\begin{remark}[Finiteness of height]
	\label{rem:ht_finite_nonzero}
	Let $\bar{X}$ be a del Pezzo surface of rank one, not isomorphic to $\P^2$. Its minimal resolution has negative Kodaira dimension, so by the Enriques--Kodaira classification of surfaces it is $\P^1$-fibered, and therefore $\height(\bar{X})<\infty$. Also, $\height(\bar{X})>0$, as otherwise a general fiber of a witnessing $\P^1$-fibration would map to a $0$-curve on $\bar{X}$, contrary to the assumption $\rho(\bar{X})=1$.
\end{remark}

\begin{lemma}[Criterion for ampleness of $-K_{\bar{X}}$ using a $\P^1$-fibration]\label{lem:delPezzo_criterion}
	Let $\bar{X}$ be a normal surface of Picard rank one such that $K_{\bar{X}}$ is $\Q$-Cartier. Let $\pi\colon X\to \bar{X}$ be its resolution. Assume that $X$ admits a $\P^1$-fibration, let $F$ be its fiber and let $H_1,\dots, H_{h}$ be all horizontal components of $\Exc\pi$. Then $\bar{X}$ is del Pezzo if and only if
	\begin{equation}\label{eq:ld_phi_H}
		\sum_{j=1}^{h}\cf(H_j)\, H_j\cdot F<2.
	\end{equation}
	In particular, every log terminal surface of rank one and height at most two is del Pezzo.
\end{lemma}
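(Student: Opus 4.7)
The proof reduces to a single intersection-number computation. Since $\rho(\bar X)=1$ and $K_{\bar X}$ is $\Q$-Cartier, the class $[-K_{\bar X}]$ spans the one-dimensional space $N^{1}(\bar X)_{\Q}$. Hence $-K_{\bar X}$ is ample if and only if $-K_{\bar X}\cdot C>0$ for some (equivalently, every) curve $C\subseteq\bar X$. The natural test curve is $C=\pi(F)$, where $F$ is a general fiber of the given $\P^{1}$-fibration: since $\pi$ is an isomorphism over an open dense subset of $\bar X$ and $F$ is general, $\pi|_{F}\colon F\to\pi(F)$ is birational, so $\pi_{*}F=\pi(F)$ as $1$-cycles.

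The main step is the identity
\[
-K_{\bar X}\cdot\pi(F)\;=\;2-\sum_{j=1}^{h}\cf(H_j)\,H_j\cdot F.
\]
To produce it, apply the projection formula to $\pi^{*}(-K_{\bar X})\cdot F=-K_{\bar X}\cdot\pi_{*}F$, then substitute \eqref{eq:discrepancy} to split $-\pi^{*}K_{\bar X}\cdot F=-K_{X}\cdot F-\sum_{i}\cf(C_i)\,C_i\cdot F$. Adjunction on $F\cong\P^{1}$ with $F^{2}=0$ gives $K_{X}\cdot F=-2$, and among the exceptional components only the horizontal ones $H_1,\dots,H_h$ contribute, since vertical $C_i$ satisfy $C_i\cdot F=0$. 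The asserted equivalence between $-K_{\bar X}$ being ample and \eqref{eq:ld_phi_H} is immediate from this formula together with the numerical criterion recalled above.

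For the final assertion, let $\bar X$ be log terminal of rank one with $\height(\bar X)\leq 2$, take $X$ to be its minimal log resolution, and let $p\colon X\to B$ be a witnessing $\P^{1}$-fibration, so that $\Exc\pi\cdot F\leq 2$. Then $\sum_{j=1}^{h}H_j\cdot F\leq 2$, and log terminality gives $\cf(H_j)<1$ for each $j$. Since each $H_j$ is horizontal we have $H_j\cdot F>0$, so the inequality $\cf(H_j)\,H_j\cdot F<H_j\cdot F$ holds termwise; summing yields $\sum_{j}\cf(H_j)H_j\cdot F<\sum_j H_j\cdot F\leq 2$, and the criterion established above gives that $\bar X$ is del Pezzo.

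No substantive obstacle arises; the only subtlety to verify carefully is that for a general fiber $F$ the pushforward $\pi_{*}F$ equals $\pi(F)$ with multiplicity one, which follows from $\pi$ being birational together with generality of $F$.
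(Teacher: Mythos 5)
Your proof is correct and follows essentially the same route as the paper's: both reduce ampleness to positivity of $-K_{\bar X}\cdot\pi(F)$ using $\rho(\bar X)=1$, derive the identity $-K_{\bar X}\cdot\pi(F)=2-\sum_j\cf(H_j)\,H_j\cdot F$ from \eqref{eq:discrepancy} and adjunction, and conclude the log terminal case by the termwise estimate $\cf(H_j)<1$ together with $\sum_j H_j\cdot F\leq D\cdot F\leq 2$. The only difference is cosmetic: you spell out the projection-formula and multiplicity-one details that the paper leaves implicit.
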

\begin{proof}
	If $\bar{X}$ is del Pezzo then $-K_{\bar{X}}\cdot \pi(F)>0$, and conversely, if $\rho(\bar{X})=1$, $K_{\bar{X}}$ is $\Q$-Cartier and $-K_{\bar{X}}\cdot \pi(F)>0$ then $\bar{X}$ is del Pezzo. The inequality  $-K_{\bar{X}}\cdot \pi(F)>0$ is equivalent to \eqref{eq:ld_phi_H}. Indeed, by formula \eqref{eq:discrepancy} we have  
	\begin{equation*}
		-K_{\bar{X}}\cdot \pi(F)=-\Big(K_{X}+\sum_{T}\cf(T)\, T\Big)\cdot F=2-\sum_{j=1}^{h} \cf(H_{j})\, H_{j}\cdot F,
	\end{equation*}
	where the first sum runs over all components $T$ of $\Exc \pi$. This proves the first part of the lemma.
	
	Assume now that $\bar{X}$ is log terminal (hence $K_{\bar{X}}$ is $\Q$-Cartier), $\rho(\bar{X})=1$ and $\height(\bar{X})\leq 2$. Since $\height(\bar{X})\leq 2$, the minimal log resolution $(X,D)$ of $\bar{X}$ has a $\P^1$-fibration whose fiber $F$ satisfies $D\cdot F\leq 2$.  Because $\bar{X}$ is log terminal, we have $\sum_{j}\cf(H_j)\, H_j\cdot F<\sum_{j} H_{j}\cdot F=D\cdot F \leq 2$, so \eqref{eq:ld_phi_H} holds, as needed.
\end{proof}

\begin{lemma}[Elementary properties of minimal resolutions of del Pezzo surfaces]\label{lem:min_res}
	Let $\bar{X}$ be a del Pezzo surface. Let $\pi\colon X\to \bar{X}$ be its minimal resolution, and let $D\de \Exc \pi$. Then the following hold.
	\begin{enumerate}
		\item\label{item:ld<=1} Every component $C$ of $D$ satisfies $\cf(C)\geq 0$.
		\item\label{item:one-elliptic} Assume that $D$ has a component $C\not\cong \P^1$. Then $\cf(C)\geq 1$ and $2p_{a}(C)+\beta_{D}(C)<2-C^2$. Moreover, $C$ is a $1$-section of some $\P^1$-fibration of $X$ such that $D-C$ is vertical. In particular, $\height(\bar{X})=1$.
		\item\label{item:D-snc} The divisor $D$ is snc. In particular, $(X,D)$ is the minimal log resolution of $\bar{X}$.
		\item\label{item:-1-curves_off_D} Every curve $L\subseteq X$, $L\not\subseteq D$ satisfies $L^{2}\geq 2p_{a}(L)-1$. In particular, if $L^2<0$ then $L$ is a $(-1)$-curve.
	\end{enumerate}
\end{lemma}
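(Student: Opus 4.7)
The plan is to prove the four parts in order, combining minimality of $\pi$, ampleness of $-K_{\bar{X}}$, and the existence of a $\P^{1}$-fibration of $X$ (available since $\kappa(\bar{X})=-\infty$). For part \ref{item:ld<=1}, minimality of $\pi$ forbids $(-1)$-curves inside $D$, so by adjunction $K_X\cdot C_i\geq 0$ for every component $C_i$ of $D$. The identity $\pi^{*}K_{\bar{X}}\cdot C_i=0$ combined with \eqref{eq:discrepancy} reads $\Delta\cdot C_i=-K_X\cdot C_i\leq 0$, where $\Delta\de\sum_j \cf(C_j)\,C_j$. Since the intersection matrix of $D$ is negative definite with nonnegative off-diagonal entries, a standard M-matrix argument (a variant of Zariski's lemma) forces $\Delta$ to be effective, giving $\cf(C_i)\geq 0$. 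This step uses only the minimality of $\pi$, not the del Pezzo hypothesis.

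For part \ref{item:one-elliptic}, fix a component $C\subseteq D$ with $p_a(C)\geq 1$. Adjunction together with $C^{2}<0$ gives $K_X\cdot C=2p_a(C)-2-C^2>0$. Isolating the diagonal term in $\Delta\cdot C=-K_X\cdot C$ produces
$$(\cf(C)-1)\,C^{2}\;=\;2-2p_a(C)\;-\!\sum_{i\neq C}\cf(C_i)(C_i\cdot C)\;\leq\;0$$
by \ref{item:ld<=1} and $p_a(C)\geq 1$; since $C^{2}<0$ we conclude $\cf(C)\geq 1$. Because $\kappa(\bar{X})=-\infty$, $X$ admits some $\P^{1}$-fibration $p\colon X\to B$; vertical components of $p$ are rational, so $C$ is horizontal. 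Ampleness yields $\Delta\cdot F<2$ for a general fiber $F$ (Lemma \ref{lem:delPezzo_criterion}), which combined with $\cf(C)\geq 1$ forces $C\cdot F=1$: $C$ is a $1$-section, $p|_{C}\colon C\to B$ is an isomorphism, and $g(B)\geq 1$. Any second non-rational component $C'\subseteq D$ would, by the same argument, be a $1$-section of $p$, yielding $\Delta\cdot F\geq \cf(C)+\cf(C')\geq 2$, contradicting $\Delta\cdot F<2$; thus $C$ is unique. Rational components of $D-C$ cannot map dominantly to the curve $B$ of positive genus, so they are vertical; hence $D-C$ is vertical and $\height(\bar{X})\leq D\cdot F=1$, with equality by Remark \ref{rem:ht_finite_nonzero}. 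Finally, $2p_a(C)+\beta_{D}(C)<2-C^{2}$ is equivalent to $(K_X+D+C)\cdot C<0$; since $D-C$ is vertical, $\Delta\cdot F=\cf(C)<2$, and using the linear system $\{\pi^{*}K_{\bar{X}}\cdot C_i=0\}$ one eliminates the coefficients $\cf(C_i)$ of the (necessarily $(-2)$-) components of $D-C$ meeting $C$ and deduces the inequality from $\cf(C)<2$.

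For part \ref{item:D-snc}, every component of $D$ is either $\cong\P^{1}$ (smooth) or, by \ref{item:one-elliptic}, a $1$-section of a $\P^{1}$-fibration isomorphic to the smooth base $B$; all components are therefore smooth. The section $C$ meets every vertical component transversally at a smooth fiber point, and $D-C$ is a subdivisor of the fibers of $p$, hence an snc tree of smooth rationals. In the purely rational-singularity case, snc of $D$ is immediate from the classical description of the minimal resolution of a rational surface singularity. For part \ref{item:-1-curves_off_D}, for $L\not\subseteq D$ the pushforward $\pi_{*}L$ is nonzero and effective, so ampleness gives $0<-\pi^{*}K_{\bar{X}}\cdot L=-K_X\cdot L-\Delta\cdot L$; part \ref{item:ld<=1} and $L\not\subseteq D$ yield $\Delta\cdot L\geq 0$, so $K_X\cdot L<0$, and adjunction delivers $L^{2}>2p_a(L)-2$, i.e.\ $L^{2}\geq 2p_a(L)-1$. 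If moreover $L^{2}<0$, the only integer solution is $L^{2}=-1$ and $p_a(L)=0$, so $L$ is a $(-1)$-curve.

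The main obstacle is the final inequality in \ref{item:one-elliptic}: $\cf(C)<2$ is immediate, but $\beta_{D}(C)<2-2p_a(C)-C^{2}$ does not follow termwise because the $\cf$-values of components of $D-C$ meeting $C$ can be strictly less than $1$, so $\sum(C_i\cdot C)$ is not a priori bounded above by $\sum \cf(C_i)(C_i\cdot C)$. The fix is to exploit the fiber structure, most cleanly by successively contracting $(-1)$-curves of fibers outside $D$ to reduce to the $\P^{1}$-bundle case, where the inequality becomes the classical bound $-C^{2}>2g-2$ for the negative section of a Hirzebruch-type surface admitting an elliptic-cone contraction (cf.\ Example \ref{ex:ht=1}).
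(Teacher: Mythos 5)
Your treatment of \ref{item:ld<=1}, \ref{item:-1-curves_off_D}, and the first two assertions of \ref{item:one-elliptic} ($\cf(C)\geq 1$, $C$ a $1$-section with $D-C$ vertical) is correct and essentially the paper's argument. There are, however, two genuine gaps.

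First, the inequality $2p_a(C)+\beta_{D}(C)<2-C^2$. You correctly identify that it does not follow termwise from $\cf(C)<2$, but your proposed fix does not work: contracting the degenerate fibers to reach a $\P^1$-bundle destroys exactly the quantity you need to bound. On the minimal model the components of $D-C$ meeting $C$ have disappeared, $\beta$ has become $0$, and the "classical bound" $-C_S^2>2g-2$ you would prove there is strictly weaker than the required $-C^2>2g-2+\beta_D(C)$ (the self-intersection of the section does not drop by $\beta$ under the contraction). Your first sketch is also not salvageable as written, since the components of $D-C$ meeting $C$ need not be $(-2)$-curves. The paper instead argues locally: set $R=C+\sum_{i=1}^{\beta}V_i$ where the $V_i$ are the components of $D$ meeting $C$, and $\delta=\sum_i\frac{1}{-V_i^2}$; the explicit star-shaped-graph formula gives $\cf_R(C)=1-\frac{2-2g-\beta+\delta}{e-\delta}$, and the monotonicity $\cf_R(C)\leq\cf(C)<2$ (the latter from the ampleness inequality \eqref{eq:ld_phi_H}) is exactly equivalent to $2g+\beta<2+e$. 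No reduction to a minimal model is needed.

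Second, part \ref{item:D-snc} in the case where every component of $D$ is rational. Your appeal to "the classical description of the minimal resolution of a rational surface singularity" is unjustified: the hypothesis that all exceptional components are isomorphic to $\P^1$ does not make the singularity rational (a circular configuration of rational curves, for instance, resolves a non-rational singularity), and nothing proved so far excludes such configurations, tangencies $C_1\cdot C_2\geq 2$, or triple points. The paper rules these out by force: a tangency or a triple point among rational components implies, via the coefficient-monotonicity lemma applied to the subconfiguration, that at least two of the curves involved have $\cf\geq 1$; feeding this into the ampleness inequality \eqref{eq:ld_phi_H} for an arbitrary $\P^1$-fibration of $X$ (at most one of these curves can be horizontal, and then it must be a $1$-section, which is incompatible with it meeting two vertical components of $D$, or else it is a multisection with $\cf<1$) yields a contradiction. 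Some argument of this kind is needed; snc-ness is not automatic here.
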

\begin{proof}
	\ref{item:ld<=1} This follows from the minimality of $\pi$ and from \cite[Lemma 3.39(1)]{KollarMori-bir_geom}, cf.\ 
	  \cite[Lemma 7.3(3)]{Palka_almost_MMP}.

	\ref{item:one-elliptic} Put $g=p_{a}(C)$, $e=-C^2$, $\beta=\beta_{D}(C)$. By the adjunction formula $C\cdot K_{X}=2g-2+e\geq e$, as $C\not\cong \P^1$. By \cite[3.1.3]{Flips_and_abundance} we have $\cf(C)\geq \cf_{C}(C)=\frac{C\cdot K_{X}}{e}\geq 1$, which proves the first assertion.
		
	Since $\kappa(K_{X})\leq \kappa(K_{\bar{X}})=-\infty$ and $X\not\cong \P^2$, the surface $X$ admits a $\P^1$-fibration $p\colon X\to B$. Let $F$ be a fiber of $p$. Since $C\not\cong \P^1$, the curve  $C$ is horizontal, i.e.\ $C\cdot F\geq 1$. We have $C\cdot F\leq \cf(C)\, C\cdot F<2$ by \eqref{eq:ld_phi_H}, so $C\cdot F=1$. 
	Suppose $D-C$ has a horizontal component $C'$. Since $B$ has genus $g\geq 1$, we have $C'\not\cong \P^1$, so $C'$ is another section of $p$ with $\cf(C')\geq 1$. Thus $\cf(C)+\cf(C')\geq 2$, a contradiction with the inequality \eqref{eq:ld_phi_H}.
	
	It remains to prove that $2g+\beta<2+e$. Let $V_1,\dots,V_{\beta}$ be the components of $D$ meeting $C$, let $R\de C+\sum_{i}V_i$ and $\delta\de \sum_{i}\frac{1}{-V_i^2}$. Negative definiteness of $R$ is equivalent to the inequality $e>\delta$. Using \cite[3.1.10]{Flips_and_abundance} we compute $\cf_{R}(C)=1-\frac{2-2g-\beta+\delta}{e-\delta}$. By \cite[Lemma 3.1.3]{Flips_and_abundance} and \eqref{eq:ld_phi_H} we have $\cf_{R}(C)\leq \cf(C)<2$, so $e-\delta -2+2g+\beta-\delta<2e-2\delta$ and therefore $2g+\beta<2+e$, as needed.
		
	\ref{item:D-snc} Part \ref{item:one-elliptic} implies that all components of $D$ are smooth. Suppose $D$ has two components $C_1,C_2$ such that $C_1\cdot C_2\geq 2$. If, say, $C_1\not\cong \P^1$ then part \ref{item:one-elliptic} shows that $C_1$ is a section and $C_2$ is vertical, so $C_1\cdot C_2\leq 1$, which is false. Thus $C_1,C_2\cong \P^1$. 
	Because $D$ is negative definite, we have, say, $C_1^2\leq -3$ and $C_2^2\leq -2$. As before, \cite[Lemma 3.1.3]{Flips_and_abundance} implies that $\cf(C_i)\geq \cf_{E}(E_i)=1$, where $E=E_1+E_2$, $E_{1}\cdot E_{2}=2$ and $p_{a}(E_i)=0$. Consider any $\P^1$-fibration of $X$. If both $C_1$ and $C_2$ are horizontal then the inequality \eqref{eq:ld_phi_H} fails. Thus, say, $C_1$ is vertical. Then $C_2$ is a multi-section, so by \eqref{eq:ld_phi_H} and part \ref{item:ld<=1} we have $\cf(C_2)<1$, a contradiction.
	
	Similarly, if $D$ has three components $C_1$, $C_2$, $C_3$ meeting normally at one point then applying \cite[Lemma 3.1.3]{Flips_and_abundance} as before we see that $\cf(C_i)\geq 1$ for each $i$. Fix a $\P^1$-fibration of $X$. By the inequality  \eqref{eq:ld_phi_H} at most one of the $C_i$'s is horizontal, and it is a $1$-section, which is impossible, since it meets two components of $D\vert$.
	
	\ref{item:-1-curves_off_D} 	Part \ref{item:ld<=1} and formula \eqref{eq:discrepancy} give $K_{X}\cdot L\leq \pi^{*}K_{\bar{X}}\cdot L=K_{\bar{X}}\cdot \pi(L)<0$, since $-K_{\bar{X}}$ is ample. By adjunction,  $2p_{a}(L)-2=K_{X}\cdot L+L^2<L^2$, as needed.
\end{proof}

We will also need the following lemma. 
Recall that given a reduced divisor $D$ on a smooth surface with a fixed $\P^1$-fibration, we write $D\hor$ and $D\vert$ for horizontal and vertical part of $D$, see Section \ref{sec:P1-fibrations}.

\begin{lemma}[$\P^1$-fibrations on a minimal resolution of a surface of rank one]\label{lem:delPezzo_fibrations}
	Let $\bar{X}$ be a normal surface of Picard rank one, and let $\pi\colon (X,D)\to (\bar{X},0)$ be its minimal log resolution. Fix a $\P^1$-fibration of $X$, and let $F$ be its degenerate fiber. Then the following hold.
	\begin{enumerate}
		\item\label{item:Sigma} We have $\#D\hor=\Sigma+1$, where $\Sigma$ is as in Lemma \ref{lem:fibrations-Sigma-chi}.
		\item\label{item:-1_curves} Assume that $\bar{X}$ is del Pezzo. Then a component $L$ of $F$ is a $(-1)$-curve if and only if $L\not\subseteq D$.
		\item\label{item:rivet} Assume that a component $R$ of $F$ satisfies $R\cdot D\hor=F\cdot D\hor$. Then $R\subseteq D$ and $F\redd$ has exactly one component not contained in $D$.
	\end{enumerate}
\end{lemma}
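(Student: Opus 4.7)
For part (a), the plan is first to show $\rho(X)=1+\#D$: this follows because $D=\Exc\pi$ is negative definite, so its components span a $\#D$-dimensional subspace of $N^1(X)_{\R}$ orthogonal to $\pi^{*}N^1(\bar{X})_{\R}$, which has dimension $\rho(\bar{X})=1$. I would then rule out $\nu_{\infty}>0$: any fiber $F'$ contained in $D$ would be a nonzero effective element of the negative-definite lattice spanned by $D$ with $(F')^{2}=0$, a contradiction. Substituting $\nu_{\infty}=0$ and $\rho(X)=1+\#D$ into Lemma~\ref{lem:fibrations-Sigma-chi} immediately yields $\#D\hor=1+\Sigma$.

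For part (b), with $\bar{X}$ assumed del Pezzo, the direction $(\Rightarrow)$ is handled by Lemma~\ref{lem:min_res}\ref{item:D-snc}: it says that $\pi$ is simultaneously the minimal log resolution and the minimal resolution of $\bar{X}$, and minimality of a resolution forbids $(-1)$-curves in its exceptional divisor, so a $(-1)$-curve component of $F$ cannot lie in $D$. For the direction $(\Leftarrow)$, I would first establish $L^{2}<0$. Zariski's lemma gives $L^{2}\leq 0$ for components of degenerate fibers; the boundary case $L^{2}=0$ would force $F=mL$ as a divisor, but $L\cong\P^{1}$ by Lemma~\ref{lem:degenerate_fibers} and adjunction then give $-K_{X}\cdot L=2$, while $-K_{X}\cdot F=2$ from comparison with a general smooth fiber forces $m=1$, contradicting that $F$ is degenerate. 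Once $L^{2}<0$, Lemma~\ref{lem:min_res}\ref{item:-1-curves_off_D} directly yields that $L$ is a $(-1)$-curve.

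For part (c), I would start with the decomposition $F=m_{R}R+F''$, where $F''$ is the remainder of $F$ not involving $R$. The nonnegativity of $m_{R}(R\cdot D\hor)$ and $F''\cdot D\hor$, combined with the hypothesis $R\cdot D\hor=F\cdot D\hor$ and the fact that $F\cdot D\hor\geq\#D\hor\geq 1$ (from part (a)), forces $m_{R}=1$ and $F''\cdot D\hor=0$; the latter means no component of $F\redd-R$ intersects $D\hor$. To upgrade this to $R\subseteq D$ and $\sigma(F)=1$, I would combine the bound $\sigma(F)\leq 1+\Sigma=\#D\hor$ from part (a), the tree structure of $F\redd$ from Lemma~\ref{lem:degenerate_fibers}, and the observation that among components of $F\redd$ only $R$ meets $D\hor$. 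If either $R\not\subseteq D$ or $\sigma(F)\geq 2$, additional non-$D$ components of $F\redd$ would need to connect to $D\hor$ only through $R$ via the tree $F\redd$, and the combinatorial constraints on degenerate fibers should be incompatible with the numerical bound $\sigma(F)\leq\#D\hor$ together with the incidence pattern forced by Step~1.

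I expect the concluding structural step in part (c) to be the main obstacle: extracting \emph{both} $R\subseteq D$ and $\sigma(F)=1$ from the local data ``only $R$ meets $D\hor$, with $m_{R}=1$'' requires a careful combinatorial analysis of how the subtrees of $F\redd-R$ are distributed between $D\vert$-components and curves off $D$, combined with the global Picard-rank-one constraint encoded in part (a). Parts (a) and (b), by contrast, should reduce to essentially direct applications of Lemma~\ref{lem:fibrations-Sigma-chi}, Lemma~\ref{lem:min_res}, and Lemma~\ref{lem:degenerate_fibers}.
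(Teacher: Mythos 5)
Parts \ref{item:Sigma} and \ref{item:-1_curves} are fine: your arguments are the standard ones (the paper simply cites \cite[Lemma 2.6]{PaPe_MT} for these), and your derivation of $\rho(X)=1+\#D$, $\nu_{\infty}=0$, and the two directions of \ref{item:-1_curves} via Lemma \ref{lem:min_res}\ref{item:D-snc},\ref{item:-1-curves_off_D} all check out. Your first step in \ref{item:rivet} (multiplicity of $R$ equals one and $F\redd-R$ is disjoint from $D\hor$) also matches the paper.

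The concluding step of \ref{item:rivet}, which you yourself flag as the obstacle, is a genuine gap, and the tools you propose will not close it. Your plan rests on the premise that components of $F\redd$ not contained in $D$ "need to connect to $D\hor$"; there is no such requirement. Part \ref{item:rivet} does not assume $\bar{X}$ is del Pezzo, so a component of $F$ off $D$ need not be a $(-1)$-curve and has no a priori incidence with $D\hor$ -- it could sit deep inside the fiber, disjoint from $D\hor$, without violating anything. Moreover the only numerical bound you invoke, $\sigma(F)\leq 1+\Sigma=\#D\hor$, is a global bound that is far too weak: when $\#D\hor\geq 2$ it happily allows $\sigma(F)=2$, so no contradiction arises from it together with the tree structure of $F\redd$.

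What actually closes the argument (and what the paper does) is a \emph{local} count via negative definiteness. Set $F'=F\redd-R$ and $D'=D-(F\redd\wedge D)$. Your Step 1 shows $F'$ is disjoint from $D'$; since $F'$ is a proper part of a fiber and $D'\subseteq D$, the divisor $F'+D'$ is negative definite, so the Hodge index theorem gives $\#F'+\#D'\leq \rho(X)-1=\#D=\#D'+\#(F'\wedge D)+\#(R\wedge D)$, i.e.\ $\#F'-\#(F'\wedge D)\leq \#(R\wedge D)\leq 1$. This is the bound your approach is missing: it controls the number of components of $F'$ off $D$ by whether $R$ lies in $D$. For the reverse inequality one uses minimality of the log resolution: since $R$ has multiplicity one, Lemma \ref{lem:degenerate_fibers}\ref{item:unique_-1-curve} produces a $(-1)$-curve $L$ in $F'$; if $L\subseteq D$ then $\beta_{D}(L)\geq 3>\beta_{F'}(L)$ forces $L$ to meet $D'$, contradicting disjointness, so $L\not\subseteq D$ and $\#F'-\#(F'\wedge D)\geq 1$. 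Equality throughout then gives $\#(R\wedge D)=1$ (i.e.\ $R\subseteq D$) and exactly one component of $F\redd$ off $D$. I recommend replacing your concluding paragraph with this argument.
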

\begin{proof}
	Parts \ref{item:Sigma} and \ref{item:-1_curves} are direct consequences of Lemmas \ref{lem:fibrations-Sigma-chi} and \ref{lem:min_res}\ref{item:-1-curves_off_D}, proved in \cite[Lemma 2.6]{PaPe_MT}. 
	
	We prove part \ref{item:rivet}. Put $F'=F\redd-R$, $D'=D-(F\redd\wedge D)$. Since $R\cdot D\hor=F\cdot D\hor$, $F'$ is disjoint from $D'$ and $R$ has multiplicity one in $F$. The first condition implies that $F'+D'$ is negative definite, so by the Hodge index theorem $\#(F'+D')\leq \rho(X)-1=\rho(\bar{X})+\#D-1=\#D=\#(D-D\wedge R)+\#(F'\wedge D)+\#D'$. Therefore, $\#F'-\#(F'\wedge D)\leq \#(D-D\wedge R)\leq 1$.
	
	Since $R$ has multiplicity one in $F\redd$, by Lemma \ref{lem:degenerate_fibers}\ref{item:unique_-1-curve} $F'$ contains a $(-1)$-curve, say $L$. If $L\subseteq D$ then, since $\pi$ is minimal, we have $\beta_{D}(L)\geq 3>\beta_{F'}(L)$, so $L$ meets $D\hor\subseteq D'$, which is false. Thus $L\not\subseteq D$, so $\#F'-\#(F'\wedge D)\geq 1$. It follows that $\#(D-D\wedge R)=1$, i.e.\ $R\subseteq D$, and $\#F'-\#(F'\wedge D)=1$, i.e.\ $L$ is the unique component of $F'$ which is not contained in $D$, as needed.
\end{proof}

\begin{lemma}\label{lem:swap_lc}
	Let $(X,D)$ be a minimal resolution of a log canonical del Pezzo surface $\bar{X}$. Let $(X,D)\sqto (X',D')$ be a swap, see Definition \ref{def:vertical_swap}. Then the following hold.
	\begin{enumerate}
		\item\label{item:swap_lc_res} The log surface $(X',D')$ is a minimal log resolution of a log canonical surface $\bar{X}'$.
		\item \label{item:swap_lc_rho} We have 	$\rho(\bar{X}')=\rho(\bar{X})$.
		\item\label{item:swap_lc_cf}  For every component $T'$ of $D'$ we have $\cf_{D'}(T')\leq \cf_{D}(T)$, where $T$ is the proper transform of $T'$ on $X$.
		\item\label{item:swap_lc_dP} If $\rho(\bar{X})=1$ then $\bar{X}'$ is a log canonical del Pezzo surface of rank one.
	\end{enumerate}	
\end{lemma}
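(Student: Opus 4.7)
The plan is to reduce to a single elementary swap $\phi\colon X \to X'$ contracting a $(-1)$-curve $A$ with $A \cdot D \leq 2$ and $A \cdot C = 1$ for the unique $(-2)$-curve $C \subseteq D$ it meets, so $D' = \phi_{*}(D - C)$; by induction on the length of the swap sequence, this reduction is immediate. I would verify the assertions in the order: existence of $\bar{X}'$ as a normal surface, then \ref{item:swap_lc_rho}, then \ref{item:swap_lc_res} and \ref{item:swap_lc_cf} together, and finally \ref{item:swap_lc_dP}.

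First I record the local structure. The projection formula $\phi(T_1) \cdot \phi(T_2) = T_1 \cdot T_2 + (T_1 \cdot A)(T_2 \cdot A)$ implies that $D'$ has the same weighted graph as $D - C$, except that when $A \cdot D = 2$ the self-intersection of $T_{*}' = \phi(T_{*})$ increases by $1$, where $T_{*}$ is the second component of $D$ met by $A$. Since $C$ is by hypothesis the unique $(-2)$-curve met by $A$, we have $T_{*}^2 \leq -3$, so $(T_{*}')^2 \leq -2$, and $D'$ is snc with all components of self-intersection $\leq -2$. The main step (and the principal obstacle I expect) is to prove that $D'$ is negative definite on $X'$: via a Schur-complement identity this reduces to the inequality $((-M_{D-C})^{-1})_{T_{*}, T_{*}} < 1$, which I would verify by a case analysis on the classified log canonical singularity types appearing in $D$ (Lemma~\ref{lem:admissible_forks}), using the arithmetic input $T_{*}^2 \leq -3$. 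Once negative definiteness is in hand, $D'$ contracts to a normal surface $\bar{X}'$ with contraction $\pi'\colon X' \to \bar{X}'$; minimality of $(X',D')$ as a log resolution is automatic since $D'$ contains no $(-1)$-curve. Part~\ref{item:swap_lc_rho} then follows from $\rho(\bar{X}) = \rho(X) - \#D = (\rho(X') + 1) - (\#D' + 1) = \rho(\bar{X}')$.

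For parts~\ref{item:swap_lc_res} and \ref{item:swap_lc_cf} I would invoke the negativity lemma \cite[Lemma~3.39]{KollarMori-bir_geom}. Set $\psi \de \pi' \circ \phi\colon X \to \bar{X}'$ and consider
\[
G \de \pi^{*}K_{\bar{X}} - \psi^{*}K_{\bar{X}'},
\]
a $\Q$-Cartier divisor on the smooth surface $X$. A direct expansion using $\phi^{*}K_{X'} = K_X - A$ shows that $G$ is supported on $D + A$ with coefficients $\cf_D(C)$ on $C$, $\cf_D(T) - \cf_{D'}(\phi(T))$ on each $T \in D - C$, and $1 - \cf_{D'}(T_{*}')$ on $A$ (the latter being simply $1$ when $A \cdot D = 1$). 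The pushforward $\psi_{*}G = \cf_D(C)\,\psi(C)$ is effective by Lemma~\ref{lem:min_res}\ref{item:ld<=1}, noting that $C$ is \emph{not} $\psi$-exceptional, so $\psi(C)$ is a curve on $\bar{X}'$. Moreover, $G \cdot T = 0$ for every $T \in D - C$, while $G \cdot A = K_{\bar{X}} \cdot \pi(A) < 0$ since $-K_{\bar{X}}$ is ample and $\pi(A)$ is a curve (as $A \not\subseteq D$). Hence $-G$ is $\psi$-nef, and the negativity lemma yields $G \geq 0$, whence $\cf_{D'}(\phi(T)) \leq \cf_D(T)$ for each $T \in D - C$ (part~\ref{item:swap_lc_cf}) and $\cf_{D'}(T_{*}') \leq 1$. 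Combined with the log canonicity of $\bar{X}$, this gives $\cf_{D'}(T') \leq 1$ for every component of $D'$, proving part~\ref{item:swap_lc_res}.

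Finally, for part~\ref{item:swap_lc_dP}, assume $\rho(\bar{X}) = 1$; then $\rho(\bar{X}') = 1$ by~\ref{item:swap_lc_rho}, and $K_{\bar{X}'}$ is $\Q$-Cartier by~\ref{item:swap_lc_res}. Pick any curve $\bar{L} \subseteq \bar{X}$ distinct from $\pi(A)$, and let $L$ be its strict transform on $X$; then $L \not\subseteq \Supp(D + A) \supseteq \Supp G$, so
\[
\psi^{*}(-K_{\bar{X}'}) \cdot L = -K_{\bar{X}} \cdot \pi(L) + G \cdot L > 0
\]
by ampleness of $-K_{\bar{X}}$ and effectivity of $G$. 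Since $\rho(\bar{X}') = 1$, a single such positive intersection forces $-K_{\bar{X}'}$ to be ample, completing the proof.
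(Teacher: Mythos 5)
Your reduction to a single elementary swap, the computation of the weighted graph of $D'$, part~\ref{item:swap_lc_rho}, and the negativity-lemma argument for parts~\ref{item:swap_lc_cf} and \ref{item:swap_lc_dP} are all correct (and the latter is a genuinely different, arguably cleaner route than the paper's, which instead describes the graph of $D'$ combinatorially and cites a monotonicity lemma for discriminants). But everything downstream is conditional on the step you flag as "the principal obstacle" and then dispose of by an unexecuted case analysis, and that step contains a real gap: the inequality $((-M_{D-C})^{-1})_{T_{*},T_{*}}<1$ is \emph{false} in general for log canonical resolution graphs with $T_{*}^{2}\leq -3$. The diagonal entry in question equals $d(V-T_{*})/d(V)$, where $V$ is the connected component of $D-C$ containing $T_{*}$; if $T_{*}$ is the branching $(-3)$-curve of the log canonical fork $\langle 3;[2,2],[2,2],[2,2]\rangle$, or the central $(-3)$-curve of the bench $\lbr 3 \rbr$, this ratio equals exactly $1$, and the weight bump produces the degenerate affine configurations $\langle 2;[2,2],[2,2],[2,2]\rangle$ and $\lbr 2 \rbr$, which are only negative \emph{semi}-definite. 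So no case analysis using only the classification of lc types and $T_{*}^{2}\leq -3$ can close this step.

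What is missing is the one place where the paper actually uses the del Pezzo hypothesis at this stage: since $\pi(A)$ is a curve and $-K_{\bar{X}}$ is ample, $0>K_{\bar{X}}\cdot\pi(A)=\pi^{*}K_{\bar{X}}\cdot A=-1+\sum_{T}\cf_{D}(T)\,(T\cdot A)\geq -1+\cf_{D}(G)$ for each component $G\in\{C,T_{*}\}$ met by $A$ (using $\cf_{D}\geq 0$ from Lemma~\ref{lem:min_res}\ref{item:ld<=1}). Hence $\cf_{D}(T_{*})<1$ and $\cf_{D}(C)<1$, which rules out $T_{*}$ being a branching component of a log canonical fork or a central component of a bench (those have coefficient $1$) and forces $C$ and $T_{*}$ to lie in admissible components or in twigs of lc forks/benches. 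With that constraint the surviving configurations are exactly those for which your inequality holds (or, as the paper argues, for which the new graph is visibly a disjoint union of admissible chains and forks, hence negative definite without any matrix computation). Once you insert this observation before the Schur-complement step, the rest of your argument goes through.
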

\begin{proof}
	Denote the contraction of $D$ by  $\pi\colon X\to \bar{X}$. By induction, we can assume that the swap $\phi\colon (X,D)\sqto (X',D')$ is elementary. 
	Let $L,C$ be as in Definition \ref{def:vertical_swap}\ref{item:def-swap-elementary}. Let $B\neq [2]$ be the component of $D-C$ meeting $L$; put $B=0$ if there is no such. Let $\check{D}_0$ be a connected component of $D+L$ containing $L$, let $\check{D}_{0}'=\phi(\check{D}_{0})$, and let $D_0=\check{D}_0-L$, $D'_{0}=\check{D}_{0}'-\phi(C)$; so $D_{0}'=\phi_{*}D_0\wedge D'$ and $\phi$ is an isomorphism near $D-D_0$. The weighted graph of $D_{0}'$ is obtained from the one of $D_{0}$ by removing the vertex corresponding to $C$, and, if $B\neq 0$, by decreasing the weight of the vertex corresponding to $B$, which is at most $-3$. 
	
	\ref{item:swap_lc_res}  It is enough to prove that every connected component of $D_0'$ is an admissible chain or an admissible fork. Suppose the contrary. 
	Then the above description shows that $D_0$ has a connected component $V$ which is not an admissible chain or fork. Let $G\in \{B,C\}$ be a component of $V$ meeting $L$. By the adjunction formula and by Lemma \ref{lem:min_res}\ref{item:ld<=1} we have $0>K_{\bar{X}}\cdot \pi(L)\geq -1+\cf(G)$. Thus $\cf(G)<1$. Since $\bar{X}$ is log canonical, we get that $G$ lies in a twig of a log canonical fork or bench. The claim now follows from the above description of $D_0'$. 
	
	\ref{item:swap_lc_rho} We have $\rho(\bar{X}')=\rho(X')-\#D'=(\rho(X)-1)-(\#D-1)=\rho(X)-\#D=\rho(\bar{X})$, as claimed.  
	
	\ref{item:swap_lc_cf} Part \ref{item:swap_lc_res} implies that $\cf_{D'}(\phi(B))\leq 1$ and that $D'$ is negative definite, in particular $d(D')>0$. Now \ref{item:swap_lc_cf} follows from the above description of $D_0'$ and \cite[Lemma 7.5]{Palka_almost_MMP}, cf.\ \cite[L.1(2)]{Keel-McKernan_rational_curves}.
	
	\ref{item:swap_lc_dP} Denote the contraction of $D'$ by $\pi'\colon X'\to \bar{X}'$.  By \ref{item:swap_lc_res} and \ref{item:swap_lc_cf}, $\bar{X}'$ is a log canonical surface of rank one. It remains to prove that $K_{\bar{X'}}\cdot \bar{R}'<0$ for some curve $\bar{R}'$. Take for $\bar{R}'$ any curve on $\bar{X}'$ other than $\pi'(\phi(C))$. Let $R'=(\pi')^{-1}_{*}\bar{R}'$, $R=\phi^{*}R'$ and $\bar{R}=\pi_{*}R$, so $\bar{R}$ is a nonzero, effective divisor on $\bar{X}$. Since $\bar{X}$ is del Pezzo, we have $0>K_{\bar{X}}\cdot \bar{R}=\phi_{*}\pi^{*}K_{\bar{X}}\cdot R'$. By Lemma \ref{lem:min_res}\ref{item:ld<=1} we have $\cf_{D}(C)\geq 0$. Hence part \ref{item:swap_lc_cf} implies that the divisor $\phi_{*}\pi^{*}K_{\bar{X}}-(\pi')^{*}K_{\bar{X}'}=\cf_{D}(C)\, \phi(C)+\sum_{T\subseteq D-C}(\cf_{D}(T)-\cf_{D'}(\phi(T)))\, \phi(T)$ is effective. Since its support does not contain $R'$, we infer that $\phi_{*}\pi^{*}K_{\bar{X}}\cdot R'\geq (\pi')^{*}K_{\bar{X}}'\cdot R'=K_{\bar{X}'}\cdot \bar{R}'$. Hence $0>K_{\bar{X}'}\cdot \bar{R}'$, as needed. 
\end{proof}

\subsection{Inner and outer blowups, and basic properties of the logarithmic tangent sheaf}\label{sec:hi}

Let $(X,D)$ be a log smooth surface. Let $\phi\colon (X',D')\to (X,D)$ be a blowup at a point $p\in D$, with $D'=(\phi^{*}D)\redd$. The blowup $\phi$ is called \emph{inner} if $p\in \Sing D$ and \emph{outer} if $p\in D\reg$, see \cite[Definitions 2.3]{FKZ-weighted-graphs}. 

To motivate the discussion here and in the next section, we outline a crucial observation underlying our proof of Propositions \ref{prop:moduli}, \ref{prop:moduli-hi}. We will formalize it in Lemmas \ref{lem:inner} and \ref{lem:outer} below, after introducing suitable notation.
\begin{observation}[{Inner vs.\ outer blowups, cf.\ Example \ref{ex:h1}}]\label{obs:blowups}
	Fix a log smooth surface $(X,D)$, and let $\phi$ be a blowup as above. If $\phi$ is inner, then the isomorphism class of $(X',D')$ depends only on $(X,D)$ and on a combinatorial datum: the edge of the weighted graph of $D$ corresponding to the center $p$ of $\phi$. On the other hand, if $\phi$ is outer, then $(X',D')$ depends also on a continuous datum: the position of $p$ on the corresponding component of $D$, up to the action of $\Aut(X,D)$. 
	 Thus, from the viewpoint of our classification, constructions involving only inner blowups are easier, while outer blowups require some control of the automorphism groups. 
\end{observation}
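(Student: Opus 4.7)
The plan is to derive both assertions directly from the universal property of blowing up a closed point, combined with the construction of the weighted graph $\Gamma(D)$ recalled in Section \ref{sec:log_surfaces}. The observation is qualitative, so the justification essentially amounts to unpacking what each blowup does to $\Gamma(D)$ and to the point set of $D$, and then checking when two such blowups yield isomorphic pairs.

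For the inner case, the key point is that, by the very definition of $\Gamma(D)$, each node $p\in\Sing D$ contributes a single edge between the two vertices $v_{C_1},v_{C_2}$ whose components meet at $p$; conversely each edge between two distinct vertices arises from a unique node. Specifying an edge therefore pins down the center of $\phi$ up to $\Aut(X,D)$, and then the blowup $\phi\colon(X',D')\to(X,D)$ is determined up to canonical isomorphism. Concretely one checks that $\Gamma(D')$ is obtained from $\Gamma(D)$ by subdividing the chosen edge with a new vertex $v_L$ of weight $-1$ and decreasing by one each of the two incident endpoint weights. This recipe depends only on the edge and on $(X,D)$, giving the first claim.

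For the outer case, $p$ lies on a unique component $C\subseteq D$ at a smooth point, and $\Gamma(D')$ is produced from $\Gamma(D)$ by attaching a new pendant vertex $v_L$ of weight $-1$ to $v_C$ and decreasing the weight of $v_C$ by one. Since this combinatorial output is independent of the choice of $p\in C^{\circ}\de C\setminus\Sing D$, any variation between outer blowups along $C$ is genuinely geometric. To turn this into the precise statement, I would argue: given $p_1,p_2\in C^{\circ}$ with blowups $(X'_i,D'_i)$, an isomorphism $\Psi\colon(X'_1,D'_1)\to(X'_2,D'_2)$ must carry $L_1=\Exc\phi_1$ to $L_2=\Exc\phi_2$ (they are the unique new exceptional $(-1)$-curves added at this step), hence $\Psi$ descends through the blowdowns $\phi_i$ to an element of $\Aut(X,D)$ sending $p_1$ to $p_2$; conversely every such automorphism lifts uniquely. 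Thus the isomorphism classes of outer blowups along $C$ are parametrized by $C^{\circ}$ modulo the stabilizer of $C$ in $\Aut(X,D)$.

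There is no deep obstacle: the observation is a near-tautological consequence of the definitions of blowup and weighted graph. The only point requiring care is the descent argument in the outer case, which depends on identifying $L_i$ as an intrinsic feature of $(X'_i,D'_i)$. In the full generality of log smooth pairs one may need auxiliary hypotheses to single $L_i$ out among all $(-1)$-components of $D'_i$, which is presumably why the authors announce a formal treatment only in Lemmas \ref{lem:inner} and \ref{lem:outer}, after the necessary notation is in place.
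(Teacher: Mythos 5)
Your argument follows essentially the same route as the paper's own formalization (the discussion of $\beta_{+}\colon \cP_{+}(\cS')\to\cP_{+}(\cS)$ in Section \ref{sec:moduli}, later packaged as Lemmas \ref{lem:inner} and \ref{lem:outer}): the inner case is a tautology once one notes that an edge of $\Gamma(D)$ records a specific node of $D$, and the outer case reduces, via descent through the blowdown, to the action of automorphisms of $(X,D)$ on $C^{\circ}=C\setminus(D-C)$. Your description of the effect on the weighted graph agrees with the paper's definition of inner/outer blowups of combinatorial types.

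The one step you flag as delicate is indeed a genuine gap in the unmarked setting, and it is worth being precise about how the paper closes it. Your descent argument asserts that any isomorphism $\Psi\colon(X'_1,D'_1)\to(X'_2,D'_2)$ must carry $L_1=\Exc\phi_1$ to $L_2=\Exc\phi_2$ because these are "the unique new exceptional $(-1)$-curves added at this step" — but "new" is not an intrinsic property of the pair $(X'_i,D'_i)$: the divisor $D'_i$ may contain several $(-1)$-components that are indistinguishable combinatorially (or even interchanged by an automorphism), in which case $\Psi$ need not respect the chosen blowdowns and the descent to $\Aut(X,D)$ fails. The paper avoids this by never asserting the unmarked statement: it works throughout with triples $(X,D,\gamma)\in\cP_{+}(\cS)$, where the isomorphism of weighted graphs $\gamma$ is part of the data and equivalences are required to respect it. An equivalence then automatically sends $(D'_1)^{w}$ to $(D'_2)^{w}$ (the component labelled by the new vertex $w$), so it descends, and the fibers of $\beta_{+}$ over $(X,D,\gamma)$ are in bijection with the orbits on $D^{v}\setminus(D-D^{v})$ of the group of automorphisms of $X$ fixing $D$ \emph{componentwise} — note this is the componentwise stabilizer, not all of $\Aut(X,D)$ as in your parametrization. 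The informal Observation is only meant as motivation; the quotient by the full $\Aut(\cS)$-symmetry is reinstated later via the notions of $G$-faithful and almost faithful families. So your proof is correct in outline, but the clean statement genuinely requires the marked category (or case-by-case identification of $L_i$), exactly as you suspected.
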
	 

We call a birational map $(X',D')\map (X,D)$ \emph{inner} if it factors as a composition of inner blowups and blowdowns. We  distinguish two useful classes of inner maps.

Let $(X,D)$ be a log smooth surface such that $D$ contains a $0$-curve $F$ meeting exactly two components of $D-F$, once each. An \emph{elementary transformation on $F$}, see  \cite[Definition 2.10]{FKZ-weighted-graphs}, is an inner map $\sigma_{1}\circ \sigma_{2}^{-1}\colon (X',D')\map (X,D)$, where $\sigma_{1}$ is a blowup at one of the two points in $F\cap (D-F)$, and $\sigma_{2}$ is the contraction of $(\sigma_{1})^{-1}_{*}F$. Note that if $D$ contains a subchain  $[a,0,b]$, with $F$ being the middle component, then the reduced total transform of this subchain in $D'$ is of type $[a+1,0,b-1]$ or $[a-1,0,b+1]$, see \cite[\sec 2.3]{FKZ-weighted-graphs}.

For a fixed $\P^1$-fibration of $X$, a \emph{vertical inner snc-minimalization} $(X,D)\to (Y,B)$ is the contraction of vertical $(-1)$-curves of branching number $2$ in $D$ and its subsequent images so that $B$ has no such curves.
\smallskip

Following \cite[\sec 1]{FZ-deformations} we now describe the effect of inner and outer blowups on the cohomology of the logarithmic tangent sheaf, and we compute the latter in some basic cases.

\begin{lemma}[Change of $h^{i}(\lts{X}{D})$ under a blowup]\label{lem:blowup-hi}
	Let $(X,D)$ be a log smooth surface, let $\phi\colon X'\to X$ be a blowup at a point $p\in X$. Let $D'$ be either $ \phi^{-1}_{*}D$ or $(\phi^{*}D)\redd$. Put $\cT\de \lts{X}{D}$, $\cT'\de \lts{X'}{D'}$. 
	\begin{enumerate}
		\item\label{item:blowup-hi-exact} Let $r\in \{0,1,2\}$ be the number of components of $D$ containing $p$. We have an exact sequence
		\begin{equation*} 
		\begin{tikzcd}[cramped]
				0 \ar[r] & H^{0}(X',\cT') \ar[r, "d\phi"] & H^{0}(X,\cT)  \ar[r, "\textnormal{ev}_{p}"] & \kk^{r}  \ar[r] & H^{1}(X',\cT') \ar[r] & H^{1}(X,\cT) \ar[r] & 0.
		\end{tikzcd}		
		\end{equation*} 
		\item\label{item:blowup-h2} We have $h^{2}(\cT')=h^{2}(\cT)$. 
		\item\label{item:blowup-hi-inner} If $p\in \Sing D$, i.e.\ $\phi$ is inner, then $h^{i}(\cT')=h^{i}(\cT)$ for every $i$.
		\item\label{item:blowup-hi-outer} Assume that $p$ is a smooth point of $D$, i.e.\ $\phi$ is outer. If every vector field on $X$ tangent to $D$ vanishes at $p$ then $h^{1}(\cT')=h^{1}(\cT)+1$ and $h^0(\cT')=h^0(\cT)$, otherwise $h^{1}(\cT')=h^{1}(\cT)$ and $h^{0}(\cT')=h^0(\cT)-1$.
	\end{enumerate}
\end{lemma}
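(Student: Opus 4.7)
My strategy is to reduce everything to part (a), then derive (b), (c), (d) as consequences, and prove (a) from a single short exact sequence on $X$ together with a vanishing of $R^{1}\phi_{*}$.

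The key claim is that there is a short exact sequence of $\cO_{X}$-modules
\begin{equation*}
0 \to \phi_{*}\lts{X'}{D'} \to \lts{X}{D} \xrightarrow{\operatorname{ev}_{p}} \kk_{p}^{r} \to 0,
\end{equation*}
together with the vanishing $R^{1}\phi_{*}\lts{X'}{D'}=0$. I would verify both of these by a local computation in analytic or formal coordinates $(x,y)$ centered at $p$, choosing them so that the local equation of $D$ is $1$, $x$, or $xy$ according as $r=0,1,2$. The blowup $\phi$ has the two standard charts with coordinates $(x,t)$, $(s,y)$ where $y=xt$, $x=sy$. On each chart, writing down a basis of $\lts{X'}{D'}$ and computing the image of $d\phi$ inside $\phi^{*}\lts{X}{D}$ as a $2{\times}2$ matrix gives an explicit description of the cokernel of
$$
d\phi\colon \lts{X'}{D'} \to \phi^{*}\lts{X}{D},
$$
which turns out to be a locally free $\cO_{E}$-sheaf of a controlled type (a line bundle on $E$, or zero) depending on $r$ and on the choice between $D'=\phi^{-1}_{*}D$ and $D'=(\phi^{*}D)\redd$. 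Taking global sections on $E$ and using $\phi_{*}$ then yields a skyscraper sheaf at $p$ of length precisely $r$, and one checks directly that $R^{1}\phi_{*}$ of the subsheaf vanishes. The reason the count $r$ is the same for both admissible choices of $D'$ is that the difference is absorbed in the twisting of the cokernel on $E\cong \P^{1}$.

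Once the sheaf sequence on $X$ is in hand, part (a) is immediate from the Leray spectral sequence: the projection formula together with $R^{i}\phi_{*}\cO_{X'}=0$ for $i>0$ (since $p$ is a smooth point, so trivially has rational singularity) gives $R^{i}\phi_{*}\phi^{*}\lts{X}{D}=\lts{X}{D}$ in degree $0$ and zero in higher degree; combined with $R^{1}\phi_{*}\lts{X'}{D'}=0$ this reduces the Leray sequence to the six-term sequence of~(a). Part (b) then follows at once: the cokernel $\kk_{p}^{r}$ contributes nothing in degree $\geq 1$, so $H^{2}(X,\phi_{*}\lts{X'}{D'}) = H^{2}(X,\lts{X}{D})$, and Leray identifies the left-hand side with $H^{2}(X',\lts{X'}{D'})$.

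Parts (c) and (d) are linear-algebra consequences of (a). For (c) the observation is that if $p\in \Sing D$ then every \emph{local} log derivation $x\d_{x}$, $y\d_{y}$ vanishes at $p$, hence the evaluation map $\operatorname{ev}_{p}$ is actually the zero map on the image of the local sections and hence on all of $H^{0}$; splitting (a) then gives both $h^{0}$- and $h^{1}$-equalities (with the $\kk^{r}$ absorbed into the boundary map, whose image lives in the kernel of $H^{1}(X',\lts{X'}{D'}) \to H^{1}(X,\lts{X}{D})$, and this boundary is identified with zero by a compatibility check via $\phi_{*}$). For (d), $r=1$ and the map $\operatorname{ev}_{p}$ is either $0$ or surjective according to the stated dichotomy, giving the two cases.

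\textbf{Main obstacle.} The delicate point is verifying the length-$r$ count for the skyscraper cokernel in both conventions $D'=\phi^{-1}_{*}D$ and $D'=(\phi^{*}D)\redd$ simultaneously. The naive local computation gives different cokernel line bundles on $E$ in the two cases (differing by $\cO_{E}(1)$-twists), but after pushing down to $p$ the contributions conspire so that $\phi_{*}\operatorname{coker}(d\phi)$ has length exactly $r$ in both. Getting this bookkeeping right, especially making the zero map $\operatorname{ev}_{p}$ in (c) visible in coordinates, is where I would spend the most care. The secondary subtlety is the global statement in (d), namely identifying when $\operatorname{ev}_{p}$ is surjective with the condition that some global log vector field is nonzero at $p$; this follows once the sheaf sequence on $X$ is in place because then $\operatorname{ev}_{p}$ is literally evaluation at $p$ in the usual sense.
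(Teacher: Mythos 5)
Your overall route is the same as the paper's: reduce everything to a short exact sequence of sheaves on $X$ with a skyscraper cokernel at $p$ (the paper quotes this sequence from Flenner--Zaidenberg/Hartshorne rather than recomputing it, and first replaces $D'$ by $(\phi^{*}D)\redd$ using Lemma \ref{lem:h1}\ref{item:h1-1_curve}, which is cleaner than your plan of tracking both conventions for $D'$ through the twist of the cokernel on $E$), then read off (a)--(d) from the long exact sequence together with the vanishing of $R^{1}\phi_{*}$. So the strategy is the right one.

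The genuine problem is the length of the skyscraper. If you carry out the chart computation you propose, you will find that the cokernel of $\phi_{*}\lts{X'}{(\phi^{*}D)\redd}\into\lts{X}{D}$ is the evaluation $v\mapsto v(p)\in\bigcap_{i}T_{p}D_{i}$, a space of dimension $2-r$, \emph{not} $r$: for $r=0$ it is $\kk^{2}$ (a vector field must vanish at $p$ to lift to the blowup), for $r=1$ it is $\kk$ (the value $v(p)$ is automatically tangent to the unique branch), and for $r=2$ it is $0$ (with $D=\{xy=0\}$ the map $d\phi$ is already surjective onto $\phi^{*}\lts{X}{D}$, e.g.\ $x\partial_x=d\phi(x\partial_x-t\partial_t)$ and $y\partial_y=d\phi(t\partial_t)$ in the chart $y=xt$). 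Your write-up asserts length $r$ and then, to rescue part (c), claims simultaneously that $\operatorname{ev}_p$ vanishes on $H^{0}$ and that the connecting map $\kk^{r}\to H^{1}(\cT')$ is zero; for $r=2$ these two statements are incompatible with exactness at the middle term, and indeed a length-$r$ cokernel would force $h^{1}(\cT')=h^{1}(\cT)+2$ for an inner blowup, contradicting (c) (and contradicting the Euler-characteristic check on $\P^{2}$ with two lines blown up at their node, where $\chi(\lts{X}{D})=4$ before and after). With the correct exponent $2-r$ everything is immediate: (c) because the cokernel vanishes, (d) because $2-r=1$ there, and (b) as you argue. So the fix is to correct the length of the skyscraper --- equivalently, to read the $\kk^{r}$ in the displayed sequence as $\kk^{2-r}$ --- rather than to bend the cohomology sequence around the wrong count.
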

\begin{proof}
		First, we note that by \cite[Proposition 1.7(3)]{FZ-deformations} removing the $(-1)$-curve from $D'$ does not change $h^{i}(\lts{X'}{D'})$, see Lemma \ref{lem:h1}\ref{item:h1-1_curve} below, so we can assume $D'=\phi^{*}D\redd$. A direct computation, see Lemma 1.5(3) loc.\ cit.\ or \cite[Exercise 10.5]{Hartshorne_deformations} gives an exact sequence of sheaves 
		\begin{equation*} 
			\begin{tikzcd}[cramped]
				0 \ar[r] & \cT' \ar[r, "d\phi"] & \cT  \ar[r, "\textnormal{ev}_{p}"] & \kk^{r}_{p} \ar[r] & 0.
			\end{tikzcd}		
		\end{equation*}
		The lemma follows from the induced exact sequence in cohomology.
\end{proof}

\begin{remark}[Extension of Lemma \ref{lem:blowup-hi}\ref{item:blowup-hi-outer} in case  $\kk=\C$]\label{rem:outer-C}
		Assume $\kk=\C$. Let $(X',D')\to (X,D)$ be an outer blowup at $p\in D$ as in Lemma \ref{lem:blowup-hi}\ref{item:blowup-hi-outer}, and let $U$ be the orbit of $\Aut(X,D)$ containing $p$, so $\dim U\in \{0,1\}$. If $\dim U =1$ then $h^{1}(\lts{X'}{D'})=h^{1}(\lts{X}{D})$, otherwise $h^{1}(\lts{X'}{D'})=h^{1}(\lts{X}{D})+1$.
		
		Indeed, if $\dim U=1$ then then the differential at $g=\id$ of the morphism $\Aut(X,D)\ni g\mapsto g(p)\in X$ is nonzero, hence its image contains a vector field tangent to $D$ which does not vanish at $p$, so $h^{1}(\lts{X'}{D'})=h^{1}(\lts{X}{D})$ by  Lemma \ref{lem:blowup-hi}\ref{item:blowup-hi-outer}. Conversely, if such a vector field exists then its flow yields a one-dimensional family of biholomorphisms of $(X,D)$, which are regular since $X$ is projective, and move $p$, as needed.
		
		This growth of $h^1$ matches the growth of dimensions of representing families constructed in Lemma \ref{lem:outer}, see Observation \ref{obs:blowups} and Figure \ref{fig:outer}. The following Example \ref{ex:h1} illustrates the failure of the above statement in positive characteristic, which leads to exceptions in  Proposition \ref{prop:moduli-hi}.
\end{remark}

\begin{example}[Uniqueness and change of $h^1$ under outer blowups, see Figure \ref{fig:h1}]\label{ex:h1}
	To illustrate Observation \ref{obs:blowups} and Remark \ref{rem:outer-C}, we now follow the construction of minimal resolutions of del Pezzo surfaces of rank one and types $2\rA_1+\rA_3$, $\rA_1+\rA_5$, $\rA_7$, and $\rD_8$, cf.\ Figure \ref{fig:ht=1}. We outline the computation of $h^1(\lts{X}{D})$ and the classification of the resulting surfaces up to an isomorphism: they are unique except for type $\rD_8$ in case $\cha\kk=2$, when there is a one-dimensional family of pairwise non-isomorphic ones, see Table \ref{table:canonical}. This is a particular instance of  Lemma \ref{lem:ht=1_uniqueness}, in cases when $\cZ$ is as in \ref{lem:ht=1_reduction}\ref{item:uniq_easy}, \ref{item:uniq_n=3}, \ref{item:uniq_n=2}, \ref{item:uniq_fork}, see Figure \ref{fig:basic_ht=1} and Table \ref{table:ht=1_exceptions}.
	
	As a consequence, we will see that if $\cha\kk=2$ the sets $\cP(\rA_7)$ and $\cP(\rD_8)$ are not represented by almost universal families in the sense of Definition \ref{def:moduli-hi}. Indeed, the surface of type $\rA_7$ is unique up to an isomorphism, but its minimal log resolution has $h^1=1$. Similarly, there are infinitely many pairwise non-isomorphic  surfaces of type $\rD_8$, one of which has $h^1=2$ and the others have $h^1=1$. These exceptional cases of Proposition \ref{prop:moduli-hi}\ref{item:moduli-hi-1} and \ref{item:moduli-infinite} are listed in Table \ref{table:exceptions-to-moduli} as \ref{lem:ht=1_types}\ref{item:chains_not-columnar} with $T,T_i=[2]$, $m,r_i=2$; and \ref{lem:ht=1_types}\ref{item:TH_long_other_b>2} with $T=[2]$, $r,m=2$.
	
	\begin{figure}[htbp]
		\begin{tikzpicture}[scale=0.9]
			\begin{scope}
					\draw (0,3) -- (1.2,3);
					\node at (0.6,2.75) {\small{$0$}};
					\draw (0.1,3.1) -- (0.1,0.5);
					\node at (0.3,1.8) {\small{$0$}};
					\node at (1.2,1.8) {\small{$0$}};
					\draw (1,3.1) -- (1,0.5);		
					\draw[<-] (1.6,1.8) -- (2.6,1.8);
					\filldraw (0.1,3) circle (0.06);
					\draw[-stealth] (0.2,2.9) -- (0.2,2.5);
					\filldraw (1,3) circle (0.06);
					\draw[-stealth] (1.1,2.9) -- (1.1,2.5);
					\node at (0.6,0.2) {\small{$\P^1\times \P^1$}};
					\node at (0.6,-0.2) {\small{$(3,0)$}};
			\end{scope}	
			\begin{scope}[shift={(3,0)}] 
					\draw (0,3) -- (1.2,3);
					\draw (0.2,3.1) -- (0,2);
					\draw[dashed] (0,2.3) -- (0.2,1.3);
					\filldraw (1,1.8) circle (0.06);
					\draw (0.2,1.5) -- (0,0.5);
					\draw (1.1,3.1) -- (0.9,2.1);
					\draw[dashed] (0.9,2.3) -- (1.1,1.3);
					\draw (1.1,1.5) -- (0.9,0.5);
					\draw[<-] (1.5,1.8) -- (2.5,1.8);
					\node at (0.6,0.2) {\small{$2\rA_1+\rA_3$}};
					\node at (0.6,-0.2) {\small{$(3,0)$}};		
			\end{scope}	
			\begin{scope}[shift={(6,0)}]	
					\draw (0,3) -- (2,3);
					\draw (0.2,3.1) -- (0,2);
					\draw[dashed] (0,2.3) -- (0.2,1.3);
					\filldraw (0.1,1.8) circle (0.06);
					\draw (0.2,1.5) -- (0,0.5);
					\draw (1.1,3.1) -- (0.9,2.1);
					\draw (0.9,2.3) -- (1.1,1.3);
					\draw (1.1,1.5) -- (0.9,0.5);
					\draw[dashed] (0.9,1.8) -- (1.9,2);
					\draw[<-] (2.2,1.8) -- (3.2,1.8);
					\node at (0.6,0.2) {\small{$\rA_1+\rA_5$}};
					\node at (0.6,-0.2) {\small{$(2,0)$}};		
			\end{scope}	
			\begin{scope}[shift={(10.5,0)}]	
					\draw (-0.8,3) -- (1.8,3);
					\draw (0.2,3.1) -- (0,2.1);
					\draw (0,2.3) -- (0.2,1.3);
					\draw (0.2,1.5) -- (0,0.5);
					\draw (1.1,3.1) -- (0.9,2.1);
					\draw (0.9,2.3) -- (1.1,1.3);
					\draw (1.1,1.5) -- (0.9,0.5);
					\draw[dashed] (0.9,1.8) -- (1.9,2);	
					\draw[dashed] (0.2,1.8) -- (-0.8,1.6);
					\filldraw (1.56,1.95) circle (0.06);
					\draw[<-] (2.2,1.8) -- (3.2,1.8);
					\node at (0.6,0.3) {\small{$\rA_7$}};
					\node[right] at (-1.3,-0.1) {\small{$\cha\kk\neq 2: (1,0)$}};
					\node[right] at (-1.3,-0.5) {\small{$\cha\kk=2: (2,1)$}};
		\end{scope}	
		\begin{scope}[shift={(15,0)}]		
					\draw (-0.8,3) -- (1.8,3);
					\draw (0.2,3.1) -- (0,2.1);
					\draw (0,2.3) -- (0.2,1.3);
					\draw[dashed] (0.2,1.8) -- (-0.8,1.6);
					\draw (0.2,1.5) -- (0,0.5);
					\draw (1.1,3.1) -- (0.9,2.1);
					\draw (0.9,2.3) -- (1.1,1.3);
					\draw (1.1,1.5) -- (0.9,0.5);
					\draw (0.85,1.9) -- (1.85,2.1);
					\draw[dashed] (1.6,2.3) -- (1.8,1.3);
					\node at (0.6,0.3) {\small{$\rD_8$}};
					\node[right] at (-1.6,-0.1) {\small{$\cha\kk\neq 2: (0,0)$}};
					\node[right] at (-1.6,-0.5) {\small{$\cha\kk=2: (2,2)$ or $(1,1)$}};
		\end{scope}
		\end{tikzpicture}
		\caption{Example \ref{ex:h1}: $(h^{0}(\lts{X}{D}),h^1(\lts{X}{D}))$ after subsequent blowups.}
		\label{fig:h1} 
	\end{figure}

We will define a sequence of log surfaces 
$\begin{tikzcd}[cramped] (X_{0},\check{D}_{0}) & \ar[l,"\phi_1"'] (X_1,\check{D}_1) & \ar[l,"\phi_2"'] \dots \end{tikzcd}$,
where each $\check{D}_{j+1}$ is the reduced total transform of $\check{D}_j$; each $X_j$ for $j=1,2,3,4$ is a minimal resolution of a del Pezzo surface of rank one and type $2\rA_1+\rA_3$, $\rA_1+\rA_5$, $\rA_7$, $\rD_8$, respectively, with exceptional divisor $D_j$ equal to $\check{D}_j$ with all $(-1)$-curves removed. Put $h^{i}_{j}\de h^{i}(\lts{X_j}{\check{D}_j})$. Then $h^i_j=h^{i}(\lts{X_j}{D_j})$ by Lemma \ref{lem:h1}\ref{item:h1-1_curve}.
	
We start with $X_0=\P^1\times \P^1$, $D_0=V_0+H+V_{\infty}$, where $V_0=\{x=0\}$, $V_{\infty}=\{x=\infty\}$ are vertical lines, and $H=\{y=0\}$ is a horizontal line. Clearly, such $(X_0,D_0)$ is unique up to an isomorphism. We have $h^{0}_0=3$, $h^1_0=0$. In fact, every $\xi\in H^{0}(\lts{X_0}{D_0})$ is of the form $ax\frac{\d}{\d x}+by\frac{\d}{\d y}+cy^2\frac{\d}{\d y}$ for some $a,b,c\in \kk$. We denote such $\xi$, or its lift to $X_j$, by $\xi(a,b,c)$. The connected component of $\Aut(X_0,D_0)$ is generated by $G_{1}\de \{(x,y)\mapsto (\lambda x,y):\lambda\in \kk^{*}\}$, $G_{2}\de \{(x,y)\mapsto (x,\lambda  y):\lambda\in \kk^{*}\}$ and $G_{3}\de \{(x,y)\mapsto (x,\frac{y}{1+\lambda y}):\lambda\in \kk\}$. 

Let $\phi_1$ be a blowup at each $V_z\cap H$, $z=0,\infty$, and at its infinitely near point on the proper transform of $V_z$. It is an inner morphism, so $(h^0_1,h^1_1)=(h^0_0,h^1_0)=(3,0)$ by Lemma \ref{lem:blowup-hi}\ref{item:blowup-hi-inner}, and we can identify $\Aut(X_1,D_1)$ with $\Aut(X_0,D_0)$. Let $V_z'=(\phi_{1}^{-1})_{*}V_z$, let $E_{z}$ be the $(-1)$-curve in the fiber over $z$, and let $E_{z}^{\circ}=E_{z}\setminus (D_1-E_{z})$. In local coordinates $(x,y)$ with $V_0'=\{x=0\}$, $E_{0}=\{y=0\}$ we have $\phi_{1}(x,y)=(xy^2,y)$. A computation in these local coordinates shows that the vector field $\xi(a,b,c)$ vanishes along $E_0$ if and only if $a=2b$. Similarly, $\xi(a,b,c)$ vanishes along $E_{\infty}$ if and only if $a=2c$. These two conditions are nontrivial; they are linearly independent if $\cha\kk\neq 2$, and equivalent to $a=0$ if $\cha\kk=2$. Since $G_3\cong \G_a$, it acts trivially on $E_0^{\circ},E_{\infty}^{\circ}\cong \A^1_*$; and a computation shows that the subgroup generated by $G_1$ and $G_2$ acts transitively on $E_{0}^{\circ}\times E_{\infty}^{\circ}$.

Fix $p_{0}\in E_0^{\circ}$, $p_{\infty}\in E_{\infty}^{\circ}$. Let $\phi_2$ be a blowup at $p_0$, and let $\phi_3$ be a blowup at $\phi_{2}^{-1}(p_{\infty})$. Lemma \ref{lem:blowup-hi}\ref{item:blowup-hi-exact} and the above computation give $(h^{0}_2,h^{1}_2)=(2,0)$, and subsequently   $(h^0_3,h^1_3)=(1,0)$ if $\cha\kk\neq 2$ and $(2,1)$ if $\cha\kk=2$. The resulting log surfaces $(X_2,D_2)$ and $(X_3,D_3)$ are unique up to an isomorphism.

Let $E\subseteq D_3$ be a $(-1)$-curve in the fiber over $0$, let $E^{\circ}\de E\setminus (D_3-E)\cong \A^1$. We can choose $p\in E^{\circ}$ and local coordinates $(x,y)$ at $p$ such that $E^{\circ}=\{y=0\}$ and $\phi_2\circ\phi_3 (x,y)=(xy+1,y)$. The connected component of $\Aut(X_3,D_3)$ can be identified with $G_3$, acting in the above coordinates by $(x,y)\mapsto  ((1+\lambda y)((1+\lambda y)^2x+\lambda^2y+2\lambda),\frac{y}{1+\lambda y})$. Thus $\Aut(X_3,D_3)$ acts transitively on $E^{\circ}$ if $\cha\kk\neq 2$ and trivially if $\cha\kk=2$. 

If $\cha\kk\neq 2$, we compute that the vector field $\xi(2b,b,b)$ on $X_1$ lifts to $-b(3xy+x+2)\frac{\d}{\d x}+by(1+y)\frac{\d}{\d y}$, so it vanishes along $E^{\circ}$ if and only if $b=0$. If $\cha\kk\neq 2$ then the vector field $\xi(0,b,c)$ lifts to $bx(1+y)\frac{\d}{\d x}+(by+cy^2)\frac{\d}{\d y}$, so it always vanishes at $p$, and vanishes along $E^{\circ}$ if and only if $b=0$. 

Let $\phi_4$ be a blowup at $q\in E^{\circ}$. By Observation \ref{obs:blowups} if $\cha\kk\neq 2$ we get a unique log surface $(X_4,D_4)$, and if $\cha\kk=2$ we get a family of pairwise non-isomorphic ones, parametrized by the choice of $q\in E^{\circ}$. 
 By Lemma \ref{lem:blowup-hi}\ref{item:blowup-hi-exact} we have $(h^{0}_{4},h^{1}_{4})=(h^0_3,h^1_3+1)$ if $\cha\kk=2$ and $q=p$ and $(h^0_4,h^1_4)=(h^0_3-1,h^1_3)$ otherwise; so $(h^0_4,h^1_4)$ equals $(2,2)$ if $\cha\kk=2$, $q=p$; $(1,1)$ if $\cha\kk=2$, $q\neq p$; and $(0,0)$ if $\cha\kk\neq 2$.
\end{example}

\begin{lemma}[Elementary properties of $\lts{X}{D}$]\label{lem:h1}
	Let $(X,D)$ be a log smooth surface.
	\begin{enumerate}
		\item\label{item:h1_exact} For a component $C$ of $D$, denote by $\cN_{C}$ be the normal bundle to $C$. We have exact sequences 
		\begin{equation*}
			\begin{split}
				0\to \lts{X}{D}&\to \lts{X}{(D-C)}\to \cN_{C}\to 0, \quad\mbox{and}  \\
				0\to \lts{X}{D}&\to \cT_{X}\to\bigoplus_{i=1}^{k} \cN_{C_i}\to 0 \quad\mbox{where } C_1,\dots,C_k \mbox{ are the components of }D.
			\end{split}
		\end{equation*}
		\item\label{item:h1_positive_curve} If $C\subseteq D$ is an $m$-curve for some $m\geq 0$ then $h^{1}(\lts{X}{D})\geq h^{1}(\lts{X}{D-C})$.
		\item\label{item:h1-1_curve} If $C\subseteq D$ is a $(-1)$-curve then $h^{i}(\lts{X}{D})=h^{i}(\lts{X}{D-C})$.
		\item\label{item:h1-easy-vanishing} We have $h^{i}(\lts{\P^1}{D})=0$ for $i\in \{0,1,2\}$ in the following cases:
		\begin{enumerate} 
			\item\label{item:h1-P2} $X=\P^{2}$ and $D$ is a sum of four lines in a general position.
			\item\label{item:h1-diagonal} $X=\P^1\times \P^1$ and $D$ is a sum of two fibers, a horizontal line, and a diagonal. 
			\item\label{item:h1-grid} $X=\P^1\times \P^1$ and $D$ is a sum of $3$ horizontal and $3$ vertical lines.
		\end{enumerate}
		\item\label{item:h1-Fm} Let $X$ be the Hirzebruch surface $\F_{m}=\P(\cO_{\P^1}(m)\oplus\cO_{\P^1})$ for some $m\geq 0$, and let $D$ be a sum of $v$ fibers, a negative section $\Sec_m$, and possibly a section disjoint from $\Sec_m$. Then $h^{1}(\lts{X}{D})=\max\{0,v-3\}$.
		\item\label{item:h1-h2=h0} We have $h^{2}(\lts{X}{D})=h^{0}(\lts{X}{D}\otimes \cO_{X}(2K_{X}+D))$.
		\item\label{item:h1-h2-fibration} If $X$ has a $\P^1$-fibration with a fiber $F$ such that $F\cdot D\leq 3$ and $F$ meets $D$ normally then $h^{2}(\lts{X}{D})=0$. 
	\end{enumerate}
\end{lemma}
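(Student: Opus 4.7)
The plan is to derive every part of the lemma from a small number of short exact sequences of sheaves, together with Serre duality. The sequences in part (a) are standard and local: at a transverse node of $D$ with components $C_1=\{x=0\}$, $C_2=\{y=0\}$, the stalk of $\lts{X}{D}$ is freely generated by $x\frac{\d}{\d x}$ and $y\frac{\d}{\d y}$, so the quotient $\lts{X}{(D-C)}/\lts{X}{D}$ is generated by $\frac{\d}{\d x}$ and restricts to $\cN_C=\cO_C(C)$; iterating gives the second sequence. Parts (b), (c) then follow from the long exact cohomology sequence. In (b), $\cN_C\cong\cO_{\P^1}(m)$ has $h^1=0$ for $m\geq 0$, so $H^1(\lts{X}{D})\to H^1(\lts{X}{D-C})$ is surjective. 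In (c), $\cN_C\cong\cO_{\P^1}(-1)$ has $h^0=h^1=h^2=0$, so the induced maps on $H^0$, $H^1$, $H^2$ are all isomorphisms. Part (f) is Serre duality for the locally free sheaf $\lts{X}{D}$: taking determinants in the second exact sequence of (a) gives $\det\lts{X}{D}=\omega_X^{-1}(-D)$, hence $(\lts{X}{D})^{\vee}\cong\lts{X}{D}\otimes\omega_X(D)$, and
\[
h^2(\lts{X}{D})=h^0((\lts{X}{D})^{\vee}\otimes\omega_X)=h^0(\lts{X}{D}\otimes\cO_X(2K_X+D)).
\]

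For parts (d) and (e), I apply the second exact sequence of (a), using classical cohomology: $\cT_X$ is rigid with $h^0=8,6$ for $\P^2$, $\P^1\times\P^1$ respectively, while for $\F_m$ we have $h^0(\cT_X)=m+5$ and $h^1(\cT_X)=\max\{0,m-1\}$; each $\cN_{C_i}$ is a line bundle on $\P^1$ whose cohomology is immediate. The essential content lies in the evaluation map $\mu\:H^0(\cT_X)\to\bigoplus H^0(\cN_{C_i})$, whose kernel is the infinitesimal stabilizer of $D$ in $\Aut(X)$. For (d), in each case the configuration forms a projective frame so this stabilizer is trivial: four lines in general position in $\P^2$ form a frame of $\PGL_3$; the configuration in (ii) is rigidified provided the diagonal is transverse to the other three components; and three vertical plus three horizontal lines in (iii) pin down three points on each factor. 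A dimension count then forces $\mu$ to be an isomorphism, so $h^0=h^1(\lts{X}{D})=0$; vanishing of $h^1(\bigoplus\cN_{C_i})$ and $h^2(\cT_X)$ yields $h^2(\lts{X}{D})=0$. For (e), the $\PGL_2$-part of $\Aut(\F_m)$ rigidifies under $\min(v,3)$ fibers and the affine/translation part under the (possible) auxiliary section, giving $\mu$ of corank $\max\{0,v-3\}$; the claimed formula follows once the next connecting map $H^1(\cT_{\F_m})\to H^1(\cN_{\Sec_m})$ is shown to be an isomorphism. I would verify this by identifying it with the map from first-order deformations of $\F_m$ to first-order deformations of $\Sec_m$ inside $\F_m$ and noting that every nontrivial deformation of $\F_m$ (to $\F_{m-2k}$) must deform $\Sec_m$, since $\Sec_m$ is characterized as the unique curve of self-intersection $-m$.

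For part (g) I combine (f) with a fiberwise restriction. Given a $\P^1$-fibration with a fiber $F$ meeting $D$ transversally in $k\leq 3$ points, tensor the second exact sequence of (a) with $\cO_F$; transversality makes the $\mathrm{Tor}_1$-terms vanish, so $\lts{X}{D}|_F$ is a rank-two bundle on $\P^1$ of degree $2-k$. Using the split $\cT_X|_F\cong\cT_F\oplus\cN_{F/X}\cong\cO_F(2)\oplus\cO_F$ together with the local generators $x\frac{\d}{\d x},y\frac{\d}{\d y}$ of $\lts{X}{D}$ near each $p\in F\cap D$, I identify $\lts{X}{D}|_F\cong\cO_F(2-k)\oplus\cO_F$, since at each such $p$ the tangential component along $F$ must vanish while the normal component is free. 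Twisting by $(2K_X+D)\cdot F=k-4$ gives
\[
\lts{X}{D}(2K_X+D)|_F\cong\cO_F(-2)\oplus\cO_F(k-4),
\]
whose $H^0$ vanishes when $k\leq 3$. Since any nonzero global section of $\lts{X}{D}(2K_X+D)$ would restrict nontrivially to a generic fiber (and the hypotheses persist there), we get $h^0=0$, and (f) converts this to $h^2(\lts{X}{D})=0$.

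The main obstacle I anticipate is verifying the infinitesimal-stabilizer claims in (d) and (e) uniformly across characteristics: identifying $\ker\mu$ with the Lie algebra of $\Aut^{\circ}(X,D)$ and checking that it has the expected dimension is routine in characteristic zero but demands a direct linear-algebraic argument in positive characteristic, since one cannot simply appeal to reducedness of $\Aut^{\circ}$. A secondary technical point is the isomorphism $H^1(\cT_{\F_m})\to H^1(\cN_{\Sec_m})$ in (e); although a dimension count suggests it, the actual proof needs a deformation-theoretic argument showing that the map is injective, equivalently that no first-order deformation of $\F_m$ comes from a deformation fixing $\Sec_m$ scheme-theoretically.
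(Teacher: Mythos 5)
Your proposal is essentially correct, and for parts \ref{item:h1_exact}--\ref{item:h1-easy-vanishing}, \ref{item:h1-h2=h0} and \ref{item:h1-h2-fibration} it follows the same route as the paper: (a)--(c) via the local generators $x\tfrac{\d}{\d x},y\tfrac{\d}{\d y}$ and the long exact sequence; (d) by checking $h^0(\lts{X}{D})=0$ together with $h^{0}(\cT_X)=\sum_i h^0(\cN_{C_i})$ (the paper does exactly this dimension count, and your worry about positive characteristic is handled correctly by your plan to verify $\ker\mu=0$ linear-algebraically rather than via $\Aut^{\circ}$); (f) by Serre duality, where your determinant computation $\det\lts{X}{D}=\cO_X(-K_X-D)$ is equivalent to the paper's perfect pairing on $\Omega^1(\log D)$; and (g) by restriction to a general fiber, where your identification $\lts{X}{D}|_F\cong\cO_F(2-k)\oplus\cO_F$ is just a repackaging of the paper's filtration argument (and the splitting is not even needed, since $h^0$ of any extension of $\cO(k-4)$ by $\cO(-2)$ vanishes for $k\leq 3$).

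The one genuine divergence is part \ref{item:h1-Fm}. The paper avoids your connecting-map problem entirely: it computes the case $m=0$ directly and then reduces $m\geq 1$ to $m-1$ by an elementary transformation on a fiber in $D$, invoking Lemma \ref{lem:blowup-hi} (the inner/outer blowup bookkeeping), which needs only the existence of a vertical vector field not vanishing at the relevant point. Your route stays on $\F_m$ and therefore must prove that $H^1(\cT_{\F_m})\to H^1(\cN_{\Sec_m})$ is injective; the heuristic you offer (``a deformation of $\F_m$ must deform $\Sec_m$'') is not a proof, since that map records the obstruction to lifting $\Sec_m$, not its ``motion''. This gap is real but closable: from the relative tangent sequence $0\to\cT_{\F_m/\P^1}(-\Sec_m)\to\lts{\F_m}{\Sec_m}\to\pi^{*}\cT_{\P^1}\to 0$ one gets $h^1(\lts{\F_m}{\Sec_m})=h^1(\cO(\Sec_m+mf))+h^1(\cO(2f))=0$, and the long exact sequence of $0\to\lts{\F_m}{\Sec_m}\to\cT_{\F_m}\to\cN_{\Sec_m}\to 0$ then gives the injectivity you need. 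With that inserted, your corank count for $\mu$ does reproduce $\max\{0,v-3\}$. The paper's reduction is shorter and reuses machinery already set up for the rest of the article, whereas your argument is self-contained on a fixed $\F_m$; either is acceptable once the injectivity is actually proved.
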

\begin{proof}
	The first exact sequence in \ref{item:h1_exact} follows from the definition of $\lts{X}{D}$, the second one follows from the first by induction. For \ref{item:h1_positive_curve} and \ref{item:h1-1_curve} note that $\cN_{C}=\cO_{\P^{1}}(m)$, so $h^{i}(\cN_{C})=0$ for $i>0$ in case \ref{item:h1_positive_curve} and $i\geq 0$ in case \ref{item:h1-1_curve}; so the assertions follow from \ref{item:h1_exact}. To prove \ref{item:h1-easy-vanishing}, we compute directly that in each case  $h^{0}(\lts{X}{D})=0$ and $h^{0}(\cT_{X})=\sum_{i}(C_{i}^{2}+1)=\sum_{i}h^{0}(\cN_{C_i})$, so by \ref{item:h1_exact} for $i>0$ we have $h^{i}(\lts{X}{D})=h^i(\cT_{X})=0$. 
	
	Let $(X,D)$ be as in \ref{item:h1-Fm}. Put $h=\#D\hor \leq 2$. If $m=0$ then $h^{1}(\lts{X}{D})=h^{1}(\cO_{\P^{1}}(2-h))+h^{1}(\cO_{\P^{1}}(2-v))=h^{0}(\cO_{\P^{1}}(h-4))+h^{0}(\cO_{\P^{1}}(v-4))=\max \{0,v-3\}$, and similarly $h^{2}(\lts{X}{D})=0$, as needed. 
	
	Assume $m\geq 1$. Let $F$ be a fiber in $D$. If $h=2$ write $\{p\}=F\cap (D-\Sec_m)$; if $h=1$ let $p$ be any point of $F^{\circ}\de F\setminus \Sec_{m}$. Let $(X,D)\map (X',D')$ be the elementary transformation given by blowing up $p$ and contracting the proper transform of $F$. Then $(X',D')$ is as in \ref{item:h1-Fm}, too, with $m$ replaced by $m-1$. If $h=1$, it is easy to see that 
	there is a vertical vector field which vanishes only along $\Sec_{m}$ and some fiber. Thus in any case Lemma \ref{lem:blowup-hi} gives  $h^{i}(\lts{X}{D})=h^{i}(\lts{X'}{D'})$ for $i\in \{1,2\}$, so \ref{item:h1-Fm} follows by induction on $m$. 
	
	Parts \ref{item:h1-h2=h0} and \ref{item:h1-h2-fibration} are proved in \cite[Proposition 6.2]{FZ-deformations}, as follows. Let $\Omega^{k}(\log D)$ be the sheaf of $k$-forms with logarithmic poles along $D$. We have a perfect pairing $\Omega^{1}(\log D)\times \Omega^{1}(\log D)\to \Omega^{2}(\log D)=\cO_{X}(K_{X}+D)$. Thus $\lts{X}{D}^{\vee}=\Omega^{1}(\log D)=\lts{X}{D} \otimes \cO_{X}(K_{X}+D)$ and \ref{item:h1-h2=h0} follows from Serre duality.
	
	Let $F$ be as in \ref{item:h1-h2-fibration}, $\xi\in H^{0}(X,\lts{X}{D}\otimes \cO_{X}(2K_{X}+D))$. The sheaf $\cN_{F}\otimes \cO_{X}(2K_{X}+D)|_{F}\cong \cO_{F}(F\cdot D-4)$ has no global sections, so by \ref{item:h1_exact} $\xi|_{F}\in H^{0}(F,\cT_{F}\otimes \cO_{X}(2K_{X}+D)|_{F})=H^{0}(F,\cO_{F}(F\cdot D-2))$. Since $F$ meets $D$ normally, $\xi|_{F}$ has $F\cdot D$ zeros, so $\xi|_{F}=0$. Since $F$ is a general fiber we get $\xi=0$, as needed.
\end{proof}

\subsection{Representing families and moduli}\label{sec:moduli}

Recall that Proposition \ref{prop:moduli} asserts that there is a smooth family whose fibers are precisely all minimal log resolutions $(X,D)$ of del Pezzo surfaces of a fixed singularity type and small height; and Proposition \ref{prop:moduli-hi} allows to choose such a family related to infinitesimal deformations of $(X,D)$. In this section, we introduce some technical language which will be convenient to inductively construct such families. 

First, we explain our motivation. In the proof of Theorem \ref{thm:ht=1,2}, we will construct vertical swaps to certain simple log surfaces, shown in Figure \ref{fig:basic-intro}. To prove Propositions \ref{prop:moduli} and \ref{prop:moduli-hi} we will apply applying Observation \ref{obs:blowups} to each elementary vertical swap. The technical issue here is that the combinatorial data in \ref{obs:blowups}, i.e.\ the vertex or edge of the weighted graph where we need to blow up, is determined by the weighted graph of $D$ only up to the action of a certain symmetry group $G$. Therefore, we will use  \emph{$G$-equivariant blowups} and construct \emph{$G$-faithful families}. This approach requires to keep a consistent order of vertices of our graphs, which is possible due to the irreducibility assumption in   Definition \ref{def:moduli}\ref{item:def-family-smooth}. Roughly speaking, once we forget about this order, a $G$-faithful family will become \emph{almost faithful} in the sense of Definition \ref{def:moduli}\ref{item:def-family-faithful}. 
\smallskip

Now we introduce precise definitions. A \emph{combinatorial type} is a pair $(\Gamma,c)$, where $\Gamma$ is a weighted graph as in Section \ref{sec:log_surfaces} and $c\in \Z$. For a log surface $(X,D)$ with smooth $X$ and reduced boundary $D$ its combinatorial type consists of the weighted graph of $D$ (\emph{without} the fixed identification of vertices and components of $D$) and the degree of the top Chern class of $X$.  We will sometimes abuse the notation and use the same letter for a singularity type of a del Pezzo surface of rank one and the combinatorial type of its minimal log resolution. 

Let $\cS\de (\Gamma,c)$ be a combinatorial type. We denote by $\cP(\cS)$ the set of isomorphism classes of log surfaces of combinatorial type $\cS$, modulo an isomorphism. We write $\Aut(\cS)\de \Aut(\Gamma)$. 

To study families of log smooth surfaces we need slightly more information than the combinatorial type, namely, a consistent way to enumerate boundary components. To this end, we define $\cP_{+}(\cS)$ as the set of triples $(X,D,\gamma)$ such that $(X,D)\in \cP(\cS)$ and $\gamma\colon \Gamma\to \Gamma(D)$ is an isomorphism of weighted graphs, modulo the following equivalence relation: we say that $(X_1,D_1,\gamma_1)$ is \emph{equivalent to} $(X_2,D_2,\gamma_2)$ if there is an isomorphism $(X_1,D_1)\to (X_{2},D_{2})$ of log surfaces such that the induced isomorphism of weighted graphs, see Section \ref{sec:log_surfaces}, equals $\gamma_{2}\circ \gamma_{1}^{-1}$. In other words, the additional datum $\gamma$ enumerates the components (and singular points) of $D$, and in $\cP_{+}(\cS)$ we identify two log surfaces if some isomorphism between them preserves that fixed order. 

Given a triple $(X,D,\gamma)\in \cP_{+}(\cS)$, a vertex $v$ and an edge $e$ of $\Gamma$, we denote by $D^v$ the component of $D$ corresponding to the vertex $\gamma(v)$, and by $D^e$ the singular point of $D$ corresponding to the edge $\gamma(e)$.
\smallskip

Let $f\colon (\cX,\cD)\to B$ be a smooth family whose fibers $(X_b,D_b)$ are log smooth surfaces. For a component $\cC$ of $\cD$ write $C_{b}=\cC|_{X_b}$. By Definition \ref{def:moduli}\ref{item:def-family-representing}, $C_b$ is an irreducible component of $D_b$.

As in Section \ref{sec:log_surfaces}, we define the weighted graph $\Gamma(f)$ together with a bijection $v\mapsto \cD^v$ from the set of its vertices to the set of components of $\cD$, such that the weight of $\cD^v$ equals $((D^v_b)^2,p_{a}(D^v_b))$, with two vertices $v,w$ being connected by $(D^v_b\cdot D^w_b)$ edges of weight $1$, for some $b\in B$. Put $\cS(f)=(\Gamma(f),c_2(X_b))$. 

The combinatorial type $\cS(f)$ is well defined, i.e.\ does not depend on $b\in B$. Indeed, for any two components $\cC$, $\cC'$ of $\cD$, and any $b\in B$, we have $X_{b}\cdot \cC\cdot \cC'=C_b\cdot \cC'=C_b\cdot C_{b}'$, where the second equality follows from the projection formula for the embedding $X_b\into \cX$. Since $f$ is flat, the fibers $X_b$ are numerically equivalent, so the number $C_b\cdot C_{b}'$ does not depend on $b\in B$. Since the restriction $f|_{\cC}\colon \cC\to B$ is flat, the arithmetic genus of $C_b$ does not depend on $b\in B$, either \cite[Corollary III.9.10]{Hartshorne_AG}. Hence $\Gamma(f)$. Similarly, using adjunction we have $X_{b}\cdot K_{\cX}^2=K_{X_b}\cdot K_{\cX}=K_{X_{b}}^2$, so $K_{X_{b}}^2$ does not depend on $b\in B$, and since $\chi(\cO_{X_{b}})$ does not depend on $b\in B$, either, by Noether's formula neither does the number $c_{2}(X_b)$, as claimed.

We can now  relate families of del Pezzo surfaces to families of their minimal log  resolutions.

\begin{lemma}[{Fiberwise contractions give a global contraction, cf.\ \cite[Theorem 1.4]{Wahl_deformation-theory}}]\label{lem:blowdown}
	Let $f\colon (\cX,\cD)\to B$ be a smooth family. Assume that for every $b\in B$, we have a contraction $\pi_{b}\colon X_b\to \bar{X}_b$ such that $D_b=\Exc\pi_b$ and $\bar{X}_b$ is a del Pezzo surface. 
	Then there is a factorization $f=\bar{f}\circ \pi$, where $\pi|_{f^{-1}(b)}=\pi_{b}$ for every $b\in B$.
\end{lemma}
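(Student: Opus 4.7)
The plan is to build a simultaneous contraction $\pi\colon \cX\to \bar{\cX}$ by producing a single $f$-semiample line bundle on $\cX$ whose associated relative Proj recovers $\bar{X}_{b}$ on each fiber. The key preliminary observation is that the combinatorial type of $(X_{b},D_{b})$ is constant in $b$: since $f|_{\cC}\colon \cC\to B$ has irreducible fibers for every component $\cC$ of $\cD$ and $B$ is connected, the numbers $\cC|_{X_{b}}\cdot \cC'|_{X_{b}}$ and $p_{a}(\cC|_{X_{b}})$ are locally constant, hence constant on $B$. Consequently the discrepancy coefficients $\cf(\cC|_{X_{b}})$ defined by \eqref{eq:discrepancy} depend only on $\cC$; call them $\cf(\cC)\in \Q$ and fix $m>0$ with $m\cf(\cC)\in \Z$ for every component $\cC$ of $\cD$.

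The candidate line bundle is
\[\cL\de \cO_{\cX}\Bigl(-mK_{\cX/B}-\sum_{\cC\subseteq \cD}m\cf(\cC)\,\cC\Bigr),\]
which is Cartier because $\cX$ is smooth (being smooth over the smooth base $B$). Formula \eqref{eq:discrepancy} applied fiberwise yields $\cL|_{X_{b}}\cong \pi_{b}^{*}\cO_{\bar{X}_{b}}(-mK_{\bar{X}_{b}})$; in particular $\cL|_{X_{b}}$ is the pullback of an ample line bundle, and has zero intersection with every component of $D_{b}$. The Hilbert polynomial $j\mapsto \chi(\bar{X}_{b},-mjK_{\bar{X}_{b}})=\chi(X_{b},\cL^{\otimes j}|_{X_{b}})$ is constant on $B$ by flatness. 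Since $B$ is a quasi-compact connected variety, upper semicontinuity of cohomology together with Serre vanishing for ample line bundles on each normal projective surface $\bar{X}_{b}$ produces a single $k_{0}>0$ such that, uniformly in $b\in B$ and $j\geq 1$, the sheaf $\cO_{\bar{X}_{b}}(-mk_{0}jK_{\bar{X}_{b}})$ is very ample and has vanishing higher cohomology. The equality $\pi_{b*}\cO_{X_{b}}=\cO_{\bar{X}_{b}}$ (Zariski's connectedness theorem, as $\bar{X}_{b}$ is normal and $\pi_{b}$ is birational) transports these properties to $\cL^{\otimes k_{0}j}|_{X_{b}}$.

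Now I set
\[\bar{\cX}\de \Proj_{B}\bigoplus_{j\geq 0} f_{*}\cL^{\otimes k_{0}j},\qquad \bar{f}\colon \bar{\cX}\to B,\qquad \pi\colon \cX\to \bar{\cX}.\]
By cohomology and base change, each $f_{*}\cL^{\otimes k_{0}j}$ is locally free and its formation commutes with base change. Hence $\bar{f}^{-1}(b)=\Proj\bigoplus_{j\geq 0}H^{0}(\bar{X}_{b},-mk_{0}jK_{\bar{X}_{b}})\cong \bar{X}_{b}$, and under this identification $\pi|_{X_{b}}$ is the morphism associated to the complete linear system $|\pi_{b}^{*}(-mk_{0}K_{\bar{X}_{b}})|$. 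Since this linear system is pulled back from a very ample one on $\bar{X}_{b}$, it factors as $\pi_{b}$ followed by a projective embedding, whence $\pi|_{X_{b}}=\pi_{b}$ as required.

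The main obstacle is the uniform choice of $k_{0}$: one must verify that very ampleness and higher-cohomology vanishing hold simultaneously on all fibers. This follows from the constancy of the Hilbert polynomial along $f$, upper semicontinuity of $h^{i}$, quasi-compactness of $B$, and the Zariski openness of the very-ample locus in flat families of polarized projective schemes; once this is secured, the rest is a formal consequence of cohomology and base change and the universal property of relative Proj.
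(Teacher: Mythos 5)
Your construction is, at its core, the same as the paper's: you form the relative divisor $-mK_{\cX/B}-\sum_{\cC}m\cf(\cC)\,\cC$ (the paper's $m\cH$ up to a pullback from $B$), observe that it restricts on each fiber to $\pi_b^{*}(-mK_{\bar{X}_b})$, and contract by a uniform power. The difference is only in the globalization step: the paper uses base point freeness of $|m\cH|$ plus Stein factorization, while you use $\Proj_B$ of the relative section algebra via cohomology and base change. That second route is where a genuine gap appears.

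The problem is your transport of cohomological information across $\pi_b$. You invoke $\pi_{b*}\cO_{X_b}=\cO_{\bar{X}_b}$ to carry both the Hilbert-polynomial identity $\chi(\bar{X}_b,-mjK_{\bar{X}_b})=\chi(X_b,\cL^{\otimes j}|_{X_b})$ and the Serre vanishing of higher cohomology from $\bar{X}_b$ up to $X_b$. Both statements require in addition $R^{i}\pi_{b*}\cO_{X_b}=0$ for $i>0$, i.e.\ that the singularities of $\bar{X}_b$ are \emph{rational} — which is not an assumption of the lemma. The paper's del Pezzo surfaces of rank one include non-rational ones (elliptic cones, see Example \ref{ex:ht=1} with $g=1$ and Remark \ref{rem:non-rational}), and for those the Leray spectral sequence gives, for any sufficiently ample $M$ on $\bar{X}_b$, that $H^{1}(X_b,\pi_b^{*}M)\cong H^{0}(\bar{X}_b,R^{1}\pi_{b*}\cO_{X_b})\neq 0$; no choice of $k_0$ makes $H^{1}(X_b,\cL^{\otimes k_0 j}|_{X_b})$ vanish. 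Hence the assertion "by cohomology and base change, each $f_{*}\cL^{\otimes k_0 j}$ is locally free and its formation commutes with base change" is unjustified as written. The fallback via Grauert would require constancy in $b$ of $h^{0}(X_b,\cL^{\otimes k}|_{X_b})=h^{0}(\bar{X}_b,-mkK_{\bar{X}_b})$, which for $k\gg 0$ reduces to constancy of the length of $R^{1}\pi_{b*}\cO_{X_b}$ — a quantity not determined by the combinatorial type of $D_b$ and not addressed in your argument. Your proof is complete if one adds the hypothesis that all $\bar{X}_b$ have rational singularities, but the lemma as stated does not include it; the paper's version avoids the issue entirely by never using higher direct images — only base point freeness of $|m\cH|$ on $\cX$ (obtained from fiberwise semiampleness and Noetherianity of $B$) and a Stein factorization, after which the fiberwise contraction is identified by noting that $\pi|_{X_b}$ contracts exactly the curves with $m\cH|_{X_b}\cdot C=0$, i.e.\ the components of $D_b$.
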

\begin{proof}
	Since $\Gamma(f)$ is negative definite, for each vertex $v$ of $\Gamma(f)$ we have a rational number $\cf_{\Gamma}(v)$ such that $\pi_{b}^{*}K_{\bar{X}_b}=K_{X_b}+\sum_{v} \cf_{\Gamma}(v)\, D_{b}^{v}$ for all $b\in B$, see Section \ref{sec:singularities}. Put $\cH=-K_{\cX}-\sum_{v}\cf(v)\, \cD^{v}$. By adjunction  $K_{X_{b}}=K_{\cX}|_{X_{b}}$, see \cite[Proposition II.8.20]{Hartshorne_AG}, so  $\pi_{b}^{*}K_{\bar{X}_{b}}=-\cH|_{X_{b}}$. Since $\bar{X}_{b}$ is del Pezzo, it follows that $\cH|_{X_{b}}$ is $\pi_{b}$-semi-ample. Thus for every $b\in B$, the linear system $|m_{b}\cH|_{X_{b}}|$ is base point free for some $m_{b}\geq 1$. Since $B$ is Noetherian, $|m\cH|$ is base point free for some $m\geq 1$. Let $\Phi_{|m\cH|}\colon \cX\to \P^{N}$ be the induced morphism, let $\pi= (\Phi_{|m\cH|},f)\colon \cX\to \P^{N}\times B$, and let $\bar{f}$ be the restriction of the second projection to the image of $\pi$. Replacing $\pi$ by its Stein factorization we can assume that $\pi$ has connected fibers. Now the restriction $\pi|_{X_{b}}$ contracts precisely the curves which intersect $m\cH|_{X_{b}}=-m\pi_{b}^{*}K_{X_{b}}$ trivially, so $\Exc\pi|_{X_{b}}=D_{b}$, as needed.
\end{proof}
We remark that the morphism $\pi$ from Lemma \ref{lem:blowdown} above is a very weak simultaneous resolution of $\bar{f}$, see \cite{Tessier_simult-res,KSB}. If the surfaces $\bar{X}_{b}$ are log terminal then $\pi$ can be obtained by an $f$-MMP run for $(\cX,\cD)$.
\smallskip

We now continue with abstract definitions.  Let $f\colon (\cX,\cD)\to B$ be a smooth family of log surfaces, and let $\cS(f)=(\Gamma(f),c_2)$ be the combinatorial type of its fibers. For $b\in B$ let $\tilde{\gamma}_b\colon \Gamma(f)\to \Gamma(D_b)$ be the isomorphism of weighted graphs mapping a vertex $v$ to the vertex corresponding to $D^v_b$. Given an isomorphism of weighted graphs $\gamma\colon \Gamma\to \Gamma(f)$ write $\gamma_b\de \tilde{\gamma}_b\circ \gamma$ for $b\in B$.  We call the equivalence class $(X_b,D_b,\gamma_b)\in \cP_{+}(\cS)$ a \emph{fiber of $f$ over $b$ with respect to $\gamma$}, and we skip $\gamma$ whenever it is clear from the context.

Note that the definition of a fiber $(X_b,D_b,\gamma_b)$ depends on the choice of $\gamma$. On the other hand, the equivalence of two such fibers does not. Indeed, the fibers of $f$ over $b_1,b_2\in B$ with respect to $\gamma$ are equivalent if and only if there is an isomorphism $(X_{b_1},D_{b_{1}})\to(X_{b_2},D_{b_2})$ whose induced isomorphism of weighted graphs equals  $\gamma_{b_2}\circ\gamma_{b_{1}}^{-1}=(\tilde{\gamma}_{b_2}\circ \gamma)\circ (\tilde{\gamma}_{b_1}\circ \gamma)^{-1}=\tilde{\gamma}_{b_2}\circ\tilde{\gamma}_{b_1}^{-1}$, and the latter expression does not depend on $\gamma$.

Let $\Fib(f,\gamma)\subseteq \cP_{+}(\cS)$ be the set of equivalence classes of fibers $(X_b,D_b,\gamma_b)$, $b\in B$. We say that the family $f$ \emph{represents} a subset $\cR_{+}\subseteq \cP_{+}(\cS)$ if $\cR_{+}=\Fib(f,\gamma)$ for some isomorphism $\gamma\colon \Gamma\to \Gamma(f)$. We say that $f$ is \emph{faithful} if its fibers over distinct points of $B$ are non-equivalent.

Let $\cR$ be the image of $\cR_{+}$ by the forgetful map $\cP_{+}(\cS)\to \cP(\cS)$. Note that if $f$ represents $\cR_{+}$ then it represents $\cR$ in the sense of Definition \ref{def:moduli}\ref{item:def-family-representing}. We say that $f$ is \emph{$h^{1}$-stratified} if it is $h^{1}$-stratified as a family representing $\cR$, see Definition \ref{def:moduli-hi}\ref{item:def-h1-stratified}, and the fibers over each stratum are pairwise equivalent. In this case, the numbers $\#\cR_{+}$ and $\#\cR$ are both equal to the dimension of the base. We say that a faithful family $f$ is \emph{universal} if at every point of the base its formal germ is semiuniversal.

The following example illustrates the difference between $\cP_{+}(\cS)$ and $\cP(\cS)$; all non-trivial examples in this article will be constructed in a similar way. 

\begin{example}[Varying the fourth fiber on $\F_0$]\label{ex:4-points}
	Denote by $H_{z}\de\P^1\times \{z\}$, $V_{z}\de \{z\}\times \P^1$, $z\in \P^1$ the horizontal and vertical lines on $\P^1\times \P^1$. Put $B=\P^{1}\setminus \{0,1,\infty\}$. For any $b\in B$, the log surface $(\P^1\times \P^1,H_{0}+H_{\infty}+V_0+V_{1}+V_{\infty}+V_{b})$ has combinatorial type $\cS\de (\Gamma,4)$, where $\Gamma$ is a graph with six vertices $h_{0},h_{\infty},v_{0},v_{1},v_{\infty},v_{*}$ of weight $0$, and eight edges connecting $h_{0}$, $h_{\infty}$ with $v_0$, $v_1$, $v_{\infty}$, $v_{*}$, see Figure \ref{fig:grid}.
	\begin{figure}[htbp]\vspace{-0.5em}
		\begin{tikzpicture}[scale=2]
			\begin{scope}
				\draw (0.2,0.1) -- (0.2,0.9);
				\draw (0.6,0.1) -- (0.6,0.9);
				\draw (1,0.1) -- (1,0.9);
				\draw (1.4,0.1) -- (1.4,0.9);
				\node[right] at (0.15,0.45) {\small{$V_0$}};
				\node[right] at (0.55,0.45) {\small{$V_1$}};
				\node[right] at (0.95,0.45) {\small{$V_{\infty}$}};
				\node[right] at (1.35,0.45) {\small{$V_{\star}\! \leftarrow\! $ moves}};
				\draw (0,0.2) -- (2.2,0.2);
				\draw (0,0.8) -- (2.2,0.8);
				\node at (2.35,0.2) {\small{$H_0$}};
				\node at (2.35,0.75) {\small{$H_{\infty}$}};
				\draw[->] (0.8,0.1) -- (0.8,-0.2);
				\draw (0,-0.3) -- (2.2,-0.3);
				\filldraw (0.2,-0.3) circle (0.03); \node[above] at (0.2,-0.3) {\small{$0$}};
				\filldraw (0.6,-0.3) circle (0.03); \node[above] at (0.6,-0.3) {\small{$1$}};
				\filldraw (1,-0.3) circle (0.03); \node[above] at (1,-0.3) {\small{$\infty$}};
				\filldraw (1.4,-0.3) circle (0.03); \node[above right] at (1.3,-0.3) {\small{$\star\! \leftarrow\! $ moves}}; 
				\node at (2.35,-0.25) {\small{$\P^1$}};
			\end{scope}
			\begin{scope}[shift={(3.5,-0.25)}]
				\filldraw (0.2,0.5) circle (0.03);
				\filldraw (0.6,0.5) circle (0.03);
				\filldraw (1,0.5) circle (0.03);
				\filldraw (1.4,0.5) circle (0.03);
				\filldraw (0.8,0) circle (0.03);
				\filldraw (0.8,1) circle (0.03);
				\draw (0.8,0) -- (0.2,0.5) -- (0.8,1);
				\draw (0.8,0) -- (0.6,0.5) -- (0.8,1);
				\draw (0.8,0) -- (1,0.5) -- (0.8,1);
				\draw (0.8,0) -- (1.4,0.5) -- (0.8,1);
				\node at (1,0) {\small{$h_0$}};
				\node at (1,1) {\small{$h_{\infty}$}};
				\node at (0.1,0.4) {\small{$v_0$}};
				\node at (0.5,0.4) {\small{$v_1$}};
				\node at (1.1,0.6) {\small{$v_{\infty}$}};
				\node at (1.5,0.6) {\small{$v_{\star}$}};
				\node at (1.4,0.1) {\small{$\Gamma$}};
			\end{scope}
		\end{tikzpicture}
	\vspace{-0.5em}
		\caption{Example \ref{ex:4-points}.}
		\label{fig:grid}
	\end{figure}
	\vspace{-0.5em}
	
	Put $\cX=B\times \P^{1}\times \P^{1}$ and $\cD=\cH_{0}+\cH_{\infty}+\cV_0+\cV_1+\cV_{\infty}+\cV_{\star}$, where $\cH=B\times H_{z}$, $\cV_{z}=B\times V_{z}$, and $\cV_{\star}=\{(z,z)\in B\times \P^{1}\}\times \P^1$. The projection $f\colon (\cX,\cD)\to B$ is a faithful family representing $\cP_{+}(\cS)$. Indeed, define $\gamma\colon \Gamma\to \Gamma(f)$ so that $\gamma(v_z)$, $\gamma(h_z)$ correspond to $\cV_z$, $\cH_z$. Fix $(X,D,\gamma)\in \cP_{+}(\cS)$. Let $V_z=D^{v_z}$, $z\in \{0,1,\infty,\star\}$, $H_w=D^{h_w}$, $w\in \{0,\infty\}$. Then $|V_0|$ and $|H_0|$ induce two $\P^1$-fibrations of $X$, so $X$ is a rational surface of rank $c_{2}(X)-2=2$, and since $X$ admits two $\P^1$-fibrations, we have $X\cong \P^1\times \P^1$. Let $\pr_1,\pr_2$ be the two projections $X\to \P^1$. We can choose coordinates on each factor of $X$  such that $\pr_1(V_z)=z$ for $z\in \{0,1,\infty\}$ and  $\pr_2(H_w)=w$ for $w\in \{0,\infty\}$. This way,  $(X,D,\gamma)$ gets identified with the fiber of $f$ over $\pr_1(V_{\star})\in B$.

	We will see in Example \ref{ex:4-points-Aut} that $f$, viewed as a family representing $\cP(\cS)$, is almost faithful, see Definition \ref{def:moduli}\ref{item:def-family-faithful}. In fact, two fibers $(X_b,D_b)$ and $(X_{b'},D_{b'})$ are isomorphic if and only if $b$ and $b'$ lie in the same orbit of $\Aut(B)\cong S_3$. As a consequence, the restriction of $f$ over $B\setminus \{b\}$ for any $b\in B$ still represents $\cP(\cS)$ in the sense of Definition \ref{def:moduli}\ref{item:def-family-representing}: indeed, the missing fiber $(X_b,D_b)$ is isomorphic to $(X_{b'},D_{b'})$ for any $b'\neq b$ in the same $S_3$-orbit; such $b'$ exists since the $S_3$-action on $B$ has no fixed point. However, since $f$ is faithful, the fiber $(X_b,D_b,\gamma_b)$ is not equivalent to $(X_{b'},D_{b'},\gamma_{b'})$ for any $b'\neq b$, so the restriction of $f$ over $B\setminus \{b\}$ does not represent $\cP_{+}(\cS)$ in the sense of definitions above.
	
	Eventually, we note that $f$ is universal. By Lemma \ref{lem:h1}\ref{item:h1-Fm} for each $b\in B$ we have  $h^{1}(\lts{X_b}{D_{b}})=1=\dim B$, so it is enough to prove that $f$ is formally versal at $b$. Let $\check{f}\colon (\check{\cX},\check{\cD})\to T$ be a formal deformation of $(X_b,D_b)$. Write  $\check{\cD}=\check{\cH}_{0}+\check{\cH}_{\infty}+\check{\cV}_{0}+\check{\cV}_{1}+\check{\cV}_{\infty}+\check{\cV}_{\star}$ so that $\check{\cH}_{w}|_{X_{b}}=H_{w}$ for $w\in \{0,\infty\}$ and $\check{\cV}_{z}|_{X_{b}}=V_z$ for $z\in \{0,1,\infty\}$. Then $\check{\cV}_{\star}|_{X_{b}}=V_b$. 
	Since $\P^1\times \P^1$ is rigid, cf.\ \cite[Exercise III.9.10]{Hartshorne_AG}, we have an isomorphism $\psi\colon \check{\cX}\to T\times \P^1\times \P^1=\P^1_{T}\times_{T}\P^1_{T}$. As before, we choose coordinates on each factor $\P^1_{T}$ so that $\pr_1(\psi(\check{\cV}_z))=z$ for $z=0,1,\infty$ and $\pr_2(\psi(\check{\cH}_w))=w$ for $w= 0,\infty$: note that now we change coordinates over $T$, i.e.\ using matrices whose entries are regular functions on $T$. The point  $\pr_1(\psi(\cV_{\star}))\in \P^1_{T}\setminus \{0,1,\infty\}$ is given by a smooth morphism $\alpha\colon T\to \P^1\setminus \{0,1,\infty\}=B$, 
	and $\check{f}$ is the pullback of $f$ through $\alpha$, as claimed. 
\end{example}

The notions of an inner and outer blowup, reviewed in Section \ref{sec:hi}, extend naturally to combinatorial types. More precisely, fix a combinatorial type $\cS=(\Gamma,c_2)$ and a vertex $v$ (respectively, an edge $e$ joining $v_1$ and $v_2$) of $\Gamma$. We define an \emph{outer} (respectively, \emph{inner}) blowup $\cS'\to \cS$ \emph{at $v$} (respectively, \emph{at $e$}) by putting $\cS'=(\Gamma',c_2+1)$, where $\Gamma'$ is obtained from $\Gamma$ by reducing the weight of $v$ (respectively, of $v_1$ and $v_2$) by $1$, adding a new vertex $w$ of weight $-1$, and adding an edge between $w$ and $v$ (respectively, replacing $e$ by edges joining $w$ with $v_1$ and $v_2$). 
We call the inverse operation an \emph{outer} (respectively, \emph{inner}) \emph{blowdown} of the vertex $w$. 

A blowup $\beta\colon \cS'\to \cS$ as above yields a surjection $\beta_{+}\colon \cP_{+}(\cS')\to \cP_{+}(\cS)$ given by $\beta_{+}(X',D',\gamma')=(X,D,\gamma)$, where $(X,D)\in \cP(\cS)$ is a log surface obtained from $(X',D')$ by contracting the $(-1)$-curve $(D')^{w}$, and the isomorphism $\gamma \colon \Gamma\to \Gamma(D)$ is given (on vertices) by the restriction of $\gamma'$. If $\beta$ is an inner blowup at an edge $e$ then $\beta_{+}$ is bijective: indeed, $\beta_{+}^{-1}$ associates to each $(X,D,\gamma)\in \cP_{+}(\cS)$ the log surface $(X',D')\in \cP(\cS')$ obtained by blowing up the point $D^{e}$; together with the unique isomorphism $\gamma'$ extending $\gamma$. If $\beta$ is an outer blowup at a vertex $v$, then the elements of the fiber $\beta_{+}^{-1}(X,D,\gamma)$ are in one-to-one correspondence with the orbits of the action  on $D^{v}\setminus (D-D^{v})$ of the group of those automorphisms of $X$ which fix $D$ componentwise. Indeed, fix $(X_{i}',D_{i}',\gamma_{i}')\in \beta_{+}^{-1}(X,D,\gamma)$, $i\in \{1,2\}$, let $\beta_{i}\colon (X_i,D_i)\to (X,D)$ be the contraction of $(D'_{i})^w$ and let $p_i=\beta((D'_{i})^w)\in D^{v}\setminus (D-D^{v})$. Since $\gamma_i'$ restricts to $\gamma$, we have $\beta_{i}((D_i')^{q})=D^{q}$ for every vertex $q$ of $\Gamma$. If $(X'_{i},D'_{i},\gamma_i)$ for $i\in\{1,2\}$ are equivalent then there is an isomorphism $\alpha'\colon (X_1',D_1')\to (X_2',D_2')$ such that $\alpha'((D_{1}')^{w})=(D_{2}')^{w}$ and $\alpha'((D_{1}')^{q})=(D_{2}')^{q}$ for each $q$, so $\alpha'$ descends to $\alpha\in \Aut(X,D)$ such that $\alpha(p_1)=p_2$ and $\alpha(D^{q})=D^q$ for each $q$. Conversely, any such $\alpha$ lifts to an equivalence between $(X_i,D_i,\gamma_i')$, as claimed.
\smallskip

Let $G$ be a subgroup of $\Aut(\cS)$. A \emph{$G$-equivariant blowup} at an edge $e$ (respectively, at a vertex $v$) is a composition $\cS'\to \cS$ of blowups at $ge$ (respectively, at $gv$) for all $g\in G$. Note that each element of $G$ uniquely extends to an automorphism of the new graph, so we can identify $G$ with a subgroup of $\Aut(\cS')$ and unambiguously speak about $G$-equivariant morphisms or birational maps.

 Given two elements $(X_{i},D_{i},\gamma_{i})\in \cP_{+}(\cS)$ and an isomorphism of log surfaces $\phi\colon (X_1,D_1)\to (X_2,D_2)$ we put $\phi_{+}\de \gamma_{2}^{-1}\circ \Gamma(\phi)\circ \gamma_{1}\in \Aut(\cS)$, where $\Gamma(\phi)\colon \Gamma(D_{1})\to \Gamma(D_2)$ is the induced isomorphism of weighted graphs, see Section \ref{sec:log_surfaces}. Then $\phi(D_{1}^{v})=D_{2}^{\phi_{+}(v)}$ for each vertex $v$.
 
 Fix $\cR_{+}\subseteq \cP_{+}(\cS)$, and let $f\colon (\cX,\cD)\to B$ be a family representing $\cR_{+}$, i.e.\ $\cR_{+}=\Fib(f,\gamma)$ for some isomorphism $\gamma\colon \Gamma\to \Gamma(f)$. We denote by $\Aut(f)\leq \Aut(\cX,\cD)$ the  group of automorphisms mapping fibers of $f$ to fibers of $f$. Any $\alpha\in \Aut(f)$ induces an automorphism of $B$, denoted by the same letter; and an element $\alpha_{+}\in \Aut(\cS)$ given by  $\alpha(\cD^{\gamma(v)})=\cD^{\gamma(\alpha_{+}(v))}$ for each vertex $v$ of $\Gamma$. Note that for every $b\in B$, $\alpha_{+}$ is equal to the element $(\alpha|_{X_{b}})_{+}$ of $\Aut(\cS)$ associated to the isomorphism $\alpha|_{X_{b}}\colon (X_b,D_b)\to (X_{\alpha(b)},D_{\alpha(b)})$.

We say that $f$ is \emph{$G$-faithful} if there is a finite subgroup $G_{f}\leq \Aut(f)$ whose image in $\Aut(\cS)$ equals $G$, such that for every isomorphism $\phi\colon (X_{b_{1}},D_{b_{1}})\to (X_{b_{2}},D_{b_{2}})$ with $\phi_{+}\in G$ there is $\alpha\in G_f$ such that $\alpha_{+}=\phi_{+}$ and $\alpha(b_1)=b_2$. As before, a $G$-faithful family is \emph{universal} if it is formally semiuniversal at each $b\in B$.

If $\cR$ is the image of $\cR_{+}$ by the forgetful map $\cP_{+}(\cS)\to \cP(\cS)$, then an  $\Aut(\cS)$-faithful family representing $\cR_{+}$ is an almost faithful family representing $\cR$ in the sense of Definition \ref{def:moduli}\ref{item:def-family-faithful}.

\begin{example}[Varying the fourth fiber on $\F_0$, continued]\label{ex:4-points-Aut}
	As in Example \ref{ex:4-points}, let $\cS$ be the combinatorial type of $\P^1\times \P^1$ with two horizontal and four vertical lines. Then $\Aut(\cS)=\Z/2\times S_4$. Let $f\colon (\cX,\cD)\to B$ be the faithful family representing $\cP_{+}(\cS)$ constructed in  Example \ref{ex:4-points}. We claim that $f$ is $\Aut(\cS)$-faithful, and therefore $f$ is an almost faithful as a family representing $\cP(\cS)$ in the sense of   Definition \ref{def:moduli}\ref{item:def-family-faithful}.
	
	To see this, we define actions of $\Z/2$ and $S_4$ on $B=\P^1\setminus \{0,1,\infty\}$ and $\cX=B\times \P^1\times \P^1$, as follows. For a generator $\tau$ of $\Z/2$, we put $\tau(b)=b$ for $b\in B$ and $\tau(b,x,y)=(b,x,y^{-1})$ for $(b,x,y)\in \cX$. To define the $S_{4}$-action, we use the following notation. Put $x_{1}=\infty$, $x_{2}=0$, $x_{3}=1$, and for $b\in B$ put $x_{i}^{b}=x_{i}$, $i\in \{1,2,3\}$ and $x_{4}^{b}=b$, so $b$ is the cross-ratio of $(x_1^b,x_2^b,x_3^b,x_4^b)$. Now for $\sigma\in S_{4}$, $b\in B$ define $\sigma_{b}\in \Aut(\P^1)$ by $\sigma_{b}(x_{\sigma^{-1}(i)}^{b})=x_{i}$, $i\in \{1,2,3\}$, and let $\sigma(b)\de \sigma_{b}(x_{\sigma^{-1}(4)}^{b})$. Then $\sigma_{b}(x_{i}^{b})=x_{\sigma(i)}^{\sigma(b)}$ for all $i\in \{1,2,3,4\}$, so $\sigma(b)$ is the cross-ratio of $(x_{\sigma(1)}^{\sigma(b)},x_{\sigma(2)}^{\sigma(b)},x_{\sigma(3)}^{\sigma(b)},x_{\sigma(4)}^{\sigma(b)})$. Hence the formula $\sigma(b,x,y)=(\sigma(b),\sigma_{b}(x),y)$ defines the required element of $\Aut(f)$.
	
	For an isomorphism $\phi\colon (X_{b_{1}},D_{b_{1}})\to (X_{b_{2}},D_{b_{2}})$, the induced element $\phi_{+}\in \Aut(\cS)$ is given by the permutation $\tau$ of $\{0,\infty\}$ and $\sigma$ of $\{0,1,\infty,\star\}$ such that $\phi(H_{z})=H_{\tau(z)}$ and $\phi(\cV_{z}|_{X_{b_1}})=\cV_{\sigma(z)}|_{X_{b_2}}$. In this case, the cross-ratios of $(x_{1}^{b_1},x_{2}^{b_1},x_{3}^{b_1},x_{4}^{b_1})$ and $(x_{\sigma(1)}^{b_2},x_{\sigma(2)}^{b_2},x_{\sigma(3)}^{b_2},x_{\sigma(4)}^{b_2})$ are equal, so $b_2=\sigma(b_1)$, as needed. 
\end{example}

We now explain how to lift a family representing $\cR_{+}\subseteq \cP_{+}(\cS)$ by inner and outer blowups $\cS'\to \cS$, thus making Observation \ref{obs:blowups} precise. For most applications we will take $\cR_{+}=\cP_{+}(\cS)$, so $\cR_{+}'=\cP_{+}(\cS')$.

\begin{lemma}[Inner blowups]\label{lem:inner}
	Let $\cS$ be a combinatorial type, let $G$ be a subgroup of $\Aut(\cS)$, and let $\beta\colon \cS'\to \cS$ be a $G$-equivariant inner blowup. 
	Fix $\cR_{+}\subseteq \cP_{+}(\cS)$ and let $\cR_{+}'=\beta_{+}^{-1}(\cR_{+})$. Then $\#\cR_{+}=\#\cR_{+}'$, and $\cR_{+}$ is represented by a ($G$-faithful, universal, or ($h^{1}$)-stratified) family if and only if the same holds for $\cR_{+}'$. Moreover, both families can be chosen having the same base, and the same stratification in the stratified case. 
\end{lemma}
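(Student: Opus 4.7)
The overall plan is to reduce to a single inner blowup and then globalize it by blowing up a smooth section of the family (and dually contracting a fiberwise $(-1)$-curve). Bijectivity of $\beta_{+}\colon \cP_{+}(\cS')\to \cP_{+}(\cS)$ was already recorded in the paragraph preceding the lemma: the inverse of a single-edge inner blowup sends $(X,D,\gamma)$ to the blowup of $X$ at the intersection point $D^{e}$. The general $G$-equivariant case is a composition of such blowups along the orbit $Ge$; the snc property of $D$ ensures that the corresponding points of any fiber are distinct, so each factor of the composition is a bijection on $\cP_{+}$. Hence $\#\cR_{+}' = \#\cR_{+}$.

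Next I would transport families between the two sides. Given $f\colon (\cX,\cD)\to B$ representing $\cR_{+}$ and an edge $e'$ joining vertices $v_1, v_2$ of $\Gamma$, let $\cE_{e'}$ denote the connected component of the scheme-theoretic intersection $\cD^{v_1}\cap \cD^{v_2}$ that meets each fiber $X_b$ at the point $D_b^{e'}$. By the snc assumption on each fiber together with the smoothness-with-irreducible-fibers condition of Definition \ref{def:moduli}\ref{item:def-family-smooth}, $\cE_{e'}$ is a smooth section of $f$ meeting $\cD^{v_1}$ and $\cD^{v_2}$ transversally, and distinct sections in the orbit are disjoint. Set $\cE \de \bigsqcup_{e'\in Ge} \cE_{e'}$ and let $f'\colon (\cX', \cD')\to B$ be the blowup of $\cX$ along $\cE$ with $\cD'$ the reduced total transform of $\cD$. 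Then $\cX'$ is smooth and $\cD'$ consists of the proper transforms of the original boundary components (still smooth with irreducible fibers) plus one exceptional $\P^{1}$-bundle $(\cD')^{w}$ over each $\cE_{e'}$; thus $f'$ is a smooth family over the same base whose fiber over $b$ is the inner blowup of $(X_b, D_b)$ at the points $D_b^{e'}$, $e' \in Ge$. Conversely, given $f'$ representing $\cR_{+}'$, the components $(\cD')^w$ corresponding to new vertices restrict to $(-1)$-curves on each fiber, and the standard Castelnuovo contraction globalizes to a simultaneous contraction $\cX' \to \cX$ yielding a smooth family $f$ representing $\cR_{+}$.

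For the enriched structures, $G$-faithfulness propagates: the subgroup $G_{f} \leq \Aut(f)$ stabilizes the center $\cE$ set-theoretically, because $\cE$ is defined from a $G$-orbit of edges, hence it lifts uniquely to $G_{f'} \leq \Aut(f')$ with the same image in $\Aut(\cS) = \Aut(\cS')$; isomorphisms between fibers correspond bijectively on both sides via the canonical inner blowup/contraction. Universality is preserved because the formal germ of $f'$ at any $b$ is the blowup of the formal germ of $f$ at $b$ along the analogous closed formal subscheme, and by Lemma \ref{lem:blowup-hi}\ref{item:blowup-hi-inner} we have $h^{1}(\lts{X_b'}{D_b'}) = h^{1}(\lts{X_b}{D_b})$, so the dimension of the tangent space to the semiuniversal base does not change and semiuniversality survives. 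The same invariance of $h^{1}$ on fibers, combined with the canonical correspondence between isomorphism classes, shows that $h^{1}$-stratifications on either side correspond with identical strata, as do ordinary stratifications.

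The main step requiring care is ensuring that the blowup construction stays within the class of smooth families in the strict sense of Definition \ref{def:moduli}\ref{item:def-family-smooth}: the total space $\cX'$ must be smooth, and every component of $\cD'$ must have smooth irreducible fibers over $B$. Both reduce to the verification that $\cE$ is a disjoint union of smooth sections of $f$ meeting each boundary component transversally in each fiber, a property forced by the snc and smooth-fibers hypotheses already built into the definition of a smooth family. Once this geometric point is checked, the rest of the argument is combinatorial bookkeeping which propagates the bijection $\beta_{+}$ through the additional structures.
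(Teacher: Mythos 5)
Your proposal is correct and follows essentially the same route as the paper: blow up the family along the disjoint union of sections cut out by the orbit of the blown-up edge, contract the resulting families of $(-1)$-curves for the converse, and use the invariance of $h^{i}(\lts{X_b}{D_b})$ under inner blowups to transport the enriched structures. The only place where the paper is more careful is the universality step: rather than arguing from equality of $h^{1}$ alone, it upgrades the blowup/blowdown correspondence of small deformations to a morphism of deformation functors and invokes $h^{2}$-invariance (Lemma \ref{lem:blowup-hi}\ref{item:blowup-h2} together with \cite[Proposition 2.3.6]{Sernesi_deformations}) to conclude that this morphism is smooth, which is what actually guarantees that versality passes in both directions.
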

\begin{proof}
	Assume first that $\cR_{+}$ is represented by a family $f\colon (\cX,\cD)\to B$, i.e.\ $\cR_{+}=\Fib(f,\gamma)$ for some $\gamma$. Let $\pi\colon\cX'\to \cX$ be a blowup at $\cD^{\gamma(e)}$ for each blown up edge $e$, $\cD'\de (\pi^{*}\cD)\redd$, let $\gamma'$ be the unique isomorphism extending $\gamma$, and let $f'\de f\circ \pi\colon (\cX,\cD)\to B$. Then $\cR_{+}'=\Fib(f',\gamma')$, so $f'$ is a family representing $\cR_{+}'$, and if $f$ is $G$-faithful then so is $f'$. Conversely, let $f'\colon (\cX',\cD')\to B$ be a ($G$-faithful) family representing $\cR_{+}'$, so $\cR_{+}'=\Fib(f',\gamma')$, and let $\cE\de \sum_{v}(\cD')^{\gamma'(v)}$, where the sum runs over the new vertices. Then $\cE$ is a disjoint union of families of $(-1)$-curves, so it can be contracted by a composition of $K_{\cX/B}$-extremal contractions $\pi\colon (\cX',\cD')\to (\cX,\cD)$. 	The induced morphism $f\colon (\cX,\cD)\to B$ is a ($G$-faithful) family representing $\cR_{+}$, more precisely, $\cR_{+}=\Fib(f,\gamma)$, where $\gamma$ is the restriction of $\gamma'$. 
	
	Similarly, every small deformation of $(X_b,D_b)$ lifts to one of $(X_b',D_b')$, and conversely, every small deformation of $(X_b',D_b')$ blows down to one of $(X_b,D_b)$, cf.\ \cite[Proposition 1.6(1)]{FZ-deformations}. This way, we get a morphism of functors from deformations of $(X_{b'},D_{b'})$ to those of $(X_b,D_b)$, mapping $f'$ to $f$. Since $h^{2}(\lts{X_{b'}},D_{b'})=h^{2}(\lts{X_b}{D_b})$ by Lemma \ref{lem:blowup-hi}\ref{item:blowup-h2}, this morphism is smooth, see  \cite[Proposition 2.3.6]{Sernesi_deformations}. It follows that  $f'$ is versal at $b\in B$ if and only if so is $f$. By Lemma \ref{lem:blowup-hi}\ref{item:blowup-hi-inner} we have $h^{1}(\lts{X'_b}{D'_b})=h^{1}(\lts{X_b}{D_b})$, so $f'$ is $h^{1}$-stratified (or universal) if and only if so is $f$, with the same stratification of $B$.
\end{proof}

\begin{lemma}[Outer blowups]\label{lem:outer}
	Let $\cS$ be a combinatorial type, and let $\beta\colon \cS'\to \cS$ be an outer blowup at a vertex~$a$. Let $\cR_{+}\subseteq \cP_{+}(\cS)$ and let $\cR'_{+}=\beta_{+}^{-1}(\cR_{+})$.  Assume that $\cR_{+}$ is represented by a family $f\colon (\cX,\cD)\to B$. 
	
	Let $\cA\de \cD^{a}$ be the component of $\cD$ corresponding to the blown up vertex, $\cA^{\circ}\de \cA\setminus (\cD-\cA)$. For each $b\in B$ let $A^{\circ}_{b}=\cA^{\circ}|_{X_b}$ and let $H_b\leq \Aut(A_{b}^{\circ})$ be the restriction of $\Aut(X_b,D_b,A_b)$. 
	Then the following hold.
\begin{parts}
	\item\label{item:outer-trivial} Assume that $f$ is $G$-faithful for some group $G$ fixing the blown-up vertex $a$. Let $G_{f}$ be the corresponding subgroup of $\Aut(f)$. Assume that each group $H_b$ is the restriction of some $H\leq  \Aut(f)$ such that the subgroup $G'_{f}\leq \Aut(f)$ generated by $G_f$ and $H$ is finite. Let $G'$ be the image of $G'_{f}$ in $\Aut(\cS')$. Then $\cR_{+}'$ is represented by an $G'$-faithful family $f'\colon (\cX',\cD')\to\cA^{\circ}$. Moreover, if $f$ is universal then $f'$ is universal, too. 
	\item\label{item:outer-open} Assume that $f$ is stratified by $B=\bigsqcup_{j=0}^{k}B_j$, and write $B_k=\{\bar{b}\}$. 
	Then the following hold.
	\begin{parts}
		\item\label{item:outer-fixed-point} Assume that for each $b\neq \bar{b}$ the group $H_{b}$ acts transitively on $A^{\circ}_{b}$; whereas $H_{\bar{b}}$ has a fixed point $\bar{z}\in A^{\circ}_{\bar{b}}$ and an open orbit $A^{\circ}_{\bar{b}}\setminus \{\bar{z}\}$. Then $\cR_{+}'$ is represented by family $(\cX',\cD')\to\cA^{\circ}$, stratified by $\cA^{\circ}=\bigsqcup_{j=0}^{k+1}\cA^{\circ}_{j}$, where $\cA^{\circ}_{k+1}=\{\bar{z}\}$, $\cA^{\circ}_{k}=A^{\circ}_{\bar{b}}\setminus \{\bar{z}\}$, and $\cA^{\circ}_{j}=f^{-1}(B_j)\cap \cA^{\circ}$ for $j<k$, see Figure \ref{fig:outer}. 
		\item\label{item:outer-transitive} Assume that for every $b\in B$ the group $H_{b}$ acts transitively on $A^{\circ}_{b}$. Then $\cR_{+}'$ is represented by stratified family $(\cX',\cD')\to \tilde{B}$ over an open neighborhood of $\bar{b}\in B$, with the stratification restricted from $B$. 
		\item \label{item:outer-h1}
		Put $h_{j}\de h^{1}(\lts{X_{b}}{D_{b}})$ for $b\in B_j$. Assume that the conditions of \ref{item:outer-fixed-point} or \ref{item:outer-transitive} hold, and put $h_{j}'\de h^{1}(\lts{X_{z}'}{D_{z}'})$ for  $z\in \cA_{j}^{\circ}$ in case \ref{item:outer-fixed-point} and for $z\in \tilde{B}_j$ in case \ref{item:outer-transitive}. Then the following hold.
		\begin{parts}
			\item\label{item:outer-h1-stays} If for some $b\in B_j$, $z\in A_{b}^{\circ}$ the sheaf $\lts{X_{b}}{D_b}$ has a global section which does not vanish at $z$ then  $h_{j}'=h_{j}$. This holds for instance if the derivative at $g=\id$ of the morphism $H_{b}\ni g \mapsto g(z)\in X_{b}$ (which we call simply \emph{the derivative of the $H_b$-action}), is nonzero.
			\item\label{item:outer-h1-stratified} Assume that $f$ is $h^1$-stratified, and for all $j\in \{0,\dots, k\}$ we have $h_j=h_j'$. In case \ref{item:outer-transitive}, assume furthermore that the group $\Aut(f)$ acts transitively on $A_{b}^{\circ}$ for each $b\in B$. Then $f'$ is $h^{1}$-stratified.
			\item\label{item:outer-h1-grows} If for some  $b\in B_j$ all sections of $\lts{X_b}{D_b}$ vanish along $A_{b}$ then $h_{j}'=h_{j}+1$.
		\end{parts}
	\end{parts} 
	\end{parts}
\end{lemma}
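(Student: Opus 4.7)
The plan is to construct the representing family $f'\colon (\cX',\cD')\to \cA^{\circ}$ as a relative blowup, and then trace how faithfulness, stratification and $h^1$ transform under Lemma~\ref{lem:blowup-hi}. More precisely, I would form the fiber product $\cY\de \cX\times_B \cA^{\circ}$, let $\Delta\subseteq\cY$ be the image of the tautological section $\cA^{\circ}\to \cY$ induced by the inclusion $\cA^{\circ}\into \cX$, and let $\pi\colon \cX'\to \cY$ be the blowup along $\Delta$, with $\cD'\de (\pi^{*}\pr_{1}^{*}\cD)\redd$. The morphism $f'\de \pr_{2}\circ \pi\colon \cX'\to \cA^{\circ}$ is then a smooth family whose fiber over $z\in \cA^{\circ}_{b}$ is the blowup of $(X_b,D_b)$ at $z$, and the isomorphism $\gamma\colon \Gamma\to\Gamma(f)$ extends canonically to $\gamma'\colon \Gamma'\to\Gamma(f')$ so that the new vertex corresponds to the exceptional component of $\pi$. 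Thus $\Fib(f',\gamma')\subseteq \cR'_{+}$, and $\beta_{+}(\Fib(f',\gamma'))=\cR_{+}$ by construction, which shows $\Fib(f',\gamma')=\cR_{+}'$.

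Next I would analyze equivalence of fibers. Two points $z_{1}\in \cA^{\circ}_{b_1}$ and $z_{2}\in \cA^{\circ}_{b_2}$ produce equivalent elements of $\cP_{+}(\cS')$ precisely when there exists an isomorphism $\phi\colon (X_{b_1},D_{b_1})\to (X_{b_2},D_{b_2})$ with $\phi_{+}$ equal to the inner automorphism of $\Gamma$ fixing every vertex together with $a$, and $\phi(z_1)=z_2$. For \ref{item:outer-trivial} I would take $G'_{f}$ as given; since $G$ fixes $a$, its action on $\cX$ preserves $\cA$ and induces an action on $\cY$ commuting with $\pr_2$, and the $H$-part provides the extra symmetries coming from $H_b\leq \Aut(A^{\circ}_b)$. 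Finiteness of $G'_f$ then yields $G'$-faithfulness of $f'$, and universality at $(b,z)$ follows from universality of $f$ at $b$ together with Lemma~\ref{lem:blowup-hi}\ref{item:blowup-hi-outer} and the smoothness criterion \cite[Proposition 2.3.6]{Sernesi_deformations}, since an infinitesimal deformation of the blowup $(X'_z,D'_z)$ consists of an infinitesimal deformation of $(X_b,D_b)$ plus an infinitesimal displacement of the center of blowup along $\cA^{\circ}$, which is precisely what the base $\cA^{\circ}$ records.

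For \ref{item:outer-open}, the hypothesis on orbits of $H_{b}$ immediately translates the above equivalence relation into the claimed stratification: in case \ref{item:outer-fixed-point}, the open orbit $A^{\circ}_{\bar{b}}\setminus\{\bar{z}\}$ and the fixed point $\{\bar{z}\}$ produce two new strata over $\bar{b}$, while outside $\bar{b}$ transitivity collapses each $\cA_{b}^{\circ}$ to a single equivalence class, giving back the pullback of the stratification of $B$; in case \ref{item:outer-transitive} every fiber $\cA^{\circ}_b$ is a single orbit, so after passing to an open neighborhood of $\bar{b}$ (where we can choose a section of $f'$) the stratification is just pulled back from $B$. The connectedness and codimension conditions of Definition~\ref{def:moduli}\ref{item:def-family-stratified} are inherited from $f$ together with the fact that $\cA^{\circ}_{b}$ is irreducible.

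Finally, part \ref{item:outer-h1} is a direct application of Lemma~\ref{lem:blowup-hi}\ref{item:blowup-hi-outer}, which gives the dichotomy between $h^{1}_j$ and $h^{1}_j+1$ according to whether some logarithmic vector field does not vanish at the center $z$; the derivative at $\id$ of $H_{b}\ni g\mapsto g(z)$ provides such a vector field whenever it is nonzero, hence \ref{item:outer-h1-stays}. The $h^{1}$-stratification statement \ref{item:outer-h1-stratified} follows by combining this with the stratification obtained in \ref{item:outer-fixed-point}/\ref{item:outer-transitive} and checking semiuniversality at the deepest stratum as in \ref{item:outer-trivial}; \ref{item:outer-h1-grows} is the other horn of the dichotomy. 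The main technical obstacle is ensuring that the extra symmetries needed in \ref{item:outer-trivial} actually assemble into a \emph{finite} subgroup $G'_{f}\leq \Aut(f')$ (rather than only a family of fiberwise automorphisms); this is why the hypothesis explicitly asks for $H$ inside $\Aut(f)$, and the verification will be carried out case by case in the applications (e.g.\ Example~\ref{ex:h1} and the proof of Proposition~\ref{prop:moduli}).
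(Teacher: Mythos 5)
Your construction is the same as the paper's: form $\cX\times_{B}\cA^{\circ}$, blow up the tautological (diagonal) section, and take the induced family over $\cA^{\circ}$; the faithfulness, stratification and $h^{1}$ analyses then proceed exactly as in the paper via the universal property of blowing up and Lemma \ref{lem:blowup-hi}\ref{item:blowup-hi-outer}. Parts \ref{item:outer-open} and \ref{item:outer-h1} are handled correctly.

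There is, however, a genuine gap in your justification of universality in \ref{item:outer-trivial} (and hence of the semiuniversality needed at the deepest stratum in \ref{item:outer-h1-stratified}). You assert that ``an infinitesimal deformation of the blowup $(X'_z,D'_z)$ consists of an infinitesimal deformation of $(X_b,D_b)$ plus an infinitesimal displacement of the center,'' and conclude that $\cA^{\circ}$ is the right base. But this decomposition of $H^{1}(\lts{X'_z}{D'_z})$ is exactly what has to be proved, and the very lemma you cite shows it can fail: by \ref{lem:blowup-hi}\ref{item:blowup-hi-outer}, the displacement direction contributes a new class in $H^{1}$ only if \emph{every} global section of $\lts{X_b}{D_b}$ vanishes at $z$; otherwise $h^{1}$ does not increase and the candidate base $\cA^{\circ}$ has dimension one too large for semiuniversality. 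Nothing in the hypotheses of \ref{item:outer-trivial} directly gives this vanishing --- finiteness of $H_b$ does not imply it in positive characteristic, which is precisely the phenomenon behind Remark \ref{rem:outer-C} and Example \ref{ex:h1}.

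The paper closes this gap in two steps that your sketch omits. First, versality of $f'$ at $z$ is obtained by blowing \emph{down} an arbitrary formal deformation of $(X'_z,D'_z)$ (contracting the family of $(-1)$-curves) to a deformation of $(X_b,D_b)$, pulling back from $f$, and recording the centers $p_t$ as a morphism to $\cA^{\circ}$. Second, the required equality $\dim\cA^{\circ}=h^{1}(\lts{X'_z}{D'_z})$ is proved by contradiction: if it failed, the classifying map to the semiuniversal base would contract a curve, so by Artin approximation the family would become trivial along a curve through $z$ after an \'etale base change --- impossible because faithfulness (finiteness of $G'_f$) forces nearby fibers to be pairwise non-isomorphic. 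You should either supply this argument or an independent proof that all logarithmic vector fields vanish along $A^{\circ}_{b}$ under the stated hypotheses.
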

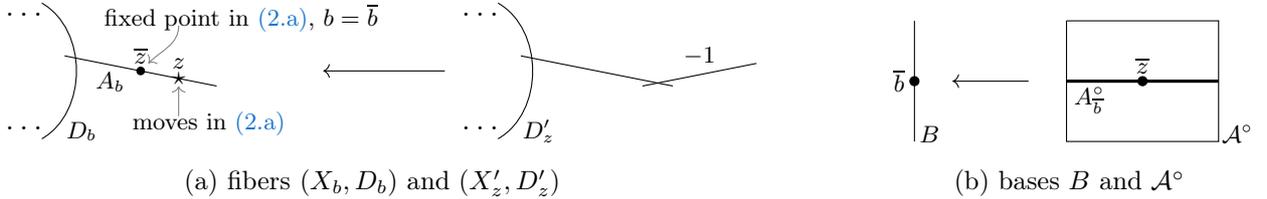
\begin{figure}[htbp]
	\subcaptionbox{fibers $(X_{b},D_{b})$ and $(X_{z}',D_{z}')$}{
	\begin{tikzpicture}
		\begin{scope}
			\draw (-0.5,0.9) to[out=-30,in=30] (-0.5,-0.9); 
			\node[below left] at (-0.3,0.9) {\Large{$\dots$}};
			\node[above left] at (-0.3,-0.9) {\Large{$\dots$}};
			\node[right] at (-0.3,-0.8) {\small{$D_{b}$}};
			\draw (-0.2,0.2) -- (1.8,-0.2);
			\filldraw (0.8,0) circle (0.05); \node at (0.8,0.2) {\small{$\bar{z}$}};
			\draw[<-, black!50] (1.3,-0.2) to[out=-90,in=90] (1.3,-0.6);  
			\node at (1.3,-0.1) {$\star$}; 
			\node at (1.3,0.1) {\small{$z$}};
			\node at (1.7,-0.7) {\small{moves in \ref{item:outer-fixed-point}}};
			\draw[<-, black!50] (0.9,0.1) to[out=45,in=-90] (1.3,0.6); 
			\node[right] at (0.2,0.7) {\small{fixed point in \ref{item:outer-fixed-point}, $b=\bar{b}$}}; 
			\node at (0.4,-0.15) {\small{$A_b$}};
			\draw[<-] (3.2,0) -- (4.8,0);
		\end{scope}
		\begin{scope}[shift={(6,0)}]
			\draw (-0.5,0.9) to[out=-30,in=30] (-0.5,-0.9); 
			\node[below left] at (-0.3,0.9) {\Large{$\dots$}};
			\node[above left] at (-0.3,-0.9) {\Large{$\dots$}};
			\node[right] at (-0.3,-0.8) {\small{$D_{z}'$}};
			\draw (-0.2,0.2) -- (1.8,-0.2);
			\draw (1.4,-0.2) -- (2.9,0.1);
			\node at (2.15,0.2) {\small{$-1$}};
		\end{scope}
	\end{tikzpicture}
	}
\hfill
	\subcaptionbox{bases $B$ and $\cA^{\circ}$}{
	\begin{tikzpicture}
		\draw (0,0.8) -- (0,-0.8); \node at (0.2,-0.7) {\small{$B$}};
		\filldraw (0,0) circle (0.06); \node at (-0.2,0) {\small{$\bar{b}$}};
		\draw[<-] (0.5,0) -- (1.5,0);
		\draw (2,0.8) -- (4,0.8) -- (4,-0.8) -- (2,-0.8) -- (2,0.8); \node at (4.25,-0.7) {\small{$\cA^{\circ}$}};
		\draw[very thick] (2,0) -- (4,0); \node at (2.3,-0.25) {\small{$A_{\bar{b}}^{\circ}$}};
		\filldraw (3,0) circle (0.06); \node at (3,0.2) {\small{$\bar{z}$}};
	\end{tikzpicture}
	}
\vspace{-0.5em}
	\caption{Lemma \ref{lem:outer}\ref{item:outer-trivial},\ref{item:outer-fixed-point}: outer blowup increases the dimension of a representing family.} \vspace{-0.5em}
	\label{fig:outer}
\end{figure}
\begin{proof}
	Write $\cS=(\Gamma,c)$. We have $\cR_{+}=\Fib(f,\gamma)$ for some isomorphism $\gamma\colon \Gamma\to \Gamma(f)$. Consider cases \ref{item:outer-trivial} and \ref{item:outer-fixed-point}. The pullback $\hat{f}\colon (\cX,\cD)\times_{B} \cA^{\circ}\to B$ of $f$ along $f|_{\cA^{\circ}}\colon \cA^{\circ}\to B$ is another family representing $\cR_{+}$, in fact, $\cR_{+}=\Fib(\hat{f},\hat{\gamma})$, where $(\cD\times_{B}\cA^{\circ})^{\hat{\gamma}(v)}=\cD^{\gamma(v)}\times_{B}\cA^{\circ}$. Let $\pi\colon \cX'\to \cX \times_{B}\cA^{\circ}$ be a blowup of the diagonal, and let $\cD'=\pi^{*}(\cD\times_{B}\cA^{\circ})\redd$. Then $f'\de \hat{f}\circ \pi\colon(\cX',\cD')\to \cA^{\circ}$ satisfies $\cR_{+}=\Fib(f',\gamma')$ for the unique extension $\gamma'$ of $\hat{\gamma}$, so $f'$ represents $\cR_{+}'$. We will show that it has properties required by \ref{item:outer-trivial} and \ref{item:outer-fixed-point}. 
	
	Consider case \ref{item:outer-trivial}. We claim that $f'$ is $G'$-faithful. 
	Suppose that for some $z_{1},z_{2}\in \cA^{\circ}$ we have an isomorphism $\alpha'\colon (X'_{z_1},D'_{z_1})\to (X_{z_{2}}',D_{z_{2}}')$ such that $\alpha'_{+}\in G'$. Let $b_i=f(z_i)\in B$, $i\in \{1,2\}$. Since $\alpha'_{+}\in G'$, $\alpha'_{+}$ fixes $a$ and the new vertex, so by the universal property of blowing up there is an isomorphism $\alpha\colon (X_{b_1},D_{b_{1}})\to (X_{b_2},D_{b_{2}})$ between the corresponding fibers of $f$ such that $\alpha(z_1)=z_2$. Since $f$ is $G$-faithful, we have $b_2=g(b_1)$ for some $g\in G_{f}$ such that $g_{+}=\alpha_{+}$. Since $g^{-1}\circ\alpha\in \Aut(X_{b_{1}})$ restricts to an element of $H_{b_{1}}$, by assumption $g^{-1}\circ \alpha=h|_{X_{b}}$ for some $h\in H$. We have $g\circ h|_{X_{b}}=\alpha$, so $g\circ h(z_1)=\alpha(z_1)=z_2$ and $(g\circ h)_{+}=\alpha_{+}$, so $g\circ h$ lifts to the required element of $\Aut(f')$. Thus $f'$ is $G'$-faithful. The assertion about universality will be proved below.
\smallskip

	Consider case \ref{item:outer-fixed-point}. 
	Fix $j\in \{0,\dots, k\}$. For any two points $z_1,z_2\in \cA^{\circ}_j$ the fibers $(X_{z_i}',D_{z_i}',\gamma_{z_i}')$ for $i\in \{1,2\}$ are equivalent. Indeed, the fibers of $f$ over $f(z_i)$ are equivalent, say by an isomorphism $\alpha\colon (X_{f(z_1)},D_{f(z_1)})\to (X_{f(z_2)},D_{f(z_2)})$ with $\alpha_{+}=\id$. Composing $\alpha$ with an element of the connected component of $H_{f(z_2)}$ we can assume $\alpha(z_1)=z_2$, so $\alpha$ lifts to an isomorphism $\alpha'\colon (X_{z_1}',D_{z_1}')\to (X_{z_2}',D_{z_2}')$ with $\alpha'_{+}=\id$, which yields the required equivalence. Conversely, if for some $z\in \cA_{j}^{\circ}$, $z'\in \cA_{j'}^{\circ}$ the fibers $(X'_{z},D'_{z})$ and $(X'_{z'},D'_{z'})$ are isomorphic then $j=j'$. Indeed, by the universal property of blowing up such an isomorphism descends to an isomorphism between $(X_{f(z)},D_{f(z)})$ and $(X_{f(z')},D_{f(z')})$, so if $j\neq j'$ then $\{j,j'\}=\{k,k+1\}$ and we get an automorphism of $(X_{\bar{b}},D_{\bar{b}})$ which maps $\bar{z}$ to a point of $A_{\bar{b}}\setminus \{\bar{z}\}$, contrary to our assumptions. 
\smallskip

	Consider now case \ref{item:outer-transitive}. Shrinking $B$ around $\bar{b}$ if needed, we can assume that $f|_{\cA^{\circ}}$ has a section $\cH\subseteq \cA^{\circ}$. Blowing up $\cH$ we obtain a family $f'\colon (\cX',\cD')\to B$ representing $\cR_{+}'$. For every two points $b_{1},b_{2}\in B_{j}$ the fibers $(X_{b_i},D_{b_i},\gamma_{b_{i}})$ are equivalent, and there is an isomorphism $(X_{b_{1}},D_{b_1})\to (X_{b_2},D_{b_2})$ mapping the point $D_{b_{1}}\cap \cH$ to $D_{b_2}\cap \cH$, so the fibers $(X_{b_i}',D_{b_i}',\gamma_{b_{i}}')$ are equivalent, too. Conversely, if $(X'_b,D'_b)$ and $(X'_{b'},D_{b'})$ are isomorphic then as before we get $b,b'\in B_j$ for some $j$. 
\smallskip
	
	Parts \ref{item:outer-h1-stays} and \ref{item:outer-h1-grows} follow from Lemma \ref{lem:blowup-hi}\ref{item:blowup-hi-outer}. It remains to prove part \ref{item:outer-h1-stratified} and the assertion about universality in \ref{item:outer-trivial}. In case \ref{item:outer-trivial} fix any $z\in \cA^{\circ}$ and let $b=f(z)$, in case \ref{item:outer-fixed-point} put $z\de \bar{z}$, $b\de \bar{b}$, and in case \ref{item:outer-transitive} put $z=b=\bar{b}$. Then by assumption $f$ is semiuniversal at $b$, in particular $h^{1}(\lts{X_b}{D_b})=\dim B$.
	
	We claim that $f'$ is versal at $z$. We apply the argument from \cite[Proposition 1.6(2)]{FZ-deformations}, which can be outlined as follows. Consider a small deformation $\check{f}'\colon (\check{\cX}',\check{\cD}')\to T$ of $(X'_{z},D'_{z})$. Let $\check{\pi}\colon (\check{\cX}',\check{\cD}')\to (\check{\cX},\check{\cD})$ be the contraction of the component $\check{\cE}$ of $\check{\cD}'$ corresponding to the new vertex: it exists since $\check{\cE}$ is a family of $(-1)$-curves, so it spans a $K_{\check{\cX}'/T}$-negative extremal ray. We get a small deformation $\check{f}\colon (\check{\cX},\check{\cD})\to T$ of $(X_{b},D_{b})$. Since $f$ is versal at $b$, $\check{f}$ is a pullback of $f$. Let $\psi\colon \check{\cX}\to \cX$ be the induced morphism. For $t\in T$ let  $p_{t}\de \check{\pi}(\check{E}_{t})\in \check{D}_{t}$ be the center of the blowup $\check{\pi}|_{\check{X}_{t}'}$. Now $\check{f}'$ is a pullback of $f'$ along the morphism $\bar{\psi}\colon T\ni t\mapsto \psi(p_t)\in \cA^{\circ}$. This proves the claim in cases \ref{item:outer-trivial} and \ref{item:outer-fixed-point}. In case \ref{item:outer-transitive}, by assumption we can compose $\bar{\psi}$ with an element of $\Aut(f)$ so that its image is the section $\cH$, and then $\check{f}'$ is a pullback of $f'$ along the composition $f\circ \bar{\psi}$, as needed.
	
	To conclude, by \cite[Proposition 2.5.8]{Sernesi_deformations} it is enough to prove that the dimension of the base of $f'$ equals $h^{1}(\lts{X'_{z}}{D'_{z}})$. In case \ref{item:outer-transitive} it follows from our assumptions. In the remaining cases, the base of $f'$ is $\cA^{\circ}$, and by Lemma \ref{lem:blowup-hi}\ref{item:blowup-hi-outer} we have  $h^{1}(\lts{X'_z}{D'_z})\leq \dim \cA^{\circ}$. Suppose the inequality is strict. Let $\hat{f}'$ be the formal deformation of $(X_{z}',D_{z}')$ associated with $f'$, it is a pullback of a semiuniversal formal deformation by some morphism $\hat{\cA}^{\circ}\to \hat{U}$. Since $\dim \hat{\cA}^{\circ}>\dim \hat{U}$, this morphism contracts a curve $\hat{C}\subseteq \hat{\cA}^{\circ}$, so $\hat{f}'$ restricts to a trivial formal deformation along this curve. By Artin approximation theorem \cite[Theorem 1.7]{Artin_algebraization_I}, the restriction of $f'$ along some curve through $z$ becomes trivial after an \'etale base change. This is impossible because the central fiber $(X_{z}',D'_{z})$ is not isomorphic to $(X'_{z'},D'_{z'})$ for any $z'\neq z$ near $z$, a contradiction.
\end{proof}

In the proof of  Propositions \ref{prop:moduli} and \ref{prop:moduli-hi} we will deduce properties of certain subsets of $\cP(\cS)$ from the properties of $\cP_{+}(\check{\cS})$, where $\check{\cS}$ is a combinatorial type obtained from $\cS$ by adding vertices corresponding to some vertical $(-1)$-curves. This will be done using the following elementary observations. 

\begin{lemma}[Adding $(-1)$-curves]\label{lem:adding-1}
	Let $\check{\cS}=(\check{\Gamma},c_2)$ be a combinatorial type, let $\Gamma_E$ be a subgraph of $\check{\Gamma}$ whose all vertices have weight $-1$, and let $\cS=(\check{\Gamma}-\Gamma_E,c_2)$. Fix $\check{\cR}_{+}\subseteq \cP_{+}(\check{\cS})$,  let $\cR_{+}$ be the image of $\check{\cR}_{+}$ by the restriction map $\cP_{+}(\check{\cS})\to \cP_{+}(\cS)$; and let $\cR$ be the image of $\cR_{+}$ by the forgetful map $\cP_{+}(\cS)\to \cP(\cS)$.
	
	Let $\check{f}\colon (\cX,\check{\cD})\to B$ be a smooth family representing $\cR_{+}$.  Let $\cE\subseteq \check{\cD}$ be a subdivisor corresponding to $\Gamma_{E}$. Write $\cD=\check{\cD}-\cE$ and let $f\colon (\cX,\cD)\to B$ be the restriction of $\check{f}$. Then the following hold. 
	\begin{enumerate}
		\item\label{item:adding-1-semiuniversal} The formal germ of $\check{f}$ at $b\in B$ is semiuniversal if and only if so is the germ of $f$. 
		\item\label{item:adding-1-h1} If $\check{f}$ is an ($h^{1}$-)stratified family representing $\check{\cR}_{+}$ then $f$ is an ($h^{1}$-)stratified family representing $\cR$, with the same stratification. In particular, $\#\cR=\#\check{\cR}_{+}=\dim B+1$, see Definition \ref{def:moduli}\ref{item:def-family-stratified}. 
		\item\label{item:adding-1-faithful} Assume that $\check{f}$ is an $\Aut(\check{\cS})$-faithful family representing $\check{\cR}_{+}$ such that 
		\begin{equation}\label{eq:E-invariant}
				\mbox{for every }b_{1},b_{2}\in B,\mbox{ every isomorphism }(X_{b_1},D_{b_{1}})\to (X_{b_{2}},D_{b_{2}})\mbox{ maps }E_{b_{1}}\mbox{ to }E_{b_{2}}.
		\end{equation}
	 	Then 
	 	$f$ is an almost faithful family representing $\cR$. Moreover, if $\check{f}$ is a universal family representing $\check{\cR}_{+}$ then $f$ is an almost universal family representing $\cR$, see Definition \ref{def:moduli-hi}\ref{item:def-universal}.
\end{enumerate}
\end{lemma}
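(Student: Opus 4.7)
The three parts form a natural hierarchy: (a) is purely local (deformation-theoretic), (b) adds the global discrete data of a stratification, and (c) handles global group actions.

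For part (a), the plan is to show that removing the $(-1)$-curves of $\cE$ from the boundary is an equivalence of deformation functors. Write $\cE|_{X_b} = C_1 + \dots + C_k$ and remove these curves one at a time. At each stage $C$ is a $(-1)$-curve with $H^0(\cN_{C/X_b}) = H^0(\cO_{\P^1}(-1)) = 0$, so $C$ extends uniquely to every infinitesimal deformation of $X_b$, and its intersections with the rest of the boundary are forced by the deformation. Hence the forgetful functor $\mathrm{Def}(X_b,\check D_b) \to \mathrm{Def}(X_b,D_b)$ is an equivalence. On tangent and obstruction spaces this manifests, via Lemma \ref{lem:h1}\ref{item:h1-1_curve} applied iteratively, as equalities $h^i(\lts{X_b}{\check D_b}) = h^i(\lts{X_b}{D_b})$ for all $i$. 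Semiuniversality is characterized by bijectivity of the Kodaira--Spencer map combined with the tangent-obstruction lifting criterion (\cite[Proposition 2.3.6]{Sernesi_deformations}), both of which are preserved by this equivalence; hence the formal germ of $\check f$ at $b$ is semiuniversal iff the germ of $f$ at $b$ is.

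For part (b), the equality $h^1(\lts{X_b}{D_b}) = h^1(\lts{X_b}{\check D_b})$ from part (a) shows that the $h^1$-stratification of $\check f$ is simultaneously an $h^1$-stratification of $f$ with the same underlying decomposition of $B$. To check that $f$ represents $\cR$ in the sense of Definition \ref{def:moduli}\ref{item:def-family-representing}: every $(X,D) \in \cR$ lifts to some $(X,\check D,\check\gamma) \in \check\cR_+ = \Fib(\check f,\check\gamma)$ and hence appears as a fiber of $f$, and conversely each restriction $(X_b,D_b)$ is manifestly in $\cR$. The stratification condition requires that fibers over different strata be non-isomorphic; for this one extends any iso $(X_b,D_b) \cong (X_{b'},D_{b'})$ along the rigid $(-1)$-curves of $\cE$ (exactly as in part (a), using their uniqueness in the deformation) to an iso of the enlarged pairs $(X_b,\check D_b) \cong (X_{b'},\check D_{b'})$, which combined with the stratification of $\check f$ forces $b,b'$ into the same stratum. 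The count $\#\cR = \#\check\cR_+ = \dim B + 1$ is then the number of strata.

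For part (c), let $G_{\check f} \leq \Aut(\check f)$ be the finite group witnessing $\Aut(\check\cS)$-faithfulness. Since every element of $G_{\check f}$ permutes the components of $\check\cD$ and, in particular, permutes those of $\cE$ among themselves, it preserves the sub-boundary $\cD = \check\cD - \cE$; this yields a finite subgroup $G_f \leq \Aut(\cX,\cD)$ acting on $f$. I claim two fibers of $f$ are isomorphic iff the base points lie in the same $G_f$-orbit. The direction ``$\Leftarrow$'' is immediate by restriction. For ``$\Rightarrow$'', given $\phi\colon(X_b,D_b) \to (X_{b'},D_{b'})$, hypothesis \eqref{eq:E-invariant} gives $\phi(E_b) = E_{b'}$, so $\phi$ is simultaneously an iso $(X_b,\check D_b) \to (X_{b'},\check D_{b'})$; $\Aut(\check\cS)$-faithfulness of $\check f$ then produces $\alpha \in G_{\check f}$ with $\alpha(b) = b'$, and the image of $\alpha$ in $G_f$ sends $b$ to $b'$. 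Hence $f$ is almost faithful representing $\cR$. Finally, if $\check f$ is universal then by part (a) $f$ is formally semiuniversal at every point, so $f$ is almost universal.

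The main technical obstacle is the implicit compatibility between the $(-1)$-curve rigidity at the level of formal deformations in (a) and the global (scheme-theoretic) extension of an arbitrary isomorphism of log surfaces in (b) and (c); the rigidity of $(-1)$-curves yields both, but the matching of labellings across strata in (b) is the most delicate point and is the reason (*) is needed globally in (c).
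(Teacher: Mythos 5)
Your overall architecture coincides with the paper's. Part \ref{item:adding-1-semiuniversal} is exactly the content of \cite[Proposition 1.7(3)]{FZ-deformations}, which the paper simply cites; you reprove it via the rigidity of a $(-1)$-curve $C$, i.e.\ the vanishing of $h^i(\cN_{C})=h^i(\cO_{\P^1}(-1))$ (for the unique extension you need $h^1=0$ as well as the $h^0=0$ you quote, but both vanish, so this is harmless), and your version even gives the ``only if'' direction cleanly. Part \ref{item:adding-1-faithful} is verbatim the paper's argument: condition \eqref{eq:E-invariant} promotes an isomorphism of the small pairs to one of the enlarged pairs, and $\Aut(\check{\cS})$-faithfulness of $\check{f}$ finishes.

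The gap is in part \ref{item:adding-1-h1}. To show that fibers of $f$ over different strata remain non-isomorphic, you ``extend any iso $(X_b,D_b)\cong(X_{b'},D_{b'})$ along the rigid $(-1)$-curves of $\cE$ \ldots using their uniqueness in the deformation.'' This is a non sequitur: deformation-theoretic rigidity says that a \emph{given} $(-1)$-curve on a central fiber lifts uniquely to any infinitesimal deformation of that fiber; it says nothing about an abstract isomorphism $\phi$ between two \emph{distinct} fibers, which carries $E_{b}$ to \emph{some} set of $(-1)$-curves on $X_{b'}$ that need not be $E_{b'}$ (these surfaces typically carry many $(-1)$-curves, and $\check{D}-\cE$ may even contain some). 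If the extension were automatic, hypothesis \eqref{eq:E-invariant} in part \ref{item:adding-1-faithful} would be vacuous --- your closing paragraph senses exactly this tension but resolves it the wrong way around. The paper's own proof of \ref{item:adding-1-h1} is the one-liner ``follows from \ref{item:adding-1-semiuniversal} and the definitions'': the statements about representing $\cR$, the values of $h^1$ on strata, and the implication ``same stratum $\Rightarrow$ isomorphic'' are indeed immediate, while in every application the converse is secured by the same mechanism as \eqref{eq:E-invariant} (via Lemma \ref{lem:adding-1-criterion}, or because $D$ contains no $(-1)$-curves and the fibration class is pinned down in $\NS_{\Q}$). You should therefore either make that extension property an explicit input to the non-isomorphy claim or drop the rigidity argument; as written, that step would fail.
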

\begin{proof}
	By \cite[Proposition 1.7(3)]{FZ-deformations}, a semiuniversal deformation of a log smooth surface remains such after removing a $(-1)$-curve from the boundary, which proves \ref{item:adding-1-semiuniversal}. For the remaining parts, note that $f$ represents $\cR_{+}$ and $\cR$. Part  \ref{item:adding-1-h1} now follows directly from \ref{item:adding-1-semiuniversal} and definitions; and to prove part \ref{item:adding-1-faithful}, it remains to prove that $f$ is almost faithful. Let $\phi\colon (X_{b_1},D_{b_{1}})\to (X_{b_{2}},D_{b_{2}})$ be an isomorphism between fibers of $f$. By condition \eqref{eq:E-invariant}, $\phi$ extends to an isomorphism $\check{\phi}\colon (X_{b_{1}},\check{D}_{b_{1}})\to (X_{b_{2}},\check{D}_{b_2})$; so since $f$ is $\Aut(\check{\cS})$-faithful, 
	the points $b_1$ and $b_2$ lie in the same orbit of the $\Aut(\check{\cS})_{f}$-action, as needed. 
\end{proof}

\begin{lemma}[Checking condition \eqref{eq:E-invariant}]\label{lem:adding-1-criterion}
	We keep the notation and assumptions from Lemma \ref{lem:adding-1}. Assume further that for each $b\in B$ the following conditions \ref{item:adding-1-criterion-delPezzo}, \ref{item:adding-1-criterion-fibration}  hold. 	 Then condition \eqref{eq:E-invariant} holds, too. 
	\begin{enumerate}
		\item\label{item:adding-1-criterion-delPezzo} The  N\'eron-Severi group $\NS_{\Q}(X_{b})$ is generated by $K_{X_{b}}$ and the components of $D_b$ (for instance, $(X_{b},D_{b})$ is a log resolution of a del Pezzo surface of rank one).
		\item\label{item:adding-1-criterion-fibration} The surface $X_{b}$ admits a $\P^{1}$-fibration such that $E_{b}$ is the sum of all vertical $(-1)$-curves, and $\Aut(\cS)$ maps $k$-sections to $k$-sections, that is, the set of vertices of $\Gamma$ admits an $\Aut(\cS)$-invariant decomposition $\bigsqcup_{j\geq 0} \Gamma_{j}$ such that the fiber $F_b$ satisfies $F_{b}\cdot D_{b}^{v}=j$ for every $j\geq 0$ and every $v\in \Gamma_j$.
	\end{enumerate}
\end{lemma}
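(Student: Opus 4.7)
Fix $b_1,b_2\in B$ and an isomorphism of log surfaces $\phi\colon (X_{b_1},D_{b_1})\to (X_{b_2},D_{b_2})$. The plan is to show that $\phi$ sends the sum of vertical $(-1)$-curves of the fibration on $X_{b_1}$ to the sum of vertical $(-1)$-curves of the fibration on $X_{b_2}$; by assumption \ref{item:adding-1-criterion-fibration} this will give $\phi(E_{b_1})=E_{b_2}$, which is precisely \eqref{eq:E-invariant}.

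First I would make the combinatorial bookkeeping precise. Choose identifications $\Gamma\cong \Gamma(D_{b_i})$ as in Section \ref{sec:moduli}; the induced graph isomorphism $\Gamma(D_{b_1})\to \Gamma(D_{b_2})$ gives an element $\phi_{+}\in\Aut(\cS)$ with $\phi(D_{b_1}^{v})=D_{b_2}^{\phi_{+}(v)}$ for every vertex $v$ of $\Gamma$. By assumption \ref{item:adding-1-criterion-fibration}, $\phi_{+}$ preserves the decomposition $\bigsqcup_{j\geq 0}\Gamma_{j}$.

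The core step is to compare the numerical classes of the fibers $F_{b_1}$ and $F_{b_2}$. Since $\phi$ is an isomorphism, $\phi^{*}K_{X_{b_2}}=K_{X_{b_1}}$ and $\phi^{*}D_{b_2}^{\phi_{+}(v)}=D_{b_1}^{v}$. Adjunction gives $F_{b_i}\cdot K_{X_{b_i}}=-2$, while \ref{item:adding-1-criterion-fibration} gives $F_{b_i}\cdot D_{b_i}^{v}=j$ for $v\in\Gamma_{j}$. Because $\phi_{+}$ preserves each $\Gamma_{j}$, the class $\phi^{*}F_{b_2}\in\NS_{\Q}(X_{b_1})$ has the same intersection numbers with $K_{X_{b_1}}$ and with every component of $D_{b_1}$ as $F_{b_1}$ does. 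Assumption \ref{item:adding-1-criterion-delPezzo} ensures that $\NS_{\Q}(X_{b_1})$ is spanned by these classes, so numerical equivalence on such a generating set implies numerical equivalence on all of $\NS_{\Q}(X_{b_1})$; hence $\phi^{*}F_{b_2}\equiv F_{b_1}$.

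Finally I would read off the conclusion. Let $L$ be any component of $E_{b_1}$; then $L$ is a $(-1)$-curve with $L\cdot F_{b_1}=0$, so $\phi(L)$ is a $(-1)$-curve on $X_{b_2}$ with $\phi(L)\cdot F_{b_2}=L\cdot \phi^{*}F_{b_2}=L\cdot F_{b_1}=0$, i.e.\ $\phi(L)$ is vertical for the $\P^{1}$-fibration of $X_{b_2}$. Moreover $\phi(L)\not\subseteq D_{b_2}$, as $L\not\subseteq D_{b_1}$. By assumption \ref{item:adding-1-criterion-fibration}, $E_{b_2}$ is the sum of \emph{all} vertical $(-1)$-curves, so $\phi(L)$ is a component of $E_{b_2}$. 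This shows $\phi(E_{b_1})\leq E_{b_2}$; the reverse inclusion follows by applying the same argument to $\phi^{-1}$, and gives $\phi(E_{b_1})=E_{b_2}$.

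There is no serious obstacle here: the argument reduces to the linear algebra of $\NS_{\Q}$ together with the combinatorial fact that $\phi_{+}$ respects the stratification by multi-section degree. The only point that requires attention is confirming that the intersection datum $(F\cdot K,F\cdot D^{v})$ really determines the numerical class of $F$, for which we use \ref{item:adding-1-criterion-delPezzo} in the form that $\{K_{X_{b}}\}\cup\{D_{b}^{v}\}_{v}$ spans $\NS_{\Q}(X_{b})$ and that intersection pairing is non-degenerate modulo numerical equivalence.
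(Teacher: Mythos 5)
Your proof is correct and follows essentially the same route as the paper's: use assumption (b) to see that the induced graph automorphism preserves the multi-section degree, compare the intersection numbers of the (transported) fiber classes against $K$ and the components of $D$, and invoke assumption (a) to conclude numerical equivalence of the fibers, whence the vertical $(-1)$-curves match up. The only cosmetic difference is that you pull back $F_{b_2}$ to $X_{b_1}$ while the paper pushes $F_{b_1}$ forward to $X_{b_2}$.
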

\begin{proof}
	Fix an isomorphism $\phi\colon (X_{b_1},D_{b_{1}})\to (X_{b_{2}},D_{b_{2}})$ and let $\psi=\phi_{+}^{-1}\in \Aut(\cS)$. Then $D_{b_{2}}^{v}=\phi(D_{b_{1}}^{\psi(v)})$ for every vertex $v$ of $\Gamma(f)$.  
	By assumption \ref{item:adding-1-criterion-fibration}, for every $v\in \Gamma_j$ we have 
	$\psi(v)\in \Gamma_j$ and 
	\begin{equation*}
		F_{b_{2}}\cdot D_{b_{2}}^{v}=j=F_{b_{1}}\cdot D_{b_{1}}^{v}=
		F_{b_{1}}\cdot D_{b_{1}}^{\psi(v)}=\phi(F_{b_{1}})\cdot \phi(D_{b_{1}}^{\psi(v)})=\phi(F_{b_{1}})\cdot D_{b_{2}}^{v},
	\end{equation*}
	and by adjunction $F_{b_{2}}\cdot K_{X_{b_{2}}}=-2=F_{b_{1}}\cdot K_{X_{b_{1}}}$. Hence by assumption \ref{item:adding-1-criterion-delPezzo} we get a numerical equivalence $F_{b_{2}}\equiv \phi(F_{b_{1}})$. It follows that the $\P^{1}$-fibrations induced by $|F_{b_{2}}|$ and $|\phi(F_{b_{1}})|$ are the same up to an automorphism of the base. In particular, they have the same vertical $(-1)$-curves, so $E_{b_2}=\phi(E_{b_{1}})$, as needed.
\end{proof}

\subsection{Summary of notation}\label{sec:notation}

We now introduce notation which will allow us to write down those weighted graphs which appear in all the lemmas cited in Theorem \ref{thm:ht=1,2}. First, we recall the notation for rational log canonical singularity types. They are disjoint sums of rational chains, forks, and benches, see Section \ref{sec:singularities}. We denote them as follows:
\begin{itemize}
	\item $[a_1,\dots,a_n]$ is a chain with components of subsequent self-intersection numbers $-a_1,\dots, -a_n$,
	\item $\langle b;T_1,T_2,T_3 \rangle$ is a fork with a branching $(-b)$-curve, and maximal twigs of types $T_1,T_2,T_3$,
	\item $\lbr T \rbr$ is a bench with central chain $T$.
\end{itemize}
We also use the following simplifying conventions:
\begin{itemize}
	\item $(m)_{k}$ stands for an integer $m$ repeated $k$ times, and $[(m)_0]$ stands for an empty chain,
	\item $[a_1,\dots,a_k]*[b_1,\dots,b_l]\de [a_1,\dots,a_{k-1},a_k+b_k-1,b_2,\dots, b_l]$, see formula  \eqref{eq:convention-Tono_star},
	\item $(\mbox{empty chain})*[a_1,\dots,a_k]\de [a_2,\dots,a_k]$,
	\item $[(2)_{-1}]*[a_1,\dots,a_k]\de [a_2+1,\dots,a_k]$, see formula \eqref{eq:convention_2-1},
	\item $[(2)_{-1},a_1,\dots,a_n] \de [a_2,\dots, a_k]$, see  formula \eqref{eq:convention_2-1},
	\item $T\trp$ is the transpose of the chain $T$, i.e.\ $[a_1,\dots,a_{n}]\trp=[a_n,\dots,a_1]$,
	\item $T^{*}$ is the chain adjoint to the chain $T$, given by the formula \eqref{eq:adjoint_chain}.
\end{itemize}

We now introduce additional notation, which allows to encode both the singularity type of a surface, and a structure of a $\P^1$-fibration on its minimal resolution.

\begin{notation}[Decorated types, see Example \ref{ex:notation}]\label{not:fibrations}
	Let $\bar{X}$ be a normal surface, and let $(X,D)\to(\bar{X},0)$ be its minimal log resolution. Assume that $D$ is a sum of rational chains, forks and benches. Fix a $\P^1$-fibration of $X$, and let $A_1,\dots, A_n$ be all components of its degenerate fibers which are not contained in $D$. Assume that every $A_j$ is a $(-1)$-curve, meeting every component of $D$ at most once (this will always hold in our case). We describe the weighted graph of $\check{D}\de D+\sum_{j}A_j$, together with the decomposition $\check{D}=\check{D}\hor+\check{D}\vert$, as follows. First, we write the type of each chain, fork or bench in $D$ using the conventions summarized above; so each number corresponds to a component of $D$.  Next, we decorate it as follows.
	\begin{enumerate}
		\item\label{not:bold} We put in boldface those numbers which correspond to horizontal components of $D$. 
		\begin{itemize}
			\item If a symbol $T$ stands for a chain $[a_1,\dots, a_n]$, we write $\fh{T}\de [\bs{a_1},a_2,\dots,a_{n}]$, $\lh{T}\de [a_1,\dots,a_{n-1},\bs{a_{n}}]$.
		\end{itemize}
		\item\label{not:Aj} We decorate by $\dec{j}$ the number corresponding to each component of $D$ meeting $A_j$.
		\begin{itemize}
			\item If $T$ stands for a chain $[a_1,\dots, a_n]$, we write $\ldec{j}T\de [a_{1}\dec{j},a_{2},\dots,a_{n}]$, $T\dec{j}\de [a_1,\dots,a_{n-1},a_{n}\dec{j}]$.
			\item For a bench $\lbr T \rbr=[2,a_1,2]+[a_{2},\dots, a_{n-1}]+[2,a_{n},2]$ with  central chain $T=[a_1,\dots,a_{n}]$, we write 
		\begin{equation*}
			\begin{split}
			\ldec{j}\lbr T \rbr& \de [2\dec{j},a_1,2]+[a_{2},\dots, a_{n-1}]+[2,a_{n},2],\quad 
			\ldecb{i}{j}\lbr T  \rbr\de [2\dec{i},a_1,2\dec{j}]+[a_{2},\dots, a_{n-1}]+[2,a_{n},2] \\
			\lbr T \rbr\dec{j}& \de [2,a_1,2]+[a_{2},\dots, a_{n-1}]+[2\dec{j},a_{n},2],\quad 
			\lbr T \rbr\decb{i}{j}\de [2,a_1,2]+[a_{2},\dots, a_{n-1}]+[2\dec{i},a_{n},2\dec{j}].
			\end{split}
		\end{equation*}
		\end{itemize} 
	\end{enumerate}
Note that if $\check{D}$ is snc then by assumption every two components of $\check{D}$  meet in at most one point, so decorations introduced in \ref{not:Aj} unambiguously encode the weighted graph of $\check{D}$. In some rare cases our $\check{D}$ will not be snc, because some $(-1)$-curve $A_j$ will pass through a common point of two adjacent components of $D$. Although our notation does not capture such a degeneracy, it will usually be clear from the constructions when it occurs.
\end{notation}

\begin{notation}[Figures]\label{not:figures}
	Unless stated otherwise, we use the following conventions to draw $\check{D}=D+\sum_j A_j$.
	\begin{enumerate}
		\item A solid line annotated by \enquote{$k$} denotes an $k$-curve in $D$. Weight $k=-2$ is skipped.
		\item A dashed line denotes a $(-1)$-curve in $\check{D}-D$.
	\end{enumerate}
\end{notation}

\begin{example}[Figure \ref{fig:cor-2} expressed using Notation \ref{not:fibrations}]\label{ex:notation}
	The weighted graphs of $\check{D}$ in Figure \ref{fig:cor-2} are 
	as follows:
	{\setlength\multicolsep{2pt}\begin{multicols}{2}
	\begin{enumerate}
		\item%[\ref{fig:T2*=[2,2]_b>2}] 
		$\langle 2;\ldec{1}[2],[2,2],[\bs{2},3]\rangle$,
		\item%[\ref{fig:E8-intro}] 
		$\langle 2;\ldec{1}[2,2,2,2],[2],[\bs{2},2] \rangle$,
		\item%[\ref{fig:A1+E7-intro}] 
		$\langle \bs{2}, [(2)_{r-1},\bs{2},2],\ldec{2}[2,2],[2]\rangle+[r]\dec{1}$,
		\item%[\ref{fig:A2+E6-intro}]
		$\langle 2;\ldec{1}[2,\bs{2}],\ldec{2}[(2)_{m}],\ldec{3}[2]\rangle+\ldec{1}[2,\bs{m}]\dec{2}$,
		\item%[\ref{fig:[T_1,n]=[3,2]_r=2}] 
		$\langle 2;\ldec{1}[3,\bs{2}],\ldec{2}[2,2],\ldec{3}[2]\rangle+\ldec{1}[2,2,\bs{2}]\dec{2}$.
	\end{enumerate}
\end{multicols}}
\end{example}

\begin{notation}[Symbol $\cM^{d}$]\label{not:Md}
	In Tables \ref{table:ht=1_exceptions}, \ref{table:exceptions-to-exceptions} and \ref{table:ht=2_char-any}--\ref{table:canonical} we list numbers $\#\cC$ for some sets $\cC$ of isomorphism classes of log surfaces. In those tables, symbol $\cM^{d}$ for $d\geq 1$ means that we have $\#\cC=\infty$, and $\cC$ is represented by an almost faithful family of dimension $d$, see Definition \ref{def:moduli}\ref{item:def-family-faithful}. 
\end{notation}

\clearpage
\section{Non--log terminal del Pezzo surfaces}\label{sec:non-lt-proof}

In this section, we study non--log terminal del Pezzo surfaces of rank one. Looking at minimal models of their log resolutions, described in \cite[Proposition 3.2]{PaPe_MT}, we prove  Proposition \ref{prop:non-lt-more} below, which is a more detailed version of Proposition \ref{prop:non-log-terminal}. In Section \ref{sec:non-lt-examples} we illustrate each of its cases by non--log canonical examples.

\begin{proposition}[Non--log terminal del Pezzo surfaces]\label{prop:non-lt-more}
	Let $\bar{X}$ be a non--log terminal del Pezzo surface of rank one, and let $(X,D)$ be its minimal log resolution. Write $D=D_0+D\lt$, where $D\lt$ is the preimage of all log terminal singularities of $\bar{X}$. Then the following hold.
	\begin{enumerate}
		\item\label{item:non-lt-B} The divisor $D_0$ is a tree with at most one component $B$ such that  $p_{a}(B)>0$ or $\beta_{D}(B)\geq 4$. In particular, $\bar{X}$ has exactly one non--log terminal singularity.
		\item\label{item:non-lt-ht=1} If $\height(\bar{X})=1$ then every connected component of $D\lt$ is an admissible chain.
		\item\label{item:non-lt-ht>=2} Assume $\height(\bar{X})\geq 2$. Then $\bar{X}$ is rational, has only rational singularities, and every component $C$ of $D_0$ satisfies $p_{a}(C)=0$ and  $\beta_{D}(C)\leq 4$.
		\item\label{item:non-lt-ht=2} Assume $\height(\bar{X})=2$. Then one of the following holds.
		\begin{enumerate}
			\item\label{item:non-lt-chains} Every connected component of $D\lt$ is an admissible chain. In this case, $\bar{X}$ swaps vertically to a surface of type $3\rA_2$, $\rA_1+2\rA_3$, $\rA_1+\rA_2+\rA_5$ or $2\rA_4$ from Examples \ref{ex:ht=2}, \ref{ex:ht=2_meeting}, see Fig.\ \ref{fig:3A2-intro}--\ref{fig:A5+A2+A1-intro}, \ref{fig:2A4-intro}. 
			\item\label{item:non-lt-Platonic} The surface $\bar{X}$ is constructed from a Platonic $\A^{1}_{*}$-fiber space  as in Example \ref{ex:permissible_fork}, see Figure \ref{fig:Platonic-examples}. In particular, $D\lt$ is a disjoint union of one admissible fork and at most one admissible chain; $X$ admits a witnessing  $\P^1$-fibration such that $D\hor$ consists of two branching components of $D$, and $\bar{X}$ swaps vertically to a surface of type $\rA_1+2\rA_3$ or $2\rD_4$ from Example \ref{ex:ht=2}\ref{item:A1+2A3_construction},\ref{item:2D4_construction}, see Fig.\ \ref{fig:2A3+A1-intro}, \ref{fig:2D4-intro}. 
			\item \label{item:non-lt-twisted} We have $\cha \kk=2$, and $\bar{X}$ swaps vertically to a surface from Example \ref{ex:ht=2_twisted_cha=2}, see Figure \ref{fig:KM_surface-intro}.  
		\end{enumerate}
		\item\label{item:non-lt-ht>=3} Assume $\height(\bar{X})\geq 3$. Then $\cha\kk\in \{2,3,5\}$, and  $\bar{X}$ satisfies condition  \ref{item:non-lt_descendant} or \ref{item:non-lt-swap} of Proposition \ref{prop:non-log-terminal}. Moreover, every connected component of $D\lt$ is an admissible chain or a $(-2)$-fork.
	\end{enumerate}
\end{proposition}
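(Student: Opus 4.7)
The strategy is to combine the structural constraints on a minimal log resolution of a rank-one del Pezzo surface (Lemmas \ref{lem:min_res} and \ref{lem:delPezzo_fibrations}) with the classification of open del Pezzo surfaces from \cite{PaPe_MT} and the height $\leq 2$ classification of Theorem \ref{thm:ht=1,2}. Throughout, we analyze a witnessing $\P^1$-fibration on $X$. By the classification of log canonical and rational surface singularities recalled in Section \ref{sec:singularities}, the exceptional divisor of a non-lt singularity always contains at least one component $E$ with $\cf_{D}(E)\geq 1$ (the branching component of a log canonical fork or the central chain of a log canonical bench in the lc case; an elliptic, circular, or non-lc component otherwise). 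For part \ref{item:non-lt-B}, I will rule out multiple such components: if two components $E_1,E_2$ with $\cf\geq 1$ were both horizontal for the fixed $\P^1$-fibration, the ampleness inequality $\sum \cf(H_j)\, H_j\cdot F<2$ of Lemma \ref{lem:delPezzo_criterion} would fail; if at most one is horizontal, the other lies in a degenerate fiber and an analogous argument involving Lemma \ref{lem:min_res}\ref{item:-1-curves_off_D} and \ref{item:one-elliptic} forces a contradiction. The bounds $p_a(C)=0$ and $\beta_{D}(C)\leq 4$ for $C\subseteq D_0$ (part \ref{item:non-lt-ht>=2}) similarly come from Lemma \ref{lem:min_res}\ref{item:one-elliptic}, which forces any non-rational $C$ to be a 1-section with $D-C$ vertical; together with an estimate on $\cf_{R}(C)$ in the style of Lemma \ref{lem:min_res}\ref{item:one-elliptic} this also handles $\beta\geq 5$.

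For part \ref{item:non-lt-ht=1}, a witnessing $\P^1$-fibration of height one has exactly one horizontal component of $D$, which must be the unique ``special'' component of $D_0$ (a non-rational curve, or the component of $D_0$ carrying the non-lt coefficient). The rest of $D$ is vertical and hence lies in degenerate fibers. By Lemma \ref{lem:degenerate_fibers}, every connected component of $D\lt$ contained in a degenerate fiber is a rational tree without branching $(-1)$-curves, and minimality of the log resolution forces it to be an admissible chain. This settles part \ref{item:non-lt-ht=1} and, combined with Lemma \ref{lem:min_res}\ref{item:one-elliptic}, the non-rational case (height one, unique non-rational component in $D_0$, admissible chains elsewhere). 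For part \ref{item:non-lt-ht>=2}, the assumption $\height(\bar X)\geq 2$ excludes a non-rational component by Lemma \ref{lem:min_res}\ref{item:one-elliptic}; combined with part \ref{item:non-lt-B}, this gives rationality of $\bar X$ and of each singularity via Artin's criterion.

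For parts \ref{item:non-lt-ht=2} and \ref{item:non-lt-ht>=3}, the starting point is the observation that $(X, D_0)$ is (the minimal log resolution of) a suitable open del Pezzo pair: contracting the admissible chain components of $D\lt$ that are twigs of connected components of $D$ produces a surface $\bar X'$ on which the image of a well-chosen subdivisor of $D_0$ is a nonzero reduced boundary $\bar B$, and one checks via the pullback formula \eqref{eq:discrepancy} and Lemma \ref{lem:delPezzo_criterion} that $-K_{\bar X'}-\bar B$ remains ample. The main theorem of \cite{PaPe_MT} then bounds $\height(\bar X',\bar B)\leq 2$ with exceptions only in $\cha\kk\in\{2,3,5\}$. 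When the bound holds, Theorem \ref{thm:ht=1,2} provides a vertical swap to one of the primitive models of Figure \ref{fig:basic-intro}; reinserting the non-lt singularity identifies exactly the three sub-cases \ref{item:non-lt-chains}, \ref{item:non-lt-Platonic}, \ref{item:non-lt-twisted}: chains everywhere give the canonical examples, a fork in $D\lt$ corresponds to the Platonic $\A^1_*$-fiber space construction (Example \ref{ex:permissible_fork}), and the exceptional char 2 case swaps to Example \ref{ex:ht=2_twisted_cha=2}. For $\height(\bar X)\geq 3$ the exceptional cases of \cite{PaPe_MT} give $\cha\kk\in\{2,3,5\}$ and either yield the elliptic descendant structure of Theorem \ref{thm:GK} (case \ref{item:non-lt_descendant}) or the primitive model of Example \ref{ex:ht=3} via a vertical swap (case \ref{item:non-lt-swap}).

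The main obstacle will be the careful case analysis in part \ref{item:non-lt-ht=2}, in particular the Platonic sub-case \ref{item:non-lt-Platonic}. Here the non-lt singularity's exceptional divisor contains a branching component of high coefficient, which forces the witnessing $\P^1$-fibration to have two horizontal components (both branching in $D$), and one must carefully track how the degenerate fibers around the non-lt component fit the Platonic pattern. A secondary difficulty is the identification of the correct boundary $\bar B$ on the contracted model: the pair $(\bar X',\bar B)$ must be genuinely open del Pezzo, which requires a delicate choice depending on whether the non-lt singularity is an elliptic singularity, log canonical fork, log canonical bench, or non-lc. Once this matching is established, the final swap-theoretic conclusion follows mechanically from the primitive models enumerated in Figure \ref{fig:basic-intro}.
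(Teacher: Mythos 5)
Your high-level plan — handle height one by the structure of a witnessing $\P^1$-fibration, and for height $\geq 2$ reduce to the classification of open del Pezzo surfaces in \cite{PaPe_MT} and then match the outcome with the primitive models of Theorem \ref{thm:ht=1,2} — is indeed the route the paper takes. But the mechanism you propose for the reduction is not the one that makes the argument work, and two of your intermediate claims fail. The paper does not hand-pick a contraction of some twigs of $D\lt$ and then verify ampleness of $-K_{\bar X'}-\bar B$; it runs a $(K+D)$-MMP $\psi_{\min}\colon (X,D)\to (X_{\min},D_{\min})$, invokes \cite[Lemma 4.1(a)]{PaPe_MT} to know that the outcome is a \emph{dlt log del Pezzo surface of rank one with $D_{\min}\neq 0$}, and — crucially — uses the count $\#\Exc\psi_{\min}=\#D$ to conclude that the induced map $\psi$ from $X$ to the minimal log resolution $X\am$ of $(X_{\min},D_{\min})$ contracts exactly $n=\#D_{\min}\leq 2$ curves not contained in $D$, i.e.\ $\psi^{-1}$ has at most two base points, each with a single $(-1)$-curve above it. Feeding this into Lemma \ref{lem:one-base-point} is what simultaneously yields the uniqueness of the non-lt singularity, the bound $\beta_{D}(C)\leq 4$, and the statement that the connected components of $D\lt$ away from the non-lt part are admissible chains (or $(-2)$-forks in the descendant case). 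Your proposed contraction need not land on an almost minimal model covered by \cite[Proposition 3.2]{PaPe_MT}, and without the counting argument you have no control over how many $(-1)$-curves sit over each base point, which is exactly the control the case analysis consumes.

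The two claims that fail outright: first, your argument for part \ref{item:non-lt-B} via components with $\cf\geq 1$. Lemma \ref{lem:delPezzo_criterion} constrains only \emph{horizontal} components, so a vertical component with $\cf(E)\geq 1$ produces no contradiction with ampleness — and there is no way around this, since a single log canonical bench already contributes an entire chain of components with $\cf=1$, all of which may be vertical. Uniqueness of the non-lt point does not follow from counting high-coefficient components. Second, the bound $\beta_D(C)\leq 4$ cannot come from a coefficient estimate in the style of Lemma \ref{lem:min_res}\ref{item:one-elliptic}: Example \ref{ex:non-lt-ht=1} exhibits del Pezzo surfaces of rank one and height one whose resolution has a component of arbitrarily large branching number, so any such estimate that ignores the hypothesis $\height(\bar X)\geq 2$ is doomed. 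In the paper the bound is a consequence of Lemma \ref{lem:one-base-point}\ref{item:one-base-point-beta} applied to the (at most two) exceptional loci of $\psi^{-1}$, again riding on the MMP reduction. You should replace the coefficient-based arguments with the almost-minimalization machinery before the rest of the case analysis can proceed.
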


\subsection{Proof of Proposition \ref{prop:non-lt-more}: structure of non--log terminal del Pezzo surfaces of rank one}

The proof is based on \cite[Proposition 3.2]{PaPe_MT}, which describes possible almost minimal models of $(X,D)$. To describe the process of almost minimalization, we will need the following elementary lemma. 

\begin{lemma}[Birational morphism with exactly one exceptional $(-1)$-curve]\label{lem:one-base-point}
			Let $(Y,B)$ be a log smooth surface. Let $\tau\colon Y'\to Y$ be a birational morphism such that $\Exc\tau$ contains exactly one $(-1)$-curve, say $A$, and the point $\tau(A)$ lies on $B$. Put $E=\Exc\tau-A$, $B'=\tau^{-1}_{*}B'$. Then the following hold.
			\begin{enumerate}
				\item\label{item:one-base-point-beta} Every component of $E$ has branching number at most $3$ in $B'+E$.
				\item\label{item:one-base-point-b0} The divisor $E$ has at most two connected components.
				\item\label{item:one-base-point-chain} A connected component of $E$ which does not meet $B'$ is an admissible chain.
			\end{enumerate}
\end{lemma}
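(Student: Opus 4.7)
The plan is to factor $\tau$ as a composition of $n$ blowups of smooth points and exploit that $\Exc\tau$ contains a single $(-1)$-curve. Since the exceptional divisor of the final blowup in any such factorization is automatically a $(-1)$-curve in $Y'$, I may arrange that $A$ is created by the last blowup $\sigma\colon Y' \to Y''$; then $\tau = \tau_0\circ\sigma$ with $\tau_0\colon Y''\to Y$ a composition of $n-1$ blowups, and the divisor $B'' + \Exc\tau_0$ on $Y''$, where $B'' \de \tau_0^{-1}_{*}B$, is snc.

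The first key step is to observe that $A$ meets at most two components of $B' + E$. Indeed, setting $q \de \sigma(A)$, the components of $B' + E$ meeting $A$ correspond bijectively to the analytic branches of $B'' + \Exc\tau_0$ through $q$, and the snc condition permits at most two such branches. In particular $\beta_{\Exc\tau}(A) \leq 2$, so removing the vertex $A$ from the tree $\Exc\tau$ leaves at most two connected components, which proves \ref{item:one-base-point-b0}.

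To prove \ref{item:one-base-point-beta} and \ref{item:one-base-point-chain} I would induct on $n$. The structural input driving the induction is that $\Exc\tau_0$ must itself contract to a smooth point, hence contains a $(-1)$-curve; since contracting $A$ alters only the self-intersections of $A$'s neighbors, raising them by one, at least one neighbor of $A$ in $E$ must be a $(-2)$-curve. Moreover, if $A$ has two neighbors then the other has self-intersection $\leq -3$, for otherwise a $0$-curve would appear in the subsequent contraction sequence of $\Exc\tau_0$, contradicting the negative-definiteness of an exceptional divisor. Peeling off $A$ together with its $(-2)$-neighbor reduces to a strictly smaller morphism, and \ref{item:one-base-point-beta} follows by tracking how $\beta_{B'+E}(C) = C \cdot B' + \beta_{E}(C)$ evolves under this peeling. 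For \ref{item:one-base-point-chain}, a connected component $E_0 \subseteq E$ disjoint from $B'$ is attached to $A$ at a unique component by connectedness of the tree $\Exc\tau$, all its components have self-intersection $\leq -2$ by uniqueness of $A$, and the absence of branching vertices follows because any such vertex would force an extra $(-1)$-curve to persist at some intermediate stage of the contraction.

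The main obstacle I expect is the inductive bookkeeping when $\Exc\tau_0$ acquires several $(-1)$-curves: this forces a careful choice of the order of further contractions and a uniform monitoring of the interaction with $B''$, so that the lemma's hypotheses reappear on each smaller piece one analyzes.
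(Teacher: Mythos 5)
Your route is the one the paper intends — its entire proof reads ``Straightforward by induction on the number of blowups in the decomposition of $\tau$'' — and your skeleton is sound: $A$ is automatically the exceptional curve of the last blowup, the reduced total transform of $B$ on the intermediate surface $Y''$ is snc (all centers lie over the single point $\tau(A)\in B$, since every connected component of $\Exc\tau$ must contain a $(-1)$-curve), whence $A\cdot(B'+E)\le 2$ and \ref{item:one-base-point-b0} follows. For \ref{item:one-base-point-beta}, note that the statement as literally written does not close under your peeling: if a component $C$ of $E'$ has branching number $3$ in $B''+E'$ and happens to meet the new $(-1)$-curve $\sigma(C_1)$, then reinstating $C_1$ gives $C$ branching number $4$ in $B'+E$. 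One must induct on the stronger claim that every component has branching number at most $3$ in $B'+\Exc\tau$ \emph{and} the unique $(-1)$-curve has branching number at most $2$ there; the second clause is exactly your ``first key step,'' so the ingredient is present but must be wired into the inductive hypothesis.

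The one genuine gap is in \ref{item:one-base-point-chain}. Your stated mechanism — a branching vertex of $E_0$ ``would force an extra $(-1)$-curve to persist'' — is not what goes wrong: at every intermediate stage there is exactly one $(-1)$-curve whether or not $E$ has branching components (it does, whenever the relevant component of $E$ meets $B'$). What actually has to be proved is that $A$ meets $E_0$ in a \emph{tip} of $E_0$: part \ref{item:one-base-point-beta} alone permits $C_1$ to sit in the interior of $E_0$, which would make $E_0$ a fork, or a chain with $A$ attached in the middle, and either possibility breaks the next step of the induction. The fix is to strengthen \ref{item:one-base-point-chain} to ``a connected component $E_0$ of $E$ disjoint from $B'$ is an admissible chain which $A$ meets in a tip.'' The only way this stronger clause could fail in the inductive step is if $\sigma(C_1)$ met two distinct $B''$-disjoint components of $E'$; but since $\sigma(C_1)$ meets at most two components of $B''+E'$, the connected component of $\Exc\tau_0$ containing $\sigma(C_1)$ would then be disjoint from $B''$, contradicting the fact that the proper transform of $B$ must meet the connected divisor $\Exc\tau_0=\tau_0^{-1}(\tau(A))$. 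With that clause added, your induction goes through.
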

\begin{proof} Straightforward by induction on the number of blowups in the decomposition of $\tau$. \end{proof}

\begin{proof}[Proof of Proposition \ref{prop:non-lt-more}]
	Assume first that 	$\height(\bar{X})=1$. Fix a witnessing $\P^{1}$-fibration of $(X,D)$. 
	By Lemma \ref{lem:delPezzo_fibrations} every degenerate fiber has a unique $(-1)$-curve. It follows from Lemma \ref{lem:degenerate_fibers}\ref{item:adjoint_chain}, see also Lemma \ref{lem:ht=1_basics} below, that each fiber meets $D\hor$ in a tip, and each connected component of $D$ not containing $D\hor$ is an admissible chain. This proves \ref{item:non-lt-ht=1}  and case $\height=1$ of \ref{item:non-lt-B}.
	
	Thus by Remark \ref{rem:ht_finite_nonzero} we can assume $\height(\bar{X})\geq 2$. Let $\psi_{\min}\colon (X,D)\to (X_{\min},D_{\min})$ be an MMP run. Since $\height(\bar{X})\geq 2$, the minimal model $(X_{\min},D_{\min})$ is a dlt log del Pezzo surface of rank one, see \cite[Lemma 4.1(a)]{PaPe_MT}. We have  $\#\Exc\psi_{\min}-\#D=\#\Exc\psi_{\min}-\rho(X)+\rho(\bar{X})=-\rho(X_{\min})+\rho(\bar{X})=0$, so $\#\Exc \psi_{\min}=\#D$. Since $X_{\min}$ is log terminal, we have $\Exc\psi_{\min}\neq D$. Hence $D_{\min}\neq 0$. Put $n=\#D_{\min}$. The morphism $\psi_{\min}$ contracts precisely $n$ curves not contained in $D$;  they are $(-1)$-curves by Lemma \ref{lem:min_res}\ref{item:-1-curves_off_D}. Since $(X,D)$ is log smooth, $\psi_{\min}$ factors as $\psi_{\min}=\alpha\circ\psi$, where $\alpha\colon (X\am,D\am)\to (X_{\min},D_{\min})$ is the minimal log resolution. This way, the analysis of non-lt del Pezzo surfaces of rank one, with no boundary, reduces to the analysis of dlt log del Pezzo surfaces \emph{with} nonzero boundary, so-called \emph{open del Pezzo}. 
	
	These surfaces are described in \cite[Proposition 3.2]{PaPe_MT}, which implies that $(X\am,D\am)$ is as in (1) or (3)--(6) loc.\ cit: we refer to each of these cases simply by its number. 
	In any case, the surface $X\am$, hence $\bar{X}$, is rational, which by \cite[Lemma 2.7]{PaPe_MT} is equivalent to $\bar{X}$ having only rational singularities. This proves the first three statements of \ref{item:non-lt-ht>=2}, and shows that every connected component of $D$ is a rational tree.
	
	\begin{casesp*}
		\litem{(1)} In this case $X\am=\P^2$ and $\deg D\am\leq 2$. Suppose $n=1$. Then $D\am$ is a line or a conic. Since $D$ is negative definite, and $\psi^{-1}$ has exactly one base point, we have $\psi=\beta\circ\psi_1$, where $\beta$ is a composition of $D\am^{2}$ blowups over a point of $D\am$, on the proper transforms of $D\am$. Now $|\beta^{-1}_{*}D\am|$ induces a $\P^1$-fibration which pulls back to a $\P^1$-fibration of height one with respect to $D$, contrary to our assumptions.
		
		Thus $n=2$, so $D\am$ is a sum of two lines, say $\ll_1$ and $\ll_2$. Since $\ll_{i}^2>0$, each $\ll_{i}$ contains a base point of $\psi^{-1}$. If the unique base point of $\psi^{-1}$ on $\ll_{i}$ is $\ll_1\cap \ll_{2}$, then the proper transform of $\ll_{i}$ after a blowup at that point is a fiber of a $\P^1$-fibration which pulls back to a $\P^1$-fibration of height $1$ with respect to $D$, which is impossible. Thus $\psi^{-1}$ has two base points, say $p_{i}\in \ll_{i}$, $p_{i}\not\in \ll_{1}\cap \ll_2$, and each $\psi^{-1}(p_i)$ contains exactly one $(-1)$-curve. By Lemma \ref{lem:one-base-point}\ref{item:one-base-point-chain} all connected components of $D$, except possibly the one containing $\psi^{-1}_{*}D\am$, are admissible chains; and by \ref{lem:one-base-point}\ref{item:one-base-point-beta} all components of $D$ have $\beta_{D}\leq 3$, so conditions \ref{item:non-lt-B}, \ref{item:non-lt-ht>=2} and \ref{item:non-lt-chains} hold. The proper transform of $\ll_{1}$ after a blowup at $p_1$ is a fiber of a $\P^1$-fibration whose pullback to $(X,D)$ has height $2$, as needed. 
		
		\litem{(3.1)--(3.3)} We have $2=\height(X\am,D\am)\geq \height(\bar{X})\geq 2$ by assumption, so $\height(\bar{X})=2$. By \cite[Remark 3.3]{PaPe_MT}, we can choose the minimal model $(X_{\min},D_{\min})$ so that $(X\am,D\am)$ is as in Figure 2 loc.\ cit. We have $n\in \{1,2\}$. We claim that conditions \ref{item:non-lt-B}, \ref{item:non-lt-ht>=2} and \ref{item:non-lt-chains} hold.
	
	\begin{casesp*}
		\litem{$n=1$} Let $R\de \alpha^{-1}_{*}D_{\min}$ be the component of $D\am$ with nonnegative self-intersection number. Since $D$ has no such component, the unique base point $p$ of $\psi^{-1}$ lies on $R$. Since $D\am$ is a chain, Lemma \ref{lem:one-base-point}\ref{item:one-base-point-chain},\ref{item:one-base-point-beta} implies that all connected components of $D\am$, except possibly the one containing $\psi^{-1}_{*}R$ are admissible chains, and $\beta_{D}(C)\leq 3$ for every component $C$ of $D$. 
		It remains to prove that $\bar{X}$ swaps vertically to one of the surfaces listed there. This will follow from a more general Lemma \ref{lem:ht=2_swaps}, let us briefly sketch the argument here. 
		
		If $\#D\am \geq 3$ and $p\in D\am\reg$, see \cite[Figure 2(c)]{PaPe_MT}, then the fact that $(\psi^{-1}_{*}R)^2\leq -2$, see  Lemma \ref{lem:min_res}\ref{item:D-snc}, easily implies that $(X,D)$ swaps vertically to the surface of type $2\rA_4$ from Example \ref{ex:ht=2_meeting}, see Figure \ref{fig:2A4-intro}. Assume that $\#D\am=2$ or $p\not\in D\am\reg$, and write $\psi=\beta\circ \psi_{1}$, where $\beta$ is a composition of $R^2$ blowups at $p$ and its infinitely near points on the proper transforms of $R$.  Then  $|\beta^{-1}_{*}R|$ pulls back to a $\P^1$-fibration of $X$ which is either of height one with respect to $D$, which is impossible, or has exactly two degenerate fibers, and $(X,D)$ with this $\P^1$-fibration swaps vertically to the surface of type $3\rA_2$ from Example \ref{ex:ht=2}\ref{item:3A2_construction}; as needed.
	
		\litem{$n=2$} Now  $D\am$ is a chain with a subchain $C=[0,1-m]$ for some $m\geq 1$. If the node of $C$ is a base point of $\psi^{-1}$ let $\beta$ be a blowup at this point; otherwise put $\beta=\id$. Then $\psi$ factors as $\psi=\beta\circ\psi_{1}$ and $(\beta^{*}C)\redd$ is a chain whose both tips have self-intersection at least $-1$, so they contain base points of $\psi_{1}^{-1}$, because $D$ has no $(-1)$-curves by Lemma \ref{lem:min_res}\ref{item:D-snc}. By construction those two base points are different, so the preimage of each contains at most one $(-1)$-curve. Now Lemma \ref{lem:one-base-point} implies, as before, that all components of $D$ have $\beta_{D}\leq 3$, and all but one connected components of $D$ are admissible chains. Moreover, if $\psi$ has a base point off $(D\am)\vert$ then $\bar{X}$ swaps vertically to the surface from Example \ref{ex:ht=2}\ref{item:3A2_construction} or \ref{item:2A1+2A3_construction}, see Figures \ref{fig:3A2-intro}, \ref{fig:3A2-intro}; otherwise it swaps vertically to the one from Example \ref{ex:ht=2}\ref{item:A1+A2+A5_construction}, see Figure \ref{fig:A5+A2+A1-intro}. For more details see Lemma \ref{lem:ht=2_swaps} below.
	\end{casesp*} 

	\litem{(3.4)} We have $n=1$. Like in cases (3.1)--(3.3) above, $\height(\bar{X})=2$. Let $H\am\subseteq D\am$ be the branching component of the non-admissible fork, and let $H=\psi^{-1}_{*}H\am$. Since $H^2\leq -2$ by Lemma \ref{lem:min_res}\ref{item:D-snc}, the base point of $\psi^{-1}$ lies on $H$, and $\psi$ factors through $H\am^2+2$ blowups over that point, as described in Example \ref{ex:permissible_fork}. Thus $\bar{X}$ is as in \ref{item:non-lt-Platonic}. Lemma \ref{lem:one-base-point} implies that $D$ all components of $D$ except the proper transform of $B$ have $\beta_{D}\leq 3$, and the latter has $\beta_{D}\leq 4$, so \ref{item:non-lt-ht>=2} holds. Moreover, $\psi^{-1}$ is an isomorphism near the admissible fork in $D\am$; and $D\wedge \Exc\psi$ consists of an admissible chain and a connected component meeting $B$, so \ref{item:non-lt-B} holds, too, as needed.
	\end{casesp*}
	
	In the remaining cases (4)--(6), $n=1$ and $D\am$ has a fork with a branching $(-1)$-curve $A$. Since by Lemma \ref{lem:min_res}\ref{item:D-snc} $D$ has no $(-1)$-curve, the base point of $\psi^{-1}$ lies on $A$. Put $B=\psi^{-1}_{*}A$. Lemma \ref{lem:one-base-point} implies that for every connected component $C$ of $D$ one of the following holds: either $\psi$ is an isomorphism near $C$, or $C\subseteq \Exc\psi$ is an admissible chain, or $C$ is a tree containing $B$. Hence $D_0$ is a rational tree. Moreover, since every component of $D\am$ has $\beta_{D\am}\leq 3$, every component of $D-B$ has $\beta_{D}\leq 3$, and $\beta_{D}(B)\leq 4$. This proves \ref {item:non-lt-B} and \ref{item:non-lt-ht>=2}.
	
	\begin{casesp*}
	\litem{(4)} In this case, we have $\cha\kk=2$, and the surface $X_{\min}$ swaps vertically to the one from Example \ref{ex:ht=2_twisted_cha=2}. Since the base point of $\psi^{-1}$ lies on some vertical $(-1)$-curve, decomposing $\psi$ into elementary vertical swaps shows that $\bar{X}$ swaps to $\bar{X}_{\min}$, and therefore $\height(\bar{X})\leq \height(\bar{X}_{\min})\leq 2$ and  \ref{item:non-lt-twisted} holds.
	
	\litem{(5)}\label{case:5} In this case, we have $\cha\kk=2$, and the log surface $(X\am,D\am)$ is described in \cite[Example 3.9]{PaPe_MT}, see Figures 4(b), 4(c) loc.\ cit. These figures are repeated in the leftmost and rightmost part of Figure \ref{fig:ht=3-remark}, explained in Remark \ref{rem:ht=3}. In particular, we have $3=\height(X\am,D\am)\geq \height(\bar{X})$. 
	
	We claim that Proposition  \ref{prop:non-log-terminal}\ref{item:non-lt-swap} holds, i.e.\ 
	$\bar{X}$ swaps vertically to a surface $\bar{Y}$ of type $\rA_3+4\rA_1+[3]$, with a $\P^1$-fibration of height $3$ constructed in Example \ref{ex:ht=3}, see the middle of Figure \ref{fig:ht=3-remark}. For this we need another $\P^1$-fibration of $X\am$, used in \cite[Lemmas 4.8, 4.9]{PaPe_MT}. It is shown in Figure \ref{fig:ht=3-remark}, too, and is constructed as follows. Let $A+T+T_2+T_3$ be the fork in $D\am$, with branching component $A=[1]$ and twigs $T$, $T_2=[2]$, $T_{3}=[3]$. In case (5.1) of \cite[Proposition 3.2]{PaPe_MT}, we consider the $\P^1$-fibration induced by $|T_2+2A+\ltip{T}|$, and in case (5.2) loc.\ cit., the one with a degenerate fiber supported on $A'+T+A+T_2$, where $A'$ is a $(-1)$-curve denoted in loc.\ cit.\ by $\dec{1}$. The horizontal part of $D\am$ consists of a $2$-section $T_3$ and a $1$-section $H$; and degenerate fibers are supported on: $[2,1,2]$, $[2,1,2]$, $\langle 2;[1,3],[1],[2]\rangle$ in case (5.1) and $[1,2,3,1,2]$, $[2,1,2]$, $\langle 2;[1],[2],[2]\rangle$ in case (5.2). They meet $T_3$ in the $(-1)$-curve of multiplicity $2$; and $H$ either in a $(-1)$-curve of multiplicity $1$ (if such occurs) or in a $(-2)$-tip. With respect to this $\P^1$-fibration, the surface $X_{\min}$ swaps vertically to $\bar{Y}$. Furthermore, as in case (4), since the base point of $\psi^{-1}$ lies on a vertical $(-1)$-curve, we see that $\bar{X}$ swaps vertically to $X_{\min}$. 
	
	Moreover, all connected components of $D\am$ not containing $A$ are admissible chains, so all connected components of $D$ not containing $\psi^{-1}_{*}A$ are admissible chains, too, as claimed in \ref{item:non-lt-ht>=3}.
	
	\litem{(6)} Here the surface $\bar{X}_{\min}$ admits a descendant with elliptic boundary as in \ref{prop:non-log-terminal}\ref{item:non-lt_descendant}, hence so does $\bar{X}$. Moreover all connected components of $D\am$ not containing $A$ are $(-2)$-chains and $(-2)$-forks, so by Lemma \ref{lem:one-base-point}\ref{item:one-base-point-chain} all components of $D$ not containing $\psi^{-1}_{*}A$ are admissible chains or $(-2)$-forks, as claimed in \ref{item:non-lt-ht>=3}.
	\qedhere\end{casesp*}
\end{proof}

\subsection{Examples of non--log canonical del Pezzo surfaces of rank one}\label{sec:non-lt-examples}

We now construct some non--log canonical del Pezzo surfaces of rank one falling in each case of Proposition \ref{prop:non-lt-more}. Although they are all constructed from a simple $\P^1$-fibrations,  their singularities can still be complicated.

We begin with an example where the minimal log resolution has a boundary component with an arbitrarily high branching number. Note that by Proposition \ref{prop:non-lt-more}\ref{item:non-lt-ht>=2} such examples are necessarily of height one.

\begin{example}[Case \ref{prop:non-lt-more}\ref{item:non-lt-ht=1}: $\height=1$, high branching number, see Figure \ref{fig:non-lt_ht=1}]\label{ex:non-lt-ht=1} 
We construct two examples: \ref{item:non-lt-ht=1_many} with $\nu+1$ singularities, of type $\langle m+\nu;\nu\cdot [2]\rangle+\nu\cdot \rA_1$;  and \ref{item:non-lt-ht=1_one} with only one singularity, of type $\langle m+\nu;\nu\cdot [2,2,2] \rangle$. Here $\langle b,\nu\cdot T\rangle$ denotes a tree with one branching component $[b]$ and $\nu$ maximal twigs of type $T$.

\begin{parlist} 
	\item\label{item:non-lt-ht=1_many} Let $\F_{m}=\P(\cO_{\P^1}(m)\oplus\cO_{\P^1})$ be a Hirzebruch surface for some $m\geq 1$. Choose $\nu$ points $p_1,\dots, p_{\nu}$ on the negative section $\Sec_{m}=[m]$, and denote by $F_{i}$ the fiber passing through $p_i$. Blow up at each $p_i$ and its infinitely near point on the proper transform of $F_i$. Denote this morphism by $\psi\colon X\to\F_{m}$, let $A_i$, $B_i$ be the $(-1)$- and $(-2)$-curves in $\psi^{-1}(p_i)$, and let $H=\psi^{-1}_{*}\Sec_m$, $C_{i}=\psi^{-1}_{*}F_i$,  $D=H+\sum_{i}(B_i+C_i)$. Then $C_i=[2]$ is a connected component of $D$, and $H=[m+\nu]$ meets each $B_i=[2]$. In particular, $\beta_{D}(H)=\nu$. Note that $(X,D)$ is the log surface from Example \ref{ex:ht=1} for $g=0$, $e=m+\nu$, see Figure \ref{fig:ht=1-intro}.  
	
	The divisor $D_0\de H+\sum_{i}B_i$ contracts to a rational singularity; with fundamental cycle $D_0$. Let $X\to \bar{X}$ be the contraction of $D$. Then $\rho(\bar{X})=1$. We compute that $\cf(H)=1+\frac{\nu-4}{\nu+2m}<2$, so by Lemma \ref{lem:delPezzo_criterion} the surface $\bar{X}$ is del Pezzo. It 
	is log terminal (respectively, log canonical but not log terminal) if and only if $\nu\leq 3$ (resp.\ $\nu=4$): these surfaces will appear in cases \ref{item:chains_columnar_T1=0,T2=0}--\ref{item:chains_columnar}, 
	\ref{item:beta=3_columnar} and \ref{item:ht=1_bench} of Lemma \ref{lem:ht=1_types}.
\begin{figure}[htbp]
		\begin{tikzpicture}[scale=0.9]
			\begin{scope}	
				\draw (0,2.4) -- (2.8,2.4);
				%\node at (1.85,3.2) {\small{$H$}};
				\node at (1.85,2.2) {\small{$-m-\nu$}};
				\draw (0,0) -- (0.2,1);
				\draw[dashed] (0.2,0.8) -- (0,1.8);
				%\node at (-0.1,1.4) {\small{$A_1$}};
				\draw (0,1.6) -- (0.2,2.6);
				%\node at (-0.1,2.5) {\small{$B_1$}};
				%
				\draw (1,0) -- (1.2,1);
				\draw[dashed] (1.2,0.8) -- (1,1.8);
				%\node at (0.9,1.4) {\small{$A_2$}};
				\draw (1,1.6) -- (1.2,2.6);
				%\node at (0.9,2.5) {\small{$B_2$}};
				%
				\node at (1.85,0.4) {\large{$\dots$}};
				\draw (2.5,0) -- (2.7,1);
				\draw[dashed] (2.7,0.8) -- (2.5,1.8);
				%\node at (2.4,1.4) {\small{$A_k$}};
				\draw (2.5,1.6) -- (2.7,2.6);
				%\node at (2.4,2.5) {\small{$B_k$}};
				\draw [decorate, decoration = {calligraphic brace}, thick] (2.8,-0.1) -- (-0.2,-0.1);
				\node at (1.5,-0.4) {\small{$\nu$ fibers}};
			\end{scope}
			\begin{scope}[shift={(8,0)}]
				\draw (0,2.4) -- (2.8,2.4);
				%\node at (1.85,3.2) {\small{$H'$}};
				\node at (1.85,2.2) {\small{$-m-\nu$}};
				\draw (0,0) -- (0.2,1);
				\draw (0.2,0.8) -- (0,1.8);
				\draw[dashed] (0.2,1.3) -- (-0.8,1.1);
				%\node at (-0.1,1.4) {\small{$A_1$}};
				\draw (0,1.6) -- (0.2,2.6);
				%\node at (-0.1,2.5) {\small{$B_1$}};
				%
				\draw (1,0) -- (1.2,1);
				\draw (1.2,0.8) -- (1,1.8);
				\draw[dashed] (1,1.1) -- (2,1.3);
				%\node at (0.9,1.4) {\small{$A_2$}};
				\draw (1,1.6) -- (1.2,2.6);
				%\node at (0.9,2.5) {\small{$B_2$}};
				%
				\node at (1.85,0.4) {\large{$\dots$}};
				\draw (2.5,0) -- (2.7,1);
				\draw (2.7,0.8) -- (2.5,1.8);
				\draw[dashed] (2.5,1.1) -- (3.5,1.3);
				%\node at (2.4,1.4) {\small{$A_k$}};
				\draw (2.5,1.6) -- (2.7,2.6);
				%\node at (2.4,2.5) {\small{$B_k$}};
				\draw [decorate, decoration = {calligraphic brace}, thick] (2.8,-0.1) -- (-0.2,-0.1);
				\node at (1.5,-0.4) {\small{$\nu$ fibers}};
			\end{scope}
		\end{tikzpicture}
	\caption{Example \ref{ex:non-lt-ht=1}: non--log canonical del Pezzo surfaces of rank one and height one, with a highly branching component in the exceptional divisor of the minimal resolution.}
	\label{fig:non-lt_ht=1}
\end{figure}
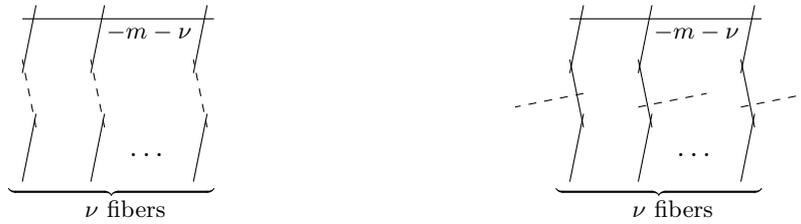
	\item\label{item:non-lt-ht=1_one} 
	We keep the notation from \ref{item:non-lt-ht=1_many}. Blow up once at $A_{i}\setminus (D-A_i)$ for all $i\in \{1,\dots, \nu\}$, denote this morphism by $\sigma\colon X'\to X$ and put $D'=\sigma^{-1}_{*}(D+\sum_{i}A_i)$. As before, there is a morphism $X'\to \bar{X}'$ contracting $D$ to a rational singularity, with reduced fundamental cycle. We have $\cf(H')=1+\frac{3\nu-8}{\nu+4m}$, so by Lemma \ref{lem:delPezzo_criterion}, the surface $\bar{X}'$ is del Pezzo if and only if $\nu\leq 2m+3$. For $\nu\leq 2$ it is log terminal, and as such will appear in Lemma \ref{lem:ht=1_types}\ref{item:chains_both_T1=0},\ref{item:chains_not-columnar}. For $\nu\geq 3$ it is not log canonical.
\end{parlist}	
\end{example}

We  now construct a del Pezzo surface $\bar{X}$ of rank one and height two whose non-lt singular point has a non-star-shaped resolution graph, see Figure \ref{fig:non-star-shaped}. Note that if $\cha\kk\not\in \{2,3,5\}$ then such a surface necessarily falls into case \ref{item:non-lt-chains} of Proposition \ref{prop:non-lt-more}, so its log terminal singularities are images of admissible chains. 

\begin{example}[Case \ref{prop:non-lt-more}\ref{item:non-lt-chains}: a non-star-shaped resolution, see Figure \ref{fig:non-star-shaped}]\label{ex:non-star-shaped}
	Let $(Y,D_Y)$ be the minimal log resolution of a surface of type $\rA_1+\rA_2+\rA_5$, together with the $\P^1$-fibration constructed in Example \ref{ex:ht=2}\ref{item:A1+A2+A5_construction}. We define a vertical swap $(X,D)\sqto (Y,D_Y)$, as follows. We keep the notation from Figure \ref{fig:A5+A2+A1}, and put $H=(D_Y)\hor-C$. Fix $k\geq 4$. Let $\psi\colon X\to Y$ be a composition of one blowup at $A_3\cap (D_Y-L_3)$, and $k-1$ blowups over $A_0\cap H$, such that $\psi^{-1}(A_0\cap H)=[2,1,3,(2)_{k-4}]$, and the first tip of this chain meets $\psi^{-1}_{*}H$. Define $D$ as $\psi^{*}(D_Y+A_0+A_3)\redd$ minus the $(-1)$-curves. Then $D$ is a disjoint union of admissible chains $[3]$, $[3,(2)_{k-4},3,2]$, and a connected component $D_0$. The latter is a sum of a chain $\psi^{-1}_{*}(H+L_3)=[k,2]$, two twigs $[2]$, $[2]$ meeting $\psi^{-1}_{*}H$ and two twigs $[2]$, $[2,\underline{2}]$ meeting $\psi^{-1}_{*}L_3$, where the underlined \enquote{$2$} corresponds to $\psi^{-1}_{*}C$. 
	
	Artin's criterion implies that $D_0$ contracts to a rational singularity, with fundamental cycle $D_0+\psi^{-1}_{*}(L_3+C)$. Let $X\to \bar{X}$ be the contraction of $D$. Then $\bar{X}$ is a normal surface of rank one, which is not log canonical. We compute $\cf(\psi^{-1}_{*}H)=1+\frac{1}{5k-11}$, $\cf(\psi^{-1}_{*}C)=1-\frac{k-3}{5k-11}$, see formulas in  \cite[3.2.5]{Flips_and_abundance}. The inequality \eqref{eq:ld_phi_H} shows that the surface $\bar{X}$ is del Pezzo if and only if $k\geq 5$.
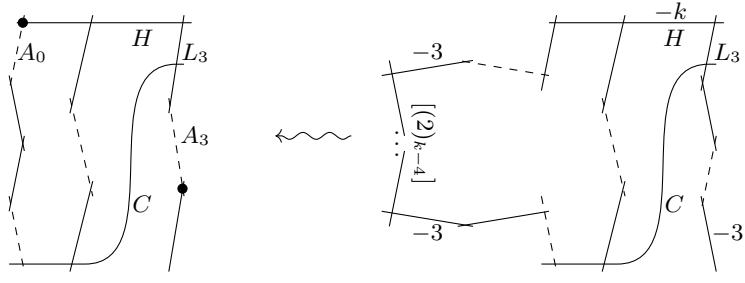
\begin{figure}[ht]
	\begin{tikzpicture}
	\begin{scope}
		\draw (0.1,3) -- (2.4,3);
		\node at (1.75,2.8) {\small{$H$}};
		\draw (0,-0.2) -- (1,-0.2) to[out=0,in=180] (2.2,2.45) -- (2.3,2.45);
		\node at (1.75,0.6) {\small{$C$}};
		\draw[dashed] (0.2,3.1) -- (0,2.1);
		\node at (0.3,2.6) {\small{$A_0$}};
		\filldraw (0.18,3) circle (0.06);
		\draw (0,2.3) -- (0.2,1.3);
		\draw (0.2,1.5) -- (0,0.5);
		\draw[dashed] (0,0.7) -- (0.2,-0.3);
		\draw (1.1,3.1) -- (0.8,1.8);
		\draw[dashed] (0.8,2) -- (1.1,0.7);
		\draw (1.1,0.9) -- (0.8,-0.3);
		\draw (2.3,3.1) -- (2.1,1.8);
		\node at (2.45,2.6) {\small{$L_3$}};
		\draw[dashed] (2.1,2) -- (2.3,0.7);
		\node at (2.45,1.5) {\small{$A_{3}$}};
		\filldraw (2.28,0.8) circle (0.06);
		\draw (2.3,0.9) -- (2.1,-0.3);
		\draw[->,decorate,decoration={snake,amplitude=1.3pt}] (4.5,1.5) -- (3.5,1.5);
	\end{scope}			
	\begin{scope}[shift={(7,0)}]
		\draw (0.1,3) -- (2.4,3);
		\node at (1.75,2.8) {\small{$H$}};
		\node at (1.7,3.15) {\small{$-k$}};
		\draw (0,-0.2) -- (1,-0.2) to[out=0,in=180] (2.2,2.45) -- (2.3,2.45);
		\node at (1.75,0.6) {\small{$C$}};
		\draw (0.2,3.1) -- (0,2.1);
		\draw[dashed] (0.1,2.3) -- (-1.1,2.5);
		\draw (-0.9,2.5) -- (-2.1,2.3);
		\node at (-1.5,2.6) {\small{$-3$}};
		\draw (-2,2.5) -- (-1.8,1.5);
		\node at (-1.9,1.5) {\small{$\vdots$}};
		\node[rotate=-90] at (-1.55,1.4) {\small{$[(2)_{k-4}]$}};
		\draw (-1.8,1.3) -- (-2,0.3);
		\draw (-2.1,0.5) -- (-0.9,0.3);
		\draw (-1.1,0.3) -- (0.1,0.5);
		\node at (-1.5,0.2) {\small{$-3$}};
		\draw[dashed] (0,0.7) -- (0.2,-0.3);
		\draw (1.1,3.1) -- (0.8,1.8);
		\draw[dashed] (0.8,2) -- (1.1,0.7);
		\draw (1.1,0.9) -- (0.8,-0.3);
		\draw (2.3,3.1) -- (2.1,2.1);
		\node at (2.45,2.6) {\small{$L_3$}};
		\draw (2.1,2.3) -- (2.3,1.3);
		\draw[dashed] (2.3,1.5) -- (2.1,0.5);
		\draw (2.1,0.7) -- (2.3,-0.3);
		\node at (2.45,0.2) {\small{$-3$}};
	\end{scope}
	\end{tikzpicture}\vspace{-0.5em}
	\caption{Example \ref{ex:non-star-shaped}: non-lc del Pezzo surface swapping vertically to~$\rA_1\!+\rA_2\!+\rA_5$.
		\label{fig:non-star-shaped}.} 
\end{figure} 
\end{example}

We now consider case \ref{prop:non-lt-more}\ref{item:non-lt-Platonic}. These surfaces are of height $2$ and their minimal log resolutions are constructed from Platonic $\A^{1}_{*}$-fiber spaces, studied in \cite{Miy_Tsu-opendP}. In Example \ref{ex:permissible_fork} we give a general definition of such del Pezzo surfaces. In  Example \ref{ex:permissible_fork-example} we construct a particular non--log canonical del Pezzo surface of this kind, see Figure \ref{fig:permissible_fork-example}. Note that if $\cha\kk\not\in \{2,3,5\}$ then case \ref{prop:non-lt-more}\ref{item:non-lt-Platonic} covers all non-lt del Pezzo surfaces of rank one which have a log terminal singularity whose minimal resolution is an admissible fork.

\begin{example}[Case \ref{prop:non-lt-more}\ref{item:non-lt-Platonic}: blowing up over a Platonic $\A^{1}_{*}$-fiber space]\label{ex:permissible_fork}
	First, we recall a construction of a Platonic $\A^{1}_{*}$-fiber space from \cite{Miy_Tsu-opendP} and its log smooth completion $(X\am,D\am)$, see Figure \ref{fig:Platonic}. 
	
	Fix $m\geq 2$, and consider a Hirzebruch surface $\F_{m}$. Denote by $H_{1}''=[m]$ the negative section. Choose three fibers $F_1,F_2,F_3$ and a section $H_{0}''$ disjoint from $H_{1}''$, so $H_{0}''=[-m]$. 
	Let $\tau\colon X\am\to \F_{m}$ be a composition of blowups over $F_{i}\cap H_0''$ for each $i\in \{1,2,3\}$ such that $(\tau^{*}F_{i})\redd=[T_{i},1,T_{i}^{*}]$ for some admissible chains $T_{i}$ satisfying $\sum_{i=1}^{3} \frac{1}{d(T_i)}>1$. Put $H_{j}'=\tau^{-1}_{*}H_{j}''$, and define $D\am$ as $\tau^{*}(H_1''+H_2''+F_1+F_2+F_3)\redd$ without the $(-1)$-curves. Then $D\am$ has two connected components: a fork $D\am^{0}=\langle 3-m;T_1\trp,T_2\trp,T_3\trp \rangle$ which is not negative definite, and an admissible fork $D\am^{1}=\langle m, T_{1}^{*},T_{2}^{*},T_{3}^{*}\rangle$.
	
	We now construct a class of del Pezzo surfaces from Proposition \ref{prop:non-lt-more}\ref{item:non-lt-Platonic}. Choose any point $p\in H_{0}'$, and blow up $m-1$ times at $p$ and its infinitely near points on the proper transform of $H_{0}'$. Denote this morphism by $Y\to X\am$, and let $D_{Y}$ be the reduced total transform of $D\am$ minus the exceptional $(-1)$-curve $A$. Now $(Y,D_Y)$ is a minimal resolution of a del Pezzo surface $\bar{Y}$ of rank one and type $\langle 2;T_1\trp,T_2\trp,T_3\trp\rangle+\langle m;T_1^{*},T_{2}^{*},T_{3}^{*}\rangle+[(2)_{m-2}]$ if $p\in D\am\reg$ or $[T_{1}\trp,2,T_{2}]+\langle m,T_{1}^{*},T_{2}^{*},T_{3}^{*}\rangle+[(2)_{m-2}]*T_{3}$ if $p\not\in D\am\reg$, cf.\ Lemma \ref{lem:ht=2,untwisted}\ref{item:c=3} or \ref{item:tau=id_fork_chain}. We define $(X,D)$ by a vertical swap $(X,D)\sqto (Y,D_Y)$ whose unique base point lies on $A$. 
	
	For $i\in \{0,1\}$ let $H_i$ be the proper transform of $H_i'$, $i\in \{0,1\}$, and let $D_i$ be the connected component of $D$ containing $H_i$. Then  $D=D_0+D_1+E$, where $D_1$ is an admissible fork, and $E$ is zero or an admissible chain. Note that if $p\in D\am\reg$ then $(Y,D_Y)$, hence $(X,D)$, swaps vertically to the surface of type $2\rD_4$ from Example \ref{ex:ht=2}\ref{item:2D4_construction},  otherwise it swaps vertically to the surface of type $\rA_1+2\rA_3$ from \ref{ex:ht=2}\ref{item:A1+2A3_construction}.
	
	The log surface $(X,D)$ is a minimal log resolution of a del Pezzo surface of rank one if and only if $D_0$ contracts to a rational singularity and $\cf(H_0)+\cf(H_1)<2$, see Lemma \ref{lem:delPezzo_criterion}. This holds in particular if $D_0$ is an admissible or a log canonical fork: in this case, we get del Pezzo surfaces from Lemma \ref{lem:ht=2,untwisted}\ref{item:tau=id_forks}, \ref{item:c=3} or \ref{item:ht=2_bench}, respectively. 
\end{example}
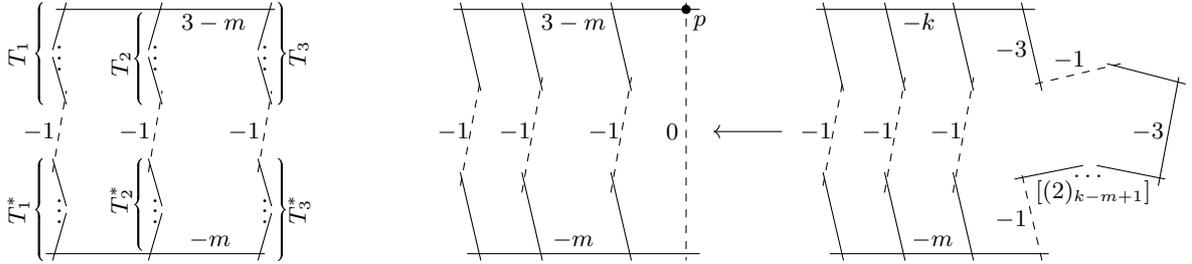
\begin{figure}[htbp]\vspace{-0.5em}
	\subcaptionbox{General Platonic $\A^{1}_{*}$-fiber space \label{fig:Platonic}}
	[.34\linewidth]
	{
		\begin{tikzpicture}[scale=0.9]
			\draw (-0.55,3.5) -- (2.65,3.5);
			\node at (1.75,3.3) {\small{$3-m$}};
		%	\node at (1.75,3.7) {\small{$H_0'$}};			
			%
			\draw (-0.6,-0.2) -- (-0.4,0.5);
			\node at (-0.5,0.65) {$\vdots$};
			\draw (-0.4,0.6) -- (-0.6,1.3);
			\draw [decorate, decoration = {calligraphic brace}, thick] (-0.75,-0.2) --  (-0.75,1.3);
			\node[rotate=90] at (-1.1,0.55) {\small{$T_1^{*}$}}; 
			\draw[dashed] (-0.6,1.1) -- (-0.4,2.3);
			\node at (-0.8,1.7) {\small{$-1$}};
			\draw (-0.4,2.1) -- (-0.6,2.8);
			\node at (-0.5,2.9) {$\vdots$};
			\draw (-0.6,2.9) -- (-0.4,3.6);
			\draw [decorate, decoration = {calligraphic brace}, thick] (-0.75,2.1) --  (-0.75,3.6);
			\node[rotate=90] at (-1.1,2.85) {\small{$T_{1}$}};
			\draw (0.8,-0.2) -- (1,0.5);
			\node at (0.9,0.65) {$\vdots$};
			\draw (1,0.6) -- (0.8,1.3);
			\draw [decorate, decoration = {calligraphic brace}, thick] (0.7,-0.05) --  (0.7,1.3);
			\node[rotate=90] at (0.4,0.7) {\small{$T_2^{*}$}}; 
			\draw[dashed] (0.8,1.1) -- (1,2.3);
			\node at (0.6,1.7) {\small{$-1$}};
			\draw (1,2.1) -- (0.8,2.8);
			\node at (0.9,2.9) {$\vdots$};
			\draw (0.8,2.9) -- (1,3.6);
			\draw [decorate, decoration = {calligraphic brace}, thick] (0.7,2.1) --  (0.7,3.45);
			\node[rotate=90] at (0.4,2.7) {\small{$T_{2}$}};
			\draw (2.4,-0.2) -- (2.6,0.5);
			\node at (2.5,0.65) {$\vdots$};
			\draw (2.6,0.6) -- (2.4,1.3);
			\draw [decorate, decoration = {calligraphic brace}, thick] (2.7,1.3) -- (2.7,-0.2);
			\node[rotate=90] at (3,0.55) {\small{$T_3^{*}$}}; 
			\draw[dashed] (2.4,1.1) -- (2.6,2.3);
			\node at (2.2,1.7) {\small{$-1$}};
			\draw (2.6,2.1) -- (2.4,2.8);
			\node at (2.5,2.9) {$\vdots$};
			\draw (2.4,2.9) -- (2.6,3.6);
			\draw [decorate, decoration = {calligraphic brace}, thick] (2.7,3.6) -- (2.7,2.1);
			\node[rotate=90] at (3,2.85) {\small{$T_{3}$}};
			\draw (-0.7,-0.1) -- (2.5,-0.1);
			\node at (1.7,0.1) {\small{$-m$}};
		%	\node at (1.75,0.15) {\small{$H_1'$}};
		\end{tikzpicture}
	}
\subcaptionbox{Example \ref{ex:permissible_fork-example}: blowing up over a specific Platonic $\A^{1}_{*}$-fiber space
	\label{fig:permissible_fork-example}}[.65\linewidth]{
		\begin{tikzpicture}[scale=0.9]
		\begin{scope}
			\draw (-0.5,3.5) -- (3.1,3.5);
			\node at (1.25,3.3) {\small{$3-m$}};
%			\node at (1.1,3.7) {\small{$H_0'$}};
			\filldraw (2.9,3.5) circle (0.06);
			\node at (3.1,3.3) {\small{$p$}};
			\draw (-0.1,-0.2) -- (-0.4,1.1);
			\draw[dashed] (-0.4,0.9) -- (-0.1,2.5);
			\node at (-0.5,1.7) {\small{$-1$}};
			\draw (-0.1,2.3) -- (-0.4,3.6);
			\draw (0.8,-0.2) -- (0.5,1.1);
			\draw[dashed] (0.5,0.8) -- (0.8,2.5);
			\node at (0.4,1.7) {\small{$-1$}};
			\draw (0.8,2.3) -- (0.5,3.6);
			\draw (2.1,-0.2) -- (1.8,1.1);
			\draw[dashed] (1.8,0.8) -- (2.1,2.5);
			\node at (1.7,1.7) {\small{$-1$}};
			\draw (2.1,2.3) -- (1.8,3.6);
			\draw[dashed] (2.9,3.6) -- (2.9,-0.2);
			\node at (2.7,1.7) {\small{$0$}};
			\draw (-0.3,-0.1) -- (3.1,-0.1);
			\node at (1.25,0.1) {\small{$-m$}};
	%		\node at (1.3,0.1) {\small{$H_1'$}};
%			\node at (3.5,1.9) {\small{$\psi$}};
			\draw[->]%,decorate,decoration={snake,amplitude=1.3pt}] 
			(4.3,1.7) -- (3.3,1.7);
		\end{scope}
		\begin{scope}[shift={(5.3,0)}]	
			\draw (-0.5,3.5) -- (2.7,3.5);
			\node at (1,3.3) {\small{$-k$}};
	%		\node at (1.1,3.7) {\small{$H_0$}};			
			%
			\draw (-0.1,-0.2) -- (-0.4,1.1);
			\draw[dashed] (-0.4,0.9) -- (-0.1,2.5);
			\node at (-0.5,1.7) {\small{$-1$}};
			\draw (-0.1,2.3) -- (-0.4,3.6);
			\draw (0.8,-0.2) -- (0.5,1.1);
			\draw[dashed] (0.5,0.8) -- (0.8,2.5);
			\node at (0.4,1.7) {\small{$-1$}};
			\draw (0.8,2.3) -- (0.5,3.6);
			\draw (1.8,-0.2) -- (1.5,1.1);
			\draw[dashed] (1.5,0.8) -- (1.8,2.5);
			\node at (1.4,1.7) {\small{$-1$}};
			\draw (1.8,2.3) -- (1.5,3.6);
			\draw (2.8,2.3) -- (2.5,3.6);
			\node at (2.35,2.9) {\small{$-3$}};
			\draw[dashed] (2.7,2.4) -- (3.95,2.7);
			\node at (3.2,2.75) {\small{$-1$}};
			\draw (3.75,2.7) -- (4.9,2.4);
			\draw (4.5,0.9) -- (4.8,2.5);
			\node at (4.35,1.7) {\small{$-3$}};
			\draw (4.6,1) -- (3.6,1.2);
			\node at (3.5,1.05) {\small{$\dots$}};
			\node at (3.55,0.8) {\small{$[(2)_{k-m+1}]$}};
			\draw (3.4,1.2) -- (2.4,1);
			\draw[dashed] (2.8,-0.2) -- (2.5,1.1);
			\node at (2.35,0.4) {\small{$-1$}};
			\draw (-0.3,-0.1) -- (2.9,-0.1);
			\node at (1.2,0.1) {\small{$-m$}};
	%		\node at (1.3,0.1) {\small{$H_1$}};
			%
		\end{scope}
		\end{tikzpicture}
	}\vspace{-0.5em}	
	\caption{Examples \ref{ex:permissible_fork} and \ref{ex:permissible_fork-example}: constructing del Pezzo surfaces in Proposition \ref{prop:non-lt-more}\ref{item:non-lt-Platonic}.}\vspace{-1em}
	\label{fig:Platonic-examples}
\end{figure}

\begin{example}[A non--log canonical del Pezzo surface in case \ref{prop:non-lt-more}\ref{item:non-lt-Platonic}, see Figure \ref{fig:permissible_fork-example}]\label{ex:permissible_fork-example}
	We now give a concrete example of a non--log canonical del Pezzo surface of rank one obtained by the construction of Example \ref{ex:permissible_fork}.
	
	We keep the notation from Example \ref{ex:permissible_fork}. Take $T_i=[2]$ for $i\in \{1,2,3\}$. Choose $p\in D\am\reg$, fix $k\geq 3$ and define $\psi\colon X\to X\am$ so that $\psi^{-1}(p)\redd=[3,1,2,3,(2)_{k-m+1}]$, where the first tip meets $H_0$. Now $D_1=\langle m,[2],[2],[2]\rangle$, $E=[2,3,(2)_{k-m+1}]$ and $D_0$ is a sum of $H_1=[k]$ and four twigs: three of type $[2]$ and one of type $[3]$. The divisor $D_0$ contracts to a rational singularity, with fundamental cycle $D_0$ if $k\geq 4$ and $D_0+H_0$ if $k=3$. 
	
	Let $X\to \bar{X}$ be the contraction of $D$. Then $\bar{X}$ is a normal, non--log canonical  surface of rank one. Using \cite[3.2.3]{Flips_and_abundance} we compute $\cf(H_0)=1+\frac{1}{6k-11}$ and $\cf(H_1)=1-\frac{1}{2m-3}$, so by Lemma \ref{lem:delPezzo_criterion} the surface $\bar{X}$ is del Pezzo if and only if $m\leq 3k-5$.
\end{example}

We now move on to some examples specific to characteristic $2,3,5$. First, we construct a non--log canonical surface covered by \ref{prop:non-lt-more}\ref{item:non-lt-twisted}. The log canonical ones are classified in Lemma \ref{lem:ht=2_twisted-separable}, see Theorem \ref{thm:ht=1,2}\ref{item:ht=2_char=2}.

\begin{example}[Case \ref{prop:non-lt-more}\ref{item:non-lt-twisted}, $\cha\kk=2$, see Figure \ref{fig:ht=2_non-lc}]\label{ex:ht=2_non-lc}
	Assume $\cha\kk=2$. Fix $\nu\geq 3$ and let $(Y,D_Y)$ be the minimal log resolution of the surface of type $[2\nu-4]+2\nu \cdot \rA_1$ from \cite[p.\ 68]{Keel-McKernan_rational_curves}, see Example \ref{ex:ht=2_twisted_cha=2}. It admits a $\P^{1}$-fibration such that the $-(2\nu-4)$-curve, call it $H_Y$, is a $2$-section, and all $\nu$ degenerate fibers are supported on chains $[2,1,2]$, see Figure \ref{fig:KM_surface-intro}, meeting $H_Y$ in the middle component. 
	
	Let $A_1\dots, A_{\nu}$ be the vertical $(-1)$-curves and let $L_i$ be a $(-2)$-curve meeting $A_i$. We construct a vertical swap $(X,D)\sqto (Y,D_Y)$, as follows. First, blow up $k\geq 2$ times over $A_1\cap H_Y$, at the proper transforms of $H_Y$, and at the common point of the exceptional $(-1)$- and $(-2)$-curve. Next, blow up at the preimage of $L_2\cap A_2$ and at  its infinitely near point on the proper transform of $A_2$, and let $A$ be the new $(-1)$-curve. Choose a point $p\in A$ away from the other components of the total transform of $D_Y$; and blow up at $p$ and its infinitely near point on the proper transform of $A$. Eventually, blow up at the common point of the two exceptional curves of $p$. Denote the resulting surface by $X$, and let $D$ be the reduced total transform of $D_Y$ without the  $(-1)$-curves, see Figure \ref{fig:ht=2_non-lc}. Then $D=D_0+D\lt$, where $D\lt=(2\nu-4)\cdot [2]+[3]+\langle 2;[2],[2],[3,(2)_{k-2}]\rangle$; and $D_0$ is a sum of a chain $[3,3]$, twigs $[2]$, $[3,2]$ meeting the first $(-3)$-curve and twigs $[2]$, $[2,m]$ meeting the second one, where $m=2\nu-4+k$ and the $(-m)$-curve is the proper transform of $H_Y$, call it $H$. 	
	
	The $\P^1$-fibration of $Y$ pulls back to a $\P^1$-fibration of $X$ such that $D\hor=H$ is a $2$-section.  The divisor $D_0$ contracts to a rational singularity, with fundamental cycle $D_0$. Let $X\to \bar{X}$ be the contraction of $D$.  Using \cite[3.1.10]{Flips_and_abundance} we compute $\cf(H)=1-\frac{25}{150m-151}<1$, so by Lemma \ref{lem:delPezzo_criterion} the surface $\bar{X}$ is del Pezzo of rank one. 
	\begin{figure}[htbp]\vspace{-1.5em}
	\begin{tikzpicture}
		\begin{scope}[scale=1.2]
			\draw[very thick] (0,1.2) -- (3.1,1.2);
			\node at (2.1,1.4) {\small{$H_Y$}};
			\node at (2.1,1) {\small{$4-2\nu$}};
			\draw (0.2,2.5) -- (0,1.5);
			\node at (0.3,2) {\small{$L_1$}};
			\draw[dashed] (0,1.7) -- (0.2,0.7);
			\node at (0.3,1.4) {\small{$A_1$}};
			\filldraw (0.1,1.2) circle (0.05);
			\draw (0.2,0.9) -- (0,-0.1);
			\draw (1.2,2.5) -- (1,1.5);
			\node at (1.3,2) {\small{$L_2$}};
			\draw[dashed] (1,1.7) -- (1.2,0.7);
			\node at (1.3,1.4) {\small{$A_2$}};
			\filldraw (1.02,1.6) circle (0.05);
			\draw (1.2,0.9) -- (1,-0.1);
			\node at (2.1,2) {\Large{$\cdots$}};
			\draw (3,2.5) -- (2.8,1.5);
			\node at (3.1,2) {\small{$L_{\nu}$}};
			\draw[dashed] (2.8,1.7) -- (3,0.7);
			\node at (3.1,1.4) {\small{$A_{\nu}$}};
			\draw (3,0.9) -- (2.8,-0.1);
			\draw [decorate, decoration = {calligraphic brace}, thick] (0,2.6) -- (3.2,2.6);
			\node at (1.6,2.8) {\scriptsize{$\nu$ fibers}};
			\draw[->,decorate,decoration={snake,amplitude=1.3pt}] (5,1.2) -- (4,1.2);
		\end{scope}		
		\begin{scope}[shift={(12,0)}]
			\draw[very thick] (-0.6,1.1) to[out=30,in=180] (0,1.2) -- (3.3,1.2);
			\node at (2.2,1.4) {\small{$H$}};
			\node at (2.2,1) {\small{$4-2\nu-k$}};
			\draw (-0.4,1.1) -- (-1.4,1.3);
			\draw[dashed] (-1.2,1.3) -- (-2.2,1.1);
			\draw (-2,1.1) -- (-3,1.3);
			\node at (-2.4,1.4) {\small{$-3$}};
			\draw (-2.8,1.3) -- (-3.8,1.1);
			\node at (-3.85,1.25) {$\dots$};
			\node at (-3.9,1.5) {\small{$[(2)_{k-2}]$}};
			\draw (-4,1.1) -- (-5,1.3);
			\draw (-4.8,2.5) -- (-5,1.5);
			\draw (-5,1.7) -- (-4.8,0.7);
			\draw (-4.8,0.9) -- (-5,-0.1);
			\draw (0,3.3) -- (0.2,2.3);
			\draw (0.2,3.1) -- (-0.8,3.3);
			\node at (-0.6,3) {\small{$-3$}};
			%			%
			\draw (0.2,2.5) -- (0,1.5);
			\node at (0.35,2) {\small{$-3$}};
			\draw (0.2,2.1) -- (-0.8,2.3);
			\draw[dashed] (-0.6,2.3) -- (-1.6,2.1);
			\draw (-1.4,2.1) -- (-2.4,2.3);
			\node at (-1.9,2.35) {\small{$-3$}}; 
			\draw (0,1.7) -- (0.2,0.7);
			\node at (0.35,1.4) {\small{$-3$}};
			\draw (0.2,0.9) -- (0,-0.1);
			%			%
			\draw (1.2,2.5) -- (1,1.5);
			\draw[dashed] (1,1.7) -- (1.2,0.7);
			\draw (1.2,0.9) -- (1,-0.1);
			\node at (2.2,2) {\Large{$\cdots$}};
			\draw (3.2,2.5) -- (3,1.5);
			\draw[dashed] (3,1.7) -- (3.2,0.7);
			\draw (3.2,0.9) -- (3,-0.1);
			\draw [decorate, decoration = {calligraphic brace}, thick] (1,2.6) -- (3.4,2.6);
			\node at (2.4,2.9) {\scriptsize{$\nu-2$ fibers}};	
		\end{scope}	
	\end{tikzpicture}
	\caption{Example \ref{ex:ht=2_non-lc}, $\cha\kk=2$: a non--log canonical del Pezzo surface covered by Theorem \ref{thm:ht=1,2}\ref{item:ht=2_char=2} and Proposition \ref{prop:non-lt-more}\ref{item:non-lt-twisted}, swapping vertically to the surface from Example \ref{ex:ht=2_twisted_cha=2}.}\vspace{-1em}
	\label{fig:ht=2_non-lc}
\end{figure}
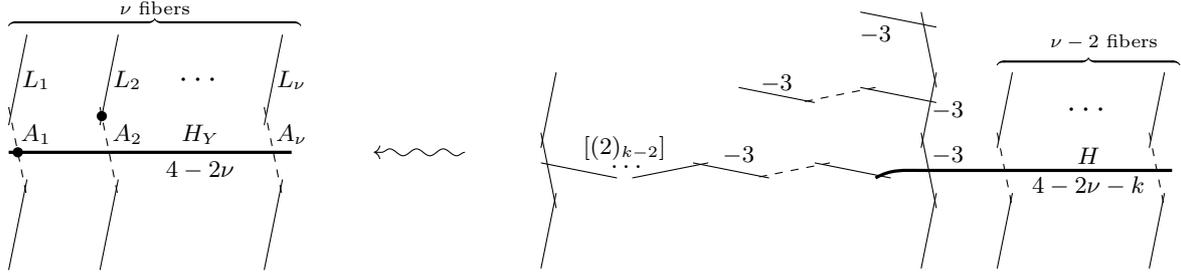
\end{example}

We now focus on case \ref{prop:non-lt-more}\ref{item:non-lt-ht>=3}, that is, on non-lt del Pezzo surfaces of rank one and  height at least $3$, described in the exceptional cases \ref{item:non-lt_descendant} and \ref{item:non-lt-swap} of Proposition \ref{prop:non-log-terminal}. Surfaces in case \ref{prop:non-log-terminal}\ref{item:non-lt_descendant} admit descendants with elliptic boundary: the log canonical ones are classified in Theorem \ref{thm:GK}, we now construct a non--log canonical one. 

\begin{example}[Case \ref{prop:non-log-terminal}\ref{item:non-lt_descendant}: non--log canonical del Pezzo surfaces with elliptic-boundary descendants, see Figure \ref{fig:cusp}]\label{ex:non-lt-descendant}
	Let $\bar{Y}$ be a canonical del Pezzo surface of rank one, and let $\bar{T}\subseteq \bar{Y}\reg$ be a cuspidal curve with $p_{a}(\bar{T})=1$. Such log surfaces $(\bar{Y},\bar{T})$ are classified in \cite[Proposition 1.5]{PaPe_MT}, see Table \ref{table:canonical}. By Noether's formula we have $\bar{T}^2=9-\#\cS(\bar{Y})$, where $\cS(\bar{Y})$ is the singularity type of $\bar{Y}$, see formula \eqref{eq:Noether}. Put $k=\#\cS(\bar{Y})-3=6-\bar{T}$. 
	
	Fix $b\geq 2$ and fix a type $V$ of an admissible chain. Let $\nu\colon (Y',D')\to (\bar{Y},\bar{T})$ be the minimal log resolution, and let $T'=\nu^{-1}_{*}\bar{T}$. Then $T'=[k]$ and the connected component of $D'$ containing $T'$ is a fork $\langle 1;[2],[3],[k]\rangle$. Let $\psi\colon X\to Y'$ be a sequence of blowups over $T'\cap (D'-T')$ such that $\psi^{*}T'\redd$ is a chain $[k,(2)_{b-2}]*[V^{*},1,V]$ whose first tip is $T\de \psi^{-1}_{*}T'$ and last tip meets $\psi^{-1}_{*}(D'-T')$, cf.\ Lemma \ref{lem:cuspidal_resolution}\ref{item:C_[rest]}. Let $A$ be the exceptional $(-1)$-curve of $\psi$, let $D=\psi^{*}D'\redd-A$, and let $D_{T}$ be the connected component of $D$ containing $T$. We have $D=D_0+D_T+D_Y$, where $D_0=\langle b;[2],[3],V\rangle$, $D_{T}=[k,(2)_{b-2}]*V^{*}$ and $D_Y$ is the preimage of $\Sing \bar{Y}$, hence a sum of $(-2)$-chains and forks. Since $D_0+A+D_T-T$ contracts to a smooth point, by Artin's criterion its subdivisor $D_0$ contracts to a rational singularity, see \cite[Example 7.1.5(i)]{Nemethi_book}. If $d(V)>6$, it is not lc.
	
	Let $X\to \bar{X}$ be the contraction of $D$; so $\bar{X}$ is a normal surface of rank one. Since the connected component $D_T$ of $D$ containing $T$ is an admissible chain, we have $\cf(T)<1$, so by Lemma \ref{lem:GK_intro}\ref{item:GK-intro-cf} $\bar{X}$ is del Pezzo. 
	
	Assume that $\bar{Y}$ is as in Proposition \ref{prop:non-log-terminal}\ref{item:non-lt_descendant}, that is, $\#\cS(\bar{Y})\geq 6$ and $\bar{Y}$ is not of type $\rE_6$, $\rE_7$ or $\rE_8$. Note that in this case, a cuspidal curve $\bar{T}$ as above exists  if and only if $\cha\kk\in \{2,3,5\}$, see Table \ref{table:canonical}. We claim that $\height(\bar{X})\geq 3$, so $\bar{X}$ is indeed as in Proposition \ref{prop:non-lt-more}\ref{item:non-lt-ht>=3}. We have $-K_{\bar{Y}}=\bar{T}$, see Lemma \ref{lem:GK_intro}\ref{item:GK-intro-Y}. Pulling this equality back to $X$, we see that $-K_{X}$ is linearly equivalent to an effective divisor supported on $D_0+A+D_{T}$, such that the multiplicity of $A$ in this divisor is at least $3$. Let $F$ be a fiber of any $\P^1$-fibration of $X$. Since by adjunction $-K_{X}\cdot F=2$, we conclude that $A$ is vertical. Applying elementary vertical swaps we can assume that $V=[2]$. Then $\bar{X}$ is log canonical, and $\height(\bar{X})\geq 3$ by Lemma \ref{lem:GK_exceptions}, as needed.
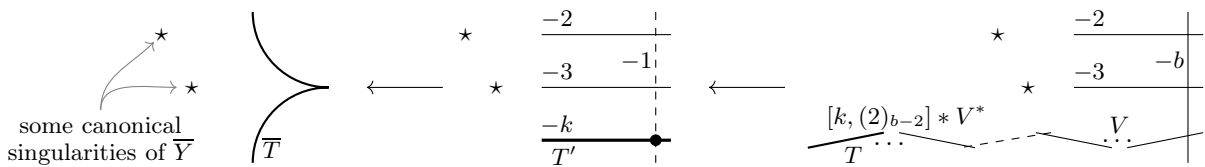
\begin{figure}[htbp]
	\begin{tikzpicture}\vspace{-1em}
		\begin{scope}
			\draw[thick] (1,2) to[out=-90,in=180] (2,1) to[out=180,in=90] (1,0);
			\node at (1.25,0.2) {\small{$\bar{T}$}};
			\node at (-0.2,1.7) {$\star$};
			\node at (0.2,1) {$\star$};
			\draw[gray, ->] (-1,0.7) to[out=90,in=-135] (-0.3,1.6);
			\draw[gray, ->] (-1,0.7) to[out=90,in=180] (0,1);
			\node at (-1,0.5) {\small{some canonical}};
			\node at (-1,0.15) {\small{singularities of $\bar{Y}$}};
			\draw[<-] (2.5,1) -- (3.5,1);  
		\end{scope}
		\begin{scope}[shift={(4,0)}]
			\node at (-0.2,1.7) {$\star$};
			\node at (0.2,1) {$\star$};
			\draw[dashed] (2.3,2) -- (2.3,0);
			\node at (2.05,1.35) {\small{$-1$}};
			\draw (0.8,1.7) -- (2.5,1.7);
			\node at (1,1.9) {\small{$-2$}};
			\draw (0.8,1) -- (2.5,1);
			\node at (1,1.2) {\small{$-3$}};
			\draw[very thick] (0.8,0.3) -- (2.5,0.3);
			\node at (1,0.5) {\small{$-k$}};
			\node at (1.1,0.1) {\small{$T'$}};
			\filldraw (2.3,0.3) circle (0.07);
			\draw[<-] (3,1) -- (4,1);
		\end{scope}
		\begin{scope}[shift={(11,0)}]
			\node at (-0.2,1.7) {$\star$};
			\node at (0.2,1) {$\star$};
			\draw (2.3,2) -- (2.3,0);
			\node at (2.05,1.35) {\small{$-b$}};
			\draw (0.8,1.7) -- (2.5,1.7);
			\node at (1,1.9) {\small{$-2$}};
			\draw (0.8,1) -- (2.5,1);
			\node at (1,1.2) {\small{$-3$}};
			\draw (2.5,0.4) -- (1.5,0.2);
			\node at (1.4,0.3) {$\dots$};
			\node at (1.4,0.5) {\small{$V$}};
			\draw (1.3,0.2) -- (0.3,0.4);
			\draw[dashed] (0.5,0.4) -- (-0.7,0.2);
			\draw (-0.5,0.2) -- (-1.5,0.4);
			\node at (-1.6,0.25) {$\dots$};
			\node at (-1.4,0.6) {\small{$[k,(2)_{b-2}]*V^{*}$}};
			\draw[thick] (-1.7,0.4) -- (-2.7,0.2);
			\node at (-2.1,0.1) {\small{$T$}};
		\end{scope}
	\end{tikzpicture}
	\caption{Example \ref{ex:non-lt-descendant}: a non-lc del Pezzo surface with an elliptic-boundary descendant.}
	\label{fig:cusp}
\end{figure}
\end{example}

Eventually, we consider case  \ref{prop:non-log-terminal}\ref{item:non-lt-swap}. First, we construct the vertically primitive surface $\bar{Y}$.

\begin{example}[Vertically primitive surface of type {$4\rA_1+\rA_3+[3]$} in Proposition  \ref{prop:non-log-terminal}\ref{item:non-lt-swap}, see Figure \ref{fig:ht=3-intro}]\label{ex:ht=3} \ 
	Assume $\cha\kk=2$. As in Example \ref{ex:ht=2_twisted_cha=2}, let $\cc\subseteq \P^2$ be a smooth conic and let $\ll_{1},\ll_{2},\ll_{3}$ be lines tangent to $\cc$. Since $\cha\kk=2$, these lines meet at one point $p_0$. For $j\in \{1,2,3\}$ write $\{p_j\}=\ll_j\cap \cc$. Blow up twice over $p_0$ and twice over each $p_j$, each time on the proper transform of $\ll_1$ and $\cc$, respectively. The resulting surface $Y$ is a minimal resolution of a del Pezzo surface $\bar{Y}$ of rank one and type $4\rA_1+\rA_3+[3]$, together with a $\P^1$-fibration of height $3$, given by the pencil of lines passing through $p_0$, see Figure \ref{fig:ht=3-intro} or the middle part of Figure \ref{fig:ht=3-remark}. 
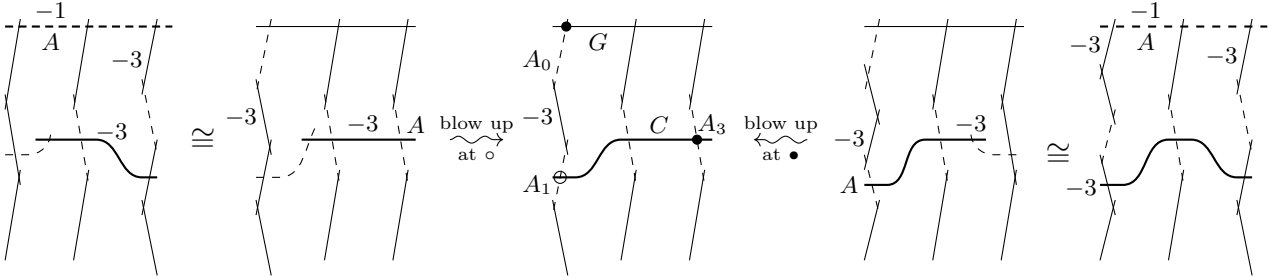
\begin{figure}[htbp]
	\begin{tikzpicture}
		\begin{scope}
			\draw (0,3.1) -- (2.1,3.1);
			\node at (0.6,2.9) {\small{$G$}};
			\draw[dashed] (0.2,3.2) -- (0,2.2);
			\node at (-0.2,2.65) {\small{$A_0$}};
			\filldraw (0.18,3.1) circle (0.06);
			\draw (0,2.4) -- (0.2,1.4);
			\node at (-0.2,1.9) {\small{$-3$}};
			\node at (-0.2,1) {\small{$A_1$}};
			\draw[dashed] (0.2,1.6) -- (0,0.6); 
			\draw (0,0.8) -- (0.2,-0.2);
			\draw (1.1,3.2) -- (0.9,2);
			\draw[dashed] (0.9,2.2) -- (1.1,1);			
			\draw (1.1,1.2) -- (0.9,0); 
			\draw (2,3.2) -- (1.8,2);
			\draw[dashed] (1.8,2.2) -- (2,1);
			\node at (2.1,1.8) {\small{$A_3$}};	
			\draw (2,1.2) -- (1.8,0); 
			\draw[thick] (0,1.1) -- (0.3,1.1) to[out=0,in=180] (0.9,1.6) -- (2.1,1.6);
			\node at (1.4,1.8) {\small{$C$}};
			\draw (0.1,1.1) circle (0.08);
			\filldraw (1.9,1.6) circle (0.06);
			\draw[->,decorate,decoration={snake,amplitude=1.3pt}] (3.35,1.6) -- (2.65,1.6);
			\node at (3,1.8) {\scriptsize{blow up}};
			\node at (3,1.4) {\scriptsize{at $\bullet$}}; 
			\draw[->,decorate,decoration={snake,amplitude=1.3pt}] (-1.35,1.6) -- (-0.65,1.6);
			\node at (-1,1.8) {\scriptsize{blow up}};
			\node at (-1,1.4) {\scriptsize{at $\circ$}}; 
			%
		%	\node at (1,-0.5) {\small{$(Y,D_Y)$}};
		\end{scope}
		\begin{scope}[shift={(-3.9,0)}]
			\draw (0,3.1) -- (2.1,3.1);
			\draw[dashed] (0.2,3.2) -- (0,2.2);
			%\node at (-0.2,2.6) {\small{$A_0$}};
			\draw (0,2.4) -- (0.2,1.4);
			\node at (-0.2,1.9) {\small{$-3$}};
			%\node at (-0.2,1) {\small{$A_1$}};
			\draw (0.2,1.6) -- (0,0.6); 
			\draw[dashed] (0,1.1) -- (0.3,1.1) to[out=0,in=-120] (0.8,1.8);
			\draw (0,0.8) -- (0.2,-0.2);
			\draw (1.1,3.2) -- (0.9,2);
			\draw[dashed] (0.9,2.2) -- (1.1,1);			
			\draw (1.1,1.2) -- (0.9,0); 
			\draw (2,3.2) -- (1.8,2);
			\draw[dashed] (1.8,2.2) -- (2,1);
			\node at (2.1,1.8) {\small{$A$}};
			\draw (2,1.2) -- (1.8,0); 
			\draw[thick] (0.6,1.6) -- (2.1,1.6);
			\node at (1.4,1.8) {\small{$-3$}};
			\node at (-0.7,1.6) {\Large{$\cong$}};
			%
		%	\node at (-0.7,-0.5) {\small{$(X\am,D\am-A)$}};
		\end{scope}
		\begin{scope}[shift={(-7.2,0)}]
			\draw[thick, densely dashed] (0,3.1) -- (2.2,3.1);
			\node at (0.6,2.9) {\small{$A$}};
			\node at (0.6,3.3) {\small{$-1$}};
			\draw (0.2,3.2) -- (0,2);
			\draw (0,2.2) -- (0.2,1);
			\draw (0.2,1.2) -- (0,0);
			\draw[dashed] (0,1.4) -- (0.3,1.4) to[out=0,in=-90] (0.6,1.7);
			\draw (1.1,3.2) -- (0.9,2);
			\draw[dashed] (0.9,2.2) -- (1.1,1);			
			\draw (1.1,1.2) -- (0.9,0); 
			\draw (2,3.2) -- (1.8,2.2);
			\node at (1.6,2.65) {\small{$-3$}};
			\draw[dashed] (1.8,2.4) -- (2,1.4);
			\draw (2,1.6) -- (1.8,0.6);
			\draw (1.8,0.8) -- (2,-0.2);
			\draw[thick] (0.4,1.6) -- (1.2,1.6) to[out=0,in=180]  (1.8,1.1) --  (2,1.1);
			\node at (1.4,1.7) {\small{$-3$}};
		\end{scope}
		\begin{scope}[shift={(4.1,0)}]
		 	\draw (0,3.1) -- (2.1,3.1);
		 	\draw[dashed] (0.2,3.2) -- (0,2.2);
		 	\draw (0,2.6) -- (0.2,1.8);
		 	\draw (0.2,2) -- (0,1.2);
		 	\node at (-0.2,1.6) {\small{$-3$}};
		 	\draw[dashed] (0,1.4) -- (0.2,0.6);
		 	\node at (-0.2,1) {\small{$A$}};
		 	\draw (0.2,0.8) -- (0,0);
		 	\draw (1.1,3.2) -- (0.9,2);
		 	\draw[dashed] (0.9,2.2) -- (1.1,1);			
		 	\draw (1.1,1.2) -- (0.9,0); 
		 	\draw (2,3.2) -- (1.8,2);
		 	\draw (1.8,2.2) -- (2,1);
		 	\draw[dashed] (2,1.4) -- (1.7,1.4) to[out=180,in=-90] (1.4,1.7);
		 	\draw (2,1.2) -- (1.8,0); 
		 	\draw[thick] (0,1) -- (0.3,1) to[out=0,in=180] (0.8,1.6) -- (1.6,1.6);
		 	\node at (1.4,1.8) {\small{$-3$}}; 
		 	\node at (2.55,1.4) {\Large{$\cong$}};
		 	%
		 %	\node at (2.55,-0.5) {\small{$(X\am,D\am-A)$}};
		 \end{scope}
	 	 \begin{scope}[shift={(7.2,0)}]
	 	 	\draw[dashed, thick] (0,3.1) -- (2.2,3.1);
	 	 	\node at (0.6,2.9) {\small{$A$}};
	 	 	\node at (0.6,3.3) {\small{$-1$}};
	 	 	\draw (0.2,3.2) -- (0,2.4);
	 	 	\node at (-0.2,2.85) {\small{$-3$}};
	 	 	\draw (0,2.6) -- (0.2,1.8);
	 	 	\draw[dashed] (0.2,2) -- (0,1.2);
	 	 	\draw (0,1.4) -- (0.2,0.6);
	 	 	\node at (-0.25,0.95) {\small{$-3$}};
	 	 	\draw (0.2,0.8) -- (0,0);
	 	 	\draw (1.1,3.2) -- (0.9,2);
	 	 	\draw[dashed] (0.9,2.2) -- (1.1,1);			
	 	 	\draw (1.1,1.2) -- (0.9,0); 
	 	 	\draw (2,3.2) -- (1.8,2.2);
	 	 	\node at (1.6,2.7) {\small{$-3$}};
	 	 	\draw[dashed] (1.8,2.4) -- (2,1.4);
	 	 	\draw (2,1.6) -- (1.8,0.6);
	 	 	\draw (1.8,0.8) -- (2,-0.2);
	 	 	\draw[thick] (0,1) -- (0.3,1) to[out=0,in=180] (0.9,1.6) -- (1.2,1.6) to[out=0,in=180] (1.8,1.1) -- (2,1.1);
	 	 \end{scope}	
	\end{tikzpicture}
	\caption{Remark \ref{rem:ht=3}: minimal log resolution of the surface $\bar{Y}$ from Example \ref{ex:ht=3} (in the middle), and two log surfaces $(X\am,D\am)$ from \cite[Proposition 3.2(5)]{PaPe_MT}, each equipped with two $\P^1$-fibrations: one from loc.\ cit.\ and the other pulled back from $Y$.}\vspace{-1em}
	\label{fig:ht=3-remark}
\end{figure}
\end{example} 
	Proposition \ref{prop:non-log-terminal}\ref{item:non-lt-swap} asserts that every non-lt del Pezzo surface $\bar{X}$ of rank one, height at least $3$, and with no descendant with elliptic boundary swaps vertically to the surface $\bar{Y}$ constructed in Example \ref{ex:ht=3} above. In fact, the proof of Proposition  \ref{prop:non-log-terminal} gives the following additional constraints on those swaps.

	\begin{remark}[Additional restrictions in \ref{prop:non-log-terminal}\ref{item:non-lt-swap}, see Figure \ref{fig:ht=3-remark}]\label{rem:ht=3}
		Let $\bar{X}$ be a del Pezzo surface as in Proposition \ref{prop:non-log-terminal}\ref{item:non-lt-swap}. Its minimal log resolution $(X,D)$ admits a morphism $X\to Y$ onto a minimal resolution of the surface from Example \ref{ex:ht=3}. In the proof of Proposition \ref{prop:non-lt-more}, \hyperref[case:5]{Case (5)}, we have constructed this morphism as a composition of the almost minimalization $\psi\colon (X,D)\to (X\am,D\am)$ with some further contraction $\tau\colon X\am \to Y$. The possible log surfaces $(X\am,D\am)$ are described in \cite[Proposition 3.2(5)]{PaPe_MT}, and shown in the left and right of Figure \ref{fig:ht=3-remark}. In those figures, $D\am$ is the sum of solid lines and the $(-1)$-curve $A$, cf.\ \cite[Figures 4(b) and 4(c)]{PaPe_MT}. The morphism $\tau$ is the blowup at the point \enquote{$\circ$} or two points \enquote{$\bullet$}. 
		
		More precisely, the morphism $\tau$ is constructed as follows. We keep notation from Example \ref{ex:ht=3}, let $A_i$ be the $(-1)$-curve over $p_i$, let $C$ be the proper transform of $\cc$, and let $G$ be the first exceptional curve over $p_0$, see Figure \ref{fig:ht=3-remark}. Now by construction, $\tau$ is a blowup at either:
		\begin{enumerate}
			\item[($\circ$)] \phantomsection\label{item:5-1}  $A_1\cap C$ : this is case (5.1) in  \cite[Proposition 3.2]{PaPe_MT}, shown in the left part of Figure \ref{fig:ht=3-remark}, or
			\item[($\bullet$)] \phantomsection\label{item:5-2}   $A_2\cap C$ and $A_0\cap G$: this is case (5.2) loc.\ cit., shown in the right part of Figure \ref{fig:ht=3-remark}.
		\end{enumerate}
		
		The map $\psi^{-1}$ has a unique base point, call it $p$. Since $D$ contains no $(-1)$-curves, the point $p$ lies on the $(-1)$-curve $A\subseteq D\am$. We remark that $p$ can be either a smooth point of $D\am$, as in Example \ref{ex:ht=3-swap}, or a singular one, as in all cases of Proposition \ref{prop:ht=3}.
		
		In the following discussion it will be more convenient to use the $\P^1$-fibration of $X\am$ constructed in \cite[Proposition 3.2(5)]{PaPe_MT}. 
		It is induced by the pencil of conics tangent to $\cc$ at $p_2$, $p_3$ in case \hyperref[item:5-1]{($\circ$)}, and at $p_1$, $p_3$ in case \hyperref[item:5-2]{($\bullet$)}. Its degenerate fibers are shown in the leftmost and rightmost parts of Figure \ref{fig:ht=3-remark}.
	\end{remark}
	
 Surfaces $\bar{X}$ as in Proposition \ref{prop:non-log-terminal}\ref{item:non-lt-swap} can be log canonical or not. The log canonical ones are classified in Proposition \ref{prop:ht=3} below, now we construct an example of the non-log canonical one.

\begin{example}[Non--log canonical surface in case \ref{prop:non-log-terminal}\ref{item:non-lt-swap}, see Figure \ref{fig:ht=3-swap}]\label{ex:ht=3-swap}
	
	Let $(X\am,D\am)$ be as in case \hyperref[item:5-1]{($\circ$)} of Remark \ref{rem:ht=3}, and let $A\subseteq D\am$ be the $(-1)$-curve. Choose $p\in A\setminus (D\am-A)$ and blow up over $p$ in such a way that the exceptional divisor is a chain $[2,1,3,(2)_{k-3}]$ meeting the proper transform $H$ of $A$ in the first tip.

	Denote the resulting surface by $X$, and as usual let $D$ be the reduced total transform of $D\am$ without the $(-1)$-curve. The connected components of $D$ are chains $[3,2,2]$, $[2]$, $[3,(2)_{k-3}]$ and a tree $D_0$, with one branching component $H=[k]$ and maximal twigs $[2,2,2]$, $[3]$, $[2]$, $[2]$, see Figure \ref{fig:ht=3-swap}. The tree $D_0$ contracts to a rational singularity, with fundamental cycle $D_0$ if $k\geq 4$ and $D_0+H+T$ if $k=3$, where $T$ is a chain $[2,2]$ meeting $H$. 
	Thus contracting $D$ we get a normal, non--log canonical surface $\bar{X}$ of rank one.
	
	Consider the pullback to $X$ of the $\P^1$-fibration of $X\am$ constructed in Remark \ref{rem:ht=3}, see the left part of Figure \ref{fig:ht=3-remark} or Figure \ref{fig:ht=3-swap} below. Then $D\hor$ consists of a $1$-section $H$ and a $2$-section, call it $H_2$, which is the middle component of the chain $[3,2,2]$. Using \cite[3.2.3,  3.2.1]{Flips_and_abundance} we compute that $\cf(H)=1+\frac{5}{12k-20}$ and $\cf(H_2)=\frac{2}{7}$, which substituted to the inequality \eqref{eq:ld_phi_H} shows that $\bar{X}$ is del Pezzo if and only if $k\geq 6$.
\smallskip
	
	We have $\height(\bar{X})\leq 3$. We claim that the equality holds. 	Suppose  $\height(\bar{X})=1$. Since $D$ has 4 connected components, by Lemma \ref{lem:ht=1_basics}\ref{item:ht=1_Sigma} the witnessing $\P^1$-fibration has at least 3 degenerate fibers. By Lemma \ref{lem:ht=1_basics}\ref{item:ht=1_nu} $D\hor=H$. The degenerate fiber containing the twig $[3]$ has exactly one $(-1)$-curve, so it is supported on a chain $[3,1,2,2]$, which is impossible as $D$ has no connected component of type $[2,2]$. The fact that $\height(\bar{X})\neq 2$ can be shown directly, too, but one can also use Theorem \ref{thm:ht=1,2}. Indeed, since $H$ is the unique branching component of $D$ and $\beta_{D}(H)=4$, we see that $\bar{X}$ does not swap vertically to any surface from Examples \ref{ex:ht=2}, \ref{ex:ht=2_meeting}; and since it has only one canonical singularity, of type $[2]$, it cannot swap vertically to the one from Example \ref{ex:ht=2_twisted_cha=2}, either. 
	
	We also note that $\bar{X}$ has no descendant with elliptic boundary. Indeed, otherwise by Theorem \ref{thm:GK} at least $6$ components of $D$ would lie in connected components of $D$ which are $(-2)$-chains or forks, while there is only one such. As a consequence, the surface $\bar{X}$ is covered only by case \ref{item:non-lt-swap} of Proposition \ref{prop:non-log-terminal}.
	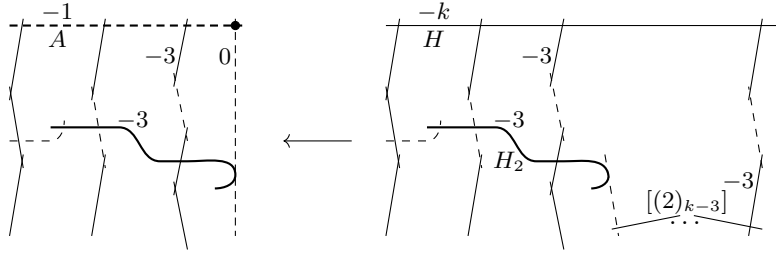
\begin{figure}[htbp]\vspace{-1em}
	\begin{tikzpicture}[scale=0.9]
		\begin{scope}
			\begin{scope}
				\draw[thick, densely dashed] (0,3.1) -- (3.4,3.1);
				\node at (0.7,2.9) {\small{$A$}};
				\node at (0.7,3.3) {\small{$-1$}};
				\filldraw (3.3,3.1) circle (0.06);
				\draw[densely dashed] (3.3,3.2) -- (3.3,0);
				\node at (3.15,2.65) {\small{$0$}}; 
				\draw (0.2,3.2) -- (0,2);
				\draw (0,2.2) -- (0.2,1);
				\draw (0.2,1.2) -- (0,0);
				\draw[dashed] (0,1.4) -- (0.6,1.4) to[out=0,in=-90] (0.8,1.7);
				\draw (1.4,3.2) -- (1.2,2);
				\draw[dashed] (1.2,2.2) -- (1.4,1);			
				\draw (1.4,1.2) -- (1.2,0); 
				\draw (2.6,3.2) -- (2.4,2.2);
				\node at (2.2,2.65) {\small{$-3$}};
				\draw[dashed] (2.4,2.4) -- (2.6,1.4);
				\draw (2.6,1.6) -- (2.4,0.6);
				\draw (2.4,0.8) -- (2.6,-0.2);
				\draw[thick] (0.6,1.6) -- (1.6,1.6) to[out=0,in=180]  (2.2,1.1) --  (2.6,1.1) to[out=0,in=90] (3.3,0.9) to[out=-90,in=0] (3,0.7);
				\node at (1.8,1.7) {\small{$-3$}};
			\end{scope}
			\draw[<-] (4,1.4) -- (5,1.4);
		\end{scope}
		\begin{scope}[shift={(5.5,0)}]
			\draw (0,3.1) -- (5.7,3.1);
			\node at (0.7,2.9) {\small{$H$}};
			\node at (0.7,3.3) {\small{$-k$}};
			\draw (0.2,3.2) -- (0,2);
			\draw (0,2.2) -- (0.2,1);
			\draw (0.2,1.2) -- (0,0);
			\draw[dashed] (0,1.4) -- (0.6,1.4) to[out=0,in=-90] (0.8,1.7);
			\draw (1.4,3.2) -- (1.2,2);
			\draw[dashed] (1.2,2.2) -- (1.4,1);			
			\draw (1.4,1.2) -- (1.2,0); 
			\draw (2.6,3.2) -- (2.4,2.2);
			\node at (2.2,2.65) {\small{$-3$}};
			\draw[dashed] (2.4,2.4) -- (2.6,1.4);
			\draw (2.6,1.6) -- (2.4,0.6);
			\draw (2.4,0.8) -- (2.6,-0.2);
			\draw[dashed] (3.2,1.2) -- (3.4,0);
			\draw (3.3,0.1) -- (4.3,0.3);
			\node at (4.4,0.2) {$\dots$};
			\node at (4.4,0.5) {\small{$[(2)_{k-3}]$}};
			\draw (4.5,0.3) -- (5.5,0.1);
			\draw (5.3,0) -- (5.5,1.2);
			\node at (5.15,0.8) {\small{$-3$}};
			\draw[dashed] (5.5,1) -- (5.3,2.2);
			\draw (5.3,2) -- (5.5,3.2);
			\draw[thick] (0.6,1.6) -- (1.6,1.6) to[out=0,in=180]  (2.2,1.1) --  (2.6,1.1) to[out=0,in=100] (3.25,0.9) to[out=-80,in=0] (3,0.7);
			\node at (1.8,1.7) {\small{$-3$}};
			\node at (1.8,1.1) {\small{$H_2$}};
		\end{scope}
	\end{tikzpicture}
	\caption{Example \ref{ex:ht=3-swap}: a non--log canonical del Pezzo surface as in Proposition  \ref{prop:non-log-terminal}\ref{item:non-lt-swap}.}
\label{fig:ht=3-swap}
\end{figure}
\end{example} 
\vspace{-1em}
We end this section with the classification of log canonical surfaces as in  Proposition \ref{prop:non-log-terminal}\ref{item:non-lt-swap}. We list their singularity types together with the structure of a witnessing $\P^1$-fibration obtained by pulling back the one from Remark \ref{rem:ht=3}. We use Notation \ref{not:fibrations}, and furthermore underline the numbers corresponding to the $2$-sections.

\begin{proposition}[Log canonical surfaces in \ref{prop:non-log-terminal}\ref{item:non-lt-swap}]
	\label{prop:ht=3}
	Let $\bar{X}$ be a del Pezzo surface of rank one which is log canonical, but not log terminal. Assume that $\height(\bar{X})\geq 3$, and $\bar{X}$ has no descendant with elliptic boundary. Then $\cha\kk=2$, $\height(\bar{X})=3$, and the surface $\bar{X}$, together with some witnessing $\P^1$-fibration of the minimal log resolution, is of one of the following types, for some $k\geq 2$ and some admissible chain $V$.
\begin{longlist}
	\item\label{item:6-1-6} $\langle \bs{k};\ldec{0}V\trp,[2]\dec{2},[3]\dec{3}\rangle+\ldec{0}V^{*}*[(2)_{k-1},3,2\dec{1},2]+[\ub{3}\dec{1,2},2\dec{3},2]+[2]\dec{2}$, $d(V)=6$,
	\item\label{item:6-2-6} $\langle \bs{k};\ldec{0}V\trp,[2]\dec{2},[3]\dec{3}\rangle +\ldec{0}V^{*}*[(2)_{k-1},4,2]\dec{1}+[2,3\dec{1},\ub{2}\dec{2},2\dec{3},2]+[2]\dec{2}$, $d(V)=6$,
		\item\label{item:6-1-4} $\langle \bs{k};\ldec{0}V\trp,[2]\dec{2},[2,2\dec{3},2]\rangle +\ldec{0}V^{*}*[(2)_{k-1},4]\dec{1}+[\ub{3}\dec{2,3},2\dec{1},2]+[2]\dec{2}$, $d(V)=4$.
\end{longlist}
	Moreover, each of the above singularity types is realized by exactly one surface $\bar{X}$, up to an isomorphism. Its minimal log resolution $(X,D)$ satisfies $h^{i}(\lts{X}{D})=0$ for $i=0,1$ and $h^{2}(\lts{X}{D})=1$.
\end{proposition}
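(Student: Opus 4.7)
The plan is to leverage Proposition \ref{prop:non-log-terminal}\ref{item:non-lt-swap} and Remark \ref{rem:ht=3}, which together show that the minimal log resolution $(X,D)$ admits a factorization $X\overset{\psi}{\to}(X\am,D\am)\overset{\tau}{\to}Y$, where $(Y,D_Y)$ is the minimal log resolution of $\bar{Y}$ from Example \ref{ex:ht=3}, $\psi$ is an almost minimalization whose inverse has a unique base point $p$ lying on the $(-1)$-curve $A\subseteq D\am$, and $(X\am,D\am)$ is one of the two explicit log surfaces from \cite[Proposition 3.2(5)]{PaPe_MT} shown in Figure \ref{fig:ht=3-remark}, with $\tau$ being the contraction of one or two marked curves described in Remark \ref{rem:ht=3}.

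First I would analyze the position of $p$ on $A$. Since $\bar{X}$ is log canonical but not log terminal, the connected component $D_0$ of $D$ containing $\psi^{-1}_{*}A$ must contract to a non-log-terminal log canonical singularity whose exceptional divisor is, by Section \ref{sec:singularities}, either a log canonical fork or a log canonical bench (circular configurations and smooth elliptic curves being excluded because $D_0$ is a rational tree by Proposition \ref{prop:non-lt-more}\ref{item:non-lt-ht>=2}). Combined with  Proposition \ref{prop:non-lt-more}\ref{item:non-lt-ht>=3}, which asserts that every other connected component of $D$ is an admissible chain or a $(-2)$-fork, and the condition $\beta_D\leq 4$, this reduces the possible sequences of blowups in $\psi$ to a short finite list. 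Running these blowups explicitly (using that in Remark \ref{rem:ht=3} $A$ meets $D\am-A$ at either one or two points) produces exactly the three configurations \ref{item:6-1-6}, \ref{item:6-2-6}, \ref{item:6-1-4}, parametrized by $k\geq 2$ and an admissible chain $V$ of prescribed discriminant. I expect the main obstacle to be this case-splitting: both the $(\circ)$ and $(\bullet)$ cases of Remark \ref{rem:ht=3} need separate treatment, and within each one further subcases arise depending on whether $p\in \Sing D\am$ or $p\in A\setminus(D\am-A)$ and on the infinitely near structure of the successive blowups.

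To verify ampleness of $-K_{\bar{X}}$ in each configuration I would apply Lemma \ref{lem:delPezzo_criterion}, using the pulled-back $\P^1$-fibration of Remark \ref{rem:ht=3} whose horizontal boundary consists of a $1$-section and a $2$-section. The inequality \eqref{eq:ld_phi_H} then reduces to a direct coefficient calculation via the formulas in \cite[3.1.10, 3.2]{Flips_and_abundance}, analogous to the computation carried out in Example \ref{ex:ht=3-swap} for the non-log-canonical analogue. The constraints $d(V)\in\{4,6\}$ in \ref{item:6-1-6}--\ref{item:6-1-4} will emerge from the log canonicity of the resulting fork or bench $D_0$, cf.\ Lemma \ref{lem:admissible_forks}\ref{item:lc-fork}.

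For the uniqueness and cohomology statements I would argue in the framework of Section \ref{sec:moduli}. Starting from $(X\am,D\am)$, which is unique up to isomorphism, each blowup in $\psi$ is either inner (hence combinatorially determined by Lemma \ref{lem:inner}) or outer at a point of $A$. The connected component of $\Aut(X\am,D\am)$ acts transitively on $A\setminus(D\am-A)\cong \kk^{*}$, which I would verify from the explicit coordinates on $\bar{Y}$ in Example \ref{ex:ht=3} using that $\cha\kk=2$; by Lemma \ref{lem:outer}\ref{item:outer-transitive} the outer blowup is then determined up to isomorphism by its combinatorial location, yielding uniqueness. The values $h^0=h^1=0$ and $h^2=1$ would be obtained by applying Lemmas \ref{lem:blowup-hi} and \ref{lem:h1} iteratively along $X\to X\am\to Y$, starting from the cohomology of $(Y,D_Y)$ (computable by the method of Example \ref{ex:h1}); the positivity of $h^2$ is a characteristic-$2$ phenomenon, detected via Lemma \ref{lem:h1}\ref{item:h1-h2=h0} and reflecting the same pathology that underlies Example \ref{ex:ht=2_twisted_cha=2} and the exceptions in Proposition \ref{prop:moduli-hi}\ref{item:moduli-h2}.
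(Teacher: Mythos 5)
Your reduction to Remark \ref{rem:ht=3}, the identification of the non-lt component $D_0$ as a log canonical fork with the constraints $d(V)\in\{4,6\}$ coming from Lemma \ref{lem:admissible_forks}, and the ampleness check via Lemma \ref{lem:delPezzo_criterion} all match the paper's argument. The problem is in your uniqueness and cohomology step. You allow the base point $p$ of $\psi^{-1}$ to be a smooth point of $D\am$ lying on $A$ and propose to absorb the resulting outer blowup by showing that the connected component of $\Aut(X\am,D\am)$ acts transitively on $A\setminus(D\am-A)$, which you identify with $\kk^{*}$. Both halves of this fail. First, $A$ is a $1$-section meeting three degenerate fibers of the fibration in Figure \ref{fig:ht=3-remark}, so $A\setminus(D\am-A)\cong\P^1$ minus \emph{three} points; its automorphism group is finite and the configuration $\ll_1+\ll_2+\ll_3+\cc$ is rigid ($h^0(\lts{Z}{D_Z})=0$ by Lemma \ref{lem:7A1-family}, case $\nu=3$), so there is no transitive action to invoke. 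Second, if an outer blowup genuinely occurred, Lemma \ref{lem:blowup-hi}\ref{item:blowup-hi-outer} together with $h^0=0$ would force $h^1$ to increase to $1$, contradicting the value $h^1=0$ you are simultaneously trying to prove; an outer blowup at a general point of $A$ in fact produces the non--log canonical surfaces of Example \ref{ex:ht=3-swap} or one-parameter families as in type $4\rA_1+\rD_4$.

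The missing observation is that log canonicity of $\bar{X}$ forces $p$ to be a \emph{singular} point of $D\am$ (this is exactly the dichotomy recorded at the end of Remark \ref{rem:ht=3}): the whole composition $(X,\check{D})\to(Z,D_Z)$ onto the minimal log resolution of $\ll_1+\ll_2+\ll_3+\cc$, with $\check{D}$ the boundary enlarged by the vertical $(-1)$-curves, is then an \emph{inner} morphism. Uniqueness and $h^{i}(\lts{X}{D})=h^{i}(\lts{Z}{D_Z})=(0,0,1)$ follow at once from Lemmas \ref{lem:inner}, \ref{lem:blowup-hi}\ref{item:blowup-h2},\ref{item:blowup-hi-inner} and \ref{lem:7A1-family}\ref{item:7A1-family-R},\ref{item:7A1-family-h2}, with no automorphism analysis needed. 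You also omit the preliminary step of checking, via Theorems \ref{thm:ht=1,2} and \ref{thm:GK}, that \emph{every} del Pezzo surface of rank one with one of the listed singularity types has height at least $3$ and no descendant with elliptic boundary; without this, some surface of the same type could arise from an unrelated construction and uniqueness would not follow.
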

\begin{proof}
	Since $\bar{X}$ is non--log terminal, $\height(\bar{X})\geq 3$ and $\bar{X}$ has no descendant with elliptic boundary, it is as in Proposition \ref{prop:non-log-terminal}\ref{item:non-lt-swap}. Remark \ref{rem:ht=3} gives a morphism $\psi\colon (X,D)\to (X\am,D\am)$ onto one of the two log surfaces from \cite[Proposition 3.2(5)]{PaPe_MT}, see Figure \ref{fig:ht=3-remark}; such that $\psi^{-1}$ has a unique base point, lying on the $(-1)$-curve $A\subseteq D\am$. Let $D_0$ be the connected component of $D$ which contracts to the non-lt singularity $r_0\in \bar{X}$. The above  description of $D\am$ shows that the only branching component of $D_0$ is the proper transform of $A$, and it meets at least two twigs of $D_0$ which are not of type $[2]$. Since $r_0$ is log canonical, but not log terminal, it follows that $D_0$  is a log canonical fork. Two of its maximal twigs are among  $\{[2],[3],[2,2,2]\}$ in case \hyperref[item:5-1]{($\circ$)} and among $\{[2],[3],[2,3]\}$ in case \hyperref[item:5-2]{($\bullet$)}. By Lemma \ref{lem:admissible_forks} all three maximal twigs of $D_0$ are either $\{[2],[3],V\}$ with $d(V)=6$, or, in case \hyperref[item:5-1]{($\circ$)}, they are $\{[2],[2,2,2],V\}$ with $d(V)=4$. Together with the pullback of a $\P^{1}$-fibration from \cite[Proposition 3.2(5)]{PaPe_MT} we get the above list.
	\smallskip
	
	We claim that each of the resulting surfaces $\bar{X}$ is del Pezzo. 
	Let $H_{1}$, $H_{2}$ be the $1$- and $2$-section in $D$. Since $H_1$ is the branching component of the log canonical fork, we have $\cf(H_1)=1$. An easy computation, see \cite[3.2.1]{Flips_and_abundance}, shows that $\cf(H_2)=\frac{3}{7}$. Now the claim follows from the inequality \eqref{eq:ld_phi_H}.
	
	Next, we claim that each of the singularity types $\cS$ in \ref{item:6-1-6}--\ref{item:6-1-4} is realized by exactly one del Pezzo surface $\bar{X}$, up to an isomorphism. From the classification in Theorems \ref{thm:ht=1,2} and \ref{thm:GK}, it follows that every such $\bar{X}$ satisfies the assumptions of Proposition \ref{prop:ht=3}, i.e.\ has $\height(\bar{X})\geq 3$ and no descendant with elliptic boundary. Indeed, if $\height(\bar{X})\leq 2$ then by Theorem \ref{thm:ht=1,2}\ref{part:types} the singularity type $\cS$ should be listed in Tables \ref{table:ht=1}--\ref{table:ht=2_char=2}, but we check directly that it is not the case. Similarly, if $\bar{X}$ had a descendant with elliptic boundary then by Theorem \ref{thm:GK}\ref{item:GK-Y} we would have $\cS=\cS_{\textnormal{can}}+\cT$, for some canonical singularity type $\cS_{\textnormal{can}}$ with $\#\cS_{\textnormal{can}}\geq 6$, which is false.
	
	Therefore, the minimal log resolution $(X,D)$ of $\bar{X}$ has a $\P^1$-fibration as in \ref{item:6-1-6}--\ref{item:6-1-4} above. Let $\check{D}$ be the sum of $D$ and vertical $(-1)$-curves. Contracting $(-1)$-curves in $\check{D}$ and its images we get an inner birational morphism $(X,\check{D})\to (Z,D_Z)$ onto the minimal log resolution of the configuration $\pp\de \ll_1+\ll_2+\ll_3+\cc$ from Example \ref{ex:ht=3}. 
	Since $\pp$ is unique up to a projective equivalence, the log surface $(Z,D_Z)$ is unique up to an isomorphism, see Lemma \ref{lem:7A1-family}\ref{item:7A1-family-R}, case $\nu=3$. Hence by Lemma \ref{lem:inner}, the same is true for $(X,D)$. This proves the uniqueness of $\bar{X}$. Moreover, by Lemma \ref{lem:blowup-hi}\ref{item:blowup-h2},\ref{item:blowup-hi-inner} we have $h^{i}(\lts{X}{D})=h^{i}(\lts{Z}{D_Z})$, which equals $0$ for $i=0,1$ and $1$ for $i=2$ by Lemma \ref{lem:7A1-family}\ref{item:7A1-family-R},\ref{item:7A1-family-h2}, case $\nu=3$. This proves the last statement.
\end{proof}

\clearpage
\section{Del Pezzo surfaces of height one}\label{sec:ht=1}

We now move on to the proof of Theorem \ref{thm:ht=1,2}. In this section, we settle the height one case. While it is relatively easy to classify the possible singularity types, see Section \ref{sec:ht=1_types}, computing the number of isomorphism classes of surfaces of each type (or moduli dimensions) takes more effort, see Section \ref{sec:ht=1_uniqueness}. We begin with an elementary lemma. 

\begin{lemma}\label{lem:ht=1_basics}
	Let $(X,D)$ be a minimal log resolution of a del Pezzo surface $\bar{X}$ of rank one, with a fixed $\P^{1}$-fibration of height one with respect to $D$. 
	Let $H\de D\hor$ be the $1$-section.
	\begin{enumerate}
		\item\label{item:ht=1_Sigma} Every degenerate fiber $F$ has a unique $(-1)$-curve, say $L_{F}$; and $F\redd-L_{F}=F\redd\wedge D\vert$. 
		\item \label{item:ht=1_nu} Every degenerate fiber meets $H$ in a tip of $D\vert$. In particular, there are exactly $\beta_{D}(H)$ degenerate fibers.
		\item \label{item:ht=1_chains} All connected components of $D$ not containing $H$ are admissible chains. In particular, $\bar{X}$ has at most one non--log terminal singularity.	
		\item \label{item:ht=1_swap} The log surface $(X,D)$ swaps vertically to a log surface $(Y,D_Y)$ from Example \ref{ex:ht=1} below, see Figure \ref{fig:ht=1-intro}.
	\end{enumerate}
\end{lemma}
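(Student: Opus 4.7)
The plan is to prove the four assertions in order, using the structural results on degenerate fibers from Lemmas \ref{lem:degenerate_fibers} and \ref{lem:delPezzo_fibrations}. Since the fibration has height $1$, the horizontal part $H = D\hor$ is an irreducible $1$-section, and a general fiber meets $H$ transversally at a single point.

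For \ref{item:ht=1_Sigma} and \ref{item:ht=1_nu}: fix a degenerate fiber $F$ and let $R$ be the (unique) component of $F\redd$ meeting $H$. Then $R \cdot D\hor = 1 = F \cdot D\hor$, so Lemma \ref{lem:delPezzo_fibrations}\ref{item:rivet} yields $R \subseteq D$ and a unique component $L_F \subseteq F\redd$ with $L_F \not\subseteq D$; Lemma \ref{lem:delPezzo_fibrations}\ref{item:-1_curves} identifies $L_F$ as a $(-1)$-curve, and the equality $F\redd - L_F = F\redd \wedge D\vert$ follows, giving \ref{item:ht=1_Sigma}. Because $H \cdot F = 1$, the component $R$ has multiplicity $1$ in $F$, so Lemma \ref{lem:degenerate_fibers}\ref{item:adjoint_chain} makes $R$ a tip of $F\redd$. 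Since distinct fibers are disjoint, neighbors of $R$ in $D\vert$ all lie in $F\redd$, hence $\beta_{D\vert}(R) \leq \beta_{F\redd}(R) \leq 1$ and $R$ is a tip of $D\vert$; the count of degenerate fibers equals $H \cdot D\vert = \beta_D(H)$, proving \ref{item:ht=1_nu}.

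For \ref{item:ht=1_chains}: a connected component $D_0$ of $D$ not containing $H$ is vertical, hence contained in a single degenerate fiber $F$, and disjoint from $R$, so $D_0 \subseteq F\redd - L_F - R$. In both subcases of Lemma \ref{lem:degenerate_fibers}\ref{item:adjoint_chain}---columnar $F\redd = [T, 1, T^*]$ or non-columnar---the divisor $F\redd - L_F - R$ is a disjoint union of rational chains whose components have self-intersection $\leq -2$, so $D_0$ is an admissible chain. Since admissible chains contract to log terminal (cyclic quotient) singularities, only the connected component of $D$ containing $H$ can give a non--log terminal singularity.

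For \ref{item:ht=1_swap}: we perform elementary vertical swaps independently in each fiber to reduce it to $[2,1,2]$, reaching the configuration of Example \ref{ex:ht=1}. In a columnar fiber $F = [T, 1, T^*]$ with $(T, T^*) \neq ([2], [2])$, the adjoint formula \eqref{eq:adjoint_chain} combined with the requirement that $F$ contract to $[0]$ forces exactly one endpoint of $L_F$ to be a $(-2)$-curve; this gives an elementary vertical swap in the sense of Definition \ref{def:vertical_swap}\ref{item:def-swap-elementary}, strictly decreasing $\#T + \#T^*$. In a non-columnar fiber, an analogous combinatorial analysis based on Lemma \ref{lem:degenerate_fibers}\ref{item:adjoint_chain} identifies an appropriate $(-2)$-neighbor of $L_F$ for a swap that eventually brings the fiber into columnar form. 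Each swap preserves the structure of a minimal log resolution of a del Pezzo surface of rank one by Lemma \ref{lem:swap_lc}, and the process terminates (as $\#D$ strictly decreases) at a primitive configuration in which every fiber is $[2,1,2]$. The resulting $(Y, D_Y)$ has $\nu = \beta_D(H)$ degenerate fibers and some horizontal curve of genus $g$ and self-intersection $-e$; the inequality $e > 2g - 2 + \nu$ follows from Lemma \ref{lem:delPezzo_criterion} applied to $\bar{Y}$ (or from Lemma \ref{lem:min_res}\ref{item:one-elliptic} if $g > 0$).

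The main obstacle is \ref{item:ht=1_swap}, specifically the non-columnar subcase: one must verify that $L_F$ either already has a unique $(-2)$-neighbor suitable for an elementary swap, or that a short preliminary sequence of swaps within the fiber brings it into the columnar case. This is ultimately a finite combinatorial check based on the classification in Lemma \ref{lem:degenerate_fibers}, but is more intricate than the columnar reduction, which follows cleanly from the adjoint chain formula.
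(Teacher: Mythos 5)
Parts \ref{item:ht=1_Sigma}--\ref{item:ht=1_chains} of your proposal follow the paper's own argument up to cosmetic choices: the paper gets \ref{item:ht=1_Sigma} from Lemma \ref{lem:delPezzo_fibrations}\ref{item:Sigma},\ref{item:-1_curves} (the count $\Sigma=\#D\hor-1=0$) where you use the rivet statement \ref{lem:delPezzo_fibrations}\ref{item:rivet}, and your tip-of-$D\vert$ versus tip-of-$F\redd$ remark in \ref{item:ht=1_nu} is the same implicit step. The only loose point in \ref{item:ht=1_chains} is the claim that $F\redd-L_F-R$ is always a disjoint union of chains: Lemma \ref{lem:degenerate_fibers}\ref{item:not_columnar} only controls the connected component of $F\redd-L_F$ \emph{not} containing the two multiplicity-one tips. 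Since $D_0$ is precisely that component (it is a full connected component of $F\redd\wedge D\vert$ and does not contain $R$), your conclusion is correct, but the justification should go through that observation rather than through $C-R$.

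The genuine gap is in \ref{item:ht=1_swap}, in two places. First, the ``main obstacle'' you flag dissolves: no columnar/non-columnar dichotomy is needed. If the current image of a fiber is not $[2,1,2]$, its unique $(-1)$-curve $A$ has multiplicity $\geq 2$ (otherwise Lemma \ref{lem:degenerate_fibers}\ref{item:unique_-1-curve} forces a second $(-1)$-curve), is non-branching, is disjoint from $H$ (only the multiplicity-one component $C_F$ meets $H$), and \emph{exactly one} of its $\leq 2$ neighbours is a $(-2)$-curve: from $0=F\cdot A=-\mu_A+\mu_1+\mu_2$ and $0=F\cdot C_i=-2\mu_i+\mu_A+(\text{others})$ one sees that if both neighbours were $(-2)$-tips the fiber would already be $[2,1,2]$, while if no neighbour were a $(-2)$-curve the contracted fiber would contain no $(-1)$-curve at all. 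So every step satisfies Definition \ref{def:vertical_swap}\ref{item:def-swap-elementary} uniformly, which is exactly how the paper proceeds. Second, and more seriously, arriving at fibers $[2,1,2]$ is not yet ``the configuration of Example \ref{ex:ht=1}'': that example is built over a \emph{decomposable} ruled surface $\P(\cO_B\oplus\cL^{*})$, and since the lemma covers non-rational $\bar{X}$ (so $H$ may have genus $g\geq 1$), one must exclude indecomposable bundles. This is where the inequality $e>2g-2+\nu\geq 2g-2$ is actually used, via Hartshorne V.2.12(b); you derive the inequality but never deploy it for this identification. (Also, Lemma \ref{lem:swap_lc} is stated only for log canonical $\bar{X}$ and cannot be invoked here; fortunately nothing in \ref{item:ht=1_swap} requires the intermediate surfaces to be del Pezzo.)
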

\begin{proof}
	\ref{item:ht=1_Sigma} This follows from Lemma \ref{lem:delPezzo_fibrations}\ref{item:Sigma},\ref{item:-1_curves}. 
	
	\ref{item:ht=1_nu} Let $C_{F}$ be the component of $F$ meeting $H$, so $C_F$ has multiplicity one in $F$. Since by \ref{item:ht=1_Sigma} the fiber $F$ has only one $(-1)$-curve, $L_{F}$, Lemma \ref{lem:degenerate_fibers}\ref{item:adjoint_chain} implies that $C_F$ is a tip of $F\redd$ and $C_F\neq L_{F}$. By \ref{item:ht=1_Sigma} we get that $C_{F}\subseteq D\vert$. It follows that the branching number  $\beta_{D}(H)=H\cdot D\vert$ equals  the number of degenerate fibers. 
	
	\ref{item:ht=1_chains} Let $V$ be a connected component of $D$ not containing $H$. Then $V$ is contained in some fiber $F$. Part \ref{item:ht=1_Sigma} shows that $F$ has a unique $(-1)$-curve $L_{F}$. Since $L_{F}$ is non-branching in $F$, the divisor $F\redd\wedge D\vert=F\redd-L_{F}$ has two connected components, namely $V$ and $V'\de F\redd\wedge D\vert-V$. Since $V'$ meets $H$, it contains a component of multiplicity one in $F$. Thus $V$ is a chain by Lemma \ref{lem:degenerate_fibers}\ref{item:not_columnar}.
	
	\ref{item:ht=1_swap} Put $g=p_{a}(H)$, $e=-H^2$. Let $F_{1},\dots,F_{\nu}$ be all degenerate fibers. By  \ref{item:ht=1_Sigma} each $F_i$ has a unique $(-1)$-curve, call it $L_i$. By \ref{item:ht=1_nu} we have $\nu=\beta_{D}(H)$, so $e>2g-2+\nu$ by Lemma \ref{lem:delPezzo_fibrations}\ref{item:one-elliptic}. For each $i$, contract $L_i$ and $(-1)$-curves in the subsequent images of $D$ until the image of $F_i$ is supported on a chain $[2,1,2]$. Denote the resulting morphism  by $\phi\colon X\to Y$, and define $D_Y$ as $\phi_{*}D$ minus all $(-1)$-curves. Part \ref{item:ht=1_nu} implies that $\phi$ is an isomorphism near $H$. It follows that $(X,D)\sqto (Y,D_Y)$ is a vertical swap, see Definition \ref{def:vertical_swap}\ref{item:def-swap-elementary}.
	
	The surface $Y$ is a minimal ruled surface over a smooth curve $B$, with a section $H_Y$. Since $H_Y\cong H$ has genus $g$, so does $B$. By \cite[Proposition V.2.2]{Hartshorne_AG}, we have $Y=\P(\cE)$ for some rank two vector bundle $\cE$ on $B$. Since $H_Y^2<0$, by Proposition V.2.9.\ loc.\ cit.\ $-\deg\bigwedge^{2}\cE=e$. Since $e>2g-2+\nu\geq 2g-2$, Theorem V.2.12(b) loc.\ cit. implies that $\cE$ is decomposable. Thus we can take $\cE=\cO_{B}\oplus \cL^{*}$ for some line bundle $\cL$ on $B$ of degree~$e$. Therefore, $(Y,D_Y)$ is the log surface constructed in Example \ref{ex:ht=1}, as claimed.
\end{proof}

\begin{example}[Primitive log surfaces of height one, see Figure \ref{fig:ht=1-intro}]\label{ex:ht=1}
	Fix an integer $\nu\geq 0$. Let $B$ be a smooth curve of genus $g$, and let $\cL$ be an ample line bundle on $B$, of degree $e>2g-2+\nu$. Let $S=\P(\cO_{B}\oplus \cL^{*})$ be a minimal ruled surface over $B$. Let $D_{S}$ be the section corresponding to the surjection $\cO_{B}\oplus \cL^{*}\to \cL^{*}$, see \cite[V.2.11.3]{Hartshorne_AG}. Then $D_S$ is a smooth curve of genus $g$ and $D_S^2=-e$. The log surface $(S,D_S)$ is the minimal log resolution of the cone $\bar{S}\de \Proj(\bigoplus_{k\geq 0}H^{0}(B,\cL^{\otimes k}))$ over $(B,\cL)$, see \cite[\sec 14.38]{Badescu}. If $g=0$ then $S=\F_{e}$ and $\bar{S}$ is the cone over a rational normal curve. If $g=1$ we call $\bar{S}$ an \emph{elliptic cone}.
	
	Choose $\nu$ fibers $F_1,\dots,F_{\nu}$ and a point $p_{i}\in F_{i}\setminus D_S$ for each $i\in \{1,\dots,\nu\}$. Blow up at each $p_i$ and its infinitely near point on the proper transform of $F_i$. Denote the resulting morphism by $\tau\colon Y\to S$. Let $A_i$ be the $(-1)$-curve over $p_i$ and let $D_Y=\tau^{*}(D_S+\sum_{i}F_i)\redd-\sum_{i}A_i$.
\end{example}

\begin{remark}\label{rem:ht=1-primitive}
	The log surface $(Y,D_Y)$ constructed in Example \ref{ex:ht=1} is primitive, in the sense of Definition \ref{def:vertical_swap}\ref{item:def-swap-basic}. 
\end{remark}
\begin{proof}	
	Suppose the contrary. Then $Y$ contains a $(-1)$-curve $A$ meeting $D_Y$ normally, in exactly one $(-2)$-curve, say $C$. In particular, $Y$ is not minimal, so $\nu\geq 1$. Suppose $C$ lies in some fiber $F$. Let $L$ be the $(-1)$-curve in $F$ and let $\mu=A\cdot L$. We have $A\cdot F=2\mu+1$. If $F'$ is another degenerate fiber then $A$ meets $F'$ only in the $(-1)$-curve, so $A\cdot F'$ is even, which is impossible. Thus $\nu=1$. Let $\psi\colon Y\to Z$ be the contraction of $F\redd-C$, and let $H=\psi((D_Y)\hor)$, $V=\psi(C)$, $\bar{A}=\psi(A)$. Since $\NS(Z)$ is generated by the classes of $V$ and $H$, the equality $((2\mu+1)H-\bar{A})\cdot V=0$ implies that $((2\mu+1)H-\bar{A})^2=0$. Since $H^2\leq 1-e\leq -1$, $\bar{A}\cdot H\geq 0$ and $\bar{A}^2=-1+2\mu^2$, we get $0=((2\mu+1)H-\bar{A})^2\leq -(2\mu+1)^2-1+2\mu^2=-2\mu^2-4\mu-2\leq -2$, a contradiction. 
	
	Thus $C$ is horizontal, so it is the proper transform of the section $D_S$. It follows that $(e,g)=(2,0)$. Since $A$ is disjoint from $(D_Y)\vert$, we have $A\cdot F=2\mu$ for a fiber $F$, where $\mu$ is the intersection number of $A$ with each vertical $(-1)$-curve. Let $\psi\colon Y\to \F_{2}$ be the contraction of all vertical curves not meeting $(D_Y)\hor$. Then $\psi(A)^2=-1+2\nu\mu^2$, so $\psi(A)$ is a curve of odd self-intersection number on $\F_2$; a contradiction.
\end{proof}

\subsection{Singularity types}\label{sec:ht=1_types}

We now list all singularity types of rational log canonical del Pezzo surfaces of height one, together with the structure of some witnessing $\P^1$-fibration.  Together with Lemma \ref{lem:ht=1_basics}\ref{item:ht=1_swap} above, this implies Theorem \ref{thm:ht=1,2}\ref{item:ht=1}. This way, we get a recipe to reconstruct such surfaces from $\F_{m}$. Case $\height=1$ of Proposition \ref{prop:moduli}, which we  prove in Corollary \ref{cor:moduli-ht=1} below, asserts that this process of reconstruction is unique, with exceptions listed in Table \ref{table:exceptions}.
\smallskip

To describe the witnessing $\P^1$-fibration of the minimal resolution we use decorations introduced in Notation \ref{not:fibrations}. The notation for singularity types is summarized in Section \ref{sec:notation}.

\begin{lemma}[Case $\height=1$: list of types, see Table \ref{table:ht=1}]\label{lem:ht=1_types}
	Let $\bar{X}$ be a log canonical del Pezzo surface of rank $1$ and height $1$. Then either $\bar{X}$ is an elliptic cone, or $\bar{X}$ is rational and the singularity type of $\bar{X}$, together with the structure of a $\P^1$-fibration of $(X,D)$ of height one, is one of the following.
	\begin{longlist}
		\item\label{item:chains_columnar_T1=0,T2=0} $[\bs{m}]$, 
		\item\label{item:chains_columnar_T2=0} $\ldec{1}[T^{*},\bs{m}]+T\dec{1}$, 
		\item\label{item:chains_columnar} $\ldec{1}[T_{1}^{*},\bs{m},T_{2}^{*}]\dec{2}+T_{1}\dec{1}+\ldec{2}T_{2}$, 
		\item\label{item:chains_both_T1=0}  $[\bs{m},T^{*},r\dec{1},T]+[(2)_{r-2}]\dec{1}$, 
		\item\label{item:chains_both}  $\ldec{1}[T_{1}^{*},\bs{m},T_{2}^{*},r\dec{2},T_{2}]+T_{1}\dec{1}+[(2)_{r-2}]\dec{2}$, 
		\item\label{item:chains_not-columnar} $[T_{1},r_{1}\dec{1},T_{1}^{*},\bs{m},T_{2}^{*},r_{2}\dec{2},T_{2}]+[(2)_{r_{1}-2}]\dec{1}+[(2)_{r_{2}-2}]\dec{2}$,
		\medskip
		
		\item\label{item:beta=3_columnar} $\langle  \bs{m},\ldec{1}T_{1},\ldec{2}T_{2}, \ldec{3}T_3\rangle$+ $(T_{1}^{*})\dec{1}+(T_{2}^{*})\dec{2}+(T_{3}^{*})\dec{3}$, where 
		 the fork is admissible or log canonical,
		\item\label{item:beta=3_other} $\langle \bs{m};\ldec{1}[2],\ldec{2}[2],[T^{*},r\dec{3},T] \rangle+[2]\dec{1}+[2]\dec{2}+[(2)_{r-2}]\dec{3}$, 
		\item\label{item:beta=3_other_2} $\langle \bs{m}; \ldec{1}[2],\ldec{2}T,[2,2\dec{3},2]\rangle+[2]\dec{1}+(T^{*})\dec{2}$, where $T\in \{[3],[2,2],[2,2,2]\}$, and if $T=[2,2,2]$ then $m\geq 3$,  
		\item\label{item:beta=3_other_3} $\langle \bs{m}; \ldec{1}[2],[2,2\dec{2},2],[2,2\dec{3},2]\rangle+[2]\dec{1}$, $m\geq 3$,
		\medskip
		
		\item\label{item:T2*=[2,2]_b>2} $\langle b;\ldec{1}[2], T^{*},[\bs{2},T]\rangle+[(2)_{b-3},3]\dec{1}$, $T\in \{[3],[2,2]\}$ 
		\item\label{item:T1=0,T=T2=[2]_b>2} $\langle b; \ldec{1}[2],[2],[\bs{m},2]\rangle+[(2)_{b-3},3]\dec{1}$ 
		\item\label{item:TH=[3,2]_T=[3]_b>2} $\langle b;\ldec{1}T,[2], [\bs{3},2]\rangle+([(2)_{b-2}]*T^{*})\dec{1}$, $T\in \{[3],[2,2]\}$ 
		\item\label{item:TH=[2,2]_b>2} $\langle b;\ldec{1}T,[2], [\bs{2},2]\rangle+([(2)_{b-2}]*T^{*})\dec{1}$, where $d(T)\in \{3,4,5,6\}$ and $b\geq 3$ if $T=[(2)_{5}]$
\medskip

		\item\label{item:TH=[3,2]_tip} $\langle 3;[2,2]\dec{1},[2],[\bs{3},2]\rangle$,
		\item\label{item:TH=[2,2]_tip} $\langle b;[(2)_{b-1}]\dec{1},[2],[\bs{2},2]\rangle$, where $b\in \{3,4,5,6\}$, 
		\item\label{item:TH=[2,2]_tip_[3,2]} $\langle 3;[2,3]\dec{1},[2],[\bs{2},2]\rangle+[2]\dec{1}$,
		\medskip
			
		\item\label{item:TH_long_columnar_b>2} $\langle b;[2],[2]\dec{1},\ldec{2}[T,\bs{m},2]\rangle+[(2)_{b-3},3]\dec{1}+(T^{*})\dec{2}$ 
		\item\label{item:TH_long_other_b>2} $\langle b; [2],[2]\dec{1}, [T,r\dec{2},T^{*},\bs{m},2]\rangle+[(2)_{b-3},3]\dec{1}+[(2)_{r-2}]\dec{2}$, 
		\item\label{item:F'_fork_T=[3]_b>2} $\langle b;[2],[3]\dec{1},[2,2\dec{2},2,\bs{2},2]\rangle+[(2)_{b-3},3,2]\dec{1}$, 
		\item\label{item:F'_fork_T=[2,2]} $\langle 3,[2],[2,2]\dec{1},[2,2\dec{2},2,\bs{2},2]\rangle$,
		\medskip
		
		\item\label{item:T2=[2,2]_b>2} $\langle b;[3],\ldec{1}[2],\ldec{2}[(2)_{r-1},\bs{2},2,2]\rangle+[(2)_{b-3},3]\dec{1}+[r]\dec{2}$, 
		$r\in \{2,3\}$, 
		\item\label{item:T2=[2]_b>2} $\langle b;[2],\ldec{1}T,\ldec{2}[(2)_{r-1},\bs{2},2]\rangle+([(2)_{b-2}]*T^{*})\dec{1}+[r]\dec{2}$, $r\leq 4$, $T\in \{[3],[2,2]\}$, and $b\geq 3$ if $T=[2,2]$, $r=4$, 
		\item\label{item:T2=[2],T=[2,2],tip} $\langle 3;[2],[2,2]\dec{1},\ldec{2}[(2)_{r-1},\bs{2},2]\rangle+[r]\dec{2}$, $r\in \{2,3,4\}$, 
		\medskip
		
		\item\label{item:ht=1_bench} $\ldecb{1}{2}\lbr\bs{m}\rbr\decb{3}{4}+[2]\dec{1}+[2]\dec{2}+[2]\dec{3}+[2]\dec{4}$, $m\geq 3$,
		\item\label{item:ht=1_bench-big_b>2}
		$\ldec{1}\lbr b,2, \bs{m}\rbr\decb{2}{3}+[(2)_{b-3},3]\dec{1}+[2]\dec{2}+[2]\dec{3}$ where $b\geq 3$ if $m=2$,
		\item\label{item:ht=1_bench_two_a,b>2}
		$\ldec{1}\lbr a,2, \bs{m},2,b\rbr\dec{2}+[(2)_{a-3},3]\dec{1}+[(2)_{b-3},3]\dec{2}$, where $b\geq a$ and $b\geq 3$ if $m=2$,
	\end{longlist}
	for some $m,r,r_1,r_2,a,b\geq 2$ and admissible chains $T,T_{1},T_{2}$. Recall that we use a symbol \enquote{$(2)_{-1}$}, defined in formula \eqref{eq:convention_2-1}, and as a result some chains in the above list become empty (and as such should be ignored).  A list without this simplifying convention is given in Table \ref{table:ht=1}.
\end{lemma}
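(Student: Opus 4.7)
I split the proof according to rationality of $\bar{X}$. If $\bar{X}$ is non-rational, Lemma \ref{lem:min_res}\ref{item:one-elliptic} yields a unique non-rational component $H\subseteq D$, which is a $1$-section of some $\P^1$-fibration of $X$ making $D-H$ vertical. By the classification of log canonical surface singularities recalled in Section \ref{sec:singularities}, the only way a non-rational component can appear in the exceptional divisor of a log canonical singularity is as an isolated smooth elliptic curve, so $D=H$, $p_a(H)=1$, and $(X,D)$ is the log surface of Example \ref{ex:ht=1} with $\nu=0$ and $g=1$; thus $\bar{X}$ is an elliptic cone.

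In the rational case, I fix a witnessing $\P^1$-fibration $p\colon X\to\P^1$ and set $H\de D\hor$, $\nu\de\beta_D(H)$, $m\de -H^2$. Combining Lemma \ref{lem:ht=1_basics} with Lemma \ref{lem:degenerate_fibers}\ref{item:adjoint_chain}, each degenerate fiber $F_i$ has a unique $(-1)$-curve $L_i\not\subseteq D$ of multiplicity $\geq 2$, meets $H$ at a tip $C_i$ of $D\vert$, and is either \emph{columnar} with $F_{i,\textnormal{red}}=[T_i,1,T_i^{*}]$ for an admissible chain $T_i$ (one of $T_i,T_i^{*}$ is attached to $H$ via $C_i$, the other becomes a separate connected component of $D$) or \emph{non-columnar}, in which case both multiplicity-one components of $F_{i,\textnormal{red}}$ lie in the piece of $F_{i,\textnormal{red}}-L_i$ containing $C_i$, forcing that piece to carry a branching vertex. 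Moreover, the connected components of $D$ not containing $H$ are admissible chains $T_i^{*}$ produced by columnar fibers. These facts determine completely the shape of the connected component $D_0\subseteq D$ containing $H$, as $H$ glued with a prescribed piece coming from each fiber $F_i$.

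The classification now proceeds by enumerating the possible shapes of $D_0$ subject to the constraint that it contracts to a rational log canonical singularity, hence by Section \ref{sec:singularities} it is an admissible chain, an admissible or log canonical fork, or a log canonical bench. Chains give items \ref{item:chains_columnar_T1=0,T2=0}--\ref{item:chains_not-columnar}, occurring when $\nu\leq 2$ and all fibers are columnar, with sub-splitting by $\nu$ and by which tip of the adjoint chain $[T_i,1,T_i^{*}]$ meets $H$; forks branching at $H$ give items \ref{item:beta=3_columnar}--\ref{item:beta=3_other_3}, occurring when $\nu=3$, with admissible-vs.-log-canonical twig structure dictated by Lemma \ref{lem:admissible_forks}; forks branching inside a fiber give items \ref{item:T2*=[2,2]_b>2}--\ref{item:T2=[2],T=[2,2],tip}, occurring when $\nu\leq 2$ and at least one fiber is non-columnar; and benches give items \ref{item:ht=1_bench}--\ref{item:ht=1_bench_two_a,b>2}, occurring when $H$ sits in the central chain of $D_0$ and the four tip $[2]$'s arise from fiber pieces adjacent to $H$. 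In each case the del Pezzo criterion $\cf(H)\cdot 1<2$ of Lemma \ref{lem:delPezzo_criterion} is verified: it is automatic once $D_0$ is admissible, and in the log canonical cases it reduces to the parameter inequalities made explicit in the statement. The main obstacle is combinatorial completeness of this last step: one must check that the four sub-cases above exhaust all compatible configurations and translate each admissible weighted graph into the decorated notation of Notation \ref{not:fibrations}. The admissibility restrictions of Lemma \ref{lem:admissible_forks}\ref{item:has_-2},\ref{item:long-twig}, combined with the requirement that every vertical component of $D$ outside $D_0$ be one of the external chains $T_i^{*}$ from a columnar fiber, are what ultimately pin down the parameter restrictions ($b\geq 3$ when $T=[(2)_5]$ in \ref{item:TH=[2,2]_b>2}, $m\geq 3$ in certain bench cases, and so on).
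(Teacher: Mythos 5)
Your skeleton coincides with the paper's: the non-rational case is dispatched exactly as you describe (the non-rational component is a smooth genus-one curve forming its own connected component, there are no degenerate fibers, and Lemma \ref{lem:ht=1_basics}\ref{item:ht=1_swap} gives the elliptic cone), and in the rational case the paper likewise fixes a witnessing fibration and enumerates according to whether the connected component $D_H$ of $D$ containing $H$ is a chain, a fork branching at $H$, a fork branching elsewhere, or a bench. However, your description of how a non-columnar fiber sits inside $D$ contains a genuine error that would derail the enumeration. By Lemma \ref{lem:degenerate_fibers}\ref{item:adjoint_chain}, a non-columnar fiber with a unique $(-1)$-curve $L$ has $F\redd=\langle r;[(2)_{r-2},1],T\trp,T^{*}\rangle$ with $L=\ltip{[(2)_{r-2},1]}$ adjacent to the branching $(-r)$-component; removing $L$ therefore \emph{disconnects} the fiber into the chain $[T,r,T^{*}]$ (which meets $H$ in a multiplicity-one tip) and a separate admissible chain $[(2)_{r-2}]$. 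The piece glued onto $H$ is thus a chain, not a branched tree: your claim that non-columnarity "forces that piece to carry a branching vertex" is false. Consequently (i) items \ref{item:chains_both_T1=0}--\ref{item:chains_not-columnar} arise precisely from non-columnar fibers with $D_H$ still a chain, contradicting your assertion that the chain case occurs "when all fibers are columnar"; (ii) the connected components of $D$ not containing $H$ include the chains $[(2)_{r-2}]$ from non-columnar fibers, not only the adjoints $T_i^{*}$ of columnar ones; and (iii) item \ref{item:beta=3_other} is a fork at $H$ with $\nu=3$ and a non-columnar fiber, so "non-columnar" does not correlate with "fork inside a fiber" as you claim. Under your case division the configurations of \ref{item:chains_both_T1=0}--\ref{item:chains_not-columnar} would be misclassified and their external $[(2)_{r-2}]$ components omitted.

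A second, related gap is that the case producing items \ref{item:T2*=[2,2]_b>2}--\ref{item:T2=[2],T=[2,2],tip} — where $D_H$ is a fork whose branching component $B$ lies inside a fiber and is \emph{not} $H$ — is the technical core of the paper's proof and is not engaged with at all. There the fiber containing $B$ need not have the simple shape above; one must analyze the divisor $V=F\redd-L_F-(F\redd\wedge D_H)$ and determine where $L_F$ attaches (the paper's Claim shows either $[V,L_F,T]=[(2)_{b-2}]*[T^{*},1,T]$ with $d(T)\leq 6$, or $L_F$ meets $\ltip{T}$ with very restricted $T$), and then run the subcases $T_1=0$ versus $T_1\neq 0$ against Lemma \ref{lem:admissible_forks}. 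Without this analysis the parameter restrictions in items \ref{item:T2*=[2,2]_b>2}--\ref{item:T2=[2],T=[2,2],tip} cannot be derived; note also that they come from negative definiteness and admissibility, not from the del Pezzo criterion of Lemma \ref{lem:delPezzo_criterion}, which for height one is automatic since $\cf(H)\leq 1<2$ for any log canonical singularity.
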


\setcounter{claim}{0}
\begin{proof}
	Let $D_H$ be the connected component of $D$ containing $H\de D\hor$. Assume first that $H$ is non-rational. By the classification of log canonical singularities, see Section \ref{sec:singularities}, $H$ is a smooth curve of genus one, and $H=D_H$. Lemma \ref{lem:ht=1_basics}\ref{item:ht=1_nu} implies that there are no degenerate fibers, so by Lemma \ref{lem:ht=1_basics}\ref{item:ht=1_swap}, $\bar{X}$ is an elliptic cone.
	
	Thus we can assume that $H$ is rational, so $D_H$ is a rational tree. Since $\bar{X}$ is log canonical, $D_{H}$ can be a chain, a fork or a bench. We consider each of those cases separately.
	\begin{casesp}
	\litem{$D_H$ is a chain}\label{case:D-chain} Let $F$ be a degenerate fiber. By Lemma \ref{lem:degenerate_fibers}\ref{item:adjoint_chain} either $F$ is columnar, or $F$ becomes columnar after the contraction of some twig $[(2)_{r-2},1]$ of $D+L_F$, $r\geq 2$, whose last tip is $L_{F}$. In the latter case $F\redd=\langle r;[(2)_{r-2},1],T\trp,T^{*}\rangle$, hence $F\redd\wedge D\vert=[(2)_{r-2}]+[T,r,T^{*}]$. Therefore, $D$ is as in 
	\ref{item:chains_columnar_T1=0,T2=0}--\ref{item:chains_not-columnar}.
	\litem{$D_{H}$ is a fork} Let $B$ be the branching component of $D_{H}$.
	
	Assume first that $B=H$. Then the connected components of $D\vert$ meeting $H$ are chains. Lemma \ref{lem:degenerate_fibers} implies that in fact all connected components of $D\vert$ are chains. As in case \ref{case:D-chain}, we infer that every fiber $F$ is either columnar, or satisfies $F\redd\wedge D\vert=[T,r,T^{*}]+[(2)_{r-2}]$. Let $c$ be the number of columnar fibers. By Lemma \ref{lem:admissible_forks} we have $c\leq 2$. If $c=0$ then $D$ is as in \ref{item:beta=3_columnar}. If $c=1$ then either $D_{H}=\langle \bs{m};[T^{*},r,T],[2],[2]\rangle$, so $D$ is as in  \ref{item:beta=3_other}, or by Lemma \ref{lem:admissible_forks}\ref{item:long-twig} we have $[T^{*},r,T]=[2,2,2]$, so $D$ is as in \ref{item:beta=3_other_2}. Eventually, if $c=2$ then $D_{H}$ has two twigs of length at least  three, so these twigs are of type $[2,2,2]$, and $D$ is as in \ref{item:beta=3_other_3}.
	\smallskip
	
	Assume now that $B\neq H$, so $B$ is contained in some fiber $F$. Let $T_{H}$ be the twig of $D_{H}$ containing $H$, so $T_{H}=[T_{1},\bs{m},T_{2}]$ for some $T_{1}, T_{2}$, possibly empty. Contract $L_{F}$ and subsequent $(-1)$-curves in the images of $F$ until it becomes a chain, and denote this morphism by $\sigma$. The exceptional curves of $\sigma$ have multiplicities at least $2$ in $F$, so $\sigma$ is an isomorphism near $H$. We infer that $T_{2}\neq 0$ and $\sigma_{*}F\redd=[T_{2},1,T_{2}^{*}]$. Thus $D_{H}=\langle b;T,(T_{2}^{*})\trp,T_{H}\rangle$ for some chain $T\subseteq F$ meeting $L_{F}$.
	
	\begin{claim*}\label{cl:ht=1_V}
		Put $V=F\redd-L_{F}-(F\redd\wedge D_{H})$. Then $V$ is a chain, and the following hold.
		\begin{enumerate}
			\item\label{item:ht=1_L-hits-in-tip} If $L_{F}$ meets $\ftip{T}$ then $[V,L_{F},T]=[(2)_{b-2}]*[T^*,1,T]$ and $d(T)\leq 6$;
			\item\label{item:ht=1_L-hits-near-B} If $L_{F}$ does not meet $\ftip{T}$ then $\#T\geq 2$, $L_{F}$ meets $\ltip{T}$, and either $[T,L_{F},V]=[2,3,1,2]$, $b=3$ or $[T,L_{F},V]=[(2)_{b-1},1]$, $b\in \{3,4,5,6\}$.
		\end{enumerate}
	\end{claim*}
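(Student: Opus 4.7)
The plan is to reduce everything to Lemma \ref{lem:degenerate_fibers} applied to the degenerate fiber $F$, and then to use the quantitative constraint that the contraction $\sigma$ realizing the equality $\sigma_{*}F\redd=[T_{2},1,T_{2}^{*}]$ must fully collapse the $T$-branch of $B$.

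First, to show that $V$ is a chain, observe that all three twigs of the fork $D_{H}$ emanate from $B$ and contribute a component to $F\redd$ (namely $T$, $(T_{2}^{*})\trp$ directly and $T_{2}$ from $T_{H}$), so $\beta_{F\redd}(B)\geq 3$. On the other hand, $L_F$ is the unique $(-1)$-curve and is non-branching in $F\redd$ by Lemma \ref{lem:degenerate_fibers}\ref{item:unique_-1-curve}. Hence $F$ is not columnar, and we fall into case \ref{item:not_columnar} of Lemma \ref{lem:degenerate_fibers}\ref{item:adjoint_chain}: writing $C$ for the connected component of $F\redd-L_F$ containing both multiplicity-$1$ components of $F$, the divisor $F\redd-L_F-C$ is empty or a chain. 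By the columnar structure of $\sigma_{*}F\redd$, the multiplicity-$1$ components are tips of $T_{2}$ and of $(T_{2}^{*})\trp$, so they lie in $D_{H}\cap F$. Since $D_{H}\cap F$ is connected through $B$, it is contained in $C$. But $V$ is disjoint from $D_{H}$ as a divisor, so $V$ sits in a different component of $F\redd-L_F$, namely $F\redd-L_F-C$; hence $V$ is empty or a chain. Combining with Lemma \ref{lem:ht=1_basics}\ref{item:ht=1_chains} gives that $V$ is an admissible chain (possibly empty), and if $V\neq 0$ then $L_F$ meets $V$ at a tip, which we orient as $\ltip{V}$.

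Next, to derive the explicit types, note that the image of $B$ under $\sigma$ must be the central $(-1)$-curve of $[T_{2},1,T_{2}^{*}]$, so $\sigma$ raises $B^{2}$ from $-b$ to $-1$, i.e.\ by $b-1$. Moreover, $\sigma$ contracts the entire chain $V+L_F+T$ attached to $B$ at $\ltip{T}$. Applying the principle from Section \ref{sec:log_surfaces} that a chain $[A,1,A^{*}]*[(2)_{k}]$ attached to a curve at the first tip of $A$ contracts to a point raising the self-intersection of that curve by $k+2$, I would identify $A=T\trp$ (so $A^{*}=(T^{*})\trp$) and $k=b-3$, which pins down the type of $V+L_F+T$ read from $B$.

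In Case \ref{item:ht=1_L-hits-in-tip}, the chain $[V,L_F,T]$ reads naturally from $\ftip{V}$ to $\ltip{T}$, and reading it toward $B$ gives $[T\trp,1,V\trp]=[T\trp,1,(T^{*})\trp]*[(2)_{b-3}]$, which after transposing yields the claimed identity $[V,L_F,T]=[(2)_{b-2}]*[T^{*},1,T]$. The bound $d(T)\leq 6$ then follows from Lemma \ref{lem:admissible_forks}\ref{item:lt-fork},\ref{item:lc-fork} since $T$ is a twig of the admissible or log canonical fork $D_H$.

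In Case \ref{item:ht=1_L-hits-near-B}, $L_F$ meets $T$ away from $\ftip{T}$. The key intermediate step is to exclude the possibility that $L_F$ meets an interior component of $T$: if that were the case, contracting $L_F$ would split the $T$-branch into two sub-chains both of which would need to further contract, forcing a second $(-1)$-curve to appear in $F$ after the first contraction and making the contraction of $F$ to $[T_{2},1,T_{2}^{*}]$ combinatorially incompatible with the data. Hence $L_F$ meets $\ltip{T}$, and in particular $\#T\geq 2$ (since otherwise we would be in Case \ref{item:ht=1_L-hits-in-tip}). A short enumeration of the chains $[T,L_F,V]$ with $V$ admissible (in particular all weights $\leq -2$) whose contraction raises $B^{2}$ by $b-1$ leaves exactly the two families $[T,L_F,V]=[2,3,1,2]$ with $b=3$ and $[T,L_F,V]=[(2)_{b-1},1]$ with $b\in\{3,4,5,6\}$; the upper bound $b\leq 6$ again comes from Lemma \ref{lem:admissible_forks} applied to the fork $D_H$.

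The hardest part will be Case \ref{item:ht=1_L-hits-near-B}: ruling out interior meetings of $L_F$ with $T$ and then identifying exactly the two admissible shapes requires combining the contraction bookkeeping with the admissibility of $V$ as a connected component of $D\lt$, and crucially using that $L_F$ is the only $(-1)$-curve in $F$ so no parallel contraction can occur.
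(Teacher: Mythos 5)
Your strategy is the same as the paper's: everything reduces to the facts that $T+L_F+V$ is exactly the part of $F\redd$ contracted by $\sigma$, that it contracts to a smooth point, and that this contraction must raise $B^2$ from $-b$ to $-1$; your derivation that $V$ is a chain via Lemma \ref{lem:degenerate_fibers}\ref{item:not_columnar} is correct (the paper instead invokes Lemma \ref{lem:ht=1_basics}\ref{item:ht=1_chains}). The first genuine problem is in case \ref{item:ht=1_L-hits-in-tip}, where your bookkeeping is internally inconsistent: from the ``increase by $k+2$'' rule you get $k=b-3$ and hence the chain $[T\trp,1,(T^{*})\trp]*[(2)_{b-3}]$ read from $B$, and you then assert that transposing yields $[(2)_{b-2}]*[T^{*},1,T]$. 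Transposition preserves the index, so what you have actually derived is $[(2)_{b-3}]*[T^{*},1,T]$, which is off by one from the statement. The statement's index is the correct one: for $T=[2]$, $b=3$ the chain attached to $B$ must be $[2,1,3]$ (so $V=[3]$), since $[2,1,2]$ contracts to a $0$-curve rather than to a smooth point and raises $B^2$ only by $1$. You cannot apply the ``$k+2$'' increment as you have set things up; a direct count of the blow-downs meeting the successive images of $B$ gives increment $b-1$ precisely for $[(2)_{b-2}]*[T^{*},1,T]$, which is what the paper's computation ($-1=-b+k+1$ with $V=T^{*}*[(2)_{k}]$, $k=b-2$) produces.

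The second problem is in case \ref{item:ht=1_L-hits-near-B}: the key step ``$L_F$ meets $\ltip{T}$'' is asserted, not proved. The reason you offer --- that contracting $L_F$ would force a second $(-1)$-curve to appear --- is no contradiction at all, since new $(-1)$-curves appear at every stage of $\sigma$ by construction. The actual obstruction is structural: if $L_F$ met an interior component $T\cp{i}$, then $T\cp{i}$ would be branching in $T+L_F+V$, and contracting $L_F$ and then $T\cp{i}$ either makes its remaining neighbours pairwise intersect (destroying the snc-tree structure that every intermediate stage of a contraction to a smooth point must have) or leaves the process stuck at a $0$-curve, as happens for $T=[(2)_{k}]$; either way $T+L_F+V$ cannot contract to a smooth point. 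The same structural constraint, and not only the numerical condition on $B^2$, is what eliminates $T=[3,2]$ and the configurations $T=[(2)_{k}]$ with $V\neq 0$ from your ``short enumeration'', which you do not carry out. As written, case \ref{item:ht=1_L-hits-near-B} is a plan rather than a proof.
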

	\begin{proof}
		By Lemma \ref{lem:ht=1_basics}\ref{item:ht=1_chains}, $V$ is a chain. Since $\#T_{H}\geq \#(H+T_{2})\geq 2$, Lemma \ref{lem:admissible_forks} gives $d(T)\leq 6$. 
		
		\ref{item:ht=1_L-hits-in-tip} Assume that $L_{F}$ meets $\ftip{T}$. Then the chain $[T,L_{F},V]$ contracts to a smooth point, so $V=T^{*}*[(2)_{k}]$ for some $k\geq -1$. After this contraction, $B$ becomes a $(-1)$-curve, so $-1=-b+k+1$, i.e.\ $k=b-2$, as claimed.
		
		\ref{item:ht=1_L-hits-near-B} Assume that $L_{F}$ does not meet $\ftip{T}$. Since $L_{F}$ meets $T$, we have $\#T\geq 2$. Since $d(T)\leq 6$, we have $T=[2,3], [3,2]$, or $[(2)_{k}]$ for some $k\in \{2,3,4,5\}$, see Lemma \ref{lem:admissible_forks}\ref{item:d=3}. Because $L_{F}+T+V$ contracts to a smooth point, we see that $L_{F}$ meets the last tip of $T$ and either $T=[2,3]$, $V=[2]$ or $T=[(2)_{k}]$, $V=0$. After this contraction $B$ becomes a $(-1)$-curve, so $b=3$ or $k+1$; respectively.
	\end{proof}
	
	\begin{casesp}
	\litem{$T_{1}=0$} Then $F$ is the unique degenerate fiber. 
	Assume $T_{2}\neq [2]$, so $d(T_H)>d(T_2)\geq 3$. Since $d(T_{2}^{*})=d(T_2)\geq 3$ and  $d(T)^{-1}+d(T_2^{*})^{-1}+d(T_{H})^{-1}\geq 1$, we get $T=[2]$ and $d(T_{2}^{*})=3$, so $T_{2}^{*}=[2,2]$ or $[3]$. Claim~\ref{item:ht=1_L-hits-in-tip} implies that $D$ is as in \ref{item:T2*=[2,2]_b>2}. 
	
	Next, assume that $T_{2}=[2]$, $T=[2]$. Then Claim~\ref{item:ht=1_L-hits-in-tip} implies that $V=[(2)_{b-3},3]$, so $D$ is as in \ref{item:T1=0,T=T2=[2]_b>2}. 
	
	Eventually, assume $T_2=[2]$, $T\neq [2]$. Since  $T_{H}=[\bs{m},2]$, Lemma \ref{lem:admissible_forks} gives  $m\in \{2,3\}$. 
	
	Consider the case $m=3$. Then $d(T)=3$, so $T=[3]$ or $[2,2]$. If $T=[3]$ then by Claim~\ref{item:ht=1_L-hits-in-tip} we have $V=[(2)_{b-3},3,2]$, so  $D$ is as in \ref{item:TH=[3,2]_T=[3]_b>2}. If $T=[2,2]$ then by  Claim~\ref{item:ht=1_L-hits-in-tip} and \ref{item:ht=1_L-hits-near-B} $D$ is as in \ref{item:TH=[3,2]_T=[3]_b>2} if $L_{F}$ meets $\ftip{T}$ and as in \ref{item:TH=[3,2]_tip} if $L_{F}$ meets $\ltip{T}$. 
	
	Consider the case $m=2$. Then $d(T)\leq 6$. If $L_{F}$ meets $\ftip{T}$, by Claim~\ref{item:ht=1_L-hits-in-tip}, $D$ is as in \ref{item:TH=[2,2]_b>2}. If $L_{F}$ meets $\ltip{T}$, Claim~\ref{item:ht=1_L-hits-near-B} implies that either $T=[(2)_{b-1}]$, $b\leq 6$, and $D$ is as in \ref{item:TH=[2,2]_tip}; or $T=[2,3]$ and $D$ is as in \ref{item:TH=[2,2]_tip_[3,2]}.
	
	\litem{$T_{1}\neq 0$} Let $F'\neq F$ be the degenerate fiber containing $T_1$. The $1$-section $H$ meets $\ltip{T_{1}}$, which therefore has multiplicity one in $F'$. Hence by Lemma \ref{lem:degenerate_fibers} either $F'\redd=[T_{1}^{*},1,T_{1}]$, i.e.\ $F'$ is columnar, or $F'\redd=\langle r;[(2)_{r-2},1],U\trp,U^{*}\rangle$, where $T_{1}=[U^{*},r,U]$ for some $r\geq 2$.
	
	If $T_{2}^{*}=[2]$, $T=[2]$ then by Claim~\ref{item:ht=1_L-hits-in-tip} $D$ is as in \ref{item:TH_long_columnar_b>2} if $F'$ is columnar and as in \ref{item:TH_long_other_b>2} if it is not. 
	Assume $T_{2}^{*}\neq [2]$ or $T\neq [2]$. Since $d(T_H)^{-1}+d(T_{2}^{*})+d(T)\geq 1$, we have $d(T_{H})\leq 6$. Since $\#T_{H}=\#T_{1}+1+\#T_{2}\geq 3$ we get $T_{H}=[(2)_{k}]$ for some $k\in \{3,4,5\}$. Moreover, since $d(T_{H})\geq 4$, we have $\{d(T_2),d(T)\}=\{2,3\}$.
	
	Consider the case when $F'$ is not columnar. Then $k=5$, $T_{2}=[2]$, so since by assumption $T\neq [2]$, we get $d(T)=3$. If $T=[3]$ then by Claim~\ref{item:ht=1_L-hits-in-tip} $D$ is as in \ref{item:F'_fork_T=[3]_b>2}. If $T=[2,2]$ then since $D_{H}$ cannot consist of $(-2)$-curves, we have $b\geq 3$, so the Claim shows that $L_{F}$ meets $\ltip{T}$ and $D$ is as in \ref{item:F'_fork_T=[2,2]}.

	Consider now the case when $F'$ is columnar, so $F'=[r,1,(2)_{r}]$, where $r=\#T_{1}=k-1-\#T_{2}$. Recall that $d(T_{2})\leq 3$, so $T_2=[2,2]$ or $[2]$. If $T_{2}=[2,2]$ then $T=[2]$ and Claim~\ref{item:ht=1_L-hits-in-tip} implies that $D$ is as in \ref{item:T2=[2,2]_b>2}. Assume $T_{2}=[2]$. Then $d(T)=3$, so $T=[3]$ or $[2,2]$. If $L_{F}$ meets $\ftip{T}$ then by Claim~\ref{item:ht=1_L-hits-in-tip} $D$ is as in \ref{item:T2=[2]_b>2}. Otherwise,  $T=[2,2]$, so by Claim~\ref{item:ht=1_L-hits-near-B} $b=3$ and $D$ is as in \ref{item:T2=[2],T=[2,2],tip}.
	\end{casesp}
	\litem{$D_{H}$ is a bench} Assume first that $D_{H}-H$ has no branching components. Then $D_{H}=\lbr \bs{m}\rbr$, and $H$ meets four disjoint $(-2)$-curves. Lemmas \ref{lem:ht=1_basics}\ref{item:ht=1_Sigma} and \ref{lem:degenerate_fibers}\ref{item:adjoint_chain} imply that each of those $(-2)$-curves lies in a fiber $[2,1,2]$, and $D$ is as in \ref{item:ht=1_bench}. 
	
	Let now $V$ be a connected component of $D_{H}-H$ with a branching component. By Lemma \ref{lem:degenerate_fibers}\ref{item:not_columnar}, $V$ meets $H$ in a tip. Hence $V$ is a fork of type $\langle b,[2],[2],T_{V}\rangle$ and meets $H$ in $\ftip{T_{V}}$. Let $F$ be a fiber containing $V$. Contract $L_{F}$ and subsequent $(-1)$-curves in $F$ until the image of $V$ contains a $(-1)$-curve. Denote by $H'$, $V'$, $F'$ the images of $H$, $V$ and $F$, respectively, and let $L'$ be the $(-1)$-curve in $V'$.
	
	Suppose that $L'$ is not a tip of $V'$. Since $L'$ is non-branching in $F'\redd$, we get $F'\redd=V'$. Now $V'-L'$ has two connected components: one of them is not a chain, and the other meets the $1$-section $H'$, so contains a component of multiplicity one. This is a contradiction with Lemma \ref{lem:degenerate_fibers}\ref{item:not_columnar}.
	
	Thus $L'$ is a tip of $V'$. Since $L'$ is the unique $(-1)$-curve in $F'$, it has multiplicity at least two in $F'$, so it does not meet $H'$. It follows that $L'$ is the image of one of the short $(-2)$-twigs of $V$. Thus $F\redd=\langle b,[U,1,2],[2],T_{V}\rangle$, where $U$ is a chain such that $[U,1,2]$ contracts to a smooth point, so $U=[(2)_{k-1},3]$ for some $k\geq 0$. After the contraction of $[U,1,2]$, the image of $F\redd$ is a chain $[T_{V},b-k-1,2]$, so $T_{V}=[2]^{*}=[2]$ and $b-k-1=1$, i.e.\ $k=b-2$. We conclude that if $D_{H}-H$ has two branching components then $D$ is as in \ref{item:ht=1_bench_two_a,b>2}. Otherwise, $H$ meets one fiber of the above type, and two twigs of type $[2]$. Like before, using Lemmas \ref{lem:ht=1_basics}\ref{item:ht=1_Sigma} and \ref{lem:degenerate_fibers}\ref{item:adjoint_chain} we conclude that those twigs lie in fibers $[2,1,2]$, so $D$ is as in \ref{item:ht=1_bench-big_b>2}.
	\qedhere\end{casesp}
\end{proof}

\begin{remark}[Canonical del Pezzo surfaces of rank one and height $1$, see Figure \ref{fig:ht=1}]\label{rem:canonical_ht=1}
	For future reference we point out all canonical types on the list in Lemma \ref{lem:ht=1_types}. Since for a canonical type all components have self intersection number $-2$, to get one we need to fix the value of all parameters as $2$; and types of all admissible chains as $[2,\dots,2]$. Note that if $T=[2,\dots,2]$ then the same holds for $T^{*}$ if and only if $T=[2]$. As a consequence, the canonical types in Lemma \ref{lem:ht=1_types} are precisely those in Figure \ref{fig:ht=1}. The vertically primitive ones are those of types $\rA_1$, $\rA_1+\rA_2$, $2\rA_1+\rA_3$ and $3\rA_1+\rD_4$, see Figures \ref{fig:A1}, \ref{fig:A1+A2}, \ref{fig:2A1+A3} and \ref{fig:3A1+D4}. Their minimal log resolutions are constructed in Example \ref{ex:ht=1} for $g=0$, $e=2$ and $\nu\leq 3$; so they are primitive by Remark \ref{rem:ht=1-primitive}.
\end{remark}	
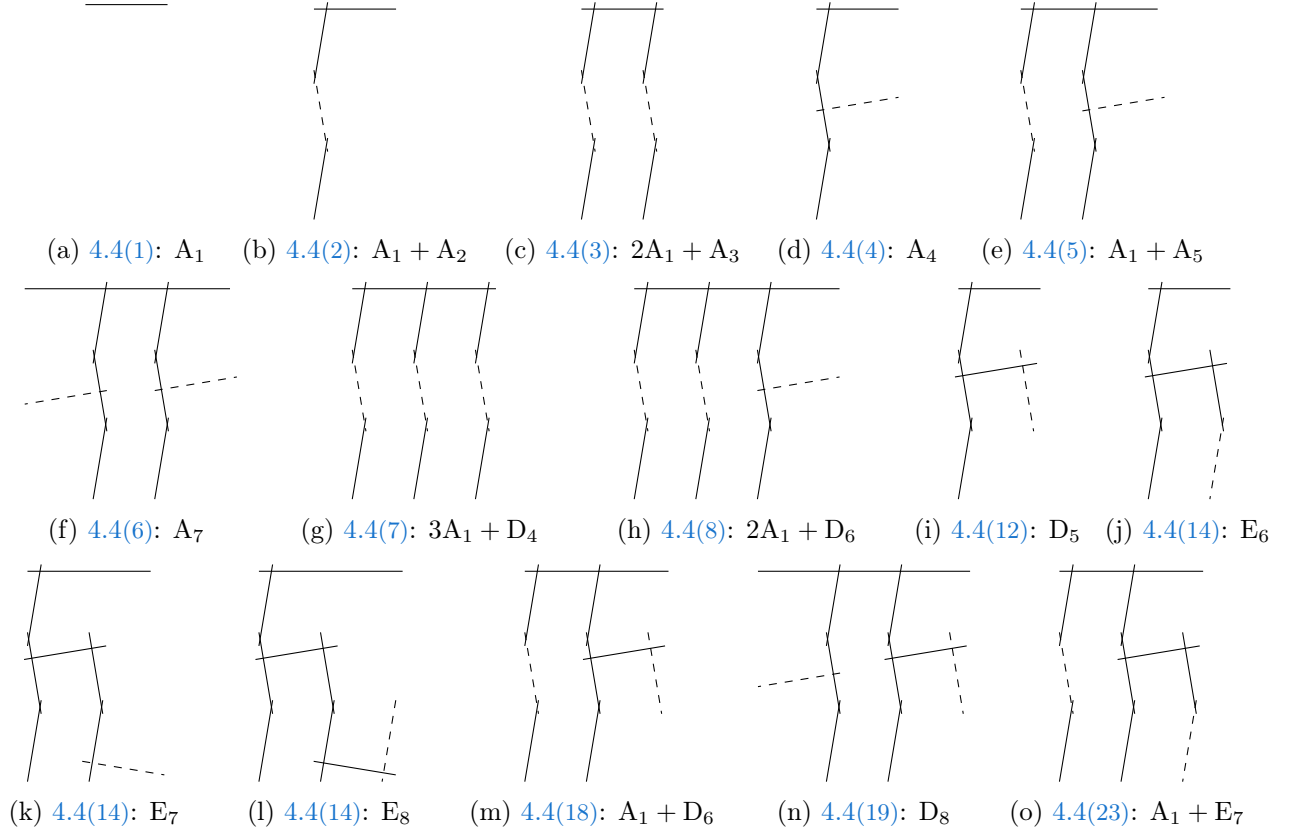
\begin{figure}[htbp]
	\subcaptionbox{\ref{lem:ht=1_types}\ref{item:chains_columnar_T1=0,T2=0}: $\rA_1$\label{fig:A1}}
	[.14\linewidth]
	{\raisebox{2.85cm}{
			\begin{tikzpicture}[scale=0.9]
				\draw (0,3) -- (1.2,3);
			\end{tikzpicture}
		}
	}
	\subcaptionbox{\ref{lem:ht=1_types}\ref{item:chains_columnar_T2=0}: $\rA_1+\rA_2$\label{fig:A1+A2}}
	[.2\linewidth]
	{
		\begin{tikzpicture}[scale=0.9]
			\draw (0,3) -- (1.2,3);
			\draw (0.2,3.1) -- (0,1.9);
			\draw[dashed] (0,2.1) -- (0.2,0.9);
			\draw (0.2,1.1) -- (0,-0.1);
		\end{tikzpicture}
	}
	\subcaptionbox{\ref{lem:ht=1_types}\ref{item:chains_columnar}: $2\rA_1+\rA_3$\label{fig:2A1+A3}}
	[.2\linewidth]{
		\begin{tikzpicture}[scale=0.9]
			\draw (0,3) -- (1.2,3);
			\draw (0.2,3.1) -- (0,1.9);
			\draw[dashed] (0,2.1) -- (0.2,0.9);
			\draw (0.2,1.1) -- (0,-0.1);
			\draw (1.1,3.1) -- (0.9,1.9);
			\draw[dashed] (0.9,2.1) -- (1.1,0.9);
			\draw (1.1,1.1) -- (0.9,-0.1);		
		\end{tikzpicture}	
	}
	\subcaptionbox{\ref{lem:ht=1_types}\ref{item:chains_both_T1=0}: $\rA_4$}
	[.15\linewidth]
	{
		\begin{tikzpicture}[scale=0.9]
			\draw (0,3) -- (1.2,3);
			\draw (0.2,3.1) -- (0,1.9);
			\draw (0,2.1) -- (0.2,0.9);
			\draw (0.2,1.1) -- (0,-0.1);
			\draw[dashed] (0,1.5) -- (1.2,1.7);
		\end{tikzpicture}
	}	
	\subcaptionbox{\ref{lem:ht=1_types}\ref{item:chains_both}: $\rA_1+\rA_5$}
	[.2\linewidth]{
		\begin{tikzpicture}[scale=0.9]
			\draw (0,3) -- (2,3);
			\draw (0.2,3.1) -- (0,1.9);
			\draw[dashed] (0,2.1) -- (0.2,0.9);
			\draw (0.2,1.1) -- (0,-0.1);
			\draw (1.1,3.1) -- (0.9,1.9);
			\draw (0.9,2.1) -- (1.1,0.9);
			\draw (1.1,1.1) -- (0.9,-0.1);
			\draw[dashed] (0.9,1.5) -- (2.1,1.7);		
		\end{tikzpicture}	
	}
%\end{figure}
%\begin{figure}[htbp]\ContinuedFloat
\medskip

	\subcaptionbox{\ref{lem:ht=1_types}\ref{item:chains_not-columnar}: $\rA_7$ \label{fig:A7}}
	[.22\linewidth]{
		\begin{tikzpicture}[scale=0.9]
			\draw (-1,3) -- (2,3);
			\draw (0.2,3.1) -- (0,1.9);
			\draw (0,2.1) -- (0.2,0.9);
			\draw (0.2,1.1) -- (0,-0.1);
			\draw (1.1,3.1) -- (0.9,1.9);
			\draw (0.9,2.1) -- (1.1,0.9);
			\draw (1.1,1.1) -- (0.9,-0.1);
			\draw[dashed] (0.9,1.5) -- (2.1,1.7);	
			\draw[dashed] (0.2,1.5) -- (-1,1.3);	
		\end{tikzpicture}	
	}
	\subcaptionbox{\ref{lem:ht=1_types}\ref{item:beta=3_columnar}: $3\rA_1+\rD_4$ \label{fig:3A1+D4}}
	[.22\linewidth]{
		\begin{tikzpicture}[scale=0.9]
			\draw (0,3) -- (2.1,3);
			\draw (0.2,3.1) -- (0,1.9);
			\draw[dashed] (0,2.1) -- (0.2,0.9);
			\draw (0.2,1.1) -- (0,-0.1);
			\draw (1.1,3.1) -- (0.9,1.9);
			\draw[dashed] (0.9,2.1) -- (1.1,0.9);
			\draw (1.1,1.1) -- (0.9,-0.1);
			\draw (2,3.1) -- (1.8,1.9);
			\draw[dashed] (1.8,2.1) -- (2,0.9);
			\draw (2,1.1) -- (1.8,-0.1);		
		\end{tikzpicture}	
	}
	\subcaptionbox{\ref{lem:ht=1_types}\ref{item:beta=3_other}: $2\rA_1+\rD_6$\label{fig:2A1+D6}}
	[.25\linewidth]{
		\begin{tikzpicture}[scale=0.9]
			\draw (0,3) -- (3,3);
			\draw (0.2,3.1) -- (0,1.9);
			\draw[dashed] (0,2.1) -- (0.2,0.9);
			\draw (0.2,1.1) -- (0,-0.1);
			\draw (1.1,3.1) -- (0.9,1.9);
			\draw[dashed] (0.9,2.1) -- (1.1,0.9);
			\draw (1.1,1.1) -- (0.9,-0.1);
			\draw (2,3.1) -- (1.8,1.9);
			\draw (1.8,2.1) -- (2,0.9);
			\draw (2,1.1) -- (1.8,-0.1);
			\draw[dashed] (1.8,1.5) -- (3,1.7);		
		\end{tikzpicture}	
	}
	\subcaptionbox{\ref{lem:ht=1_types}\ref{item:T1=0,T=T2=[2]_b>2}: $\rD_5$}
	[.14\linewidth]
	{
		\begin{tikzpicture}[scale=0.9]
			\draw (0,3) -- (1.2,3);
			\draw (0.2,3.1) -- (0,1.9);
			\draw (0,2.1) -- (0.2,0.9);
			\draw (0.2,1.1) -- (0,-0.1);
			\draw (-0.05,1.7) -- (1.15,1.9);
			\draw[dashed] (0.9,2.1) -- (1.1,0.9);
		\end{tikzpicture}
	}
	\subcaptionbox{\ref{lem:ht=1_types}\ref{item:TH=[2,2]_b>2}: $\rE_6$\label{fig:E6}}
	[.14\linewidth]
	{
		\begin{tikzpicture}[scale=0.9]
			\draw (0,3) -- (1.2,3);
			\draw (0.2,3.1) -- (0,1.9);
			\draw (0,2.1) -- (0.2,0.9);
			\draw (0.2,1.1) -- (0,-0.1);
			\draw (-0.05,1.7) -- (1.15,1.9);
			\draw (0.9,2.1) -- (1.1,0.9);
			\draw[dashed] (1.1,1.1) -- (0.9,-0.1);
		\end{tikzpicture}
	}
%\end{figure}
%\begin{figure}[htbp]\ContinuedFloat
\medskip

	\subcaptionbox{\ref{lem:ht=1_types}\ref{item:TH=[2,2]_b>2}: $\rE_7$}
	[.15\linewidth]
	{
		\begin{tikzpicture}[scale=0.9]
			\draw (0,3) -- (1.8,3);
			\draw (0.2,3.1) -- (0,1.9);
			\draw (0,2.1) -- (0.2,0.9);
			\draw (0.2,1.1) -- (0,-0.1);
			\draw (-0.05,1.7) -- (1.15,1.9);
			\draw (0.9,2.1) -- (1.1,0.9);
			\draw (1.1,1.1) -- (0.9,-0.1);
			\draw[dashed] (0.8,0.2) -- (2,0);
		\end{tikzpicture}
	}	
	\subcaptionbox{\ref{lem:ht=1_types}\ref{item:TH=[2,2]_b>2}: $\rE_8$\label{fig:E8}}
	[.2\linewidth]
	{
		\begin{tikzpicture}[scale=0.9]
			\draw (0,3) -- (2.1,3);
			\draw (0.2,3.1) -- (0,1.9);
			\draw (0,2.1) -- (0.2,0.9);
			\draw (0.2,1.1) -- (0,-0.1);
			\draw (-0.05,1.7) -- (1.15,1.9);
			\draw (0.9,2.1) -- (1.1,0.9);
			\draw (1.1,1.1) -- (0.9,-0.1);
			\draw (0.8,0.2) -- (2,0);
			\draw[dashed] (2,1.1) -- (1.8,-0.1);
		\end{tikzpicture}
	}
	\subcaptionbox{\ref{lem:ht=1_types}\ref{item:TH_long_columnar_b>2}: $\rA_1+\rD_6$ \label{fig:A1+D6}}
	[.2\linewidth]
	{
		\begin{tikzpicture}[scale=0.9]
			\draw (0,3) -- (2.1,3);
			\draw (0.2,3.1) -- (0,1.9);
			\draw[dashed] (0,2.1) -- (0.2,0.9);
			\draw (0.2,1.1) -- (0,-0.1);
			\draw (1.1,3.1) -- (0.9,1.9);
			\draw (0.9,2.1) -- (1.1,0.9);
			\draw (1.1,1.1) -- (0.9,-0.1);
			\draw (0.85,1.7) -- (2.05,1.9);
			\draw[dashed] (1.8,2.1) -- (2,0.9);
		\end{tikzpicture}
	}	
	\subcaptionbox{\ref{lem:ht=1_types}\ref{item:TH_long_other_b>2}: $\rD_8$}
	[.2\linewidth]
	{
		\begin{tikzpicture}[scale=0.9]
			\draw (-1,3) -- (2.1,3);
			\draw (0.2,3.1) -- (0,1.9);
			\draw (0,2.1) -- (0.2,0.9);
			\draw[dashed] (0.2,1.5) -- (-1,1.3);
			\draw (0.2,1.1) -- (0,-0.1);
			\draw (1.1,3.1) -- (0.9,1.9);
			\draw (0.9,2.1) -- (1.1,0.9);
			\draw (1.1,1.1) -- (0.9,-0.1);
			\draw (0.85,1.7) -- (2.05,1.9);
			\draw[dashed] (1.8,2.1) -- (2,0.9);
		\end{tikzpicture}
	}
	\subcaptionbox{\ref{lem:ht=1_types}\ref{item:T2=[2]_b>2}: $\rA_1+\rE_7$ \label{fig:A1+E7}}
	[.2\linewidth]
	{
		\begin{tikzpicture}[scale=0.9]
			\draw (0,3) -- (2.1,3);
			\draw (0.2,3.1) -- (0,1.9);
			\draw[dashed] (0,2.1) -- (0.2,0.9);
			\draw (0.2,1.1) -- (0,-0.1);
			\draw (1.1,3.1) -- (0.9,1.9);
			\draw (0.9,2.1) -- (1.1,0.9);
			\draw (1.1,1.1) -- (0.9,-0.1);
			\draw (0.85,1.7) -- (2.05,1.9);
			\draw (1.8,2.1) -- (2,0.9);
			\draw[dashed] (2,1.1) -- (1.8,-0.1);
		\end{tikzpicture}
	}	
	\vspace{-0.5em}
	\caption{Canonical del Pezzo surfaces of rank one and height one, see Remark \ref{rem:canonical_ht=1}.}
	\label{fig:ht=1}
\end{figure} 

\subsection{Computation of moduli}\label{sec:ht=1_uniqueness}

Lemma \ref{lem:ht=1_types} allows to construct each rational log canonical del Pezzo surface of rank one and height one (o rather its  minimal log resolution) by a specific sequence of blowups over some Hirzebruch surface. In this section we analyze to which extent this process is unique, thus proving Proposition \ref{prop:moduli} for surfaces of height one, see Corollary \ref{cor:moduli-ht=1}. Once this is done, the proof of case $\height(\bar{X})=1$ of Theorem \ref{thm:ht=1,2} will be complete. 

We begin with a brief sketch of the proof. Our goal is to put all log surfaces $(X,D)$ from Lemma \ref{lem:ht=1_types} with a fixed graph of $D$ in a smooth family, and to count the number of its non-isomorphic fibers. 
 
 Let $\check{D}$ be the sum of $D$ and $(-1)$-curves provided by Lemma \ref{lem:ht=1_types}. Lemma \ref{lem:adding-1} allows to replace $D$ by $\check{D}$ and to study the set $\cP_{+}(\check{\cS})$, where $\check{\cS}$ is the combinatorial type of $(X,\check{D})$, see Section \ref{sec:moduli}. 
 
 Now, we simplify $\check{\cS}$ by inner blowdowns. More precisely, let $(X,\check{D})\to (Z,D_Z)$ be a vertical inner snc-minimalization, i.e. the contraction of all $(-1)$-curves in $\check{D}\vert$ and its images which are not tips of the images of $\check{D}$. Let $\cZ$ be the combinatorial type of $(Z,D_Z)$: it is as  in Lemma~\ref{lem:ht=1_reduction}. The center of each blowdown is uniquely determined by the combinatorial type $\check{\cS}$, so it is enough to study $\cP_{+}(\cZ)$, cf.\ Lemma \ref{lem:inner}. 
 
 To further simplify $(Z,D_Z)$, we need to perform an outer blowdown $(Z,D_Z)\to (Y,D_Y)$, centered somewhere on the curve $G^{\circ}\de  G\setminus (D_Y-G)$, where $G$ is some component of $D_Y$. At this point we need to understand the action of $\Aut(Y,D_Y)$ on $G^{\circ}$, see Lemma \ref{lem:outer}. This is done case by case in Lemma \ref{lem:ht=1_uniqueness}.
\smallskip

We note that the proof of Propositions \ref{prop:moduli} in case $\height(\bar{X})=2$ is based on the same strategy, but requires substantially less computations, as surfaces of bigger height turn out to be more rigid. 
\smallskip

We now proceed with the proof, following the strategy outlined above. First, we list possible combinatorial types $\cZ$, together with some additional ones which will be useful either for induction or in the next section.

\begin{lemma}[Vertical inner snc-minimalization, see Table \ref{table:ht=1_exceptions} and Figure \ref{fig:basic_ht=1}]\label{lem:ht=1_reduction}
	Let $\bar{X}$ be a rational log canonical del Pezzo surface of rank one. Let $(X,D)$ be its minimal log resolution, equipped with a $\P^{1}$-fibration as in Lemma \ref{lem:ht=1_types}. Let $\check{D}$ be the sum of $D$ and all vertical $(-1)$-curves. Let $(X,\check{D})\to (Z,D_Z)$ be a vertical inner snc-minimalization, i.e.\ a contraction of a maximal number of vertical $(-1)$-curves which are not tips of the subsequent images of $\check{D}$. Then the weighted graph of $D_Z$ is as in one of the cases \ref{item:uniq_easy}--\ref{item:uniq-bench} described below and shown in Figure \ref{fig:basic_ht=1}, with additional restrictions listed in Table \ref{table:ht=1_exceptions}. 

In each of these cases, we have $D_Z=V_Z+\sum_{j=1}^{v}F_{j}+\sum_{j=1}^{h}H_j$, where, unless stated otherwise: 
\begin{itemize}
	\item $V_Z$ is the support of all degenerate fibers,
	\item  $F_j$ for each $j\in \{1,\dots, v\}$ is a nondegenerate fiber,
	\item $h\in \{1,2\}$, and for each $j\in \{1,\dots, h\}$, $H_j$ is a $1$-section,
	\item if $h=2$ then the $1$-sections $H_1$, $H_2$ are disjoint and meet distinct components of $V_{Z}$, 
	\item $H_{1}=[m]$ for some $m\geq 0$; in fact $m=\tilde{m}+v$, where $\tilde{m}$ is the self-intersection number of $D\hor$.
\end{itemize}
and, depending on the type of $(X,\check{D})$ as specified in Table \ref{table:ht=1_exceptions},  one of the following holds, see Figure  \ref{fig:basic_ht=1}:
	\begin{longlist}
		\item\label{item:uniq_easy} $V_Z=0$; and $v\leq 4$
		\item\label{item:uniq_n=3} $V_Z=\langle 2;[1],T^*,T\trp\rangle$ for some admissible $T$, or $V_Z=[1,1]$ where each $H_j$ meets $V_Z\cp{1}$; and $v\leq 2$
		\item\label{item:uniq_n=2} $V_Z$ is a sum of two fibers as in \ref{item:uniq_n=3}; and 
		$v=0$, $m\geq 2$
		\item\label{item:uniq_dumb} $V_Z=[1]+\langle k;T^{*},T\trp,[(2)_{k-2},\underline{2}]\rangle$ for some $k\geq 3$ and admissible $T$, where $[1]$ meets the underlined curve; or $V_Z=\langle 2;[k],[1],[(2)_{k-2}]\rangle$ for some $k\geq 3$, where each $H_j$ meets the $(-k)$-curve; and $v\leq 1$,
		\item\label{item:uniq_n=2_v=1} $V_{Z}=\langle 2;[2],[2],[1]\rangle+\langle 2;[2],[2],[1]\rangle$; and $h=v=m=1$, 
		\item\label{item:uniq_fork-lc} $V_{Z}=\langle 2;[2],[2],[1]\rangle + \langle 3;[2,\uline{2}],[2],[2]\rangle+[1]$, where $[1]$ meets the underlined curve; and $h=1$, $v=0$, $m=2$,
		\item\label{item:uniq_A1E7} $V_Z=\langle 2;[2],[1,2,2],[2]\rangle$ and $v=h=m=1$,
		\item \label{item:uniq_2} $V_Z=\langle  2;T^{*},T\trp,[1,2]\rangle$ for some admissible $T$, or $V_Z=[1,2,1]$ where each $H_j$ meets $V_Z\cp{1}$; and $v=1$, $h=2$,
		\item \label{item:uniq_-2-chain} $V_Z=\langle  2;[(2)_{d-1}],[d],[1,2]\rangle$ for some $d\geq 2$, where $H_1$ meets a $(-2)$-tip of $V_Z$, and $v\leq 2$, $h=1$, $m\geq 1$.
		\item\label{item:uniq_fork} $D_Z=\langle  2;[\bs{m},(2)_{d-1}],[d],[1,2]\rangle+\langle 2;T^{*},T\trp,[1]\rangle$ for some $d\geq 2$ and admissible $T$,
		\item\label{item:uniq_3} $D_Z=\langle 2;[\bs{2},3],[2,2],[1,2]\rangle$,
		\item\label{item:uniq_5} $D_Z=\langle 2;[\bs{3},2],[2],[1,2,2]\rangle$,		
		\item\label{item:uniq_Ek} $D_Z=\langle 2;[\bs{2},2],[2],[1,(2)_{k}]\rangle$, $k\in \{2,3,4\}$,
		\item\label{item:uniq-bench} $V_{Z}=\langle 2;[2],[2],[1,2]\rangle+\langle 2;[2],[2],[1,2]\rangle$, $h=1$, $v=0$, $m\geq 3$.
	\end{longlist}
	where the bold numbers in \ref{item:uniq_fork}--\ref{item:uniq_Ek} refer to the unique horizontal component $H_1$ of $D_Z$, cf.\ Notation \ref{not:fibrations}\ref{not:bold}.
\end{lemma}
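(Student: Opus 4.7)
The approach is a case-by-case analysis of the $23$ singularity types classified in Lemma~\ref{lem:ht=1_types}. For each type, I would explicitly carry out the vertical inner snc-minimalization $(X, \check D) \to (Z, D_Z)$ and verify that the result falls into one of the cases \ref{item:uniq_easy}--\ref{item:uniq-bench}. Since by Lemma~\ref{lem:ht=1_basics} we have $D\hor = H$ (a single $1$-section) and every degenerate fiber $F$ satisfies $F\redd - L_F = F\redd \wedge D\vert$, the snc-minimalization is supported on the degenerate fibers of the $\P^1$-fibration; in particular it preserves $H$, possibly raising its self-intersection.

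First I would classify how a single degenerate fiber $F$ reduces under successive contractions of vertical $(-1)$-curves of branching number $2$. By Lemma~\ref{lem:degenerate_fibers}\ref{item:adjoint_chain}, $F\redd$ is either columnar, of type $[T^*, 1, T]$ for some admissible $T$, or non-columnar, of type $\langle r; [(2)_{r-2}, 1], T^\top, T^*\rangle$. Depending on the position of $H$ and of the components of $D\vert$ inside $F$, the fiber either contracts completely to a nondegenerate fiber (raising $H^2$ by $1$), or its image survives as a fiber supported on one of a handful of normal forms, such as $[1, 1]$, $[1, 2, 1]$, $[1, T^*, T^\top, 1]$, $\langle 2; [1], T^*, T^\top\rangle$, or $\langle k; T^*, T^\top, [(2)_{k-2}, 2]\rangle$. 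These normal forms correspond exactly to the shapes of the connected components of $V_Z$ enumerated in \ref{item:uniq_easy}--\ref{item:uniq-bench}.

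Next I would process each of the $23$ types by grouping them according to the shape of the connected component $D_H$ of $D$ containing $H$: chain types \ref{item:chains_columnar_T1=0,T2=0}--\ref{item:chains_not-columnar} lead to cases \ref{item:uniq_easy}--\ref{item:uniq_fork}, fork types \ref{item:beta=3_columnar}--\ref{item:T2=[2],T=[2,2],tip} lead to cases \ref{item:uniq_fork-lc}--\ref{item:uniq_Ek}, and bench types \ref{item:ht=1_bench}--\ref{item:ht=1_bench_two_a,b>2} lead to case \ref{item:uniq-bench}. Within each group, the $(-1)$-curves in each degenerate fiber get contracted either partially or completely, and this dictates both the number $v$ of nondegenerate fibers appearing in $D_Z$ and the self-intersection $-m$ of $H_1$; the relation $m = \tilde m + v$ then follows by counting the contractions adjacent to $H$.

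The main obstacle is the bookkeeping required to consolidate the $23$ types into the $14$ cases of the conclusion and to recover the exceptions collected in Table~\ref{table:ht=1_exceptions}. Several different types can yield the same $D_Z$ (for instance, when certain parameters $T, T_1, T_2$ degenerate to empty chains and a degenerate fiber fully contracts to a nondegenerate one), and conversely a single type can yield different shapes of $V_Z$ depending on where $L_F$ sits inside $F$. Special care will be needed for the fork and bench types, where the admissible piece of $D_H$ survives essentially intact in $D_Z$ and must be reassembled with the modified fiber structures as in \ref{item:uniq_fork-lc} and \ref{item:uniq-bench}; as well as for the subtle distinction between cases with $h = 1$ and $h = 2$, which depends on whether or not a tip of $V_Z$ meets the image of the horizontal section.
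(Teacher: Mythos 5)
Your approach is exactly what the paper intends: its own proof of Lemma \ref{lem:ht=1_reduction} consists of the single sentence ``Straightforward from the description of $(X,\check{D})$ in Lemma \ref{lem:ht=1_types}'', i.e.\ precisely the type-by-type execution of the vertical inner snc-minimalization that you outline, using Lemmas \ref{lem:ht=1_basics} and \ref{lem:degenerate_fibers} to control how each degenerate fiber collapses. So the method is sound and matches the paper.

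Two concrete inaccuracies in your sketch would, however, derail the bookkeeping if taken at face value. First, your proposed correspondence ``chains $\to$ \ref{item:uniq_easy}--\ref{item:uniq_fork}, forks $\to$ \ref{item:uniq_fork-lc}--\ref{item:uniq_Ek}, benches $\to$ \ref{item:uniq-bench}'' contradicts Table \ref{table:ht=1_exceptions}, which is the actual content to be verified: for instance the bench type \ref{lem:ht=1_types}\ref{item:ht=1_bench} minimalizes to case \ref{item:uniq_easy} with $v=4$ (all four fibers $[2,1,2]$ contract completely onto $H$), the fork type \ref{lem:ht=1_types}\ref{item:beta=3_columnar} lands in case \ref{item:uniq_easy} with $v\leq 3$, types \ref{lem:ht=1_types}\ref{item:beta=3_other}--\ref{item:beta=3_other_2} land in case \ref{item:uniq_n=3} with $v=2$, and only the subcase $a=b=2$ of \ref{lem:ht=1_types}\ref{item:ht=1_bench_two_a,b>2} reaches case \ref{item:uniq-bench}. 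The partition is governed not by the shape of $D_H$ but by which twigs of the fibers survive as tips of the image of $\check D$, and the exceptional subcases (typically $b=2$, or $T=[2]$) are exactly what the table records. Second, a normal form like $[1,T^{*},T^{\top},1]$ with $T\neq[2]$ cannot occur as a component of $V_Z$: by Lemma \ref{lem:ht=1_basics}\ref{item:ht=1_Sigma} each degenerate fiber carries a unique $(-1)$-curve off $D$, and the only fibers of $D_Z$ with two $(-1)$-tips that actually arise are the degenerate shapes $[1,1]$ and $[1,2,1]$ of cases \ref{item:uniq_n=3} and \ref{item:uniq_2}. Neither issue is a flaw of method --- both would be corrected automatically upon carrying out the $27$ cases --- but the stated groupings should be replaced by the correspondence of Table \ref{table:ht=1_exceptions} rather than asserted a priori.
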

\begin{proof}
	Straightforward from the description of $(X,\check{D})$ in Lemma \ref{lem:ht=1_types}. 
\end{proof}

\begin{figure}
	\subcaptionbox{\ref{lem:ht=1_reduction}\ref{item:uniq_easy}\label{fig:uniq_easy}}[.24\linewidth]{
		\begin{tikzpicture}
			\draw (-0.6,2) -- (2.8,2);
			\node at (-0.4,2.2) {\small{$H_1$}};
			\node at (-0.4,1.8) {\small{$-m$}};
			\draw[densely dotted] (-0.6,0) -- (2.8,0);
			\node at (-0.4,0.2) {\small{$H_2$}};
			\node at (-0.4,-0.2) {\small{$m$}};
			\draw[densely dotted] (0.2,2.1) -- (0.2,-0.1);
			\node at (0.4,1) {\small{$F_1$}};
			\draw[densely dotted] (1,2.1) -- (1,-0.1);
			\node at (1.2,1) {\small{$F_2$}};
			\draw[densely dotted] (1.8,2.1) -- (1.8,-0.1);
			\node at (2,1) {\small{$F_3$}};
			\draw[densely dotted] (2.6,2.1) -- (2.6,-0.1);
			\node at (2.8,1) {\small{$F_4$}};
		\end{tikzpicture}	
	}
	\subcaptionbox{\ref{lem:ht=1_reduction}\ref{item:uniq_n=3}\label{fig:uniq_n=3}}[.22\linewidth]{
		\begin{tikzpicture}
			\draw[densely dotted] (-0.1,0) -- (2.6,0);
			\node at (1,-0.2) {\small{$m-1$}};
			\node at (1,0.2) {\small{$H_2$}};
			\draw (0,-0.2) -- (0.2,0.8);
			\node at (0.1,1) {$\vdots$};
			\draw (0.2,1) -- (0,2);
			\draw [decorate, decoration = {calligraphic brace}, thick] (-0.2,-0.2) --  (-0.2,1.9);
			\node[rotate=-90] at (-0.5,0.8) {\small{$T$}}; 
			\draw (0,1.8) -- (0.2,3);
			\node at (-0.2,2.5) {\small{$-2$}};
			\draw (-0.1,2.2) -- (1.1,2.4);
			\node at (0.6,2.5) {\small{$-1$}};
			\draw (0.2,2.8) -- (0,3.8);
			\node at (0.1,4) {$\vdots$};
			\draw (0,4) -- (0.2,5);
			\draw [decorate, decoration = {calligraphic brace}, thick] (-0.2,2.9) --  (-0.2,5);
			\node[rotate=-90] at (-0.5,3.85) {\small{$T^{*}$}};
			\draw (-0.1,4.8) -- (2.6,4.8);
			\node at (1,4.6) {\small{$-m$}};
			\node at (1,5) {\small{$H_1$}};
			\draw[densely dotted] (1.7,-0.2) -- (1.7,5);
			\node at (1.95,2.5) {\small{$F_1$}};
			\draw[densely dotted] (2.5,-0.2) -- (2.5,5);
			\node at (2.75,2.5) {\small{$F_2$}};
		\end{tikzpicture}	
	}
	\subcaptionbox{\ref{lem:ht=1_reduction}\ref{item:uniq_n=2}, $m\geq 2$\label{fig:uniq_n=2}}[.26\linewidth]{
		\begin{tikzpicture}
			\draw[densely dotted] (-0.1,0) -- (2.2,0);
			\node at (1,-0.2) {\small{$m-2$}};
			\node at (1,0.2) {\small{$H_2$}};
			\draw (0,-0.2) -- (0.2,0.8);
			\node at (0.1,1) {$\vdots$};
			\draw (0.2,1) -- (0,2);
			\draw [decorate, decoration = {calligraphic brace}, thick] (-0.2,-0.2) --  (-0.2,1.9);
			\node[rotate=-90] at (-0.5,0.8) {\small{$T_1$}}; 
			\draw (0,1.8) -- (0.2,3);
			\node at (-0.2,2.5) {\small{$-2$}};
			\draw (-0.1,2.2) -- (1.1,2.4);
			\node at (0.6,2.5) {\small{$-1$}};
			\draw (0.2,2.8) -- (0,3.8);
			\node at (0.1,4) {$\vdots$};
			\draw (0,4) -- (0.2,5);
			\draw [decorate, decoration = {calligraphic brace}, thick] (-0.2,2.9) --  (-0.2,5);
			\node[rotate=-90] at (-0.5,3.85) {\small{$T_1^{*}$}};
			\draw (2,-0.2) -- (2.2,0.8);
			\node at (2.1,1) {$\vdots$};
			\draw (2.2,1) -- (2,2);
			\draw [decorate, decoration = {calligraphic brace}, thick] (2.4,1.9) --  (2.4,-0.2);
			\node[rotate=90] at (2.7,0.8) {\small{$T_2$}}; 
			\draw (2,1.8) -- (2.2,3);
			\node at (1.8,2.5) {\small{$-2$}};
			\draw (1.9,2.2) -- (3.1,2.4);
			\node at (2.6,2.5) {\small{$-1$}};
			\draw (2.2,2.8) -- (2,3.8);
			\node at (2.1,4) {$\vdots$};
			\draw (2,4) -- (2.2,5);
			\draw [decorate, decoration = {calligraphic brace}, thick] (2.4,5) -- (2.4,2.9);
			\node[rotate=90] at (2.7,3.85) {\small{$T_2^{*}$}};
			\draw (0,4.8) -- (2.3,4.8);
			\node at (1,4.6) {\small{$-m$}};
			\node at (1,5) {\small{$H_1$}};
		\end{tikzpicture}	
	}
	\subcaptionbox{\ref{lem:ht=1_reduction}\ref{item:uniq_dumb}, $k\geq 3$\label{fig:uniq_dumb}}[.22\linewidth]{
		\begin{tikzpicture}
			\draw[densely dotted] (-0.1,0) -- (2.9,0);
			\node at (1.2,-0.2) {\small{$m-1$}};
			\node at (1.2,0.2) {\small{$H_2$}};
			\draw (0,-0.2) -- (0.2,0.8);
			\node at (0.1,1) {$\vdots$};
			\draw (0.2,1) -- (0,2);
			\draw [decorate, decoration = {calligraphic brace}, thick] (-0.2,-0.2) --  (-0.2,1.9);
			\node[rotate=-90] at (-0.5,0.8) {\small{$T$}}; 
			\draw (0,1.8) -- (0.2,3);
			\node at (-0.2,2.5) {\small{$-k$}};
			\draw (-0.1,2.2) -- (1.1,2.4);
			\node at (1.2,2.2) {$\cdots$};
			\node at (1.2,2.6) {\small{$[(2)_{k-1}]$}};
			\draw (1.3,2.4) -- (2.5,2.2);
			\draw (0.6,2.4) -- (0.8,1.2);
			\node at (0.5,1.6) {\small{$-1$}};
			%		\node at (0.6,2.5) {\small{$-$}};
			\draw (0.2,2.8) -- (0,3.8);
			\node at (0.1,4) {$\vdots$};
			\draw (0,4) -- (0.2,5);
			\draw [decorate, decoration = {calligraphic brace}, thick] (-0.2,2.9) --  (-0.2,5);
			\node[rotate=-90] at (-0.5,3.85) {\small{$T^{*}$}};
			\draw (-0.1,4.8) -- (2.9,4.8);
			\node at (1.2,4.6) {\small{$-m$}};
			\node at (1.2,5) {\small{$H_1$}};
			\draw[densely dotted] (2.8,-0.2) -- (2.8,5);
			\node at (3.05,2.5) {\small{$F_1$}};
		\end{tikzpicture}	
	}
	\medskip
	
	\subcaptionbox{\ref{lem:ht=1_reduction}\ref{item:uniq_n=2_v=1}\label{fig:uniq_n=2_v=1}}[.35\linewidth]{
		\begin{tikzpicture}
			\draw (0,3) -- (4,3);
			\node at (1,3.2) {\small{$H_1$}};
			\node at (1,2.8) {\small{$-1$}};
			\draw (0,0) -- (0.2,1.2);
			\node at (-0.2,0.4) {\small{$-2$}};
			\draw (0.2,1) -- (0,2.2);
			\node at (-0.2,1.6) {\small{$-2$}};
			\draw (0,1.4) -- (1.2,1.6);
			\node at (0.6,1.7) {\small{$-1$}};
			\draw (0,2) -- (0.2,3.2);
			\node at (-0.2,2.6) {\small{$-2$}};
			\draw (2,0) -- (2.2,1.2);
			\node at (1.8,0.4) {\small{$-2$}};
			\draw (2.2,1) -- (2,2.2);
			\node at (1.8,1.6) {\small{$-2$}};
			\draw (2,1.4) -- (3.2,1.6);
			\node at (2.6,1.7) {\small{$-1$}};
			\draw (2,2) -- (2.2,3.2);
			\node at (1.8,2.6) {\small{$-2$}};
			\draw (3.8,0) -- (3.8,3.2);
			\node at (4.05,1.5) {\small{$F_1$}};
		\end{tikzpicture}
	}
	\subcaptionbox{\ref{lem:ht=1_reduction}\ref{item:uniq_fork-lc}\label{fig:uniq_fork-lc}}[.35\linewidth]{
		\begin{tikzpicture}
			\draw (0,3) -- (3.4,3);
			\node at (1.7,3.2) {\small{$H_1$}};
			\node at (1.7,2.8) {\small{$-2$}};
			\begin{scope}[shift={(3,0)}]
			\draw (0,0) -- (0.2,1.2);
			\node at (-0.2,0.4) {\small{$-2$}};
			\draw (0.2,1) -- (0,2.2);
			\node at (-0.2,1.6) {\small{$-2$}};
			\draw (0,1.4) -- (1.2,1.6);
			\node at (0.6,1.7) {\small{$-1$}};
			\draw (0,2) -- (0.2,3.2);
			\node at (-0.2,2.6) {\small{$-2$}};
			\end{scope}
			\begin{scope}[shift={(-2,0)}]
			\draw (2,0) -- (2.2,1.2);
			\node at (1.8,0.4) {\small{$-2$}};
			\draw (2.2,1) -- (2,2.2);
			\node at (1.8,1.6) {\small{$-3$}};
			\draw (2,1.4) -- (3.2,1.6);
			\node at (2.6,1.7) {\small{$-2$}};
			\draw (2.7,1.6) -- (2.9,0.4);
			\node at (2.6,0.7) {\small{$-1$}};
			\draw (3,1.6) -- (4.2,1.4);
			\node at (3.7,1.7) {\small{$-2$}};
			\draw (2,2) -- (2.2,3.2);
			\node at (1.8,2.6) {\small{$-2$}};
			\end{scope}
		\end{tikzpicture}
	}
	\subcaptionbox{\ref{lem:ht=1_reduction}\ref{item:uniq_A1E7}\label{fig:uniq_A1E7}}[.25\linewidth]{
		\begin{tikzpicture}
			\draw (0,3) -- (3,3);
			\node at (1.6,3.2) {\small{$H_1$}};
			\node at (1.6,2.8) {\small{$-1$}};
			\draw (0,0) -- (0.2,1.2);
			\node at (-0.2,0.4) {\small{$-2$}};
			\draw (0.2,1) -- (0,2.2);
			\node at (-0.2,1.6) {\small{$-2$}};
			\draw (0,1.4) -- (1.2,1.6);
			\node at (0.6,1.7) {\small{$-2$}};
			\draw (1,1.6) -- (2.2,1.4);
			\node at (1.6,1.7) {\small{$-2$}};
			\draw (2.1,1.6) -- (1.9,0.4);
			\node at (1.7,0.8) {\small{$-1$}};
			\draw (0,2) -- (0.2,3.2);
			\node at (-0.2,2.6) {\small{$-2$}};
			\draw (2.8,0) -- (2.8,3.2);
			\node at (3.05,1.7) {\small{$F_1$}};
		\end{tikzpicture}
	}
	\medskip

	\subcaptionbox{\ref{lem:ht=1_reduction}\ref{item:uniq_2}\label{fig:uniq_2}}[.3\linewidth]{
		\begin{tikzpicture}
			\draw (-0.1,0) -- (2.9,0);
			\node at (1.6,-0.2) {\small{$m-1$}};
			\node at (1.6,0.2) {\small{$H_2$}};
			\draw (0,-0.2) -- (0.2,0.8);
			\node at (0.1,1) {$\vdots$};
			\draw (0.2,1) -- (0,2);
			\draw [decorate, decoration = {calligraphic brace}, thick] (-0.2,-0.2) --  (-0.2,1.9);
			\node[rotate=-90] at (-0.5,0.8) {\small{$T$}}; 
			\draw (0,1.8) -- (0.2,3);
			\node at (-0.2,2.5) {\small{$-2$}};
			\draw (-0.1,2.2) -- (1.1,2.4);
			\node at (0.5,2.5) {\small{$-2$}};
			\draw (0.9,2.4) -- (2.1,2.2);
			\node at (1.6,2.5) {\small{$-1$}};
			\draw (0.2,2.8) -- (0,3.8);
			\node at (0.1,4) {$\vdots$};
			\draw (0,4) -- (0.2,5);
			\draw [decorate, decoration = {calligraphic brace}, thick] (-0.2,2.9) --  (-0.2,5);
			\node[rotate=-90] at (-0.5,3.85) {\small{$T^{*}$}};
			\draw (-0.1,4.8) -- (2.9,4.8);
			\node at (1.6,4.6) {\small{$-m$}};
			\node at (1.6,5) {\small{$H_1$}};
			\draw (2.7,-0.2) -- (2.7,5);
			\node at (2.95,2.5) {\small{$F_1$}};
		\end{tikzpicture}	
	}
	\subcaptionbox{\ref{lem:ht=1_reduction}\ref{item:uniq_-2-chain}, $m\geq 1$, $d\geq 2$\label{fig:uniq_-2-chain}}[.36\linewidth]{
		\raisebox{3em}{
			\begin{tikzpicture}
				\draw (0.2,1) -- (0,2);
				\node at (-0.2,1.4) {\small{$-d$}};
				\draw (0,1.8) -- (0.2,3);
				\node at (-0.2,2.5) {\small{$-2$}};
				\draw (-0.1,2.2) -- (1.1,2.4);
				\node at (0.5,2.5) {\small{$-2$}};
				\draw (0.9,2.4) -- (2.1,2.2);
				\node at (1.6,2.5) {\small{$-1$}};
				\draw (0.2,2.8) -- (0,3.8);
				\node at (0.1,4) {$\vdots$};
				\draw (0,4) -- (0.2,5);
				\node[rotate=-90] at (-0.3,3.8) {\small{$[(2)_{d-1}]$}};
				\draw (-0.1,4.8) -- (3.6,4.8);
				\node at (1.6,4.6) {\small{$-m$}};
				\node at (1.6,5) {\small{$H_1$}};
				\draw[densely dotted] (2.5,1) -- (2.5,5);
				\node at (2.75,2.5) {\small{$F_1$}};
				\draw[densely dotted] (3.5,1) -- (3.5,5);
				\node at (3.75,2.5) {\small{$F_2$}};
			\end{tikzpicture}
		}	
	}
	\subcaptionbox{\ref{lem:ht=1_reduction}\ref{item:uniq_fork}, $d,m\geq 2$\label{fig:uniq_fork}}[.3\linewidth]{
		\raisebox{.5em}{
			\begin{tikzpicture}
				\draw (0.2,1) -- (0,2);
				\node at (-0.2,1.4) {\small{$-d$}};
				\draw (0,1.8) -- (0.2,3);
				\node at (-0.2,2.5) {\small{$-2$}};
				\draw (-0.1,2.2) -- (1.1,2.4);
				\node at (0.5,2.5) {\small{$-2$}};
				\draw (0.9,2.4) -- (2.1,2.2);
				\node at (1.6,2.5) {\small{$-1$}};
				\draw (0.2,2.8) -- (0,3.8);
				\node at (0.1,4) {$\vdots$};
				\draw (0,4) -- (0.2,5);
				\node[rotate=-90] at (-0.3,3.8) {\small{$[(2)_{d-1}]$}};
				\draw (-0.1,4.8) -- (3.3,4.8);
				\node at (1.6,4.6) {\small{$-m$}};
				\node at (1.6,5) {\small{$H_1$}};
				%		%
				\draw (3,-0.2) -- (3.2,0.8);
				\node at (3.1,1) {$\vdots$};
				\draw (3.2,1) -- (3,2);
				\draw [decorate, decoration = {calligraphic brace}, thick] (3.4,1.9) --  (3.4,-0.2);
				\node[rotate=90] at (3.7,0.8) {\small{$T$}}; 
				\draw (3,1.8) -- (3.2,3);
				\node at (2.8,2.5) {\small{$-2$}};
				\draw (2.9,2.2) -- (4.1,2.4);
				\node at (3.6,2.5) {\small{$-1$}};
				\draw (3.2,2.8) -- (3,3.8);
				\node at (3.1,4) {$\vdots$};
				\draw (3,4) -- (3.2,5);
				\draw [decorate, decoration = {calligraphic brace}, thick] (3.4,5) -- (3.4,2.9);
				\node[rotate=90] at (3.7,3.85) {\small{$T^{*}$}};
			\end{tikzpicture}
		}	
	}
	\medskip
	
	\subcaptionbox{\ref{lem:ht=1_reduction}\ref{item:uniq_3}\label{fig:uniq_3}}[.22\linewidth]{
		\begin{tikzpicture}
			\draw (0,3) -- (2.2,3);
			\node at (1.6,3.2) {\small{$H_1$}};
			\node at (1.6,2.8) {\small{$-2$}};
			\draw (0,0.2) -- (0.2,-1);
			\node at (-0.2,-0.6) {\small{$-2$}};
			\draw (0,0) -- (0.2,1.2);
			\node at (-0.2,0.6) {\small{$-2$}};
			\draw (0.2,1) -- (0,2.2);
			\node at (-0.2,1.6) {\small{$-2$}};
			\draw (0,1.4) -- (1.2,1.6);
			\node at (0.6,1.7) {\small{$-2$}};
			\draw (1,1.6) -- (2.2,1.4);
			\node at (1.6,1.7) {\small{$-1$}};
			\draw (0,2) -- (0.2,3.2);
			\node at (-0.2,2.6) {\small{$-3$}};
		\end{tikzpicture}
	}
	\subcaptionbox{\ref{lem:ht=1_reduction}\ref{item:uniq_5}\label{fig:uniq_5}}[.22\linewidth]{
		\begin{tikzpicture}
			\draw (0,3) -- (2.2,3);
			\node at (1.6,3.2) {\small{$H_1$}};
			\node at (1.6,2.8) {\small{$-3$}};
			\draw (0,0) -- (0.2,1.2);
			\node at (-0.2,0.4) {\small{$-2$}};
			\draw (0.2,1) -- (0,2.2);
			\node at (-0.2,1.6) {\small{$-2$}};
			\draw (0,1.4) -- (1.2,1.6);
			\node at (0.6,1.7) {\small{$-2$}};
			\draw (1,1.6) -- (2.2,1.4);
			\node at (1.6,1.7) {\small{$-2$}};
			\draw (2.1,1.6) -- (1.9,0.4);
			\node at (1.7,0.8) {\small{$-1$}};
			\draw (0,2) -- (0.2,3.2);
			\node at (-0.2,2.6) {\small{$-2$}};
		\end{tikzpicture}
	}
	\subcaptionbox{\ref{lem:ht=1_reduction}\ref{item:uniq_Ek}, $k\in \{2,3,4\}$\label{fig:uniq_Ek}}[.25\linewidth]{
		\begin{tikzpicture}
			\draw (0,3) -- (2.6,3);
			\node at (1.6,3.2) {\small{$H_1$}};
			\node at (1.6,2.8) {\small{$-2$}};
			\draw (0,0) -- (0.2,1.2);
			\node at (-0.2,0.4) {\small{$-2$}};
			\draw (0.2,1) -- (0,2.2);
			\node at (-0.2,1.6) {\small{$-2$}};
			\draw (0,1.4) -- (1.2,1.6);
			\node at (1.3,1.8) {\small{$[(2)_{k}]$}};
			\node at (1.3,1.5) {$\cdots$};
			\draw (1.4,1.6) -- (2.6,1.4);
			\draw (2.5,1.6) -- (2.3,0.4);
			\node at (2.1,0.8) {\small{$-1$}};
			\draw (0,2) -- (0.2,3.2);
			\node at (-0.2,2.6) {\small{$-2$}};
		\end{tikzpicture}
	}
	\subcaptionbox{\ref{lem:ht=1_reduction}\ref{item:uniq-bench}, $m\geq 3$\label{fig:uniq-bench}}[.25\linewidth]{
		\begin{tikzpicture}
			\draw (0,3) -- (2.4,3);
			\node at (1,3.2) {\small{$H_1$}};
			\node at (1,2.8) {\small{$-m$}};
			\draw (0,0) -- (0.2,1.2);
			\node at (-0.2,0.4) {\small{$-2$}};
			\draw (0.2,1) -- (0,2.2);
			\node at (-0.2,1.6) {\small{$-2$}};
			\draw (0,1.4) -- (1.2,1.6);
			\node at (0.6,1.7) {\small{$-2$}};
			\draw (1.1,1.7) -- (1.3,0.5);
			\node at (1,0.9) {\small{$-1$}};
			\draw (0,2) -- (0.2,3.2);
			\node at (-0.2,2.6) {\small{$-2$}};
			\draw (2,0) -- (2.2,1.2);
			\node at (1.8,0.4) {\small{$-2$}};
			\draw (2.2,1) -- (2,2.2);
			\node at (1.8,1.6) {\small{$-2$}};
			\draw (2,1.4) -- (3.2,1.6);
			\node at (2.6,1.7) {\small{$-2$}};
			\draw (3.1,1.7) -- (3.3,0.5);
			\node at (3,0.9) {\small{$-1$}};
			\draw (2,2) -- (2.2,3.2);
			\node at (1.8,2.6) {\small{$-2$}};
		\end{tikzpicture}
	}
	\caption{Auxiliary pairs $(Z,D_Z)$ from Lemma \ref{lem:ht=1_reduction}, cf.\ Table \ref{table:ht=1_exceptions}. Dotted lines correspond to curves which may or may not be components of $D_Z$ (Lemma \ref{lem:ht=1_uniqueness} covers all such possibilities).}
	\label{fig:basic_ht=1}
\end{figure}
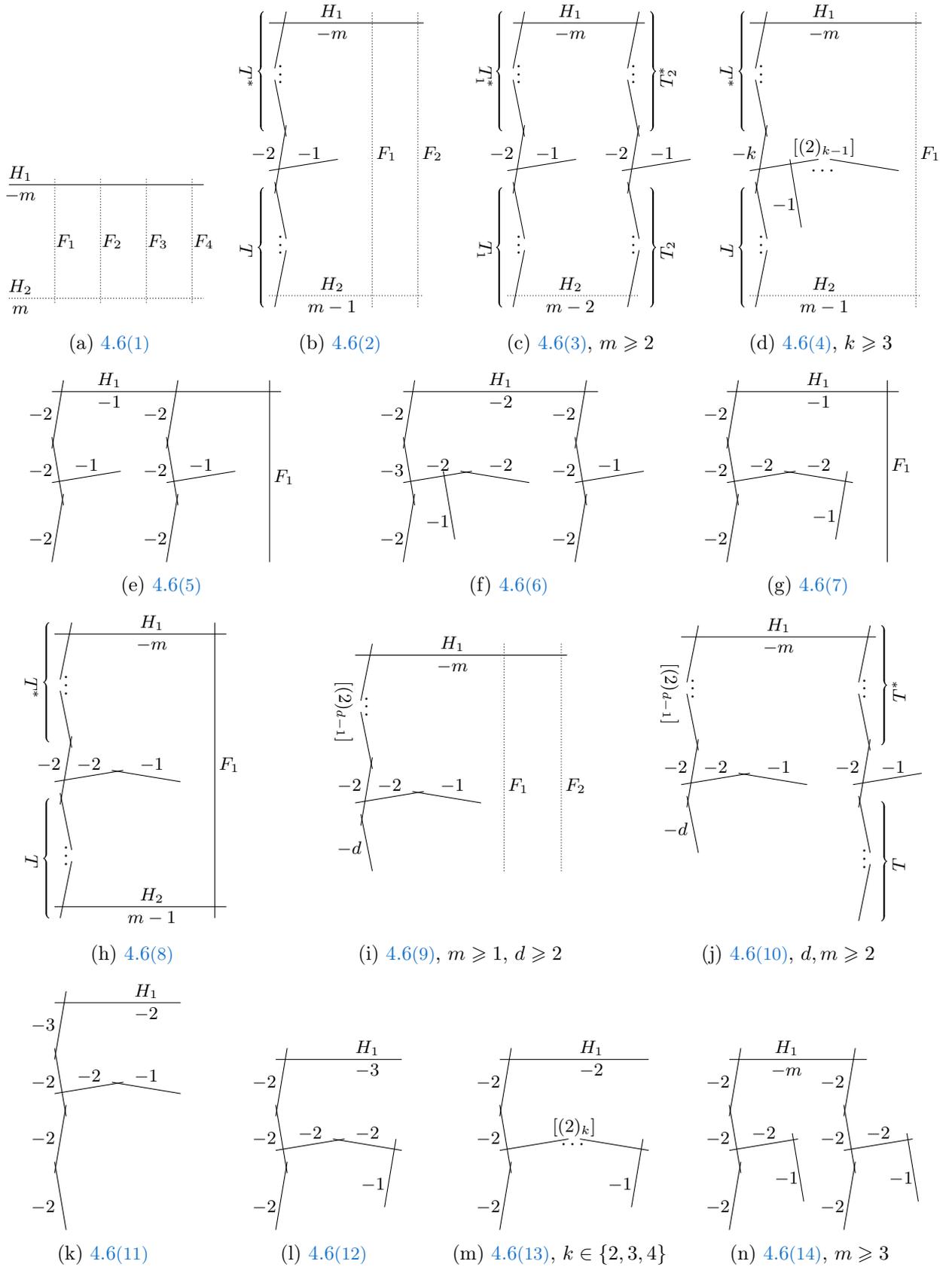

\begin{table}[h]
			\begin{tabular}{l|c|rl|l}	
				$(Z,D_Z)$ from \ref{lem:ht=1_reduction} & $\#\cP_{+}(\cZ)$
				& \multicolumn{2}{c|}{corresponding $(X,\check{D})$ from \ref{lem:ht=1_types}, if exists} & \multicolumn{1}{c}{exception in \ref{lem:ht=1_uniqueness}\ref{item:ht=1_uniqueness-exceptions}}
				\\ \hline \hline
				\ref{item:uniq_easy} $v\leq 3$ &
				$1$ & 
				\ref{item:chains_columnar_T1=0,T2=0},
				\ref{item:chains_columnar_T2=0},
				\ref{item:chains_columnar},
				\ref{item:beta=3_columnar} & 
				&\\
				\ref{item:uniq_easy} $v=4$ &
				$\cM^1$ &
				\ref{item:ht=1_bench} &
				&\\ \hline 
				%%%%%%%%%%%%%%%%%%%%%%%%%%%%%%%%%%%%%%%%%%%%%%%%
				\multirow{3}{*}{\ref{item:uniq_n=3} $v\leq 1$} &
				\multirow{3}{*}{1} & 	
				\ref{item:chains_both_T1=0}, 
				\ref{item:chains_both} 
				& &\\
				&&
				\ref{item:T2*=[2,2]_b>2},  
				\ref{item:T1=0,T=T2=[2]_b>2},  
				\ref{item:TH=[3,2]_T=[3]_b>2},
				\ref{item:TH=[2,2]_b>2}  & $b\geq 3$ &\\
				&& \ref{item:TH_long_columnar_b>2}, 
				\ref{item:T2=[2,2]_b>2}, 
				\ref{item:T2=[2]_b>2} 
				 & $b\geq 3$ 
				& \\
				%&&
				%\ref{item:TH=[2,2]_b>2} & $b=2$, $T=[2,3]$
				%& \\ 
				\hline 
				%%%%
				\multirow{2}{*}{\ref{item:uniq_n=3} $v=2$} & \multirow{2}{*}{1} & 
				\ref{item:beta=3_other},
				\ref{item:beta=3_other_2} && 
					\multirow{2}{*}{$\cha\kk|d(T)$} \\
				&& 	\ref{item:ht=1_bench-big_b>2} & $b\geq 3$ &
				 \\
				\hline 
				%%%%%%%%%%%%%%%%%%%%%%%%%%%%%%%%%%%%%%%%%%%%%%%%
				\multirow{3}{*}{\ref{item:uniq_n=2}}
				& \multirow{3}{*}{1} &
				\ref{item:chains_not-columnar} & &
				\multirow{3}{*}{$\cha\kk|d([T_{1}^{*},m,T_{2}^{*}])$}
				\\
				&& \ref{item:TH_long_other_b>2}, \ref{item:F'_fork_T=[3]_b>2} & $b\geq 3$ &\\
				&& \ref{item:ht=1_bench_two_a,b>2} & $a,b\geq 3$
				&  \\ \hline 
				%%%%%%%%%%%%%%%%%%%%%%%%%%%%%%%%%%%%%%%%%%%%%%%%
				\ref{item:uniq_dumb}
				& 1  &
				\ref{item:TH=[3,2]_tip},
				\ref{item:TH=[2,2]_tip}, 
				\ref{item:TH=[2,2]_tip_[3,2]},  
				\ref{item:T2=[2],T=[2,2],tip} & 
				&\\ \hline
				\ref{item:uniq_n=2_v=1}
				& $\cM^1$
				& \ref {item:beta=3_other_3} & 
				&
				$\cha\kk=2$
				\\ \hline
				%%%%%%%%%%%%%%%%%%%%%%%%%%%%%%%%%%%%%%%%%%%%%%%%	
				\ref{item:uniq_fork-lc} 
				& $\cM^1$& 
				\ref{item:F'_fork_T=[2,2]} & 
				&\\
				%%%%%%%%%%%%%%%%%%%%%%%%%%%%%%%%%%%%%%%%%%%%%%%%
				\ref{item:uniq_A1E7}
				& $2$  & 
				\ref{item:T2=[2]_b>2} & $b=2$, $T=[2,2]$ &\\
				%%%%%%%%%%%%%%%%%%%%%%%%%%%%%%%%%%%%%%%%%%%%%%%%
				%
				\ref{item:uniq_2}
				& $2$ & \multicolumn{2}{c|}{\small{none (case \ref{lem:ht=1_reduction}\ref{item:uniq_2} is auxiliary)}} &\\
				%
				%%%%%%%%%%%%%%%%%%%%%%%%%%%%%%%%%%%%%%%%%%%%%%%%	
				\ref{item:uniq_3} 
				& $2$ &
				\ref{item:T2*=[2,2]_b>2} & $b=2$, $T=[3]$ & \\ \hline
			\end{tabular}
			\vspace{1em}
			
			\begin{tabular}{l|cc|rl|c|l}
				\multirow{2}{*}{$(Z,D_Z)$ from \ref{lem:ht=1_reduction}} & \multicolumn{2}{c|}{$\#\cP_{+}(\cZ)$ if}
				&  \multicolumn{3}{c|}{corresponding $(X,\check{D})$ from \ref{lem:ht=1_types}, if exists} & \multicolumn{1}{c}{\multirow{2}{*}{exception in \ref{lem:ht=1_uniqueness}\ref{item:ht=1_uniqueness-exceptions}}} \\
				& $\cha \kk\nmid d$ & $\cha\kk | d$ & \multicolumn{2}{c|}{ } & $d$ & \\ \hline \hline
				%%%%%%%%%%%%%%%%%%%%%%%%%%%%%%%%%%%%%%%%%%%%%%%%
				\multirow{3}{*}{\ref{item:uniq_-2-chain} $v\leq 1$}
				& 
				\multirow{4}{*}{$1$}
				& 
				\multirow{4}{*}{$2$} &  
				\ref{item:T1=0,T=T2=[2]_b>2}, \ref{item:TH_long_columnar_b>2} & $b=2$ & \multirow{2}{*}{$2$} & \\ % with $b=2$,
				&&& \ref{item:TH=[3,2]_T=[3]_b>2}, \ref{item:TH=[2,2]_b>2}, \ref{item:T2=[2]_b>2} & $b=2$, $T\in \{[k],[2,3]\}$ &&\\ % with $b=2$,
				%\ref{item:TH=[2,2]T=[3,2]_b=2},
				%\ref{item:TH=[2,2]T=[2,3]_b=2}, \\
				%&&&&\ref{item:T2=[2]_b>2} & with $b=2$, $T=[3]$
				%\\
				\cline{4-7}
				&& &
				\ref{item:T2*=[2,2]_b>2} &  $b=2$, $T=[2,2]$    & \multirow{2}{*}{$3$} &\\
				&&& \ref{item:T2=[2,2]_b>2} & $b=2$ &
				& \\ \hline 
				\ref{item:uniq_-2-chain} $v=2$ 
				& 
				$1$
				& 
				$\cM^1$
				&
				\ref{item:ht=1_bench-big_b>2} & $b=2$ & $2$ &
				$\cha\kk|d$ \\ \hline 
				%%%%%%%%%%%%%%%%%%%%%%%%%%%%%%%%%%%%%%%%%%%%%%%%
				%
				\multirow{2}{*}{\ref{item:uniq_fork}}
				& 
				\multirow{2}{*}{$1$}
				& 
				\multirow{2}{*}{$\cM^1$}
				&
				\ref{item:TH_long_other_b>2}, \ref{item:F'_fork_T=[3]_b>2} &  $b=2$ &\multirow{2}{*}{$2$} & \multirow{2}{*}{$\cha\kk |d([(2)_{d-1},m,T^{*}])$} \\
				&&& \ref{item:ht=1_bench_two_a,b>2} & $a=2$, $b\geq 3$  && \\ \hline
			\end{tabular}
			\vspace{1em}
			
			\begin{tabular}{l|cccc|l|l}
				\multirow{2}{*}{$(Z,D_Z)$ from \ref{lem:ht=1_reduction}} & \multicolumn{4}{c|}{$\#\cP_{+}(\cZ)$ if $\cha \kk$} 
				& \multicolumn{1}{c|}{corresponding $(X,\check{D})$ from \ref{lem:ht=1_types},} & \multicolumn{1}{c}{\multirow{2}{*}{exception in \ref{lem:ht=1_uniqueness}\ref{item:ht=1_uniqueness-exceptions}}} \\
				& $\neq 2,3,5$ & $=5$ & $=3$ & $=2$ & \multicolumn{1}{c|}{if exists} &\\ \hline \hline
				%%%%%%%%%%%%%%%%%%%%%%%%%%%%%%%%%%%%%%%%%%%%%%%%
				\ref{item:uniq_5} 
				& 1 & 2 & 1 & 2 &
				\ref{item:TH=[3,2]_T=[3]_b>2}, $b=2$, $T=[2,2]$ &\\
				%%%%%%%%%%%%%%%%%%%%%%%%%%%%%%%%%%%%%%%%%%%%%%%%
				%
				\ref{item:uniq_Ek} $k=2$
				& 1 & 1 & 2 & 2 
				& \ref{item:TH=[2,2]_b>2}, $b=2$, $T\in\{[2,2],[3,2]\}$  & \\
				%%%%%%%%%%%%%%%%%%%%%%%%%%%%%%%%%%%%%%%%%%%%%%%%
				%
				\ref{item:uniq_Ek} $k=3$
				& 1 & 1 & 2 & 3 
				& \ref{item:TH=[2,2]_b>2}, $b=2$, $T=[2,2,2]$ &\\	
				%%%%%%%%%%%%%%%%%%%%%%%%%%%%%%%%%%%%%%%%%%%%%%%%
				\ref{item:uniq_Ek} $k=4$ 
				& 2 & 2 & 3 & 3 
				& \ref{item:TH=[2,2]_b>2}, $b=2$, $T=[2,2,2,2]$ & $\cha\kk=2$\\	
				%%%%%%%%%%%%%%%%%%%%%%%%%%%%%%%%%%%%%%%%%%%%%%%%
				%%%%%%%%%%%%%%%%%%%%%%%%%%%%%%%%%%%%%%%%%%%%%%%%
				\ref{item:uniq-bench} 
				& 1 & 1 & 1 &  $\cM^2$ 
				& \ref{item:ht=1_bench_two_a,b>2}, $a=b=2$ & $\cha\kk|2(m-1)$ \\ \hline 	
			\end{tabular} \vspace{-0.5em}
	\caption{Lemma \ref{lem:ht=1_reduction}: combinatorial types $\cZ$ of log surfaces obtained from the ones in Lemma \ref{lem:ht=1_types} by a vertical inner snc-minimalization, with the number $\#\cP_{+}(\cZ)$ computed in Lemma \ref{lem:ht=1_uniqueness}.}
	\label{table:ht=1_exceptions}\vspace{-0.5em}
\end{table}

\begin{table}[h]%\vspace{1em}
	\begin{tabular}{l|c|c|c}
		$(Z,D_Z)$ from \ref{lem:ht=1_reduction} & $\cha\kk$ divides & $\#\cP_{+}(\cZ)$ & $h^{1}(\lts{Z}{D_Z})$ \\ \hline
		\ref{item:uniq_n=3} $v=2$ &  $d(T)$ & $1$ & $1$ \\
		\ref{item:uniq_n=2} & $d([T_{1}^{*},m,T_{2}^{*}])$ & $1$ & $1$ \\
		\ref{item:uniq_n=2_v=1} & $2$ & $\cM^{1}$ & $2$ \\
		\ref{item:uniq_-2-chain} $v=2$ & $d$ &  $\cM^{1}$ & $1,2$ \\\hline 
		\ref{item:uniq_fork} $\cha\kk\nmid d$ & \multirow{2}{*}{$d([(2)_{d-1},m,T^{*}])$} & $1$ & $1$ \\
		\ref{item:uniq_fork} $\cha\kk\ |\ d$ & & $\cM^{1}$ & $1,2$ \\ \hline 
		\ref{item:uniq_Ek} $k=4$ & 2 & $3$ & $0,1,3$ \\ \hline 
		\ref{item:uniq-bench} $\cha\kk \neq 2$ & \multirow{2}{*}{$2(m-1)$} & $1$ & $1$  \\
		\ref{item:uniq-bench} $\cha\kk = 2$ & & $\cM^{2}$ & $2,3$ 
	\end{tabular} \vspace{-0.5em}
	\caption{Lemma \ref{lem:ht=1_uniqueness}\ref{item:ht=1_uniqueness-exceptions}: cases when the family is not $h^{1}$-stratified or universal.}
	\label{table:exceptions-to-exceptions}\vspace{-1em}
\end{table}	

We now state the main technical lemma, which is a counterpart of Proposition \ref{prop:moduli} for log surfaces $(Z,D_{Z})$. We prove it at the end of this section. For the relevant definitions see Section \ref{sec:moduli}.

\begin{lemma}[Moduli computation]\label{lem:ht=1_uniqueness}
	Let $\cZ$ be the combinatorial type of a log surface $(Z,D_{Z})$ from Lemma \ref{lem:ht=1_reduction}. 
	\begin{enumerate}
		\item\label{item:ht=1_uniqueness-finite} If $\#\cP_{+}(\cZ)<\infty$ then $\cP_{+}(\cZ)$ is represented by a stratified family; and $\#\cP_{+}(\cZ)$ is listed in Table \ref{table:ht=1_exceptions}. 
		\item\label{item:ht=1_uniqueness-infinite} If $\#\cP_{+}(\cZ)=\infty$ then $\cP_{+}(\cZ)$ is represented by an $\Aut(\cZ)$-faithful family, whose base $B$ and symmetry group is listed in Table \ref{table:bases-primitive}. The corresponding entry in Table \ref{table:ht=1_exceptions} is $\cM^{\dim B}$, see Notation \ref{not:Md}.
		\item\label{item:ht=1_uniqueness-exceptions} 
			The family from \ref{item:ht=1_uniqueness-finite}  or \ref{item:ht=1_uniqueness-infinite} can be chosen  $h^{1}$-stratified or universal, respectively, unless $\cZ$ is as in Table \ref{table:exceptions-to-exceptions}.	
		\end{enumerate}
\end{lemma}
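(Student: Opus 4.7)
The plan is to proceed by induction on the number of blowups required to reach $(Z,D_Z)$ from a minimal model, reducing each case of Lemma~\ref{lem:ht=1_reduction} to simpler ones via Lemmas~\ref{lem:inner} and~\ref{lem:outer}. The base cases are given by minimal ruled models: a Hirzebruch surface $\F_m$ with a negative section, possibly a disjoint section and up to three fibers (case~\ref{item:uniq_easy} with $v\leq 3$); and the four-fiber configuration on $\P^1\times\P^1$ treated in Examples~\ref{ex:4-points}--\ref{ex:4-points-Aut}. For these, uniqueness up to isomorphism and the values of $h^1$, $h^2$ follow directly from Lemma~\ref{lem:h1}\ref{item:h1-Fm},\ref{item:h1-easy-vanishing}; the four-fiber case yields the unique positive-dimensional base case, a $1$-parameter family with symmetry group $S_3\cong\Aut(\P^1\setminus\{0,1,\infty\})$.

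For the inductive step, I would factor $(Z,D_Z)\to(Z',D_{Z'})$ as a sequence of inner and outer blowups chosen so that the target of each outer blowup is the first tip of one of the admissible chains appearing as twigs of $D_Z$. Inner blowups are handled mechanically by Lemma~\ref{lem:inner}, preserving $\#\cP_+$, $h^1$, $h^2$, and the stratification. For an outer blowup at a component $C\subseteq D_{Z'}$, the behaviour of $\cP_+$ and of $h^1$ is controlled by Lemma~\ref{lem:outer}: one must identify the image in $\Aut(C^\circ)$ of the restriction of $\Aut(Z',D_{Z'})$, and list which global sections of $\lts{Z'}{D_{Z'}}$ vanish at a given point of $C^\circ$. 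In the generic situation the action is transitive and the family extends without change of dimension (Lemma~\ref{lem:outer}\ref{item:outer-transitive}, \ref{item:outer-h1-stays}); when a fixed point appears, the dimension grows by one (Lemma~\ref{lem:outer}\ref{item:outer-fixed-point}, \ref{item:outer-h1-grows}), which matches the entries $\cM^d$ of Table~\ref{table:ht=1_exceptions} and the values $h^1$ of Table~\ref{table:exceptions-to-exceptions}. The faithful-family statement~\ref{item:ht=1_uniqueness-infinite} is obtained by keeping track of the finite subgroup $G_f\leq\Aut(f)$ throughout, as in Example~\ref{ex:4-points-Aut}; the relevant bases and symmetry groups are read off directly from the construction.

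The computation of $h^2$ is straightforward: at each stage a general fibre of the fixed $\P^1$-fibration meets $D_Z$ in at most three points, so Lemma~\ref{lem:h1}\ref{item:h1-h2-fibration} gives $h^2(\lts{Z}{D_Z})=0$; Lemma~\ref{lem:blowup-hi}\ref{item:blowup-h2} then shows this persists under all blowups in the induction. The computation of $h^1$ is then just bookkeeping via the evaluation exact sequence of Lemma~\ref{lem:blowup-hi}\ref{item:blowup-hi-exact}, applied successively along the chain of blowups: the local structure of each twig $[(2)_{d-1}]$ forces the evaluation map at the successive infinitely near points to be surjective except when $\cha\kk\mid d$, which is precisely the content of the determinant conditions in Table~\ref{table:exceptions-to-exceptions}.

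The hard part will be the exceptional cases in Table~\ref{table:exceptions-to-exceptions}, where the characteristic divides a combinatorial determinant (such as $d(T)$, $d([T_1^{*},m,T_2^{*}])$, $d([(2)_{d-1},m,T^{*}])$, or $2(m-1)$ in case~\ref{item:uniq-bench}). In such characteristic, extra vector fields in $H^0(\lts{Z'}{D_{Z'}})$ vanish along boundary components that would otherwise be moved by $\Aut$; this has two distinct effects that must be distinguished by explicit local coordinate computations in the spirit of Example~\ref{ex:h1}. Either (a)~an outer blowup at a would-be-free orbit becomes frozen, producing a new fixed point which increases $h^1$ by one without increasing $\#\cP_+$ (so the family is no longer $h^1$-stratified or universal), as happens e.g.\ for $\rD_8$ with $\cha\kk=2$; or (b)~automorphisms that were expected to fuse distinct fibres of the family instead act trivially, creating extra isomorphism classes (the values $\#\cP_+=2$ or $3$ in the $\rE_k$ rows of Table~\ref{table:ht=1_exceptions}), matched against the corresponding jumps of $h^1$. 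I would carry out this local analysis chain by chain, parametrising the outer blowup in normal form $\phi(x,y)=(x y^{n}+\cdots,y)$ adapted to the chain $[(2)_{d-1}]$, and read off the congruence mod $\cha\kk$ that controls whether a generator of the connected component of $\Aut(Z',D_{Z'})$ moves the blowup centre. This is essentially a systematic generalisation of the computation carried out in Example~\ref{ex:h1} for the types $\rA_7$ and $\rD_8$.
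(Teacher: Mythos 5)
Your proposal follows essentially the same route as the paper's proof: reduction to simple models via inner blowdowns handled by Lemma~\ref{lem:inner}, case-by-case analysis of the action of $\Aut(Y,D_Y)$ on the locus $G^{\circ}$ of outer blowup centres via Lemma~\ref{lem:outer}, and explicit local-coordinate computations of lifted vector fields and of intersection multiplicities of germs in the exceptional characteristics, exactly in the spirit of Example~\ref{ex:h1}. One small slip in your illustration: type $\rD_8$ in characteristic $2$ is not an instance of your effect~(a) --- there the relevant automorphism group acts trivially on the blowup locus, so $\#\cP_{+}$ becomes infinite while $h^{1}$ varies between fibres, and the correct example of $h^{1}$ jumping while $\#\cP_{+}=1$ stays fixed is $\rA_7$.
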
	

\begin{corollary}[Case $\height=1$ of Propositions \ref{prop:moduli}, \ref{prop:moduli-hi}]
	\label{cor:moduli-ht=1}
	The set of isomorphism classes of del Pezzo surfaces of rank 1, height 1 and fixed singularity type is represented by a family with properties listed in Propositions \ref{prop:moduli},~\ref{prop:moduli-hi}.
\end{corollary}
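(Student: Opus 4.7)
Fix a singularity type $\cS$, and let $\cP=\cP(\cS)$ denote the set of isomorphism classes of del Pezzo surfaces of rank one, height one, and type $\cS$. Assuming $\cS$ is rational and log canonical (the remaining case of an elliptic cone follows directly from Example~\ref{ex:ht=1}), Lemma~\ref{lem:ht=1_types} together with Lemma~\ref{lem:ht=1_basics}\ref{item:ht=1_swap} shows that $\cS$ appears on the list \ref{item:chains_columnar_T1=0,T2=0}--\ref{item:ht=1_bench_two_a,b>2}, and picks for every $\bar X\in\cP$ a witnessing $\P^1$-fibration on the minimal log resolution $(X,D)$. Let $\check D$ be the sum of $D$ with all vertical $(-1)$-curves, let $\check{\cS}$ be the combinatorial type of $(X,\check D)$, and let $\cR_{+}\subseteq \cP_{+}(\check{\cS})$ be the set of triples $(X,\check D,\gamma)$ obtained this way. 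The plan is to chain three reductions in order to transfer the statements of Lemma~\ref{lem:ht=1_uniqueness} (for some auxiliary type $\cZ$) into the desired statements of Propositions~\ref{prop:moduli} and \ref{prop:moduli-hi} for $\cP$.

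The first reduction removes the enumeration and the vertical $(-1)$-curves. Since every isomorphism $\bar X_{1}\cong \bar X_{2}$ between surfaces in $\cP$ lifts to an isomorphism of minimal log resolutions, and the latter, in view of Lemma~\ref{lem:ht=1_basics}\ref{item:ht=1_Sigma},\ref{item:ht=1_nu}, sends the set of vertical $(-1)$-curves into itself, one obtains an identification of $\cP$ with the image of $\cR_{+}$ under the restriction map $\cP_{+}(\check{\cS})\to\cP_{+}(\cS)$ followed by the forgetful map $\cP_{+}(\cS)\to\cP(\cS)$. The hypothesis \eqref{eq:E-invariant} of Lemma~\ref{lem:adding-1}\ref{item:adding-1-faithful} needed to descend families is verified via Lemma~\ref{lem:adding-1-criterion}: condition \ref{item:adding-1-criterion-delPezzo} holds because $\rho(\bar X)=1$, and \ref{item:adding-1-criterion-fibration} holds by Lemma~\ref{lem:ht=1_basics}\ref{item:ht=1_Sigma}--\ref{item:ht=1_nu}, together with the fact that the weighted graph of $\check D$ (as recorded in Notation~\ref{not:fibrations}) carries the decomposition into horizontal and vertical parts through the decorations, so $\Aut(\check{\cS})$ preserves it. Thus any (almost/stratified/$h^1$-stratified/universal) family representing $\cR_{+}\subseteq \cP_{+}(\check{\cS})$ descends to a family representing $\cP$ with the same qualitative properties by Lemma~\ref{lem:adding-1}.

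The second reduction passes from $\check{\cS}$ to one of the types $\cZ$ of Lemma~\ref{lem:ht=1_reduction} by a vertical inner snc-minimalization $(X,\check D)\to (Z,D_Z)$. The centers of the successive contractions are determined purely combinatorially (they are the vertices of weight $-1$ that are not tips in the current graph), hence the whole minimalization is a sequence of inner blowdowns on the level of combinatorial types, and, inverting it, $\check{\cS}$ is obtained from $\cZ$ by a sequence of inner blowups. Lemma~\ref{lem:inner} now gives a bijection $\cP_{+}(\check{\cS})\leftrightarrow\cP_{+}(\cZ)$ which preserves the property of being represented by a ($G$-faithful, universal, or $h^1$-stratified) family; in particular $\cR_{+}$ corresponds to the full $\cP_{+}(\cZ)$, since the vertical inner snc-minimalization of any member of the auxiliary list in Lemma~\ref{lem:ht=1_reduction} produces a log surface that fits in Lemma~\ref{lem:ht=1_types}. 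This identification matches the entries of the third and fourth columns of Table~\ref{table:ht=1_exceptions}.

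The third step applies Lemma~\ref{lem:ht=1_uniqueness} to the resulting $\cZ$: part~\ref{item:ht=1_uniqueness-finite} or \ref{item:ht=1_uniqueness-infinite} gives the representing family (finite or almost faithful) together with the count $\#\cP_{+}(\cZ)$ appearing in Table~\ref{table:ht=1_exceptions}, and part~\ref{item:ht=1_uniqueness-exceptions} decides whether it may be chosen $h^1$-stratified or universal. Combining the outputs of the three reductions shows that $\cP$ is represented by a family as required in Proposition~\ref{prop:moduli}, with moduli dimension $\dim B$ in the infinite case, and that the only failures of the universal / $h^1$-stratified property come from the cases listed in Table~\ref{table:exceptions-to-exceptions}. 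A direct case-check verifies that those cases are exactly the ones appearing for height one in Table~\ref{table:exceptions-to-moduli}, which yields the refined conclusions of Proposition~\ref{prop:moduli-hi}. The only nontrivial step throughout is the third reduction, which is a case-by-case analysis of $\Aut(Z,D_Z)$ acting on $G^\circ$ for each pair $(Z,D_Z)$ of Lemma~\ref{lem:ht=1_reduction}; this is the real content of Lemma~\ref{lem:ht=1_uniqueness} and the main obstacle, carried out in the following section.
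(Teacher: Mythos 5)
Your proposal is correct and follows essentially the same route as the paper: reduce to $\cP_{+}(\check{\cS})$ via Lemma \ref{lem:adding-1} (checking \eqref{eq:E-invariant} through Lemma \ref{lem:adding-1-criterion}), pass to the auxiliary type $\cZ$ by vertical inner snc-minimalization and Lemma \ref{lem:inner}, and invoke Lemma \ref{lem:ht=1_uniqueness}. The only point treated more carefully in the paper is the $\Aut(\check{\cS})$-equivariance of the minimalization (established via the observation that graph automorphisms preserve $D\hor$ and that the contracted $(-1)$-curves are disjoint in a height-one fibration), together with the explicit computation $h^2=0$ and of $\chi$; both are minor and your sketch covers the substance.
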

\begin{proof}
	Fix a singularity type $\cS$. 
	Let $\Phtt(\cS)$ be the set of isomorphism classes of del Pezzo surfaces of rank one, height one and singularity type $\cS$, and let $\Phttres(\cS)\subseteq \cP(\cS)$ be the set of minimal log  resolutions of surfaces in $\Phtt(\cS)$.  On each  $(X,D)\in \Phttres(\cS)$, consider a $\P^{1}$-fibration given by Lemma  \ref{lem:ht=1_types}. Let $\check{D}$ be the sum of $D$ and all vertical $(-1)$-curves, and let $\check{\cS}$ be the combinatorial type of $(X,\check{D})$. Since type $\cS$ appears exactly once in the list of Lemma  \ref{lem:ht=1_types}, it uniquely determines the extended type $\check{\cS}$, and $\Phttres(\cS)$ is the image of $\cP(\check{\cS})$ by the natural restriction map $\cP(\check{\cS})\to \cP(\cS)$. Let $\cZ$ be the corresponding type from Lemma \ref{lem:ht=1_uniqueness}, see Table \ref{table:ht=1_exceptions}. 
	
	By Lemma \ref{lem:inner} we have $\#\cP_{+}(\check{\cS})=\#\cP_{+}(\cZ)$.  	Assume $\#\cP_{+}(\cZ)<\infty$. By Lemma \ref{lem:ht=1_uniqueness}\ref{item:ht=1_uniqueness-finite}  $\cP_{+}(\cZ)$ is represented by a stratified family, which is $h^{1}$-stratified unless $\cZ$ is as in Table \ref{table:exceptions-to-exceptions}. By Lemma \ref{lem:inner} the same is true for $\cP_{+}(\check{\cS})$, with the corresponding exceptional types $\cS$ listed in Table \ref{table:exceptions-to-moduli}. By Lemma \ref{lem:adding-1}\ref{item:adding-1-h1}, the same holds for $\Phttres(\cS)$, and $\#\Phttres(\cS)=\#\cP_{+}(\check{\cS})=\#\cP_{+}(\cZ)$. If $\#\cP_{+}(\cZ)=1$, this means that  $\#\Phttres(\cS)=1$, and $(X,D)\in\Phttres(\cS)$ has $h^{1}(\lts{X}{D})=0$ (or $1$ for the exceptions). Otherwise, the number  $\#\Phttres(\cS)=\#\Phtt(\cS)$ (and $h^{1}(\lts{X}{D})$ in the exceptional case $\cS=\rE_8$, $\cha\kk=2$) are  listed in Tables \ref{table:exceptions} and \ref{table:exceptions-to-moduli}, as needed.
	
	Assume that $\#\cP_{+}(\cZ)$ is infinite. By Lemma \ref{lem:ht=1_uniqueness}\ref{item:ht=1_uniqueness-infinite}  $\cP_{+}(\cZ)$ is represented by an $\Aut(\cZ)$-faithful family whose base and symmetry group is listed in Table \ref{table:bases-primitive}. This family is universal unless $\cZ$ is as in Table \ref{table:exceptions-to-exceptions}.
	
	Looking at the list in Lemma \ref{lem:ht=1_types} we check directly that 
	\begin{equation}\label{eq:preserving-Dhor}
		\mbox{every automorphism of the weighted graph of }D\mbox{ preserves }D\hor. 	
	\end{equation}
	Observation \eqref{eq:preserving-Dhor} implies that 
	is $\Aut(\check{\cS})$-equivariant. To see this, write $\psi=\psi_{2}\circ \psi_{1}$ for some morphisms $\psi_{1}$, $\psi_{2}$, let $L$ be a $(-1)$-curve in $\Exc\psi_{2}$, and let $L'$ be a component of $(\psi_{1})_{*}\check{D}$ corresponding to a vertex in the same $\Aut(\check{\cS})$-orbit. Then $L$ and $L'$ are vertical $(-1)$-curves with of branching number $2$ in $(\psi_{1})_{*}\check{D}$. Since our $\P^{1}$-fibration has height one, $L$ and $L'$ are disjoint, so $\psi_2$ contracts them both, as needed.
	
	Thus $\Aut(\check{\cS})$ gets identified with a subgroup of $\Aut(\cZ)$, see Section \ref{sec:moduli}.
	Hence by Lemma \ref{lem:inner} the set $\cP_{+}(\check{\cS})$ is represented by an $\Aut(\check{\cS})$-faithful family with the same base and symmetry group; universal unless $\cZ$ is as in Table \ref{table:exceptions-to-exceptions}. Observation \eqref{eq:preserving-Dhor} and Lemma \ref{lem:adding-1-criterion} imply that this family satisfies condition \eqref{eq:E-invariant}, so by Lemma \ref{lem:adding-1}\ref{item:adding-1-faithful} it restricts to an almost faithful family representing $\Phttres(\cS)$, as needed. 
	
	By Lemma \ref{lem:blowup-hi}\ref{item:blowup-hi-inner} we have $h^i=h^i(\lts{Z}{D_Z})$. Let $\tau\colon (Z,D_Z)\to(\F_{m},B)$ be the contraction of all vertical curves which are disjoint from $H_1$. Then $B$ is a sum of the  negative section and $\nu$ fibers, where $\nu$ is the number of degenerate fibers in $\check{D}$. Put $\bar{h}^i=h^i(\lts{\F_{m}}{B})$, $\bar{\chi}=\chi(\lts{\F_{m}}{B})$. A direct computation as in \cite[Lemma 2.5(2)]{FZ-deformations} gives $\bar{h}^2=0$ and $\bar{\chi}=5+m-\nu$. By Lemma \ref{lem:blowup-hi}, $h^2=\bar{h}^2=0$ and $\chi=\bar{\chi}-\epsilon$, where $\epsilon$ is the number of outer blowups in the decomposition of $\tau$. Computing $\epsilon$ shows that $\chi$ is as in Table \ref{table:ht=1}.
\end{proof}

\begin{table}[h]%\vspace{-0.5}
{\renewcommand{\arraystretch}{1.1}		\begin{tabular}{r||c|c|c|c|c|c}
		$(Z,D_Z)$ from \ref{lem:ht=1_reduction} &
		\ref{item:uniq_easy} $v=4$ & 
		\ref{item:uniq_n=2_v=1} & 
		\ref{item:uniq_fork-lc} &
		\ref{item:uniq_-2-chain} $v=2$, $\cha\kk|d$ &
		\ref{item:uniq_fork} $\cha\kk|d$ &
		\ref{item:uniq-bench} $\cha\kk=2$ \\ \hline
		base & 
		$\P^1\setminus\{0,1,\infty\}$  & 
		\multicolumn{2}{c|}{$\A^1_{*}$} & 
		$\A^1$ & 
		$\A^1$ & 
		$\A^2$ \\ \hline
		symmetry group &
		$S_3$ &
		$\Z/2$ &
		$\{\id\}$ & 
		$\Z/\rr(d)$ &
		$\Z/\rr(d([(2)_{d-1},m,T^{*}]))$ & 
		$\Z/2$ 
	\end{tabular}\vspace{1em}
	
	\begin{tabular}{r||c|c|c|c|c|c|c}
		$\cha \kk$ & \multicolumn{3}{c|}{any} & \multicolumn{4}{c}{$2$} \\ \hline 
		singularity type of $\bar{X}$ &
		\ref {item:beta=3_other_3} &  
		\ref{item:F'_fork_T=[2,2]} & 
		\ref{item:ht=1_bench} &
		\ref{item:TH_long_other_b>2} &
		\ref{item:F'_fork_T=[3]_b>2}, 
		\ref{item:ht=1_bench-big_b>2} &
		\multicolumn{2}{c}{\ref{item:ht=1_bench_two_a,b>2}}
		\\
		from Lemma \ref{lem:ht=1_types}  &&&& $b=2 $ & $b=2$ & $a=2$, $b\geq 3$ & $a=b=2$ 
		\\ \hline
		base &
		\multicolumn{2}{c|}{$\A^{1}_{*}$} &
		$\P^1\setminus \{0,1,\infty\}$ & 
		\multicolumn{3}{c|}{$\A^1$} &
		$\A^{2}$ \\ \hline
		symmetry group & 
		$\Z/2$ &
		$\{\id\}$ & 
		$S_3$ &
		$\Z/\rr(d[2,m,T^{*}])$ & 
		$\{\id\}$ &
		$\Z/\rr(m-1)$ &
		$\Z/2$  
	\end{tabular}}\vspace{-0.5em}
	\caption{Bases and symmetry groups of almost faithful families constructed in Lemma  \ref{lem:ht=1_uniqueness}\ref{item:ht=1_uniqueness-infinite}. For $n\in \Z$ we define $\rr(n)\in \Z$ as the quotient of $n$ by the highest power of $\cha\kk$ dividing $n$.}
	\label{table:bases-primitive}
\end{table} \vspace{-1em}

\begin{remark}[Comparison of Lemma \ref{lem:ht=1_uniqueness} with some results in the literature]
	To give a self-contained treatment, we present a complete proof of Lemma \ref{lem:ht=1_uniqueness} below. However, explicit computations in various cases were already made in literature. For example, \cite[Example 4.16]{FZ-deformations} treats the case \ref{lem:ht=1_reduction}\ref{item:uniq_n=2_v=1}; and a straightforward generalization of the computations from loc.\ cit.\ allows one to treat cases \ref{lem:ht=1_reduction}\ref{item:uniq_n=3} and \ref{lem:ht=1_reduction}\ref{item:uniq_n=2}, too. 
	
	Case \ref{lem:ht=1_reduction}\ref{item:uniq_Ek} essentially classifies del Pezzo surfaces of rank one and type $\rE_{k+4}$ (together with a specific $(-1)$-curve on the minimal resolution, whose existence is known, see  \cite[Theorem 3.4]{Ye}). Thus Lemma \ref{lem:ht=1_uniqueness} in this case can be inferred e.g.\ from explicit equations in weighted projective spaces given in \cite{Ye,Kawakami_Nagaoka_canonical_dP-in-char>0}.
	
	Recall that the proof of Lemma \ref{lem:ht=1_uniqueness} relies on describing the action of $\Aut(Y,D_Y)$ on $G^{\circ}$, where $(Y,D_Y)$ is obtained from $(Z,D_Z)$ by an inner blowdown, and the curve $G^{\circ}$ is the locus of its possible centers. Thus we study specific automorphisms of the affine surface $S_Y=Y\setminus (D_Y-G)$. The entire group $\Aut(S_Y)$ is much more complicated than $\Aut(Y,D_Y)$, but in some cases it is well described in the literature. For example, in case \ref{lem:ht=1_reduction}\ref{item:uniq_-2-chain} with  $v=1$, see Figure \ref{fig:uniq_-2-chain}, the log surface $(Y,D_Y)$ is as in Figure \ref{fig:uniq_n=3} (with $h=v=1$, $T=[d]$), and $S_Y$ is a Gizatullin surface. If $\cha\kk | d$, \cite{DG-example} shows that $\Aut(S_Y)$ has a fixed point, which is the reason why in this case we get $\#\cP_{+}(\cZ)=2$, cf.\ Lemma \ref{lem:outer}\ref{item:outer-fixed-point}. If $\cha\kk=0$, $\Aut(S_Y)$ is usually transitive \cite{Kovalenko_Gizatullin-surfaces}, and indeed we get $\#\cP_{+}(\cZ)=1$, cf.\  Lemma \ref{lem:outer}\ref{item:outer-transitive}. Of course, in the proof below we do not use the description of $\Aut(S_Y)$, but work directly with its well-behaved subgroup $\Aut(Y,D_Y)$. 
\end{remark}

\begin{proof}[Proof of Lemma \ref{lem:ht=1_uniqueness}]
	We study each case of Lemma \ref{lem:ht=1_reduction} separately. 
	\begin{casesp*} 
	\litem{\ref{lem:ht=1_reduction}\ref{item:uniq_easy}} In this case, $Z$ is a Hirzebruch surface $\F_{m}$ for some $m\geq 0$, and $D_{Z}$ is a union of $v\leq 4$ fibers, a negative section $H_1=[m]$, and possibly a section $H_2=[-m]$ disjoint from $H_1$. By Lemma \ref{lem:h1}\ref{item:h1-Fm} we have 
	$h^{1}(\lts{Z}{D_Z})=0$ if $v\leq 3$, and $h^{1}(\lts{Z}{D_Z})=1$ if $v=4$.
	
	Assume $h=2$. Performing elementary transformations on a fiber in $D_{Z}$ and applying Lemma \ref{lem:inner} we can assume $m=0$. If $v=3$ then $\#\cP_{+}(\cZ)=1$, so $\cP_{+}(\cZ)$ is represented by an $h^{1}$-stratified family over a point, as claimed in \ref{item:ht=1_uniqueness-finite}. If $v=4$, Examples \ref{ex:4-points} and \ref{ex:4-points-Aut} show that $\cP_{+}(\cZ)$ is represented by a universal $\Aut(\cZ)$-faithful family, with one-dimensional base parametrizing the choice of the fourth fiber, as claimed in \ref{item:ht=1_uniqueness-infinite} and \ref{item:ht=1_uniqueness-exceptions}.
	
	Assume $h=1$. If $v\leq 3$, case $h=2$ above implies that $\#\cP_{+}(\cZ)=1$, as needed. Assume $v=4$. Choose a section $H\not \subseteq D_Z$ disjoint from $H_1$, and let $\cZ'$ be the combinatorial type of $(Z,D_Z+H)$. An explicit description of $\Aut(\F_{m})$ in \cite[\sec 6.1]{Blanc_Lukecin} shows that $\Aut(Z,D_Z)$ acts transitively on the linear system $|H|$ (and on $|H|\setminus \{H_1\}$ in case $m=0$), so the restriction map $\cP_{+}(\cZ')\to \cP_{+}(\cZ)$ is bijective. Thus the universal, $\Aut(\cZ')$-faithful family representing $\cP_{+}(\cZ')$ constructed in case $h=2$ above restricts to a faithful family representing $\cP_{+}(\cZ)$. Since $\Aut(\cZ)\subseteq \Aut(\cZ')$, this family is $\Aut(\cZ)$-faithful. The fact that it is universal follows from \cite[Proposition 1.7(1)]{FZ-deformations} and from the fact that the dimensions $h^{1}(\lts{Z}{(D_Z+H)})$ and $h^{1}(\lts{Z}{D_Z})$ are equal.
	\litem{\ref{lem:ht=1_reduction}\ref{item:uniq_n=3}} 
	Let $\sigma\colon (Z,D_Z)\to (Y,D_Y)$ be the contraction of the $(-1)$-tip of $D_Z$, and let $G\subseteq D_Y$ be the component containing its image. Put $G^{\circ}\de G\setminus (D_Y-G)$. 
	Assume $h=2$, $v\geq 1$. By Lemma  \ref{lem:inner} performing elementary transformations on $F_1$ we can further assume that $m=1$. Contracting all vertical curves in $D_Y$ which do not meet the image of $H_1$ we get an inner birational  morphism $\tau\colon (Y,D_Y)\to (\F_{1},\hat{\pp})$ where $(\F_{1},\hat{\pp})$ is as in case \ref{lem:ht=1_reduction}\ref{item:uniq_easy} above, with $v\leq 3$; so $(\F_1,\hat{\pp})$ is unique up to an isomorphism and has $h^{1}(\lts{\F_1}{\hat{\pp}})=0$. In other words, if $\cY$ and $\cF$ are the combinatorial types of $(Y,D_Y)$ and $(\F_{1},\hat{\pp})$, then $\cP_{+}(\cF)$ is represented by an $h^{1}$-stratified family over a point. By Lemma \ref{lem:inner} the same is true for $\cY$. 
	
	We claim that $\Aut(Y,D_Y)$ acts transitively on $G^{\circ}$. Let $\phi\colon(Y,D_Y)\to (\P^2,\pp)$ be a composition of $\tau$ with the contraction of the negative section $\Sec_1$. Then $\pp$ is a sum of lines $\ll_{0}$, $\ll_j\de\phi(\sigma(F_j))$, $j\in \{1,\dots,v\}$ which pass through the image of $H_1$; and a line $\ll\de \phi(\sigma(H_{2}))$ which does not. Fix coordinates $[x:y:z]$ on $\P^2$ so that $\ll=\{x=0\}$, $\ll_0=\{y=0\}$, $\ll_1=\{z=0\}$, and consider a $\G_{m}$-action on $(\P^2,\pp)$ given by $\alpha_{\lambda}\colon[x:y:z]\mapsto[\lambda x:y:z]:\lambda \in \kk^{*}$. Put $\cc_{\lambda}=\{\lambda x^{d}=y^{e}\}$, where $d=d(T)$, $e=d(T^{*}-\ftip{T^{*}})$. Then $\kk^{*}\ni \lambda\mapsto G^{\circ}\cap \tau^{-1}_{*}\cc_{\lambda}\in G^{\circ}$ parametrizes $G^{\circ}$. Hence the transitive action of $\Aut(\P^2,\pp)$ on $\{\cc_{\lambda}\}_{\lambda\in \kk^{*}}$, given by $\alpha_{\lambda}(\cc_{1})=\cc_{\lambda^{d}}$  lifts to a transitive action of $\Aut(Y,D_Y)$ on $G^{\circ}$; as needed. 
	
	The group $\Aut(\cZ)$ is trivial if $v\leq 1$, and if $v=2$ it is generated by the involution interchanging the fibers. The latter is realized by some $\alpha\in \Aut(Z,D_Z)$: to see this, choose the coordinates above so that $\ll_{2}=\{y=z\}$ and note that an automorphism of $(\P^2,\pp)$ given by $[x:y:z]\mapsto [\zeta x:y:y-z]$ with $\zeta^{d}=(-1)^{d-e}$ maps $(\ll_1,\ll_2)$ to $(\ll_2,\ll_1)$ and acts trivially on $G^{\circ}$, hence lifts to the required $\alpha$. Therefore, Lemma \ref{lem:outer}\ref{item:outer-transitive} implies that $\cP_{+}(\cZ)$ is represented by an $\Aut(\cZ)$-faithful family over a point, in particular $\#\cP_{+}(\cZ)=1$.

	 If $\cha\kk\nmid d$ then a direct computation shows that the derivative of the above $\G_{m}$ action on $G^{\circ}$ is nonzero, hence by Lemma \ref{lem:outer}\ref{item:outer-h1} we have $h^{1}(\lts{Z}{D_Z})=h^{1}(\lts{Y}{D_Y})=0$, as needed. If  $v\leq 1$ we have another $\G_{m}$-action on $(\P^2,\pp)$ which lifts to a transitive action on $G^{\circ}$, namely $[x:y:z]\mapsto [x:y:\lambda z]$, $\lambda \in \kk^{*}$. Its derivative is nonzero if $\cha\kk\nmid e$. Since $\gcd(d,e)=1$, we conclude as before that  $h^{1}(\lts{Z}{D_Z})=0$. 
	
	Thus we have shown that $\#\cP_{+}(\cZ)=1$ and $h^{1}(\lts{Z}{D_Z})=0$ if $\cha\kk\nmid d$ or $v\leq 1$. The same holds if $h=1$: indeed, after removing $H_2$ the numbers $\#\cP_{+}(\cZ)$ and $h^{1}(\lts{Z}{D_Z})$ can only decrease, see Lemma \ref{lem:h1}\ref{item:h1_positive_curve} for the latter. It remains to show that in case $\cha\kk |d$, $v=2$, see Table \ref{table:exceptions-to-exceptions}, we have $h^{1}(\lts{Z}{D_Z})=1$. By Lemma \ref{lem:outer}\ref{item:outer-h1-grows} we need to check that every vector field $\xi$ on $Y$ tangent to $D_Y$ vanishes along $G^{\circ}$. 
	
	As before, let $\tau\colon Y\to \F_{m}$ be the contraction of vertical curves which do not meet the image of $H_1$. The image $\bar{\xi}$ of $\xi$ is a vector field on $\F_{m}$ which is tangent to three fibers, hence it is vertical. Choose a local coordinate system $(x,y)$ around the center $p$ of $\tau$ such that $x$ is the restriction of the $\P^1$-fibration of $\F_m$. Since $\bar{\xi}$ is vertical and vanishes at $p$, we can assume that $\xi=x\frac{\d}{\d y}$ or $-y\frac{\d}{\d y}$. After a blowup $(x,y)\mapsto (xy,y)$, vector fields $x\frac{\d}{\d x}$ and $ax\frac{\d}{\d x}-by\frac{\d}{\d y}$, $a,b\in \kk$ lift to $x\frac{\d}{\d y}$ and $(a+b)x\frac{\d}{\d x}-by\frac{\d}{\d y}$. Thus by induction on the number of blowups, we conclude that, in some local coordinates $(x,y)$ around $G\cap \sigma_{*}T$ where $\sigma_{*}T=\{x=0\}$, $G=\{y=0\}$, we have $\xi=x\frac{\d}{\d y}$ or $dx\frac{\d}{\d x}-ey\frac{\d}{\d y}=-ey\frac{\d}{\d y}$. In any case, $\xi$ vanishes along $G$, as needed.

	\litem{\ref{lem:ht=1_reduction}\ref{item:uniq_n=2}} Assume $h=2$. Then $H_2=[2-m]$. Indeed, denoting by $\phi\colon Z\to \F_{m}$ the contraction of all vertical curves which are disjoint from $H_1$, we see that $\phi(H_1)$ is the negative section and $\phi(H_2)\cap \phi(H_1)=\emptyset$, so $\phi(H_2)=[-m]$. Thus $H_{2}=[2-m]$, as needed.
	
	We blow up $m-2\geq 0$ times at some point of $H_2\cap V_Z$ and its infinitely near points on the proper transforms of $H_2$, so that the latter becomes a $0$-curve, and perform elementary transformations on it until the total transform of $D_Z$ contains a chain $[0,0]$. Via further elementary transformations, we \enquote{move $[0,0]$ along $D_Z$} to get an inner birational map $(Z,D_Z)\map (Z',D')$ such that a $(-1)$-tip of $D'$ meets a $0$-curve, cf.\ \cite[Examples 2.11]{FKZ-weighted-graphs}. We get $D'=L_1+L_2+R$, where  $R=[\uline{0}]+[0,2,T_{1}]*[(2)_{m-1}]*[T_2,\uline{2},T_{2}^{*},m,T_1^{*}]$ is circular, $L_j=[1]$, $L_j\cdot R=1$, $j=1,2$. The underlined numbers refer to the components $G_{1}'=[0]$, $G_2'=[2]$ meeting $L_1$ and $L_2$, respectively (in the degenerate case when some $V_j$ is a chain, the corresponding chain $T_j$ is empty). Let $U\neq G_1'\subseteq R$ be the other $0$-curve, and let $H$ be the component of $D'-G_{1}'$ meeting it. Then $|U|$ induces a $\P^1$-fibration of $Z'$, whose degenerate fibers are supported on $D'-(G_{1}'+U+H)$ and, by Lemma \ref{lem:fibrations-Sigma-chi}, on $L_1'+W=[1,1]$ for a unique $(-1)$-curve $W$ such that $W\cdot D'=W\cdot H=1$. Let $\cZ'$ and $\cW$ be the combinatorial types of $(Z',D'_Z)$ and $(Z',D_{Z}'+W)$. By Lemma \ref{lem:inner} we have $\#\cP_{+}(\cZ)=\#\cP_{+}(\cZ')$. Since on each log surface $(Z',D_{Z}')\in \cP(\cZ')$ the curve $W$ is unique, we have $\#\cP_{+}(\cZ')=\#\cP_{+}(\cW)$. Similarly, $h^{1}(\lts{Z}{D_Z})=h^{1}(\lts{Z'}{D'})=h^{1}(\lts{Z'}{(D'+W)})$ by Lemmas \ref{lem:blowup-hi}\ref{item:blowup-hi-inner} and \ref{lem:h1}\ref{item:h1-1_curve}. Contraction of $W$ is an inner  blowdown onto a log surface as in case~\ref{lem:ht=1_reduction}\ref{item:uniq_n=3} above, so by Lemma \ref{lem:inner}  we get $\#\cP_{+}(\cW)=1$, and by Lemma \ref{lem:blowup-hi}\ref{item:blowup-hi-inner} we get $h^{1}(\lts{Z'}{(D'+W)})=0$ if $\cha\kk \nmid d\de d([T_{1}^{*},m,T_{2}^{*}])$ and $1$ otherwise. This ends the proof in case $h=2$. 
	
	Assume $h=1$. From case $h=2$ we conclude that $\#\cP_{+}(\cZ)=1$ and, using Lemma \ref{lem:h1}\ref{item:h1_positive_curve}, $h^{1}(\lts{Z}{D_Z})=0$ if $\cha\kk \nmid d$, and $h^{1}(\lts{Z}{D_Z})\leq 1$ if $\cha\kk | d$. It remains to check that the latter inequality is an equality. Let $(Z,D_{Z})\to (Y,D_Y)$ be the contraction of a $(-1)$-tip of the first fiber, let $G\subseteq D_Y$ be the new $(-1)$-curve. By Lemma \ref{lem:blowup-hi}\ref{item:blowup-hi-outer} we have $h^{1}(\lts{Z}{D_Z})\geq h^{1}(\lts{Y}{D_Y})$, with strict inequality if every vector field on $Y$ tangent to $D_Y$ vanishes along $G$. Thus if $h^{1}(\lts{Y}{D_Y})=1$ then the result follows. Assume $h^{1}(\lts{Y}{D_Y})=0$. Case \ref{lem:ht=1_reduction}\ref{item:uniq_n=3} above gives $\cha\kk\nmid d(T_1)$, so by symmetry $\cha\kk\nmid d(T_2)$. Let $(Y,D_Y)\to (\bar{Y},D_{\bar{Y}})$ be the contraction of the $(-1)$-tip of $D_Y$, and let $(\bar{Y},D_{\bar{Y}})\to (\F_{m},B)$ be the inner morphism contracting both degenerate fibers to $0$-curves. Since $\cha\kk \nmid d(T_1),d(T_2)$, the local computation at the end of case \ref{lem:ht=1_reduction}\ref{item:uniq_n=3} shows that the lift of every vertical vector field on $\F_{m}$ either does not vanish on some $(-1)$-curve in $D_{\bar{Y}}$, hence does not lift to $Z$ (case $-y\frac{\d}{\d y}$); or lifts to one vanishing on $G$ (case $x\frac{\d}{\d y}$). Since every vector field on $\F_m$ tangent to $B$ is a linear combination of a vertical one and one tangent to some $H_2$, cf.\ \cite[Lemma 2.5(2)]{FZ-deformations}, case $h=2$ above shows that all vector fields on $(Y,D_Y)$ vanish along $G$, as needed.
	
	\litem{\ref{lem:ht=1_reduction}\ref{item:uniq_dumb}} We can assume $h=2$, $v=1$. Write $V_Z=L+B+T_0+T_1+T_2$, where $L=[1]$, $\beta_{V_Z}(B)=3$, and for $j\in \{1,2\}$, $T_j$ is the maximal twig of $V_Z$ meeting $H_j$. As in case \ref{lem:ht=1_reduction}\ref{item:uniq_n=3}, after a sequence of elementary transformations on $F_1$ we can assume $H_1=[0]$. Like in case \ref{lem:ht=1_reduction}\ref{item:uniq_n=2}, we \enquote{move} the subchain $F_1+H_1=[0,0]$ to $B+\ltip{T_1}$, that is, we perform elementary transformations $(Z,D_Z)\map (Z', D')$ such that the proper transforms $B'$, $T'$ of $B$, $\ltip{T_1}$ are $0$-curves. Now $|T'|$ induces a $\P^1$-fibration such that $D'\hor=B'+H$ consists of two disjoint $1$-sections; and $D'\vert=T'+F'+R$, where $T',F'$ are fibers, and $R$ is the image of $L+T_0$. By Lemma \ref{lem:fibrations-Sigma-chi}, the fiber $F_R$ containing $R$ has exactly one component off $D'$, say $W$, and $F',F_R$ are the only degenerate fibers. Hence $(F_R)\redd=R+W$ and $W\cdot D'=W\cdot H=1$. We have $R=[1,(2)_{k-1}]$ for some $k\geq 3$, and $R\cp{2}$ meets $B'$, so it has multiplicity one in $F_{R}$. Therefore, $F_{R}=[1,(2)_{k-1},1]$ since $k\geq 3$. 
	
	As in case \ref{lem:ht=1_reduction}\ref{item:uniq_n=2} above, it is enough to study the log surface $(Z',D'+W)$. An inner morphism contracting a subchain $[1,(2)_{k-2}]$ of $F_R$ containing $W$ reduces the proof to case \ref{lem:ht=1_reduction}\ref{item:uniq_n=3} (with $T=0$), settled above.
	
	\litem{\ref{lem:ht=1_reduction}\ref{item:uniq_n=2_v=1}} Let $A_1,A_2$ be the $(-1)$-curves in $V_Z$, and let $\sigma\colon (Z,D_{Z})\to (Y,D_{Y})$ be the contraction of $A_2$. Let $A_1'=\sigma(A_1)$, let $G_2$ be the component of $D_Y$ containing the point $\sigma(A_2)$, and let $G_{2}^{\circ}=G_{2}\setminus (D_Y-G_2)$. Case \ref{lem:ht=1_reduction}\ref{item:uniq_n=3} shows that the combinatorial type $\cY$ of $(Y,D_{Y})$ satisfies $\#\cP_{+}(\cY)=1$, and $h^{1}(\lts{Y}{D_Y})$ equals $0$ if $\cha\kk\neq 2$ and $1$ otherwise. Moreover, in case $\cha\kk=2$ all vector fields on $Y$ tangent to $D_Y$ vanish along $G^{\circ}_2$: to see this, contract the fiber containing $A_1'$ to a $0$-curve and apply the computation from the end of~\ref{lem:ht=1_reduction}\ref{item:uniq_n=3}. 

	Thus $\cP_{+}(\cY)$ is represented by a faithful family over a point, universal if $\cha\kk\neq 2$. Lemma \ref{lem:outer}\ref{item:outer-trivial} shows that to construct a faithful family over $G_{2}^{\circ}$ representing $\cP_{+}(\cZ)$ it is enough to show that the group $\Aut(Y,D_Y)$ acts trivially on $G_{2}^{\circ}\cong \A^{1}_{*}$. Moreover, the resulting family will be universal if $\cha\kk\neq 2$; and in case $\cha\kk=2$ we will get $h^{1}(\lts{Z}{D_Z})=1$. Since $G_{2}\cong \P^1$ and $\Aut(Y,D_Y)$ fixes each of the two points $G_{2}\setminus G_{2}^{\circ}$ it is enough to show that  $\Aut(Y,D_Y)$ has a fixed point on $G_{2}^{\circ}$.
	
	Let $\upsilon\colon (Y,D_Y)\to (Z',D_{Z}')$ be the contraction of $\sigma(A_1)$, let $V_{i}\subseteq D'_{Z}$ be the vertical $(-1)$-curve and let $V_{i}^{\circ}=V_{i}\setminus (D_{Z}'-V_i)$. Let $\phi\colon Z'\to \P^2$ be the contraction of the images of $H_1$ and all vertical curves which are disjoint from it. Let $p,p_1,p_2$ be the images of $H_1,A_1,A_2$, let $\ll_{12}$ be the line joining $p_1$ with $p_2$, and for $i\in \{1,2\}$ let $\ll_i$ be the line joining $p_i$ with $p$, so $\ll_1+\ll_2$ is the image of $V_Z$. Let $\{\cc_{z}\}_{z\in \P^1}$ be the pencil of conics tangent to $\ll_i$ at $p_1,p_2$, with degenerate members $\cc_0\de \ll_1+\ll_2$ and $\cc_{\infty}=2\ll_{12}$. Write $\{q^{z}_{i}\}=\phi^{-1}_{*}\cc_{z}\cap V_{i}$, so $\A^{1}_{*}\ni z\mapsto q^{z}_{i}\in V_{i}^{\circ}$ is a parametrization of $V_{i}^{\circ}$. Say that $q_1^{1}=\upsilon(\sigma(A_1))$. Then $(\phi\circ\upsilon)^{-1}_{*}\cc_1$ is fixed by $\Aut(Y,D_Y)$; so its common point with $G_2^{\circ}$, namely, $\upsilon^{-1}(q_{1}^{2})$, is fixed by $\Aut(Y,D_Y)$, as needed.
		
	It remains to show that the above faithful family is $\Aut(\cZ)$-faithful. The group $\Aut(\cZ)\cong \Z/2$ interchanges the degenerate fibers, see  Figure \ref{fig:uniq_n=2_v=1}. We need to endow our family with a $\Z/2$-action such that the fibers $(Z_{b},D_{Z,b})$ obtained from $(Y,D_Y)$ by blowing up points $b\in G_{2}^{\circ}$ which lie in the same orbit are isomorphic.

	It is easy to see that $(Z',D_{Z}')$ admits a $\Z/2$-action interchanging the degenerate fibers; and a $\G_{m}$-action which is transitive on $V_{1}^{\circ}$: indeed, otherwise we would have $\#\cP_{+}(\cY)>1$ by Lemma \ref{lem:outer}, which is false. Write the $\G_{m}$-action as $\lambda\cdot q_{z}^{1}=q_{\lambda z}^{1}$. Since $\Aut(Z',D_{Z}')$ fixes the pencil $\{\phi^{-1}_{*}\cc_{z}\}$, we have $\lambda\cdot q_{z}^{2}=q_{\lambda z}^{2}$, too. 
	 
	 Parametrize $G_{2}^{\circ}\cong V_{2}^{\circ}$ in such a way that $(Z_{b},D_{Z,b})$ is obtained from $(Z',D_{Z}')$ by blowing up $q_{1}^{1}$ and $q_{b}^{2}$. Applying the above $\Z/2$- and $\G_{m}$-actions, we see that $(Z_{b},D_{Z,b})$ is isomorphic to the log surface obtained by blowing up $q_{b}^1$ and $q_1^2$; and to the one obtained by blowing up $q_{1}^{1}$ and $q_{1/b}^{2}$. Thus $(Z_{b},D_{Z,b})\cong  (Z_{1/b},D_{Z,1/b})$, and the required $\Z/2$-action lifts the automorphism of $Z'$ mapping $(q_{1}^{1},q^{2}_{b})$ to $(q^{2}_{1},q^{1}_{1/b})$. In particular, the symmetry group of our family is $\Z/2$, acting on the base $G_{2}^{\circ}\cong \A^{1}_{*}$ is by $b\mapsto \frac{1}{b}$.

	\litem{\ref{lem:ht=1_reduction}\ref{item:uniq_fork-lc}}
		Let $F_{1,Z},F_{2,Z}$ be the fibers in $D_Z$, where $(F_2)\redd=\langle 2;[2],[2],[1]\rangle$, see Figure \ref{fig:uniq_fork-lc}. Let $T_{i,Z}$ be the $(-2)$-tip of multiplicity $1$ of $F_{i,Z}$ which does not meet $H_1$, and let $U_{i,Z}$ be the $(-1)$-curve in $F_{i,Z}$. Let $\sigma\colon (Z,D_Z)\to (Y,D_Y)$ be the contraction of $U_{2,Z}$, let $F_{i}=\sigma_{*}F_{i,Z}$, $T_{i}=\sigma(T_{i,Z})$, $U_1=\sigma_{*}(U_1,Z)$; let $G$ be the $(-1)$-curve in $F_2$ and let $G^{\circ}=G\setminus (D_Y-G)\cong \A^1_{*}$. 
		
		The contraction of $F_2-T_2$ is an inner morphism from $(Y,D_Y)$ onto a log surface $(Y',D_{Y}')$ as in case \ref{lem:ht=1_reduction}\ref{item:uniq_dumb}. We have shown that $h^{1}(\lts{Y'}{D_{Y}'})=0$ and the combinatorial type $\cY'$ of $(Y',D_{Y}')$ has $\#\cP_{+}(\cY')=1$, so $\cP_{+}(\cY')$ is represented by a universal faithful family over a point. By Lemma \ref{lem:inner}, the same is true for $(Y,D_Y)$. Note that the group $\Aut(\cZ)$ is trivial, see Figure \ref{fig:uniq_fork-lc}. Thus to conclude that   \ref{lem:ht=1_uniqueness}\ref{item:ht=1_uniqueness-infinite} holds in this case it is enough, by Lemma \ref{lem:outer}\ref{item:outer-trivial}, to prove that the action of $\Aut(Y,D_Y)$ on $G^{\circ}\cong \A^{1}_{*}$ is trivial. 
		
		Let $A$ be the restriction of $\Aut(Y,D_Y)$ to $G^{\circ}$, so $A\leq \G_m$.  The action of $A$ on $T_1\setminus (D_Y-T_1)\cong \A^1$ has a fixed point, say $p_1$. Let $\tau\colon Y\to \P^1\times \P^1$ be the contraction of $(D_Y)\vert-T_1-T_2$. Let $H'$ be the horizontal line through $\tau(p_1)$. The action of $A$ on $\P^1\times \P^1$ fixes $H'$, so $A\leq \Aut(Y,D_Y+H)$, where $H=\tau^{-1}_{*}H'=[0]$. 
		
		Now, we argue like in case \ref{lem:ht=1_reduction}\ref{item:uniq_dumb}. Let $B$ be the branching component of $(F_1)\redd-U_1$. Perform two elementary transformations on $H$ so that the proper transform of $T_1$, say $\hat{T}_1$, becomes a $0$-curve, and denote this map by $\phi\colon (Y,D_{Y})\map (\hat{Y},\hat{D})$. Consider a $\P^1$-fibration induced by $|\hat{T}_{1}|$. Then $\hat{D}\hor$ consists of two $1$-sections, namely $\phi_{*}B$ and a $0$-curve, say $E$. The divisor $\hat{D}\vert$ has three connected components, namely: a smooth fiber $\hat{T}_{1}$, a chain $C'=[2,2,2,1,4]$ supporting a degenerate fiber, and a chain $C=[1,2,2]\subseteq \phi_{*}F_1$, such that $\phi^{-1}$ is an isomorphism near $C$. Lemma \ref{lem:fibrations-Sigma-chi} implies that the unique degenerate fiber not contained in $\phi_{*}D_Y$ is the one containing $C$, which is a chain $C+L=[1,2,2,1]$. We have $L\cdot \hat{D}=2$, and $L$ meets $E$ and the $(-2)$-tip of $\hat{D}$.
		
		Perform elementary transformations on $\hat{T}_{1}$ in such a way that the image of $B$ becomes a $(+1)$-curve, and contract all components of the image of $\hat{D}+L$ which are disjoint from it. Denote the resulting map by $\psi\colon (Y,D_Y)\map (\P^2,\pp)$. Then $\pp$ is the sum of four lines: the image of $B$, call it $\ll$, and three others, call them $\ll_1,\ll_2,\ll_3$, which meet at one point away from $\ll$: these contain the images of $C$, $C'$ and $\hat{T}_{1}$, respectively. The image of $U_1$ is a point $q\in \ll_1\setminus (\pp-\ll_1)$. We have an injective homomorphism $A\into \Aut(\P^2,\pp,q)$ whose image fixes each component of $\pp$, so $A=\{\id\}$, as needed.

	\litem{\ref{lem:ht=1_reduction}\ref{item:uniq_A1E7}} We claim that $\#\cP_{+}(\cZ)=2$. Let $T_{1}$, $T_2$ be the $(-1)$- and $(-2)$-tip of $D_Z$, and let $\phi\colon Z\to Z'$ be the contraction of $D_Z-T_1-T_2-F_1=[1,2,2,2,2]$, see Figure \ref{fig:uniq_A1E7}. Then $\rho(Z')=2$ and $\phi(T_2)$ is a $(+1)$-curve, so $Z'\cong \F_{1}$ and $\phi(T_2)$ is disjoint from the negative section $\Sec_1=[1]$. We have $\phi(T_1)=[0]$, so $\phi(T_1)$ is a fiber, and $\phi(F_1)$ is a $(+5)$-curve meeting $\phi(T_1)$ and $\phi(T_2)$ only at their unique common point, with multiplicities $1$ and $3$, respectively; so $\phi(F_1)\equiv 2\phi(T_1)+\phi(T_2)$. Thus $\Sec_1\cdot \phi(F_1)=2$, $\Sec_1\cdot \phi(T_1)=1$, and $\Sec_1\cdot \phi(T_2)=0$. Let $\tau\colon (Z,D_Z)\to (\P^2,\pp)$ be a composition of $\phi$ with the contraction of $\Sec_1$. Then $\tau(F_1)$ is a rational cubic, $\tau(T_{2})$ is its inflectional tangent line, and $\tau(T_1)$ is a line joining $\tau(F_1)\cap \tau(T_2)$ with $\Sing \tau(F_1)$. Up to an isomorphism there are exactly two such pairs $(\P^2,\pp)$, one with $\tau(F_1)$ nodal and the other with $\tau(F_1)$ cuspidal, see \cite[Lemma 5.5]{PaPe_MT}. Since each blowup in a decomposition of $\tau$ is uniquely determined by $\pp$, by the universal property of blowing up we get exactly two log surfaces $(Z,D_Z)$, cf.\  Lemma \ref{lem:inner} and its proof. Since $\Aut(\cZ)$ is trivial, this means that $\#\cP_{+}(\cZ)=2$, as needed.
	
	We claim that $\cP_{+}(\cZ)$ is represented by a stratified family. Let $(Z,D_Z)\toin{\sigma_1} (Y_{1},D_{1})\toin{\sigma_2} (Y_2,D_2)$  be the contractions of the $(-1)$-tips of $D_Z$ and $D_1$; let $G_i$ be the $(-1)$-tip of $D_i$, let $G^{\circ}_{i}=G_{i}\setminus (D_{i}-G_i)\cong \A^1$, and let $\cY_{i}$ be the combinatorial type of $(Y_i,D_i)$. Now $\cY_{2}$ is as in case~\ref{lem:ht=1_reduction}\ref{item:uniq_n=3} above, so  $\#\cP_{+}(\cY_2)=1$.
	 
	Since $\#\cP_{+}(\cZ)<\infty$, Lemma \ref{lem:outer}\ref{item:outer-trivial} implies that for both $i\in \{1,2\}$, the action of $\Aut(Y_i,D_i)$ on $G_{i}^{\circ}$ has an open orbit; so applying Lemma \ref{lem:outer}\ref{item:outer-open} twice we get that $\cZ$ is represented by a stratified family, as needed. 
	
	We remark that, since $\#\cP_{+}(\cZ)=2=\#\cP_{+}(\cY_2)+1$, Lemma \ref{lem:outer}\ref{item:outer-open} implies that the action of $\Aut(Y_{i},D_{i})$ on $G_{i}^{\circ}$ is transitive for one $i\in \{1,2\}$, and has a fixed point for the other. The proof in case \ref{lem:ht=1_reduction}\ref{item:uniq_-2-chain} below will show that this action is transitive for $i=2$ if $\cha\kk\neq 2$, and for $i=1$ if $\cha\kk=2$.
	
	To conclude that this family is $h^{1}$-stratified, by Lemma \ref{lem:outer}\ref{item:outer-h1-stratified} it is enough to show that for $(Z,D_Z)$ lying over the open stratum, i.e.\ in case when $\tau(F_1)$ is nodal, we have $h^1\de h^{1}(\lts{Z}{D_Z})=0$. Let $E=[1]$ be the proper transform of the negative section of $\F_1$, and let $L\subseteq Z$ be the proper transform of a line tangent to $\Sing \tau(F_1)$. Since $\tau(F_1)$ is nodal, the sum $D_{Z}+E$ is snc, so Lemma \ref{lem:h1}\ref{item:h1-1_curve} gives  $h^{1}=h^{1}(\lts{Z}{(D_Z+E)})$. Let $\upsilon\colon W\to Z$ be a blowup at $E\cap L$, and let $D_W=(\upsilon^{*}D_{Z})\redd$, $E_{W}=\upsilon^{-1}_{*}E=[2]$, $L_{W}=\upsilon^{-1}_{*}L=[1]$.  By Lemmas \ref{lem:blowup-hi}\ref{item:blowup-hi-inner} and \ref{lem:h1}\ref{item:h1-1_curve} we have $h^{1}=h^{1}(\lts{W}{(D_W+E_W)})=h^{1}(\lts{W}{(D_W+E_W+L_W)}$. Contracting non-branching $(-1)$-curves in $D_{W}+E_W+L_{W}$ and its images yields an inner morphism $(W,D_{W}+L_W)\to (\P^2,\pp)$, where $\pp$ is a sum of four general lines, so $h^{1}=h^{1}(\lts{\P^2}{\pp})=0$ by Lemma \ref{lem:h1}\ref{item:h1-P2}, as claimed.
	\litem{\ref{lem:ht=1_reduction}\ref{item:uniq_2}} 
	Let $\sigma\colon (Z,D_Z)\to (Y,D_Y)$ be the contraction of the $(-1)$-tip of $D_Z$, let $G$ be the $(-1)$-tip of $D_Y$,  let $G^{\circ}=G\setminus (D_Y-G)\cong \A^{1}$, and let $\cY$ be the combinatorial type of $(Y,D_Y)$. Case \ref{lem:ht=1_reduction}\ref{item:uniq_n=3}, $v=1$ shows that $\cP_{+}(\cY)$ is represented by an $h^{1}$-stratified family over a point. By Lemma \ref{lem:outer}\ref{item:outer-fixed-point},\ref{item:outer-h1}, 
	it is enough to show that the action of $\Aut(Y,D_Y)$ on $G^{\circ}\cong \A^{1}$ has two orbits, and the derivative of this action is nonzero.
	
	We argue like in \ref{lem:ht=1_reduction}\ref{item:uniq_n=2} above. We assume that $V_Z$ is a fork, the degenerate case $V_Z=[1,2,1]$ is analogous. 
	
	Say that $H_1$ meets $T^{*}$. Note that $H_2=[1-m]$: indeed, after the contraction of all vertical curves disjoint from $H_1$, the image of $H_2$ is a member of $|\sigma_{m}|$. Replacing $H_1$ with $H_2$ if needed, we can assume $m\geq 1$. 
	
	Let $\phi_{1}\colon Y'\to Y$ be the composition of blowups over $\sigma(H_2)\cap \sigma(F_1)$ such that 
	\begin{equation*}
		(\phi_{1}^{*}D_Y)\redd=[1,2]+[T,\uline{1},T^{*},\bs{1},T]*[(2)_{m-1},\uline{1},\uline{m},T^{*}].
	\end{equation*}
	Here, the first chain is the proper transform of $G+B$, where $\beta_{D_Y}(B)=3$; the second chain meets $(\phi_{1})^{-1}_{*}B$; and the subsequent underlined components are the proper transforms of $H_2$, $F_1$ and $H_{1}$, see Figure \ref{fig:phi}.
	
	\begin{figure}[ht]
		\begin{tikzpicture}[scale=0.9]
			\begin{scope}
				\draw (-0.1,0) -- (2,0);
				\node at (1,0.2) {\small{$m-1$}};
				\node at (1,-0.2) {\small{$H_2$}};
				\draw (0,-0.2) -- (0.2,0.8);
				\node at (0.1,1) {$\vdots$};
				\draw (0.2,1) -- (0,2);
				\draw [decorate, decoration = {calligraphic brace}, thick] (-0.2,-0.2) --  (-0.2,1.9);
				\node[rotate=-90] at (-0.5,0.8) {\small{$T$}}; 
				\draw (0,1.8) -- (0.2,3);
				\node at (-0.2,2.5) {\small{$-2$}};
				\draw[dashed] (-0.1,2.2) -- (1.1,2.4);
				\node at (0.6,2.5) {\small{$-1$}};
				\node at (0.65,2.1) {\small{$G$}};
				\draw (0.2,2.8) -- (0,3.8);
				\node at (0.1,4) {$\vdots$};
				\draw (0,4) -- (0.2,5);
				\draw [decorate, decoration = {calligraphic brace}, thick] (-0.2,2.9) --  (-0.2,5);
				\node[rotate=-90] at (-0.5,3.85) {\small{$T^{*}$}};
				\draw (-0.1,4.8) -- (2,4.8);
				\node at (1,5) {\small{$-m$}};
				\node at (1,4.6) {\small{$H_1$}};
				\draw (1.9,-0.2) -- (1.9,5);
				\node at (1.7,2.5) {\small{$0$}};
				\node at (2.2,2.5) {\small{$F_1$}};
				\draw[<-] (3,2.5) -- (4,2.5);
				\node at (3.5,2.7) {\small{$\phi_1$}};
			\end{scope}
			\begin{scope}[shift={(5,0)}]
				\draw (-0.1,0) -- (3,0);
				\node at (1.5,0.2) {\small{$-1$}};
				\node at (1.5,-0.2) {\small{$H_2$}};
				\draw (0,-0.2) -- (0.2,0.85);
				\node at (0.1,1) {$\vdots$};
				\draw (0.2,1) -- (0,2);
				\draw [decorate, decoration = {calligraphic brace}, thick] (-0.2,-0.2) --  (-0.2,1.9);
				\node[rotate=-90] at (-0.5,0.8) {\small{$T$}}; 
				\draw (0,1.8) -- (0.2,3);
				\node at (-0.2,2.5) {\small{$-2$}};
				\draw[dashed] (-0.1,2.2) -- (1.7,2.45);
				\node at (0.8,2.55) {\small{$-1$}};
				\node at (0.85,2.15) {\small{$G$}};
				\draw (0.2,2.8) -- (0,3.8);
				\node at (0.1,4) {$\vdots$};
				\draw (0,4) -- (0.2,5);
				\draw [decorate, decoration = {calligraphic brace}, thick] (-0.2,2.9) --  (-0.2,5);
				\node[rotate=-90] at (-0.5,3.85) {\small{$T^{*}$}};
				\draw (-0.1,4.8) -- (3,4.8);
				\node at (1.5,5) {\small{$-m$}};
				\node at (1.5,4.6) {\small{$H_1$}};
			\begin{scope}[shift={(1,0)}]	
				\draw (1.8,-0.2) -- (2,0.7);
				\node at (1.85,0.85) {\small{$\vdots$}};
				\draw (2,0.8) -- (1.8,1.7);
				\draw [decorate, decoration = {calligraphic brace}, thick] (2.2,1.7) --  (2.2,-0.2);
				\node[rotate=90] at (2.55,0.8) {$T^{*}$};
				\node at (1.7,1.2) {\small{$L_1$}};
				\draw (1.8,1.5) -- (2,2.5);
				\node at (2.15,2) {\small{$-1$}};
				\draw (2,2.3) -- (1.8,3.2);
				\node at (1.7,2.75) {\small{$L_2$}};
				\node at (1.95,3.35) {\small{$\vdots$}};
				\draw (1.8,3.3) -- (2,4.2);
				\draw [decorate, decoration = {calligraphic brace}, thick] (2.2,4.2) --  (2.2,2.3);
				\node[rotate=90] at (2.55,3.25) {\small{$T*[(2)_{m-1}]$}};
				\draw (2,4) -- (1.8,5);
				\node at (1.65,4.45) {\small{$-1$}};
				\node at (2.15,4.45) {\small{$F_1$}};
				\draw[dashed] (0.5,2.5) to[out=-60,in=180] (1.9,1.85)-- (2.1,1.85);
				\node at (1.25,2.15) {\small{$-1$}};
				\node at (1,1.8) {\small{$L_0$}};
				\draw[->] (3.5,2.5) -- (4.5,2.5);
				\node at (4,2.7) {\small{$\phi_2$}};
			\end{scope}
			\end{scope}
			\begin{scope}[shift={(11.5,2.5)}]
				\draw (0,-1.2) -- (0,1.2);
				\node at (-0.15,-0.5) {\small{$\ll$}};
				\draw (-0.2,-1) -- (2,0.2);
				\node at (0.5,0.9) {\small{$\ll_2$}};
				\draw (-0.2,1) -- (2,-0.2);
				\node at (0.4,-0.2) {\small{$\ll_0$}};
				\draw (-0.2,0) -- (2,0);
				\node at (0.5,-0.9) {\small{$\ll_1$}};
			\end{scope}
		\end{tikzpicture}\vspace{-1em}
		\caption{The map $\phi\colon Y\map \P^2$ constructed in case \ref{lem:ht=1_reduction}\ref{item:uniq_2}.}
		\label{fig:phi}
	\end{figure}

	Let  $L_1$, $L_2$ be the components of $T^{*}$ and $T$ meeting the exceptional $(-1)$-curve of $\phi_{1}$, denoted above by \enquote{$\bs{1}$}. Let $\phi_2\colon Y'\to \P^2$ be the contraction of $(\phi_1^{*}D_Y)\redd-(\phi_{1}^{-1})_{*}B-L_1-L_2$. Put $\phi\de\phi_2\circ \phi_1^{-1}$. Then $\ll_{j}\de \phi_{2}(L_{j})$, $j\in \{1,2\}$ and $\ll\de \phi_{*}B$ are non-concurrent lines. Write $\{q\}=\ll_1\cap \ll_2$, $\{q_j\}=\ll_j\cap \ll$, $\{q_0\}=\phi(G)\subseteq \ll\setminus \{q_1,q_2\}$, let $\ll_0$ be the line joining $q_0$ with $q$; and $\pp=\ll+\sum_{j=0}^{2}\ll_{j}$. Fix coordinates on $\P^2$ so that $\ll=\{x=0\}$, $\ll_1=\{y=0\}$, $\ll_2=\{z=0\}$, $\ll_0=\{y=z\}$. Now $\Aut(Y,D_Y)=\Aut(\P^2,\pp)=\{[x:y:z]\mapsto [\lambda x:y:z]: \lambda\in \kk^{*}\}\cong \G_{m}$ has two orbits on $G^{\circ}$, namely $\phi^{-1}_{*}\ll_0\cap G^{\circ}$ and the open one, and the vector field $x\frac{\d}{\d x}$ on $\P^2$ lifts to a vector field on $(Y,D_Y)$ which does not vanish along that open orbit, as needed.
	
	\litem{\ref{lem:ht=1_reduction}\ref{item:uniq_-2-chain}} As in case \ref{lem:ht=1_reduction}\ref{item:uniq_2} above, let $\sigma\colon (Z,D_Z)\to (Y,D_Y)$ be the contraction of the $(-1)$-tip of $V_Z$, let $G\subseteq D_Y$ be the $(-1)$-tip of $V_Y\de \sigma_{*}V_Z$, and let $G^{\circ}=G\setminus (D_Y-G)\cong \A^{1}$. The combinatorial type $\cY$ of $(Y,D_Y)$ is as in case~\ref{lem:ht=1_reduction}\ref{item:uniq_n=3}, so $\#\cP_{+}(\cY)=1$, and in case $v\leq 1$ or $\cha\kk\nmid d$ we have $h^{1}(\lts{Y}{D_Y})=0$. 
	
	Let $T=[d]$ be the maximal twig of $V_Y$ which does not meet $H_Y\de \sigma_{*}H_1$. Put $F=\sigma_{*}F_1$ if $v\geq 1$, otherwise let $F\subseteq Y$ be some non-degenerate fiber. Fix a section $H\subseteq Y$ whose image under the contraction $Y\to \F_{m-1}$ of $V_Y-T$ is disjoint from the negative section. Then $H\cdot (V+H_Y)=H\cdot T=1$ and $H=[1-m]$.
	\begin{casesp*}
	\litem{$\cha\kk\nmid d$} By Lemma \ref{lem:outer}\ref{item:outer-transitive},\ref{item:outer-h1-stays} it is enough to check that $\Aut(Y,D_Y)$ acts transitively on $G^{\circ}$, and the derivative of this action is nonzero.
	
	For this we can assume $v=2$. If $m=1$ blow up at $F_1\cap H_{Y}$, otherwise blow up $m-2$ times at $H\cap F_1$ and its infinitely near points on the proper transforms of $H$. Denote this morphism by $\tau_1\colon Y''\to Y$. Let $U$ be the exceptional $(-1)$-curve of $\tau_{1}$; and let $\tau_2\colon Y''\to \P^2$ be the contraction of $(\tau_1^{*}D_Y)\redd-(\tau_1)^{-1}_{*}(T+F_2)-U$: if $m=1$, this divisor is a sum of disjoint chains $[1]+[(2)_{d+1},1]$; otherwise it is a chain $[(2)_{m-3},1,m,(2)_{d},1]$. Put $\tau=\tau_{2}\circ \tau_{1}^{-1}$, $\ll_j=\tau_{*}F_j$, $\ll_{T}=\tau_{*}T$, $\pp=\tau_{*}D_Y=\ll_1+\ll_2+\ll_{T}$. Then $\ll_1,\ll_2,\ll_{T}$ are lines meeting at a point $q$. Choose coordinates on $\P^2$ so that $\ll_1=\{x=0\}$, $\ll_{T}=\{y=0\}$ and put $\cc=\{x^{d+1}=yz^{d}\}$. Then $q=[0:0:1]$ is  smooth point of $\cc$, and we have $(\cc\cdot \ll_{T})_{q}=d+1$. Let $B\subseteq D_Y$ be the $(-2)$-curve meeting $G$. Write $\tau=\upsilon\circ \upsilon'$, where $\upsilon'$ is the contraction of $G$. Then $\upsilon^{-1}_{*}\cc$ meets $(\upsilon^{*}\ll_{T})\redd$ only in $\upsilon'(B)$, once. Since $\#\cP_{+}(\cY)=1$, we can assume that the point $\upsilon'(G)$ lies on $\upsilon^{-1}_{*}\cc$, so $\tau^{-1}_{*}\cc$ meets $G^{\circ}$. 
	
	For $\lambda\in \kk$ define $\alpha_{\lambda}\in \Aut(\P^2,\pp)$ by $\alpha_{\lambda}[x:y:z]=[x:y:z+\lambda x]$. For $\lambda\neq 0$ we have $(\alpha_{\lambda}(\cc)\cdot \cc)_{q}=d+2$. To see this, parametrize $\alpha_{\lambda}(\cc)$ near $q$ by $t\mapsto [t:t^{d+1}:1+\lambda t]$. Substituting this parametrization to the equation $x^{d+1}-yz^{d}$ of $\cc$ we get
	\begin{equation*}
		t^{d+1}-t^{d+1}(1+\lambda t)^{d}=d\lambda t^{d+2}+\hot,
	\end{equation*}  
	where \enquote{$\hot$} stands for \enquote{higher order terms}. Since $\cha\kk\nmid d$, the lowest exponent is $d+2$, as claimed. 
	
	In particular,  $(\alpha_{\lambda}(\cc)\cdot \cc)_{q}>d+1=(\ll_{T}\cdot \cc)_{q}$. Hence by our construction $\alpha_{\lambda}$ lifts to an element of $\Aut(Y,D_Y)$. In turn, since $(\alpha_{\lambda}(\cc)\cdot \cc)_{q}=d+2$, this lift does not fix $\tau^{-1}_{*}\cc \cap G^{\circ}$. We conclude that $\{\alpha_{\lambda}:\lambda\in \kk\}\cong \G_{a}$ lifts to a subgroup of $\Aut(Y,D_Y)$ acting transitively on $G^{\circ}\cong \A^{1}$, as needed. 
	
	A direct computation shows that the derivative of this action is nonzero. Indeed, in local coordinates $(\check{x},\check{y})=(\frac{x}{z},\frac{y}{z})$, the $\G_{a}$-action is given by $\lambda\cdot (\frac{\check{x}}{1+\lambda \check{x}},\frac{\check{y}}{1+\lambda\check{x}})$, so its derivative at $\lambda=0$ maps the unit vector field $\frac{\d}{\d \lambda}$ to $\xi\de -\check{x}^{2}\frac{\d}{\d\check{x}}-\check{x}\check{y}\frac{\d}{\d \check{y}}$. At a point of $G^{\circ}$ choose local coordinates $(x,y)$ on $Y$, with $G^{\circ}=\{x=0\}$, and write $\tau\colon Y\to \P^2$ as a composition of an outer blowup $(x,y)\mapsto (xy+1)$ and $d+1$ blowups $(x,y)\mapsto (x,xy)$, so $\tau(x,y)=(x,x^{d+1}(xy+1))$. Multiplying $\xi$ by the inverse of its jacobian matrix, we conclude that $\xi$ lifts to $-x^2\frac{\d}{\d x}+(d+1)xy\frac{\d}{\d y}+d\frac{\d}{\d y}$, which does not vanish along $G^{\circ}=\{x=0\}$ because $\cha\kk\nmid d$.

\litem{$\cha \kk | d$} We claim that the  action of $\Aut(Y,D_Y)$ on $G^{\circ}$ has a fixed point.  

Put $\tilde{D}_Y= V_Y+H_Y+H+F$. Then $(Y,\tilde{D}_Y)$ is the same as the log surface $(Y,D_Y)$ considered in case \ref{lem:ht=1_reduction}\ref{item:uniq_2} above. Let $\phi\colon (Y,\tilde{D}_Y)\map (\P^2,\pp)$ be the map constructed there, see Figure \ref{fig:phi}. The curve $L_0\de \phi^{-1}_{*}\ll_0$ has a unique cusp at $H\cap F$. Its minimal log resolution is  $\phi_{1}\colon (Y',L_{0}'+R)\to (Y,L_0)$, where $R=[(2)_{d-1},1,d+1,(2)_{m-2}]\subseteq D'$. Fix $\alpha\in \Aut(Y,D_Y)$ and put $\bar{L}_0=\alpha(L_0)$. Then $\bar{L}_0$ has a cusp of the same type. The curve  $\bar{L}_0'\de (\phi_{1}^{-1})_{*}\bar{L}_0$ meets $R$ in at most one point, and if it does then either $\bar{L}_0'\cdot R=\bar{L}_0'\cdot R\cp{d}=1$, or $\bar{L}_0'\cdot R=\bar{L}_0'\cdot R\cp{i}=d$ for some $i>d$; where $R\cp{j}$ is the $j$-th component of the chain $R$. In any case, it follows that at the point $\ll_1\cap \ll_2$, the lines $\ll_{1}$ and $\ll_{2}$ meet every branch of $\bar{\ll}_0\de \phi_{*}\bar{L}_0$ with multiplicity divisible by $d$. As in case \ref{lem:ht=1_reduction}\ref{item:uniq_2} above, we choose coordinates on $\P^2$ so that $\ll_{1}=\{y=0\}$, $\ll_{2}=\{z=0\}$, $\ll_0=\{y=z\}$. Then the curve $\bar{\ll}_0$ can be parametrized by $[s:t]\mapsto [f:g_1^d:g_2^d]$ for some $f,g_{1},g_2\in \kk[s,t]$. Since $\cha\kk|d$, we get $\cha \kk|(\ll_0\cdot \bar{\ll}_0)_{r}$ for every $r\in \ll_0\cap \bar{\ll}_0$. In particular, $\cha \kk$ divides the intersection multiplicity of $\ll_0$ and $\bar{\ll}_0$ at the point $q_0=\phi(G)$, so $(\bar{\ll}_0\cdot \ll_0)_{q_{0}}\geq 2$. Hence $\bar{L}_0$ meets $L_0$ at $\{p\}\de L_0\cap G$, which is the required fixed point. 

Consider the case $v\leq 1$. Since $\#\cP_{+}(\cY)=1$ and $h^{1}(\lts{Y}{D_Y})=0$, Lemma \ref{lem:outer} and the above claim imply that $\cP_{+}(\cZ)$ is represented by an $h^1$-stratified or universal family over $G^{\circ}$. On the other hand, for some section $H'\subseteq Z$ the combinatorial type $\cZ'$ of $(Z,D_Z+H')$ is as in case~\ref{lem:ht=1_reduction}\ref{item:uniq_2} above, so we have a surjective restriction map $\cP_{+}(\cZ')\to \cP_{+}(\cZ)$. Thus $\#\cP_{+}(\cZ)\leq \#\cP_{+}(\cZ')=2$, so our family if $h^1$-stratified, as claimed. 

Consider the case $v=2$. We claim that $\cP_{+}(\cZ)$ is represented by an $\Aut(\cZ)$-faithful family over $G^{\circ}$. Since $\#\cP_{+}(\cY)=1$ and $\Aut(\cZ)=\Aut(\cY)$, by Lemma \ref{lem:outer}\ref{item:outer-trivial} it is enough to prove that the action of $\Aut(Y,D_Y)$ on $G^{\circ}\cong \A^{1}$ has finite orbits. Suppose the contrary. Since $\Aut(Y,D_Y)$ has a fixed point on $G^{\circ}$, it has a subgroup $A\cong \G_{m}$ with an open orbit on $G^{\circ}$. Since $\Aut(Y,D_Y)$ fixes three fibers in $D_Y$, it acts trivially on the base, i.e.\ sends every fiber to itself. Thus $A\cong \G_{m}$ has a fixed point on $T\setminus D_{Y}\cong \A^{1}$, and on $F'\setminus H_1\cong \A^1$ for any nondegenerate fiber $F'$. It follows that $A$ fixes some section $H$ as above, so the map $\alpha\mapsto \phi\circ \alpha\circ \phi^{-1}$ defines an embedding $\iota\colon A\into \Aut(\P^{2},\pp)$. In the above coordinates we have $\Aut(\P^2,\pp)=\{[x:y:z]=[x:y:\lambda z]: \lambda \in \kk^{*}\}\cong \G_{m}$. On the other hand, $\iota(A)$ fixes $\phi(F_2)$, which is a curve of degree $d$, satisfying $\phi(F_2)\cap \ll_{j}=\ll\cap \ll_j$ for $j\in \{1,2\}$. Thus $\phi(F_2)=\{zy^{d-1}=c x^{d}\}$ for some $c \in \kk^{*}$. The only element of $\Aut(\P^{2},\pp)$ fixing this curve is the identity, so $A=\{\id\}$, a contradiction. Thus $\cP_{+}(\cZ)$ is represented by an $\Aut(\cZ)$-faithful family over $G^{\circ}$.

The symmetry group of the above family is the image of $\Aut(Y,D_Y)$ in $\Aut(G^{\circ})$. A direct computation shows that $\Aut(Y,D_Y)$ is generated by the automorphisms interchanging the non-degenerate fibers in $D_Y$ constructed in case~\ref{lem:ht=1_reduction}\ref{item:uniq_n=3}, which act on $G^{\circ}\cong \A^1$ as multiplication by $\zeta$, where $\zeta^d=-1$; as needed.

To conclude, we need to show that $h^{1}(\lts{Z}{D_Z})$ equals $2$ for the central fiber of this family, and $1$ for the remaining ones. By Lemma \ref{lem:blowup-hi}\ref{item:blowup-hi-outer}, it is enough to prove that there is a unique point $p\in G^{\circ}$ such that every vector field $\xi\in H^{0}(\lts{Y}{D_Y})$ vanishes at $p$. Let $\tau\colon Y\to \F_{m-1}$ be the contraction of $V_{Y}-T$. It can be written as $\tau(x,y)=((xy+1)y^d,y)$, where $(x,y)$ are local coordinates at some $q\in G^{\circ}$, with $G^{\circ}=\{y=0\}$, and at the target we choose local coordinates $(\check{x},\check{y})$ such that $\check{x}$ is the restriction of the $\P^1$-fibration of $\F_{m-1}$, and $\{\check{y}=0\}$ is the equation of the negative section. The vector field $\tau_{*}\xi$ is tangent to three fibers of $\F_{m-1}$, so it is vertical, and therefore a multiple of $\check{x}\frac{\d}{\d\check{y}}$ or $\check{y}\frac{\d}{\d\check{y}}$. Multiplying these vector fields by the inverse of the jacobian matrix of $\tau$ we see that in the first case $\xi$ vanishes along $G^{\circ}$, and in the second case it vanishes at $q$. Furthermore, if $\tau_{*}\xi=\check{y}\frac{\d}{\d\check{y}}$ then $\xi=-x\frac{\d}{\d x}+y\frac{\d}{\d y}$ does not vanish along $G^{\circ}\setminus \{q\}$, as needed.
	\end{casesp*}
	
	\litem{\ref{lem:ht=1_reduction}\ref{item:uniq_fork}} Let $V_{Z,1}=\langle 2;[(2)_{d-1}],[d],[1,2]\rangle$, $V_{Z,2}=\langle 2;T^{*},T\trp,[1]\rangle$ be the connected components of $V_{Z}$, they both meet $H_1=[m]$ in the first tip of the first twig. Let $\sigma\colon (Z,D_Z)\to (Y,D_Y)$ be the contraction of the $(-1)$-tip of $V_{Z,1}$, let $H=\sigma_{*}H_{1}$, $V_{i}=\sigma_{*}V_{Z,i}$, let $B_i$ be the branching component of $V_i$, let $G$ be the $(-1)$-tip of $V_{1}$ and let $G^{\circ}=G\setminus (D_Y-G)\cong \A^{1}$, see Figure \ref{fig:uniq_fork}. The log surface $(Y,D_Y)$ is as in case~\ref{lem:ht=1_reduction}\ref{item:uniq_n=2}.
	
	Let $\upsilon \colon (Y,D_Y)\to (Y_1,D_1)$ be the contraction of the $(-1)$-tip of $V_2$, and let $\tau\colon (Y_{1},D_{1})\to (Y_{2},D_{2})$ be the contraction of all components of $\upsilon_{*}V_2$ except for the tip of $D_1$. The log surface $(Y_2,D_2)$ is as in case~\ref{lem:ht=1_reduction}\ref{item:uniq_n=3}, $v=1$, see Figure \ref{fig:uniq_n=3}. In case~\ref{lem:ht=1_reduction}\ref{item:uniq_-2-chain} above we have shown that the action of $\Aut(Y_2,D_2)$ on the image of $G^{\circ}$ has a fixed point if $\cha\kk | d$ and is transitive with nonvanishing derivative if $\cha\kk\nmid d$. Since the morphism $\tau$ is inner and $\Bs\tau^{-1}$ is fixed by $\Aut(Y_2,D_2)$, every automorphism of $(Y_2,D_2)$ lifts to an automorphism of $(Y_1,D_1)$. 
		
	Consider the case $\cha\kk\nmid d$. Since $\upsilon(G^{\circ})\cong \A^1$, there is a subgroup  $\G_a\leq \Aut(Y_1,D_1)$ acting transitively with nonvanishing derivative on $\upsilon(G^{\circ})\cong \A^1$. This subgroup fixes the two points $\upsilon(B_2)\cap (D_1-\upsilon(B_2))$, so it fixes $\upsilon(B_2)$ pointwise, so its action lifts to a transitive action on $G^{\circ}$, with nonvanishing derivative. Since $\#\cP_{+}(\cY)=1$, by Lemma \ref{lem:outer}\ref{item:outer-transitive} we have $\#\cP_{+}(\cZ)=1$ and $h^{1}(\lts{Z}{D_Z})=h^{1}(\lts{Y}{D_Y})$, as needed.
	
	Consider the case $\cha\kk |d$. Put $c=d([(2)_{d-1},m,T^{*}])$. To prove that $\cP_{+}(\cZ)$ is represented by a faithful family over $G^{\circ}$, by Lemma \ref{lem:outer}\ref{item:outer-trivial} it is enough to prove that the restriction $A$ of $\Aut(Y,D_Y)$ to $G^{\circ}$ is finite (the group $A$ will then be the symmetry group of the resulting family). Since $A$ has a fixed point on $G^{\circ}$, we have $A\leq \G_{m}$. Any subgroup of $\G_m$ acting on $(Y,D_Y)$ fixes the proper transform of some section $H_2\subseteq Y$ as in \ref{lem:ht=1_reduction}\ref{item:uniq_n=2}. Let $(Y,D_Y+H_2)\map (Z',D')$ be the birational map constructed in \ref{lem:ht=1_reduction}\ref{item:uniq_n=2}, and let $(Z',D')\to (\P^1\times \P^1,B)$ be the contraction of all vertical curves not meeting the section $G_{1}'=[0]$ of the $\P^1$-fibration constructed there. Now $B$ consists of two horizontal and three vertical lines, say $\bar{H}_0,\bar{H}_\infty$ and $\bar{V}_0,\bar{V}_1,\bar{V}_{\infty}$, where $\bar{V}_1$ is the proper transform of $G^{\circ}$, and $\{p\}\de \bar{H}_0\cap \bar{V}_0$ is the image of $L_2$. Fix coordinates $(x,y)$ on $\P^1\times \P^1$ such that $\bar{H}_{z}=\{x=z\}$, $\bar{V}_{z}=\{y=z\}$. Every automorphism of $\P^1\times \P^1$ preserving $B$ componentwise is of the form $\alpha_{\lambda}\colon (x,y)\mapsto (\lambda x,y)$, for $\lambda\in \kk^{*}$. Such $\alpha_{\lambda}$ lifts to an element of $\in \Aut(Y_1,D_1)$, and a direct computation shows that the latter acts on the $(-1)$-curve $L$ in the image of $V_2$ as a multiplication by $\lambda^{c}$. Thus $\alpha_{\lambda}$ lifts to an element of $\Aut(Y,D_Y)$ if and only if $\lambda^{c}=1$, and the latter acts on $G^{\circ}$ as multiplication by $\lambda$. Therefore, $A$ is the finite group of $c$-th roots of unity, as needed. 
	
	Case \ref{lem:ht=1_reduction}\ref{item:uniq_n=2} gives $\#\cP_{+}(\cY)=1$, so the above claim and Lemma \ref{lem:outer}\ref{item:outer-trivial} imply that $\cP_{+}(\cZ)$ is represented by a faithful family over $G^{\circ}$. If $\cha\kk\nmid c$ then $h^{1}(\lts{Y}{D_Y})=0$, so this family is universal.
		
	Assume $\cha\kk |c$, so $h^{1}(\lts{Y}{D_Y})=1$. We have $c=d\cdot d([m,T^{*}])-(d-1)\cdot d(T)$, so $\cha\kk|d(T)$. Let $G_{2}^{\circ}$ be the image of $G^{\circ}$ on $Y_2$. 
	At the end of case~\ref{lem:ht=1_reduction}\ref{item:uniq_-2-chain} we have shown that all vertical vector fields on $(Y_2,D_2)$ vanish at some $q\in G^{\circ}$, and there is one nonvanishing along $G_2^{\circ}\setminus \{q\}$. Moreover, since $\cha\kk|d(T)$, the computation in case \ref{lem:ht=1_reduction}\ref{item:uniq_n=3} shows that these vector fields lift to $(Y,D_Y)$. We claim that every $\xi\in H^{0}(\lts{Y}{D_Y})$ is vertical, so vanishes at $q$. Indeed, the image of $\xi$ by the contraction $Y\to \F_{m-2}$ is a linear combination of a vertical vector field and $\check{x}\frac{\d}{\d \check{x}}$, where as in case~\ref{lem:ht=1_reduction}\ref{item:uniq_-2-chain} $\check{x}$ denotes a parameter on the base of the $\P^1$-fibration $\F_{m-2}\to \P^1$, such that the images $V_1$, $V_2$ lie over $\{\check{x}=0\}$ and $\{\check{x}=\infty\}$, see \cite[Lemma 2.5]{FZ-deformations}. A direct computation as in \ref{lem:ht=1_reduction}\ref{item:uniq_-2-chain} shows that $\check{x}\frac{\d}{\d \check{x}}$ does not lift to $Y$, which proves the claim.
	
	Now Lemma \ref{lem:blowup-hi}\ref{item:blowup-hi-outer} shows that $h^{1}(\lts{Z}{D_Z})=h^{1}(\lts{Y}{D_Y})+\epsilon=1+\epsilon$, where $\epsilon=1$ if the blowup $\sigma\colon (Z,D_Z)\to (Y,D_Y)$ is centered at $q$ and $0$ otherwise; as needed.

	\litem{\ref{lem:ht=1_reduction}\ref{item:uniq_3}} As usual, let $\sigma\colon (Z,D_Z)\to (Y,D_Y)$ be the contraction of the $(-1)$-tip of $D_Z$, let $G$ be the $(-1)$-tip of $D_Y$, and let $G^{\circ}=G\setminus (D_Y-G)\cong \A^1$. The combinatorial type $\cY$ of $(Y,D_Y)$ is as in case~\ref{lem:ht=1_reduction}\ref{item:uniq_n=3}, $v=0$, see Figure \ref{fig:uniq_n=3}, so $\#\cP_{+}(\cY)=1$ and $\cP_{+}(\cY)$ is represented by an $h^{1}$-stratified family (over a point). In case \ref{lem:ht=1_reduction}\ref{item:uniq_2}, we have seen that $\Aut(Y,D_Y)$ acts on $G^{\circ}$ with an open orbit, and the derivative of this action along the open orbit does not vanish. By Lemma \ref{lem:outer}\ref{item:outer-fixed-point},\ref{item:outer-h1-stays} it remains to prove that it has a fixed point. 
	
	Let $T$ be the $(-2)$-tip of $\sigma(V_Z)$, and let $\phi\colon (Y,D_Y)\to (\P^2,\ll)$ be the contraction of $D_{Y}-T=\langle 2;[3,2],[2],[1]\rangle$, and let $\{q\}=\Bs\phi^{-1}$. Choose coordinates on $\P^{2}$ so that $\ll=\{x=0\}$ and $q=[0:0:1]$. Put $\cc=\{y^{5}=x^{2}z^3\}$. Since $\#\cP_{+}(\cY)=1$, we can assume that $\phi^{-1}_{*}\cc$ meets $G^{\circ}$. We have an injective group homomorphism $\Aut(Y,D_Y)\ni \alpha\mapsto \phi\circ\alpha\circ\phi^{-1}\in \Aut(\P^2,\ll,q)$, whose image consists of those automorphisms $\upsilon\in \Aut(\P^2,\ll,q)$ such that $\phi^{-1}_{*}\upsilon(\cc)$ meets $G^{\circ}$, too, i.e.\ $(\upsilon(\cc)\cdot \cc)_{q}\geq 11$. 	
	Take any $\upsilon\in \Aut(\P^{2},\ll,q)$, so $\upsilon[x:y:z]=[x:ax+by:cx+dy+ez]$ for some $b,e\in \kk^{*}$, $a,c,d\in \kk$. We can parametrize $\upsilon(\cc)$ near $q$ by $t\mapsto [t^{5}:at^{5}+bt^{2}:ct^{5}+dt^{2}+e]$. Substituting this to the equation of $\cc$ gives $(at^{5}+bt^{2})^{5}-t^{10}(ct^{5}+dt^{2}+e)^3$,  which has coefficient zero near $t^{11}$. Therefore, whenever $(\upsilon(\cc)\cdot \cc)_{q}\geq 11$, we have in fact $(\upsilon(\cc)\cdot \cc)_{q}\geq 12$, that is, $\phi^{-1}_{*}\cc\cap G^{\circ}=\phi^{-1}_{*}\upsilon(\cc)\cap G^{\circ}$. Hence $\phi^{-1}_{*}\cc\cap G^{\circ}$ is the required fixed point of the action of $\Aut(Y,D_Y)$ on $G^{\circ}$.
	
	\litem{\ref{lem:ht=1_reduction}\ref{item:uniq_5}} Let $(Z,D_Z)\overset{\sigma}{\to}(Y,D_Y) \to (\bar{Y},D_{\bar{Y}})$ be the contractions of the $(-1)$-tips of $D_Z$ and $D_Y$. Let $G,\bar{G}$ be the $(-1)$-tips of $D_Y$, $\bar{D}_Y$ and let $G^{\circ}=D_Y\setminus (D_Y-G)\cong \A^1$, $\bar{G}^{\circ}=\bar{G}\setminus (D_{\bar{Y}}-\bar{G})\cong \A^1$. Let $\cY$, $\bar{\cY}$ be the combinatorial types of $(Y,D_Y)$ and $(\bar{Y},D_{\bar{Y}})$, respectively: they are as in cases  \ref{lem:ht=1_reduction}\ref{item:uniq_-2-chain} and \ref{lem:ht=1_reduction}\ref{item:uniq_n=3}, $v=0$. Thus $\#\cP_{+}(\bar{\cY})=1$, and $\cP_{+}(\cY)$ is represented by an $h^{1}$-stratified family, say $f$, over a point if $\cha\kk\neq 2$, and over $\bar{G}^{\circ}$ if $\cha\kk= 2$. By Lemma \ref{lem:outer}\ref{item:outer-open} it remains to show that $\Aut(f)$ acts on $G^{\circ}$ transitively if $\cha\kk\neq 5$, with exactly two orbits if $\cha\kk=5$, and the derivative of this action along the open orbit does not vanish. 
	
	Let $T$ be the $(-2)$-tip of $D_Y$, and let $\phi\colon (Y,D_Y)\to (\F_{2},\pp)$ be the contraction of $\sigma(V_Z)-T$. Then $\pp$ is the sum of the negative section $\Sec_{2}$ and a fiber $F_T\de \phi(T)$. Let $\chi$ be a smooth germ at $\{q\}\de \Sec_{2}\cap F_{T}$ such that $\phi^{-1}_{*}\chi$ meets $G^{\circ}$. Fix coordinates on $\F_{2}=\{([x:y:z];[u:v])\in \P^2\times \P^1: yv^{2}=zu^{2}\}$ so that $\Sec_{2}=\{y=z=0\}$; $F_T=\{u=0\}$. At $q$, we have local coordinates $(\check{z},\check{u})\de (\frac{z}{x},\frac{u}{v})$. Since $\#\cP_{+}(\bar{\cY})=1$, we can assume that $\chi$ is parametrized in those coordinates by $t\mapsto (t,t^{2}+\lambda t^{3})$. The number $\lambda \in \kk$ specifies the point where the proper transform of $\chi$ meets $\bar{G}^{\circ}$. Moreover, if $\cha\kk\neq 2$ then $\#\cP_{+}(\cY)=1$, too, so we can assume $\lambda=0$.
	
	We have an injective group homomorphism $\iota\colon \Aut(Y,D_Y)\ni \beta\mapsto \phi\circ\beta\circ \phi^{-1}\in \Aut(\F_{2},\pp)$, and $\alpha \in \Aut(\F_{2},\pp)$ lies in its image if and only if $(\alpha(\chi)\cdot \chi)_{q}\geq 4$. In this case, $\iota^{-1}(\alpha)$ fixes the point $\{p\}\de \phi^{-1}_{*}\chi\cap G^{\circ}$ if and only if $(\alpha(\chi)\cdot \chi)_{q}\geq 5$. Near $F_T$ we can express any $\alpha\in \Aut(\F_{2},\pp)$ as
	\begin{equation*}
		\alpha([x:y:z],[u:1])=([x+z(a_0+a_1u+a_2u^2):z(au)^2:z(cu+d)^{2}],[au:cu+d]),
	\end{equation*}
	for some $a_0,a_1,a_2,c\in \kk$, $a,d\in \kk^{*}$, see \cite[\sec 6.1]{Blanc_Lukecin}. In local coordinates $(\check{z},\check{u})$ at $q$, the germ $\alpha(\chi)$ is given by 
	\begin{equation*}
		t\mapsto \left(
		\frac{t(d+c(t^{2}+\lambda t^{3}))^{2}}{1+t(a_0+a_1(t^{2}+\lambda t^{3})+a_2(t^{2}+\lambda t^{3})^{2})},\ 
		\frac{a(t^{2}+\lambda t^{3})}{d+c(t^{2}+\lambda t^{3})}
		\right).
	\end{equation*}
	Substituting this to the equation $\check{u}-\check{z}^{2}-\lambda \check{z}^{3}$ of $\chi$ and clearing denominators, we get, modulo $t^{5}$:
	\begin{equation*}
		\begin{split}
			& a(t^{2}+\lambda t^{3})(1+a_0t)^{3}
			-t^{2}(d+c(t^{2}+\lambda t^{3}))^{5}\cdot
			(1+a_0t+
			\lambda d^{2}t)=\\
			= &		(a-d^{5})t^{2}+
			(3aa_{0}+a\lambda-a_0d^{5}-\lambda d^{7})t^{3}+
			(3aa_0^2+3a\lambda a_0-5cd^{4})t^{4} 
		\end{split}
	\end{equation*}
	hence $\alpha=\iota(\tilde{\alpha})$ for some $\tilde{\alpha}\in \Aut(Y,D_Y)$ if and only if $a=d^{5}$ and $a_{0}(3a-d^{5})=\lambda(d^{7}-a)$, and under these conditions, $\tilde{\alpha}(p)=p$ if and only if $3aa_0(a_0+\lambda)=5cd^{4}$. 
	
	Consider the case $\cha\kk\neq 5$. Taking $a,d=1$, $a_0,a_1,a_2=0$ we get a subgroup of $\Aut(\F_2,\pp)$ consisting of 
	\begin{equation*}
		([x:y:z],[u:v])\mapsto ([xv^{2}:zu^2:z(cu+v)^{2}],[u:cu+v]),\quad c\in \kk,
	\end{equation*}
	which lifts to $\Aut(Y,D_Y)$, does not fix $p$, and 
	acts transitively on $G^{\circ}$. 
	Moreover, it does not depend on $\lambda$, so it comes from the action of $\Aut(f)$, as needed.
	
	To compute the derivative, we first write this action in local coordinates $(\check{z},\check{u})$ as $c\cdot (\check{z},\check{u})=(\check{z}(c\check{u}+1)^2,\frac{\check{u}}{c\check{u}+1})$. Its derivative at $c=0$ maps $\frac{\d}{\d c}$ to $\xi\de 2\check{z}\check{u}\frac{\d}{\d\check{z}}-\check{u}^2\frac{\d}{\d\check{u}}$. We need to compute the lift of $\xi$ to $Y$. To this end, we first perform two blowups at $(0,0)$, and write each one as $(z,u)\mapsto (z,zu)$. Next, we perform an outer blowup $(z,zu+1)$ which yields $\bar{Y}$, and to get $Y$ we write $(z,u)\mapsto(z,zu)$ once more. We conclude that in local coordinates $(\check{z},\check{u})$ on $\F_2$ and some $(z,u)$ on $Y$, with $G^{\circ}=\{z=0\}$ we have $\phi(z,u)=(z,z^2(uz^2+1))$. Multiplying $\xi=2z^3(uz^2+1)\frac{\d}{\d\check{z}}-z^2(uz^2+1)\frac{\d}{\d\check{u}}$ by the inverse of its jacobian matrix, we conclude that $\xi$ lifts to $(uz^2+1)\cdot (2z^3\frac{\d}{\d z}-(9uz^2+5)\frac{\d}{\d u})$, which does not vanish along $G^{\circ}=\{z=0\}$ because $\cha\kk\neq 5$.
\smallskip 
	
	Consider the case $\cha\kk=5$. Then $\lambda=0$ by assumption. Now $\alpha\in \Aut(\F_2,\pp)$ lifts to $\Aut(Y,D_Y)$ if and only if $a=d^{5}$ and $a_0=0$, so the condition $3aa_0(a_0+\lambda)=5cd^{4}$ holds automatically, i.e.\  $p$ is a fixed point of $\Aut(Y,D_Y)$. On the other hand, taking $a_{0},a_{1},a_{2},c=0$ and $a=d^{5}$ we get a subgroup consisting of 
	\begin{equation*}
		\alpha_{d}\colon ([x:y:z],[u:v])\mapsto ([xv^2:z(d^{5}u)^2:z(dv)^{2}],[d^{4}u:v]),\quad d\in \kk^{*},
	\end{equation*}
	which lifts to $\Aut(Y,D_{Y})$ and acts transitively on $G^{\circ}\setminus \{p\}$. To see the last assertion, take a germ $\eta$ at $q$ such that $\phi^{-1}_{*}\eta$ meets $G^{\circ}\setminus \{p\}$, say $t\mapsto (t,t^{2}+t^{4})$ in coordinates $(\check{z},\check{u})$. In these coordinates, $\alpha_{d}(\eta)$ is parametrized by $t\mapsto (d^2 t,d^{4}(t^{2}+t^{4}))$, which substituted to the equation of $\eta$ gives $d^{4}(t^{2}+t^{4})-d^{4}t^{2}-d^{8}t^{4}=d^4(1-d^4) t^{4}$, so $(\alpha_{b}(\eta)\cdot \eta)_{q}=4$ unless $d^4=1$. It follows that the above subgroup acts transitively on $G^{\circ}\setminus \{p\}$, as needed. 
	
	A direct computation as before shows that the derivative of this action does not vanish along $G^{\circ}\setminus \{p\}$. Indeed, in local coordinates on $\F_2$ this action is given by $d\cdot (\check{z},\check{u})=(d^2\check{z},d^{4}\check{u})$, so its derivative at $d=1$ is $\xi\de 2\check{z}\frac{\d}{\d\check{z}}+4\check{u}\frac{\d}{\d\check{u}}$. As before, writing $\phi(z,u)=(z,z^2(uz^2+1))$ in some local coordinates $(z,u)$ with $G^{\circ}=\{z=0\}$, we conclude that $\xi$ lifts to $2z\frac{\d}{\d z}-4u\frac{\d}{\d u}$, so does not vanish along $G^{\circ}$, as claimed.
	
	\litem{\ref{lem:ht=1_reduction}\ref{item:uniq_Ek}} As before, let $(Z,D_Z)\to (Y,D_Y)\to (\bar{Y},D_{\bar{Y}})$ be the contraction of the $(-1)$-tips of $D_Z$ and $D_Y$, let $G,\bar{G}$ be the $(-1)$-tips of $D_Y$ and $D_{\bar{Y}}$, let $G^{\circ}=G\setminus (D_Y-G)$, $\bar{G}^{\circ}=D_{\bar{Y}}\setminus (\bar{G}-D_{\bar{Y}})$, and let $\cY$, $\bar{\cY}$ be the combinatorial types of $(Y,D_Y)$ and $(\bar{Y},D_{\bar{Y}})$.
	
	Let $T$ be the $(-2)$-tip of $D_{Z}$ contained in $V_Z$, and let $\phi\colon Z\to \P^2$ be the contraction of $D_{Z}-T$. Then $\ll\de \phi(T)$ is a line, and $\{q\}\de (\Bs \phi^{-1})\redd\subseteq \ll$. Fix coordinates on $\P^2$ so that $\ll=\{x=0\}$, $q=[0:0:1]$.  Let $\chi_{p}$ be a germ at $p\in G^{\circ}$ meeting $G^{\circ}$ normally. In local coordinates $(\check{x},\check{y})\de (\frac{x}{z},\frac{y}{z})$ at $q$, we write $\chi\de \phi_{*}\chi_{p}$ as $(t^{3}+\hot,t)$, with higher order terms chosen for each particular case. Any $\alpha\in \Aut(\P^{2},\ll,q)$ can be written as
	\begin{equation}\label{eq:LTr}
		\alpha\colon [x:y:z]\mapsto [x:ax+by:cx+dy+ez],\quad a,c,d\in \kk,\ b,e\in \kk^{*}.
	\end{equation}	
	As in the previous cases, we say that $\alpha\in \Aut(\P^2,\ll,q)$ \emph{lifts to $(Y,D_Y)$} if it is in the image of the injective homomorphism $\Aut(Y,D_Y)\ni \upsilon\mapsto \phi\circ \upsilon\circ \phi^{-1}\in  \Aut(\P^{2},\ll,q)$. This happens if and only if $(\alpha(\chi)\cdot \chi)_{q}\geq k+3$. The induced automorphism of $(Y,D_Y)$, denoted by the same letter, fixes $p$ if and only if $(\alpha(\chi)\cdot \chi)_{q}\geq k+4$.
	
	The proof in subsequent cases $k=2,3,4$ boils down to a computation analogous to the one in case \ref{lem:ht=1_reduction}\ref{item:uniq_5}. 
	
	\begin{casesp*}
	\litem{$k=2$}
	In this case, type $\bar{\cY}$ is as in case~\ref{lem:ht=1_reduction}\ref{item:uniq_n=3}, $v=0$, see Figure \ref{fig:uniq_n=3}, so $\#\cP_{+}(\bar{\cY})=1$. Therefore, we can take $\chi=(t^{3}+\lambda t^{4},t)$, where $\lambda\in \kk$ parametrizes $\bar{G}^{\circ}$. Since $\cY$ is as in case~\ref{lem:ht=1_reduction}\ref{item:uniq_-2-chain} with $v=0$, $\cP_{+}(\cY)$ is represented by an $h^{1}$-stratified family $f$. Moreover, if $\cha\kk\neq 2$ then $\#\cP_{+}(\bar{\cY})=1$, we can take $\lambda=0$.

	By Lemma \ref{lem:outer}\ref{item:outer-open} it remains to prove that $\Aut(f)$ acts transitively on $G^{\circ}$ if $\cha\kk\neq 3$,  with exactly two orbits if $\cha\kk=3$, and in each case the derivative of this action does not vanish along the open orbit. 
	\smallskip
	
	Assume $\cha\kk\neq 3$. Then the subgroup of $\Aut(\P^2,\ll,q)$ consisting of 
	\begin{equation}\label{eq:Ga_alpha}
		\alpha_{a}\colon [x:y:z]\mapsto [x:y+ax:z],\quad a\in \kk
	\end{equation}	
	lifts to a subgroup $\G_{a}\leq \Aut(Y,D_{Y})$ acting transitively on $G^{\circ}\cong \A^{1}$, with nonvanishing derivative. Indeed, substituting $\alpha_{a}(\chi)=(t^{3}+\lambda t^{4},t+at^{3}+a\lambda t^{4})$ to the equation $\check{y}^{3}(1+\lambda \check{y})-\check{x}$ of $\chi$, we get:
	\begin{equation*}
		t^{3}(1+at^{2}+a\lambda t^{3})^{3}(1+\lambda t+a\lambda t^{3}+a\lambda^{2}t^{4})-(t^{3}+\lambda t^{4})=3at^{5}+\hot
	\end{equation*}
	so $(\alpha_{a}(\chi)\cdot \chi)_{q}=5$, and $a\mapsto \alpha_{a}(p)$ parametrizes $G^{\circ}$, so the subgroup  \eqref{eq:Ga_alpha} acts transitively on $G^{\circ}$. In fact, since \eqref{eq:Ga_alpha} does not depend on $\lambda$, it lifts to a subgroup of $\Aut(f)$, as needed.
	
	Nonvanishing of the derivative follows similarly. The derivative of \eqref{eq:Ga_alpha} at $a=0$ maps $\frac{\d}{\d a}$ to $\check{x}\frac{\d}{\d\check{y}}$. Decomposing $\phi$ as two outer blowups $(x,y)\mapsto (xy+\lambda,y)$, $(x,y)\mapsto (xy+1,y)$ and three inner ones $(x,y)\mapsto (xy,y)$, we get $(\check{x},\check{y})=\phi(x,y)=(xy^5+\lambda y^4+y^3,y)$, where $(x,y)$ are some local coordinates on $Y$ with $G^{\circ}=\{y=0\}$. To compute the required derivative, multiply $\check{x}\frac{\d}{\d\check{y}}$ by the inverse of the jacobian matrix of $\phi$. The resulting vector field $-(xy^2+\lambda y+1)(5xy^2+4\lambda y+3)\frac{\d}{\d x}+(xy^5+\lambda y^4+y^3)\frac{\d}{\d y}$ does not vanish on $G^{\circ}$ since $\cha\kk\neq 3$.
	\smallskip
	
	Assume $\cha\kk=3$. Then $\lambda=0$. Fix $\alpha\in \Aut(\P^2,\ll,q)$; so $\alpha$ is given by formula \eqref{eq:LTr}. We now substitute $\alpha(\chi)=(t^{3}(ct^{3}+dt+e)^{-1},(at^{3}+bt)(ct^{3}+dt+e)^{-1})$ to the equation $\check{y}^{3}-\check{x}$ of $\chi$. Clearing denominators, we get
	\begin{equation*}
		t^{3}((at^{2}+b)^{3}-(ct^{3}+dt+e)^{2})=(b^3-e^2)t^{3}-2det^4-d^2t^5+\hot
	\end{equation*}
	Recall that  $\alpha$ lifts to $(Y,D_Y)$ if and only if the coefficients near $t^0,\dots,t^4$ above vanish, i.e.\ if  $b^3=e^2$ and $d=0$. But under these conditions, the coefficient near $t^5$ vanishes, too, so $\alpha$ fixes $p$. We conclude that $p$ is a fixed point of $\Aut(Y,D_Y)$. On the other hand,  the subgroup of $\Aut(\P^2,\ll,q)$ consisting of 
	\begin{equation}\label{eq:GM}
		\epsilon_{a}\colon [x:y:z]\mapsto [a^{3}x:ay:z], \quad a\in \kk^{*}.
	\end{equation}	
	lifts to a subgroup $\G_{m}\leq \Aut(Y,D_{Y})$ acting transitively on $G^{\circ}\setminus \{p\}\cong \A^{1}_{*}$. To see this, take a germ $\eta'$ meeting $G^{\circ}\setminus \{p\}$ normally, say $\eta'\de\phi^{-1}_{*}\eta$ for $\eta=(t^3+t^5,t)$. Substituting $\epsilon_{a}(\eta)=(a^{3}t^{3}+a^{3}t^{5},at)$ to the equation $\check{y}^{3}+\check{y}^{5}-\check{x}$ of $\eta$, we get
	$
	(a^{5}-a^{3})t^{5}
	$, 
	so $\kk^{*}\ni a\mapsto \epsilon_{a}(\eta')\cap G^{\circ}\in G^{\circ}\setminus \{p\}$ is surjective, as claimed. 
	
	Again, a direct computation shows that the derivative of this action does not vanish. Indeed, the derivative of \eqref{eq:GM} at $a=1$ is $\check{y}\frac{\d}{\d\check{y}}$, and since $\lambda=0$, the local expression of $\phi$ chosen above is $\phi(x,y)=(xy^5+y^3)$, where $(x,y)$ are some local coordinates at $p$, with $G^{\circ}=\{y=0\}$. Multiplying $\check{y}\frac{\d}{\d\check{y}}$ by the inverse of the jacobian matrix of $\phi$ we conclude that our derivative is $(xy^2+1)x\frac{\d}{\d y}$, which does not vanish on $G^{\circ}\setminus \{p\}$.
	
	\litem{$k=3$} Now $\cY$ is as in Subcase $k=2$ above; and $\bar{\cY}$ is as in case \ref{lem:ht=1_reduction}\ref{item:uniq_-2-chain}. 
	
	Assume $\cha\kk\neq 2$. Since $\#\cP_{+}(\bar{\cY})=1$, $\cP_{+}(\cY)$ is represented by a family $f$ over $\bar{G}^{\circ}$, so we can take  $\chi=(t^{3}+\lambda t^{5},t)$ for some $\lambda\in \kk$. By the previous Subcase and Lemma \ref{lem:outer}\ref{item:outer-open}, it remains to prove that $\Aut(f)$ acts transitively on each $G^{\circ}$, with nonzero derivative.  The subgroup  of $\Aut(\P^2,\ll,q)$ consisting of 
	\begin{equation}\label{eq:Ga_beta}
		\beta_{a}\colon [x:y:z]\mapsto [x:y:z+ax],\quad a\in \kk
	\end{equation}	
	lifts to a subgroup of $\Aut(f)$ acting transitively on $G^{\circ}$. Indeed, substituting $\beta_{a}(\chi)=((t^{3}+\lambda t^{5})(1+at^{3}+a\lambda t^{5})^{-1},t(1+at^{3}+a\lambda t^{5})^{-1})$ to the equation $\check{y}^{3}(1+\lambda \check{y}^{2})-\check{x}$ of $\chi$ we get
	\begin{equation*}
		t^{3}((1+at^{3}+a\lambda t^{5})^{2}+\lambda t^{2})-t^{3}(1+\lambda t^{2})(1+at^{3}+a\lambda t^{5})^{4}=-2at^{6}+\hot
	\end{equation*}
	so $a\mapsto \beta_{a}(p)$ parametrizes $G^{\circ}$. As before, we check directly that the derivative of this action does not vanish. To compute it, we multiply the derivative of \eqref{eq:Ga_beta}, namely $-\check{x}^2\frac{\d}{\d\check{x}}-\check{x}\check{y}\frac{\d}{\d\check{y}}$, by the inverse of the jacobian matrix of $\phi(x,y)=(xy^6+\lambda y^5+y^3,y)$, where $(x,y)$ are some local coordinates on $Y$ with $G^{\circ}=\{y=0\}$. Substituting $y=0$ to the resulting vector field gives $2\frac{\d}{\d x}$, which does not vanish along $G^{\circ}$, as needed.
	\smallskip
	
	Assume $\cha\kk=2$. In the previous Subcase, we have inferred from case \ref{lem:ht=1_reduction}\ref{item:uniq_-2-chain} that $\cP_{+}(\bar{\cY})$ is parametrized by germs $\chi=(t^3+\lambda t^4,t)$ and we have shown that the same is true for $\cP_{+}(\cY)$. Thus we can take $\chi=(t^3+\lambda t^4,t)$.
	
	We now study two particular values: $\lambda=1$ and $\lambda=0$. We claim that $\Aut(Y,D_Y)$ acts transitively on $G^{\circ}$ in the first case, and has one fixed point in the second case; moreover, the derivative of either action does not vanish along the open orbit. Once this is shown, we will see that $\lambda=1,0$ correspond to the general and special fiber of the $h^{1}$-stratified family representing $\cP_{+}(\cY)$, and our result will follow from Lemma \ref{lem:outer}\ref{item:outer-fixed-point}.
	
	Consider the case $\lambda=1$. Then a transitive $\G_{a}$-action on $G^{\circ}$ is given by
	\begin{equation}\label{eq:Ga_gamma}
		\gamma_{a}\colon [x:y:z]\mapsto [x:y+a(1+a)x:z+ay],\quad a\in \kk.
	\end{equation}	
	Indeed, substituting $\gamma_{a}(\chi)=(\frac{t^{3}(1+t)}{1+at},\frac{t+a(1+a)t^{3}(1+t)}{1+at})$ to the equation $\check{y}^3+\check{y}^4+\check{x}$ of $\chi$, we get
	\begin{equation*}
		t^{3}(1+a(1+a)t^2(1+t))^{3}(1+at+t+a(1+a)t^{3}(1+t))+t^{3}(1+t)(1+at)^{3}=
		(a+a^2)t^{6}+\hot
	\end{equation*}	
	so $a\mapsto \gamma_{a}(p)$ is surjective, as needed. To compute the derivative, we multiply the derivative of \eqref{eq:Ga_gamma}, namely $\check{x}\check{y}\frac{\d}{\d\check{x}}+(\check{x}+\check{y}^2)\frac{\d}{\d\check{y}}$ by the inverse of the jacobian matrix of $\phi(x,y)=(xy^6+y^4+y^3,y)$, and restrict it to $G^{\circ}=\{y=0\}$. The resulting vector field $\frac{\d}{\d x}$ does not vanish, as needed.
	
	Consider the case $\lambda=0$. Then for any $\alpha\in \Aut(\P^2,\ll,q)$ as in \eqref{eq:LTr},  substituting $\alpha(\chi)=(\frac{t^{3}}{ct^{3}+dt+e},\frac{at^{3}+bt}{ct^{3}+dt+e})$ to the equation $\check{y}^{3}+\check{x}$ of $\chi$ gives
	\begin{equation*}
		(at^{3}+bt)^{3}+t^{3}(ct^{3}+dt+e)^{2}=(b^3+e^2)t^{3}+(ab^{2}+d^2)t^5+a^{2}bt^{7}+\hot
	\end{equation*}
	Therefore, $\alpha$ lifts to $(Y,D_Y)$ if and only if $b^3=e^2$ and $ab^2=d^2$; and under these conditions $\alpha$ fixes $\{p\}$. The subgroup \eqref{eq:GM} acts transitively on $G^{\circ}\setminus \{p\}$. To see this, fix a germ $\eta$ such that $\phi^{-1}_{*}\eta$ meets $G^{\circ}$ transversally off $p$, say $\eta=(t^3+t^6,t)$. Then substituting $\epsilon_{a}(\eta)=(a^{3}t^{3}(1+t^3),at)$ to the equation $\check{y}^3(1+\check{y}^3)+\check{x}$ of $\eta$ gives
	\begin{equation*}
		a^3t^3(1+a^3t^3)+a^3t^3(1+t^3)=(a^3+a^6)t^6,
	\end{equation*}
	so $\kk^{*}\ni a\mapsto \phi^{-1}_{*}\epsilon_{a}(\eta)\cap G^{\circ}\in G^{\circ}\setminus \{p\}$ is indeed surjective. Its derivative is obtained by multiplying the derivative of \eqref{eq:GM}, namely $\check{x}\frac{\d}{\d\check{x}}+\check{y}\frac{\d}{\d\check{y}}$, by the inverse of the jacobian matrix of $\phi(x,y)=(xy^6+y^4,y^3)$. Restricted to $G^{\circ}=\{y=0\}$, the resulting vector field is $x\frac{\d}{\d x}$, so it does not vanish on $G^{\circ}\setminus \{p\}$, as needed.
	
	\litem{$k=4$} Now $\cY$ and $\bar{\cY}$ are as in Subcases $k=3$ and $k=2$ above. Recall that $\cP_{+}(\cY)$ is represented by an $h^{1}$-stratified family whose base is parametrized by some family of germs $\chi$. Let $(Y_{\chi},D_{Y,\chi})\in \cP_{+}(\cY)$ be the fiber corresponding to the germ $\chi$, and as usual let $G^{\circ}_{\chi}=G_{\chi}\setminus (D_{Y,\chi}-G_{\chi})$, where $G_{\chi}$ is the $(-1)$-tip of $D_{Y,\chi}$. Let $\cA(\chi)$ be the action of $\Aut(Y_{\chi},D_{Y,\chi})$ on $G^{\circ}_{\chi}$. We claim that it is enough to prove the following. 
	\begin{enumerate-alt}
		\item\label{item:E8-char_neq_2,3} If $\cha\kk\neq 2,3$ then $\cA(t^3,t)$ has exactly two orbits.
		\item\label{item:E8-char=3} If $\cha\kk=3$ then $\cA(t^3+t^5,t)$ is transitive, and $\cA(t^3,t)$ has exactly two orbits.
		\item\label{item:E8-char=2} If $\cha\kk=2$ then $\cA(t^3,t)$, $\cA(t^3+t^4,t)$, and $\cA(t^3+t^6,t)$ are transitive.
		\item\label{item:E8-derivative} The derivative of each of the above actions does not vanish along the open orbit, unless $\cha\kk=2$ and $\chi=(t^3,t)$. In the latter case, all vector fields on $Y_{\chi}$ tangent to $D_{Y_{\chi}}$ vanish along $G_{\chi}^{\circ}$.
	\end{enumerate-alt}
	Indeed, assume that \ref{item:E8-char_neq_2,3}--\ref{item:E8-derivative} hold. If $\cha\kk\neq 2,3$ then $\#\cP_{+}(\cY)=1$, so \ref{lem:ht=1_uniqueness}\ref{item:ht=1_uniqueness-finite} follows from \ref{item:E8-char_neq_2,3},\ref{item:E8-derivative} and Lemma \ref{lem:outer}\ref{item:outer-fixed-point}. If $\cha\kk=3$ then $\#\cP_{+}(\cY)=2$, and \ref{item:E8-char=3} shows that the germs $(t^3+t^5,t)$ and $(t^3,t)$ correspond to, respectively, the general and special fibers of our $h^{1}$-stratified family; so again  \ref{lem:ht=1_uniqueness}\ref{item:ht=1_uniqueness-finite} follows from \ref{item:E8-char=3},\ref{item:E8-derivative} and Lemma \ref{lem:outer}\ref{item:outer-fixed-point}. Eventually, assume $\cha\kk=2$. In Subcase $k=1$ we have shown that the log surfaces corresponding to $(t^3,t)$ and $(t^3+t^4,t)$ are non-isomorphic; and in Subcase $k=2$ we have shown that the one corresponding to $(t^3+t^6,t)$ is not isomorphic to either of them; in fact it is the special fiber of our family. Thus in this case \ref{lem:ht=1_uniqueness}\ref{item:ht=1_uniqueness-finite} follows from \ref{item:E8-char=2},\ref{item:E8-derivative} and Lemma \ref{lem:outer}\ref{item:outer-transitive},\ref{item:outer-h1}.
	\smallskip
	
	Thus it remains to show \ref{item:E8-char_neq_2,3}--\ref{item:E8-derivative}. We begin with the case $\chi=(t^3,t)$. Fix any $\alpha\in \Aut(\P^2,\ll,q)$ as in \eqref{eq:LTr}. Substituting $\alpha(\chi)$ to the equation $\check{y}^3-\check{x}$ of $\chi$, we get 
	\begin{equation*}
		(at^{3}+bt)^{3}-t^{3}(ct^{3}+dt+e)^{2}=(b^3-e^2)t^{3}-2det^4+(3ab^2-d^2)t^5-2cet^6+(3a^2b-2cd)t^7+\hot.
	\end{equation*}	
	Such $\alpha$ lifts to $(Y_{\chi},D_{Y,\chi})$ if and only if coefficients near $t^0,\dots,t^6$ vanish. 
	
	Assume $\cha\kk\neq 2$. Then $\Aut(Y_{\chi},D_{Y,\chi})$ is generated by \eqref{eq:GM} and, in case $\cha\kk=3$, by \eqref{eq:Ga_alpha}. In any case, the coefficient near $t^7$ above vanishes if $\alpha$ lifts to $(Y_{\chi},D_{Y,\chi})$, so $\Aut(Y,D_Y)$ fixes $\{p\}$. The map $a\mapsto \phi^{-1}_{*}\epsilon_{a}(\eta)\cap G_{\chi}^{\circ}\setminus \{p\}$ for $\eta=(t^3+t^7,t)$ is surjective, so \eqref{eq:GM} acts transitively on $G_{\chi}^{\circ}\setminus \{p\}$, as claimed in \ref{item:E8-char_neq_2,3} and \ref{item:E8-char=3}. Note that this ends the proof of \ref{item:E8-char_neq_2,3} in case $\cha\kk\neq 2$, we will treat case $\cha\kk=2$ at the end. 
	
	To prove the corresponding claim in \ref{item:E8-derivative} we multiply, as before, the derivative $3\check{x} \frac{\d}{\d\check{x}}+\check{y}\frac{\d}{\d\check{y}}$ of \eqref{eq:GM} by the inverse of the jacobian matrix of $\phi(x,y)=(xy^7+y^3,y)$, where $(x,y)$ are coordinates at $p$ with $G^{\circ}=\{y=0\}$. The resulting vector field $-4x\frac{\d}{\d x}+y\frac{\d}{\d y}$ does not vanish along $G^{\circ}\setminus \{p\}$, as needed.
	\smallskip
		
	Assume now that $\cha\kk=3$, consider $\chi=(t^3+t^5,t)$ and the subgroup \eqref{eq:Ga_alpha}. Substituting $\alpha_{a}(\chi)=(t^3(1+t^2),t(1+at^2(1+t^2)))$ to the equation $\check{y}^3(1+\check{y}^2)-\check{x}$ of $\chi$, we get 
	\begin{equation*}
		t^3(1+a^3t^6(1+t^6))(1+t^2(1+at^2(1+t^2))^2)-t^3(1+t^2)=2at^7+\hot
	\end{equation*}
	so $a\mapsto \alpha_{a}(p)$ parametrizes $G_{\chi}^{\circ}$, hence \eqref{eq:Ga_alpha} acts transitively on $G_{\chi}^{\circ}$, which ends the proof of \ref{item:E8-char=3}. Again, we compute directly that the derivative of this action does not vanish, as claimed in \ref{item:E8-derivative}. To this end, we multiply the derivative $\check{x}\frac{\d}{\d\check{y}}$ of \eqref{eq:Ga_alpha} by the inverse of the jacobian of $\phi=(xy^7+y^5+y^3,y)$: the resulting vector field, restricted to $G^{\circ}=\{y=0\}$ equals $\frac{\d}{\d x}$, so does not vanish along $G^{\circ}$, as needed.
	
	Assume $\cha\kk=2$. Like before we compute that \eqref{eq:Ga_beta} gives a  transitive action of $\cA(t^3+t^4,t)$, and 
	\begin{equation}\label{eq:Ga_delta}
		\delta_{d}\colon
		[x:y:z]\mapsto [x:y+d^2x:z+dy],\quad d\in \kk,
	\end{equation}	
	gives a transitive action of $\cA(t^3+t^6,t)$; moreover, their derivatives do not vanish along $G^{\circ}$, as needed. In the remaining case $\chi=(t,t^3)$, the group \eqref{eq:Ga_delta} acts transitively on $G^{\circ}$, too, which ends the proof of \ref{item:E8-char_neq_2,3}--\ref{item:E8-char=2}.

To end the proof of \ref{item:E8-derivative}, we need to show that for $\chi=(t,t^3)$, every vector field $\xi$ on $Y$ tangent to $D_Y$ vanishes along $G^{\circ}$. The  image $\check{\xi}$ of $\xi$ on $\P^2$ vanishes at $(0,0)$ and is tangent to $\{x=0\}$, so in coordinates $(\check{x},\check{y})$ it can be written as $\check{x}(e+c\check{x}+d \check{y})\frac{\d}{\d\check{x}}+(a\check{x}+\check{y}(b+c \check{x}+d\check{y}))\frac{\d}{\d \check{y}}$ for some $a,b,c,d,e \in \kk$. 
	
To compute $\xi$, we multiply $\check{\xi}$ by the inverse of the jacobian matrix of $\phi(x,y)=(xy^{7}+y^{3},y)$, where as before $(x,y)$ are some local coordinates on $Y$ with $\{y=0\}$. The coefficient near $\frac{\d}{\d x}$ of the resulting vector field is $(x+y^{-4})(b+e+ay^2+axy^{7})$. Since $\check{\xi}$ lifts to a vector field $\xi$ on $Y$, we have $b+e=0$ and $a=0$, so the coefficient near $\frac{\d}{\d x}$ is zero. Thus $\xi$ vanishes along $G^{\circ}$, as claimed.
\end{casesp*}
	\litem{\ref{lem:ht=1_reduction}\ref{item:uniq-bench}} Let $F_{1}'$, $F_{2}'$ be the fibers contained in $D_Z$, and for $i\in \{1,2\}$ let $A_i$ and $T_i$ be the $(-1)$- and $(-2)$-tip of $(F_i')\redd$. Let $\sigma\colon (Z,D_{Z})\to (Y,D_{Y})$ be the contraction of $A_2$, let $G$ be the  $(-1)$-tip of $D_Y$ containing $\sigma(A_2)$, and let $G^{\circ}=G\setminus (D_{Y}-G)\cong \A^1$. Moreover, let $(Y,D_Y)\to (\bar{Y},D_{\bar{Y}})$ be the contraction of a $(-1)$-tip of $D_Y$ other than $G$, let $\bar{G}$ be the new $(-1)$-tip of $D_{\bar{Y}}$, and let $\bar{G}^{\circ}=\bar{G}\setminus (D_{\bar{Y}}-\bar{G})\cong \A^1$.
		
	The combinatorial type $\cY$ of $(Y,D_Y)$ is as in case \ref{lem:ht=1_reduction}\ref{item:uniq_fork} with $d=2$, $T=[2]$. We claim that $\Aut(Y,D_Y)$ acts on $G^{\circ}$ trivially if $\cha\kk=2$, and transitively with nonvanishing derivative if $\cha\kk\neq 2$. 

	Put $n=m-2\geq 1$. Let $\tau\colon (Y,D_Y)\to (\F_{n},\pp)$ be the contraction of $(D_{Y})\vert-T_1-T_2$. Then $\pp=\Sec_{n}+F_1+F_2$, where $F_i=\tau_{*}T_i$ is a fiber, and $\Sec_n=[n]$ is the negative section. Fix coordinates on $\F_{n}=\{([x:x_1:x_2],[u_1:u_2])\in \P^2\times \P^1: x_1u_1^n=x_2u_2^n\}$ so that  $F_i=\{u_i=0\}$, $\Sec_n=\{x_1=x_2=0\}$. 
		It follows from \cite[\sec 6.1]{Blanc_Lukecin} that $\Aut(\F_{n},\pp)=\{\alpha_{p,\lambda_1,\lambda_2}:\ p\in \kk[u_1,u_2]_n,\ \lambda_1,\lambda_2\in \kk^{*}\}$, where $\alpha_{p,\lambda_1,\lambda_2}$ is given on $\{u_1\neq 0\}$ by 
		\begin{equation*}
			\alpha_{p,\lambda_1,\lambda_2}\colon ([x:x_1:x_2],[u_1:u_2])\mapsto ([x+x_{2}\cdot p (1,\tfrac{u_2}{u_1}):\lambda_1^{n}x_{1}:\lambda_2^{n}x_{2}],[\lambda_2 u_1:\lambda_1 u_2]).
		\end{equation*}
		Write $\{q_i\}=F_i\cap \Sec_n$. Let $\chi_{i}$ be a germ  given in local coordinates $(\check{x}_i,\check{u}_i)\de (\frac{x_i}{x},\frac{u_i}{u_{2-i}})$ at $q_i$ by $t\mapsto (t,a_it^2+b_it^3)$ for some $a_i\in \kk^{*}$, $b_i\in \kk$. Fix $a_1,b_1,a_2$ so that the proper transform $\tau^{-1}_{*}\chi_{i}$ meets $D_Y$ normally, in a $(-1)$-tip of $\sigma_{*}F_i'$: then $\kk\ni b_{2}\mapsto \tau^{-1}_{*}\chi_{2}\cap G^{\circ}$ is a parametrization of $G^{\circ}$.  
		An $\alpha\in \Aut(\F_{n},\pp)$ lifts to $\Aut(Y,D_Y)$ if and only if $(\alpha(\chi_1)\cdot \chi_1)_{q_1}\geq 4$, $(\alpha(\chi_2)\cdot \chi_2)_{q_2}\geq 3$; and it fixes the point $\tau^{-1}_{*}\chi_{2}\cap G^{\circ}$ if and only if $(\alpha(\chi_2)\cdot \chi_2)_{q_2}\geq 4$. Let $s_{i}\de a_{i}t^{2}+b_{i}t^{3}$ be the $\check{u}_{i}$-th coordinate of $\chi_{i}$. Then in the local coordinates $(\check{x}_i,\check{u}_i)$ at $q_{i}$ we have 
		\begin{equation*}
			\alpha_{p,\lambda_1,\lambda_2}(\chi_1)=\left(\frac{\lambda_1^n\cdot t}{1+t\cdot p(s_{1},1)},\frac{\lambda_2}{\lambda_1}\cdot s_{1},\right),
			\quad
			\alpha_{p,\lambda_1,\lambda_2}(\chi_2)=\left(\frac{\lambda_2^n \cdot t}{1+t\cdot p(1,s_2)}, \frac{\lambda_1}{\lambda_2}\cdot  s_{2}\right).
		\end{equation*}
		Substituting these formulas to the equations $a_i\check{x}_i^2+b_i\check{x}_i^3-\check{u}_i$ of $\chi_{i}$, we compute that $(\alpha_{p,\lambda,\mu}(\chi_{i})\cdot \chi_{i})\geq 3$ if and only if $\lambda_i=\lambda_{2-i}^{2n+1}$, in which case $(\alpha_{p,\lambda}(\chi_{i})\cdot \chi_{i})\geq 4$ if and only if $2a_{i}p_{i}=(\lambda_{i}^n-1)b_i$, where $p_i$ is the coefficient of $p$ near $u_{2-i}^{n}$.  Thus $\alpha_{p,\lambda_1,\lambda_2}$ lifts to $\Aut(Y,D_Y)$ if and only if $\lambda_{1}^n=\lambda_2^n=1$ and $2p_{1}=0$; and such a lift acts trivially on $G^{\circ}$ if and only if $2p_2=0$. 
		Thus if $\cha\kk=2$ then the action of $\Aut(Y,D_Y)$ on $G^{\circ}$ is trivial. 
		
		Assume $\cha\kk\neq 2$. For $q\in \kk$ put $\alpha_q=\alpha_{qu_1^n,1,1}$.  The subgroup $\{\alpha_{q}:q\in \kk\}\cong \G_a$ lifts to $\Aut(Y,D_Y)$ and acts transitively on $G^{\circ}$. To compute its derivative, write $\alpha_q$ in local coordinates as $\alpha_q(\check{x}_2,\check{u}_2)=(\frac{\check{x}_{2}}{1+q\check{x}_2},\check{u}_2)$. Its derivative at $q=0$ maps $\frac{\d}{\d q}$ to $-\check{x}_{2}^2\frac{\d}{\d\check{x}_{2}}$. Multiplying this vector field by the inverse of the jacobian matrix of $\tau(x,u)=(x,ux^3+x^2)$, where like before $(x,u)$ are some local coordinates on $Y$ with $G^{\circ}=\{x=0\}$, we get $-x^2\frac{\d}{\d x}+(3xu+2)\frac{\d}{\d u}$, which does not vanish along $G^{\circ}$ since $\cha\kk\neq 2$, as claimed.
		\smallskip
		
		If $\cha\kk\neq 2$ then the above claim and Lemma \ref{lem:outer}\ref{item:outer-transitive} show that $\#\cP_{+}(\cZ)=\#\cP_{+}(\cY)=1$; and by Lemma \ref{lem:outer}\ref{item:outer-h1-stays} we get $h^{1}(\lts{Z}{D_Z})=h^{1}(\lts{Y}{D_Y})$, which equals $0$ if $\cha\kk\nmid m-1$ and $1$ otherwise.
		
		Assume  $\cha\kk=2$. Lemma \ref{lem:outer}\ref{item:outer-trivial} implies that $\#\cP_{+}(\cZ)$ is represented by a faithful family over $G^{\circ}\times \bar{G}^{\circ}$, which by symmetry of our construction is $\Aut(\cZ)$-faithful for $\Aut(\cZ)=\Z/2$ interchanging the factors of $G^{\circ}\times \bar{G}^{\circ}$.
		
		Put $\bar{G}_{1}^{\circ}=\bar{G}^{\circ}$ and let $\bar{G}_{2}^{\circ}$ be the image of $G^{\circ}$ on $\bar{Y}$. The computation in case \ref{lem:ht=1_reduction}\ref{item:uniq_-2-chain} (repeated for both fibers) shows that there are points $\bar{q}_{i}\in \bar{G}_i^{\circ}$ such that every vector field $\xi\in H^{0}(\lts{\bar{Y}}{D_{\bar{Y}}})$ vanishes at $\bar{q}_{1},\bar{q}_2$, and there is a vector field $\xi_{1}$ which does not vanish along $\bar{G}_{1}^{\circ}\setminus \{\bar{q}_{1}\}$. We check that any such $\xi_{1}$ does not vanish along  $\bar{G}_{2}^{\circ}\setminus \{\bar{q}_{2}\}$, either. Since $h^{1}(\lts{\bar{Y}}{D_{\bar{Y}}})=1$, Lemma \ref{lem:blowup-hi}\ref{item:blowup-hi-outer} implies that $h^{1}(\lts{Z}{D_Z})=3$ if $(Z,D_Z)$ is obtained by blowing up at $\bar{q}_{1}$, $\bar{q}_2$, and $h^{1}(\lts{Z}{D_Z})=2$ otherwise.
	\qedhere\end{casesp*}
\end{proof}

\clearpage
\section{Del Pezzo surfaces of height two}\label{sec:ht=2}

In this section, we complete the proof of Theorem \ref{thm:ht=1,2} by describing all del Pezzo surfaces of rank one and height $2$. The proof is organized into the following steps.
\begin{steps}
	\item\label{step:basic-ht=2} We construct vertically primitive surfaces of height $2$ from Theorem \ref{thm:ht=1,2}, see Figures \ref{fig:3A2-intro}--\ref{fig:KM_surface-intro}. 
	\item\label{step:swap-ht=2} We show that every del Pezzo surface $\bar{X}$ of rank $1$ and height $2$ swaps vertically to one of the surfaces constructed in Step \ref{step:basic-ht=2}. This is done in Lemmas \ref{lem:ht=2_swaps} and \ref{lem:ht=2_twisted-swaps}, by a suitable choice of a witnessing $\P^1$-fibration.
	\item 	\label{step:classifiation-ht=2} We reconstruct all possible sequences of vertical swaps from Step \ref{step:swap-ht=2}, thus classifying singularity types.
	\item \label{step:uniqueness-ht=2} We classify surfaces $\bar{X}$ of a given singularity type $\cS$, that is, we check to which extent the sequence reconstructed in Step \ref{step:classifiation-ht=2} is uniquely determined by $\cS$. This amounts to classifying vertically primitive surfaces from Step \ref{step:basic-ht=2}, and describing outer blowups made in Step \ref{step:classifiation-ht=2}, as outlined in Observation \ref{obs:blowups}.
\end{steps}

We note that the proof in case $\height=1$ followed a similar path: we have constructed primitive surfaces in Example \ref{ex:ht=1}, and classified singularity types in Lemma \ref{lem:ht=1_types}. The main technical part was the final Step \ref{step:uniqueness-ht=2}. 

Now, the situation is different. There are several classes of vertically primitive surfaces, including one specific to characteristic $2$. The latter is constructed from a pencil of lines tangent to a smooth planar conic, see \cite[\sec 9]{Keel-McKernan_rational_curves} or Example \ref{ex:ht=2_twisted_cha=2}: clearly, such a pencil exists if and only if $\cha\kk=2$. The remaining surfaces are constructed from simpler planar configurations, and their combinatorics do not depend on $\cha\kk$.

As a consequence, the reconstruction process in Step \ref{step:classifiation-ht=2} gives a longer list of singularity types: for example, from the primitive surfaces specific to characteristic $2$ we get complicated ones listed in Table \ref{table:ht=2_char=2}, see Theorem \ref{thm:ht=1,2}\ref{item:ht=2_char=2}. Nonetheless, it turns out that this process involves very few outer blowups, so Step \ref{step:uniqueness-ht=2} is easier, and essentially boils down to the known classification of vertically primitive surfaces from Step \ref{step:basic-ht=2}.

In Steps \ref{step:swap-ht=2}--\ref{step:uniqueness-ht=2}, we first study the case $\width=2$, i.e.\ when for some witnessing $\P^1$-fibration the horizontal part of the boundary consists of two $1$-sections. This is done in Section \ref{sec:ht=2_untwisted}. The remaining case $\width=1$, which contains examples specific to characteristic $2$, is settled  in Section \ref{sec:ht=2_twisted}.

\subsection{Vertically primitive surfaces of height $2$}\label{sec:ht=2_constructions}

We now construct vertically primitive surfaces 
appearing in Theorem \ref{thm:ht=1,2}\ref{item:ht=2_width=2},\ref{item:ht=2_width=1}. Their constructions 
are known; see e.g.\  \cite{MZ_canonical} for the canonical ones (Fig.\ \ref{fig:3A2-intro}--\ref{fig:2A3+2A1-intro}) and  \cite[\S 9]{Keel-McKernan_rational_curves} for the additional ones in characteristic $2$ (Fig.\ \ref{fig:KM_surface-intro}). Nonetheless, in our approach we need a self-contained presentation. 

\begin{notation}[A pencil of lines]\label{not:pencil_of_lines}
	Fix a point $p_0\in \P^2$, lines $\ll_{1},\dots,\ll_{\nu}$, passing through $p_0$, and a curve $\cc\subseteq \P^2$. Put $\pp=\cc+\ll_1+\dots+\ll_\nu$. For a birational morphism $\phi\colon Y\to \P^2$ such that $p_0\in \Bs\phi^{-1}$, we define $D_{Y}$ as $(\phi^{*}\pp)\redd$ with all $(-1)$-curves subtracted, and consider a $\P^1$-fibration of $Y$ induced by the pencil of lines through $p_0$. We denote the proper transforms of $\ll_i,\cc$ by $L_i,C$, respectively. For a point $p_i\in \P^2$, we denote by $A_i$ the $(-1)$-curve in the preimage of $p_i$. The surface $\bar{Y}$ from Theorem \ref{thm:ht=1,2} is obtained from $Y$ by contracting $D_Y$.
\end{notation}

\begin{example}[Del Pezzo surfaces of rank one and  types $3\rA_{2}$, $\rA_{1}+2\rA_{3}$, $\rA_{1}+\rA_{2}+\rA_{5}$ or $2\rD_{4}$, see Figure \ref{fig:ht=2_w=2_basic}]
	\label{ex:ht=2} 
We keep Notation \ref{not:pencil_of_lines}. Let $\cc$ be a line not passing through $p_{0}$. Write $\{p_{j}\}=\ll_{j}\cap \cc$. We now define a morphism $\phi\colon Y\to\P^2$ as in Notation \ref{not:pencil_of_lines}, such that $(Y,D_{Y})$ is as in Figure \ref{fig:3A2}--\ref{fig:2D4}; and the del Pezzo surface $\bar{Y}$ obtained by contracting $D_Y$ is of type $3\rA_2$, $\rA_1+2\rA_3$, $\rA_1+\rA_2+\rA_5$ or $\rD_4$. 
	
	\begin{parlist}
		\item\label{item:3A2_construction} For type $3\rA_{2}$: take $\nu=2$, and blow up three times at $p_0, p_{1}$ and their infinitely near points on the proper transforms of $\ll_2$ and $\cc$, respectively, see Figure \ref{fig:3A2}.
		\item\label{item:A1+2A3_construction} For type $\rA_{1}+2\rA_{3}$: take $\nu=3$, blow up at $p_{1}$; and at $p_0$, $p_2$, $p_3$ and their infinitely near points on the proper transforms of $\ll_{1}$, $\ll_{2}$, $\ll_{3}$, respectively, see Figure \ref{fig:2A3+A1}.
		\item\label{item:A1+A2+A5_construction} For type $\rA_{1}+\rA_{2}+\rA_{5}$: take $\nu=3$, choose a point $q_3\in \ll_{3}\setminus \cc$, blow up at $p_0$, $p_1$, $p_2$, $q_3$ and their infinitely near points on the proper transforms of $\ll_{1}$, $\cc$, $\ll_{2}$, $\ll_{3}$. Let $A_3$ be the $(-1)$-curve over $q_3$ (not $p_3$), see Fig.\  \ref{fig:A5+A2+A1}.
		\item\label{item:2D4_construction} For type $2\rD_{4}$: take $\nu=4$, blow up at $p_{4}$; twice at $p_{2}$, $p_{3}$ and three times at $p_{0}$; each time on the proper transforms of $\ll_{2}$, $\ll_{3}$ and $\ll_{1}$; respectively, see Figure \ref{fig:2D4}.
	\end{parlist}
	In cases \ref{item:3A2_construction}--\ref{item:A1+A2+A5_construction}, $\pp\subseteq \P^2$ is uniquely determined by four points $(p_0,p_1,p_2,q)$ in a general position (where $q=p_3$ or $q_3$), so the surface $\bar{Y}$ 
	is unique up to an isomorphism. In case \ref{item:2D4_construction}  
	the set of the isomorphism classes of surfaces $\bar{Y}$ is 
	parametrized by the choice of the fourth line $\ll_4$; see Lemma \ref{lem:ht=2,untwisted} for details.
	
	In Remark \ref{rem:primitive_ht=2} we will see that the surfaces in \ref{item:3A2_construction}--\ref{item:A1+A2+A5_construction} are not only vertically primitive, but in fact primitive. The one in  \ref{item:2D4_construction} is not: indeed, contracting $A_4$ we get a swap to a surface of type $3\rA_1+\rD_4$, see Figure \ref{fig:3A1+D4}.
\end{example}

\begin{figure}[ht]
	\subcaptionbox{$3\rA_{2}$ \label{fig:3A2}}[.12\linewidth]{\centering
		\begin{tikzpicture}[scale=1]
			\draw (0.1,3) -- (1.3,3);
			\draw[dashed] (0.2,3.1) -- (0,2.1);
			\node at (0.3,2.6) {\small{$L_1$}};
			\draw (0,2.3) -- (0.2,1.4);
			\draw (0.2,1.6) -- (0,0.7);
			\draw[dashed] (0,0.9) -- (0.2,-0.1);
			\node at (0.3,0.4) {\small{$A_1$}};
			\draw (1.2,3.1) -- (1,1.9);
			\draw[dashed] (1,2.1) -- (1.2,0.9);
			\node at (1.3,1.5) {\small{$A_0$}};
			\draw (1.2,1.1) -- (1,-0.1);
			\node at (1.3,0.5) {\small{$L_2$}};
			\draw (0.1,0.05) -- (1.1,0.05);
			\node at (0.7,0.2) {\small{$C$}};
		\end{tikzpicture}
	}
	\subcaptionbox{$\rA_{1}+2\rA_{3}$\label{fig:2A3+A1}}[.21\linewidth]{\centering
		\begin{tikzpicture}[scale=1]
			\draw (0.1,3) -- (2.3,3);
			\draw (-0.1,0) -- (2.1,0);
			\node at (1.6,0.2) {\small{$C$}};
			\draw[dashed] (0.2,3.1) -- (0,1.9);
			\node at (0.3,2.5) {\small{$A_0$}};
			\draw (0,2.1) -- (0.2,0.9);
			\node at (0.3,1.5) {\small{$L_1$}};
			\draw[dashed] (0.2,1.1) -- (0,-0.1);
			\node at (0.3,0.5) {\small{$A_1$}};
			\draw (1.2,3.1) -- (1,1.9);
			\node at (1.3,2.5) {\small{$L_2$}};
			\draw[dashed] (1,2.1) -- (1.2,0.9);
			\node at (1.3,1.5) {\small{$A_2$}};
			\draw (1.2,1.1) -- (1,-0.1);
			\draw (2.2,3.1) -- (2,1.9);
			\node at (2.3,2.5) {\small{$L_3$}};
			\draw[dashed] (2,2.1) -- (2.2,0.9);
			\node at (2.3,1.5) {\small{$A_3$}};
			\draw (2.2,1.1) -- (2,-0.1);			
		\end{tikzpicture}
	}
	\subcaptionbox{$\rA_1+\rA_2+\rA_5$ \label{fig:A5+A2+A1}}[.21\linewidth]{\centering
		\begin{tikzpicture}[scale=1]
			\draw (0.1,3) -- (2.4,3);
			\draw (0,0) -- (1,0) to[out=0,in=180] (2.3,2.8) -- (2.4,2.8);
			\node at (1.7,0.4) {\small{$C$}};
			\draw[dashed] (0.2,3.1) -- (0,2.1);
			\node at (0.3,2.6) {\small{$A_0$}};
			\draw (0,2.3) -- (0.2,1.4);
			\node at (0.3,1.9) {\small{$L_1$}};
			\draw (0.2,1.6) -- (0,0.7);
			\draw[dashed] (0,0.9) -- (0.2,-0.1);
			\node at (0.3,0.4) {\small{$A_1$}};
			\draw (1.2,3.1) -- (1,1.9);
			\node at (1.3,2.5) {\small{$L_2$}};
			\draw[dashed] (1,2.1) -- (1.2,0.9);
			\node at (1.3,1.5) {\small{$A_2$}};
			\draw (1.2,1.1) -- (1,-0.1);
			\draw (2.3,3.1) -- (2.1,1.9);
			\node at (2.4,2.5) {\small{$L_3$}};
			\draw[dashed] (2.1,2.1) -- (2.3,0.9);
			\node at (2.4,1.5) {\small{$A_{3}$}};
			\draw (2.3,1.1) -- (2.1,-0.1);			
		\end{tikzpicture}
	}	
	\subcaptionbox{$2\rD_4$ \label{fig:2D4}}[.28\linewidth]{\centering
		\begin{tikzpicture}[scale=1]
			\draw (0.1,3) -- (2.9,3);
			\draw (-0.1,0) -- (2.9,0);
			\node at (2.5,0.2) {\small{$C$}};
			\draw (0.2,3.1) -- (0,1.9);
			\draw[dashed] (0,2.1) -- (0.2,0.9);
			\node at (0.3,1.5) {\small{$A_0$}};
			\draw (0.2,1.1) -- (0,-0.1);
			\node at (0.3,0.5) {\small{$L_1$}};
			\draw (1.2,3.1) -- (1,1.9);
			\node at (1.3,2.5) {\small{$L_2$}};
			\draw[dashed] (1,2.1) -- (1.2,0.9);
			\node at (1.3,1.5) {\small{$A_2$}};
			\draw (1.2,1.1) -- (1,-0.1);
			\draw (2.2,3.1) -- (2,1.9);
			\node at (2.3,2.5) {\small{$L_3$}};
			\draw[dashed] (2,2.1) -- (2.2,0.9);
			\node at (2.3,1.5) {\small{$A_3$}};
			\draw (2.2,1.1) -- (2,-0.1);
			\draw[dashed] (2.8,3.1) -- (3.1,1.4);
			\node at (3.1,2.5) {\small{$L_4$}};	
			\draw[dashed] (2.8,-0.1) -- (3.1,1.6);
			\node at (3.1,0.5) {\small{$A_4$}};			
		\end{tikzpicture}			
	}	
	\subcaptionbox{$2\rA_4$ \label{fig:2A4}}[.15\linewidth]{\centering
		\begin{tikzpicture}[scale=1]
			\draw (0,3) -- (1.2,3) to[out=0,in=120] (2,1.4);
			\draw (0,0) -- (1,0) to[out=0,in=-120] (2,1.6);
			\node at (1.75,0.4) {\small{$C$}};
			\draw[dashed] (0.2,3.1) -- (0,2.4);
			\node at (0.3,2.75) {\small{$A_0$}};
			\draw (0,2.55) -- (0.2,1.95);
			\node at (0.3,2.3) {\small{$L_1$}};
			\draw (0.2,2.05) -- (0,1.45);
			\draw (0,1.55) -- (0.2,0.95);
			\draw (0.2,1.05) -- (0,0.45);
			\draw[dashed] (0,0.55) -- (0.2,-0.1);
			\node at (0.3,0.3) {\small{$A_1$}};
			\draw (1.1,3.1) -- (0.9,1.9);
			\node at (1.2,2.5) {\small{$L_2$}};
			\draw[dashed] (0.9,2.1) -- (1.1,0.9);
			\node at (1.2,1.5) {\small{$A_2$}};
			\draw (1.1,1.1) -- (0.9,-0.1);	
		\end{tikzpicture}
	}
	\caption{Examples \ref{ex:ht=2}, \ref{ex:ht=2_meeting}: vertically primitive surfaces in case $\height=2$, $\width=2$.}
	\label{fig:ht=2_w=2_basic}
\end{figure}

\begin{example}[Del Pezzo surface of rank one and type $2\rA_{4}$, see Figure \ref{fig:2A4}]\label{ex:ht=2_meeting}
	Let $\cc$ be a conic passing through $p_0$. Take $\nu=2$ and write $\{p_{j},p_0\}=\ll_{j}\cap \cc$. Blow up twice at $p_0$, twice at $p_2$, and four times at $p_1$, each time on the proper transform of $\ll_{1}$, $\ll_{2}$ and $\cc$; respectively. Then $\bar{Y}$ is of type $2\rA_{4}$, see Figure \ref{fig:2A4}. 
	As in Example \ref{ex:ht=2} above, we see that $\bar{Y}$ is unique up to an isomorphism. In Remark \ref{rem:primitive_ht=2} we will see that it is primitive. 
\end{example}

\begin{example}[Del Pezzo surfaces of rank one and types  $\rA_{3}+\rD_{5}$ or  $2\rA_{1}+2\rA_{3}$, see Fig.\  \ref{fig:ht=2_w=1-basic}]\label{ex:ht=2_twisted}
	Let $\cc$ be a conic, $p_{0}\not\in \cc$. Assume that $\ll_{2},\dots,\ll_{\nu}$ are tangent to $\cc$ and $\ll_{1}$ is not. Write $\{p_{1},p_{1}'\}=\ll_{1}\cap \cc$, $\{p_{j}\}=\ll_{j}\cap \cc$, $j\geq 2$. 
	
	\begin{parlist}
		\item\label{item:A3+D5_construction} For $\rA_{3}+\rD_{5}$: take $\nu=2$, blow up at $p_0$, twice over $p_1$ on the proper transforms of $\ll_1$, and five times over $p_{2}$, on the proper transforms of $\cc$, see Figure \ref{fig:D5+A3_twisted}.
		\item\label{item:2A1+2A3_construction} For $2\rA_{1}+2\rA_{3}$: take $\nu =3$ (so in this case $\cha \kk\neq 2$). Blow up at $p_0$, twice over $p_1$ on the proper transforms of $\ll_1$; and $j$ times over $p_{j}$, $j\in \{2,3\}$ on the proper transforms of $\cc$, see Figure \ref{fig:2A3+2A1}.
	\end{parlist}
	As before, projective uniqueness of each configuration shows that $\bar{Y}$ is unique up to an isomorphism. 
	
	In Remark \ref{rem:primitive_ht=2}, we will see that the surface from \ref{item:2A1+2A3_construction} is primitive. The one from \ref{item:A3+D5_construction} is not. Indeed, consider the $\P^1$-fibration of $Y$ given by the pencil of conics tangent to $\ll_i$ at $p_i$, $i\in \{1,2\}$. Let $A$ be the proper transform of the unique member of this pencil meeting $\cc$ at $p_1$ with multiplicity $3$. Then $A$ is a $(-1)$-curve, $A\cdot D_Y=1$ and $A$ meets the second component of the long twig of the fork in $D_Y$. Contracting $A$, wee see that $\bar{Y}$ swaps vertically, with respect to the new $\P^1$-fibration, to the surface of type $\rA_1+2\rA_3$ from Example \ref{ex:ht=2}\ref{item:A1+2A3_construction}, cf.\ Example \ref{ex:remaining_canonical}\ref{item:A1+2A3_tower}.

	Note that the existence of the above $\P^1$-fibration shows that $\width(\bar{Y})=2$, even though in Lemma \ref{lem:ht=2_twisted-swaps}\ref{item:ht=2_sep_nu=2} we will use $\bar{Y}$ as a vertically primitive model for log surfaces of height $2$ and width $1$.
\end{example}

\begin{figure}[ht]
	\subcaptionbox{$\rA_3+\rD_5$ \label{fig:D5+A3_twisted}}[.3\linewidth]{
		\begin{tikzpicture}[scale=1]
			\draw (0.1,3) -- (0.3,3) to[out=0,in=170] (1.2,1.56) to[out=-10,in=-135] (1.3,1.61);
			\draw (-0.1,0) -- (0.2,0) to[out=0,in=170] (1.2,1.56) to[out=-10,in=135] (1.3,1.49);
			\node at (0.8,0.5) {\small{$C$}};
			\draw (0.2,3.1) -- (0,1.9);
			\node at (0.3,2.5) {\small{$L_1$}};
			\draw[dashed] (0,2.1) -- (0.2,0.9);
			\node at (0.3,1.5) {\small{$A_1$}};
			\draw (0.2,1.1) -- (0,-0.1);
			\draw[dashed] (1,1.6) -- (2.1,1.4);
			\node at (1.65,1.65) {\small{$A_2$}};
			\draw (1.9,1.4) -- (3,1.6);
			\draw (2.8,1.6) -- (3.9,1.4);
			\draw (3.9,3.1) -- (3.7,1.9);
			\node at (4,2.5) {\small{$L_2$}};
			\draw (3.7,2.1) -- (3.9,0.9);
			\draw (3.9,1.1) -- (3.7,-0.1);
		\end{tikzpicture}
	}
	\subcaptionbox{$2\rA_1+2\rA_3$, $\cha\kk\neq 2$ \label{fig:2A3+2A1}}[.3\linewidth]{
		\begin{tikzpicture}[scale=1]
			\draw (0.2,3.1) -- (0,1.9);
			\node at (0.3,2.5) {\small{$L_2$}};
			\draw[dashed] (0,2.1) -- (0.2,0.9);
			\node at (-0.1,1.35) {\small{$A_2$}};
			\draw (0.2,1.1) -- (0,-0.1);
			\draw (1.2,3.1) -- (1,1.9);
			\node at (1.3,2.5) {\small{$L_1$}};
			\draw[dashed] (1,2.1) -- (1.2,0.9);
			\node at (1.3,1.5) {\small{$A_1$}};
			\draw (1.2,1.1) -- (1,-0.1);
			\draw[dashed] (1.7,1.6) -- (2.8,1.4);
			\node at (2.35,1.65) {\small{$A_3$}};
			\draw (2.8,3.1) -- (2.6,1.9);
			\node at (2.9,2.5) {\small{$L_3$}};
			\draw (2.6,2.1) -- (2.8,0.9);
			\draw (2.8,1.1) -- (2.6,-0.1);
			\draw (-0.05,1.55) to[out=0,in=180] (0.05,1.5) to[out=0,in=180] (1.1,2.9) -- (1.3,2.9) 
			to[out=0,in=170] (1.9,1.56) to[out=-10,in=-135] (2,1.61);
			\draw (-0.05,1.45) to[out=0,in=180] (0.05,1.5) to[out=0,in=180] (0.9,0.1) -- (1.1,0.1) 
			to[out=0,in=170] (1.9,1.56) to[out=-10,in=135] (2,1.51);
			\node at (1.6,0.5) {\small{$C$}};
		\end{tikzpicture}
	}
	\subcaptionbox{type \eqref{eq:KM_type}, $\cha\kk=2$ \label{fig:KM_surface}}[.3\linewidth]{
		\begin{tikzpicture}[scale=1]
			\draw[very thick] (0,1.2) -- (3.1,1.2);
			\node at (2.1,1.4) {\small{$C$}};
			\node at (2.1,1) {\small{$4-2\nu$}};
			\draw (0.2,2.5) -- (0,1.5);
			\node at (0.3,2) {\small{$L_1$}};
			\draw[dashed] (0,1.7) -- (0.2,0.7);
			\node at (0.3,1.4) {\small{$A_1$}};
			\draw (0.2,0.9) -- (0,-0.1);
			\draw (1.2,2.5) -- (1,1.5);
			\node at (1.3,2) {\small{$L_2$}};
			\draw[dashed] (1,1.7) -- (1.2,0.7);
			\node at (1.3,1.4) {\small{$A_2$}};
			\draw (1.2,0.9) -- (1,-0.1);
			\node at (2.1,2) {\Large{$\cdots$}};
			\draw (3,2.5) -- (2.8,1.5);
			\node at (3.1,2) {\small{$L_{\nu}$}};
			\draw[dashed] (2.8,1.7) -- (3,0.7);
			\node at (3.1,1.4) {\small{$A_{\nu}$}};
			\draw (3,0.9) -- (2.8,-0.1);
			\draw [decorate, decoration = {calligraphic brace}, thick] (0,2.6) -- (3.2,2.6);
			\node at (1.6,2.9) {\small{$\nu$ fibers}};			
		\end{tikzpicture}
	}
	\caption{Examples \ref{ex:ht=2_twisted}, \ref{ex:ht=2_twisted_cha=2}: vertically primitive surfaces in case $\height=2$, $\width=1$.}
	\label{fig:ht=2_w=1-basic}
\end{figure}
\vspace{-1em}
\begin{example}[{Additional class of primitive del Pezzo surfaces in case $\cha\kk=2$, cf.\ \cite[\S 9]{Keel-McKernan_rational_curves}}]\label{ex:ht=2_twisted_cha=2}
	Assume $\cha\kk=2$. Take $\nu\geq 3$, and let $\cc$ be a conic such that $(\ll_{j}\cdot \cc)_{p_{j}}=2$ for all $j$. Clearly, such a \emph{strange} conic exists if and only if $\cha\kk=2$. Blow up at $p_0$, and twice over each $p_{j}$, on the proper transforms of $\cc$, see Figure \ref{fig:KM_surface}. The singularity type of $\bar{Y}$, written together with the resulting $\P^1$-fibration using Notation \ref{not:fibrations}, is
	\begin{equation}\label{eq:KM_type}
		[\bs{2\nu-4}]\dec{1,\dots,\nu}+\sum_{j=1}^{\nu}([2]\dec{j}+[2]\dec{j}).
	\end{equation}
	
	For $\nu=3$ we get a canonical surface of type $7\rA_{1}$, unique up to an isomorphism. In general, the set of isomorphism classes of surfaces $\bar{Y}$ has moduli dimension $\nu-3$, with the representing family parametrized by the choice of $\ll_{4},\dots, \ll_{\nu}$; see Lemma \ref{lem:7A1-family}\ref{item:7A1-family-P} for details.
	
	We claim that $\bar{Y}$ is primitive. We argue as in Remark \ref{rem:ht=1-primitive}. Case $\nu=3$ is covered by Remark \ref{rem:primitive_ht=2}, so assume $\nu\geq 4$. Suppose  $\bar{Y}$ is not primitive. Then $Y$ contains a $(-1)$-curve $A$ meeting $D\vert$ once. 
	Every degenerate fiber $F$ is supported on a chain $[2,1,2]$, with a $(-1)$-curve of multiplicity $2$ and tips of multiplicity $1$. Hence $A\cdot F$ is odd for the fiber $F$ containing the point $A\cap D\vert$ and even for all the other degenerate fibers, a contradiction.
\end{example}

\begin{example}[See Figure \ref{fig:remaining_canonical}]\label{ex:remaining_canonical}
	We  now reconstruct remaining canonical del Pezzo surfaces of rank one and height $2$, and we show that the following hold, see Table \ref{table:canonical}.
	\begin{enumerate}
		\item \label{item:3A2_tower} Surfaces of types $\rA_{2}+\rA_{5}$, $\rA_{8}$, $\rA_{2}+\rE_{6}$ swap vertically to the one of type $3\rA_2$, constructed in \ref{ex:ht=2}\ref{item:3A2_construction}.
		\item \label{item:A1+2A3_tower} Surfaces of types $\rA_{3}+\rD_{5}$ $\rA_{1}+\rA_{7}$ swap vertically to the one of type $\rA_{1}+2\rA_{3}$, constructed in \ref{ex:ht=2}\ref{item:A1+2A3_construction}.
		\item \label{item:7A1_tower} Surfaces of type $4\rA_{1}+\rD_{4}$ swap vertically to the one of type $7\rA_{1}$, constructed in \ref{ex:ht=2_twisted_cha=2} for $\nu=3$. 
	\end{enumerate}
	Let $(Y,D_{Y})\to (\bar{Y},0)$ and $C,L_j,A_j\subseteq Y$ be as in one of the examples quoted above.
	\smallskip
	
	\ref{item:3A2_tower} Take $\bar{Y}$ of type $3\rA_2$, see Figure \ref{fig:3A2}. Choose points $q\in L_{1}\setminus D_{Y}$, $r\in A_{0}\setminus D_{Y}$, and some $q'$  infinitely near to $q$, not on the proper transform of $L_{1}$. Define a vertical swap $(X,D)\sqto (Y,D_Y)$ as a blowup either at $q$, or at $q$ and $r$, or at $q$ and $q'$; and let $X\to \bar{X}$ be the contraction of $D$. Then $\bar{X}$ is either of type $\rA_{2}+\rA_{5}$, or  $\rA_{8}$, or $\rA_{2}+\rE_{6}$, respectively, see Figures \ref{fig:A5+A2}, \ref{fig:A8}, or \ref{fig:E6+A2}. 
	Lemma \ref{lem:ht=1_uniqueness} in cases \ref{lem:ht=1_reduction}\ref{item:uniq_n=3} and \ref{item:uniq_2} implies that types  $\rA_{2}+\rA_{5}$ and $\rA_{8}$ are realized by a unique surface, and type $\rA_{2}+\rE_{6}$ by exactly two; moreover, the set $\cP(\rA_2+\rE_6)$ is represented by an $h^{1}$-stratified family. 
	
	One can also see this directly, as follows. Put $\bar{q}=\phi(q)\in \ll_1$.  Clearly, $(\P^2,\pp,\bar{q})$ is unique up to an isomorphism, hence so is $\bar{X}$ of type $\rA_{2}+\rA_{5}$. Fix coordinates on $\P^2$ such that $p_0=[0:1:0]$, $p_1=[0:0:1]$, $p_2=[1:0:0]$, $\bar{q}=[0:1:1]$. Then $\Aut(\P^2,\pp,\bar{q})=\{[x:y:z]\mapsto [x:y:\alpha z]: \alpha\in \kk^{*}\}\cong \G_{m}$ acts transitively on the set of irreducible members of the pencil $\{\lambda x^{3}=\mu y z^2\}_{[\lambda:\mu]\in \P^1}$: this action lifts to a transitive action on $A_1\setminus D_{Y}$. It follows that the surface $\bar{X}$ of type $\rA_8$ is unique up to an isomorphism. On the other hand, the action of $\Aut(\P^2,\pp,\bar{q})$ on the pencil of lines through $\bar{q}$ other than $\ll_1$ has two orbits: the open one, and the fixed line $\bar{\ll}$ joining $\bar{q}$ with $p_2$; and the derivative of this action does not vanish along the open orbit. Thus by Lemma \ref{lem:outer}\ref{item:outer-fixed-point} we have $\#\cP(\rA_2+\rE_6)=2$ and $\cP(\rA_{2}+\rE_{6})$ is represented by an $h^{1}$-stratified family, as claimed.

	We remark that by \cite[Proposition 1.5(b)]{PaPe_MT}, see Table \ref{table:canonical}, exactly one of the two surfaces of type $\rA_2+\rE_6$ has a singular member $\bar{T}\in |-K_{\bar{X}}|$ in its smooth locus. We claim that it is the general one if $\cha\kk\neq 3$, and the special one if $\cha\kk=3$. Here the \emph{general} and \emph{special} one is, respectively, the one corresponding to the open and closed orbit, i.e.\ to the general and special fiber of the $h^{1}$-stratified family.
	
	Indeed, let $T$ be the proper transform of $\bar{T}$ on $X$. We have $T\in |-K_{X}|$, so by adjunction $T$ meets each $(-1)$-curve once. It follows that the image of $T$ on $\P^2$, call it $\qq$, is a rational cubic such that $p_{0},p_{1},\bar{q}\in \qq\reg$; $(\qq\cdot \ll_{2})_{p_0}=(\qq\cdot \cc)_{p_1}=3$, and $\qq$ is tangent to $\bar{\ll}$ at $\bar{q}$ precisely in the special case. The Hurwitz formula or \cite[Lemma 5.5]{PaPe_MT} show that the latter holds exactly when $\cha\kk=3$, as claimed.
	\smallskip
	
	\ref{item:A1+2A3_tower} Take $\bar{Y}$ of type $\rA_{1}+2\rA_{3}$, see Figure \ref{fig:2A3+2A1}. Choose a point $q_{j}\in A_{j}^{\circ}\de A_{j}\setminus D_{Y}$, $j\in \{0,2\}$. A vertical swap $(X,D)\sqto (Y,D_Y)$ given by a blowup at $q_{0}$ or $q_{2}$, followed by the contraction of $D$, gives a surface $\bar{X}$ of type $\rA_{3}+\rD_{5}$ or $\rA_{1}+\rA_{7}$, respectively, see Figures \ref{fig:D5+A3}, \ref{fig:A7+A1}. Like before, Lemma \ref{lem:ht=1_uniqueness} in case \ref{lem:ht=1_reduction}\ref{item:uniq_n=3} implies that these surfaces are unique up to an isomorphism. To see this directly, note that the choice of $q_0,q_2$ is equivalent to a choice of a conic $\qq$ tangent to $\ll_{1}$ and $\cc$ at $p_0$, $p_2$; and the configuration $\pp+\qq$ is uniquely determined by four points $(p_0,p_1,p_2,\qq\cap \ll_3\setminus \{p_0\})$ in a general position.
	\smallskip
	
	\ref{item:7A1_tower} Assume $\cha\kk=2$ and take $\bar{Y}$ of type $7\rA_1$, see Figure \ref{fig:KM_surface}. Blowing up a point $q\in A_{1}\setminus D_{Y}$ gives a surface of type $4\rA_{1}+\rD_{4}$. Since the group $\Aut(Y,D_Y)$ acts on $A_{1}\setminus D_Y$ with finite orbits, Lemma \ref{lem:outer}\ref{item:outer-trivial} implies that the set of isomorphism classes of such surfaces has moduli dimension $1$, with the representing family parametrized by the choice of $q$, cf.\ \cite[Example 8.1(b)]{PaPe_MT} or Lemma \ref{lem:ht=2_twisted-inseparable}. 
\end{example}
\begin{figure}
	\subcaptionbox{$\rA_{2}+\rA_{5}$ \label{fig:A5+A2}}[.14\linewidth]{
		\begin{tikzpicture}[scale=0.9]
			\draw (0,3) -- (1.3,3);
			\draw[dashed] (-0.6,2.8) -- (0.2,2.6); 
			\draw (0.2,3.1) -- (0,2.1);
			\draw (0,2.3) -- (0.2,1.4);
			\draw (0.2,1.6) -- (0,0.7);
			\draw[dashed] (0,0.9) -- (0.2,-0.1);
			\draw (1.2,3.1) -- (1,1.9);
			\draw[dashed] (1,2.1) -- (1.2,0.9);
			\draw (1.2,1.1) -- (1,-0.1);
			\draw (0.1,0.05) -- (1.1,0.05);
		\end{tikzpicture}
	}
	\subcaptionbox{$\rA_8$ \label{fig:A8}}[.12\linewidth]{
		\begin{tikzpicture}[scale=0.9]
			\draw (0,3) -- (1.3,3);
			\draw[dashed] (-0.6,2.8) -- (0.2,2.6);
			%\draw[dashed] (0.2,2.9) -- (0,3.9); 
			\draw (0.2,3.1) -- (0,2.1);
			\draw (0,2.3) -- (0.2,1.4);
			\draw (0.2,1.6) -- (0,0.7);
			\draw[dashed] (0,0.9) -- (0.2,-0.1);
			\draw (1.2,3.1) -- (1,1.9);
			\draw (1,2.1) -- (1.2,0.9);
			\draw[dashed] (1,1.5) -- (1.8,1.7);
			\draw (1.2,1.1) -- (1,-0.1);
			\draw (0.1,0.05) -- (1.1,0.05);
		\end{tikzpicture}
	}	
	\subcaptionbox{$\rA_2+\rE_6$ \label{fig:E6+A2}}[.12\linewidth]{
		\begin{tikzpicture}[scale=0.9]
			\draw (0,3) -- (1.3,3);
			\draw (-0.6,2.8) -- (0.2,2.6);
			\draw[dashed] (-0.4,3.1) -- (-0.6,2.1);
	%		\draw[dashed] (-0.1,3.6) -- (0.9,3.8);
	%		\draw (0.2,2.9) -- (0,3.9); 
			\draw (0.2,3.1) -- (0,2.1);
			\draw (0,2.3) -- (0.2,1.4);
			\draw (0.2,1.6) -- (0,0.7);
			\draw[dashed] (0,0.9) -- (0.2,-0.1);
			\draw (1.2,3.1) -- (1,1.9);
			\draw[dashed] (1,2.1) -- (1.2,0.9);
			\draw (1.2,1.1) -- (1,-0.1);
			\draw (0.1,0.05) -- (1.1,0.05);
		\end{tikzpicture}	
	}
	\subcaptionbox{$\rA_{3}+\rD_{5}$ \label{fig:D5+A3}}[.17\linewidth]{
		\begin{tikzpicture}[scale=0.9]
			\draw (0,3) -- (2.3,3);
			\draw (-0.1,0) -- (2.1,0);
			\draw[dashed] (-0.6,2.8) -- (0.2,2.6);
	%		\draw[dashed] (0.2,2.9) -- (0,4); 
			\draw (0.2,3.1) -- (0,1.9);
			\draw (0,2.1) -- (0.2,0.9);
			\draw[dashed] (0.2,1.1) -- (0,-0.1);
			\draw (1.2,3.1) -- (1,1.9);
			\draw[dashed] (1,2.1) -- (1.2,0.9);
			\draw (1.2,1.1) -- (1,-0.1);
			\draw (2.2,3.1) -- (2,1.9);
			\draw[dashed] (2,2.1) -- (2.2,0.9);
			\draw (2.2,1.1) -- (2,-0.1);			
		\end{tikzpicture}	
	}
	\subcaptionbox{$\rA_1+\rA_7$ \label{fig:A7+A1}}[.18\linewidth]{
		\begin{tikzpicture}[scale=0.9]
			\draw (0.1,3) -- (2.3,3);
			\draw (-0.1,0) -- (2.1,0);
			\draw[dashed] (0.2,3.1) -- (0,1.9);
			\draw (0,2.1) -- (0.2,0.9);
			\draw[dashed] (0.2,1.1) -- (0,-0.1);
			\draw (1.2,3.1) -- (1,1.9);
			\draw[dashed] (1,2.1) -- (1.2,0.9);
			\draw (1.2,1.1) -- (1,-0.1);
			\draw (2.2,3.1) -- (2,1.9);
			\draw (2,2.1) -- (2.2,0.9);
			\draw[dashed] (2,1.5) -- (2.8,1.7);
			\draw (2.2,1.1) -- (2,-0.1);			
		\end{tikzpicture}
	}	
	\subcaptionbox{$4\rA_{1}+\rD_{4}$, $\cha\kk=2$ \label{fig:4A1+D4}}[.23\linewidth]{
		\begin{tikzpicture}[scale=0.9]
			\draw[very thick] (0,1.4) -- (2.3,1.4);
			\draw (0.2,3.1) -- (0,1.9);
			\draw[dashed] (0,2.1) -- (0.2,0.9);
			\draw (0.2,1.1) -- (0,-0.1);
			\draw (1.2,3.1) -- (1,1.9);
			\draw[dashed] (1,2.1) -- (1.2,0.9);
			\draw (1.2,1.1) -- (1,-0.1);
			\draw (2.2,3.1) -- (2,1.9);
			\draw (2,2.1) -- (2.2,0.9);
			\draw[dashed] (2,1.6) -- (2.8,1.8);
			\draw (2.2,1.1) -- (2,-0.1);			
		\end{tikzpicture}
	}	\vspace{-0.5em}
	\caption{Example \ref{ex:remaining_canonical}: canonical surfaces of height $2$ which are not vertically primitive.}
	\label{fig:remaining_canonical}\vspace{-0.5em}
\end{figure}
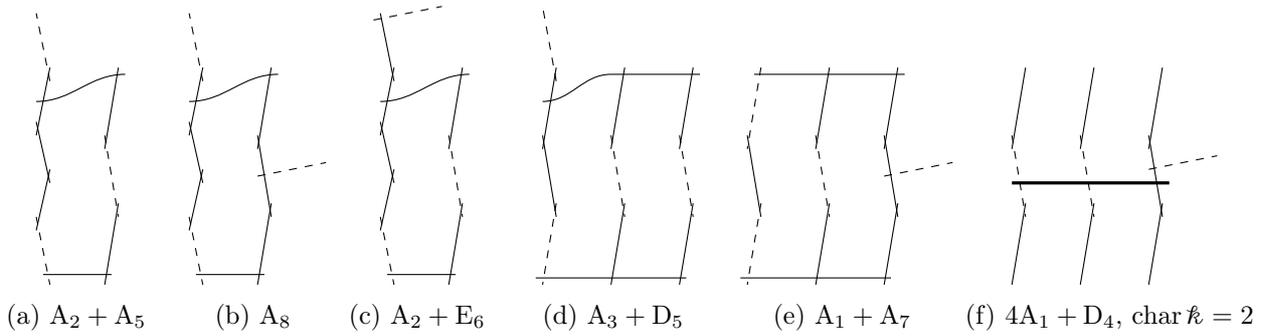

	\begin{remark}[Primitive canonical surfaces, see Table \ref{table:canonical}]\label{rem:primitive_ht=2}
		Recall that all singular, canonical del Pezzo surfaces of rank one are those listed in Table \ref{table:canonical}. For future reference, we note that the primitive ones are those of types: $\rA_1$, $\rA_1+\rA_2$, $2\rA_1+\rA_3$, $3\rA_1+\rD_4$ (see Remark \ref{rem:canonical_ht=1}); $3\rA_2$, $\rA_1+2\rA_3$, $\rA_1+\rA_2+\rA_5$, $2\rA_4$, $2\rA_1+2\rA_3$, $7\rA_1$ (see Figures \ref{fig:ht=2_w=2_basic} and \ref{fig:ht=2_w=1-basic}) and $4\rA_2$, $8\rA_1$,  see \cite[Examples 7.1, 8.1(c)]{PaPe_MT}.
		\end{remark}
		\begin{proof}
			We have seen in Remark \ref{rem:canonical_ht=1} and in the examples above that all other canonical surfaces are not primitive. It remains to prove that those listed in the statement are primitive. Suppose one of them, say $\bar{Y}$, is not. By Definition \ref{def:vertical_swap}\ref{item:def-swap-basic}, the minimal log resolution $(Y,D_Y)$ of $\bar{Y}$ contains a $(-1)$-curve $A$ such that $A\cdot D_Y=1$.

			To get a contradiction, one can use a description of all $(-1)$-curves on their minimal resolutions, given in \cite[Table 1]{BBD_canonical}. Nonetheless, we provide an easy direct argument. By Remark \ref{rem:canonical_ht=1} we can assume $\height(\bar{Y})\geq 2$. 
			
			Let $R$ be the connected component of $D_Y$ meeting $L$. Then $\#R\geq 4$, because $A+R$ is not non-positive definite. Recall that we have assumed $\height(\bar{Y})\geq 2$, so looking at Table \ref{table:canonical} we see that $\bar{Y}$ is of type $\rA_1+\rA_2+\rA_5$ or $2\rA_4$. In particular, $R$ is a chain, so since $A+R$ is not negative definite, $A$ does not meet a tip of $R$.

			Consider the $\P^1$-fibration of $Y$ shown in Figure \ref{fig:A5+A2+A1} or \ref{fig:2A4}. 
			In the first case, $R$ contains $(D_Y)\hor$; in the second case we can assume that the same holds by symmetry. 
			Let $F$ be a general fiber, let $\pi\colon Y\to \bar{Y}$ be the contraction of $D_Y$, and let $\bar{F}=\pi(F)$, $\bar{A}=\pi(A)$. Since $\bar{Y}$ is canonical, we have $\pi^{*}K_{\bar{Y}}=K_{Y}$, so by Noether's formula $K_{\bar{Y}}^2=K_{Y}^2=10-\rho(Y)=1$, and by adjunction $\bar{F}\cdot K_{\bar{Y}}=F\cdot K_{Y}=-2$. Since $\rho(\bar{Y})=1$, it follows that $\bar{F}=-2K_{\bar{Y}}$, so  
			$\bar{A}\cdot \bar{F}=-2\bar{A}\cdot K_{\bar{Y}}=-2A\cdot K_{Y}=2$. We compute that $\pi^{*}\bar{F}=F+2R-\ftip{R}-\ltip{R}$, so $\bar{A}\cdot \bar{F}=A\cdot F+2$, hence $A\cdot F=0$, i.e.\ $A$ is vertical, a contradiction, see 
			Figures \ref{fig:A5+A2+A1}, \ref{fig:2A4}.
		\end{proof}

\subsection{Case $\height(X,D)=\width(X,D)=2$}\label{sec:ht=2_untwisted}

Throughout this section, we assume that
\begin{equation}\label{eq:assumption_ht=2_width=2}
	\parbox{.9\textwidth}{
		$\bar{X}$ is a del Pezzo surface of rank one and height two; 
		$(X,D)\to (\bar{X},0)$ is the minimal log resolution; and
		$p\colon X\to \P^{1}$ is a  $\P^{1}$-fibration such that $D\hor$ consists of two $2$-sections.
	}
\end{equation}

\begin{lemma}[The structure of $p$]\label{lem:ht=2_basic}
	Let $H_{1},H_{2}$ be the $1$-sections in $D\hor$. Let $F_{1},\dots,F_{\nu}$ be the degenerate fibers.
	\begin{enumerate}
		\item\label{item:m,n>=2} We have $H_1=[n]$, $H_2=[m]$ for some $n,m\geq 2$. 
		\item\label{item:branching-sections} If both $H_{1}$ and $H_2$ are branching in $D$ then they lie in different connected components of $D$.
		\item\label{item:ht=2_sigma} The fiber $F_{1}$ has exactly two $(-1)$-curves, and each $F_{j}$ for $j\geq 1$ has exactly one.
		\item\label{item:ht=2_nu} Each fiber $F_{j}$ for $j\geq 2$ meets $D\hor$ in tips of $D\vert$. In particular, $\nu-1\leq \min\{\beta_{D}(H_{1}),\beta_{D}(H_{2})\}\leq 3$.
		\item\label{item:ht=2_s} Let $\phi$ be the contraction of $(-1)$-curves in $F_1$ and its images such that all $(-1)$-curves in $\phi_{*}F_1$ meet $\phi_{*}D\hor$, and $\phi$ is an isomorphism in some neighborhood of $D\hor$. Then $\phi_{*}F_1=[1,(2)_{s},1]$ for some $s\geq 0$.
	\end{enumerate}
\end{lemma}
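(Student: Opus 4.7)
\textbf{The plan} is to prove the five parts in the order (a), (c), (d), (e), (b), each leaning on previous ones.

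First, (a) is immediate: $H_i \cong \P^1$ is a component of the exceptional divisor $D$ of the minimal resolution $\pi$, so by Lemma \ref{lem:min_res}\ref{item:D-snc} and negative definiteness of $D$ we have $H_i^2 \leq -2$. For (c), I would apply Lemma \ref{lem:fibrations-Sigma-chi} with $\#D\hor = 2$, $\nu_\infty = 0$ (no fiber lies in $D$ since $D\hor \neq 0$), and $\rho(X) - \#D = \rho(\bar{X}) = 1$; this yields $\Sigma = 1$. By Lemma \ref{lem:delPezzo_fibrations}\ref{item:-1_curves}, $\sigma(F_j)$ counts the $(-1)$-curves in $F_j$, and every degenerate fiber has at least one. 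Hence $\sigma(F_j) \geq 1$ with equality except for a single fiber $F_1$ having two $(-1)$-curves.

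Next, for (d), fix $j \geq 2$. Let $L_j$ be the unique $(-1)$-curve in $F_j$. If $L_j$ had multiplicity one in $F_j$, Lemma \ref{lem:degenerate_fibers}\ref{item:unique_-1-curve} would produce a second $(-1)$-curve in $F_j - L_j$, contradicting (c). So $L_j$ has multiplicity $\geq 2$, and the two multiplicity-one tips of $F_j$ guaranteed by Lemma \ref{lem:degenerate_fibers}\ref{item:adjoint_chain} lie in $D\vert$. Since $H_i \cdot F_j = 1$, the unique component of $F_j$ meeting $H_i$ is of multiplicity one, hence is a tip of $D\vert$; different fibers yield distinct tips, giving $\beta_D(H_i) \geq \nu - 1$. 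For the upper bound $\min_i \beta_D(H_i) \leq 3$, I would use the ampleness inequality $\cf(H_1) + \cf(H_2) < 2$ of Lemma \ref{lem:delPezzo_criterion}: if both $\beta_D(H_i) \geq 4$, then \cite[Lemma 3.1.3]{Flips_and_abundance} applied to the subgraph consisting of $H_i$ and four of its $D$-neighbors (at minimum $(-2)$-curves) yields $\cf(H_i) \geq 1$ for each $i$, a contradiction.

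For (e), by the choice of $\phi$ every $(-1)$-curve $L$ of $\phi_*F_1$ meets $D\hor$, and from $H_i \cdot \phi_*F_1 = H_i \cdot F_1 = 1$ such an $L$ must have multiplicity one and meet $D\hor$ in a single point. Lemma \ref{lem:degenerate_fibers}\ref{item:unique_-1-curve} then forces $L$ to be a tip and supplies a second $(-1)$-curve in $\phi_*F_1 - L$; conversely each $H_i$ can meet at most one $(-1)$-curve, so $\phi_*F_1$ has exactly two $(-1)$-curves $L_1, L_2$ (tips, meeting $H_1, H_2$ respectively). Contracting $L_1$, the neighbor $C_1$ becomes a $(-|C_1^2|+1)$-curve in the image fiber, and a reapplication of Lemma \ref{lem:degenerate_fibers}\ref{item:unique_-1-curve} (every new $(-1)$-curve of multiplicity one must still be a tip) forces $C_1^2 = -2$ and $\beta_{\phi_*F_1}(C_1) = 2$. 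Iterating yields the chain $\phi_*F_1 = [1, (2)_s, 1]$.

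Finally, for (b), I would argue by contradiction: suppose $H_1, H_2$ are both branching and lie in a common connected component $R \subseteq D$. By the analysis above each $H_i$ has $\beta_D(H_i) \geq 3$ branches, each branch living in a distinct degenerate fiber (since $H_i \cdot F = 1$). Taking the subgraph $R_0 \subseteq R$ consisting of $H_1, H_2$, a minimal path between them, and two extra $(-2)$-twigs at each $H_i$, Lemma \ref{lem:admissible_forks} together with the negative definiteness of $R_0$ rules out the extended-Dynkin shapes ($\tilde D_n$); in every remaining case a direct linear algebra computation (essentially the bench or two-fork configuration) gives $\cf_{R_0}(H_1) + \cf_{R_0}(H_2) \geq 2$, and then \cite[Lemma 3.1.3]{Flips_and_abundance} lifts this to $\cf_D(H_1) + \cf_D(H_2) \geq 2$, contradicting the ampleness inequality $\cf(H_1) + \cf(H_2) < 2$. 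The main obstacle is this last case analysis in (b) (and the analogous one in the upper bound of (d)): although each individual subcase reduces to a small explicit linear system, enumerating them carefully and handling the boundary between negative-definite and semi-definite configurations is the most delicate part of the proof.
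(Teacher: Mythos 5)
Your route coincides with the paper's for \ref{item:m,n>=2}, \ref{item:ht=2_sigma}, the first half of \ref{item:ht=2_nu}, and the inductive core of \ref{item:ht=2_s} and \ref{item:branching-sections}, but there is one concrete gap, in \ref{item:ht=2_s}: you start from the assumption that $\phi_{*}F_1$ contains a $(-1)$-curve. Nothing in the definition of $\phi$ rules out the possibility that the contractions terminate with $\phi_{*}F_1=[0]$ (an irreducible $0$-curve has no $(-1)$-curves, so the condition ``all $(-1)$-curves in $\phi_{*}F_1$ meet $\phi_{*}D\hor$'' is then vacuously satisfied), and in that case the conclusion $\phi_{*}F_1=[1,(2)_{s},1]$ with $s\geq 0$ is simply false. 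This case has to be excluded by a separate argument: since $\phi$ is an isomorphism near $D\hor$, the component $R$ of $F_1$ mapping onto the $0$-curve would satisfy $R\cdot D\hor=F_1\cdot D\hor=2$, and then Lemma \ref{lem:delPezzo_fibrations}\ref{item:rivet} forces $R\subseteq D$ and $F_1$ to have exactly one component off $D$, contradicting part \ref{item:ht=2_sigma}. This is precisely the one place in the proof where the ``rivet'' lemma is needed, and it is missing from your write-up; the rest of your argument for \ref{item:ht=2_s} (multiplicity one of the $(-1)$-curves, tips, the $[2]$-neighbour forced by Lemma \ref{lem:degenerate_fibers}\ref{item:unique_-1-curve} after each contraction) is fine once that case is dead.

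Two further remarks. For the bound $\min_i\beta_{D}(H_i)\leq 3$ in \ref{item:ht=2_nu} the paper splits into the log terminal case and the non-lt case (citing Proposition \ref{prop:non-lt-more}\ref{item:non-lt-B}); your direct computation that a star with four branches of self-intersection $\leq -2$ has $\cf\geq 1$ at the centre is a valid, more self-contained alternative and buys you independence from Section~3. In \ref{item:branching-sections} the case analysis you fear is unnecessary: the subgraph $B$ you build (the path from $H_1$ to $H_2$ with two extra components attached at each end) is exactly a generalized bench, so $\cf_{B}(H_i)\geq 1$ follows in one stroke from the coefficient formula \cite[3.2.5]{Flips_and_abundance} together with monotonicity \cite[3.1.3]{Flips_and_abundance} (the log canonical bench, where the central chain has coefficient exactly $1$, is the extremal case), and non--negative-definite configurations need no separate treatment because $B\subseteq D$ is automatically negative definite. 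Finally, a trivial slip in \ref{item:ht=2_sigma}: $\nu_{\infty}=0$ because $D$ is negative definite while the support of a fiber is not, not because $D\hor\neq 0$.
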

\begin{proof}
	\ref{item:m,n>=2} The curves $H_1,H_2$ are rational by Lemma  \ref{lem:min_res}\ref{item:one-elliptic}. We have $H_{i}^{2}\leq -1$, because $D$ is negative definite, and $H_{i}^{2}\neq -1$ by Lemma \ref{lem:min_res}\ref{item:D-snc}.
	
	\ref{item:branching-sections} Suppose the contrary. Then $D$ contains a rational chain $T$ with $H_{1}$ and $H_{2}$ as tips, and four components $T_{i,j}\not\subseteq T$ such that $T_{i,j}$ meets $T$ only in $H_{i}$, for $i,j\in \{1,2\}$. Put $B=T+\sum_{i,j} T_{i,j}$. By \cite[3.2.5, 3.1.3]{Flips_and_abundance} we have $1\leq \cf_{B}(H_i)\leq \cf(H_i)$, so $2\leq \cf(H_1)+\cf(H_2)$, contrary to the inequality \eqref{eq:ld_phi_H}.
	
	\ref{item:ht=2_sigma} By Lemma \ref{lem:delPezzo_fibrations}\ref{item:-1_curves}, the $(-1)$-curves in $F_i$ are precisely those components of $F_i$ which do not lie in $D$. Thus \ref{item:ht=2_sigma} follows from Lemma \ref{lem:delPezzo_fibrations}\ref{item:Sigma}.
	
	\ref{item:ht=2_nu} Fix $j>1$. By \ref{item:ht=2_sigma}, the fiber $F_j$ contains a unique $(-1)$-curve. This $(-1)$-curve  has multiplicity at least $2$, so it does not meet $D\hor$. This proves the first inequality in \ref{item:ht=2_nu}, cf.\ Lemma \ref{lem:ht=1_basics}\ref{item:ht=1_nu}. The second one is obvious if $\bar{X}$ is log terminal; otherwise it follows from Proposition \ref{prop:non-lt-more}\ref{item:non-lt-B}.
	
	\ref{item:ht=2_s} If $\phi_{*}F_1=[0]$ then, since $\phi$ is an isomorphism near $D\hor$, the proper transform  $\phi^{-1}_{*}(\phi^{*}F_1)$ meets both components of $D\hor$, which is impossible by Lemma \ref{lem:delPezzo_fibrations}\ref{item:rivet}. Thus $\phi_{*}F_{1}$ contains a $(-1)$-curve. By the definition of $\phi$, every such curve  meets $\phi_{*}D\hor$, so it has multiplicity $1$ in $\phi_{*}F_{1}$. Hence $\phi_{*}F_{1}$ has at least $2$ such $(-1)$-curves, and part \ref{item:ht=2_sigma} shows that it has exactly two. The result follows,  since $\phi_{*}F_1$ contracts to a $0$-curve.
\end{proof}

Recall from \cite[7.2]{Fujita-noncomplete_surfaces} that a \emph{rivet} is a connected component of $D\vert$ meeting both $H_{1}$ and $H_{2}$.

\setcounter{claim}{0}
\begin{lemma}[Vertical swaps]\label{lem:ht=2_swaps}
	Let $(X,D)$ and $m,n,s,\nu,\phi$ be as in Lemma \ref{lem:ht=2_basic}. We can choose $p$ so that $\bar{X}$ swaps vertically to a canonical surface $\bar{Y}$ constructed in Examples \ref{ex:ht=2}--\ref{ex:ht=2_meeting}; and one of the following holds.
	\begin{enumerate}
		\item\label{item:ht=2_no-rivet} $H_{1}\cdot H_{2}=0$, 
		$s=m+n-\nu$, $\nu=2,3$ or $4$,  
		and $\bar{Y}$ is of type $3\rA_{2}$, $\rA_{1}+2\rA_{3}$ or $2\rD_{4}$, respectively, 
		\item\label{item:ht=2_rivet} $H_{1}\cdot H_{2}=0$, $s=m+n-2$, $\nu=3$, 
		and $\bar{Y}$ is of type $\rA_{1}+\rA_{2}+\rA_{5}$,
		\item\label{item:ht=2_meet} $H_{1}\cdot H_{2}=1$, $s=m+n$, $\nu=2$,  
		and $\bar{Y}$  is of type $2\rA_{4}$.
	\end{enumerate}
	Assume furthermore that $\bar{X}$ is log canonical. Then we can choose $p$ so that if case \ref{item:ht=2_no-rivet} holds and $R$ is a rivet then $R\subseteq F_2$, and either $\nu=3$ 
	or $\nu=2$, and the connected components of $D-R$ containing $H_1$, $H_2$ meet the same $(-1)$-curve in $F_1$.
\end{lemma}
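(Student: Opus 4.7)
The strategy is to use Lemma~\ref{lem:ht=2_basic} together with Lemma~\ref{lem:degenerate_fibers} to pin down the combinatorial shape of every degenerate fiber of $p$, then iteratively apply elementary vertical swaps (each legal by Lemma~\ref{lem:swap_lc}) to reduce the fibers to their standard forms, and finally identify the resulting primitive log surface with one of the canonical models of Examples~\ref{ex:ht=2}--\ref{ex:ht=2_meeting}. The first split is on $H_1\cdot H_2\in\{0,1\}$, the only possibilities by Lemma~\ref{lem:min_res}\ref{item:D-snc}.

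In the main case $H_1\cdot H_2=0$, Lemma~\ref{lem:ht=2_basic}\ref{item:ht=2_sigma},\ref{item:ht=2_nu} combined with Lemma~\ref{lem:degenerate_fibers}\ref{item:adjoint_chain} force each $F_j$ with $j\geq 2$ to be either columnar or to contain a unique branching $(-r)$-component that becomes a rivet in $D\vert$. I would perform elementary vertical swaps that contract the unique $(-1)$-curve $L_j\subseteq F_j$ and the successive $(-1)$-tips created in the images of the admissible twigs; this straightens $F_j$ either to $[2,1,2]$ or to $[2,1,r,1,2]$. Reducing $F_1$ via the map $\phi$ of Lemma~\ref{lem:ht=2_basic}\ref{item:ht=2_s} to $[1,(2)_s,1]$ and tracking self-intersection numbers of $H_1,H_2$ through $\phi$ and all the swaps yields $s=m+n-\nu$ (no rivet, with $\nu\leq 4$ by Lemma~\ref{lem:ht=2_basic}\ref{item:ht=2_nu}) or $s=m+n-2$ (one rivet, forcing $\nu\leq 3$). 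Matching the resulting primitive model with the appropriate construction in Example~\ref{ex:ht=2} identifies $\bar{Y}$ as the surface of type $3\rA_2$, $\rA_1+2\rA_3$ or $2\rD_4$ in case~\ref{item:ht=2_no-rivet}, and $\rA_1+\rA_2+\rA_5$ in case~\ref{item:ht=2_rivet}.

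The remaining case $H_1\cdot H_2=1$ is case~\ref{item:ht=2_meet}: both $(-1)$-tips of $\phi_{*}F_1=[1,(2)_s,1]$ must emanate from the unique point $H_1\cap H_2$, and Lemma~\ref{lem:delPezzo_fibrations}\ref{item:rivet} then forbids any rivet in $F_j$ for $j\geq 2$. The analogous self-intersection count gives $\nu=2$, $s=m+n$, and comparison with Example~\ref{ex:ht=2_meeting} identifies $\bar{Y}$ as the surface of type $2\rA_4$. For the log canonical refinement, Proposition~\ref{prop:non-lt-more}\ref{item:non-lt-B} allows at most one connected component of $D$ to carry extra branching, so the fibers can be relabelled to place any rivet $R$ in $F_2$; and in the $\nu=2$ subcase an alternative choice of witnessing fibration arranges the two connected components of $D-R$ adjacent to $H_1$ and $H_2$ to share the same $(-1)$-curve in $F_1$.

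The main obstacle will be the combinatorial bookkeeping: verifying at each stage that the $(-1)$-curve required for the next elementary swap indeed appears as a tip of $D\vert$ meeting exactly one $(-2)$-curve of $D$ (as demanded in Definition~\ref{def:vertical_swap}\ref{item:def-swap-elementary}), and then confirming that the final outcome matches one of the specific figures in Examples~\ref{ex:ht=2}--\ref{ex:ht=2_meeting} rather than merely producing a surface with the correct singularity type. Extracting the sharp bounds $\nu\leq 4$ and, with a rivet, $\nu\leq 3$ from the non-negativity of $m,n,s$ will also require some care.
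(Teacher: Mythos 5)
Your high-level strategy — contract to a Hirzebruch surface to get a numerical relation between $s$, $m$, $n$, $\nu$, $\epsilon$ and $H_1\cdot H_2$, then perform vertical swaps to a standard model and match it with Examples~\ref{ex:ht=2}--\ref{ex:ht=2_meeting} — is indeed the paper's. But there is a genuine gap in the middle: your case analysis silently assumes that the given witnessing fibration already has $\nu\in\{2,3,4\}$ degenerate fibers arranged as in (a)--(c), and you never explain how to reach such a fibration when it does not. The configurations $\nu=1$ (which is perfectly consistent with Lemma~\ref{lem:fibrations-Sigma-chi}, since $F_1$ then simply carries both curves off $D$), $\nu=1$ with $H_1\cdot H_2=1$, and $\nu=2$ with a one-component rivet (where $s=m+n-1$, matching none of (a)--(c)) all occur a priori, and the paper disposes of them only by \emph{replacing} $p$ with the auxiliary fibration $\tilde p$ induced by $|\tau^{*}(\bar H_1+2\bar F_1^{(1)}+\bar F_1^{(2)})|$, whose fiber is supported on $H_1$ plus part of $F_1$. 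This fibration switch is the crux of the $\nu\leq 2$ case (it is how one proves $\nu\geq 2$ at all, via $\#\tilde D\hor\geq\height(X,D)=2$), and your plan contains no substitute for it. Without it, your swaps in the $\nu=1$ case terminate at a surface with a single degenerate fiber $[1,(2)_{s'},1]$, which is none of the claimed models.

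The final log-canonical refinement is also off. A rivet here is a \emph{connected component} of $D\vert$ meeting both $H_1$ and $H_2$, not necessarily a single component, so Lemma~\ref{lem:delPezzo_fibrations}\ref{item:rivet} does not exclude one from $F_1$; and since $F_1$ is intrinsically distinguished as the fiber with two $(-1)$-curves, no relabelling of fibers can move a rivet from $F_1$ into $F_2$ — again one must change the fibration, which is exactly what the paper does. Moreover Proposition~\ref{prop:non-lt-more}\ref{item:non-lt-B} concerns non--log terminal surfaces and gives you nothing for a log terminal $\bar X$. Two smaller slips: in case (c) the absence of a rivet follows from $D$ having no circular subdivisors (a rivet together with $H_1+H_2$ would close a cycle), not from Lemma~\ref{lem:delPezzo_fibrations}\ref{item:rivet}; and the two $(-1)$-tips of $\phi_{*}F_1$ do not pass through $H_1\cap H_2$ (compare Figure~\ref{fig:2A4}), though nothing in your argument actually depends on that claim.
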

\begin{proof}
	Let $R_0$ be a component of $D$ meeting both $H_{1}$ and $H_{2}$; put $R_0=0$ if there is no such. Since $D$ has no circular subdivisors, $R_0$ is unique. By Lemma \ref{lem:delPezzo_fibrations}\ref{item:rivet} $R_0\not\subseteq F_1$, so we can assume $R_0\subseteq F_2$. Put $\epsilon=\#R_0\in \{0,1\}$. 
	
	Let $\upsilon\colon X\to \F_{n}$ be the contraction of all vertical curves not meeting $H_{1}$. Then $\upsilon(H_1)$ is the negative section, and $\upsilon(H_{2})$ is a $1$-section such that $\upsilon(H_1)\cdot \upsilon(H_{2})=H_{1}\cdot H_{2}\in \{0,1\}$. Numerical properties of $\F_n$ yield $0=(\upsilon(H_2)-\upsilon(H_1))^2=\upsilon(H_2)^2-2(H_1\cdot H_2)-n$, so $\upsilon(H_{2})^{2}=n+2(H_{1}\cdot H_{2})$. Decomposing $\upsilon$ as a sequence of blowups, we see that those blowups which are not isomorphisms near the image of $H_2$ contract exactly $s+1$ components of $F_{1}$ and, if $\nu>1$, also  $1-\epsilon$ components of $F_2$ and one component of $F_j$ for each $j>2$. We get  $(s+1)+(1-\epsilon)+(\nu-2)=\upsilon(H_{2})^{2}-H_{2}^{2}=n+2(H_{1}\cdot H_{2})+m$, so
	\begin{equation}\label{eq:s}
		s=m+n-\nu+\epsilon+2H_{1}\cdot H_{2}.
	\end{equation}
	In particular, since $\nu \leq 4$ by  Lemma  \ref{lem:ht=2_basic}\ref{item:ht=2_nu}, we have  $s \geq m+n-4$.
	
	Let $\tau'$ be the contraction of $\Exc\phi$ and of all components of $F_{2}+\dots+F_{\nu}$ which do not meet $D\hor$. Then $\tau'_{*}H_{1}=[n]$, $\tau'_{*}H_{2}=[m]$. Since $s \geq m+n-4$, we can further contract $(-1)$-curves in $F_1$ and its images until the images of $H_{1}$ and $H_{2}$ are $(-2)$-curves. This way, we get a vertical swap $\tau\colon (X,D)\sqto (Y,D_Y)$. Put 
	$\bar{H}_{j}=\tau_{*}H_{j}$, 
	$\bar{F}_{j}=\tau_{*}(F_{j})\redd$. Then $\bar{H}_j=[2]$, $\bar{F}_{j}=[2,1,2]$ for $j>1$; and $\bar{F}_{1}=[1,(2)_{s'},1]$ for $s'=s-m-n+4$. By \eqref{eq:s}
	\begin{equation}\label{eq:s'}
		s'=4-\nu+\epsilon+2H_{1}\cdot H_{2}.
	\end{equation}
	Note that  $(Y,D_{Y})\to (\bar{Y},0)$ is a minimal log resolution of some canonical surface $\bar{Y}$. We have $\rho(\bar{Y})=\rho(\bar{X})=1$ and $\height(\bar{Y})\leq \height(\bar{X})=2$, so $\bar{Y}$ is del Pezzo by Lemma \ref{lem:delPezzo_criterion}. 
	We need to show that $\bar{Y}$ is as in Examples \ref{ex:ht=2}--\ref{ex:ht=2_meeting}.

	Assume $\nu\geq 3$. By Lemma \ref{lem:ht=2_basic}\ref{item:ht=1_nu} we have $\nu\leq 4$.  If $H_{1}\cdot H_{2}=1$ or $F_1$ contains a rivet then both $H_{1}$ and $H_{2}$ are branching in $D$, contrary to Lemma \ref{lem:ht=2_basic}\ref{item:branching-sections}. Hence $H_{1}\cdot H_{2}=0$, and a rivet, if exists, is contained in $F_2$. This way, we get \ref{item:branching-sections} if $\epsilon=1$ and \ref{item:ht=2_no-rivet} if $\epsilon=0$.

	Assume $\nu\leq 2$. Formula \eqref{eq:s'} gives $s'\geq 2$. Put $\bar{F}=\bar{H}_{1}+2\bar{F}_{1}\cp{1}+\bar{F}_{1}\cp{2}$, so $\bar{F}\redd=[2,1,2]$, and put $\tilde{F}=\tau^{*}\bar{F}$. The linear system  $|\tilde{F}|$ induces a $\P^{1}$-fibration $\tilde{p}\colon X\to \P^1$. The horizontal part of $D$ for $\tilde{p}$, call it $\tilde{D}\hor$, consists of disjoint $1$-sections. 
	Assume $H_{1}\cdot H_{2}=1$. If $\nu=2$ then \ref{item:ht=2_meet} holds, so assume $\nu=1$. Now, $\tilde{D}\hor$ consists of two disjoint $1$-sections: $H_{2}$ and $\tau^{-1}_{*}\bar{F}_{1}\cp{3}$. Thus replacing $p$ by $\tilde{p}$, we can assume $H_{1}\cdot H_{2}=0$. 
	
	We have $\nu=\#\tilde{D}\hor\geq \height(X,D)=2$, so $\nu=2$; and $\tilde{D}\hor$ consists of two $1$-sections. Since $s'\geq 2$, those sections are disjoint and do not meet the same component of $D$. Thus after replacing $p$ by $\tilde{p}$, we get case \ref{item:ht=2_no-rivet}.

	To prove the last statement, assume that $D$ has a rivet $R$. Since $D$ has no circular subdivisors, $R$ is unique. If $R\subseteq F_{1}$ then the $\P^1$-fibration $\tilde{p}$ defined above admits a rivet, too. This rivet lies in the fiber $\tilde{F}$, which has exactly one component off $D$. Thus replacing $p$ by $\tilde{p}$, we can assume $R\subseteq F_2$.
	
	Since $D$ has no circular subdivisors, the sections $H_1$ and $H_2$ lie in different connected components of $D-R$, call them $D_1$ and $D_2$. Put $V=\tau^{-1}_{*}(\bar{F}_1\wedge D_{Y})$. If $V\not\subseteq D_1+D_2$ then $\tilde{p}$ has no rivet, so we are done by replacing $p$ with $\tilde{p}$. Thus we can assume $V\subseteq D_1$. Let $\bar{A}$ be the tip of $\bar{F}_1$ which meets $\bar{H}_2$. Then since $F_1$ contains no rivet the preimage $\tau^{*}\bar{A}$ contains a $(-1)$-curve meeting both $D_1$ and $D_2$, as claimed.
\end{proof}

We are now ready to list singularity types of log canonical surfaces satisfying assumption \eqref{eq:assumption_ht=2_width=2}. 
We use notation summarized in Section \ref{sec:notation}. In particular, we encode our $\P^1$-fibration of $(X,D)$ using a decorated singularity type, introduced in Notation \ref{not:fibrations}. Singularity types without decorations are listed in Table \ref{table:ht=2_char-any}. 

\setcounter{claim}{0}
\begin{lemma}[Classification, see Table \ref{table:ht=2_char-any}]\label{lem:ht=2,untwisted}
	Let $\cS$ be a log canonical singularity type. Let $\Phtw(\cS)$ be the set of isomorphism classes of del Pezzo surfaces $\bar{X}$ of rank $1$, height $2$, width $2$, and singularity type $\cS$.
		
	Let $(X,D)$ be a minimal log resolution of a surface in $\Phtw(\cS)$, put $h^i= h^{i}(\lts{X}{D})$. Fix a $\P^1$-fibration of $X$ as in Lemma \ref{lem:ht=2_swaps}, so that $\bar{X}$ swaps vertically to a surface $\bar{Y}$ from Example \ref{ex:ht=2} or  \ref{ex:ht=2_meeting}. Let $\check{\cS}$ be the combinatorial type of the sum of $D$ and all vertical $(-1)$-curves. Then one of the following holds.
	\begin{enumerate}[itemsep=0.6em]
		\item\label{item:ht=2_3A2-unique}  $\bar{Y}$ is of type $3\rA_{2}$, see Example \ref{ex:ht=2}\ref{item:3A2_construction}; we have $\#\Phtw(\cS)=1$, and $\check{\cS}$ is one of the following:
		\begin{longlist}
			\item\label{item:tau=id_chains} $\ldec{2}[\lh{T_2},T^{*}]\dec{3}+\ldec{3}[T,\fh{T_1}]\dec{1}+\ldec{1}[T_1^{*},T_2^{*}]\dec{2}$,
			\item \label{item:V-chains_c=1} $\ldec{3}[T,\fh{T_1},r\dec{1},T_1^{*},T_{2}^{*}]\dec{2}+\ldec{2}[\lh{T_{2}},T^{*}]\dec{3}+[(2)_{r-2}]\dec{1}$, 
			\item \label{item:V-chains_c=1_rivet} $\ldec{2}[\lh{T_{2}},T^{*},r_2\dec{3},T,\fh{T_1},r_1\dec{1},T_1^{*},T_2^{*}]\dec{2}+[(2)_{r_1-2}]\dec{1}+[(2)_{r_2-2}]\dec{3}$,
			\smallskip
			
			\item \label{item:lc_rivet} $\langle r,\ldec{2}[2,2,2\dec{1},\bs{2},2],\ldec{2}[\bs{2},2],[2]\dec{3}\rangle +[(2)_{r-3},3]\dec{3}$, $r\geq 3$,
			\smallskip
			
			\item \label{item:[T_1,n]=[2,2]_r>2} 
			$\langle r; \ldec{3}[2,\bs{2}],\ldec{2}[(2)_{m}],[2]\dec{1}\rangle+\ldec{2}[\bs{m},2]\dec{3}+[(2)_{r-3},3]\dec{1}$, $m\in\{2,3,4,5\}$, $r\geq 3$,
			\item \label{item:[T_1,n]=[3,2]_r>2} 
			$\langle r; \ldec{3}[3,\bs{2}],\ldec{2}[2,2],[2]\dec{1}\rangle+\ldec{2}[\bs{2},2,2]\dec{3}+[(2)_{r-3},3]\dec{1}$,  $r\geq 3$,
			\item \label{item:lc_T=[2]} $\langle r,\ldec{3}[2,2,\bs{2}],\ldec{2}[2,2,2],[2]\dec{1}\rangle +\ldec{2}[\bs{3},3]\dec{3}+[(2)_{r-3},3]\dec{1}$, $r\geq 3$,
			\item \label{item:lc_T=[2,2]} $\langle r,\ldec{3}[2,\bs{2}],\ldec{2}[2,2],\ldec{1}[2,2]\rangle +\ldec{2}[\bs{2},2]\dec{3}+[(2)_{r-3},4]\dec{1}$, $r\geq 3$.
			\setcounter{foo}{\value{longlisti}}
		\end{longlist}
		\smallskip
		
		\noindent Moreover $h^{1}=1$ in case
			\ref{item:V-chains_c=1_rivet} with $\cha\kk|d([T,T_1])$ or \ref{item:lc_rivet} with $\cha\kk=3$; and $h^1=0$ otherwise.
		\myitem{(a')}\label{item:ht=2_3A2-2}  $\bar{Y}$ is of type $3\rA_{2}$, see Example \ref{ex:ht=2}\ref{item:3A2_construction}; we have $\#\Phtw(\cS)=2$, and $\check{\cS}$ is one of the following:
		\begin{longlist}\setcounter{longlisti}{\value{foo}}	
			\item \label{item:[T_1,n]=[2,2]_r=2} 
			$\langle 2; \ldec{3}[2,\bs{2}],\ldec{2}[(2)_{m}],[2]\dec{1}\rangle+\ldec{2}[\bs{m},2]\dec{3}$, $m\in\{2,3,4\}$ ,
			\item \label{item:[T_1,n]=[3,2]_r=2} 
			$\langle 2; \ldec{3}[3,\bs{2}],\ldec{2}[2,2],[2]\dec{1}\rangle+\ldec{2}[\bs{2},2,2]\dec{3}$.
			\setcounter{foo}{\value{longlisti}}
		\end{longlist}
		\smallskip
		
		\noindent Moreover, $\Phtw(\cS)$ is represented by an $h^{1}$-stratified family.
		\item\label{item:ht=2_A1+2A3}  $\bar{Y}$ is of type $\rA_{1}+2\rA_{3}$, see Example \ref{ex:ht=2}\ref{item:A1+2A3_construction}; we have $\#\Phtw(\cS)=1$, and $\check{\cS}$ is one of the following:
		\begin{longlist}\setcounter{longlisti}{\value{foo}}
			\item \label{item:c=2} $\ldec{3}[T_{1},\bs{n}\dec{1},T_{2}]\dec{4}+\ldec{4}[T_{2}^{*},\bs{m},T_{1}^{*}]\dec{3}+\ldec{1}[(2)_{m+n-3}]\dec{2}$, 
			\item\label{item:rivet_nu=3} $\ldec{3}[T_{2},\bs{m}\dec{1},T_{1},r\dec{4},T_{1}^{*},\bs{n}\dec{2},T_{2}^{*}]\dec{3}+\ldec{4}[(2)_{r-2}]+\ldec{1}[(2)_{m+n-3}]\dec{2}$, 
			\item\label{item:rivet_nu=3-fork-1}
			$\langle \bs{m};
			\ldec{3}[2,\bs{n}\dec{2},T_{1}^{*},r\dec{4},T_{1}],
			[2]\dec{1},[2]\dec{3}\rangle+
			\ldec{4}[(2)_{r-2}]+\ldec{1}[3,(2)_{m+n-4}]\dec{2}$,
			\item\label{item:rivet_nu=3-fork-2}
			$\langle \bs{m},
				\ldec{3}[2,\bs{2}\dec{2},2,2\dec{4},2]
				,\ldec{1}T\trp,[2]\dec{3}\rangle+\ldec{1}T^{*}*[(2)_{m-1}]\dec{2}$, $d(T)=3$, where $m\geq 3$ if $T=[2,2]$,
			\smallskip
			
			\item\label{item:tau=id_fork_chain} $\langle \bs{n},\ldec{1}T_{1},\ldec{2}T_{2}\trp,\ldec{3}T\trp \rangle+
			\ldec{1}[T_{1}^{*},\bs{m},T_{2}^{*}]\dec{2}+\ldec{3}T^{*}*[(2)_{n+m-3}]\dec{4}$
			\item\label{item:V-chains_c=2_[2]_T1=0}  
			$\langle \bs{n}; \ldec{2}T,\ldec{3}[2],\ldec{4}[2]\rangle+
			\ldec{3}[2,\bs{m}\dec{2},2]\dec{4}+[(2)_{r-2}]\dec{1}$, 
			where $T=[(2)_{m+n-3}]*[T^{*}_2,r\dec{1},T_2]$ or  $[(2)_{m+n-3},r]\dec{1}$, 
			\item\label{item:V-chains_c=2_[2,3]}  $\langle \bs{2}; \ldec{2}[2,3]\dec{1},\ldec{3}T,\ldec{4}[2]\rangle+\ldec{4}[2,\bs{2}\dec{2},T^{*}]\dec{3}+[2]\dec{1}$, $d(T)=3$, 
			\item\label{item:V-chains_c=2_-2_twig_long} 
			$\langle \bs{n}; \ldec{2}[(2)_{m+n-2}]\dec{1},\ldec{3}T,\ldec{4}[2]\rangle+\ldec{3}[2,\bs{m}\dec{2},T^{*}]\dec{4}$, where  $m+n\leq 7$, $d(T)=3$, and $m\leq 4$,
			\item\label{item:V-chains_c=2_-2_twig}  
			$\langle \bs{2}; \ldec{2}[2,2]\dec{1},\ldec{3}T,\ldec{4}[2]\rangle+\ldec{3}[2,\bs{2}\dec{2},T^{*}]\dec{4}$, $d(T)\in \{4,5\}$, 	
			\smallskip

			\item\label{item:tau=id_forks} $\langle \bs{n},\ldec{1}T_{1}\trp,\ldec{2}T_{2}\trp,\ldec{3}T_{3}\trp \rangle+\langle \bs{m};\ldec{1}T_{1}^{*},\ldec{2}T_{2}^{*},\ldec{4}T_{4}^{*}\rangle+\ldec{3}(T_{3}^{*}*[(2)_{n+m-3}]*T_{4})\dec{4}$ with one admissible fork,
			\item\label{item:V-chains_c=2_[2]}  
			$\langle \bs{n}; \ldec{2}T,\ldec{3}[2],\ldec{4}[2]\rangle+
			\langle \bs{m};\ldec{2}T_1\trp,\ldec{3}[2],\ldec{4}[2]\rangle+[(2)_{r-2}]\dec{1}$, \\ 
			where $T=T_1^{*}*[(2)_{m+n-3}]*[T^{*}_2,r\dec{1},T_2]$ or  $T_{1}^{*}*[(2)_{m+n-3},r]\dec{1}$, 
			\setcounter{foo}{\value{longlisti}}
		\end{longlist}
	\smallskip
	
	\noindent Moreover, $h^{1}=1$ in cases 
	\ref{item:rivet_nu=3}--\ref{item:rivet_nu=3-fork-2} with $\cha\kk |d(T_1)$ and 
	\ref{item:V-chains_c=2_[2]_T1=0}, \ref{item:V-chains_c=2_[2]} with $\cha\kk|d(T_2)$; and $h^1=0$ otherwise.	
		\item\label{item:ht=2_A1+A2+A5} $\bar{Y}$ is of type $\rA_{1}\! +\! \rA_{2}\! +\! \rA_{5}$, see Example \ref{ex:ht=2}\ref{item:A1+A2+A5_construction}; we have $\#\Phtw(\cS)=1$, $h^1=0$, and $\check{\cS}$ is one of the following: 
		\begin{longlist}\setcounter{longlisti}{\value{foo}}
			\item\label{item:R_0-non-branching} $\ldec{4}[T,\bs{n}\dec{1},r\dec{3},\bs{m},T^{*}]\dec{4}+[(2)_{r-1}]\dec{3}+\ldec{1}[(2)_{m+n-2}]\dec{2}$,
		\smallskip
			
			\item\label{item:R_0-lc} $\langle  r,\ldec{3}[2,2],\ldec{4}[2,\bs{2}]\dec{1},\ldec{4}[2,\bs{2}]\dec{2}\rangle+[(2)_{r-2},4]\dec{3}+\ldec{1}[2,2]\dec{2}, r\geq 3$,	
			\item\label{item:R_0_branching} $\langle r; \ldec{4}[2,\bs{2}]\dec{1},\ldec{4}[2,\bs{m}]\dec{2},[2]\dec{3}\rangle+[(2)_{r-2},3]\dec{3}+\ldec{1}[(2)_{m}]\dec{2}$, $m\in \{2,3\}$, $r\geq 2$, 		
			\setcounter{foo}{\value{longlisti}}
		\end{longlist}
		\item\label{item:ht=2_2A4} $\bar{Y}$ is of type $2\rA_{4}$, see Example \ref{ex:ht=2_meeting}; we have $\#\Phtw(\cS)=1$, $h^1=0$, and $\check{\cS}$ is one of the following:
		\begin{longlist}\setcounter{longlisti}{\value{foo}}
			\item\label{item:c=1_meeting} $\ldec{3}[T^{*},\bs{m}\dec{1},\bs{n}\dec{2},T]\dec{3}+\ldec{1}[(2)_{n+m}]\dec{2}$,
			\smallskip
			
			\item \label{item:2A4_[2]} $\langle \bs{n},[2]\dec{1},[2]\dec{3},\ldec{3}[2,\bs{m}]\dec{2}\rangle+\ldec{2}[(2)_{n+m-1},3]\dec{1}$,
			\item \label{item:2A4_m=3} $\langle \bs{n},\ldec{1}T,[2]\dec{3},\ldec{3}[2,\bs{3}]\dec{2}\rangle+\ldec{2}[(2)_{n+3}]*(T^{*})\dec{1}$, $d(T)=3$,
			\item \label{item:2A4_m=2} $\langle \bs{n},\ldec{1}T,[2]\dec{3},\ldec{3}[2,\bs{2}]\dec{2}\rangle+\ldec{2}[(2)_{n+2}]*(T^{*})\dec{1}$, $d(T)\in \{3,4,5,6\}$, if $T=[(2)_{5}]$ then $n\geq 3$, 
			\setcounter{foo}{\value{longlisti}}
		\end{longlist}
		\item\label{item:ht=2_2D4} $\bar{Y}$ is of type $2\rD_{4}$, see Example \ref{ex:ht=2}\ref{item:2D4_construction}; $\Phtw(\cS)$ has moduli dimension $1$, and $\check{\cS}$ is one of the following:
		\begin{longlist}\setcounter{longlisti}{\value{foo}}
			\item \label{item:c=3} $\langle \bs{n}\dec{1},\ldec{3}T_{1}\trp,\ldec{4}T_{2}\trp,\ldec{5}[2]\rangle+\langle \bs{m}\dec{2},\ldec{3}T_{1}^{*},\ldec{4}T_{2}^{*},\ldec{5}[2]\rangle+\ldec{1}[(2)_{m+n-4}]\dec{2}$, $d(T_{1})^{-1}+d(T_{2})^{-1}>\tfrac{1}{2}$,
			\item \label{item:ht=2_bench} $\ldecb{1}{3}\lbr \bs{n} \rbr\decb{4}{5}+\langle \bs{m}\dec{2},[2]\dec{3},[2]\dec{4},[2]\dec{5}\rangle+\ldec{1}[(2)_{m+n-5},3]\dec{2}$, $n\geq 3$, 
			\setcounter{foo}{\value{longlisti}}
		\end{longlist}
	\end{enumerate}
	where $r,r_1,r_2,n,m\geq 2$ are integers, and $T,T_1,T_2$ are admissible chains such all the above forks are admissible or log canonical. Moreover, we have $h^2=0$, and $h^0-h^1=2$ in case \ref{item:tau=id_chains}, $h^0-h^1=1$ in cases \ref{item:V-chains_c=1}, \ref{item:[T_1,n]=[2,2]_r>2}--\ref{item:lc_T=[2,2]}, \ref{item:c=2}, \ref{item:tau=id_fork_chain}, \ref{item:tau=id_forks}, and $h^0-h^1=0$ otherwise.
\end{lemma}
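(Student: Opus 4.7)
The starting point is Lemma~\ref{lem:ht=2_swaps}, which gives a vertical swap $\tau\colon (X,D)\sqto (Y,D_Y)$ onto the minimal log resolution of one of the five vertically primitive canonical surfaces $\bar{Y}$ of Examples~\ref{ex:ht=2}--\ref{ex:ht=2_meeting}, together with combinatorial information about $H_1,H_2,F_1,\dots,F_\nu$. Cases \ref{item:ht=2_3A2-unique}--\ref{item:ht=2_2D4} of the statement correspond respectively to $\bar{Y}$ of type $3\rA_2$, $\rA_1+2\rA_3$, $\rA_1+\rA_2+\rA_5$, $2\rA_4$ and $2\rD_4$, and are organized by the type of $\bar{Y}$.

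The main classification step is to reverse the swap $\tau$. Writing $\tau$ as a composition of elementary vertical swaps, in each case I will enumerate the possible degenerate fibers of $X$ lying over each fiber of $\bar Y$. Over the \emph{special} fiber $\bar F_1=[2,1,2,\dots,1,2]$ of $\bar{Y}$ the structure is encoded, via Lemma~\ref{lem:ht=2_basic}\ref{item:ht=2_s}, by the chain $\phi_*F_1=[1,(2)_s,1]$; so after snc--minimalization $F_1$ becomes a chain that can be recovered by the standard $T,T^*$ calculus from Section~\ref{sec:log_surfaces}. Over each fiber $\bar F_j=[2,1,2]$ with $j\geq 2$, the fiber $F_j$ has one $(-1)$-curve by Lemma~\ref{lem:ht=2_basic}\ref{item:ht=2_sigma}, and Lemma~\ref{lem:degenerate_fibers}\ref{item:adjoint_chain} shows that $F_j$ is either columnar or of the form $\langle r;[(2)_{r-2},1],U\trp,U^*\rangle$, i.e.\ determined by an admissible chain. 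Combining these possibilities with the combinatorial restrictions of Lemma~\ref{lem:ht=2_basic} (especially \ref{item:branching-sections}--\ref{item:ht=2_nu}) and the branching, admissibility and log canonicity conditions coming from the hypothesis that $\bar X$ is log canonical (see Section~\ref{sec:singularities}) produces exactly the long list (i)--(xxii). The del Pezzo property is automatic by Lemma~\ref{lem:swap_lc}\ref{item:swap_lc_dP}, or checked in the boundary cases via the inequality \eqref{eq:ld_phi_H} of Lemma~\ref{lem:delPezzo_criterion}. This enumeration is the longest but most routine part; the main risk is bookkeeping errors in translating chain contraction/extension into the decorated-type notation of Notation~\ref{not:fibrations}, and I will handle each of the five cases for $\bar Y$ separately, further subdivided by whether the connected component of $D$ containing $H_1,H_2$ is a chain, a fork, or a bench (cf.\ the grouping of items within each case).

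For the uniqueness and moduli statement I will use the machinery of Section~\ref{sec:moduli}. For each combinatorial type $\cS$ on the list let $\check\cS$ be obtained by adjoining the vertical $(-1)$-curves; Lemma~\ref{lem:adding-1}, together with observation \eqref{eq:preserving-Dhor} and the $\P^1$-fibration criterion of Lemma~\ref{lem:adding-1-criterion}, reduces the study of $\Phtw(\cS)$ to that of $\cP_+(\check\cS)$. The map $(X,\check D)\mapsto (Y,D_Y)$ is an inner birational morphism provided every elementary swap in $\tau$ blows up an edge of the graph; by Lemma~\ref{lem:inner} this part does not change $\#\cP_+$, the $h^i$'s, or the property of being $h^1$-stratified/universal. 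For the primitive models of types $3\rA_2$, $\rA_1+2\rA_3$, $\rA_1+\rA_2+\rA_5$, $2\rA_4$ projective uniqueness of the planar configurations in Examples~\ref{ex:ht=2}--\ref{ex:ht=2_meeting} shows $\#\cP_+(\cY)=1$ and a direct check (as in the argument for case \ref{lem:ht=1_reduction}\ref{item:uniq_n=3} in the proof of Lemma~\ref{lem:ht=1_uniqueness}) gives $h^i(\lts{Y}{D_Y})=0$ for $i=1,2$. For type $2\rD_4$, parametrizing the fourth line $\ll_4$ gives, via Lemma~\ref{lem:h1}\ref{item:h1-easy-vanishing}\ref{item:h1-grid} and the argument of Example~\ref{ex:4-points}, a universal $\Aut(\cY)$-faithful family of dimension $1$, and this propagates via Lemma~\ref{lem:inner} to each case in (vi). The remaining cases in \ref{item:ht=2_3A2-2}, and the jumps $h^1=1$ listed in \ref{item:ht=2_3A2-unique} and \ref{item:ht=2_A1+2A3}, come from the rare \emph{outer} blowups necessary to build certain chains (namely those where the twig coming off $F_2$ or $F_3$ is long enough that its recipe \eqref{eq:adjoint_chain} forces blowing up on a smooth point of the boundary); Lemma~\ref{lem:outer} then either produces a transitive action with non-vanishing derivative (giving $\#\cP_+=1$, $h^1$ unchanged) or a $\G_a$/$\G_m$-action whose derivative vanishes modulo $\cha\kk\mid d(T)$ (giving $h^1=1$). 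The vanishing $h^2=0$ is immediate from Lemma~\ref{lem:h1}\ref{item:h1-h2-fibration} applied to a general fiber $F$ of $p$, which meets $D$ in at most $2$ points; and the values of $h^0-h^1=\chi(\lts{X}{D})-h^2$ can be read off by tracing the exact sequences in Lemma~\ref{lem:blowup-hi}\ref{item:blowup-hi-exact} from the primitive model down to $(X,D)$, counting outer blowups.

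The hardest step, both technically and notationally, is the enumeration in (i)--(xxii) itself: one must ensure that the list is exhaustive and has no overlaps across cases, in particular that no surface admitting \emph{two} non-equivalent $\P^1$-fibrations of height $2$ is counted twice. The last statement of Lemma~\ref{lem:ht=2_swaps} (choosing $p$ so that rivets lie in $F_2$ and, for $\nu=2$, in a prescribed position) is crucial here: it pins down a canonical $\P^1$-fibration for each $(X,D)$ and thus avoids double-counting. After that, uniqueness, the moduli dimension in case \ref{item:ht=2_2D4}, and the $h^i$ computations all reduce to the two local principles of Section~\ref{sec:moduli}, already exercised at length in Lemma~\ref{lem:ht=1_uniqueness}, so they will be presented as essentially mechanical verifications rather than as separate computations in each subcase.
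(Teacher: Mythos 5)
Your plan is correct and follows essentially the same route as the paper: reverse the vertical swap of Lemma \ref{lem:ht=2_swaps} fiber by fiber using Lemmas \ref{lem:ht=2_basic} and \ref{lem:degenerate_fibers} together with the admissibility/log-canonicity constraints to obtain the list, then reduce the moduli and $h^i$ computations via Lemmas \ref{lem:adding-1} and \ref{lem:inner} to the primitive planar models (and, through the inner snc-minimalization, to the cases already settled in Lemma \ref{lem:ht=1_uniqueness}), with the $2\rD_4$ case handled by the fourth-fiber family of Examples \ref{ex:4-points}--\ref{ex:4-points-Aut} and $h^2=0$ from Lemma \ref{lem:h1}\ref{item:h1-h2-fibration}. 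The only caveat is that the plan defers the exhaustive fiber enumeration, which is where all the actual work (and the risk of omissions) lies.
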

\begin{remark}[Redundancies in Lemma \ref{lem:ht=2,untwisted}]
		Interchanging the roles of chains $T,T_1,T_2$ in cases \ref{item:tau=id_chains}--\ref{item:V-chains_c=1_rivet} or \ref{item:c=2} can yield the same combinatorial type $\check{\cS}$, hence  isomorphic del Pezzo surfaces $\bar{X}$. For example, in case \ref{item:tau=id_chains} replacing $(T,T_1,T_2)$ with $(T_{1}\trp,T\trp,(T_{2}^{*})\trp)$ or $(T_{1}^{*},T_{2}^{*},T)$ we get the same $\check{\cS}$. In fact, in this case $D+\sum_{i}A_{i}$ is circular, and the above substitution corresponds to a reflection or a  rotation of its graph.
	
We also recall that by convention \eqref{eq:convention_2-1} type $[(2)_{0}]$ refers to a smooth point, and should be ignored. 
\end{remark}
\begin{proof}[Proof of Lemma \ref{lem:ht=2,untwisted}]
	We keep notation from Lemma \ref{lem:ht=2_basic} and choose $p$ as in Lemma \ref{lem:ht=2_swaps}. Then $D\hor=H_1+H_2$, $H_{1}=[n]$, $H_{2}=[m]$. Let $D_{j}$ be the connected component of $D$ containing $H_{j}$. First, we show that the structure of $p$ (hence the singularity type of $\bar{X}$) is as in \ref{item:ht=2_3A2-unique}--\ref{item:ht=2_2D4}. We study each case  \ref{lem:ht=2_swaps}\ref{item:ht=2_no-rivet}--\ref{lem:ht=2_swaps}\ref{item:ht=2_meet} separately.
	\begin{casesp*}
		\litem{\ref{lem:ht=2_swaps}\ref{item:ht=2_no-rivet}} Assume first that $D_1\neq D_2$. Then for $j\geq 2$, the fiber $F_{j}$  is columnar, say $(F_j)\redd=[T_{j-1},1,T_{j-1}^{*}]$. Assume $F_{1}\cap H_{j}\not\subseteq D$ for both $j\in \{1,2\}$. Then $F_{1}=[1,(2)_{m+n-\nu},1]$. If $\nu=2$ then $D$ is as in \ref{item:tau=id_chains} for $\#T_1=\#T_2=1$. If $\nu=3$ then $D$ is as in \ref{item:c=2}. If $\nu=4$ then $D_1,D_2$ are forks, and if neither is admissible then $\cf(H_1)+\cf(H_2)=2$, contrary to inequality \eqref{eq:ld_phi_H}. Thus $D$ is as in  \ref{item:c=3}.
	
	Assume now that $F_{1}\cap H_{1}\subseteq D$. Then $\nu=\beta_{D}(H_{1})\leq 4$. Consider the case $\nu=4$. Then $D_1$ is a bench, so $T_1,T_2,T_3=[2]$. It follows that $\beta_{D}(H_2)\geq 3$, and as before, inequality \eqref{eq:ld_phi_H} implies that $D_2$ is an admissible fork. In particular, $F_1$ meets $D_2$ in a $(-1)$-curve. Since $(F_1)\redd\wedge D_1=[2]$, we get $(F_1)\redd=[2,1,3,(2)_{s-1},1]$. Lemma \ref{lem:ht=2_swaps}\ref{item:ht=2_no-rivet} gives $s=m+n-5$, so $D$ is as in \ref{item:ht=2_bench}.
	
	Consider the case $\nu\leq 3$. Contract $(-1)$-curves in the subsequent images of $F_{1}$ until it becomes a chain  meeting the image of $D\hor$ in tips, and denote this morphism by $\tau$. Since $\tau_{*}F_1$ contracts to $[1,(2)_{s},1]$ in such a way that its tips  are not contracted, we have   $\tau_{*}F_1=[U_{1}',1,(U_{1}')^{*}]*[(2)_{n+m-\nu}]*[(U_{2}')^{*},1,U_{2}']$ for some chains $U_{1}'$, $U_{2}'$, possibly empty. Let $W_{j}'$ be the $(-1)$-curve meeting $U_{j}'$, and say that $\ftip{U_{1}'+W_1}$ meets $H_{1}$. The assumption $F_{1}\cap H_{1}\subseteq D$ gives $U_{1}'\neq 0$ or $W_{1}'\subseteq \tau_{*}D$. Since $\tau_{*}F_{1}$ is not a rivet, $W_{j}'\not\subseteq \tau_{*}D$ for some $j\in \{1,2\}$. Thus we can assume $W_{2}'\not\subseteq \tau_{*}D$, i.e.\ $\tau^{-1}$ has at most one base point and $\Bs\tau^{-1}\subseteq W_{1}'$. Put $U_{j}=\tau^{-1}_{*}U_{j}'$, $W_{j}=\tau^{-1}_{*}W_{j}'$ and $r=-W_{1}^{2}$. 
	
	Assume $\tau=\id$. Then $r=1$ and $U_1'\neq 0$. If $\nu=2$ then $D$ is as in \ref{item:tau=id_chains}.  If $\nu=3$ then $D$ is as in \ref{item:tau=id_fork_chain} if $U_{2}'=0$ and \ref{item:tau=id_forks} otherwise: in the latter case, inequality \eqref{eq:ld_phi_H} implies that one of the forks is admissible.
	
	Assume $\tau\neq\id$, so $r\geq 2$. Consider the case when $W_{1}$ is branching in $D_{1}$. Since $D_{1}$ has at most one branching component, we infer that $\nu=2$ and $D_{1}=\langle r,[T_{1},\bs{n},U_{1}],(U_{1}^{*}*[(2)_{n+m-2}]*U_{2}^{*})\trp,T\rangle$; $\Exc \tau=[(2)_{r-2}]*[T^{*},1,T]$.  Suppose $U_{1}\neq 0$. Then the second twig of $D_{1}$ is not a $(-2)$-twig; and has length at least $n+m-2\geq 2$, so by Lemma \ref{lem:admissible_forks}, the first twig of $D_{1}$, namely $[T_{1},n,U_1]$, has length at most two; a contradiction. Thus $U_{1}=0$, so $D_{1}=\langle r;T_1',T_2',T\rangle$; where $T_1'=[T_{1},\bs{n}]$, $T_2'=([(2)_{n+m-2}]*U_{2}^{*})\trp$; and $D_2=[U_{2},\bs{m},T_{1}^{*}]$. 
	We can assume $d(T_1')\geq d(T_2')$: indeed, replacing $p$ by the $\P^1$-fibration induced by $(F_1)\redd-W_1-T+H_2=[(2)_{m}]*[U_{2}^{*},1,U_{2},m]$ interchanges $T_1'$ with $T_{2}'$. Thus $d(T_{2}')\leq 4$ by Lemma \ref{lem:admissible_forks}. If the equality holds then $D_1=\langle r,[\bs{2},2,2],[2,2,2],[2]\rangle$, so $r\geq 3$, $n=2$, $m=3$, $U_2=0$ and $D$ is as in \ref{item:lc_T=[2]}. Assume $d(T_{2}')\leq 3$. Since $\#T_{2}'\geq 2$, we get $T_{2}'=[2,2]$, that is,  $m=n=2$, $U_{2}=0$. If $T\neq [2]$ then by Lemma \ref{lem:admissible_forks} $D_1=\langle r,[2,2],[2,2],[2,2]\rangle$ and $r\geq 3$, so $D$ is as in \ref{item:lc_T=[2,2]}. Assume $T=[2]$. Now $T_1'=[2,\bs{2}]$ or $[3,\bs{2}]$. In the first case; $D$ is as in \ref{item:[T_1,n]=[2,2]_r=2} if $r=2$ and \ref{item:[T_1,n]=[2,2]_r>2} otherwise. In the second case $D$ is as in \ref{item:[T_1,n]=[3,2]_r=2} or \ref{item:[T_1,n]=[3,2]_r>2}.
	
	Consider the case when $W_{1}\subseteq D$ is non-branching, so $\Exc\tau=[(2)_{r-2},1]$. If $\nu=2$ then putting  $T_{1}=[\bs{n},U_{1}]$, $T_2=[U_2,\bs{m}]$, we see that $D$ is as in \ref{item:V-chains_c=1}. 
	Assume $\nu=3$. Then $D_{1}=\langle \bs{n}; U,T_{1},T_{2}\rangle$, where  $U=U_{2}^{*}*[(2)_{n+m-3}]*[U_{1}^{*},r,U_{1}]$. If $T_{1}=T_{2}=[2]$ then $D$ is as in \ref{item:V-chains_c=2_[2]}. 
	Assume $T_{1}\neq [2]$. If $T_{2}\neq [2]$ then since $\#U\geq n+m-2\geq 2$, Lemma \ref{lem:admissible_forks} implies that $U=[2,2]$ and $n\geq 3$, which is impossible. Thus $T_2=[2]$.  If $U$ is a $(-2)$-twig then $r=2$, $U_{1}=U_{2}=0$, so $U=[(2)_{n+m-2}]$, and Lemma \ref{lem:admissible_forks} implies that $n+m-2\leq 5$, so $D$ is as in \ref{item:V-chains_c=2_-2_twig_long} if $d(T)=3$ and \ref{item:V-chains_c=2_-2_twig} otherwise. Assume that $U$ is not a $(-2)$-twig. Then $\#U=2$, so again $U_{1}=U_{2}=0$; and $U=[2,r]=[2,3]$. Thus $D$ is as in \ref{item:V-chains_c=2_[2,3]}. 
	
	Assume now that $D_1=D_2$, i.e.\ $D$ has a rivet, say $R$. Then by the second part of Lemma \ref{lem:ht=2_swaps} we can assume $R\subseteq F_2$, so $(F_{2})\redd=\langle r;T_2\trp,T_{2}^{*},[(2)_{r-2}]*[T^{*},1,T]\rangle$, where $\ltip{T_2}$ meets $H_1$. 
	
	Consider the case $\nu=3$. Then $(F_3)\redd=[T_3,1,T_3^{*}]$ is columnar. If $T\neq 0$ then $H_{1}$, $H_{2}$ lie in different twigs of $D_1$, both of length at least $3$, so by Lemma \ref{lem:admissible_forks} both these twigs have length $3$ and $r\geq 3$, which is impossible. Thus $T=0$. If $\phi=\id$ then $F_1=[1,(2)_{m+n-2},1]$ and $D$ is as in \ref{item:rivet_nu=3}. Assume $\phi\neq \id$. Then one of the sections, say $H_{1}$, is branching in $D_1$. Let $W$ be the twig of $D_{1}$ containing $(F_{2})\redd\wedge D_{1}$. Then $W=[T_{3}^{*},\bs{m},T_{2}^{*},r,T_{2}]$, so $\#W\geq 5$, and if $T_{3}\neq [2]$ then $W$ is not a $(-2)$-twig, contrary to Lemma \ref{lem:admissible_forks}. Thus $T_{3}=[2]$. Suppose that some blowup in the decomposition of $\phi^{-1}$ is centered at a smooth point of $\phi_{*}D$. Then $\#T_{1}=s+1=m+n-2\geq 2$, so by Lemma,  \ref{lem:admissible_forks} $D_{1}=\langle \bs{n};[2,2],[2,2,2,\bs{2},2],[2]\rangle$. In particular, $s=1$, $m=2$, and therefore $n=2$, so $D_{1}$ is not negative definite, a contradiction. Therefore, $F_{1}=[T_{1}\trp,1,T_{1}^{*}]*[(2)_{s},1]$. If $T_{1}=[2]$ then $D$ is as in \ref{item:rivet_nu=3-fork-1}. Assume $T_{1}\neq [2]$. Then by Lemma \ref{lem:admissible_forks} we have $W=[(2)_{5}]$ and $D$ is as in \ref{item:rivet_nu=3-fork-2}.

	Consider the case $\nu=2$. Define a vertical swap $\sigma\colon (X,D)\sqto (X',D')$ as the contraction of $(F_{2})\redd-R$. Then $D'$ becomes as in \ref{lem:ht=2_swaps}\ref{item:ht=2_no-rivet} with no rivet, $\nu=2$; and such that $D_1'\cap F_1'$, $D_2'\cap F_1'$ meet the same $(-1)$-curve in $F_1'$, where $F_{i}'=\sigma_{*}F_i$, and $D_{i}'$ is the connected component of $D'$ containing $\sigma_{*}H_{i}$. Hence $D'$ is as in   \ref{item:V-chains_c=1} or \ref{item:[T_1,n]=[2,2]_r>2}--\ref{item:[T_1,n]=[3,2]_r=2}. Let $A_3'$ be the $(-1)$-curve denoted there by decoration $\dec{3}$. Then $D_{1}$ is obtained from $D_{1}'+D_{2}'+A_{3}'$ by replacing $A_{3}'$ with a component $V$ and possibly adding a twig $W$ meeting $V$. In cases \ref{item:[T_1,n]=[2,2]_r>2}--\ref{item:[T_1,n]=[3,2]_r=2} we must have $W=0$ since $D_1$ has only one branching component, but even then $D_1$ is neither admissible nor log canonical; a contradiction. Thus $D'$ is as in \ref{item:V-chains_c=1}. If $W\neq 0$ then $D_1=\langle v;[T,\fh{T_1},r,T_1^{*},T_{2}^{*}]\trp, [\lh{T_2},T^{*}] ;W\rangle$, so  by Lemma \ref{lem:admissible_forks}\ref{item:long-twig} $D$ is as in \ref{item:lc_rivet}. If $W=0$ then $D_1$ is a chain, and $D$ is as in \ref{item:V-chains_c=1_rivet}.
	\litem{\ref{lem:ht=2_swaps}\ref{item:ht=2_rivet}}
	One of the fibers $F_2$, $F_3$ has a component $R_0\subseteq D$ meeting both $H_{1}$ and $H_{2}$. Say that $R_0\subseteq F_2$ and write $R_0=[r]$. Since $D$ has no circular subdivisors, the fiber $F_3$ is columnar, i.e.\ $(F_3)\redd=[T,1,T^{*}]$.
	
	Consider the case $\beta_{D}(R_{0})=2$, so $F_{2}=[r,1,(r)_{r-1}]$. If $(F_{1})\redd\wedge D\vert$ is disjoint from $D_1$ then $D$ is as in \ref{item:R_0-non-branching}. So we can assume $F_1\cap H_1\subseteq D$. Then $\beta_{D}(H_{1})=\nu=3$, so since $H_1,H_2$ lie in the same connected component of $D$, we have $\beta_{D}(H_{2})=2$, that is, $F_{1}\cap H_{2}\not\subseteq D$. It follows that $D_{1}=\langle  \bs{n};U',[T^{*},\bs{m},r],T\trp \rangle$, where $U'=[(2)_{m+n-2}]*[U^{*},r_1,U]\subseteq F_1$ for some $U$, possibly zero. The first two twigs have length at least $3$ each, so by Lemma \ref{lem:admissible_forks} they are both $(-2)$-twigs and $T=[2]$. Thus $n=m=r=2$, so $D_1$ is a $(-2)$-fork which is not negative definite; a contradiction.
	
	Consider the case $\beta_{D}(R_{0})=3$, so $F_2=[r,U,1,U^{*}]*[(2)_{r-1}]$ for some twig $U$ of $D_1$. The remaining two twigs of $D_1$ contain $H_{1}$ and $H_{2}$. In particular, $\beta_{D}(H_{j})=2$, $j\in \{1,2\}$, so  $(F_{1})\redd\wedge D\vert$ is disjoint from $D_{1}$. Now $D_{1}=\langle r;[T\trp,\bs{n}],[T^{*},\bs{m}];U\rangle$. By symmetry, we can assume $m\geq n$. If $U\neq [2]$ then, since the first two twigs have length at least $2$, by Lemma \ref{lem:admissible_forks} $m=n=2$, $T=[2]$ and $D$ is as in \ref{item:R_0-lc}. Assume $U=[2]$. Then by Lemma \ref{lem:admissible_forks} $T=[2]$, $n=2$ and $m\in \{2,3\}$. Hence $D$ is as in \ref{item:R_0_branching}.
	\litem{\ref{lem:ht=2_swaps}\ref{item:ht=2_meet}} We argue as in case~\ref{lem:ht=2_swaps}\ref{item:ht=2_rivet} above, $\beta_{D}(R_0)=2$. The fact that $D$ has no circular subdivisors implies that $F_2$ is columnar, i.e.\ $(F_{2})\redd=[T,1,T^{*}]$. If $(F_{1})\redd\wedge D\vert$ is disjoint from $D_1$, then $D$ is as in \ref{item:c=1_meeting}. Assume that $F_{1}\cap H_{1}\subseteq D\vert$. Then $F_{1}\cap H_{2}\not\subseteq D\vert$. 
	
	Assume further that $(F_{1})\redd$ is a chain. Then $(F_{1})\redd=[T_1,1,T_1^{*}]*[(2)_{n+m},1]$ and $D_1=\langle \bs{n},T_1\trp,T\trp,[T^{*},\bs{m}]\rangle$. We have $1\geq \frac{1}{d(T_1)}+\frac{1}{d(T)}+\frac{1}{d([T^{*},m])}\geq \frac{1}{2}+\frac{2}{d(T)}$, so $d(T)=2$, i.e.\ $T=[2]$. If $T_1=[2]$ then $D$ is as in \ref{item:2A4_[2]}. Assume $T_1\neq [2]$. If $m>2$ then $m=3$, $d(T_1)=3$ and $D$ is as in \ref{item:2A4_m=3}. If $m=2$ then $d(T_1)\leq 6$ and if the equality holds then $D_1$ is not a $(-2)$-fork, so $D$ is as in  \ref{item:2A4_m=2}. 
	
	Assume now that $(F_1)\redd$ is a fork. Then $D_{1}=\langle \bs{n};U',T^{*},[T,\bs{m}]\rangle$, where $U'=[(2)_{m+n}]*[U^{*},r_1,U]\subseteq F_1$ for some $U$, possibly empty. Hence $\#U'\geq m+n+1\geq 5$, $\#[T,\bs{m}]\geq 2$, so by Lemma \ref{lem:admissible_forks} both equalities hold, $n=m=2$ and $D_1$ is a $(-2)$-fork. Such a fork is not negative definite, a contradiction.
\end{casesp*}

To complete the proof, we compute the numbers $\#\Phtw(\cS)$, $h^0$, $h^1$, and construct the representing families. We argue as in the proof of Corollary \ref{cor:moduli-ht=1}. Let $\Phtwres(\cS)$ be the set of minimal log resolutions of surfaces in $\Phtw(\cS)$. For $(X,D)\in \Phtwres(\cS)$ let $\check{D}$ be the sum of $D$ and all vertical $(-1)$-curves, and let $\check{\cS}$ be the combinatorial type of $(X,\check{D})$. Lemma gives \ref{lem:h1}\ref{item:h1-1_curve} $h^i=h^{i}(\lts{X}{\check{D}})$. Since the singularity type $\cS$ appears exactly once on our list, it uniquely determines the type $\check{\cS}$, so $\Phtwres(\cS)$ is the image of $\cP_{+}(\check{\cS})$ in $\cP(\cS)$.
	
\begin{casesp*}
	\litem{\ref{item:ht=2_3A2-unique}, \ref{item:ht=2_A1+2A3}, \ref{item:ht=2_A1+A2+A5}} 
		Let 
		$(X,\check{D})\to (Z,D_Z)$ be a vertical inner snc-minimalization, and let $\cZ$ be the combinatorial type of $(Z,D_Z)$. By Lemma \ref{lem:inner} we have $\#\cP_{+}(\check{\cS})=\#\cP_{+}(\cZ)$ and $h^i=h^i(\lts{Z}{D_Z})$. These numbers are  computed in Lemma \ref{lem:ht=1_uniqueness}, see Table \ref{table:exceptions}. Indeed, the type $\cZ$ is as in:
		\begin{itemize}
		\item 
		\ref{lem:ht=1_reduction}\ref{item:uniq_easy} with $v=2$ in case \ref{item:tau=id_chains} and with $v=3$ in cases \ref{item:c=2}, \ref{item:tau=id_fork_chain}, \ref{item:tau=id_forks}; 
		\item 
		\ref{lem:ht=1_reduction}\ref{item:uniq_n=3} with $v=1$ in cases \ref{item:V-chains_c=1}, \ref{item:[T_1,n]=[2,2]_r>2}--\ref{item:lc_T=[2,2]}, 
		\item
		\ref{lem:ht=1_reduction}\ref{item:uniq_n=3} with $v=2$ in cases \ref{item:rivet_nu=3}--\ref{item:rivet_nu=3-fork-2}, \ref{item:V-chains_c=2_[2]_T1=0}--\ref{item:V-chains_c=2_-2_twig}, \ref{item:V-chains_c=2_[2]} and \ref{item:ht=2_A1+A2+A5}; the number $d(T)$ from \ref{lem:ht=1_reduction}\ref{item:uniq_n=3} equals $d(T_1)$ in cases~\ref{item:rivet_nu=3}--\ref{item:rivet_nu=3-fork-2}, $d(T_2)$ in cases \ref{item:V-chains_c=2_[2]_T1=0}, \ref{item:V-chains_c=2_[2]} and $1$ otherwise,
		\item 
		\ref{lem:ht=1_reduction}\ref{item:uniq_n=2} in cases \ref{item:V-chains_c=1_rivet}, \ref{item:lc_rivet}; the number $d(T_{1}^{*},m,T_{2}^{*})$ from \ref{lem:ht=1_reduction}\ref{item:uniq_n=2} equals $d([T,T_1])$ in case \ref{item:V-chains_c=1_rivet} and $3$ in case \ref{item:lc_rivet}.
	\end{itemize}

	Therefore, $\#\cP_{+}(\check{\cS})=1$ and $h^1$ is as in the statement. We have $h^2=0$ by Lemma \ref{lem:h1}\ref{item:h1-h2-fibration}, so $\chi\de \chi(\lts{Z}{D_Z})=h^0-h^1$. We have $\chi=\bar{\chi}-\epsilon$, where $\bar{\chi}=\chi(\lts{\F_{m}}{B})$, $\tau\colon (Z,D_Z)\to (\F_r,B)$ contracts all degenerate fibers to $0$-curves, and $\epsilon$ is the number of outer blowups within $\tau$. We have $\epsilon=0,1,2$ if $\cZ$ is as in \ref{lem:ht=1_reduction}\ref{item:uniq_easy}, \ref{lem:ht=1_reduction}\ref{item:uniq_n=3} and 	\ref{lem:ht=1_reduction}\ref{item:uniq_n=2}, respectively. To compute $\bar{\chi}$ note that $B$ is the sum of $2$ disjoint sections and $\nu$ fibers. Applying elementary transformations we can assume $r=0$, and we get $\bar{\chi}=4-\nu$. We conclude that $\chi$ equals $4-v$ in case \ref{lem:ht=1_reduction}\ref{item:uniq_easy}, $2-v$ in case \ref{lem:ht=1_reduction}\ref{item:uniq_n=3} and $0$ in case \ref{lem:ht=1_reduction}\ref{item:uniq_n=2}, as needed. 
	\litem{\ref{item:ht=2_3A2-2}} By Lemma \ref{lem:adding-1}\ref{item:adding-1-h1} it is enough to prove that $\#\cP_{+}(\check{\cS})=2$ and $\cP_{+}(\check{\cS})$ is represented by an $h^{1}$-stratified family. As before, let $(X,\check{D})\to (Z,D_Z)$ be a vertical inner snc-minimalization of $(X,\check{D})\in \cP_{+}(\check{\cS})$. Then $(Z,D_Z)$ is as in Lemma \ref{lem:ht=1_reduction}\ref{item:uniq_2}, so the result follows from Lemmas \ref{lem:ht=1_uniqueness} and \ref{lem:inner} as before.
	\litem{\ref{item:ht=2_2A4}} Let $X\to Y$ be the morphism onto the surface from Example \ref{ex:ht=2_meeting}, and let $Y\to \P^1\times \P^1$ be a further contraction of vertical curves such that the image $\pp$ of $\check{D}$ is snc, and consists of two fibers, one horizontal line and a diagonal. Let $\cZ$ be the combinatorial type of $(\P^1\times \P^1,\pp)$. We have $\#\cP_{+}(\cZ)=1$ and $h^{i}(\lts{\P^1\times \P^1}{\pp})=0$ by Lemma \ref{lem:h1}\ref{item:h1-diagonal}. 
	We check that the morphism $(X,\check{D})\to (\P^1\times \P^1,\pp)$ is inner, so by Lemmas \ref{lem:inner} and \ref{lem:adding-1}\ref{item:adding-1-h1} we get $\#\cP_{+}(\check{\cS})=\#\Phtwres(\cS)=1$ and $h^i=0$, as needed.
	\litem{\ref{item:ht=2_2D4}} The group $\Aut(\check{\cS})$ is a subgroup of $S_3\times \Z/2$, where $S_3$ permutes the fibers $F_2,F_3,F_4$ with one $(-1)$-curve each; and $\Z/2$ interchanges the sections $H_1,H_2$. Put $G=S_3\times \{\id\}\subseteq \Aut(\check{\cS})$. Let $\psi\colon (X,\check{D})\map (\P^1\times \P^1,\pp)$ be the contraction of all vertical curves disjoint from $H_1$, followed by some elementary transformations on the image of $F_1$. Clearly, $\psi$ is $G$-equivariant, and we check directly that $\psi$ is inner. The image $\pp$ of $\check{D}$ is the sum of two horizontal and four vertical fibers. Let $\cZ$ be the combinatorial type of $(\P^1\times \P^1,\pp)$. We have seen in Examples \ref{ex:4-points} and \ref{ex:4-points-Aut} that $\cP_{+}(\cZ)$ is represented by an $\Aut(\cZ)$-faithful universal family over $B=\P^{1}\setminus \{0,1,\infty\}$, parametrized by the cross-ratio of the images of $F_1,\dots, F_{4}$. By Lemma \ref{lem:inner} $\cP_{+}(\check{\cS})$ is represented by a universal $G$-faithful family over the same base, and $h^i=h^{i}(\lts{\P^1\times \P^1}{\pp})=1$ for $i=0,1$.
	
	It remains to prove that this family, viewed as one representing $\Phtwres(\cS)$, is almost faithful. Assume that we have an isomorphism $\phi\colon (X_{b_{1}},\check{D}_{b_{1}})\to (X_{b_{2}},\check{D}_{b_{2}})$ between two fibers. We check directly that any element of $\Aut(\cS)$ preserves the set of vertices corresponding to horizontal components, so $\phi$ maps the quadruple $\{F_{j,b_1}\cap H_{1,b_1}:j=1,2,3,4\}$ to $\{F_{j,b_2}\cap H_{i,b_2}:j=1,2,3,4\}$ for some $i\in \{1,2\}$. Hence their cross-ratios are equivalent by the $S_3$-action, which means that  $b_1$ and $b_2$ lie in the same $G$-orbit, as needed. 
\qedhere\end{casesp*}
\end{proof}

\subsection{Case $\height(X,D)=2$, $\width(X,D)=1$}\label{sec:ht=2_twisted}

To prove Theorem \ref{thm:ht=1,2}, it remains to work in the following setting.
\begin{equation}\label{eq:assumption_ht=2_width=1}
	\parbox{.9\textwidth}{
		$(X,D)\to (\bar{X},0)$ is the minimal log resolution of a del Pezzo surface $\bar{X}$ of rank one, such that $\height(\bar{X})=2$, and for all witnessing $\P^1$-fibrations, $D\hor$ is irreducible.
	}
\end{equation}

Fix a log surface $(X,D)$ satisfying assumption \eqref{eq:assumption_ht=2_width=1}, and a $\P^1$-fibration $p$ of $X$ witnessing the height.

\begin{lemma}[The structure of $p$]\label{lem:ht=2_twisted-models}
	Let $H$ be the $2$-section in $D$. Let $F_{1},\dots,F_{\nu}$ be the degenerate fibers.
	\begin{enumerate}
		\item \label{item:ht=2_rational} The curve $H$ is rational, and $\beta_{D}(H)\leq 3$.
		\item\label{item:ht=2_twisted_Sigma} Each $F_{j}$ has exactly one unique $(-1)$-curve, say $L_{j}$, and  $L_{j}=(F_{j})\redd-(F_{j})\redd\wedge D\vert$.
		\item\label{item:ht=2_twisted_columnar} We have $\#F_{j}\cap H=1$ for every $j\geq 2$, and either $\#F_{1}\cap H=1$ or $F_{1}$ is columnar.
		\item\label{item:ht=2_twisted_tangent} Assume $\#F_{j}\cap H=1$. Contract those $(-1)$-curves in $F_{j}$ and its images which do not meet the image of $H$, and let $F_{j}'$ be the image of $F_j$. Put $k_{j}=\#(F_{j}')\redd-1$. Then $(F_{j}')\redd=[2,1,2]$ or $\langle 2;[2],[2],[1,(2)_{k_{j}-3}]\rangle$.
		\item\label{item:ht=2_twisted_psi} There is a morphism $\psi\colon X\to \P^{2}$ such that $\cc\de \psi(H)$ is a conic and $\ll_{j}\de \psi_{*}F_{1}$, $j\in \{1,\dots, \nu\}$ are concurrent lines, meeting away from $\cc$. The line $\ll_j$ is tangent to $\cc$ if and only if $\# F_j\cap H=1$.
		\item\label{item:ht=2_twisted_H} Let $k_1,\dots,k_{\nu}$ be as in \ref{item:ht=2_twisted_tangent}, where we put $k_1=1$ if $F_1$ is columnar. Then $H^{2}=4-\sum_{j=1}^{\nu} k_{j}$.
	\end{enumerate}
\end{lemma}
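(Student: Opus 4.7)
I will prove the six parts in order; only (c) requires substantive new work, the remaining parts being direct applications of the preparatory lemmas.

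For (a), Lemma~\ref{lem:min_res}\ref{item:one-elliptic} forces $H$ to be rational, since a non-rational component of $D$ would be a $1$-section of some $\P^1$-fibration, making $\height(\bar X)=1$. As $H$ is a $2$-section, \eqref{eq:ld_phi_H} yields $\cf(H)<1$. For $\beta_D(H)\leq 3$, take $E=H+V_1+\dots+V_\beta$ where the $V_i$ are the neighbors of $H$ in $D$, with $a_i=-V_i^2\geq 2$; setting $\delta=\sum a_i^{-1}$ and $e=-H^2$, the formula
\[\cf_E(H)=1-\frac{2-\beta+\delta}{e-\delta}\]
from \cite[3.1.10]{Flips_and_abundance} combined with $\cf(H)\geq \cf_E(H)$ \cite[Lemma~3.1.3]{Flips_and_abundance} shows that $\beta\geq 4$ would imply $\cf_E(H)\geq 1$ (since $\delta\leq\beta/2$), contradicting $\cf(H)<1$. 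Part (b) is an immediate consequence of Lemma~\ref{lem:delPezzo_fibrations}: $\#D\hor=1$ gives $\Sigma=0$ via part~\ref{item:Sigma}, so each $F_j$ has a unique component off $D$, and part~\ref{item:-1_curves} identifies it with a $(-1)$-curve $L_j$.

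The hard part is (c). For each $F_j$, the relation $F_j\cdot H=2$ allows either $\#F_j\cap H=1$ (\emph{tangent}) or $\#F_j\cap H=2$ (\emph{transverse}). In the transverse case, Lemma~\ref{lem:delPezzo_fibrations}\ref{item:rivet} yields a dichotomy: either a single component $R\subseteq F_j$ satisfies $R\cdot H=2$, whence $R\subseteq D$ and $L_j$ is the unique off-$D$ component of $F_j$; or two distinct components $R_1,R_2$ of $F_j$ meet $H$ transversally, in which case both $R_i$ have multiplicity one in $F_j$ and Lemma~\ref{lem:degenerate_fibers}\ref{item:adjoint_chain} sharply restricts the shape of $F_j$. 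My plan is to argue that two transverse fibers cannot coexist, and that any transverse fiber must be columnar: contracting $L_j$ followed by appropriate successive $(-1)$-curves produces an elementary vertical swap after which -- unless we are merely moving a columnar fiber -- the proper transform of one of the multiplicity-one tips acquires zero self-intersection and so induces a new witnessing $\P^1$-fibration of $X$ in which $D\hor$ acquires a second irreducible component, contradicting the width-one hypothesis~\eqref{eq:assumption_ht=2_width=1}. The unique possibly transverse fiber is thus relabelled $F_1$, and columnarity is forced on it.

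Parts (d)--(f) are then essentially computational. For (d), iteratively contracting $L_j$ and the successive $(-1)$-curves disjoint from the image of $H$ reduces $F_j$ to a fiber $F_j'$ with a single $(-1)$-curve meeting the image of $H$ once; Lemma~\ref{lem:degenerate_fibers}\ref{item:adjoint_chain} then forces $F_j'$ to be either $[2,1,2]$ (when $k_j=2$) or the Platonic fork $\langle 2;[2],[2],[1,(2)_{k_j-3}]\rangle$. For (e), further contracting the vertical components of $D$ disjoint from $H$ yields a morphism $X\to\F_n$ under which $H$ becomes a smooth section disjoint from the negative section; blowing down the negative section gives $\psi\colon X\to\P^2$ with $\cc=\psi(H)$ a smooth conic and each $\ll_j=\psi_*F_j$ a line through the image of the negative section (so the lines are concurrent and meet away from $\cc$), with tangency to $\cc$ precisely reflecting tangency of $F_j$ with $H$ in the sense of (c). Finally (f) is a blow-up count: $\cc^2=4$ in $\P^2$, and the description in (d) shows that exactly $k_j$ blowups in the decomposition of $\psi$ over $\cc\cap\ll_j$ are centered on successive proper transforms of $H$, giving $H^2=4-\sum_j k_j$.
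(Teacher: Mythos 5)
Parts (a), (b), (d) and (f) of your proposal match the paper's argument; your explicit coefficient computation in (a) is a legitimate filling-in of the paper's ``direct computation''. In (e) you say $H$ becomes a \emph{section} of $\F_n$ disjoint from the negative section -- it is of course a $2$-section (a section would map to a line, not a conic, under the contraction of $\Sec_1$) -- but this is a slip of wording, not of substance.

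The genuine gap is in (c), which you rightly single out as the hard part but for which you give only a plan. Your strategy is to contract $L_j$ and further $(-1)$-curves inside a transverse non-columnar fiber until a multiplicity-one tip becomes a $0$-curve, and then to claim the induced $\P^1$-fibration contradicts the width-one hypothesis \eqref{eq:assumption_ht=2_width=1}. But that hypothesis constrains only \emph{witnessing} fibrations, so you would have to show that the new fibration has height exactly $2$ with respect to $D$ and that its horizontal part is reducible. Neither is verified: the new fiber is the total transform of the $0$-curve, and its intersection with $D$ -- in particular with the components of $D$ sitting in the \emph{other} fibers of $p$, which need not remain vertical for the new fibration -- is not controlled by anything in your sketch. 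As written, the plan does not close; moreover the coexistence statement (``two transverse fibers cannot coexist'') has the one-line proof that each transverse fiber contributes $2$ to $\beta_D(H)$, contradicting (a), which your plan does not use.

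The argument the paper uses is much shorter and already available from your setup. If $F_1$ meets $H$ at two points then, since $F_1\cdot H=2$ and any two components of $D$ meet at most once (Lemma \ref{lem:min_res}\ref{item:D-snc}), the two intersection points lie on the two multiplicity-one tips of $F_1$, and both tips lie in $D\vert$ because $L_1$ has multiplicity at least $2$ by Lemma \ref{lem:degenerate_fibers}\ref{item:unique_-1-curve}. If $F_1$ were not columnar, Lemma \ref{lem:degenerate_fibers}\ref{item:not_columnar} would place both tips in the same connected component of $(F_1)\redd-L_1$, so $H$ together with the chain joining them would be a circular subdivisor $Z$ of $D$; since $Z$ is negative definite and each of its components has branching number $2$ in $Z$, one gets $\cf_Z\equiv 1$, hence $\cf(H)\geq\cf_Z(H)=1$ by monotonicity, contradicting \eqref{eq:ld_phi_H}. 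This forces a transverse fiber to be columnar. Finally, the ``rivet'' alternative you extract from Lemma \ref{lem:delPezzo_fibrations}\ref{item:rivet} (a single component $R$ with $R\cdot H=2$) is vacuous for the same snc reason.
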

\begin{proof}
	\ref{item:ht=2_rational} Since $\height(\bar{X})=2$, rationality of $H$ follows from Lemma \ref{lem:min_res}\ref{item:one-elliptic}. If $\beta_{D}(H)\geq 4$ then a direct computation shows that $\cf(H)\geq 1$, contrary to Lemma \ref{lem:delPezzo_criterion}.
	
	\ref{item:ht=2_twisted_Sigma} This follows from Lemma \ref{lem:delPezzo_fibrations}\ref{item:Sigma},\ref{item:-1_curves}. 
	
	\ref{item:ht=2_twisted_columnar} Assume $\#F_{1}\cap D=2$. Then $F_1$ meets $D$ in components of multiplicity one. By \ref{item:ht=2_twisted_Sigma}, they lie in $D\vert$, in fact in different connected components of $D\vert$ since otherwise $D$ would have a circular subdivisor. Lemma \ref{lem:degenerate_fibers} implies that $F_1$ is columnar. If $\#F_{2}\cap D=2$ then $\beta_{D}(H)\geq 4$ contrary to \ref{item:ht=2_rational}. Hence $\#F_{j}\cap D=1$ for $j\geq 2$. 
	
	\ref{item:ht=2_twisted_tangent} Part \ref{item:ht=2_twisted_Sigma} shows that $F_{j}'$ has a unique $(-1)$-curve, say $L_{j}'$. By the definition of $\phi^{j}$, the curve $L_{j}'$ meets $\phi^{j}(H)$, so it has multiplicity $2$ in $F_{j}'$. If $L_{j}'$ is a tip of $(F_{j}')\redd$, contract a maximal twig $[1,(2)_{k}]$ of $(F_{j}')\redd$ containing it and denote by $F_{j}''$ the image of $F_{j}'$; otherwise put $F_{j}''=F_{j}'$. Then the unique $(-1)$-curve in $F_{j}''$ has multiplicity two and is not a tip of $(F_{j}'')\redd$, so by Lemma \ref{lem:degenerate_fibers}, $(F_{j}'')\redd=[2,1,2]$. This proves the claim.
	
	\ref{item:ht=2_twisted_psi} By \ref{item:ht=2_twisted_columnar} and \ref{item:ht=2_twisted_psi}, there is a morphism $\phi\colon X\to \F_{n}$, for some $n\geq 0$,  such that $\phi_{*}H\cong H$ is a $2$-section, so $\phi_{*}H\in |bf+2\sigma_{n}|$ for some $b\geq 0$. Moreover, $0=p_{a}(H)=p_{a}(\phi_{*}H)=n+b-1$, so $n=1-b\leq 1$. We can choose the last blowdown in such a way that $n=1$, so $b=\Sec_{1}\cdot \phi_{*}H=0$, $(\phi_{*}H)^{2}=4$, and the composition of $\phi$ with the contraction of the negative section $\Sec_1$ is the required morphism $\psi$.
	
	\ref{item:ht=2_twisted_H} The morphism $\psi$ from \ref{item:ht=2_twisted_psi} factors as $\beta\circ\alpha$, where $\alpha$ is an isomorphism in a neighborhood of $H$, and $\beta$ is a composition of exactly $\sum_{j=1}^{\nu}k_{j}$ blowups, whose exceptional curves meet the images of $H$ once each. Part \ref{item:ht=2_twisted_H} follows since $\psi(H)^{2}=4$. 
\end{proof}

\begin{lemma}[Vertical swaps]\label{lem:ht=2_twisted-swaps}	
	Let $\bar{X}$ be as in \eqref{eq:assumption_ht=2_width=1}. Then $\bar{X}$ swaps vertically to a surface  $\bar{Y}$ from Example \ref{ex:ht=2_twisted} or \ref{ex:ht=2_twisted_cha=2}. With notation from Lemma \ref{lem:ht=2_twisted-models}, one of the following holds.
	\begin{enumerate}	
		\item\label{item:ht=2_sep_nu=2} $\nu=2$, $\#F_{1}\cap H=2$, $(F_{1})\redd\neq [2,1,2]$, $k_{2}=3-H^{2}\geq 5$; and $\bar{Y}$ is of type $\rA_{3}+\rD_{5}$, see Example \ref{ex:ht=2_twisted}\ref{item:A3+D5_construction},
		\item\label{item:ht=2_sep_nu=3} $\nu=3$, $\#F_{1}\cap H=2$, $k_{2}+k_{3}=3-H^{2}\geq 5$; $\cha\kk\neq 2$; and $\bar{Y}$ of type $2\rA_{1}+2\rA_{3}$, see Example \ref{ex:ht=2_twisted}\ref{item:2A1+2A3_construction},
		\item\label{item:ht=2_insep} $\nu\geq 3$, $\#F_{1}\cap H=1$; $\cha\kk=2$, and $\bar{Y}$ is of type \eqref{eq:KM_type}, see Example  \ref{ex:ht=2_twisted_cha=2}.
	\end{enumerate}
\end{lemma}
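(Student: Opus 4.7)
The plan is to combine Lemma \ref{lem:ht=2_twisted-models} with elementary projective geometry of the image configuration in $\P^{2}$, plus the width-$1$ assumption \eqref{eq:assumption_ht=2_width=1}, to classify $p$ into one of the three listed cases and then construct the vertical swap in each. The key numerical inputs are $H^{2}\le -2$ (from minimality of the log resolution), which via the formula $H^{2}=4-\sum_{j}k_{j}$ of \ref{item:ht=2_twisted_H} forces $\sum_{j}k_{j}\ge 6$, together with the bound $\beta_{D}(H)\le 3$ from \ref{item:ht=2_rational}.

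First I would split on whether $\ll_{1}=\psi_{*}F_{1}$ is a secant of $\cc$ (equivalently $\#F_{1}\cap H=2$, which by \ref{item:ht=2_twisted_columnar} forces $F_{1}$ columnar) or tangent to $\cc$ ($\#F_{1}\cap H=1$). When $\ll_{1}$ is a secant, all remaining $\ll_{j}$ ($j\ge 2$) are tangent to $\cc$ and pass through the common external point $p_{0}\notin\cc$. In characteristic $\ne 2$ an external point admits at most two tangents to a smooth conic, hence $\nu-1\le 2$. In characteristic $2$, $p_{0}$ cannot be the strange point of $\cc$ (otherwise every line through $p_{0}$ would be tangent, contradicting that $\ll_{1}$ is a secant), so exactly one tangent passes through $p_{0}$ and $\nu\le 2$. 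The subcases $\nu=2$ and $\nu=3$ give cases \ref{item:ht=2_sep_nu=2} and \ref{item:ht=2_sep_nu=3} respectively, with the characteristic restriction in \ref{item:ht=2_sep_nu=3} automatic. The identities $k_{2}=3-H^{2}$ and $k_{2}+k_{3}=3-H^{2}$ then follow from the convention $k_{1}=1$ for columnar $F_{1}$ combined with \ref{item:ht=2_twisted_H}, and the lower bound $\ge 5$ from $H^{2}\le -2$.

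When $\ll_{1}$ is tangent, every $\ll_{j}$ is tangent to $\cc$ through $p_{0}$. If $\cha\kk=2$ and $p_{0}$ is the strange point, every line through $p_{0}$ is tangent, and the configuration is that of Example \ref{ex:ht=2_twisted_cha=2}; the bound $\sum_{j}k_{j}\ge 6$ with each $k_{j}\ge 2$ forces $\nu\ge 3$, yielding case \ref{item:ht=2_insep}. The remaining tangent subcases (characteristic $\ne 2$ with $\nu\le 2$, or characteristic $2$ with $p_{0}\ne p^{*}$ and $\nu\le 1$) must be excluded. In each such subcase $\sum k_{j}\ge 6$ together with the small value of $\nu$ forces at least one $F_{j}$ to have the fork structure $\langle 2;[2],[2],[1,(2)_{k_{j}-3}]\rangle$ with $k_{j}\ge 4$. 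I would exclude these by producing an alternative witnessing $\P^{1}$-fibration of $X$ whose horizontal part has two components: performing a suitable sequence of elementary transformations that move a $0$-curve along the long twig of the fork, one of the two short $(-2)$-twigs becomes a $1$-section for the new pencil, which together with the image of $H$ yields a reducible $D\hor$ of height~$2$, contradicting assumption \eqref{eq:assumption_ht=2_width=1}.

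Finally, in each of the three surviving configurations I would construct the vertical swap $(X,D)\sqto (Y,D_{Y})$ onto the primitive model following the pattern of Lemma \ref{lem:ht=2_swaps}. In cases \ref{item:ht=2_sep_nu=2} and \ref{item:ht=2_sep_nu=3}, each tangent fork fiber $F_{j}$ is collapsed by contracting $(-1)$-curves in its long twig and the iterated images until $F_{j}$ becomes $[2,1,2]$, matching Example \ref{ex:ht=2_twisted}. In case \ref{item:ht=2_insep} the analogous contractions applied to every fiber produce the surface of Example \ref{ex:ht=2_twisted_cha=2} with all $k_{j}=2$. The main obstacle will be Step 2 above: the explicit construction of the alternative witnessing $\P^{1}$-fibration from a short $(-2)$-twig of a fork-type fiber, and the verification that this alternative fibration remains of height $2$ on $X$, has reducible $D\hor$, and is genuinely base-point-free, thereby contradicting \eqref{eq:assumption_ht=2_width=1} as required.
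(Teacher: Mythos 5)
Your skeleton (push the configuration down to $\P^2$ via Lemma \ref{lem:ht=2_twisted-models}\ref{item:ht=2_twisted_psi}, split on whether $\ll_1$ is a secant or a tangent of $\cc$, count tangent lines through $p_0$, and use the strange point to separate characteristic $2$) is exactly the paper's. But the substantive content of the proof is the elimination of the configurations that do \emph{not} appear in \ref{item:ht=2_sep_nu=2}--\ref{item:ht=2_insep}, and here your proposal has two concrete gaps. First, in the secant case with $\nu=2$ you never rule out $(F_1)\redd=[2,1,2]$, which is an explicit conclusion of case \ref{item:ht=2_sep_nu=2} and is \emph{not} automatic: the configuration with $F_1=[2,1,2]$ columnar and $k_2=5$ is numerically consistent ($H^2=4-1-5=-2$) and is precisely the surface of Example \ref{ex:ht=2_twisted}\ref{item:A3+D5_construction}, which has width $2$ (see the remark there), so it must be excluded \emph{geometrically} from the width-$1$ hypothesis; this exclusion is also what makes the restriction $T\neq[2]$ in Lemma \ref{lem:ht=2_twisted-separable}\ref{item:twisted_off_nu=2} correct. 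Second, your claim that in the char-$2$ all-tangent case the bound $\sum_j k_j\geq 6$ with $k_j\geq 2$ ``forces $\nu\geq 3$'' is false: $\nu=2$ with $k_1=k_2=3$, or $\nu=1$ with $k_1=6$, satisfy the numerics, so the all-tangent configurations with $\nu\leq 2$ must be excluded in characteristic $2$ as well, not just in characteristic $\neq 2$.

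The paper disposes of all of these excluded configurations (secant $\nu=2$ with $F_1=[2,1,2]$; all-tangent with $\nu=2$ in any characteristic; $\nu=1$) by one uniform device: pull back to $X$ the pencil of lines through $p_1$ (when $\nu=1$) or the pencil of conics tangent to $\ll_1,\ll_2$ at $p_1,p_2$ (when $\nu=2$), and check that the resulting $\P^1$-fibration has $D\hor$ consisting of $1$-sections, contradicting \eqref{eq:assumption_ht=2_width=1}. Your substitute mechanism --- elementary transformations moving a $0$-curve along the long twig of a fork fiber so that a short $(-2)$-twig becomes a $1$-section --- is left unverified (you flag it yourself as the main obstacle), and as stated it does not even apply to the missing secant subcase, where $F_1=[2,1,2]$ is a chain, not a fork. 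Since the configuration already lives on $\P^2$ after $\psi$, the explicit conic/line pencils are both the natural and the working tool; without them (or a completed version of your alternative) the case analysis is incomplete and the statement of \ref{item:ht=2_sep_nu=2} and \ref{item:ht=2_insep} is not established.
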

\begin{proof}
	Let $\psi$ be as in Lemma \ref{lem:ht=2_twisted-models}\ref{item:ht=2_twisted_psi}. Then $(\Bs \psi^{-1})\redd=\{p_{1},\dots, p_{\nu}\}$, where $p_{j}\in \ll_j\cap\cc$. In fact, $\{p_j\}=\ll_{j}\cap \cc$ if and only if $\#F_j\cap H=1$, i.e.\ $j\geq 2$ or $j=1$ and $F_1$ is not columnar.
	
	If $\nu=1$ then the pencil of lines through $p_{1}$ pulls back to a $\P^{1}$-fibration of $X$ such that $\#D\hor\leq 2$, and $D\hor$ consists of $1$-sections, contrary to assumption \eqref{eq:assumption_ht=2_width=1}. Thus $\nu\geq 2$.
	
	Consider the case $\nu=2$. If $(F_{1})\redd=[2,1,2]$ or $\#F_{1}\cap H=1$, then using the pencil of conics tangent to $\ll_{1},\ll_{2}$ at $p_{1}$, $p_{2}$ we get a contradiction with \eqref{eq:assumption_ht=2_width=1} as before; see  Example \ref{ex:ht=2_twisted}\ref{item:A3+D5_construction} for a particular case with $k_2=5$. Lemma \ref{lem:ht=2_twisted-models}\ref{item:ht=2_twisted_H} gives $k_{2}=3-H^{2}\geq 5$, and we conclude that 
	 \ref{item:ht=2_sep_nu=2} holds.
	
	Consider the case $\nu\geq 3$. Assume first that $\cha\kk\neq 2$. Then at most two lines through $p_0$ are tangent to $\cc$, so $\nu=3$ and $\#F_{1}\cap D=2$. As before, Lemma \ref{lem:ht=2_twisted-models}\ref{item:ht=2_twisted_H} gives $k_{2}+k_{3}=3-H^{2}\geq 5$, so, say, $k_{2}\geq 3$ and 
	\ref{item:ht=2_sep_nu=3} holds.
	
	Assume 
	$\cha\kk=2$. Since $p_0$ lies on two lines $\ll_{2}$, $\ll_{3}$ tangent to $\cc$, it lies on all of them, so $\#F_{j}\cap H=1$ for all $j$. Contracting all degenerate fibers to chains $[2,1,2]$ we get a vertical swap as required in \ref{item:ht=2_insep}.
\end{proof}

We now list all singularity types of log canonical surfaces $\bar{X}$ satisfying assumption \eqref{eq:assumption_ht=2_width=1}, and describe the witnessing $\P^1$-fibrations. 
In the list, we will frequently encounter fibers supported on rational forks with two twigs of type $[2]$, cf.\ Lemma \ref{lem:degenerate_fibers}\ref{item:not_columnar}. Thus to simplify our list, we introduce the following notation (we recall that all the remaining conventions used in our statements are summarized in Section \ref{sec:notation}).  

\begin{notation}[{Forks and chains with two twigs $[2]$}]\label{not:Dk,gfork}
	Let $T$ be an admissible chain, and let $r\geq 2$. We put 
	\begin{equation*}
		\gfork{[T,r]}\de \langle r;T,[2],[2]\rangle, \qquad \gfork{[r]}=[2,r,2].
	\end{equation*}
	We extend Notation \ref{not:fibrations} by putting $\gforkd{i}{[T,r]}=\langle r,\ldec{i}T,[2],[2]\rangle$, and $\gforkd{i}{[r]}=[2,r\dec{i},2]$. Moreover, we extend the notation for Dynkin type $\rD_{k}$ by putting $\rD_{3}=\rA_3$, $\rD_{2}=2\rA_1$, and extend Notation \ref{not:fibrations} to such types by
	\begin{equation*}
		\rD_{k}\dec{i}=\gforkd{i}{[(2)_{k-2}]} \mbox{ for }k\geq 3 \quad \mbox{and} \quad \rD_{2}\dec{i}=[2]\dec{i}+[2]\dec{i}.
	\end{equation*}
\end{notation}

First, we consider those cases in Lemma \ref{lem:ht=2_swaps} which can occur if $\cha\kk\neq 2$; thus completing the proof of Theorem \ref{thm:ht=1,2} and Propositions \ref{prop:moduli}, \ref{prop:moduli-hi} in this case.

\begin{lemma}[Classification in cases \ref{lem:ht=2_twisted-swaps}\ref{item:ht=2_sep_nu=2},\ref{item:ht=2_sep_nu=3}, see Tables \ref{table:ht=2_char-any}--\ref{table:ht=2_char-neq-2}.]\label{lem:ht=2_twisted-separable}
Let $(X,D)$ be as in \eqref{eq:assumption_ht=2_width=1}, i.e.\  $(X,D)$ is a minimal log resolution of a del Pezzo surface $\bar{X}$ of rank one, height $2$ and width $1$. Fix a $\P^1$-fibration of $X$ as in Lemma \ref{lem:ht=2_twisted-swaps}, and assume that $\bar{X}$ swaps vertically to a surface $\bar{Y}$ from Example \ref{ex:ht=2_twisted}.

Assume that $\bar{X}$ is log canonical.  Let $\cS$ be the singularity type of $\bar{X}$, and let $\check{\cS}$ be the combinatorial type of the sum of $D$ and all vertical $(-1)$-curves (which we write using Notations \ref{not:fibrations} and \ref{not:Dk,gfork}). Then $\cS$ determines $\bar{X}$ uniquely up to an isomorphism, $h^{i}(\lts{X}{D})=0$ for all $i$, and one of the following holds.
	\begin{enumerate}[itemsep=1em]
		\item\label{item:ht=2_A3+D5} The surface $\bar{Y}$ is  of type $\rA_{3}+\rD_{5}$, see Example \ref{ex:ht=2_twisted}\ref{item:A3+D5_construction}, and $\check{\cS}$ is one of the following:
		\begin{longlist}	
			\item\label{item:twisted_off_nu=2} $\ldec{1}[T,\bs{k}\dec{2},T^{*}]\dec{1}+\rD_{k+3}\dec{2}$, $T\neq [2]$, 
			%%%%%%%%%%%	
			\item\label{item:c=1_T=[2]_l=0} 
			$\langle \bs{k}\dec{3}; [3]\dec{1},\ldec{1}[2,2],[2]\dec{2} \rangle+\langle 2;\ldec{2}[3,(2)_{k+2}],[2],[2]\rangle$,
			\setcounter{foo}{\value{longlisti}}
		\end{longlist}
		\item \label{item:ht=2_2A1+2A3} We have $\cha\kk\neq 2$, the surface $\bar{Y}$ is of type  $2\rA_{1}+2\rA_{3}$, see Example \ref{ex:ht=2_twisted}\ref{item:2A1+2A3_construction}, and $\check{\cS}$ is one of the following:
		\begin{longlist}\setcounter{longlisti}{\value{foo}}
			\item\label{item:twisted_off_nu=3}
			$\ldec{1}[T,\bs{k+l-2}\dec{2,3},T^{*}]\dec{1}+\rD_{k+1}\dec{2}+\rD_{l}\dec{3}$, 
			%%%%%%%%%%
			\item\label{item:k2=2}
			$\langle \bs{k}\dec{3}; [2]\dec{1},[2]\dec{1},[2,l]\dec{2} \rangle+\ldec{2}[(2)_{l-2},3]+\rD_{k+1}\dec{3}$,
			%%%%%%%%%%
			\item\label{item:c=1_T1=[2]}
			$\langle \bs{k+l-2}\dec{3}; [2]\dec{1},[2]\dec{1},\ldec{2}T\trp \rangle+
			\gforkd{2}{T^{*}*[(2)_{k-1}]}+\rD_{l}\dec{3}$,
			%%%%%%%%%%%
			\item\label{item:c=1_T=[2]} 
			$\langle \bs{k+l-2}\dec{3}; [3]\dec{1},\ldec{1}[2,2],[2]\dec{2} \rangle+
			\gforkd{2}{[3,(2)_{k-2}]} +\rD_{l}\dec{3}$,
			\setcounter{foo}{\value{longlisti}}
		\end{longlist}
	\end{enumerate}
	for some integers $k,l\geq 2$ and an admissible chain $T$.
\end{lemma}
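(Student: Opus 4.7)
The plan is to follow the two-step strategy used in the proof of Lemma \ref{lem:ht=2,untwisted}. I fix a witnessing $\P^1$-fibration on $X$ as provided by Lemma \ref{lem:ht=2_twisted-swaps}, so the $2$-section $H \subseteq D$ and the degenerate fibers $F_1, \dots, F_\nu$ have the explicit shape recorded in Lemma \ref{lem:ht=2_twisted-models}. In particular, each fiber $F_j$ with $\#F_j \cap H = 1$ becomes, after contracting the vertical $(-1)$-curves not meeting the image of $H$, either a chain $[2,1,2]$ (when $k_j = 2$) or a fork $\langle 2; [2], [2], [1,(2)_{k_j - 3}]\rangle$ (when $k_j \geq 3$); and $F_1$ is either columnar $[T, 1, T^{*}]$ or has $\#F_1 \cap H = 2$.

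Next, I enumerate the possible combinatorial types $\check{\cS}$ of the sum $\check{D}$ of $D$ and all vertical $(-1)$-curves. The connected component $D_H$ of $D$ containing $H$ is assembled from these fiber skeletons together with $H$, possibly after additional blowups centered on $H \cap D\vert$. Since $\bar{X}$ is log canonical, Lemma \ref{lem:ht=2_twisted-models}\ref{item:ht=2_rational} gives $\beta_D(H) \leq 3$, and Lemma \ref{lem:admissible_forks} restricts the twig structure of $D_H$; taken together, these constraints split the analysis according to $\nu \in \{2,3\}$, to whether $F_1$ is columnar, and to which tip or node of $F_1$ the additional blowups are attached. Each resulting configuration is certified to be del Pezzo by the inequality \eqref{eq:ld_phi_H} of Lemma \ref{lem:delPezzo_criterion}, and the six surviving families are precisely the entries \ref{item:twisted_off_nu=2}--\ref{item:c=1_T=[2]}.

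For uniqueness, I reduce to the primitive surface $\bar{Y}$. The log surface $(Y, D_Y)$ from Example \ref{ex:ht=2_twisted} is unique up to isomorphism, since the planar configuration $\pp = \cc + \ll_1 + \dots + \ll_\nu$ is projectively rigid: four general points on $\P^2$ (three in the case $\nu=2$) plus a tangency condition determine it. The swap $(X, \check{D}) \sqto (Y, \check{D}_Y)$ factors as a sequence of inner blowdowns, because at each step the center of the blowdown is forced to be a node of the current image of $\check{D}$. Lemma \ref{lem:inner} then yields $\#\cP_{+}(\check{\cS}) = 1$, and Lemma \ref{lem:adding-1}\ref{item:adding-1-h1} translates this into $\#\cP(\cS) = 1$. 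The same chain of inner blowdowns further contracts $(Y, \check{D}_Y)$ to an elementary model on $\P^1 \times \P^1$ or $\F_m$ whose logarithmic tangent sheaf has $h^i = 0$ by Lemma \ref{lem:h1}\ref{item:h1-easy-vanishing} or \ref{lem:h1}\ref{item:h1-Fm}, so Lemmas \ref{lem:blowup-hi}\ref{item:blowup-hi-inner} and \ref{lem:h1}\ref{item:h1-1_curve} give $h^i(\lts{X}{D}) = h^i(\lts{X}{\check{D}}) = 0$.

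The main obstacle will be the case enumeration in the first step. Unlike in the width-$2$ setting, where $D\hor$ consisted of two disjoint $1$-sections, the horizontal boundary is now a single rational $k$-curve meeting each fork-type fiber in a point of branching number up to $3$, so combining all the fiber components with $H$ frequently produces candidate divisors $D_H$ that fail the admissibility conditions of Lemma \ref{lem:admissible_forks} and must be discarded. The delicate restrictions recorded in the statement, such as $T \neq [2]$ in \ref{item:twisted_off_nu=2} or the specific bounds on the auxiliary chain $T$, emerge precisely from running those admissibility conditions against the twig lengths of $D_H$.
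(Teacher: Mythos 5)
Your overall strategy coincides with the paper's: enumerate the combinatorial types from the fiber shapes in Lemma \ref{lem:ht=2_twisted-models} (splitting on whether the $(-1)$-curves $L_j$, $j\geq 2$, meet $H$ or not, which forces $D_H$ to be a chain or an admissible fork and triggers Lemma \ref{lem:admissible_forks}), and then prove uniqueness by checking that the morphism onto the primitive model $(Y,\check{D}_Y)$ of Example \ref{ex:ht=2_twisted} is inner and invoking projective rigidity of the planar configuration together with Lemmas \ref{lem:inner} and \ref{lem:adding-1}. That part is fine.

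There is, however, a gap in your computation of $h^{i}(\lts{X}{D})$. You claim that "the same chain of inner blowdowns further contracts $(Y,\check{D}_Y)$ to an elementary model on $\P^1\times\P^1$ or $\F_m$" covered by Lemma \ref{lem:h1}\ref{item:h1-easy-vanishing} or \ref{item:h1-Fm}. This does not work as stated: in the width-one setting the horizontal part of $\check{D}_Y$ is the single $2$-section $C$ (the proper transform of the conic $\cc$), so contracting the vertical part by inner blowdowns lands you on a Hirzebruch surface whose boundary contains a $2$-section tangent to the fibers over $p_2$ (and $p_3$) — a configuration not covered by \ref{lem:h1}\ref{item:h1-Fm} (which requires fibers plus disjoint sections) nor by \ref{lem:h1}\ref{item:h1-diagonal} or \ref{item:h1-grid}. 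The paper's fix is to first enlarge the boundary by further $(-1)$-curves that are \emph{not} vertical for your fibration — in case \ref{item:ht=2_A3+D5} the proper transform of the line joining $p_1$ with $p_2$, after which an inner contraction reaches $(\P^2,\text{four general lines})$; in case \ref{item:ht=2_2A1+2A3} one contracts $A_3$, drops $C$ from the boundary, adds the proper transforms of the lines joining $p_1$ with $p_2$ and $p_3$, and contracts to $(\P^1\times\P^1,\text{three horizontal plus three vertical lines})$. Adding or removing these $(-1)$-curves is harmless for $h^i$ by Lemma \ref{lem:h1}\ref{item:h1-1_curve}, but without them your reduction target is not one of the rigid models and the vanishing does not follow from the lemmas you cite. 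You should supply these auxiliary curves explicitly and verify the resulting contraction is inner.
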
 
\begin{proof}
	We keep notation from Lemma \ref{lem:ht=2_twisted-models}. Let $D_{H}$ be the connected component of $D$ containing $H=[\bs{m}]$. Since $\#F_1\cap H=2$ by Lemma \ref{lem:ht=2_twisted-swaps}, Lemma \ref{lem:ht=2_twisted-models}\ref{item:ht=2_twisted_columnar} implies that $F_1$ is columnar, so  $(F_{1})\redd=[T,1,T^{*}]$. 
	
	If $L_{j}$ meets $H$ for all $j\geq 2$ then by Lemma \ref{lem:ht=2_twisted-models}\ref{item:ht=2_twisted_tangent},\ref{item:ht=2_twisted_H} $D$ is as in \ref{item:twisted_off_nu=2} if $\nu=2$ and   \ref{item:twisted_off_nu=3} if $\nu=3$. Assume $F_{2}$ meets $H$ in some component of $D\vert$. Then $\beta_{D}(H)\geq 3$, so $\beta_{D}(H)=3$ by Lemma \ref{lem:ht=2_twisted-models}\ref{item:ht=2_rational}. Suppose $D_H$ is not a fork. Then $D_H$ contains a chain whose tips are branching in $D_H$, and $H$ is one of those tips, so by \cite[3.1.3, 3.2.5]{Flips_and_abundance} we get $\cf(H)\geq 1$, a contradiction with Lemma \ref{lem:delPezzo_criterion}. Thus $D_{H}=\langle \bs{m};T\trp,T^{*},U\rangle$ for some admissible chain $U\subseteq F_2$; and we have either $\nu=2$, or $\nu=3$ and $L_{3}$ meets $H$. Note that the fork $D_{H}$ is admissible, since otherwise $\cf(H)\geq 1$, contrary to Lemma  \ref{lem:delPezzo_criterion}.
	
	Consider the case $k_{2}=2$. Then $\nu=3$, since otherwise $k_2\geq 5$ by Lemma  \ref{lem:ht=2_twisted-swaps}\ref{item:ht=2_sep_nu=2}. The morphism from Lemma \ref{lem:ht=2_twisted-models}\ref{item:ht=2_twisted_tangent} maps $(F_{2})\redd$ to a chain $[2,1,2]$, and does not contract $\ftip{U}$. It follows that $(F_2)\redd=[U,1,U^{*}]=[2,l,1,(2)_{l-2},3]$ for some $l\geq 2$. Since $\#U\geq 2$, admissibility of $D_{H}$, see Lemma \ref{lem:admissible_forks}\ref{item:has_-2}, implies that $T=[2]$ or $T^{*}=[2]$. Thus $T,T^{*}=[2]$ and $D$ is as in \ref{item:k2=2}.
	
	Consider the case $k_2\geq 3$. Then $(F_{2})\redd=\langle 2; [U,1,U^{*}]*[(2)_{k_{2}-3}],[2],[2]\rangle$. If $T=[2]$ then $\nu=3$ by Lemma \ref{lem:ht=2_twisted-swaps}, so $D$ is as in \ref{item:c=1_T1=[2]}. Assume $T\neq [2]$. Then interchanging $T$ with $T^{*}$ if necessary, we infer from  Lemma \ref{lem:admissible_forks}\ref{item:has_-2} that $U=[2]$, $T=[3]$, $T^{*}=[2,2]$, so $D$ is as in \ref{item:c=1_T=[2]_l=0} if $\nu=2$ and as in \ref{item:c=1_T=[2]} if $\nu=3$.
	\smallskip
	
	To see the uniqueness result, we argue, once again, as in the proof of Corollary \ref{cor:moduli-ht=1}. Let $\check{D}$ be the sum of $D$, the vertical $(-1)$-curves, and the $(-1)$-curve over the common point of point $p_0$ of the lines in Example   \ref{ex:ht=2_twisted}. Let $\psi\colon (X,\check{D})\to (Y,\check{D}_Y)$ be the morphism onto  the surface from Example   \ref{ex:ht=2_twisted}, and let $\check{\cS}$, $\check{\cS}_0$ be the combinatorial types of $(X,\check{D})$ and $(Y,\check{D}_Y)$. Applying Lemma \ref{lem:adding-1}\ref{item:adding-1-h1} to $B=\{\pt\}$ we see that it is enough to prove that $\#\cP_{+}(\check{\cS})=1$ and $h^{i}(\lts{X}{\check{D}})=0$. We check that the morphism $\psi$ is inner, so by Lemmas \ref{lem:inner} and \ref{lem:blowup-hi}\ref{item:blowup-hi-inner} it is enough to prove the corresponding statements for $(Y,\check{D}_Y)$. 
	
	Since each planar configuration used to construct $(Y,\check{D}_Y)$  in Example \ref{ex:ht=2_twisted} is unique up to a projective equivalence (even with a fixed order of components and singular points), we have $\#\cP_{+}(\check{\cS}_0)=1$.  It remains to prove that $h^{1}(\lts{Y}{\check{D}_Y})=0$. We keep the notation from Example \ref{ex:ht=2_twisted}.
	
	In case \ref{ex:ht=2_twisted}\ref{item:A3+D5_construction} put $D_{Y}'=D_{Y}+A_1+A_2+A_0+L$, where $L$ is the proper transform of a line joining $p_1$ with $p_2$. By Lemma \ref{lem:h1}\ref{item:h1-1_curve} we have $h^{1}(\lts{Y}{\check{D}_Y})=h^{1}(\lts{Y}{D_Y'})$. The contraction of $L+A_0+A_1+A_2+T$, where $T$ is the fork in $D_Y$ minus its branching component, is an inner morphism $(Y,D_Y')\to (\P^{2},B)$, where $B$ is a sum of four general lines. Thus $h^{i}(\lts{Y}{D'_Y})=0$ for all $i$ by Lemmas \ref{lem:blowup-hi}\ref{item:blowup-hi-inner} and \ref{lem:h1}\ref{item:h1-P2}. 
	
	In case \ref{ex:ht=2_twisted}\ref{item:2A1+2A3_construction}, let $(Y,\check{D}_Y)\to (Z,D_{Z}')$ be the contraction of $A_3$, and let $D_Z$ be the image of $D_Y+\sum_{i=0}^{3}A_{i}+L_2+L_3-C$, where $L_i$ is the line joining $p_1$ with $p_i$. Then $h^{1}(\lts{Y}{D_Y})=h^{1}(\lts{Z}{D_Z'})=h^{1}(\lts{Z}{D_Z})$ by Lemmas \ref{lem:blowup-hi}\ref{item:blowup-hi-inner} and \ref{lem:h1}\ref{item:h1-1_curve}. Contracting the image of $A_1+L_1+\phi^{-1}(p_2)+\phi^{-1}(p_3)$ we get an inner morphism $(Z,D_Z)\to (\P^1\times \P^1,B)$, where $B$ is the sum of three horizontal and three vertical lines, so $h^{i}(\lts{Z}{D_Z})=h^{i}(\lts{\P^1\times \P^1}{B})=0$ by Lemmas \ref{lem:blowup-hi}\ref{item:blowup-hi-inner} and  \ref{lem:h1}\ref{item:h1-grid}.
\end{proof}

To prove Theorem \ref{thm:ht=1,2} and Propositions \ref{prop:moduli}, \ref{prop:moduli-hi} it remains to classify, in case $\cha\kk=2$, del Pezzo surfaces of rank one swapping vertically to one of the surfaces from Example \ref{ex:ht=2_twisted_cha=2}. We begin by constructing representing families for the latter, in such a way that they can be lifted through the vertical swaps. This construction is analogous to the one in Examples \ref{ex:4-points}, \ref{ex:4-points-Aut}, where the family was parametrized by the choice of a fourth point on $\P^1$. Now, we consider $\nu \geq 3$ lines tangent to a given conic (recall that these exist if and only if $\cha\kk=2$), and construct a family parametrized by the choice of $\nu-3$ of them.

First, we recall the construction in Example \ref{ex:ht=2_twisted_cha=2} and introduce additional notation. We assume $\cha\kk=2$. Fix an integer $\nu\geq 3$. Let $\PKM(\nu)$ be the set of isomorphism classes of surfaces constructed in Example \ref{ex:ht=2_twisted_cha=2} for the given $\nu$, and let $\PKMres(\nu)$ be the set of isomorphism classes of their minimal log resolutions. Each $(Y,D_Y)\in \PKMres(\nu)$ is constructed in Example \ref{ex:ht=2_twisted_cha=2} as follows. Let $p_{1},\dots, p_{\nu}$ be distinct points on a smooth conic $\cc\subseteq \P^2$, and for $i\in \{1,\dots,\nu\}$ let $\ll_{i}$ be the line tangent to $\cc$ at $p_{i}$. Since $\cha\kk=2$, the lines $\ll_1,\dots,\ll_{\nu}$ meet at a point $p_{0}$. Put $\pp=\cc+\ll_1+\dots+\ll_{\nu}$. Now, let $\psi\colon \to \P^2$ as the minimal log resolution of $(\P^2,\pp)$, for $i\in \{0,\dots,\nu\}$ let $A_{i}$ be the $(-1)$-curve in $\psi^{-1}(p_{i})$, and let $D=(\psi^{*}\pp)\redd-\sum_{i=0}^{\nu}A_i$.

Write $\psi=\sigma\circ\psi'$, where $\sigma$ is the blowup at $\{p_0,p_{1}\}$, and put $\phi=\sigma'\circ \psi'\colon X\to \P^1\times \P^1$, where $\sigma'$ is the contraction of $\sigma^{-1}_{*}\ll_1$. Then $\phi_{*}D_Y=\bar{C}+\sum_{i=1}^{\nu}\bar{V}_{i}$, where each $\bar{V}_{i}$ is a vertical line, and $\bar{C}=\phi(C)$ is a $(+4)$-curve which is tangent to each vertical line with multiplicity $2$, and meets each horizontal line once. Write $\{q_i\}=\bar{V}_i\cap \bar{C}$, $V_{i}=\phi^{-1}_{*}\bar{V}_i$ and $\phi^{-1}(q_i)=A_i+W_i$, where $W_i=[2]$. Let $\check{D}=D_Y+\sum_{i=1}^{\nu}A_i$. Let $\check{\Gamma}$ be the weighted graph of $\check{D}$, let $c$ be its central vertex, let $a_1,\dots,a_{\nu}$ be the vertices of weight $(-1)$ adjacent to $c$, and for $i\in \{1,\dots,\nu\}$ let $v_i$, $w_i$ be the tips of weight $-2$ adjacent to $a_i$, see Figure \ref{fig:KM_alt}.

\begin{figure}[htbp] \vspace{-0.6em}
	\begin{tikzpicture}[scale=1.2]
		\begin{scope}
		\draw[very thick] (0,1.2) -- (3.1,1.2);
		\node at (2.1,1.4) {\small{$C$}};
		\node at (2.1,1) {\small{$4-2\nu$}};
		%\node at (2.1,0.2) {\small{$Y$}};
		%
		\draw (0.2,2.5) -- (0,1.5);
		\node at (0.3,2) {\small{$V_1$}};
		\draw[dashed] (0,1.7) -- (0.2,0.7);
		\node at (0.3,1.4) {\small{$A_1$}};
		\draw (0.2,0.9) -- (0,-0.1);
		\node at (0.3,0.2) {\small{$W_1$}};
		\draw (1.2,2.5) -- (1,1.5);
		\node at (1.3,2) {\small{$V_2$}};
		\draw[dashed] (1,1.7) -- (1.2,0.7);
		\node at (1.3,1.4) {\small{$A_2$}};
		\draw (1.2,0.9) -- (1,-0.1);
		\node at (1.3,0.2) {\small{$W_2$}};
		\node at (2.1,2) {\Large{$\cdots$}};
		\draw (3,2.5) -- (2.8,1.5);
		\node at (3.1,2) {\small{$V_{\nu}$}};
		\draw[dashed] (2.8,1.7) -- (3,0.7);
		\node at (3.1,1.4) {\small{$A_{\nu}$}};
		\draw (3,0.9) -- (2.8,-0.1);
		\node at (3.1,0.2) {\small{$W_{\nu}$}};
	%	\draw [decorate, decoration = {calligraphic brace}, thick] (0,2.6) -- (3.2,2.6);
	%	\node at (1.6,2.9) {\small{$\nu$ fibers}};	
		\draw[->] (-0.3,1.2) -- (-1.3,1.2);
		\node at (-0.7,1.4) {\small{$\phi$}};
		\end{scope}
		\begin{scope}[shift={(-4.8,0)}]
			\draw (0,2.5) -- (0,0);
			\node[right] at (0,0.2) {\small{$\bar{V}_1$}};
			\node[left] at (0,0.2) {\small{$0$}};
			\draw (1,2.5) -- (1,0);
			\node[right] at (1,2.3) {\small{$\bar{V}_2$}};
			\node[left] at (1,2.3) {\small{$0$}};
			\node at (2.1,2) {\Large{$\cdots$}};
			%\node at (2.1,0.2) {\small{$\P^1\times \P^1$}};
			\draw (3,2.5) -- (3,0);
			\node[right] at (3,2.3) {\small{$\bar{V}_{\nu}$}};
			\node[left] at (3,2.3) {\small{$0$}};
			\draw[thick] (-0.6,2.2) -- (-0.3,2.2) to[out=0,in=90] (0,2) to[out=-90,in=90] (-0.3,1.7) to[out=-90,in=180] (-0.1,1.6); 
			\draw[thick] (0.1,1.6) -- (0.7,1.6) to[out=0,in=90] (1,1.4) to[out=-90,in=90] (0.7,1.1) to[out=-90,in=180] (0.9,1); 
			%\draw[thick] (1.1,1) -- (1.6,1);
			%\draw[thick, densely dashed] (1.6,1) -- (2.4,1);
			\draw[thick] (1.1,1) -- (2.7,1) to[out=0,in=90] (3,0.8) to[out=-90,in=90] (2.7,0.5) to[out=-90,in=180] (2.9,0.4);
			\draw[thick] (3.1,0.4) -- (3.5,0.4);
			\node at (2.1,1.2) {\small{$\bar{C}$}};
			\node at (2.1,0.8) {\small{$4$}};
		\end{scope}	
		\begin{scope}[shift={(6.3,0.2)}]
			\filldraw (0,2) circle (0.04); 
			\node[left] at (0,2.1) {\small{$4-2\nu$}};
			\node[right] at (0,2.1) {\small{$c$}};
			\draw (0,2) -- (-1.4,1.4); \filldraw (-1.4,1.4) circle (0.04);
			\node at (-1.65,1.35) {\small{$-1$}};
			\node at (-1.15,1.3) {\small{$a_1$}};
			\draw (0,2) -- (0,1.4); \filldraw (0,1.4) circle (0.04);
			\node at (-0.25,1.4) {\small{$-1$}};
			\node at (0.25,1.4) {\small{$a_2$}};
			\node at (0.8,0.8) {\Large{$\dots$}};
			\draw (0,2) -- (1.6,1.4); \filldraw (1.6,1.4) circle (0.04);
			\node at (1.3,1.3) {\small{$-1$}};
			\node at (1.85,1.4) {\small{$a_{\nu}$}};
			\draw (-1.4,1.4) -- (-1.7,0.4); \filldraw (-1.7,0.4) circle (0.04); 
			\node[below] at (-1.7,0.4) {\small{$v_1$}};
			\draw (-1.4,1.4) -- (-1.1,0.4); \filldraw (-1.1,0.4) circle (0.04);
			\node[below] at (-1.1,0.4) {\small{$w_1$}};
			\draw (0,1.4) -- (-0.3,0.4); \filldraw (-0.3,0.4) circle (0.04);
			\node[below] at (-0.3,0.4) {\small{$v_2$}};
			\draw (0,1.4) -- (0.3,0.4); \filldraw (0.3,0.4) circle (0.04);
			\node[below] at (0.3,0.4) {\small{$w_2$}};
			\draw (1.6,1.4) -- (1.3,0.4); \filldraw (1.3,0.4) circle (0.04);
			\node[below] at (1.3,0.4) {\small{$v_{\nu}$}};
			\draw (1.6,1.4) -- (1.9,0.4); \filldraw (1.9,0.4) circle (0.04);
			\node[below] at (1.9,0.4) {\small{$w_{\nu}$}};
			\draw[<->, black!80, densely dashed] (-1.7,0.1) to[out=-90,in=180] (-1.4,-0.2) to[out=0,in=-90] (-1.1,0.1);
			\node[black!80] at (-0.9,-0.15) {\small{$\Z/2$}};
			\draw[<->, black!80, densely dashed] (-0.3,0.1) to[out=-90,in=180] (0,-0.2) to[out=0,in=-90] (0.3,0.1);
			\node[black!80] at (0.5,-0.15) {\small{$\Z/2$}};
			\draw[black!30] (-1.9,1.55) rectangle (-0.9,0.13);
			\draw[<-,black!80, densely dashed] (-1.4,1.6) to[out=30,in=-150] (1.6,2.1);
			\draw[black!30] (-0.5,1.55) rectangle (0.5,0.13);
			\draw[<-,black!80, densely dashed] (0.2,1.6) to[out=45,in=-150] (1.6,2.1);
			\draw[black!30] (1.1,1.55) rectangle (2.1,0.13);
			\draw[<-,black!80, densely dashed] (1.6,1.6) to[out=90,in=-90] (1.6,2.1);
			\node[black!80] at (1.6,2.2) {\small{$S_{\nu}$ permutes}};
		\end{scope}
	\end{tikzpicture}\vspace{-0.8em}
	\caption{Constructing a surface in $\PKM(\nu)$ from $\nu$ fibers of $\P^1\times \P^1\to \P^1$, cf.\ Example \ref{ex:ht=2_twisted_cha=2}.}
	\label{fig:KM_alt}\vspace{-0.7em}
\end{figure}

Let $\cS$ and $\check{\cS}$ be the combinatorial types of $(Y,D_Y)$ and $(Y,\check{D})$, respectively, so $\check{\cS}=(\check{\Gamma},2\nu+4)$. We consider the subgroup $\Z/2\rtimes S_{\nu}\subseteq \Aut(\check{\cS})$, where $S_{\nu}$ permutes the $\nu$ ordered triples $(v_i,a_i,w_i)$, and $\Z/2$ interchanges the tips $(v_1,v_2)$ with $(w_1,w_2)$ and leaves the rest of the graph intact.

Eventually, we define $\cR\subseteq\cP_{+}(\check{\cS})$ as the set of (equivalence classes of) triples $(Y,\check{D},\gamma)\in \cP_{+}(\check{\cS})$ such that $\gamma(v_i)$ is the vertex corresponding to $V_i$. By construction, the composition of forgetful maps $\cP_{+}(\check{\cS})\to \cP_{+}(\cS)\to \cP(\cS)$ maps $\cR$ onto $\PKMres(\nu)$. If $(\cY,\check{\cD})\to M$ is a smooth family representing $\cR$, then for $m\in M$, $i\in \{1,\dots,\nu\}$ we denote by $C_m$, $V_{i,m}$, $A_{i,m}$, $W_{i,m}$ the components of $\check{D}_{m}$ corresponding to $c,v_i,a_i,w_i$, respectively.

\begin{lemma}[Moduli in Example \ref{ex:ht=2_twisted_cha=2}]\label{lem:7A1-family} With the above notation, the following hold.
	\begin{enumerate}
		\item\label{item:7A1-family-R} The set $\cR$ is represented by a $(\Z/2\rtimes S_{\nu})$-faithful universal family $f\colon (\cY,\check{\cD})\to M$, with $\dim M=\nu-3$. 
		\item\label{item:7A1-family-iso} Fix $m_1,m_2\in M$. If the pairs $(C_{m_j},\{A_{i,m_j}\cap C_{m_j}\}_{i=1}^{\nu})$, $j\in \{1,2\}$ are isomorphic, then $m_1$, $m_2$ lie in the same orbit of the above $S_{\nu}$-action.
		\item\label{item:7A1-family-P} The set $\PKM(\nu)$ has moduli dimension $\nu-3$.
		\item\label{item:7A1-family-h2} Every $(Y,D_Y)\in \PKMres(\nu)$ satisfies $h^{0}(\lts{Y}{D_Y})=0$ and $h^{2}(\lts{Y}{D_Y})=\nu-2$.
	\end{enumerate}
\end{lemma}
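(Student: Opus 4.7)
The plan is to construct the family $f$ explicitly by realizing each $(Y,\check{D},\gamma)\in\cR$ as a relative blow-up of $\P^1\times\P^1$ along a canonical Frobenius conic plus $\nu$ ordered vertical fibers.

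In characteristic $2$, the $\P^1$-fibration of Lemma~\ref{lem:ht=2_twisted-models}\ref{item:ht=2_twisted_psi} (composed with an elementary transformation) gives a morphism $\phi\colon Y\to\P^1\times\P^1$ contracting $\check{D}$ onto $\bar{C}+\sum_{i=1}^{\nu}\bar{V}_i$, where $\bar{C}$ is (up to $\PGL_2$) the unique smooth rational curve of bidegree $(1,2)$ tangent to every vertical fiber (the graph of the Frobenius $\P^1\to\P^1$), and $\bar{V}_i=\{x=x_i\}$ are ordered distinct vertical fibers. Conversely, any such configuration gives an element of $\cR$ via two inner blow-ups over each $q_i=\bar{C}\cap\bar{V}_i$. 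Hence $\cR$ is in bijection with ordered $\nu$-tuples $(x_1,\ldots,x_\nu)$ of distinct points on $\P^1$ modulo the diagonal $\PGL_2$-action preserving $\bar{C}$. Fixing $(x_1,x_2,x_3)=(0,1,\infty)$, I take $M\subset(\P^1\setminus\{0,1,\infty\})^{\nu-3}$ to be the complement of the diagonals and define $f\colon(\cY,\check{\cD})\to M$ fiberwise via the above correspondence. For part (a), $f$ represents $\cR$ by this bijection; $\Z/2\rtimes S_\nu$-faithfulness holds because $S_\nu$ lifts via coordinate permutations combined with $\PGL_2$-renormalization, and the $\Z/2$-involution swapping $(v_1,v_2)\leftrightarrow(w_1,w_2)$ lifts via an automorphism of a specific fiber exchanging the two exceptional divisors over $q_1$ and $q_2$; universality follows from $\dim M=\nu-3=h^1(\lts{Y}{\check{D}})$ (computed below) together with the fact that every first-order deformation preserves the contraction $\phi$ by rigidity of $\bar{C}$.

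Part (b) uses Frobenius rigidity: an isomorphism of the pairs $(C_{m_j},\{A_{i,m_j}\cap C_{m_j}\})_{j=1,2}$ is an automorphism of $\bar{C}\cong\P^1$ permuting the $\nu$ marked points, which via the $\PGL_2$-action lifts to an isomorphism of fibers of $f$ corresponding to an element of $S_\nu$. Part (c) follows from (a) and Lemma~\ref{lem:adding-1}\ref{item:adding-1-faithful}, after verifying~\eqref{eq:E-invariant} via Lemma~\ref{lem:adding-1-criterion} using the $\P^1$-fibration of Lemma~\ref{lem:ht=2_twisted-models}\ref{item:ht=2_twisted_psi} in which $C$ is a $2$-section and each $A_i$ is a vertical $(-1)$-curve.

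For part (d), $h^0(\lts{Y}{D_Y})=0$ follows from a direct computation on $\P^1\times\P^1$: a vector field $\xi_1(x)\partial_x+\xi_2(y)\partial_y$ tangent to $\bar{C}+\sum\bar{V}_i$ must have $\xi_2\equiv 0$ (the tangent direction to the Frobenius graph is $\partial_x$ in characteristic $2$) and $\xi_1$ vanishing at the $\nu$ points $x_i$; since $\xi_1$ has degree at most $2$, the $\nu\geq 3$ vanishing conditions force $\xi_1\equiv 0$. The equality $h^1(\lts{Y}{\check{D}})=\nu-3$ is established by identifying first-order deformations of $(Y,\check{D})$ with those of the configuration $(\bar{C},\bar{V}_1,\ldots,\bar{V}_\nu)$ modulo the $\PGL_2$-action: $\bar{C}$ is rigid (unique Frobenius graph up to $\PGL_2$) and the $\bar{V}_i$ contribute $\nu$ parameters. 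For $h^2$, I apply Riemann--Roch: since $Y$ is a blow-up of $\P^1\times\P^1$ at $2\nu$ points, $K_Y^2=8-2\nu$, and Noether's formula yields $\chi(\cT_Y)=2\chi(\cO_Y)+K_Y^2-c_2(Y)=6-4\nu$; combined with $\chi(\cN_C)=5-2\nu$ and $\chi(\cN_{V_i})=\chi(\cN_{W_i})=-1$, the defining sequence $0\to\lts{Y}{D_Y}\to\cT_Y\to\cN_C\oplus\bigoplus_i(\cN_{V_i}\oplus\cN_{W_i})\to 0$ gives $\chi(\lts{Y}{D_Y})=1$, so $h^2=\chi+h^1-h^0=\nu-2$. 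The main technical obstacle is realizing the $\Z/2$-involution of the dual graph as an honest automorphism of a fiber of $f$ (needed for the $\Z/2\rtimes S_\nu$-faithfulness in (a)), which is not apparent from the $\P^1\times\P^1$ model and will require a direct analysis of $\Aut(Y,\check{D})$.
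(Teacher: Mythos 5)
Your overall strategy coincides with the paper's: the base of the family is the configuration space of $\nu$ ordered points on $\P^1$ modulo $\PGL_2$ (the paper realizes it via Kapranov's model $U\to M$, you via normalizing three points), and the total space is obtained by blowing up the Frobenius conic plus $\nu$ vertical fibers (the paper's $\bar{\cC}=(\id_U,s)^{-1}(\Delta)$ with $s(z)=z^2$ is exactly your Frobenius graph). Your Riemann--Roch computation of $\chi(\lts{Y}{D_Y})=1$ is a legitimate alternative to the paper's route to $h^2$. However, two steps do not go through as written.

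First, the $\Z/2$-part of the faithfulness in \ref{item:7A1-family-R}, which you explicitly defer to ``a direct analysis of $\Aut(Y,\check{D})$'', is the crux of the statement and cannot be postponed. The involution is not visible on the fixed $\P^1\times\P^1$ model; the paper produces it by contracting a \emph{different} set of curves, namely $\sum_{i=1}^{2}(\cA_i+\cV_i)+\sum_{i\geq 3}^{\nu}(\cA_i+\cW_i)$ (swapping the roles of $V_i$ and $W_i$ for $i=1,2$), landing on a second copy of $\P^1\times\P^1$ carrying an isomorphic marked configuration $(\bar{C},\{q_i\})$. Part \ref{item:7A1-family-iso} — which therefore must be proved \emph{before} finishing \ref{item:7A1-family-R} — identifies this with the same fiber, and the composite $\Phi^{-1}\circ\Phi'$ of the two contractions is then an honest automorphism by the universal property of blowing up. Without some such device your proof of (a) is incomplete, and your stated order of the parts (proving (a) before (b)) cannot work.

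Second, the $h^0$ computation in (d) rests on a sign error about the strange conic. The curve $\bar{C}=\{x=y^2\}$ is tangent to every vertical fiber precisely \emph{because} its tangent direction is everywhere $\partial_y$ (parametrize by $y\mapsto(y^2,y)$; the derivative is $(2y,1)=(0,1)$ in characteristic $2$), not $\partial_x$. Consequently tangency to $\bar{C}$ forces $\xi_1|_{\bar C}=0$, hence $\xi_1=0$, and imposes \emph{no} condition on $\xi_2$: every vertical field $\xi_2(y)\partial_y$ is tangent to $\bar{C}+\sum\bar{V}_i$, so $h^0(\lts{\P^1\times\P^1}{(\bar{C}+\sum\bar{V}_i)})=3$, not $0$. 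To conclude $h^0(\lts{Y}{D_Y})=0$ you must additionally use that a global section lifting to $Y$ and tangent to the whole boundary has to vanish at each of the $\nu\geq 3$ centers $q_i$ (Lemma \ref{lem:blowup-hi}\ref{item:blowup-hi-exact} with $r=2$), which kills $\xi_2$ since it is a section of $\cO_{\P^1}(2)$. (The paper sidesteps this by adding the $(-1)$-curves $A_i+H_i$ and contracting to $\nu$ horizontal plus $\nu$ vertical lines on $\P^1\times\P^1$.)
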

\begin{proof}
	We will use the open part of the moduli space of genus zero curves with $\nu$ marked points. It is  constructed in \cite{Kapranov-moduli} as follows, cf.\ \cite[\sec III.2]{DO}. Fix a $\nu$-tuple  $\uline{x}=(x_{1},\dots,x_{\nu})$ of points  in $\P^{\nu-2}$ in general position. Let $H\subseteq \P^{\nu-2}$ be the union of hyperplanes spanned by $(\nu-1)$-tuples of points in $\uline{x}$, let $U$ be the blowup of $\P^{\nu-2}$ in $\uline{x}$ with the proper transform of $H$ removed, and let $E_{i}\subseteq U$ be the exceptional divisor over $x_i$. There is a $\P^{1}$-fibration $u\colon U\to M$ whose fibers are proper transforms of rational normal curves passing through $\uline{x}$. For $m\in M$ let $U_{m}$ be the fiber over $m$, and write $\{x_{i,m}\}=E_i\cap U_{m}$. It is shown in \cite[p.\ 243]{Kapranov-moduli} that for every ordered $\nu$-tuple of points $\uline{p}$ on $\P^{1}$, the pair $(\P^1,\uline{p})$ is isomorphic to $(U_m,\uline{x}_{m})$ for a unique $m\in M$. The $S_{\nu}$-action on $\P^{\nu-2}$ permuting the points in $\uline{x}$ induces an $S_{\nu}$-action on $U$ and $M$ which makes $u$ equivariant. 
	
	For any $m\in M$, the germ of $u$ at $m\in M$ is a semiuniversal deformation of $(U_m,Z_m)$, where $Z_m$ is the divisor $\sum_{i=1}^{\nu}x_{i,m}$. To see this, we note first that $h^{1}(\lts{\P^1}{Z_m})=\nu-3$, so it is enough to show that $u$ is versal at $m$. For any infinitesimal deformation $(\cP,\cZ)\to T$ of $(U_m,Z_m)$ we perform the above construction in a parametric way. More precisely, we argue as follows. Since $\P^1$ is rigid, we have $\cP=\P^{1}\times T$, and $\cZ$ consists of $\nu$ sections of the second projection. Let $\P^1\times T\to \P^{\nu-2}\times T$ be a family of Veronese embeddings, mapping $\cZ$ to $\{x_1,\dots,x_{\nu}\}\times T$ such that at the central fiber over $t_0\in T$, the point $(x_{i,m},t_0)\in U_{m}\times \{t_0\}$ is mapped to $(x_i,t_0)$. This embedding lifts to an embedding $\P^1\times T\to U\times T$, whose image lies in a neighborhood of $U_m\times T$. Composing it with the first projection and with the $\P^1$-fibration $u\colon U\to M$ we get a morphism $\alpha\colon T\to U$ such that our deformation is a pullback of $u$ along $\alpha$, as needed.
	\smallskip
	
	We go back to the proof of the lemma. Let $\bar{\Gamma}^{\circ}$ be a graph consisting of $\nu$ vertices of weight $0$ and no edges; let $\bar{\cS}^{\circ}=(4,\bar{\Gamma}^{\circ})$ and let $\bar{\cR}^{\circ}$ be a subset of $\cP_{+}(\bar{\cS}^{\circ})$ consisting of those triples whose underlying surface is $\P^1\times \P^1$. In other words, $\bar{\cR}^{\circ}$ is the set of isomorphism classes of $\nu$ ordered vertical lines on $\P^1\times \P^1$. Let $\bar{\cY}=U\times \P^1$, $\bar{\cV}_i=E_i\times \P^1$, $\bar{\cD}^{\circ}=\sum_{i}\bar{\cV}_i$, and let $\bar{f}\colon \bar{\cY}\to U\to M$ be the composition of the first projection and the $\P^1$-fibration $u$. Then $\bar{f}\colon (\bar{\cY},\bar{\cD}^{\circ})\to M$ is a universal, $S_{\nu}$-faithful family representing $\bar{\cR}^{\circ}$. 
	
	We now introduce a hypersurface $\bar{\cC}\subseteq \bar{\cY}$ corresponding to the family of conics, as follows. Write  $\{z_1,z_2,z_3\}=\{0,1,\infty\}\subseteq \P^1$, $\bar{\cH}_{j}\de U\times \{z_i\}$ for $i\in \{1,2,3\}$. For $m\in M$, let $\Delta_{m}\subseteq \bar{Y}_m\cong \P^1\times \P^1$ be the unique curve of type $(1,1)$ passing through $\bar{V}_{i,m}\cap \bar{H}_{i,m}$, $i\in \{1,2,3\}$. We claim that there is a hypersurface $\Delta$ in $\bar{\cY}$ such that $\bar{f}|_{\Delta}$ is a smooth family with fibers $\Delta_m$. If $\nu=3$ then the claim is clear. Otherwise let $\eta\colon \P^{\nu-2}\map  \P^1$ be the linear projection from the hyperplane spanned by $x_{4},\dots,x_{\nu}$, and choose the coordinates on the target $\P^1$ so that for $i\in \{1,2,3\}$ we have $\eta(x_i)=z_i$. Then $\eta$ extends to a morphism $U\to \P^1$ whose graph is the required hypersurface $\Delta$. Let now $\bar{\cC}\de (\id_{U},s)^{-1}(\Delta)$, where $s\colon \P^1\to \P^1$ is given by $s(z)=z^2$. Write $\cQ_{i}=\bar{\cV}_i\cap \bar{\cC}$. The restriction $\bar{f}|_{\bar{\cC}}$ is a smooth morphism whose fiber $\bar{C}_m$ is a curve of type $(2,1)$ on $\bar{Y}_m$, and since $\cha\kk=2$, it is tangent to $\bar{V}_{i,m}$ at $\{q_{i,m}\}\de \cQ_{i}\cap \bar{Y}_{m}$. For $i\in \{1,2,3\}$ we have $s(z_i)=z_i$, so $\{q_{i,m}\}=\bar{V}_{i,m}\cap \bar{H}_{i,m}$. Note that these properties determine the curve $\bar{C}_{m}\subseteq \bar{Y}_m$ uniquely; and for every curve $\bar{C}_m'$ of type $(2,1)$ on $\bar{Y}_m$ which is tangent to every vertical line there is an automorphism of $\bar{Y}_m$ mapping $\bar{C}'_{m}$ to $\bar{C}_m$. 
	
	Let $\bar{\cD}=\bar{\cD}^{\circ}+\bar{\cC}$. For every $i\in \{1,\dots,\nu\}$ blow up at $\cQ_i$ and at the intersection of the exceptional divisor with the proper transform of $\bar{\cC}$, and denote the resulting morphism by  $\Phi\colon \cY\to \bar{\cY}$. Let $\cV_{i}=\Phi^{-1}_{*}\bar{\cV}_i$, $\cH_{i}=\Phi^{-1}_{*}\bar{\cH}_i$, $\cC=\Phi^{-1}_{*}\bar{\cC}$; let $\cW_i$, $\cA_i$ be the first and second exceptional divisor over $\cQ_i$, and let $\check{\cD}=(\Phi^{*}\bar{\cD})\redd$, $\check{\cD}^{\circ}=(\Phi^{*}\bar{\cD}^{\circ})\redd$. Let $f\de \bar{f}\circ \Phi \colon (\cY,\check{\cD})\to M$ and let $f^{\circ}$ be the restriction of $f$ to $(\cY,\check{\cD}^{\circ})$. By the  definition of $\cR$ every log surface $(X,\check{D})$ in $\cR$ has a morphism $\phi$ mapping $(X,\check{D}-C)$ to a fiber $(\bar{Y}_m,\bar{D}^{\circ}_m)$ of $\bar{f}^{\circ}$, and the above uniqueness property of $\bar{C}_m$ shows that $\phi(C)=\bar{C}_m$. Hence $f$ is a family representing $\cR$. It is $S_{\nu}$-faithful since $\bar{f}$ is. 
	
	We now claim that $f$ has property \ref{item:7A1-family-iso}. To this end, assume that for two fibers $(Y_{m_j},\check{D}_{m_j})$, $j=1,2$ there is an isomorphism $\tau\colon C_{m_1}\to C_{m_2}$ preserving the distinguished $\nu$-tuples. Applying the $S_{\nu}$-action we can assume that $\tau(A_{i,m_1}\cap C_{m_1})=A_{i,m_2}\cap C_{m_2}$ for all $i\in \{1,\dots,\nu\}$. Hence $\tau$ descends to an isomorphism $\bar{C}_{m_1}\to \bar{C}_{m_2}$ mapping $q_{i,m_1}$ to $q_{i,m_2}$. This in turn yields an isomorphism $\Delta_{m_1}\to \Delta_{m_2}$ which after projecting to the $U$-coordinate yields an isomorphism $U_{m_1}\to U_{m_2}$, mapping $\uline{x}_{m_1}$ to $\uline{x}_{m_2}$. Hence $m_1=m_2$, which proves \ref{item:7A1-family-iso}.
	
	To prove that $f$ is $(\Z/2\rtimes S_{\nu})$-faithful, we need to construct an involution of $(\cY,\check{\cD})$ which maps fibers of $f$ to fibers of $f$, and on every such fiber it maps the $(-2)$-tips $V_{1,m},V_{2,m}$ to $W_{1,m},W_{2,m}$ and fixes the remaining components of $\check{D}_m$. To this end, let $\Phi'\colon (\cY,\check{\cD})\to (\bar{\cY}',\bar{\cD}')$ be the contraction of $\sum_{i=1}^{2}(\cA_i+\cV_i)+\sum_{i\geq 2}^{\nu}(\cA_i+\cW_i)$. 
	Then for each $m\in M$, the weighted graphs of $\bar{D}_{m}'$ and $\bar{D}_m$ are the same, in particular $\bar{Y}_{m}'\cong \P^1\times \P^1$, and $(\bar{Y}_{m}',\bar{D}_{m}')$ can be identified with some fiber $(\bar{Y}_{m'},\bar{D}_{m'})$ of $\bar{f}$. The curves $\Phi'(C_{m})$ and $\bar{C}_{m}'$ with distinguished $\nu$-tuples of points are isomorphic, so using property \ref{item:7A1-family-iso} we can assume that $m=m'$. This way, we can identify the family $(\bar{\cY}',\bar{\cD}')$ with $(\bar{\cY},\bar{\cD})$. The birational map $\Phi^{-1}\circ \Phi'$ extends to the required involution. 
	
	To end the proof of \ref{item:7A1-family-R} we need to check that $f$ is semiuniversal at every $m\in M$. Let $h\colon (\cY_T,\check{\cD}_T)\to T$ be a small deformation of $(Y_m,\check{D}_m)$, let $\cC_{T}$ be the component of $\check{\cD}_{T}$ corresponding to the vertex $c$, and let $\check{\cD}^{\circ}_{T}=\check{\cD}_{T}-\cC_{T}$. As in the proof of Lemma \ref{lem:inner}, we have a blowdown $\Phi_{T}\colon (\cY_{T},\check{\cD}_{T}^{\circ})\to (\bar{\cY}_{T},\bar{\cD}^{\circ}_{T})$ to a small deformation $\bar{h}^{\circ}\colon (\bar{\cY}_T,\bar{\cD}^{\circ}_{T})\to T$ of $(\bar{Y}_m,\bar{D}_{m}^{\circ})$. Since $\bar{f}^{\circ}$ is semiuniversal at $m$, we get a morphism $\alpha\colon T\to M$ such that $\bar{h}^{\circ}=\alpha^{*}\bar{f}^{\circ}$, with differential uniquely determined by $h$. The image of $\bar{\cC}$ on $\bar{\cY}_{T}\cong \P^1_{T}\times _{T} \P^1_{T}$ is a curve of type $(2,1)$ tangent to every vertical line, so composing $\Phi_{T}$ with an automorphism of $\P^{1}_{T}\times_{T}\P^1_{T}$ we can further assume that $\Phi_{T}(\cC_{T})=\alpha^{*}\bar{\cC}$. By the  universal property of blowing up we obtain that $h=\alpha^{*}f$, as required.
	\smallskip

	We have thus proved  \ref{item:7A1-family-R} and \ref{item:7A1-family-iso}. Now we prove \ref{item:7A1-family-P}. 
	Recall that the image of $\cR$ under the forgetful map $\cP_{+}(\check{\cS})\to \cP(\cS)$ equals $\PKMres(\nu)$, so the restriction of $f$ to $(\cY,\cD_{Y})$, where $\cD_{Y}=\check{\cD}-\sum_{i=1}^{\nu}\cA_{i}$, represents~$\PKMres(\nu)$. 
	
	We need to show that this restriction, call it $f_Y$, is almost universal. The fibers of $f_Y$ differ from those of $f$ by removing some $(-1)$-curves from the boundary. Since $f$ is semiuniversal at each $m\in M$, by \cite[Proposition 1.7(1)]{FZ-deformations} so is $f_Y$, cf.\ Lemma \ref{lem:h1}\ref{item:h1-1_curve}.	By Definitions \ref{def:moduli}\ref{item:def-family-faithful} and \ref{def:moduli-hi}\ref{item:def-universal} it remains to prove that whenever two fibers of $f_Y$ are isomorphic (as log surfaces), they are equivalent under the $S_{\nu}$-action constructed above. 
	
	If $\nu=3$ then $M$ has dimension $0$, so $\#\PKMres(\nu)=1$ and there is nothing to prove. Assume $\nu\geq 4$, and let $\tau\colon (X_{m_1},D_{Y,m_1})\to (X_{m_2},D_{Y,m_2})$ be an isomorphism between fibers of $f_Y$. The horizontal part of $D_{Y,m_i}$, namely the curve $C_{m_i}$, is the unique $(4-2\nu)$-curve in $D_{Y,m_i}$, hence $\tau(C_{m,1})=C_{m,2}$. Lemma  \ref{lem:adding-1-criterion} implies that $\tau(A_{i,m_1})=A_{\sigma(i),m_2}$ for some $\sigma\in S_{\nu}$. The claim now follows from property  \ref{item:7A1-family-iso}.
	\smallskip
	
	It remains to prove \ref{item:7A1-family-h2}. Fix $(Y,D_Y)\in \PKMres(\nu)$. Part \ref{item:7A1-family-P} implies that  $h^{1}(\lts{Y}{D_Y})=\nu-3$. The normal bundle $\cN_{C}$ to $C=[2\nu-4]$ in $Y$ satisfies  $h^{i}(\cN_{C})=h^{i}(\cO_{\P^1}(4-2\nu))=2\nu-5$ for $i=1$ and $0$ otherwise. Hence by Lemma \ref{lem:h1}\ref{item:h1_exact} it is enough to show that $h^{i}(\lts{Y}{(D_Y-C)})$ for $i=0,1,2$ equals $0,2\nu-6,0$, respectively.
	
	Put $B_Y=D_Y-C+\sum_{i=1}^{\nu}(A_i+H_i)$, where $H_{i}$ is the proper transform of the horizontal curve passing through $\{q_i\}=C\cap V_i$. Since $A_i$ and $H_i$ are $(-1)$-curves, by Lemma \ref{lem:h1}\ref{item:h1-1_curve}  $h^{i}(\lts{Y}{D_Y-C})=h^{i}(\lts{Y}{B_Y})$. By Lemma \ref{lem:blowup-hi}\ref{item:blowup-hi-inner},  $h^{i}(\lts{Y}{B_Y})=h^{i}(\lts{\P^1\times \P^1}{B})$, where $B\de \phi_{*}B_Y$ consists of $\nu$ horizontal and $\nu$ vertical lines. We compute directly that $h^{i}(\lts{\P^1\times \P^1}{B})$ equals $0,2\nu-6,0$ for $i=0,1,2$, as needed.	
\end{proof}

\begin{lemma}[Classification in case \ref{lem:ht=2_twisted-swaps}\ref{item:ht=2_insep}, $\cha\kk=2$, see Table \ref{table:ht=2_char=2}]\label{lem:ht=2_twisted-inseparable}
Let $(X,D)$ be as in \eqref{eq:assumption_ht=2_width=1}, i.e.\  $(X,D)$ is a minimal log resolution of a del Pezzo surface $\bar{X}$ of rank one, height $2$ and width $1$. Fix a $\P^1$-fibration of $X$ as in Lemma \ref{lem:ht=2_twisted-swaps}, and assume that $\bar{X}$ swaps vertically to a surface $\bar{Y}$ from Example \ref{ex:ht=2_twisted_cha=2} for some $\nu\geq 3$.

Assume $\bar{X}$ is log canonical. Let $\cS$ be the singularity type of $\bar{X}$, and let $\Phtinsep(\cS)$ be the set of isomorphism classes of surfaces $\bar{X}$ of type $\cS$ and above properties. Let $\check{\cS}$ be the combinatorial type of the sum of $D$ and all vertical $(-1)$-curves. Then $h^{0}(\lts{X}{D})=0$, $h^{2}(\lts{X}{D})=\nu-2$, and one of the following holds.
	\begin{enumerate}[itemsep=1em]
		\item \label{item:7A1} The set $\Phtinsep(\cS)$ has moduli dimension $\nu-3$, and $\check{\cS}$ is one of the following (see Notations \ref{not:fibrations}, \ref{not:Dk,gfork}):
	\begin{longlist}[leftmargin=1em]
		\item\label{item:beta=0} $\ldec{1,\dots,\nu}[\bs{k_{1}^{\geq}-4}]+\sum_{j=1}^{\nu}\rD_{k_{j}}\dec{j}$,
		\item\label{item:beta=1_k=2} $\ldec{2,\dots,\nu}[\bs{k_{2}^{\geq}-2},l\dec{1},2]+\ldec{1}[(2)_{l-2},3]+\sum_{j=2}^{\nu}\rD_{k_{j}}\dec{j}$,
		\item\label{item:beta=1_k>2}
		$\dec{2,\dots,\nu}[\bs{k_{1}^{\geq}-3},T]\dec{1}+
		\gforkd{1}{T^{*}*[(2)_{k_{1}-1}]}+\sum_{j=2}^{\nu}\rD_{k_{j}}\dec{j}$,
		%%%%%%%%
		\item\label{item:beta=2_k1=k2=2}
		$[2,l_{1}\dec{1},\ldec{3,\dots,\nu}\,\! \bs{k_{3}^{\geq}},l_2\dec{2},2]+\ldec{1}[(2)_{l_{1}-2},3]+\ldec{2}[(2)_{l_{2}-2},3]+\sum_{j=3}^{\nu}\rD_{k_{j}}\dec{j}$,
		\item\label{item:beta=2_k1=2,k2>2}
		$[2,l\dec{1},\ldec{3,\dots,\nu}\,\!\bs{k_{2}^{\geq}-1},T]\dec{2}+\ldec{1}[(2)_{l-2},3]+
		\gforkd{2}{T^{*}*[(2)_{k_2-1}]}+\sum_{j=3}^{\nu}\rD_{k_{j}}\dec{j}$,
		\item\label{item:beta=2_k1>2,k2>2}
		$\ldec{1}[T_{1}\trp,\ldec{3,\dots,\nu}\,\!\bs{k_{1}^{\geq}-2},T_{2}]\dec{2}+
		\gforkd{1}{T_{1}^{*}*[(2)_{k_{1}-1}]}+
		\gforkd{2}{T_{2}^{*}*[(2)_{k_{2}-1}]}
		+\sum_{j=3}^{\nu}\rD_{k_{j}}\dec{j}$,
		%			%%%%%%%%%%%
		\item\label{item:beta=3_notD_k1=k2=2_V1=V2=[2,2]}
		$\langle \ldec{4,\dots,\nu}\,\!\bs{k_{3}^{\geq}+1};[2,2]\dec{1},[2,2]\dec{2},[2]\dec{3}\rangle+
		\gforkd{3}{[3,(2)_{k_3-2}]}+
		[3]\dec{1}+[3]\dec{2}+\sum_{j=4}^{\nu}\rD_{k_{j}}\dec{j}$,
		\item\label{item:beta=3_notD_k1=k2=2_V1=[3,2],V2=[2,2]}
		$\langle \ldec{4,\dots,\nu}\,\!\bs{k_{3}^{\geq}+1};[2,3]\dec{1},[2,2]\dec{2},[2]\dec{3}\rangle+
		\gforkd{3}{[3,(2)_{k_3-2}]}+
		[3,2]\dec{1}+[3]\dec{2}+\sum_{j=4}^{\nu}\rD_{k_{j}}\dec{j}$,
		\item\label{item:beta=3_U2=U3=[2]_k1=2}
		$\langle \ldec{4,\dots,\nu}\,\!\bs{k_{2}^{\geq}};[2,l]\dec{1},[2]\dec{2},[2]\dec{3}\rangle+
		\sum_{j=2}^{3}\gforkd{j}{[3,(2)_{k_j-2}]}+\ldec{1}[(2)_{l-2},3]+\sum_{j=4}^{\nu}\rD_{k_{j}}\dec{j}$,
		%			%%%%%%%
		\item\label{item:beta=3_notD_k1=2_V1=[3,2]_k2>2}
		$\langle \ldec{4,\dots,\nu}\,\!\bs{k_{2}^{\geq}};[2,3]\dec{1},\ldec{2}T\trp,[2]\dec{3}\rangle+
		\gforkd{2}{T^{*}*[(2)_{k_{2}-1}]}+
		[3,2]\dec{1}+
		\gforkd{3}{[3,(2)_{k_3-2}]}+\sum_{j=4}^{\nu}\rD_{k_{j}}\dec{j}$, $d(T)=3$,
		\item\label{item:beta=3_notD_k1=2_V1=[2,2]_k2>2}
		$\langle \ldec{4,\dots,\nu}\,\!\bs{k_{2}^{\geq}};[2,2]\dec{1},\ldec{2}T\trp,[2]\dec{3}\rangle+
		\gforkd{2}{T^{*}*[(2)_{k_{2}-1}]}+
		[3]\dec{1}+
		\gforkd{3}{[3,(2)_{k_3-2}]}+\sum_{j=4}^{\nu}\rD_{k_{j}}\dec{j}$, $d(T)\in \{3,4,5\}$,
		%%%
		\item\label{item:beta=3_notD_kj>2}
		$\langle \ldec{4,\dots,\nu}\,\! \bs{k_{1}^{\geq}-1};\ldec{1}T_{1}\trp,\ldec{2}T_2\trp,[2]\dec{3}\rangle+
		\sum_{j=1}^{2}\! \! 
		\gforkd{j}{T_{j}^{*}*[(2)_{k_{j}-1}]}+
		\gforkd{3}{[3,(2)_{k_3-2}]}+\sum_{j=4}^{\nu}\rD_{k_{j}}\dec{j}$,  $\frac{1}{d(T_{1})}+\frac{1}{d(T_{2})}>\tfrac{1}{2}$,
		%			%%%%%%%%%%%%%%%
		\item\label{item:C_[2]_L2H=1}
		$\langle k;[2]\dec{1},[2],[\bs{k_{2}^{\geq}-2}]\dec{2,\dots,\nu}\rangle+[3,(2)_{k-2}]*[2]\dec{1}+\sum_{j=2}^{\nu}\rD_{k_{j}}\dec{j}$,
		\item\label{item:C_[2]_k2=2}
		$\langle k;[2]\dec{1},[2],[2,l\dec{2},\bs{k_{3}^{\geq}}]\dec{3,\dots,\nu}\rangle+[3,(2)_{k-2}]*[2]\dec{1}+
		\ldec{2}[(2)_{l-2},3]
		+\sum_{j=3}^{\nu}\rD_{k_{j}}\dec{j}$,
		\item\label{item:C_[2]_k2>2}
		$\langle k;[2]\dec{1},[2],\ldec{2}[T\trp,\bs{k_{2}^{\geq}-1}]\dec{3,\dots,\nu}\rangle+[3,(2)_{k-2}]*[2]\dec{1}+
		\gforkd{2}{T^{*}*[(2)_{k_2-2}]}
		+\sum_{j=3}^{\nu}\rD_{k_{j}}\dec{j}$,
		%%%%%%%%%%%%%%%
		\item \label{item:C_columnar_nu=5} $\langle k;[2],\ldec{1}T\trp,[\bs{6}]\dec{2,3,4,5}\rangle+[3,(2)_{k-2}]*(T^{*})\dec{1}+\sum_{j=2}^{5} ([2]\dec{j}+[2]\dec{j})$, $d(T)=3$, $k\geq 3$,
		\item \label{item:C_columnar_nu=4_k2=3} $\langle k;[2],\ldec{1}T\trp,[\bs{5}]\dec{2,3,4}\rangle+[3,(2)_{k-2}]*(T^{*})\dec{1}+[2,2\dec{2},2]+\sum_{j=3}^{4}([2]\dec{j}+[2]\dec{j})$, $d(T)=3$,
		\item \label{item:C_columnar_nu=4_k2=2} $\langle k;[2],\ldec{1}T\trp,[\bs{4}]\dec{2,3,4}\rangle+[3,(2)_{k-2}]*(T^{*})\dec{1}+\sum_{j=2}^{4}([2]\dec{j}+[2]\dec{j})$, $d(T)\in\{3,4\}$,
		\item \label{item:C_columnar_nu=3_k2=3,k3=2} $\langle k;[2],\ldec{1}T\trp,[\bs{3}]\dec{2,3}\rangle+[3,(2)_{k-2}]*(T^{*})\dec{1}+[2,2\dec{2},2]+[2]\dec{3}+[2]\dec{3}$, $d(T)\in \{3,4,5,6\}$,
		\item \label{item:C_columnar_nu=3_k2=k3=2} $\langle k;[2],\ldec{1}T\trp,[\bs{2}]\dec{2,3}\rangle+[3,(2)_{k-2}]*(T^{*})\dec{1}+\sum_{j=2}^{3}([2]\dec{j}+[2]\dec{j})$, $T\neq [2]$,
		\item \label{item:C_columnar_nu=3_k2+k3>5} $\langle k;[2],\ldec{1}T\trp,[\bs{k_2+k_3-2}]\dec{2,3}\rangle+[3,(2)_{k-2}]*(T^{*})\dec{1}+\sum_{j=2}^{3}\rD_{k_j}\dec{j}$, $d(T)= 3$, $k_2+k_3\in \{6,7\}$
		%%%%
		\item \label{item:C_columnar_L2H=0_k2=2} $\langle k;[2],\ldec{1}T\trp,[2,2\dec{2},\bs{2}\dec{3}]\rangle+[3,(2)_{k-2}]*(T^{*})\dec{1}+[3]\dec{2}+[2]\dec{3}+[2]\dec{3}$, $d(T)\in\{3,4\}$ and $k\geq 3$ if $d(T)=4$,
		\item \label{item:C_columnar_L2H=0_k2=3} $\langle k;[2],\ldec{1}T\trp,[2\dec{2},\bs{3}\dec{3}]\rangle+[3,(2)_{k-2}]*(T^{*})\dec{1}+[2,3\dec{2},2]+[2]\dec{3}+[2]\dec{3}$, $d(T)=3$,
		%%%%%%%%%%%%%%%%%%%
	\setcounter{foo}{\value{longlisti}}
	\end{longlist}
	\item\label{item:4A1+D4} The set $\Phtinsep(\cS)$ has moduli dimension $\nu-2$, and $\check{\cS}$ is one of the following 
	\begin{longlist}[leftmargin=1em]
	\setcounter{longlisti}{\value{foo}}
		\item\label{item:C_fork_L2H=1}
		$\langle k\dec{1};[2],[2],[\bs{k_{2}^{\geq}-2}]\dec{2,\dots,\nu}\rangle+[(2)_{k-2}]\dec{1}+\sum_{j=2}^{\nu}\rD_{k_{j}}\dec{j}$,
		\item\label{item:C_fork_k2=2}
		$\langle k\dec{1};[2],[2],[2,l\dec{2},\bs{k_{3}^{\geq}}]\dec{3,\dots,\nu}\rangle+[(2)_{k-2}]\dec{1}+
		\ldec{2}[(2)_{l-2},3]
		+\sum_{j=3}^{\nu}\rD_{k_{j}}\dec{j}$,
		\item\label{item:C_fork_k2>2}
		$\langle k\dec{1};[2],[2],\ldec{2}[T\trp,\bs{k_{2}^{\geq}-1}]\dec{3,\dots,\nu}\rangle+[(2)_{k-2}]\dec{1}+
		\gforkd{2}{T^{*}*[(2)_{k_2-2}]}
		+\sum_{j=3}^{\nu}\rD_{k_{j}}\dec{j}$,
		\item\label{item:twisted-bench} $\ldec{1}\lbr k \rbr\decb{2}{3}+[(2)_{k-3},3]\dec{1}+\sum_{j=2}^{3} ([2]\dec{j}+[2]\dec{j})$, $k\geq 3$; here $D\hor$ is a twig of the bench,
	\end{longlist}
	\end{enumerate}
	for some integers $k,l,l_1,l_2,k_{1},\dots,k_{\nu}\geq 2$; $\nu\geq 3$; $k^{\geq}_{i}\de \sum_{j=i}^{\nu}k_j$, and admissible chains $T,T_1,T_2$; where $\nu=5$ in case \ref{item:C_columnar_nu=5}, $\nu=4$ in cases \ref{item:C_columnar_nu=4_k2=3}--\ref{item:C_columnar_nu=4_k2=2}, $\nu=3$ in cases \ref{item:C_columnar_nu=3_k2=3,k3=2}--\ref{item:C_columnar_L2H=0_k2=3} and \ref{item:twisted-bench}.
\end{lemma}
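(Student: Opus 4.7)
The plan is to mimic the structure of the proof of Lemma \ref{lem:ht=2,untwisted}, adapted to the present setting where $\width(X,D)=1$ in characteristic $2$. The starting point is Lemma \ref{lem:ht=2_twisted-models}: since we are in case \ref{lem:ht=2_twisted-swaps}\ref{item:ht=2_insep} we have $\nu\geq 3$, every fiber $F_j$ satisfies $\#F_j\cap H=1$, and by \ref{lem:ht=2_twisted-models}\ref{item:ht=2_twisted_tangent} the reduced support of $(F_j)_{\mathrm{red}}\wedge D\vert$ after suitable contractions is either a chain $[2,1,2]$ (when $k_j=2$) or a rational fork $\langle 2;[2],[2],[1,(2)_{k_j-3}]\rangle$ (when $k_j\geq 3$); and $H^2=4-\sum_{j=1}^{\nu}k_j$ by \ref{lem:ht=2_twisted-models}\ref{item:ht=2_twisted_H}. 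Let $D_H$ denote the connected component of $D$ containing $H$. By \ref{lem:ht=2_twisted-models}\ref{item:ht=2_rational}, $\beta_D(H)\leq 3$.

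First I would classify the combinatorial types. The cases split along three axes: the value of $\beta_D(H)\in\{0,1,2,3\}$, whether $H$ is branching in $D_H$ or non-branching, and whether $D_H$ is a chain (cases \ref{item:beta=0}--\ref{item:beta=2_k1>2,k2>2} and \ref{item:C_[2]_L2H=1}--\ref{item:C_columnar_L2H=0_k2=3}), a fork (cases \ref{item:beta=3_notD_k1=k2=2_V1=V2=[2,2]}--\ref{item:beta=3_notD_kj>2} and \ref{item:C_fork_L2H=1}--\ref{item:C_fork_k2>2}) or a bench (case \ref{item:twisted-bench}). For each case, log canonicity of $\bar X$ combined with Lemma \ref{lem:admissible_forks} constrains the twig lengths of $D_H$, while the inequality $\cf(H)+\sum \cf(H_i)<2$ from Lemma \ref{lem:delPezzo_criterion} rules out configurations in which $H$ would carry coefficient $\geq 1$, in particular excluding non-admissible forks and benches with heavy spine. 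Once the shape of $D_H$ and the integers $k_j$ are fixed, the shape of $F_1$ is pinned down by $H^2=4-\sum k_j$ together with the requirement that every vertical $(-1)$-curve in $F_1$ is not a boundary component (by Lemma \ref{lem:delPezzo_fibrations}\ref{item:-1_curves}).

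Second I would compute moduli. Let $\check D$ be the sum of $D$ and all vertical $(-1)$-curves, with combinatorial type $\check{\cS}$. In cases \ref{item:beta=0}--\ref{item:C_columnar_L2H=0_k2=3} the birational map $(X,\check D)\to (Y,\check D_Y)$ onto the minimal log resolution of the primitive surface of type \eqref{eq:KM_type} from Example \ref{ex:ht=2_twisted_cha=2} (augmented by its natural vertical $(-1)$-curves) factors as a sequence of \emph{inner} blowups, since every center is a node of the image of $\check D$ inside a chain $[T,1,T^{*}]$ attached to $C$ or inside a columnar fragment of $F_1$. Consequently, by Lemma \ref{lem:inner}, $\cP_{+}(\check{\cS})\to \cR$ is bijective, where $\cR\subseteq \cP_{+}(\check{\cS}_0)$ is the set from Lemma \ref{lem:7A1-family}; combined with Lemmas \ref{lem:adding-1}\ref{item:adding-1-faithful} and \ref{lem:adding-1-criterion} (whose hypotheses hold thanks to Lemma \ref{lem:delPezzo_criterion} and the fixed $\P^1$-fibration) we obtain an almost faithful family representing $\Phtinsep(\cS)$ of dimension $\nu-3$. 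In the cases \ref{item:C_fork_L2H=1}--\ref{item:twisted-bench}, by contrast, the reduction to $(Y,\check D_Y)$ requires an \emph{outer} blowup at a smooth point of $C$, which accounts for the extra parameter $k$ in those types; here I would apply Lemma \ref{lem:outer} to show that the moduli dimension increases by one to $\nu-2$, after verifying that the subgroup of $\Aut(Y,\check D_Y)$ preserving the fixed $\P^1$-fibration acts transitively on the relevant open stratum of $C^{\circ}$ with nonzero derivative.

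Third I would compute $h^0$ and $h^2$. By Lemma \ref{lem:blowup-hi}\ref{item:blowup-h2}, $h^2$ is invariant under blowups, so $h^2(\lts{X}{D})=h^2(\lts{Y}{D_Y})=\nu-2$ by Lemma \ref{lem:7A1-family}\ref{item:7A1-family-h2}. For the vanishing of $h^0$, I would start from $h^0(\lts{Y}{D_Y})=0$ (Lemma \ref{lem:7A1-family}\ref{item:7A1-family-h2}) and note that the exact sequence of Lemma \ref{lem:blowup-hi}\ref{item:blowup-hi-exact} forces $h^0$ to remain zero under all subsequent blowups, inner or outer, in the decomposition produced above.

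The hardest part will be Step 1: the large number of sub-cases, especially \ref{item:beta=3_notD_k1=k2=2_V1=V2=[2,2]}--\ref{item:beta=3_notD_kj>2} where $D_H$ is a log canonical fork with nontrivial second twig, and \ref{item:C_columnar_nu=5}--\ref{item:C_columnar_L2H=0_k2=3} where $F_1$ is columnar and $\nu$ is small, demands careful bookkeeping of admissibility of the resulting forks, of the formula $H^2=4-\sum k_j$, and of the distinction between columnar and non-columnar $F_1$. A secondary difficulty is verifying in Step 2 that no \emph{extra} outer blowups appear in cases \ref{item:beta=0}--\ref{item:C_columnar_L2H=0_k2=3} beyond those already accounted for by the $\nu-3$ parameters coming from Lemma \ref{lem:7A1-family}.
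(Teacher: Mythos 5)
Your overall architecture matches the paper's: classify $\check{\cS}$ by $\beta_D(H)$ and the shape of the component of $D$ containing $H$ (the paper organizes this slightly differently, by whether the vertical component $C_j$ meeting $H$ is a tip of $D\vert$ or branching, but the case tree is the same); reduce the moduli count to Lemma \ref{lem:7A1-family} through the blowdown to the primitive model of Example \ref{ex:ht=2_twisted_cha=2}; and get $h^0=0$, $h^2=\nu-2$ from Lemmas \ref{lem:blowup-hi} and \ref{lem:7A1-family}\ref{item:7A1-family-h2}. However, Step 2 of your plan contains a concrete error that would make the dimension count come out wrong in cases \ref{item:C_fork_L2H=1}--\ref{item:twisted-bench}. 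First, the single outer blowup there is \emph{not} centered at a smooth point of the $2$-section $C$: it is centered at a point of $A_1^{\circ}=A_1\setminus(\check{D}_Y-A_1)$ for a vertical $(-1)$-curve $A_1$ of the primitive model (this is what turns $A_1$ into the branching $(-k)$-curve of the fork $\langle k;[2],[2],\dots\rangle$, resp.\ the central curve of the bench; blowing up on $C$ would change the horizontal $2$-section and produce types not on the list). Second, and more seriously, your criterion for gaining a modulus is inverted: by Lemma \ref{lem:outer}, a \emph{transitive} action of the relevant automorphism group on the blown-up curve (part \ref{item:outer-transitive}) keeps the base the same, so it would yield moduli dimension $\nu-3$, contradicting the claimed $\nu-2$. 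To get the extra parameter one must show the action on $A_1^{\circ}$ has \emph{finite} orbits and apply Lemma \ref{lem:outer}\ref{item:outer-trivial}; the paper does this by observing that the only automorphisms of $(Y,\check{D}_Y)$ fixing $A_1$ are the identity and the involution switching $A_1\cap(D_Y)\vert$.

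A second gap is in the almost-faithfulness verification. Invoking Lemmas \ref{lem:adding-1}\ref{item:adding-1-faithful} and \ref{lem:adding-1-criterion} is not enough here, for two reasons the paper has to address separately. The family coming from Lemma \ref{lem:7A1-family} represents only the subset $\cR\subseteq\cP_{+}(\check{\cS})$ distinguished by a choice of one $(-2)$-tip per degenerate fiber, so one must first prove that $\cR$ surjects onto $\cP(\check{\cS})$ (the paper's Claim, which uses the $\Z/2$-action of Lemma \ref{lem:7A1-family} and the existence of a symmetric fiber). And the hypothesis of Lemma \ref{lem:adding-1-criterion} that graph automorphisms preserve the horizontal part fails precisely in the cases \ref{item:C_fork_L2H=1} and \ref{item:twisted-bench} with $\nu=3$: there an isomorphism between fibers may send $H_1$ to a vertical curve, and ruling this out (or rather showing it forces the two base points into the same orbit) requires the separate numerical-equivalence argument with $\phi(F_1)-F_2\equiv a(H_2-H_1)$ that occupies the last part of the paper's proof. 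Your plan as written would not detect either issue.
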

\begin{proof}
	%We keep notation from Lemma \ref{lem:ht=2_twisted-models}. 
	Let $D_{H}$ be the connected component of $D$ containing $H$. By Lemma \ref{lem:ht=2_twisted-models}\ref{item:ht=2_twisted_H} we have $H=[\bs{k_{1}^{\geq}-4}]$. 
	
	Put $\beta=\beta_{D}(H)$. We have $\beta\leq 3$ by Lemma \ref{lem:ht=2_twisted-models}\ref{item:ht=2_rational}. We order $F_{1},\dots, F_{\nu}$ in such a way that $L_{j}\cdot H=0$ for $j\leq \beta$ and $L_{j}\cdot H=1$ for $j>\beta$. 
	If $\beta=0$ then $D$ is as in \ref{item:beta=0}, so we can assume $\beta\geq 1$. For $j\in \{1,\dots, \beta\}$ let $C_{j}\subseteq D\vert$ be the component of $F_{j}$ meeting $H$. Clearly, $C_j$ has multiplicity $2$ in $F_j$. If $C_j$ is a tip of $D\vert$ then as in the proof of Lemma \ref{lem:ht=2_twisted-separable} we infer that  either $k_j=2$, $(F_{j})\redd=[2,l,1,(2)_{l-2},3]$ for some $l\geq 2$; and $C_j=(F_{j})\redd\cp{2}$; or $k_j\geq 3$ $(F_{j})\redd=\langle 2; [U,1,U^{*}]*[(2)_{k_{j}-3}],[2],[2]\rangle$ for some admissible chain $U$, and $C_j=\ftip{U}$. 
	
	\begin{casesp}
	\litem{$C_j$ is a tip of $D\vert$ for all $j\in \{1,\dots,\beta\}$} If $\beta=1$ then $D$ is as in \ref{item:beta=1_k=2} if $k_1=2$ and \ref{item:beta=1_k>2} if $k_1>2$ (in the latter case, we have replaced $k_{1}$ by $k_{1}-1\geq 2$). Similarly, if $\beta=2$ then $D$ is as in \ref{item:beta=2_k1=k2=2} if $k_1=k_2=2$, \ref{item:beta=2_k1=2,k2>2} if $k_1=2$, $k_2>2$ and \ref{item:beta=2_k1>2,k2>2} if $k_1,k_2>2$. Assume now $\beta=3$, and so $D_{H}$ is a fork with twigs $V_j\supseteq C_{j}$, $j\in \{1,2,3\}$. Note that if $k_j=2$ for some $j\in \{1,2,3\}$ then $V_{j}=[2,l_{j}]$ for some $l_j\geq 2$; in particular, $\#V_{j}=2$.
	
	Since $\cf(H)<1$ by Lemma \ref{lem:delPezzo_fibrations}, the fork $D_{H}$ is admissible. In particular, by Lemma \ref{lem:admissible_forks}\ref{item:has_-2} we have, say, $V_{3}=[2]$, so $k_{3}>2$.  Assume $k_1=k_2=2$. Then, say, $l_2=2$ and $l_1=2$ or $3$, so $D$ is as in \ref{item:beta=3_notD_k1=k2=2_V1=V2=[2,2]} or \ref{item:beta=3_notD_k1=k2=2_V1=[3,2],V2=[2,2]}, respectively  (where, as before, we have replaced $k_3$ by $k_3-1\geq 2$). Assume now $k_1=2$, $k_2,k_3\geq 3$. If $V_2=V_3=[2]$ then $D$ is as in \ref{item:beta=3_U2=U3=[2]_k1=2}. If $V_{2}\neq [2]$ then since $\#V_1\geq 2$, we have  $V_1=[2,3]$ or $[2,2]$, hence $D$ is as in \ref{item:beta=3_notD_k1=2_V1=[3,2]_k2>2} or \ref{item:beta=3_notD_k1=2_V1=[2,2]_k2>2}; respectively. Eventually, if $k_j>2$ for all $j$ then $D$ is as in \ref{item:beta=3_notD_kj>2}.
	
	\litem{$C_{1}=[k]$ is not a tip of $D\vert$} Then $C_1$ is branching in $D_{H}$. Assume $\beta_{D}(C_1)\geq 4$. Then $D_{H}$ is a bench $\lbr k \rbr$, so $k\geq 3$ and $-2=H^2=4-k_{1}^{\geq}$ by Lemma \ref{lem:ht=2_twisted-models}\ref{item:ht=2_twisted_H}, which implies that $\nu=3$ and $k_1=k_2=k_3=2$. By Lemma \ref{lem:ht=2_twisted-models}\ref{item:ht=2_twisted_tangent}, we have $(F_1)\redd=\langle k;[(2)_{k-2},3,1,2],[2],[2]\rangle$, so $D$ is as in \ref{item:twisted-bench}. 
	
	Thus we can assume $\beta_{D}(C_1)=3$. Since $\cf(H)<1$ by Lemma \ref{lem:delPezzo_criterion}, $H$ lies in a twig $T_{H}$ of $D_{H}$. It follows that $L_{j}\cdot H=1$ for $j\geq 3$, and either $L_{2}\cdot H=1$ or $C_2$ is a tip of $D\vert$; hence $F_2$ is as in case 1. The connected component of $(F_1)\redd-L_1$ containing $C_1$, call it $V$, is a chain. Since $C_1$ is not a tip of $F_1$, $V$ contains a component of multiplicity one in $F_1$. By Lemma \ref{lem:degenerate_fibers}, the other connected component $(F_1)\redd-L_1-V$ is a chain, too, so $k_1=2$, and either $(F_1)\redd=\langle k;[2],[2],[(2)_{k-2},1]\rangle$ or $(F_1)\redd=[2,k,T,1,T^{*}]*[(2)_{k-1},3]$. Consider the first case. If $L_2\cdot H=1$ then $D$ is as in \ref{item:C_fork_L2H=1}. If $L_2\cdot H=0$ then $D$ is as in \ref{item:C_fork_k2=2} if $k_2=2$ and \ref{item:C_fork_k2>2} if $k_2>2$. Consider the second case. If $T=[2]$ then as before we get $D$ as in \ref{item:C_[2]_L2H=1}--\ref{item:C_[2]_k2>2}. Assume $T\neq [2]$, so $d(T_{H})\leq 6$. Consider the case $L_2\cdot H=1$. Then $T_{H}=H=[\bs{k_{2}^{\geq}-2}]$, so $8\geq k_{2}^{\geq}\geq 2\nu-2$. It follows that $\nu\leq 5$. If $\nu=5$ then $k_j=2$ for all $j$, and $D$ is as in \ref{item:C_columnar_nu=5}. If $\nu=4$ then, say, $k_{2}\leq 3$, $k_{3}=k_4=2$, and $d(T)=3$, so $D$ is as in \ref{item:C_columnar_nu=4_k2=3} or \ref{item:C_columnar_nu=4_k2=2}. If $\nu=3$ then either $k_{2}=k_{3}=2$ and $D$ is as in \ref{item:C_columnar_nu=3_k2=k3=2}, or $k_2=3$, $k_3=2$ and $D$ is as in \ref{item:C_columnar_nu=3_k2=3,k3=2}; or $k_2+k_3\in \{6,7,8\}$ and $D$ is as in \ref{item:C_columnar_nu=3_k2+k3>5}. Eventually, consider the case $L_{2}\cdot H=0$. If $k_2=2$ then $\#T_{H}=3$, so by Lemma \ref{lem:admissible_forks} $T_H=[2,2,\bs{2}]$, and $D$ is as in \ref{item:C_columnar_L2H=0_k2=2}. Assume $k_2\geq 3$. Then $H^2\leq -3$, so $T_H$ is not a $(-2)$-twig; and $\#T_{H}\geq 2$; so by Lemma \ref{lem:admissible_forks} $T_{H}=[2,\bs{3}]$ and $D$ is as in \ref{item:C_columnar_L2H=0_k2=3}.
	\end{casesp}

Thus we got the list of singularity types $\cS$. The equalities $h^i(\lts{X}{D})=0,\nu-2$ for $i=0,2$ follow from Lemmas \ref{lem:7A1-family}\ref{item:7A1-family-h2} and \ref{lem:blowup-hi}. It remains to construct the almost universal families representing each $\Phtinsep(\cS)$. 

We argue as in the proof of Corollary \ref{cor:moduli-ht=1}. Let $\Phtinsepres(\cS)$ be the set of minimal log resolutions of surfaces in $\Phtinsep(\cS)$. For $(X,D)\in \Phtinsepres(\cS)$ let $\check{D}$ be the sum of $D$ and the vertical $(-1)$-curves found above, so $\check{\cS}$ is the combinatorial type of $(X,\check{D})$. As usual, since type $\cS$ appears exactly once on our list, it determines $\check{\cS}$ uniquely, i.e.\ $\Phtinsepres(\cS)$ is the image of $\cP(\check{\cS})$ in $\cP(\cS)$. Let $\psi\colon (X,D)\to (Y,\check{D}_Y)$ be the morphism onto the surface from Example \ref{ex:ht=2_twisted_cha=2}. As in  Figure \ref{fig:KM_surface} we denote by $L_j$ the proper transform of $\ll_j$, and by $A_j$ the $(-1)$-curve over $\ll_j\cap\cc$. Let $\check{\cS}_0$ be the combinatorial type of $(Y,\check{D}_Y)$, and let $\cR_0\subseteq \cP_{+}(\check{\cS}_0)$ be the subset from Lemma \ref{lem:7A1-family}\ref{item:7A1-family-R}, distinguished by choosing a $(-2)$-tip of each degenerate fiber in $\check{D}_Y$. By Lemma \ref{lem:7A1-family}, $\cR_0$ is represented by a universal $(\Z/2\rtimes S_{\nu})$-faithful family over a base $M$ of dimension $\nu-3$. Let $\cR$ be the preimage of $\cR_0$ in $\cP_{+}(\check{\cS})$, and let $G$ be the maximal subgroup of $\Z/2\rtimes S_{\nu}$ such that $\psi$ is $G$-invariant. 

Write $\psi=\psi_{\textnormal{out}}\circ \psi_{\textnormal{in}}$, where $\psi_{\textnormal{in}}$ is a vertical inner minimalization. In case \ref{item:7A1} we have $\psi_{\textnormal{out}}=\id$, so by Lemma \ref{lem:inner} $\cR$ is represented by a universal $G$-faithful family over the same base $M$ as $\cR_0$. Consider case \ref{item:4A1+D4}. Then $\psi_{\textnormal{out}}$ is a single outer blowup on $A_{1}^{\circ}\de A_{1}\setminus (\check{D}_Y-A_{1})$. We note that the only elements of $\Aut(Y,\check{D}_Y)$ which fix $A_1$ are the identity and the involution switching the two points $A_1\cap (D_Y)\vert$, which is the restriction of the $\Z/2$-action from Lemma \ref{lem:7A1-family}. Thus by Lemmas  \ref{lem:inner} and \ref{lem:outer}\ref{item:outer-trivial} $\cR$ is represented by a universal $G$-faithful family $f$ whose base $B$ admits a fibration $B\to M$ whose fibers are identifies with curves $A_1^{\circ}$ as above. 

Thus we get a universal $G$-faithful family representing $\cR$, of dimension $\nu-3$ in case \ref{item:7A1} and $\nu-2$ in case~\ref{item:4A1+D4}.

\begin{claim*}
	The restriction of the forgetful map $\cP_{+}(\check{\cS})\to \cP(\check{\cS})$ to $\cR$ is surjective. 
\end{claim*}
\begin{proof}
For $(X,\check{D})\in \cP(\check{\cS})$ we need to find $(X,\check{\cD},\gamma)\in \cP_{+}(\check{\cS})$ such that the triple $(Y,\check{D}_Y,\gamma_0)$, with $\gamma_0$ induced from $\gamma$ by the morphism $\psi$, is equivalent to some  $(Y,\check{D}_Y,\tilde{\gamma}_0)\in \cR_0$, see Section \ref{sec:moduli} for definitions.

Let $V_j^1,V_j^2$ be the two tips of multiplicity one in the fiber $F_j$. The isomorphism $\gamma$ is unique up to reordering those fibers $F_j$ which have the same weighted graph, and reordering $\{V_j^{1},V_j^{2}\}$ whenever the weighted graph of $F_j$ has an involution interchanging the corresponding vertices. In the latter case, call $F_j$ \emph{symmetric}. We check directly that there is at least one symmetric fiber. 

We need to show that, after possibly replacing $V_j^{1}$ with $V_j^{2}$ in some symmetric fibers, there is $\alpha\in \Aut(Y,\check{D}_Y)$ such that $\psi_{*}V_j^{1}=\alpha(L_j)$ for every $j\in \{1,\dots,\nu\}$.  
Let $J_{\alpha}^{i}=\{j: \psi_{*}V_j^{i}=\alpha(L_j)\}$, clearly $J_{\alpha}^{1}\sqcup J_{\alpha}^{2}=\{1,\dots,\nu\}$. Applying $\Z/2$-action  from Lemma \ref{lem:7A1-family} we see that for every pair of distinct indices $j,j'$ we can change $\alpha$ and move both $j$ and $j'$ between $J_{\alpha}^{2}$ and $J_{\alpha}^{1}$. Thus we can choose $\alpha$ so that either  $J_{\alpha}^{2}=\emptyset$, as needed, or $J_{\alpha}^{2}=\{j\}$ and $F_j$ is symmetric, in which case we are done after  replacing $V_{j}^{1}$ with $V_{j}^{2}$. 
\end{proof}

The above Claim implies that 
$f$ represents $\cP(\check{\cS})$. Recall that the image of $\cP(\check{\cS})$ in $\cP(\cS)$ equals $\Phtinsepres(\cS)$, so $f$ restricts to a family representing $\Phtinsepres(\cS)$. If $d=0$ this means that $\#\Phtinsepres(\cS)=1$, as needed, so we can assume $d\geq 1$. We need to prove that $f$, viewed as a family representing  $\Phtinsepres(\cS)$, is almost faithful. 

Let $(X_{i},\check{D}_{i},\gamma_{i})\in \cR$ be a fiber of $f$ over some $b_i\in B$, for $i\in \{1,2\}$. Let $E_{i}$ be the sum of vertical $(-1)$-curves in $\check{D}_{i}$, let $D_{i}=\check{D}_{i}-E_{i}$, $H_i=(D_i)\hor$, let $F_{j,i}$ be the fibers in $\check{D}_i$, ordered by $\gamma_i$; and let $F_i\subseteq X_i$ be the general fiber. Fix an isomorphism  $\phi\colon (X_{1},D_{1})\to (X_{2},D_{2})$.

\begin{casesp}
\litem{$\phi(H_{1})=H_2$} We argue as in the last part of the proof of Lemma \ref{lem:7A1-family}. By Lemma \ref{lem:adding-1-criterion} we have $\phi(\check{D}_1)=\check{D}_2$. Replacing $b_2$ by another element in its $G$-orbit we can reorder the fibers $F_{2,j}$ in such a way that $\phi(H_1\cap F_{1,j})=H_2\cap F_{2,j}$. Now $\phi$ descends to an isomorphism between fibers of $f_0$ which maps the image of $H_1$ to the image of $H_2$, and preserves their $\nu$ marked points. The property of $f_0$ stated in Lemma \ref{lem:7A1-family} shows that the images of $b_1,b_2$ in $M$ are the same. In case \ref{item:7A1} we have $B=M$, so the result follows. In case  \ref{item:4A1+D4} we get that $b_1,b_2$ lie in the same fiber of $B\to M$, see Lemma \ref{lem:outer}\ref{item:outer-trivial}. This fiber is identified with $A_{1}^{\circ}$, and this identification maps $b_i$ to the cross-ratio of the four common points of the proper transform of $A_1$ with the remaining part of $\check{D}_{i}$. These points are ordered by $\gamma^{i}$, and lie, respectively: on $H_i$, on the twig with a $(-1)$-curve, and on two twigs $[2]$ which are interchanged by an involution in $G$ coming from the $\Z/2$-action in Lemma \ref{lem:7A1-family}. Thus the cross-ratios $b_1$, $b_2$ lie in the same orbit of the $G$-action, as needed.  

\litem{$\phi(H_1)\neq H_2$} Now $\phi(H_1)$ is vertical, so $\phi(F_1)$ is horizontal. Since $\phi(H_1)$ is vertical, we have $\phi(F_1)\cdot F_2\geq \phi(F_1)\cdot \phi(H_1)=F_1\cdot H_1=2$. For every component $C$ of $D_2-H_2-\phi(H_1)$ we have $F_2\cdot C=0=\phi(F_1)\cdot C$ because $\phi^{-1}(C)$ is vertical. Since $K_{X_2}$ and components of $D_2$ generate the group $\NS_{\Q}(X_2)$, we conclude that $\phi(F_1)- F_2\equiv a\cdot (H_2-H_1)$ for some $a\in \Q$. Intersecting the above numerical equivalence with $F_2$ and $\phi(H_1)$ we get $2\leq \phi(F_1)\cdot F_2=2a$, so $a\geq 1$, and $2=a(H_2\cdot \phi(H_1)-H_1^2)\geq H_2\cdot \phi(H_1)-H_1^2\geq \sum_{j=1}^{\nu} k_{j}-4\geq 2\nu-4$ by Lemma \ref{lem:ht=2_twisted-models}\ref{item:ht=2_twisted_H}. Thus $\nu=3$ (so we are in case \ref{item:4A1+D4} by assumption $d\geq 1$), $k_{1}=k_{2}=k_{3}=2$; and the weighted graph of $D_1$ has an automorphism which does not preserve $H_1$. 
\smallskip

We check directly that these conditions can hold only in cases \ref{item:C_fork_L2H=1} and \ref{item:twisted-bench}. 
In these cases, the morphism $\psi\colon (X,\check{D})\to (X,\check{D}_Y)$ has only one exceptional $(-1)$-curve, call it $U$.

Recall that, since  we are in case \ref{item:4A1+D4} and $\nu=3$, the base $B$ of our family $f$ is the curve $A_1^{\circ}=A_1\setminus D_Y\cong \A^{1}_{**}$, see Figure \ref{fig:KM_surface}. 
Put $\hat{D}=D+U$, $\hat{D}_{Y}=\psi_{*}\hat{D}=D_Y+A_1$. Let $\hat{\cS}$, $\hat{\cS}_0$ be the combinatorial types of $(X,\hat{D})$ and $(Y,\hat{D}_Y)$; and let $\hat{\cR}$, $\hat{\cR}_0$ be the images of $\cR$ and $\cR_0$ in $\cP_{+}(\hat{\cS})$ and $\cP_{+}(\hat{\cS}_0)$, respectively. Case $\nu=3$ of Lemma \ref{lem:7A1-family}, or the proof above, shows that $\#\hat{\cR}_0=1$.
The restriction of $\Aut(Y,\hat{D}_Y)$ to $\Aut(A_{1}^{\circ})\cong S_3$ is surjective, see \cite[Lemma 8.2(e)]{PaPe_MT}. Applying Lemma \ref{lem:outer}\ref{item:outer-trivial} to a family representing $\hat{\cR}_0$ (whose base is a point), we get an $S_3$-faithful family over $A_1^{\circ}$ representing $\hat{\cR}$ (which is in fact the restriction of $f$). 

Let $(X_i,\hat{D}_i)$ be its fiber over $b_i\in A_{1}^{\circ}$, $i\in \{1,2\}$, let $U_i$ be the $(-1)$-curve in $\hat{D}_i$, let $D_i=\hat{D}_i-U_i$, and as before let $A_{1,i}$ be the proper transform of $A_{1}$ in $\hat{D}_i$. The point $b_i$ is determined by the cross-ratio of the quadruple $A_{1,i}\cap (\hat{D}_i-A_{1,i})$. Since $A_{1,i}$ is the unique component of $\hat{D}_i$ of branching number $4$, any isomorphism $(X_1,\hat{D}_1)\to (X_1,\hat{D}_2)$ maps $A_{1,1}$ to $A_{1,2}$, so the corresponding cross-ratios are equivalent by the $S_3$-action. Hence our $S_3$-faithful family representing $\hat{\cR}$ is an almost faithful family representing $\cP(\hat{\cS})$. 

To prove that it is almost faithful as a family representing $\Phtinsepres(\cS)$, which is the image of $\cP(\hat{\cS})$ in $\cP(\cS)$, we need to prove that every isomorphism $\phi\colon (X_1,D_1)\to (X_2,D_2)$ satisfies $\phi(U_1)=U_2$. 

Let $R_2$ be the connected component of $D_2$ containing $H_2$. The $(-1)$-curve $\phi(U_{1})$ meets $D_{2}$ only in $R_{2}$ and in the last tip of a connected component $T_2$ of $D_2$, where $T_2=[(2)_{k-2}]$ in case  \ref{item:C_fork_L2H=1} and $T_2=[(2)_{k-3},3]$ in case \ref{item:twisted-bench}. The fork $T_2+\phi(U_{1})+(R_2-H_2)$ supports a fiber of a $\P^{1}$-fibration of $X_2$, such that $(D_{2})\hor$ consists only of the $2$-section $H_2$. Since the components of $D_2$ and $K_{X_2}$ generate $\NS_{\Q}(X_2)$, we conclude that this $\P^1$-fibration of $X_2$ is the same as the original one (up to an automorphism of the base). Since $U_2$ is the unique vertical $(-1)$-curve which is disjoint from $D_2-R_2-T_2$, we have $U_2=\phi(U_{1})$, as claimed. 
\qedhere\end{casesp}
\end{proof}

The proof of Theorem  \ref{thm:ht=1,2} and Propositions \ref{prop:moduli}, \ref{prop:moduli-hi} is now complete. For the readers' convenience, we gather here the references to lemmas covering each case of those results.

\begin{proof}[Proof of Theorem  \ref{thm:ht=1,2} and Propositions \ref{prop:moduli}, \ref{prop:moduli-hi}]
	The assertions in Theorem \ref{thm:ht=1,2} about vertical swaps are proved in Lemma \ref{lem:ht=1_basics}\ref{item:ht=1_swap} in case $\height=1$, Lemma \ref{lem:ht=2_swaps} in case $\height=2$, $\width=2$ and Lemma \ref{lem:ht=2_twisted-swaps} in case $\height=2$, $\width=1$. 
	
	Let $\cS$ be a log canonical singularity type. If $\cS$ is non-rational it is a type of an elliptic cone. In this case, by Proposition \ref{prop:non-log-terminal} every surface in $\Pht(\cS)$ is of height one, hence by Lemma \ref{lem:ht=1_basics}\ref{item:ht=1_swap} swaps vertically to an elliptic cone and so, in fact, is an elliptic cone, as claimed. Assume that $\cS$ is rational. Let $\Phtt(\cS)$, $\Phtw(\cS)$, $\Phtsep(\cS)$ and $\Phtinsep(\cS)$ be the subsets of $\Pht(\cS)$ described in Lemmas \ref{lem:ht=1_types}, \ref{lem:ht=2,untwisted}, \ref{lem:ht=2_twisted-separable} and \ref{lem:ht=2_twisted-inseparable}, respectively; so $\Pht(\cS)$ is their union. We check directly that the lists of singularity types in those lemmas are disjoint, so for each $\cS$ exactly one of the above sets is nonempty, and therefore equal to $\Pht(\cS)$.
	
	In case $\Pht(\cS)=\Phtt(\cS)$, the list of possible types $\cS$ is given in Lemma \ref{lem:ht=1_types}, and the required properties of $\Pht(\cS)$ -- namely the number $\#\Pht(\cS)$ and the existence of representing families -- are proved in  Corollary \ref{cor:moduli-ht=1}. In the remaining cases they are proved in Lemmas  \ref{lem:ht=2,untwisted}, \ref{lem:ht=2_twisted-separable} and \ref{lem:ht=2_twisted-inseparable}, respectively. 
	
	The numbers $h^{1}(\lts{X}{D})$ for minimal log resolutions $(X,D)$ of surfaces in $\Pht(\cS)$ are computed in the quoted results, too. By Lemma \ref{lem:h1}\ref{item:h1-h2-fibration} the numbers $h^{2}(\lts{X}{D})$ are zero in all cases except when $\Pht(\cS)=\Phtinsep(\cS)$: in the latter case, they are computed in Lemma \ref{lem:ht=2_twisted-inseparable}.
\end{proof}

\begin{remark}[Bases and symmetry groups of almost faithful families in case $\height=2$]\label{rem:ht=2_bases}
	Let $\cS$ be a singularity type of a log canonical del Pezzo surface of rank one and height $2$ such that $\Pht(\cS)$ is infinite. Then Lemmas \ref{lem:ht=2,untwisted}, \ref{lem:ht=2_twisted-separable}, \ref{lem:ht=2_twisted-inseparable} imply that $\Pht(\cS)$ has moduli dimension $\nu-3+\epsilon$, where $\nu\geq 3$ is the number of degenerate fibers of the witnessing $\P^1$-fibration constructed in the above lemmas; we put $\epsilon=1$ in case \ref{lem:ht=2_twisted-inseparable}\ref{item:4A1+D4} and $\epsilon=0$ otherwise. Let $f\colon (\cX,\cD)\to B$ be the almost faithful family representing $\Pht(\cS)$ constructed above. We now discuss its base $B$ and symmetry group $G\leq \Aut(B)$.
	
	In each case, the family $f$ is constructed from a family $U_{\nu}\to M_{\nu}$ representing the combinatorial type of $\P^1\times \P^1$ with $\nu$ fibers, see Example \ref{ex:4-points} for the case $\nu=4$. The base $M_{\nu}$ is the (open part of the) moduli space of genus $0$ curves with $\nu$ marked points: explicitly, $M_{\nu}$ can be realized as the complement in $\P^{\nu-3}$ of the union of hyperplanes spanned by all $(\nu-2)$-tuples of fixed $(\nu-3)$ points in a general position, see \cite{Kapranov-moduli} for details. The symmetry group $H_{\nu}$ of the family $U_{\nu}\to M_{\nu}$ is the image in $\Aut(M_{\nu})$ of the group $S_{\nu}$ permuting the fibers: we have $H_{\nu}\cong S_{\nu}$ if $\nu\geq 5$; $H_{4}\cong S_{3}$, $M_4\cong \P^1\setminus \{0,1,\infty\}$, cf.\ Example \ref{ex:4-points-Aut}; and $H_3=\{\id\}$, $M_3=\{\pt\}$.
	
	In case $\epsilon=0$ the base $B$ of $f$ is still $M_{\nu}$, and its symmetry group $G$ is the image in $H_{\nu}$ of the product $\prod_{i}S_{k_i}\leq S_{\nu}$, where each factor permutes degenerate fibers of the same type (i.e.\ such that there is an automorphism of $\cS$ mapping one fiber to another). For instance, assume $\cha\kk\neq 2$. Then $\cS$ is as in Lemma \ref{lem:ht=2,untwisted}\ref{item:ht=2_2D4}, so $\epsilon=0$, $\nu=4$. Thus in this case $B=M_{4}\cong \P^1\setminus \{0,1,\infty\}$, and $G$ is isomorphic to $S_3$ in cases \ref{lem:ht=2,untwisted}\ref{item:ht=2_bench} and \ref{lem:ht=2,untwisted}\ref{item:c=3} with $T_1=T_2=[2]$, $\Z/2$ in case \ref{lem:ht=2,untwisted}\ref{item:c=3} with $T_1=T_2\neq [2]$ or $T_1\neq T_2=[2]$, and trivial otherwise.
	
	Assume $\epsilon=1$. Then $\bar{X}\in \Pht(\cS)$ swaps vertically to the surface $\bar{Y}$ from Example \ref{ex:ht=2_twisted_cha=2}, and this vertical swap factors through one outer blowup, centered at a point $p$ on a vertical $(-1)$-curve, say $p\in A_{1}^{\circ}\de A_1\setminus D_Y\cong M_4$, see Figure \ref{fig:KM_surface}. If $\cS=4\rA_1+\rD_4$ then $\nu=3$, $B=A_{1}^{\circ}$ and $G=\Aut(A_1^{\circ})\cong S_3$, see Example \ref{ex:remaining_canonical}\ref{item:7A1_tower}. In all other cases we have $B=M_{\nu}\times A_1^{\circ}$ and $G\cong \Sigma\times \Z/2$. The first factor $\Sigma\leq S_{\nu}$ is a product of symmetric groups, acting on $M_{\nu}$ as in case $\epsilon=0$: note that $\Sigma$ fixes the first degenerate fiber, so in fact $\Sigma\leq S_{\nu-1}$. The second factor $\Z/2$ acts on $A_1^{\circ}$ as the automorphism of $(Y,D_Y)$ switching the two points $A_1\cap (D_Y)\vert$, cf.\ Figure \ref{fig:KM_alt}.
\end{remark} 

\clearpage
\section{Del Pezzo surfaces admitting descendants with elliptic boundary} \label{sec:GK}

In this section we prove Theorem \ref{thm:GK}, that is, we describe log canonical del Pezzo surfaces of rank one and height at least $3$ admitting descendants with elliptic boundary. Recall from Definition \ref{def:GK} that a normal projective surface $\bar{X}$ \emph{has a descendant with elliptic boundary} if there is a morphism $\phi$ from the minimal log resolution of $\bar{X}$ onto a log surface $(\bar{Y},\bar{T})$ where  $\bar{T}\subseteq \bar{Y}\reg$ is a curve of arithmetic genus one. If $\bar{X}$ is a del Pezzo surface of rank one, further properties of $\phi$ are stated in Lemma \ref{lem:GK_intro}. We summarize these properties, and subsequent Notation \ref{not:GK}, in the following diagram.

\begin{equation}\label{eq:GK-diagram}
	\begin{tikzcd}[row sep=0, column sep = 0]
	 (X,D_{Y}+D_{T}) 
		\ar[r, "\sigma"', 
		"{\Exc\sigma=D_{T}+L-T,\ L=[1],\ L\not \subseteq D}"
		] 
		\ar[rd, "\phi", "\substack{T\de \phi^{-1}_{*}\bar{T}\\ D_T\de (\phi^{*}\bar{T})\redd-L}"', end anchor={[xshift=1ex,yshift=-1ex]north west}]
		\ar[d,"\textnormal{min. res.}"']
		&[12em]
		(Y, \sigma_{*}D_{Y}+\sigma_{*}T) 
		\ar[d, "\alpha"', 
		"\substack{
			\textnormal{min. res. of } \bar{Y} \\[0.3em]
			\Exc\alpha\ =\ \sigma_{*}D_Y  
			}"
		]   \\[3em] 
		\bar{X} &  (\bar{Y},\bar{T})
		\\[-0.5em]
		\scriptsize{\textnormal{del Pezzo of rank 1}} 
		 & 
		\scriptstyle{\bar{Y}\ -\ \textnormal{canonical del Pezzo of rank 1}} \\[-0.3em] & \scriptstyle{\bar{T}\subseteq \bar{Y}\reg,\ p_{a}(\bar{T})=1,\ \bar{T}\in |-K_{\bar{Y}}|}
	\end{tikzcd}
\end{equation}

\begin{proof}[Proof of Lemma \ref{lem:GK_intro}]
	Assume that $\bar{T}$ is smooth and $\bar{X}$ is del Pezzo. Then $T$ is a component of $D$ of genus $1$, so by Proposition \ref{prop:non-log-terminal} we have $\height(\bar{X})=1$. Thus $X$ admits a $\P^{1}$-fibration $X\to B$ such that $T$ is a $1$-section, so $B\cong T$ is a curve of genus $1$. If all fibers are non-degenerate then $\bar{X}$ is an elliptic cone, see Example \ref{ex:ht=1}, as needed. Suppose there is a degenerate fiber $F$. Then by  Lemma \ref{lem:ht=1_basics}\ref{item:ht=1_Sigma},\ref{item:ht=1_nu} $F$ meets $T$ in a connected component $C$ of $D\vert$ which contains no $(-1)$-curves, hence does not blow down to a smooth point. However, $\phi(C)$ is a point of $\bar{T}$, hence a smooth point of $\bar{Y}$. Thus $\phi^{-1}(\phi(C))$ contains a horizontal $(-1)$-curve. This is a contradiction, as the genus of each horizontal curve equals at least the genus of $B$, which is $1$.
	
	Assume now that $\bar{T}$ has a singular point $q$. Since $T$ is smooth, $q$ is a base point of $\phi^{-1}$, and since $q$ is a smooth point of $\bar{Y}$, its preimage contains a $(-1)$-curve $L$. Since $D$ contains no $(-1)$-curves, $L$ is a component of $L'\de \Exc\phi-(D-T)$. We have $\#L'=(\rho(X)-\rho(\bar{Y}))-(\#D-1)=\rho(\bar{X})-\rho(\bar{Y})+1=2-\rho(\bar{Y})\leq 1$, so we get $L'=L$, which proves \ref{item:GK-intro-L}, and $\rho(\bar{Y})=1$, which together with \cite[Lemma 2.11(b)]{PaPe_MT} proves \ref{item:GK-intro-Y}.
	
	Consider the case $\bar{Y}\cong \P^2$. Part \ref{item:GK-intro-Y} shows that $\bar{T}$ is a cubic. 	Part \ref{item:GK-intro-L} shows that $\phi$ factors through the blowup at $q$, so the pencil of lines through $q$ induces a $\P^1$-fibration of $X$ whose fiber $F$ satisfies $F\cdot D=F\cdot T=2$. Thus $\height(\bar{X})\leq 2$, as claimed in \ref{item:GK-intro-ht}. 
	
	Consider the case $\bar{Y}\not\cong \P^2$. Write $\phi=\alpha\circ\sigma$, where $\alpha\colon (Y,D_Y')\to (\bar{Y},0)$ is a minimal log resolution. Fix a witnessing $\P^1$-fibration of $(Y,D_Y')$. Let $F_Y$ be its non-degenerate fiber which does not pass through the singular point of $T_Y\de \alpha^{-1}_{*}\bar{T}$. By \ref{item:GK-intro-Y}, $F_Y\cdot T_Y=-F_Y\cdot K_Y=2$. Part \ref{item:GK-intro-L} shows that $\sigma$ is an isomorphism near $F\de \sigma^{*}F_Y$, so $F=[0]$, $F\cdot T=2$ and $F$ meets $D-T$ only in $D_Y\de \tau^{-1}_{*}D_Y'$. Thus $\height(\bar{X})\leq F\cdot D\leq F_Y\cdot (T_Y+D_Y')=2+\height(\bar{Y})$. Table \ref{table:canonical} shows that $\height(\bar{Y})\leq 4$, and $\height(\bar{Y})\leq 2$ if $\cha\kk\neq 2,3$, which ends the proof of \ref{item:GK-intro-ht}. 
	
	In any case, we have constructed a $\P^{1}$-fibration of $X$ such that $D\hor$ consists of a $2$-section $T$ and components with $\cf=0$. Lemma \ref{lem:delPezzo_criterion} shows that $\bar{X}$ is del Pezzo if and only if $\cf(T)<1$, as claimed in \ref{item:GK-intro-cf}.
\end{proof}

\begin{notation}\label{not:GK}
	Let $\bar{X}$ be a log canonical del Pezzo surface of rank one and singularity type $\cS$, admitting a descendant $(\bar{Y},\bar{T})$ with elliptic boundary such that $\bar{T}$ is singular. Let $\phi\colon (X,D+L)\to (\bar{Y},\bar{T})$ be the morphism from Definition \ref{def:GK}, where $L$ is the $(-1)$-curve from Lemma \ref{lem:GK_intro}\ref{item:GK-intro-L}. We call the curve $L$ an \emph{elliptic tie}.
		
	Write $\phi=\alpha\circ\sigma$, where $\alpha\colon Y\to \bar{Y}$ is the minimal resolution, see diagram \eqref{eq:GK-diagram}. Write $D=D_{Y}+D_T$, where $D_{Y}=\sigma^{-1}_{*}(\Exc\alpha)$: note that $\sigma$ is an isomorphism near $D_Y$.  Let $T\de \phi^{-1}_{*}\bar{T}\subseteq D_{T}$. Let $\cS_{Y}$ and $\cT$ be the weighted graphs of $D_Y$ and $D_T$, respectively. Hence $\cS=\cS_Y+\cT$, and $\cS_Y$ is the singularity type of $\bar{Y}$. Put $s\de \#\cS_Y$. By Noether's formula we have 
	\begin{equation}\label{eq:Noether}
		\bar{T}^2=K_{Y}^2=10-\rho(Y)=9-s.
	\end{equation}
\end{notation}

\subsection{Singularity types}

We now determine the possible singularity types $\cS$. 
Lemma \ref{lem:cuspidal_resolution} describes part $\cT$, 
obtained by blowing up over the singular point of $\bar{T}$. Assumption $s\geq 6$ is justified by subsequent Lemma \ref{lem:GK_exceptions}.

\begin{lemma}[Possible types $\cT$]\label{lem:cuspidal_resolution}
Let the notation be as in \ref{not:GK}.	Assume $s\geq 6$. 
Then the following hold.
	\begin{enumerate}
		\item\label{item:T_nodal} If $\bar{T}$ is nodal or $\#\cT\leq 2$ then $\cT$ is one of the following:
		\begin{longlist}
			\item\label{item:C_1} $[s-5]\adec{1,1}$ if $s\geq 7$,			
			\item\label{item:C_2} $\ldec{1}[V,s-5]*(V^{*})\adec{1}$.
		\setcounter{foo}{\value{longlisti}}
		\end{longlist}
		\item\label{item:T_cuspidal} If $\bar{T}$ is cuspidal and $\#\cT\geq 3$ then $\cT$ is one of the following:
		\begin{longlist}
		\setcounter{longlisti}{\value{foo}}
			\item\label{item:C_3} $[s-3]\adec{1}+[3]\adec{1}+[2]\adec{1}$,
			\item\label{item:C_chain_[2]} $[s-3,k\adec{1},3]+[3,(2)_{k-2}]\adec{1}$,
			\item\label{item:C_chain_[3]} $[s-3,k\adec{1},2]+[4,(2)_{k-2}]\adec{1}$,
			\item\label{item:C_chain_[rest]} $[3,k\adec{1},2]+[s-2,(2)_{k-2}]\adec{1}$,
			\item\label{item:C_[2]} $\langle k;[s-3],[3],V\adec{1}\rangle+[3,(2)_{k-2}]*(V^{*})\adec{1}$, where either $V=[2]$ or $s=6$ and $d(V)\leq 3$,
			\item\label{item:C_[3]} $\langle k;\ldec{1}V,[s-3],[2]\rangle+[4,(2)_{k-2}]*(V^{*})\adec{1}$, where $d(V)\leq 6,4,3$ if $s=6,7,8$, respectively,
			\item\label{item:C_[rest]} $\langle k; \ldec{1}V,[3],[2] \rangle + [s-2,(2)_{k-2}]*(V^{*})\adec{1}$, where $d(V)\leq 6$,
			\item\label{item:C_smooth} $\langle k\adec{1};[2],[3],[s-3]\rangle+[(2)_{k-2}]\adec{1}$, or $\langle 2\adec{1};[2],[3],[s-3]\rangle$ if $k=2$,
		\end{longlist}
	\end{enumerate}	
	where $k\geq 2$, $V$ is an admissible chain; $V^{*}$ is its adjoint, see formula \eqref{eq:adjoint_chain}; and the symbol $*$ is introduced in formula \eqref{eq:convention-Tono_star}. The numbers decorated by $\adec{1}$ (respectively, $\adec{1,1}$) refer to the components meeting the elliptic tie $L$ once (resp.\ twice). Decoration at an end of a chain refers to the corresponding tip, cf.\ Notation \ref{not:fibrations}.
\end{lemma}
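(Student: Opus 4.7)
The plan is to analyze $\sigma\colon X\to Y$ as an iterated sequence of blowups over the singular point of $\bar{T}$ on $Y$. The overall strategy is to classify the dual graph of $D_T+L$ in terms of how the blowup sequence can be built up from a minimal snc embedded resolution of $\Sing \bar{T}$. The constraints are that (i) $\Exc\sigma$ ends with exactly one $(-1)$-curve, namely $L$, (ii) each connected component of $D_T$ must contract to a log canonical singularity of $\bar{X}$, and (iii) $T^2$ is controlled by Noether's formula $\bar{T}^{2}=9-s$ together with the squared-multiplicity drop under each blowup. First I would split into the nodal and cuspidal cases for the singularity of $\bar{T}$.

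In the nodal case, the minimal snc resolution is a single blowup, giving $L=[1]$ with $L\cdot T=2$ and $T=[s-5]$ (the node has multiplicity $2$, so the self-intersection drops by $4$); this yields case \ref{item:C_1} provided $s\ge 7$. For $s=6$, when $T$ would still be a $(-1)$-curve, or whenever one wishes to further blow up $X$ and separate the two intersection points of $L$ and $T$, additional blowups are needed. The resulting chain structure is controlled by the Hirzebruch-Jung formula \eqref{eq:adjoint_chain}; the position of $L$ as the \enquote{closing curve} of the cycle in the dual graph forces $D_T$ to be an admissible chain with $L$ attached to its two tips, yielding case \ref{item:C_2} with the chain parameters expressed through an adjoint chain $V^{*}$.

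In the cuspidal case, the minimal snc resolution requires three blowups and produces a fork $\langle 1;[s-3],[3],[2]\rangle$ with $L=[1]$ as the branching component, so removing $L$ gives the three-disjoint-components configuration of case \ref{item:C_3}. Further blowups extend one of the three branches and rearrange the dual graph around $L$. I would enumerate them by the final position of $L$ in the graph of $D_T+L$: if $L$ sits inside a chain, one obtains cases \ref{item:C_chain_[2]}--\ref{item:C_chain_[rest]} (differing by which of the three original branches contains the weight $T=[s-3]$); if $L$ remains the branching component of a fork, one obtains cases \ref{item:C_[2]}--\ref{item:C_[rest]}; and if $L$ is pushed away from the fork into a $(-2)$-chain tip with the branching passing to a neighbouring component, one obtains case \ref{item:C_smooth}. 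The admissibility constraints on $V$, namely $d(V)\le 3$, $4$, or $6$ depending on the subcase and on $s$, come from Lemma \ref{lem:admissible_forks} applied to the fork component of $D_T$ meeting $L$, and the del-Pezzo condition $\cf(T)<1$ from Lemma \ref{lem:GK_intro}\ref{item:GK-intro-cf} restricts how long a chain can be attached on the $T$-side.

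The hard part will be showing that the list is exhaustive and non-redundant. In each cuspidal subcase one must verify that the stated configuration is the only way (up to the order in which one performs inner blowups at snc points of $D_T$, which can be absorbed in the sense of Section \ref{sec:hi}) to produce the given $D_T$. The bookkeeping of how the weights $[s-3]$, $[3]$, $[2]$ migrate across the three branches, combined with the admissibility bounds on $d(V)$ coming from the log canonical fork classification, is the delicate part; I would proceed by fixing the tip of $D_T$ carrying the weight $[s-3]$ and then reading off $V$ from the adjoint-chain constraint \eqref{eq:adjoint_chain} that the full chain $[V,1,V^{*}]$ together with $L$ contract to a smooth point of $Y$.
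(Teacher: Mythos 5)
Your overall strategy for part \ref{item:T_cuspidal} — factor $\sigma$ through the minimal embedded resolution of the cusp and then classify the remaining blowups using the unique-$(-1)$-curve constraint, Lemma \ref{lem:admissible_forks}, and adjoint chains — is the paper's. But the proposal has two genuine gaps. First, your case split is ``nodal vs.\ cuspidal,'' whereas part \ref{item:T_nodal} of the statement also covers cuspidal $\bar{T}$ with $\#\cT\leq 2$; there $\sigma$ does \emph{not} factor through the minimal embedded resolution, so your cuspidal analysis (which starts from the three-blowup fork) never produces configurations \ref{item:C_1} and \ref{item:C_2} for a cuspidal $\bar{T}$. The paper disposes of part \ref{item:T_nodal} uniformly in one line: since $\cf(T)<1$, the connected component of $D_T$ containing $T$ cannot be circular, yet $(\phi^{*}\bar{T})\redd=D_T+L$ must be circular because $p_a(\bar{T})=1$; hence the loop passes through $L$ and $D_T$ is an admissible chain with $L$ joining its two tips. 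You gesture at this only under the nodal hypothesis; state it so that it applies whenever $\#\cT\le 2$.

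Second, in the cuspidal case your trichotomy by ``the final position of $L$'' is miscalibrated and would not close the enumeration. In cases \ref{item:C_[2]}--\ref{item:C_[rest]} the elliptic tie $L$ is \emph{not} the branching component of the fork: the branching component is the $[k]$-curve, i.e.\ the proper transform of the branching $(-1)$-curve $B$ of the minimal resolution, and $L$ sits at the far end, joining the twig $V$ to the adjoint chain $V^{*}$. The principle that actually works is that all extra blowups lie over a single point $p\in B$ (forced by $\Exc\sigma$ containing exactly one $(-1)$-curve), and one splits according to whether every blowup is centered on the successive proper transforms of $B$ — giving \ref{item:C_chain_[2]}--\ref{item:C_chain_[rest]} and \ref{item:C_smooth} according to $p\in T_2,T_3,T_0$ or $p\in F\reg$ — or not, giving \ref{item:C_[2]}--\ref{item:C_[rest]}. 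In the latter branch you must still exclude two configurations your plan does not mention: the one where the proper transform of $B$ acquires branching number $\geq 4$ while meeting a $(-3)$-curve, and the alternative fork $\langle k;T_x,T_y,T_z*[(2)_{k-1}]*[V^{*},l,V]\rangle+[(2)_{l-2}]$, which Lemma \ref{lem:admissible_forks} rules out because two of its twigs would be long without being $(-2)$-twigs. Finally, the bounds on $d(V)$ come solely from Lemma \ref{lem:admissible_forks} applied to the fork whose other two twigs are drawn from $\{[s-3],[3],[2]\}$; the condition $\cf(T)<1$ plays no role in part \ref{item:T_cuspidal}.
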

\begin{proof}
	\ref{item:T_nodal}  Since $\cf(T)<1$ by Lemma \ref{lem:GK_intro}\ref{item:GK-intro-cf}, the connected component of $D_T$ containing $T$ is not circular. It follows that $D_{T}+L$ is circular, and \ref{item:T_nodal} holds.
	
	\ref{item:T_cuspidal} 
	Let $\theta$ be the minimal embedded resolution of the cusp of $\alpha^{-1}_{*}\bar{T}$. Then $(\theta^{*}\bar{T})\redd$ is a fork $F$ with a branching $(-1)$-curve $B$ and twigs $T_{0}=[s-3]$, $T_{2}=[2]$, $T_{3}=[3]$. If $\#\cT=3$ then $\sigma=\theta$ and \ref{item:C_3} holds. Assume $\#\cT\geq 4$, so $\sigma$ is a composition of $\theta$ with a sequence of blowups over some $p\in B$. 
	If all these blowups are centered on the proper transforms of $B$ then $\cT$ is as in \ref{item:C_chain_[2]}, \ref{item:C_chain_[3]}, \ref{item:C_chain_[rest]} if $p\in T_{2},T_{3},T_{0}$; respectively, or as in \ref{item:C_smooth} if $p\in F\reg$. Assume now that one of these blowups is not centered on a proper transform of $B$. If $p\in F\reg$ then the proper transform $B'$ of $B$ satisfies $\beta_{D_T}(B')\geq 4$ and meets a $(-3)$-curve, which is impossible. Hence $p\in T_{0},T_{2}$ or $T_{3}$, and $\beta_{D_T}(B')=3$. Moreover, $\cT$ equals $\langle k;T_{x},T_{y},V\rangle+T_{z}*[(2)_{k-1}]*V^{*}$ or $\langle k;T_{x},T_{y},T_{z}*[(2)_{k-1}]*[V^{*},l,V]\rangle+[(2)_{l-2}]$, where $V$ is an admissible chain, $k,l\geq 2$ and $\{x,y,z\}=\{0,2,3\}$. In the second case, at most one twig of the fork in $D_T$ is of type $[2]$, and the third one has length at least $3$ and is not a $(-2)$-twig, contrary to Lemma \ref{lem:admissible_forks}.  Hence the first case holds. If $z=2$ then $T_{x},T_{y}\neq [2]$, so by Lemma \ref{lem:admissible_forks} $V=[2]$ and $\cT$ is as in \ref{item:C_[2]}. If $z=3,0$ then Lemma \ref{lem:admissible_forks} shows that $\cT$ is as in   \ref{item:C_[3]}, \ref{item:C_[rest]}, respectively.
\end{proof}

We now focus on the part $\cS_Y$, that is, the possible singularity types of the canonical surface $\bar{Y}$. Recall that in Theorem \ref{thm:GK} we restrict our attention to the case $\height\geq 3$, not covered by Theorem \ref{thm:ht=1,2}. 

\begin{lemma}[$\height(\bar{X})\geq 3\implies \#\cS(\bar{Y})\geq 6$, and exceptions to the converse]\label{lem:GK_exceptions}
	We keep Notation \ref{not:GK}. If $\height(\bar{X})\geq 3$ then $s\geq 6$. Conversely, if $s\geq 6$ then either  $\height(\bar{X})\geq 3$, or one of the following holds.
	\begin{enumerate}
		\item\label{item:GK2_canonical} $\bar{X}$ is canonical, of type $\rA_1+\rA_7$, $\rA_1+\rA_2+\rA_5$, $2\rA_1+\rD_6$, $2\rA_1+2\rA_3$ or $4\rA_1+\rD_4$.
		\item\label{item:GK2_Ek} $\cS_Y=\rE_{s}$ and $\bar{T}$ is cuspidal or $\cT=[(2)_{l},s+l-5]$ for some $l\in \{0,1,2\}$.
		\item\label{item:A1E7} $\cS_Y\in \{\rA_2+\rE_6,\rA_3+\rD_5,\rA_1+\rE_7,2\rA_1+\rD_6,\rD_8\}$ and $\cT=[3]$.
		\item\label{item:A1A5} $\cS_Y\in \{\rA_1+\rA_5,\rA_1+\rD_6,\rA_1+\rE_7\}$, $\bar{T}$ is cuspidal and  $\cT=[s-3]+[3]+[2]$.
	\end{enumerate}
	Moreover, if $\bar{X}$ is of one of the above types then $\height(\bar{Y})\leq \height(\bar{X})\leq 2$. 
\end{lemma}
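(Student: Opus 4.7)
The plan is to split the argument according to the two implications. For the forward direction $\height(\bar{X})\geq 3\Rightarrow s\geq 6$, I would argue by contraposition: assuming $s\leq 5$, I would show $\height(\bar{X})\leq 2$. If $\bar{Y}\cong\P^{2}$ then Lemma \ref{lem:GK_intro}\ref{item:GK-intro-ht} immediately gives $\height(\bar{X})\leq 2$. Otherwise $\bar{Y}$ is a singular canonical del Pezzo surface of rank one, and the pairs $(\bar{Y},\bar{T})$ with $s\leq 5$ and $\bar{T}$ singular form a short explicit list, extracted from Table \ref{table:canonical} (cf.\ \cite[Proposition 1.5]{PaPe_MT}). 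For each such $(\bar{Y},\bar{T})$ and each admissible way of blowing up over $\Sing\bar{T}$ to produce $\cT$, I would exhibit a $\P^{1}$-fibration on $X$ of height at most $2$ with respect to $D$, typically obtained by pulling back a pencil on $\bar{Y}$ passing through $\Sing\bar{T}$, in the spirit of the construction in the proof of Lemma \ref{lem:GK_intro}\ref{item:GK-intro-ht}.

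For the converse, assume $s\geq 6$ and $\height(\bar{X})\leq 2$; the goal is to match $\bar{X}$ with the list \ref{item:GK2_canonical}--\ref{item:A1A5}. The constraints on $\cS_{Y}$ come from Table \ref{table:canonical} (canonical types with $\#\cS_{Y}\geq 6$ admitting a singular anticanonical member in $\bar{Y}\reg$), while the constraints on $\cT$ come from Lemma \ref{lem:cuspidal_resolution}. Assembling these gives a finite explicit list of candidate singularity types $\cS=\cS_{Y}+\cT$ for $\bar{X}$. Cross-referencing this list against the classification of $\height\leq 2$ types in Theorem \ref{thm:ht=1,2}\ref{part:types} and the more detailed Lemmas \ref{lem:ht=1_types}, \ref{lem:ht=2,untwisted}, \ref{lem:ht=2_twisted-separable}, \ref{lem:ht=2_twisted-inseparable} should leave exactly the four families in the statement: the canonical examples in \ref{item:GK2_canonical} correspond to nodal $\bar{T}$ with $\cT$ a $(-2)$-chain; case \ref{item:GK2_Ek} reflects the fact that $\rE_{s}$-type $\bar{Y}$ are exceptional in Table \ref{table:canonical}; and cases \ref{item:A1E7}, \ref{item:A1A5} catch the remaining sporadic matches with $\cT=[3]$ or $\cT=[s-3]+[3]+[2]$. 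In each exceptional case, the witnessing $\P^{1}$-fibration on $X$ pushes forward through $\phi$ to one on $Y$, giving $\height(\bar{Y})\leq\height(\bar{X})$, and the upper bound $\height(\bar{X})\leq 2$ holds by assumption.

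The main obstacle is the combinatorial bookkeeping for the converse: cross-referencing the extensive tables of $\height\leq 2$ singularity types against every allowed combination $\cS_{Y}+\cT$ requires matching not only the underlying weighted graphs but also the positional data --- which components are met by the elliptic tie $L$, which lie in $D_{Y}$ versus $D_{T}$, and how they intersect the horizontal part of a witnessing $\P^{1}$-fibration. Characteristic-dependent cases are especially delicate: cuspidal $\bar{T}$ only exist when $\cha\kk\in\{2,3,5\}$ (see Table \ref{table:canonical}), and the additional height-$2$ types appearing in Lemma \ref{lem:ht=2_twisted-inseparable} in characteristic $2$ require separate verification that they yield no spurious matches outside the listed exceptions.
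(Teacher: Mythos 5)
Your overall strategy for the classification step of the converse (enumerate the candidate types $\cS_Y+\cT$ from Table \ref{table:canonical} and Lemma \ref{lem:cuspidal_resolution}, then cross-reference against the height $\leq 2$ tables) is exactly what the paper does. For the forward direction, the paper avoids your case-by-case contrapositive entirely: by \eqref{eq:Noether} one has $\bar{T}^2=9-s$, so $s\leq 5$ forces $\bar{T}^2\geq 4$, and then $\bar{T}^2-3$ blowups over $\Sing\bar{T}$ turn the proper transform of $\bar{T}$ into a $0$-curve meeting the exceptional divisor at most twice, whose pullback gives a fibration of height $\leq 2$ on $X$ \emph{uniformly}, for every choice of $\cT$. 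This matters: for a fixed $(\bar{Y},\bar{T})$ with $s\leq 5$ there are infinitely many admissible $\cT$ (the chains and forks in Lemma \ref{lem:cuspidal_resolution} carry unbounded parameters $k$ and $V$), so your "short explicit list" cannot literally be checked case by case; the pencil argument you gesture at is the actual proof, and you should make the numerical count $\bar{T}^2=9-s$ the centerpiece rather than an afterthought.

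The genuine gap is in the \emph{Moreover} clause. You write that "the upper bound $\height(\bar{X})\leq 2$ holds by assumption," but for that clause nothing is assumed about $\height(\bar{X})$: one is given a surface $\bar{X}$ whose descendant has one of the listed types and must \emph{prove} $\height(\bar{X})\leq 2$. Matching the singularity type against Tables \ref{table:ht=1}--\ref{table:ht=2_char=2} only shows that \emph{some} del Pezzo surface of that type has height $\leq 2$; within this paper it is not yet known that all surfaces of a fixed type share the same height (that is exactly the content deferred to forthcoming work in Remark \ref{rem:Pht=P}), so this does not transfer to the particular $\bar{X}$ at hand. The paper closes this by explicit constructions in each of cases \ref{item:GK2_canonical}--\ref{item:A1A5}: since $\bar{T}\in|-K_{\bar{Y}}|$, adjunction gives $T_Y\cdot A=1$ for every $(-1)$-curve $A$ on $Y$, so by Lemma \ref{lem:GK_intro}\ref{item:GK-intro-L} the map $\sigma$ is an isomorphism near $\sigma^{-1}_*A$; one then exhibits, case by case, a divisor such as $T+\sigma^{-1}_*(A+G)=[s-3,1,(2)_{s-4}]$ supported on $D$ plus such $(-1)$-curves that moves in a pencil of height $\leq 2$ with respect to $D$. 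Without this step your argument establishes the dichotomy "height $\geq 3$ or type on the list" but not that the listed exceptions are genuine, nor the inequality $\height(\bar{Y})\leq\height(\bar{X})\leq 2$ asserted in the statement.
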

\begin{proof}
	By Lemma \ref{lem:GK_intro}\ref{item:GK-intro-Y} we have $\rho(\bar{Y})=1$, so $\bar{T}^2>0$. Suppose $\bar{T}^{2}\geq 4$. After $\bar{T}^{2}-3$ blowups over $\Sing \bar{T}$, the proper transform of $\bar{T}$ is a $0$-curve meeting the exceptional divisor at most twice. Its pullback on $(X,D)$ induces a $\P^1$-fibration of height at most 2. Thus if $\height(\bar{X})\geq 3$ then $\bar{T}^2\leq 3$, so by formula \eqref{eq:Noether} $s=9-\bar{T}^2\geq 6$.
	\smallskip
	
	Assume now that $s\geq 6$. Recall that types $\cS_Y$ and $\cT$ are listed in Table \ref{table:canonical} and Lemma \ref{lem:cuspidal_resolution}. Comparing them with Tables \ref{table:ht=1}--\ref{table:ht=2_char=2} we conclude that the inequality $\height(\bar{X})\leq 2$ can occur only in  cases \ref{item:GK2_canonical}--\ref{item:A1A5} above. We now construct a $\P^1$-fibration of height $\leq 2$ in each case. 
	Let $T_{Y}\in |-K_{Y}|$ be the proper transform of $\bar{T}$ on the minimal resolution $Y$ of $\bar{Y}$. By formula \eqref{eq:Noether} we have $T_Y^2=9-s$ and by adjunction $T_Y\cdot A=1$ for every $(-1)$-curve $A$ on $Y$. It follows from Lemma \ref{lem:GK_intro}\ref{item:GK-intro-L} that the map $\sigma\colon X\to Y$ is an isomorphism near $\sigma^{-1}_{*}A$.  
	
	\ref{item:GK2_canonical} Here we have $\height(\bar{Y})\leq \height(\bar{X})\leq 2$ by \cite[Proposition 6.1]{PaPe_MT}, see  Table \ref{table:canonical}.
	
	\ref{item:GK2_Ek} By \cite[Proposition 6.1]{PaPe_MT} we have $\height(\bar{Y})=1$, and $\bar{Y}$ is as in Remark \ref{rem:canonical_ht=1}, see Figure \ref{fig:ht=1}. Thus $Y$ contains a $(-1)$-curve $A$ meeting $\Exc\alpha$ once, in the last tip of its long $(-2)$-twig $G=[(2)_{s-4}]$. If $D_T=[(2)_{l},s+l-5]$ for some $l\in \{0,1,2\}$ then a subchain of $T+\sigma^{-1}_{*}(A+G)=[s+l-5,1,(2)_{s-4}]$ supports a fiber of a $\P^1$-fibration of height at most two, as needed. In fact, if $l=0$ then $\height(\bar{X})=1$ and $\bar{X}$ as in Lemma \ref{lem:ht=1_types}\ref{item:T2=[2]_b>2}; and if $l=1,2$ then $\height(\bar{X})=2$ and $\bar{X}$ is as in Lemma \ref{lem:ht=2,untwisted}\ref{item:[T_1,n]=[2,2]_r=2},\ref{item:tau=id_fork_chain}, respectively.
	
	Assume that $\bar{T}$ is cuspidal. If $\#\cT\leq 2$ then $D_T$ is as in the previous case, so we can assume $\#\cT\geq 3$. Then $\sigma=\gamma\circ\theta$ for some morphism $\theta$ such that $\#\Exc\theta=3$, so $T'\de \theta^{-1}_{*}T_{Y}=[s-3]$. Since $\bar{T}$ is cuspidal, we have $\beta_{D}((\gamma^{*}T')\redd)\leq 1$, cf.\ Lemma \ref{lem:cuspidal_resolution}\ref{item:T_cuspidal}. Now the pullback of $\theta^{-1}_{*}(T_{Y}+A+G)=[s-3,1,(2)_{s-4}]$ induces a $\P^1$-fibration of height at most two, as needed. More precisely, if $\#\cT=3$ then we get $\height(\bar{X})=1$ and $\bar{X}$ is as in Lemma \ref{lem:ht=1_types}\ref{item:beta=3_columnar}, and if $\#\cT\geq 4$ then $\height(\bar{X})=2$ and $\bar{X}$ is as in Lemma \ref{lem:ht=2,untwisted}\ref{item:tau=id_fork_chain}, 	\ref{item:V-chains_c=2_[2]_T1=0}--\ref{item:tau=id_forks}, or \ref{item:c=3}.
	
	\ref{item:A1E7} Consider the case $\cS_{Y}=\rA_2+\rE_6$. Figure \ref{fig:E6+A2} shows that $Y$ contains a $(-1)$-curve $A$ meeting $\Exc\alpha$ once, in the maximal twig $G=[2]$. Let $B$ be the branching component of $\Exc\alpha$. Now $T+\sigma^{-1}_{*}(A+G+B)=[3,1,2,2]$ supports a fiber of a fibration of height $2$. In fact, $\height(\bar{X})=2$ and $\bar{X}$ is as in Lemma  \ref{lem:ht=2,untwisted}\ref{item:R_0_branching}.
	
	In the remaining cases, we see from Figure \ref{fig:D5+A3} or \ref{fig:ht=1} that $Y$ contains a $(-1)$-curve $A$ meeting $\Exc\alpha$ once, in some tip of a twig $G=[2,2]$. Now $T+\sigma^{-1}_{*}(A+G)=[3,1,2,2]$ supports a fiber of a fibration of height $1$ or $2$. In fact, if $\cS_{Y}=\rA_1+\rE_7$ then $\height(\bar{X})=\height(\bar{Y})=1$ and $\bar{X}$ is as in Lemmas \ref{lem:ht=1_types}\ref{item:beta=3_other_2}, otherwise $\height(\bar{X})=2$ and $\bar{X}$ is as in Lemma \ref{lem:ht=2,untwisted}\ref{item:rivet_nu=3-fork-1}, \ref{lem:ht=2_twisted-separable}\ref{item:k2=2} or \ref{lem:ht=2_twisted-inseparable}\ref{item:C_fork_k2=2} if $\cS_Y$ equals $\rD_8,\rA_3+\rD_5$ or $2\rA_1+\rD_6$ respectively.
	
	\ref{item:A1A5} Like before, we see from Figure \ref{fig:ht=1} that there is a $(-1)$-curve $A\subseteq Y$ such that $A+\Exc\alpha$ contains a subchain $R=[(2)_{s-4},1]$ meeting the remaining part of $A+\Exc\alpha$ only in the second component, once. Now $\sigma^{-1}_{*}R+T=[(2)_{s-4},1,s-3]$ supports a fiber of the fibration of height $2$, as needed. In fact, $\height(\bar{X})=2$ and $\bar{X}$ is as in Lemma \ref{lem:ht=2_twisted-inseparable}\ref{item:beta=2_k1=k2=2},\ref{item:C_[2]_k2=2},\ref{item:C_columnar_L2H=0_k2=2} for $s=6,7,8$, respectively.
\end{proof}

\subsection{Computation of the height}

As in the previous section, we consider a del Pezzo surface $\bar{X}$ of rank one having a descendant $(\bar{Y},\bar{T})$ with elliptic boundary. The singularity type of $\bar{X}$ is $\cS_{Y}+\cT$, where $\cS_{Y}$ is the singularity type of $\bar{Y}$; $\cT$ is described as in Lemma \ref{lem:cuspidal_resolution}. Lemma \ref{lem:GK_intro}\ref{item:GK-intro-ht} shows that $\height(\bar{X})\leq \height(\bar{Y})+2$: in fact, pulling back a witnessing $\P^1$-fibration from the minimal log resolution of $\bar{Y}$ we get a $\P^1$-fibration of height $\height(\bar{Y})+2$ with respect to $D$. In this section we prove Theorem \ref{thm:GK}\ref{item:GK-ht}, which asserts that this inequality is in fact an equality, up to some  minor exceptions. More precisely, we prove the following statement. We keep Notation \ref{not:GK}. 

\begin{proposition}\label{prop:GK-ht-exceptions}
	Assume $\height(\bar{X})\geq 3$. If $\height(\bar{X})\neq \height(\bar{Y})+2$ then one of the following holds.
	\begin{enumerate}
		\item\label{item:GK-ht-exceptions-T=1} $\cS_{Y}\in \{\rA_1+\rA_2+\rA_5,\rA_8,\rA_1+\rA_7\}$ and $\#\cT=1$,
		\item\label{item:GK-ht-exceptions-T=3} $\cS_Y\in \{3\rA_2,\rA_2+\rA_5,\rA_2+\rE_6\}$  and $\cT=[s-3]+[3]+[2]$. In this case, we have $\cha\kk=3$.%
	\end{enumerate}
	Conversely, if $\bar{X}$ is of type $\cS_{Y}+\cT$ listed above then $\height(\bar{X})=3$ and $\height(\bar{Y})=2$.
\end{proposition}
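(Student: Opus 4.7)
The plan splits into the two implications of the proposition. For the \emph{converse} direction (the last sentence), I would exhibit, for each of the five pairs $(\cS_Y,\cT)$ in \ref{item:GK-ht-exceptions-T=1} and \ref{item:GK-ht-exceptions-T=3}, an explicit $\P^1$-fibration on $X$ of height $3$. The natural source is the complete linear system $|-K_X|$: since $T_Y\in |-K_Y|$, pulling back gives $\sigma^*T_Y = T + mL + \cdots$, where $m$ is the multiplicity of the singular point of $\bar T$, and correcting by $K_X - \sigma^*K_Y = \Exc\sigma-(\text{components with discrepancy zero})$ produces effective divisors in $|-K_X|$ supported on $T+L$ together with a short portion of $D_T$. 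I would then complete such a divisor by a suitably chosen $(-1)$-curve on $Y$ (available thanks to the known structure of $(-1)$-curves on the minimal resolution of each canonical $\bar Y$) so as to obtain a divisor of self-intersection $0$ meeting $D$ exactly three times; this produces the required fibration. The lower bound $\height(\bar X)\geq 3$ is the standing hypothesis of the proposition, and $\height(\bar Y)=2$ is read off Table~\ref{table:canonical} for each listed $\cS_Y$.

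For the \emph{direct} direction, I would fix a witnessing $\P^1$-fibration $p\colon X\to B$ for $\bar X$ with general fiber $F$ and first show that $\sigma_*F$ cannot be a fiber of any $\P^1$-fibration on $Y$. Otherwise, since $\sigma$ is an isomorphism near $D_Y$, we would have $F\cdot D_Y = \sigma_*F\cdot D_Y \geq \height(\bar Y)$, while adjunction on $Y$ gives $F\cdot T = \sigma_*F\cdot T_Y = -K_Y\cdot \sigma_*F = 2$, forcing $F\cdot D\geq \height(\bar Y)+2$ and contradicting the assumption $\height(\bar X)<\height(\bar Y)+2$. Hence $\phi_*F$ is a nonzero curve on $\bar Y$ passing through $\Sing\bar T$ with some positive multiplicity. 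Combining $F^2=0$ and $F\cdot K_X=-2$ with the linear equivalences $\sigma^*T_Y = T+mL$ and $K_X=\sigma^*K_Y+\Exc\sigma$, I would translate the inequality $F\cdot D\leq \height(\bar Y)+1\leq 5$ into tight numerical constraints on the individual intersections of $F$ with $T$, with $L$, and with the components of $D_Y$.

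The main obstacle is the case analysis that follows: one must rule out all pairs $(\cS_Y,\cT)$ satisfying those numerical constraints other than the listed ones. Lemma~\ref{lem:cuspidal_resolution} enumerates $\cT$, the list of $\cS_Y$ with $s\geq 6$ is read off Table~\ref{table:canonical}, and the elimination requires, for each candidate $\cS_Y$, detailed knowledge of the $(-1)$-curves on the minimal resolution of $\bar Y$ (compiled in \cite[Table~1]{BBD_canonical}, or available via direct inspection using \cite[Proposition~6.1]{PaPe_MT}). The delicate sub-case is the cuspidal one, where I expect $\cT$ to be forced to equal $[s-3]+[3]+[2]$, i.e.\ case~\ref{lem:cuspidal_resolution}\ref{item:C_3}; the characteristic restriction $\cha\kk=3$ then follows automatically, since a cuspidal $\bar T\in|-K_{\bar Y}|$ contained in $\bar Y\reg$ exists for $\cS_Y\in\{3\rA_2,\rA_2+\rA_5,\rA_2+\rE_6\}$ only in characteristic $3$ by the classification summarized in Table~\ref{table:canonical}.
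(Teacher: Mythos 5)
Your converse direction is essentially the paper's Lemma \ref{lem:GK-ht-exceptions-ht=3}: adjunction forces every $(-1)$-curve on $Y$ to meet the image of $T$ once, and one completes $T$ (or a short piece of $D_T$) by such a curve and a suitable $(-2)$-chain to a divisor supporting a fiber meeting $D$ three times. That part matches the paper.

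The direct direction, however, is only a plan, and as stated it has a genuine gap. The numerical constraints you extract from the \emph{general} fiber $F$ --- its intersections with $T$, $L$ and $D_Y$ --- are far from sufficient to isolate the listed pairs $(\cS_{Y},\cT)$: many excluded types satisfy exactly the same constraints. What does the work in the paper is, first, an induction on $\#\cT$: contracting the elliptic tie $L$ is an elementary swap onto a surface with the same descendant and smaller $\#\cT$, and the minimality of $\#\cT$ is what forces $L$ to be horizontal for every fibration of height at most $\height(\bar{Y})+1$ (the opening Claims of Lemmas \ref{lem:GK-ht-nodal} and \ref{lem:GK-ht-cuspidal}); this is how the cases $\#\cT=2$, $\#\cT\geq 3$ nodal, and $\#\cT\geq 4$ cuspidal are shown to admit \emph{no} exceptions at all. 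Second, the discrimination between types in the remaining cases $\#\cT=1$ and $\#\cT=3$ comes from classifying the \emph{degenerate} fibers of the witnessing fibration (chains $[1,2,\dots,2,1]$, forks, via Lemma \ref{lem:delPezzo_fibrations} and the linear equivalence $-K_{X}=D_{T}+L$ resp.\ $-K_X=T+T_2+T_3+2L$) and then contracting to $\P^{2}$ or $\F_{m}$, where identities such as $\tau(H)^{2}=-2+l_{i}$ pin $\cS_Y$ down to $\rA_{8}$, $\rA_{2}+\rE_{6}$, etc.\ (Lemmas \ref{lem:GK-ht-T=1}, \ref{lem:GK-ht-T=3}). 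Your proposal defers exactly this to ``detailed knowledge of the $(-1)$-curves on $Y$,'' but a list of $(-1)$-curves on $Y$ does not control the fibration on $X$; without the horizontality of $L$ and the degenerate-fiber analysis the elimination cannot be completed. A minor additional point: $F\cdot T=\sigma_{*}F\cdot T_{Y}$ holds only when $F$ is disjoint from $\Exc\sigma$, which is precisely the alternative you are ruling out in that step, so this hypothesis should be stated.
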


Within the proof we keep Notation \ref{not:GK}. First, we construct the exceptional $\P^1$-fibrations of height $3$.
\begin{lemma}
	\label{lem:GK-ht-exceptions-ht=3}
	Assume that the singularity type $\cS_{Y}+\cT$ of $\bar{X}$ is as in Proposition \ref{prop:GK-ht-exceptions}.	Then $\height(\bar{X})=\height(\bar{Y})+1=3$.
\end{lemma}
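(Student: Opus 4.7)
The statement combines three assertions: $\height(\bar{X})\geq 3$, $\height(\bar{Y})=2$, and $\height(\bar{X})\leq 3$. The first would follow immediately from Lemma~\ref{lem:GK_exceptions}: I would check that none of the singularity types $\cS=\cS_Y+\cT$ listed in Proposition~\ref{prop:GK-ht-exceptions}(a),(b) fall under the exceptional cases (a)--(d) of that lemma. Indeed, in each of our cases $\#\cT\geq 1$ (excluding the canonical list in~\ref{lem:GK_exceptions}(a)), $\cS_Y\neq\rE_s$ (excluding~(b)), and our pairs $(\cS_Y,\cT)$ match none of the combinations in~(c) (where $\cS_Y\in\{\rA_2+\rE_6,\rA_3+\rD_5,\rA_1+\rE_7,2\rA_1+\rD_6,\rD_8\}$ and $\cT=[3]$) or~(d) (where $\cS_Y\in\{\rA_1+\rA_5,\rA_1+\rD_6,\rA_1+\rE_7\}$ with $\bar{T}$ cuspidal, while in our Case~(a) the condition $\#\cT=1$ forces $\bar{T}$ nodal). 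The second assertion $\height(\bar{Y})=2$ is then read off Table~\ref{table:canonical}.

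The substantive work is the upper bound: construct, in each listed case, a $\P^1$-fibration on $X$ of height exactly three. The guiding principle is that a connected reduced divisor $F\subseteq X$ supports (with appropriate multiplicities) a fiber of some $\P^1$-fibration if and only if it contracts step-by-step via $(-1)$-curves to a single $0$-curve; one then verifies $F\cdot D=3$. For Case~(a), where $\cT=[3]$ and $L\cdot T=2$, the candidate fiber is built from $T$, $L$, and an auxiliary $(-1)$-curve $A$ on $Y$ passing through the nodal point $\sigma(L)$ of $T_Y$; the strict transform $A_X$ together with $T+2L$ and a short subchain of $D_Y$ yields a divisor contracting to $[0]$. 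For Case~(b), where $\cT=[s-3]+[3]+[2]$ is the minimal embedded resolution of a cusp (forcing $\cha\kk=3$ by Table~\ref{table:canonical}), the $\rE_6$-shaped fork $L+T+E_1+E_2$ with branching component $L$ is supplemented by an auxiliary $(-1)$-curve on $Y$ meeting $T_Y$ at the cusp with a prescribed tangent direction, producing again a divisor of the required shape.

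To produce the auxiliary curves I would use the explicit realizations of $\bar{Y}$ provided by Examples~\ref{ex:ht=2}, \ref{ex:ht=2_meeting}, and~\ref{ex:remaining_canonical}, which describe $\bar{Y}$ as a contraction of a blowup of $(\P^2,\pp)$ for some planar configuration $\pp$. In Case~(a) with $\cS_Y=\rA_1+\rA_2+\rA_5$, the curve $A$ comes from a line through the base point $p_0$ of the pencil, chosen so that its proper transform on $Y$ meets $T_Y$ at $\Sing\bar{T}$; in the $\rA_8$ and $\rA_1+\rA_7$ subcases, $A$ is constructed from the additional outer blowups appearing in Example~\ref{ex:remaining_canonical}\ref{item:3A2_tower},\ref{item:A1+2A3_tower}. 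In Case~(b) one reasons similarly for each of the three subcases $\cS_Y\in\{3\rA_2,\rA_2+\rA_5,\rA_2+\rE_6\}$, now imposing the tangency condition at the cusp of $\bar{T}$.

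The main obstacle is the explicit case-by-case verification: in each of the six subcases one must exhibit the auxiliary $(-1)$-curve $A$ on $Y$ with the correct incidence to $T_Y$ at $\Sing\bar{T}$, determine the multiplicities of the resulting divisor $F$ on $X$, and confirm both that $F$ contracts to $[0]$ and that $F\cdot D=3$. The characteristic constraint $\cha\kk=3$ in Case~(b) is essential for the existence of a cuspidal anticanonical curve on $\bar{Y}$, but once that curve is available the remaining combinatorial computation is characteristic-independent.
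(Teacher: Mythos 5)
Your reduction of the lower bound to Lemma~\ref{lem:GK_exceptions} and the identification $\height(\bar{Y})=2$ from Table~\ref{table:canonical} are correct and coincide with the paper's argument (one small slip: $\#\cT=1$ does \emph{not} force $\bar{T}$ nodal --- for $\cS_Y=\rA_8$ and $\cha\kk=3$ the boundary is cuspidal, with $L$ tangent to $T$ --- but this does not affect the exclusion of case~(d) of that lemma, since the types $\cS_Y$ there are disjoint from yours). The substantive part, the construction of a height-$3$ fibration, is however wrong in a way that cannot be repaired along the lines you propose. Your candidate fibers contain the elliptic tie $L$ (with multiplicity $2$ in Case~(a), as branching component in Case~(b)), so $L$ would be vertical, i.e.\ $F\cdot L=0$ for a general fiber $F$. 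Pulling back $-K_{\bar{Y}}=\bar{T}$ gives $-K_X=D_T+\epsilon L$ with $\epsilon\in\{1,2\}$ (Lemma~\ref{lem:GK-nodal-basics}\ref{item:GK-nodal-K} and the computation opening the proof of Lemma~\ref{lem:GK-ht-T=3}), so adjunction forces $F\cdot D_T=2$; moreover $\sigma_*F$ is then a fiber of a $\P^1$-fibration of $Y$, whence $F\cdot D_Y\geq\height(\bar{Y})=2$ and $F\cdot D\geq 4$. This is exactly the argument of Lemma~\ref{lem:GK-ht-T=1}\ref{item:T=1-L-horizontal}: any fibration in which $L$ is vertical has height at least $\height(\bar{Y})+2=4$, so a fiber through $L$ can never witness height $3$. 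Independently, your auxiliary curve does not exist: a $(-1)$-curve $A$ on $Y$ satisfies $A\cdot T_Y=A\cdot(-K_Y)=1$, whereas passing through the singular point of $\bar{T}$ (where $T_Y$ has multiplicity $2$) would force $A\cdot T_Y\geq 2$. This adjunction computation is precisely the observation the paper makes at the start of its proof --- but it is used to conclude the opposite of what you need, namely that every $(-1)$-curve of $Y$ pulls back to a $(-1)$-curve on $X$ meeting $T$ once and \emph{disjoint} from $L$.

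The paper's construction keeps $L$ horizontal throughout: the degenerate fiber contains $T$ but no component of $\Exc\sigma$, and is assembled from pullbacks of $(-1)$-curves of $Y$ (automatically disjoint from $L$ by the adjunction remark above) together with short $(-2)$-chains of $D_Y$. Concretely, for $\cS_Y=\rA_1+\rA_2+\rA_5$ one takes $T+A_0+V=[3,1,2,2]$ with $V=[2,2]$ a connected component of $D_Y$ and $A_0$ meeting $D_Y$ once outside $V$; for $\rA_8$ and $\rA_1+\rA_7$ one takes $A_1+T+A_2+V=[1,3,1,2]$; and in Case~(b) one takes $V+T=[(2)_{s-4},1,s-3]$ inside $D_Y+A$ for a suitable $(-1)$-curve $A$. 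In each case the multiplicity of the $(-1)$-curve times its intersection with the horizontal part of $D_Y$ gives $F\cdot D=3$, while $F\cdot L$ equals $1$ or $2$. If you want to salvage your write-up, replace the $L$-centred fibers by these configurations; the figures \ref{fig:A5+A2+A1}, \ref{fig:A8}, \ref{fig:A7+A1}, \ref{fig:3A2}, \ref{fig:A5+A2}, \ref{fig:E6+A2} already display the required $(-1)$-curves.
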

\begin{proof}
	By \cite[Proposition 6.1]{PaPe_MT} we have $\height(\bar{Y})=2$, and the minimal resolution $Y$ of $\bar{Y}$ is constructed in Section \ref{sec:ht=2_constructions}, cf.\ Table \ref{table:canonical}. Since $-K_{\bar{Y}}=\bar{T}$, the adjunction formula shows that each $(-1)$-curve on $Y$ meets the image of $T$ once, so it pulls back to a $(-1)$-curve on $X$ meeting $T$ once and disjoint from the elliptic tie $L$. 
	Lemma \ref{lem:GK_exceptions} gives $\height(\bar{X})\geq 3$, so it is enough to find $\P^1$-fibrations of height $3$ in each case.
	
	\ref{item:GK-ht-exceptions-T=1} 
	Consider the case $\cS_{Y}=\rA_1+\rA_2+\rA_5$. Looking at Figure \ref{fig:A5+A2+A1}, we see that there is a $(-1)$-curve $A_0$ on $X$ meeting $T$ once and $D_Y$ twice, once in a tip of a connected component $V=[2,2]$ of $D_Y$, and once off $V$. Now $T+A_0+V=[3,1,2,2]$ supports a fiber of the required $\P^1$-fibration. 
	
	Consider the case $\cS_{Y}=\rA_8$. Looking at Figure \ref{fig:A8} we see that there are disjoint $(-1)$-curves $A_1,A_2$ on $X$ such that $A_i\cdot T=A_i\cdot D_Y=1$, $i=1,2$, and $A_1,A_2$ meet different components of $D_Y$. Let $V$ be the component of $D_Y$ meeting $A_2$, so $\beta_{D}(V)=2$. Now $A_1+T+A_2+V=[1,3,1,2]$ supports a fiber of the required $\P^1$-fibration.
	
	Eventually, consider the case $\cS_Y=\rA_1+\rA_7$. Looking at Figure \ref{fig:A7+A1} we see that there are $(-1)$-curves $A_1,A_2$ on $X$ such that $A_1\cdot T=A_2\cdot T=1$, $A_1\cdot D_Y=1$, $A_2\cdot D_Y=2$ and $A_2$ meets a connected component $V=[2]$ of $D+A_1$. As before, $A_1+T+A_2+V=[1,3,1,2]$ supports the required fiber.
	 
	\ref{item:GK-ht-exceptions-T=3} In this case $\bar{T}$ is cuspidal, so Table \ref{table:canonical} shows that $\cha\kk=3$. Looking at Figures \ref{fig:3A2}, \ref{fig:A5+A2} and \ref{fig:E6+A2} we see that there is a $(-1)$-curve $A$ on $X$ such that $D_{Y}+A$ has a subchain $V=[(2)_{s-4},1]$ meeting $D_Y$ once, in the third component. Now $V+T=[(2)_{s-4},1,s-3]$ supports the fiber of the required $\P^1$-fibration. 
\end{proof}

We now verify the completeness of the list in Proposition \ref{prop:GK-ht-exceptions}. We treat separately the cases when the graph of $D_{T}+L$ is circular (i.e.\ $\#\cT\leq 2$ or $\bar{T}$ is nodal) and when it is not, which lead to  \ref{prop:GK-ht-exceptions}\ref{item:GK-ht-exceptions-T=1} and \ref{prop:GK-ht-exceptions}\ref{item:GK-ht-exceptions-T=3}, respectively. In each case we argue by induction on $\#\cT$. We begin with a basic observation in the former case. 

\begin{lemma} \label{lem:GK-nodal-basics}
	Assume that $\#\cT\leq 2$ or $\bar{T}$ is nodal. Let $A\neq L$ be a $(-1)$-curve on $X$. Then the following hold.
	\begin{enumerate}
		\item \label{item:GK-nodal-K} We have $-K_{X}=D_{T}+L$. In particular, $A\cdot (D_{T}+L)=1$.
		\item \label{item:GK-nodal-F} If $F$ is a fiber of a $\P^1$-fibration of $X$ then $F\cdot (D_{T}+L)=2$. 
		\item \label{item:GK-nodal-neg-def} Assume $A\cdot T=1$. Then $D_Y+A$ is not negative (semi)-definite. In particular, the connected component of $D_Y+A$ containing $A$ is neither a chain $[1,2,\dots,2]$ nor a fork $\langle 2;[2],[2],[1,2,\dots,2]\rangle$.
		\item \label{item:GK-nodal-T=1} Assume $\#\cT=1$. If $A$ meets $L$ then $A\cdot D_Y\geq \height(\bar{Y})$.
	\end{enumerate}
\end{lemma}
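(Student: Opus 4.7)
The plan is to derive all four statements from a single computation of $-K_X$, combined with adjunction for the $(-1)$-curve $A$ and a short intersection-theoretic argument on $Y$ using $\rho(\bar{Y})=1$.

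For \ref{item:GK-nodal-K}, I would start from Lemma \ref{lem:GK_intro}\ref{item:GK-intro-Y}, which gives $\bar{T}\in|-K_{\bar{Y}}|$, together with the canonicity of $\bar{Y}$, yielding $K_Y+T_Y\sim\alpha^{*}(K_{\bar{Y}}+\bar{T})\sim 0$. Our hypothesis forces $\bar{T}$ to be nodal (a simple cusp already requires three exceptional components of $\sigma$, ruling out $\#\cT\leq 2$), so $(Y,T_Y)$ is log canonical and the minimal log resolution $\sigma$ satisfies the crepant identity $\sigma^{*}(K_Y+T_Y)=K_X+T+\Exc\sigma$ with reduced exceptional part (verified locally at a node from $\sigma^{*}(xy)=x^{2}y$ and $\sigma^{*}K_Y=K_X-L$). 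Since $\Exc\sigma=L+(D_T-T)$, this gives $-K_X\sim D_T+L$, and adjunction $A\cdot K_X=-1$ for the $(-1)$-curve $A$ yields $A\cdot(D_T+L)=1$. Part \ref{item:GK-nodal-F} is immediate: for any $\P^{1}$-fiber $F$ we have $F\cdot K_X=-2$, so $F\cdot(D_T+L)=2$.

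For \ref{item:GK-nodal-neg-def}, combining \ref{item:GK-nodal-K} with $A\cdot T=1$ and the non-negativity of $A\cdot L$ and $A\cdot(D_T-T)$ forces these to vanish, so $A$ is disjoint from $\Exc\sigma$; in particular $\sigma(A)$ is a $(-1)$-curve on $Y$ meeting $T_Y$ transversally once away from the node. Suppose that $D_Y+A$ is negative semi-definite; then so is $\Exc\alpha+\sigma(A)$ on $Y$. In the negative definite case, contracting $\Exc\alpha+\sigma(A)$ produces a normal projective surface of Picard rank $\rho(Y)-(s+1)=0$, which is absurd. In the strictly semi-definite case the null vector is a nonzero effective divisor $N$ supported on $\Exc\alpha+\sigma(A)$ with $N\cdot C=0$ for every component $C$; negative definiteness of $\Exc\alpha$ rules out $N$ being concentrated there, so $\sigma(A)$ occurs in $N$ with positive multiplicity and $\alpha_{*}N$ is a nonzero effective divisor on $\bar{Y}$. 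Since $N$ is orthogonal to every $\alpha$-exceptional component, the orthogonal decomposition $\NS(Y)_{\Q}=\alpha^{*}\NS(\bar{Y})_{\Q}\oplus\langle\Exc\alpha\rangle$ puts $N\in\alpha^{*}\NS(\bar{Y})_{\Q}$, so $(\alpha_{*}N)^{2}=N^{2}=0$; but $\rho(\bar{Y})=1$ forces $\alpha_{*}N$ to be numerically proportional to an ample class, giving $(\alpha_{*}N)^{2}>0$, a contradiction. The ``in particular'' clause then follows because each of the two listed shapes contracts to a smooth point and is therefore negative definite.

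For \ref{item:GK-nodal-T=1}, the assumption $\#\cT=1$ gives $D_T=T$ and $L\cdot T=2$; combining $A\cdot(T+L)=1$ with $A\cdot L\geq 1$ forces $A\cdot L=1$ and $A\cdot T=0$. Then $(A+L)^{2}=0$, so the connected chain $A+L=[1,1]$ is a degenerate fiber of a $\P^{1}$-fibration $p\colon X\to\P^{1}$; because $p$ contracts $L$, it factors as $p=p_Y\circ\sigma$ for a $\P^{1}$-fibration $p_Y\colon Y\to\P^{1}$. Transversality of $A$ and $L$ yields $\sigma^{*}\sigma(A)=A+L$, so by the projection formula the general fiber $F_Y$ of $p_Y$ satisfies $F_Y\cdot\Exc\alpha=\sigma(A)\cdot\Exc\alpha=(A+L)\cdot D_Y=A\cdot D_Y$, whence $\height(\bar{Y})\leq A\cdot D_Y$. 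The main obstacle I anticipate is the strictly semi-definite case of \ref{item:GK-nodal-neg-def}: the clean equality $(\alpha_{*}N)^{2}=N^{2}$ hinges on $N$ lying in $\alpha^{*}\NS(\bar{Y})_{\Q}$ and on $\sigma(A)$ genuinely surviving under $\alpha_{*}$, and it is this step that converts numerical data on $Y$ into a contradiction on $\bar{Y}$ via $\rho(\bar{Y})=1$.
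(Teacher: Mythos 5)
Your reduction to the nodal case is where the proof breaks. The hypothesis ``$\#\cT\leq 2$ or $\bar{T}$ is nodal'' genuinely includes cuspidal $\bar{T}$ with $\#\cT\in\{1,2\}$: the morphism $\sigma$ is only required to make $(X,D)$ the minimal log resolution of $\bar{X}$, and since $L\not\subseteq D$ nothing forces $\phi^{*}\bar{T}$ to be snc, so $\sigma$ may be a single blowup at a cusp (then $D_T=T$ and $L$ is tangent to $T$) or two blowups (then $D_T+L$ is a triangle). These cases really occur and are exactly why the lemma is phrased this way --- Lemma \ref{lem:cuspidal_resolution}\ref{item:T_nodal} treats ``nodal or $\#\cT\leq 2$'' together, and Proposition \ref{prop:GK-small} is devoted to cuspidal $\bar{T}$ with $\#\cT\leq 2$ (e.g.\ type $2\rA_4+[3]$ in characteristic $5$). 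Your claim that ``a simple cusp requires three exceptional components of $\sigma$'' is true of the minimal \emph{log} resolution of $(\bar{Y},\bar{T})$, which $\sigma$ need not be. Consequently your justification of \ref{item:GK-nodal-K} --- log canonicity of the pair $(Y,T_Y)$ plus crepancy of its log resolution --- does not apply in these cases: a cuspidal pair is not lc, and $\sigma$ is not its log resolution. The identity $-K_X=D_T+L$ does still hold, and there is a uniform argument: $\Delta\de -K_X-D_T-L$ is $\sigma$-exceptional, and since by Lemma \ref{lem:cuspidal_resolution}\ref{item:T_nodal} every component $E$ of $D_T+L$ satisfies $(D_T+L-E)\cdot E=2$, adjunction gives $\Delta\cdot E=0$ for all $E\subseteq\Exc\sigma$, whence $\Delta=0$ by negative definiteness of $\Exc\sigma$. (The paper simply pulls back the equivalence $-K_{\bar{Y}}=\bar{T}$.) Parts \ref{item:GK-nodal-F}--\ref{item:GK-nodal-T=1} of your argument use only the identity from \ref{item:GK-nodal-K} together with configuration facts valid in all cases, so they survive once \ref{item:GK-nodal-K} is repaired.

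Two further remarks. In \ref{item:GK-nodal-neg-def} your route differs from the paper's, which notes that $\phi^{*}\phi(A)$ is supported on $D_Y+A$ (because $A$ is disjoint from $\Exc\sigma$) and has square $\phi(A)^2>0$ since $\rho(\bar{Y})=1$; your case split (a rank count in the definite case, pushing the null vector down to $\bar{Y}$ in the semi-definite case) is correct and amounts to the same computation, just longer. However, your justification of the ``in particular'' clause is wrong for the fork: $\langle 2;[2],[2],[1,2,\dots,2]\rangle$ does \emph{not} contract to a smooth point and is not negative definite --- it supports a degenerate fiber (cf.\ Lemma \ref{lem:ht=2_twisted-models}\ref{item:ht=2_twisted_tangent}), hence is only negative semi-definite. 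Since the main statement excludes semi-definiteness as well, the conclusion still follows, but not for the reason you give.
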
 
\begin{proof}
	\ref{item:GK-nodal-K}, \ref{item:GK-nodal-F} 
	Pulling back the linear equivalence $-K_{\bar{Y}}=\bar{T}$ we get the linear equivalence $-K_{X}=D_{T}+L$ from \ref{item:GK-nodal-K}. The remaining part of \ref{item:GK-nodal-K} and \ref{item:GK-nodal-F} follow by the adjunction formula. 
	
	\ref{item:GK-nodal-neg-def} Since $\rho(\bar{Y})=1$, we have $0<\phi(A)^2=(\phi^{*}\phi(A))^2$. Part \ref{item:GK-nodal-K} implies that $A$ is disjoint from $D_T-T+L$, so $\phi^{*}\phi(A)$ is supported on $D_{Y}+A$, hence the latter is not negative (semi)-definite. 
		
	\ref{item:GK-nodal-T=1} We have $A\cdot D_Y=\sigma(A)\cdot \sigma_{*}D_Y$ and $\sigma(A)=[0]$, so $|\sigma(A)|$ induces a $\P^1$-fibration of $Y$ of height $A\cdot D_{Y}$ with respect to $\sigma_{*}D_Y=\Exc\alpha$. Thus $A\cdot D_Y\geq \height(\bar{Y})$, as needed.
\end{proof}

\begin{lemma}
	\label{lem:GK-ht-T=1}
	Assume that $\#\cT=1$, $\height(\bar{Y})\geq 2$ and $\height(\bar{X})\neq  \height(\bar{Y})+2$. Then the following hold.
	\begin{enumerate}
		\item\label{item:T=1-L-horizontal} For every $\P^1$-fibration of $X$ of height at most $\height(\bar{Y})+1$ with respect to $D$ the elliptic tie $L$ is horizontal.
		\item\label{item:T=1-exceptions} We have $\height(\bar{X})\geq \height(\bar{Y})=2$ and $\bar{X}$ is as in Proposition \ref{prop:GK-ht-exceptions}\ref{item:GK-ht-exceptions-T=1} or as in Lemma \ref{lem:GK_exceptions}.
	\end{enumerate}
\end{lemma}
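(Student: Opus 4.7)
The plan is to prove part (a) directly by contradiction, and then use it in part (b) as a structural constraint on the witnessing fibration, matched against Table~\ref{table:canonical} and the classification of height at most $2$ del Pezzo surfaces already at our disposal.

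For part (a), suppose $L$ is vertical with respect to some $\P^1$-fibration $p$ on $X$ of height $h\leq\height(\bar Y)+1$. Since $\#\cT=1$ forces $D_T=T$ and hence $\Exc\sigma=L$ by Lemma~\ref{lem:GK_intro}\ref{item:GK-intro-L}, the morphism $\sigma$ contracts only $L$, and since $L$ is vertical the fibration descends through $\sigma$ to a $\P^1$-fibration on $Y$. A general fiber $F$ of $p$ is disjoint from $L$, so its image $\sigma_*F$ is a general fiber of the induced fibration and $\sigma_*F\cdot\sigma_*D_Y=F\cdot D_Y$ by the projection formula. Lemma~\ref{lem:GK-nodal-basics}\ref{item:GK-nodal-F} gives $F\cdot(T+L)=2$; combined with $F\cdot L=0$ this yields $F\cdot T=2$, so the induced fibration on $Y$ has height $F\cdot D_Y=h-2\leq\height(\bar Y)-1$ with respect to $\Exc\alpha=\sigma_*D_Y$. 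Since $\height(\bar Y)\geq2$ by hypothesis, this contradicts the minimality of $\height(\bar Y)$.

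For part (b), fix a witnessing $\P^1$-fibration on $X$ of height $h=\height(\bar X)\leq\height(\bar Y)+1$. By part (a), $L$ is horizontal, a $k$-section for some $k\geq1$. Adjunction for $L$ gives $L\cdot K_X=-1$, and $-K_X\sim T+L$ from Lemma~\ref{lem:GK-nodal-basics}\ref{item:GK-nodal-K} yields $L\cdot T=2$; the analogous calculation with $F$ gives $F\cdot T=2-k$, forcing $k\in\{1,2\}$ and splitting the analysis between $T$ being a 1-section ($k=1$) or vertical ($k=2$). In either subcase I would analyse the degenerate fibers of $p$ using Lemma~\ref{lem:delPezzo_fibrations}\ref{item:-1_curves} (every fiber component off $D$ is a $(-1)$-curve), Lemma~\ref{lem:degenerate_fibers} on their shape, and the non-negative-definiteness constraint of Lemma~\ref{lem:GK-nodal-basics}\ref{item:GK-nodal-neg-def}, then reduce to a primitive model by contracting vertical $(-1)$-curves meeting $D$ only in $(-2)$-curves. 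In subcase $k=1$, the sections $L$ and $T$ meet at two points, so the pencil $|F|$ on $X$ pushes forward to a pencil on $Y$ whose members all pass through the singular point $\sigma(L)$ of $\bar T$; resolving this base point yields a $\P^1$-fibration of the minimal resolution of $\bar Y$ of height $F\cdot D_Y=h-1$ with respect to $\Exc\alpha$, so $h-1\geq\height(\bar Y)$, forcing $h=\height(\bar Y)+1$ and in particular $\height(\bar Y)=2$ whenever $h=3$. Subcase $k=2$, with $T$ vertical and $L$ a 2-section meeting $T$ twice in the same fiber, is more restrictive: here $L\cdot T=2$ together with Lemma~\ref{lem:GK-nodal-basics}\ref{item:GK-nodal-neg-def} applied to $L$ should force $\height(\bar X)\leq 2$, placing $\bar X$ under Lemma~\ref{lem:GK_exceptions}.

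The main obstacle is verifying that the primitive model produced in subcase $k=1$ corresponds to one of exactly the three canonical types listed in Proposition~\ref{prop:GK-ht-exceptions}\ref{item:GK-ht-exceptions-T=1}, namely $\rA_1+\rA_2+\rA_5$, $\rA_8$, $\rA_1+\rA_7$, and not to any other type in Table~\ref{table:canonical}. This requires a careful examination of how the elliptic tie $L$ and the two points of $L\cap T$ can sit on the minimal resolution of $\bar Y$: the constraint that $L$ is a $1$-section meeting $T$ at two points, rather than the ``canonical'' 2-section coming from the pencil through $\Sing\bar T$ as in the proof of Lemma~\ref{lem:GK_intro}\ref{item:GK-intro-ht}, is what pins $\bar Y$ down to those three types, and a direct inspection of their minimal resolutions (Figures~\ref{fig:A5+A2+A1},~\ref{fig:A8},~\ref{fig:A7+A1}) using Lemma~\ref{lem:GK-nodal-basics}\ref{item:GK-nodal-T=1} should confirm existence. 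Combined with Lemma~\ref{lem:GK-ht-exceptions-ht=3}, which realises $\height(\bar X)=3$ for these types, this completes part (b).
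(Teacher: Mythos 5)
Your part~(a) is correct and is exactly the paper's argument: if $L$ were vertical, a general fiber $F$ would satisfy $F\cdot T=2$ and $\sigma_{*}F$ would be a fiber of a $\P^1$-fibration of $Y$ itself, so $F\cdot D=F\cdot D_Y+2\geq \height(\bar{Y})+2$, a contradiction. The opening reductions of part~(b) --- $L\cdot T=2$, $F\cdot T=2-k$, hence $k\in\{1,2\}$ --- also match the paper, which at this point additionally observes that $\bar{X}$ cannot be canonical, so $T^2=5-\#\cS_Y\leq -3$ and $\#\cS_Y=8$; this is the step that later narrows the list of candidate types and is missing from your outline.

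The two subcases, however, are handled incorrectly. In subcase $k=1$ your bound $F\cdot D_Y\geq\height(\bar Y)$ is not justified as stated: resolving the base point of the pencil $|\sigma_{*}F|$ returns $X$, which is a blowup of $Y$ at a point of $\bar{Y}\reg$, not the minimal log resolution of $\bar{Y}$, so you do not obtain a $\P^1$-fibration of $Y$ and cannot invoke Definition~\ref{def:height}. The inequality is true, but it must be extracted from the degenerate fibers: each degenerate fiber meets the $1$-section $L$ in a $(-1)$-curve, whose image on $Y$ is a $0$-curve, and Lemma~\ref{lem:GK-nodal-basics}\ref{item:GK-nodal-T=1} combined with \ref{lem:GK-nodal-basics}\ref{item:GK-nodal-neg-def} then forces the fiber structure; one must also separately exclude $\height(\bar{Y})=4$ (types $4\rA_2$, $8\rA_1$), and the outcome of this subcase is only $\cS_Y=\rA_8$ (or $\rA_2+\rE_6$, which is absorbed by Lemma~\ref{lem:GK_exceptions}), not all three types of Proposition~\ref{prop:GK-ht-exceptions}\ref{item:GK-ht-exceptions-T=1}. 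More seriously, your claim that subcase $k=2$ forces $\height(\bar{X})\leq 2$ is false: Lemma~\ref{lem:GK-ht-exceptions-ht=3} exhibits, for $\cS_Y=\rA_1+\rA_2+\rA_5$ and $\rA_1+\rA_7$, witnessing fibrations of height $3$ in which $T$ is vertical, i.e.\ $L$ is a $2$-section; so $k=2$ is precisely where two of the three exceptional types live, and where the bulk of the work lies (one contracts $X$ to a Hirzebruch surface on which $\tau(L)$ is a smooth $2$-section with $\tau(L)^2=4$, deduces $\sum_{i\geq 2}k_i=5$, and rules out the remaining candidates $2\rA_4$, $4\rA_1+\rD_4$, $4\rA_2$, $8\rA_1$ by a case-by-case fiber analysis). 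Moreover Lemma~\ref{lem:GK-nodal-basics}\ref{item:GK-nodal-neg-def} cannot be applied to $L$, since that statement requires $A\neq L$ and $A\cdot T=1$ while $L\cdot T=2$. As written, your argument would wrongly exclude the types $\rA_1+\rA_2+\rA_5$ and $\rA_1+\rA_7$ that the lemma is meant to produce.
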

\begin{proof}
Lemma \ref{lem:GK_intro}\ref{item:GK-intro-ht} and the assumption $\height(\bar{X})\neq  \height(\bar{Y})+2$ give $\height(\bar{X})\leq \height(\bar{Y})+1$. Fix a $\P^1$-fibration of $X$ whose fiber $F$ satisfies $F\cdot D\leq \height(\bar{Y})+1$.

	To prove \ref{item:T=1-L-horizontal} suppose  $F\cdot L=0$. Then $F\cdot T=2$ by Lemma \ref{lem:GK-nodal-basics}\ref{item:GK-nodal-F} and $\sigma_{*}F$ is a fiber of a $\P^1$-fibration of $Y$, so $F\cdot D_{Y}\geq \height(\bar{Y})$. We get $F\cdot D=F\cdot D_Y+F\cdot T\geq \height(\bar{Y})+2$, a contradiction.

	It remains to prove part \ref{item:T=1-exceptions}. By Lemma \ref{lem:GK_exceptions} we can assume $F\cdot D\geq \height(\bar{X})\geq 3$. 
	If $\bar{X}$ is canonical then $\height(\bar{X})\leq 2$ or $\height(\bar{X})=4=\height(\bar{Y})+2$, see Table \ref{table:canonical}, contrary to the assumptions. Thus $-3\geq T^2=5-\#\cS_{Y}$ by \eqref{eq:Noether}, so $\#\cS_{Y}=8$. Part \ref{item:T=1-L-horizontal} and Lemma \ref{lem:GK-nodal-basics}\ref{item:GK-nodal-F} show that $L$ is a $1$- or a $2$-section. Let $F_{1},\dots, F_{\nu}$ be all degenerate fibers, and let 
	$k_{i}=\#(F_i)\redd-1$.
	\smallskip

	Consider the case $F\cdot L=1$. Then $F\cdot T=1$ by Lemma \ref{lem:GK-nodal-basics}\ref{item:GK-nodal-F}. By Lemma \ref{lem:GK-nodal-basics}\ref{item:GK-nodal-K}, every vertical $(-1)$-curve meets $T+L$ once, so it has multiplicity $1$ in $F_i$. Lemma \ref{lem:delPezzo_fibrations}\ref{item:-1_curves} implies that each $F_i$ consists of $(-1)$- and $(-2)$-curves, so $F_{i}=[1,(2)_{k_i-1},1]$. Let $L_i,T_i$ be the tips of $F$ meeting $L$ and $T$, respectively, and let $G_i=F_i-L_i-T_i$. We have $L_{i}\cdot D_{Y}\geq \height(\bar{Y})$ and $(T_{i}+G_i)\cdot (D_Y)\hor\geq 1$ by Lemma \ref{lem:GK-nodal-basics}\ref{item:GK-nodal-T=1},\ref{item:GK-nodal-neg-def}, so $\height(\bar{Y})+1\geq F_{i}\cdot D\hor=L_{i}\cdot (D_Y)\hor+(T_{i}+G_{i})\cdot (D_Y)\hor+F_{i}\cdot T\geq \height(\bar{Y})-L_{i}\cdot G_{i}+2$. We conclude that  $L_{i}\cdot (D_Y)\hor=\height(\bar{Y})-1$, $G_{i}\neq 0$ and $(T_{i}+G_{i})\cdot (D_Y)\hor=1$, so $(D_Y)\hor$ meets $G_{i}+T_{i}$ exactly once. We order the chain $F_i$ so that $L_i$ is its first tip. Then $(D_Y)\hor$ meets the $(l_i+1)$-th component of $F_i$ for a unique $l_{i}\in \{1,\dots, k_i\}$.  We have $\#(D_Y)\hor=\#D\hor-1=\nu$ by Lemma \ref{lem:delPezzo_fibrations}\ref{item:Sigma}.  Let $\tau\colon X\to \P^2$ be the contraction of $L+\sum_{i=1}^{\nu}(G_{i}+T_{i})$. 
	
	Suppose $\height(\bar{X})>3$. Then $\height(\bar{Y})>2$, so $\height(\bar{Y})=4$ and $\cS_{Y}\in \{8\rA_1,4\rA_2\}$, see Table \ref{table:canonical}. By Remark \ref{rem:primitive_ht=2} the surface $\bar{Y}$ is primitive, so each $(-1)$-curve $T_{i}$ meets $D_Y$ at least twice. Thus $T_{i}$ meets $(D_Y)\hor$, i.e.\ $l_i=k_{i}$, and therefore $G_{i}$ is a connected component of $D_Y$. If $\cS_{Y}=4\rA_2$ then $8=\sum_{i=1}^{\nu}\#G_i+\#(D_Y)\hor=3\nu$, so $\nu=\frac{8}{3}$, which is impossible. Thus $\cS_{Y}=8\rA_1$, and $8=\sum_{i=1}^{\nu}\#G_i+\#(D_Y)\hor=2\nu$, so $\nu=4$. Thus $(D_Y)\hor$ consists of four $1$-sections. Each of the four curves $T_{i}$ meets $(D_Y)\hor$ exactly once, so there is a component of $(D_Y)\hor$, call it $H$, which meets at most one $T_i$. Then $\tau(H)^2\leq -2+2=0$, a contradiction. 	
		
	Hence $\height(\bar{X})=3$. We have $\nu=\#(D_Y)\hor\leq 2$. If $\nu=1$ then $\tau((D_Y)\hor)$ is a conic, so $4=\tau((D_Y)\hor)^2=-2+l_{1}$. Thus $l_{1}=6$, and we get $\cS_{Y}=\rD_8$, so Table \ref{table:canonical} shows that  $\height(\bar{Y})=1$, a contradiction. Hence $\nu=2$. Now $(D_Y)\hor=H_{1}+H_2$, where each $H_{i}$ is a $1$-section, so $\tau(H_{i})$ is a line. If $H_{1}$ meets $L_{i}$ for both $i\in \{1,2\}$ then $\tau(H_{i})^2=-2$, which is impossible. Thus, say, $H_{i}$ meets $G_{i}+T_i$ for $i=1,2$, and $1=\tau(H_{i})^2=-2+l_i$, so $l_i=3$. We have $l_{i}\leq k_{i}$ and $k_1+k_2=\#D_Y=8$, so either $k_{1}=k_2=4$, or, say, $k_{1}=5$, $k_2=3$. Moreover, the map $\tau^{-1}$ is an isomorphism near the common point of the lines $\tau(H_i)$, so $H_{1}\cdot H_{2}=1$. It follows that either $\cS_{Y}=\rA_8$, as needed, or $\cS_{Y}=\rA_2+\rE_6$, in which case  Lemma \ref{lem:GK_exceptions} gives $\height(\bar{X})=2$, a contradiction.
		\smallskip
	
	Consider the case $F\cdot L=2$. By Lemma \ref{lem:GK-nodal-basics}\ref{item:GK-nodal-F} $T$ is vertical, so, say, $T\subseteq F_1$. For $i\geq 2$, the fiber $F_i$ consists of $(-1)$- and $(-2)$-curves, so $(F_i)\redd$ is a chain $[1,(2)_{k_i-1},1]$, $[2,1,2]$ or on a fork $\langle 2;[1,(2)_{k_i-3}],[2],[2]\rangle$. 
	 
	Since $T$ meets the $2$-section $L$ twice, it has multiplicity $1$ in $F_1$. Hence there is a birational morphism $\tau\colon X\to \F_{m}$ such that $\tau(T)$ is a fiber. Lemma \ref{lem:GK-ht-nodal}\ref{item:GK-nodal-K} implies that the exceptional $(-1)$-curve of every blowup in the decomposition of $\tau$ either lies in the image of $F_1$ and is disjoint from the image of $L$, or lies in the image of $F_i$ for some $i\geq 2$ and meets the image of $L$ once. Thus $\tau(L)$ is a smooth, rational $2$-section and $\tau(L)^2=L^2-\sum_{i=2}^{\nu}k_{i}$. Numerical properties of $\F_m$ imply that $\tau(L)^2=4$, so  $\sum_{i=2}^{\nu}k_{i}=\tau(L)^2-L^2=5$.
	 
	 Suppose the Lemma fails. Then $\cS_{Y}\in \{2\rA_4,4\rA_1+\rD_4,4\rA_2,8\rA_1\}$, see Table \ref{table:canonical}. Suppose that each $F_i$ for $i\geq 2$ has exactly one $(-1)$-curve; in particular $k_i\geq 2$.  If $\nu=2$ then $k_{2}=5$, so $D_{Y}$ has a subdivisor of type $\langle 2,[2],[2],[2,2]\rangle$, which is false for the above types $\cS_{Y}$. Hence $\nu=3$ and, say, $(k_{2},k_{3})=(2,3)$. Now $D_Y$ has a subchain $[2,2,2]$, so the above list gives $\cS_{Y}\in \{2\rA_4,4\rA_1+\rD_4\}$. In particular, $\height(\bar{Y})=2$, see Table \ref{table:canonical}. By Lemma \ref{lem:GK-nodal-basics}\ref{item:GK-nodal-T=1}, the $(-1)$-curve $A$ in $F_3$ satisfies $A\cdot D_Y\geq \height(\bar{Y})=2$, so it meets $D\hor$. Since $A$ has multiplicity $2$ in $F_3$, and $F_3\cdot D\geq \height(\bar{Y})+1=3$, we see that $D\hor$ consists of a $2$-section meeting $A$, and a $1$-section $H$ which meets $F_i$ in a $(-2)$-tip for $i=2,3$. Thus $(D_Y)\vert+H\subseteq D_Y$ contains a subchain $[(2)_{5}]$, a contradiction.
	 
	 Therefore, say, $F_2$ has two $(-1)$-curves, call them $A_1,A_2$. Then $F_2=[1,(2)_{k_2-1},1]$. Since $T$ is vertical, we have $A_{i}\cdot L=1$ and $A_{i}\cdot D_{Y}\geq \height(\bar{Y})$ by Lemma \ref{lem:GK-nodal-basics}\ref{item:GK-nodal-K},\ref{item:GK-nodal-T=1}. If $\cS_{Y}\in \{4\rA_2,8\rA_1\}$ then $\height(\bar{Y})=4$, so $5\geq F_2\cdot D\geq \sum_{i=1}^2(A_{i}\cdot D_{Y}-1)\geq 6$, a contradiction. Hence $\cS_{Y}\in \{2\rA_4,4\rA_1+\rD_4\}$ and $\height(\bar{Y})=2$. If $k_2=1$ then as before $3\geq F_{2}\cdot D= \sum_{i=1}^2A_{i}\cdot D_Y\geq 4$, which is false, so $k_2\geq 2$. By Lemma \ref{lem:delPezzo_fibrations}\ref{item:Sigma}, the number of $(-1)$-curves in $F_{1}$ is at most $\#D\hor-1\leq 2$, in particular $\#D\hor\geq 2$. Since $F_{1}$ consists of $(-1)$-curves off $D$, $(-2)$-curves in $D_Y$ and the $(-3)$-curve $T$, it is supported on a chain $[1,3,1,2]$ or $[3,1,2,2]$; and we have $\#D\hor=3$ or $\#D\hor\geq 2$, respectively. We conclude that $F_1$ contains a $(-1)$-curve $A$ of multiplicity $2$ or $3$, respectively, and $A\cdot D\hor=0$. Thus $A\cdot D_Y=1$. Since $A\cdot L=0$, we have $\sigma(A)=[1]$, so $\bar{Y}$ is not primitive. By Remark \ref{rem:primitive_ht=2} we have $\cS_{Y}\neq 2\rA_4$, so $\cS_{Y}=4\rA_1+\rD_4$. Let $G$ be the connected component of $D_Y$ of type $\rD_4$.
 
 	Suppose $(F_1)\redd=[3,1,2,2]$. The subchain $[2,2]$ of $(F_1)\redd$ is contained in $G$, so it meets $D\hor$ once. Since the components of this subchain have multiplicities $1$ and $2$ in $F_1$, it follows that $D\hor$ meets $T$ or  $A$, a contradiction.
 	
 	Thus $(F_1)\redd=[1,3,1,2]$, so Lemma \ref{lem:delPezzo_fibrations}\ref{item:Sigma} shows that $\#D\hor=3$, and each $F_i$ for $i\geq 2$ contains exactly one $(-1)$-curve. Since $D\hor$ consists of $1$-sections, it meets $F_1$ only in tips. By Lemma \ref{lem:GK-ht-nodal}\ref{item:GK-nodal-neg-def}, the $(-1)$-tip of $F_{1}$ meets $D\hor$, so the $(-2)$-tip of $F_{1}$ meets $D\hor$ at most twice. Thus the $(-2)$-tip of $F_1$ is either is a tip of $G$, or a connected component of $D_Y$. Hence the sum of $A$ and the connected component of $D_Y$ meeting it is a chain $[1,2]$ or a fork $\langle 2;[1,2],[2],[2]\rangle$, contrary to Lemma \ref{lem:GK-nodal-basics}\ref{item:GK-nodal-neg-def}.
\end{proof}

\begin{lemma}
	\label{lem:GK-ht-nodal}
	Assume that $\height(\bar{Y})\geq 2$, and either $\#\cT= 2$ or $\#\cT\geq 3$ and $\bar{T}$ is nodal. Then $\height(\bar{X})=\height(\bar{Y})+2$.
\end{lemma}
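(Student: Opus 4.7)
The inequality $\height(\bar X)\le h+2$, where $h\de\height(\bar Y)\ge 2$, is Lemma \ref{lem:GK_intro}\ref{item:GK-intro-ht}. My plan to prove the reverse inequality is by contradiction: suppose $X$ carries a $\P^1$-fibration $p$ with general fiber $F$ satisfying $F\cdot D\le h+1$. By Lemma \ref{lem:GK-nodal-basics}\ref{item:GK-nodal-K},\ref{item:GK-nodal-F} the relation $-K_X\sim D_T+L$ gives $F\cdot(D_T+L)=2$, hence $F\cdot D_T=2-F\cdot L$ and $F\cdot D_Y\le h-1+F\cdot L$.

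The central computation is $F\cdot\Exc\sigma=F\cdot(D_T-T+L)=2-F\cdot T$. In the \emph{clean case} $F\cdot L=0$ and $F\cdot T=2$, the exceptional divisor $\Exc\sigma$ is vertical, so $p$ factors as $p_Y\circ\sigma$ for a $\P^1$-fibration $p_Y$ of $Y$; its fiber $\sigma_*F$ satisfies $\sigma_*F\cdot\Exc\alpha=F\cdot D_Y\le h-1$ by the projection formula, contradicting $\height(\bar Y)\ge h$. The rest of the work is to force the clean case.

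For $\#\cT=2$, the cycle $D_T+L$ has only three components, so a direct case-by-case analysis is feasible. Writing $(\ell,t)\de(F\cdot L,F\cdot T)$, one enumerates the non-clean possibilities $(\ell,t)\in\{(0,0),(0,1),(1,0),(1,1),(2,0)\}$ and rules each out using: every $(-1)$-curve $A\ne L$ meets $D_T+L$ exactly once (Lemma \ref{lem:GK-nodal-basics}\ref{item:GK-nodal-K}); any $(-1)$-curve $A\ne L$ meeting $T$ once cannot sit in a negative-semidefinite configuration with $D_Y$ (Lemma \ref{lem:GK-nodal-basics}\ref{item:GK-nodal-neg-def}); and each degenerate fiber of $p$ contains such a $(-1)$-curve by Lemma \ref{lem:delPezzo_fibrations}\ref{item:-1_curves}. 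For $\#\cT\ge 3$ (so $\bar T$ nodal) I would induct on $\#\cT$: when the contraction of $L$ is an elementary swap in the sense of Definition \ref{def:vertical_swap}\ref{item:def-swap-elementary} (the generic situation), Lemma \ref{lem:swap_lc} produces a log canonical del Pezzo $\bar X'$ of rank one with the same descendant $(\bar Y,\bar T)$ and $\#\cT'=\#\cT-1$, while the fibration $p$ (after pushdown) exhibits $\height(\bar X')\le h+1$, contradicting the inductive hypothesis $\height(\bar X')=h+2$. Exceptional configurations where $L$ is ineligible for an elementary swap (both $D_T$-components adjacent to $L$ in the cycle are $(-2)$-curves) are to be handled by contracting a different $(-1)$-curve, or by extending the base-case analysis.

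The principal obstacle will be the subcase $F\cdot L=2$, where $D_T$ — and in particular $T$ — is entirely vertical, so that $p$ cannot descend to $Y$. In this subcase I plan to analyze the degenerate fiber $F_0\supseteq T$: its other components are constrained by the relation $T\cdot(F_0-T)=-T^2=s-5$ together with the requirement that every non-$L$ $(-1)$-curve in $F_0$ meets $D_T+L$ exactly once; combined with $F\cdot D_Y\le h+1$ and $h\ge 2$, these force a configuration that violates either Lemma \ref{lem:GK-nodal-basics}\ref{item:GK-nodal-neg-def} or the negative definiteness of $D$.
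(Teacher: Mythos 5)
Your upper bound and your ``clean case'' computation are both correct: if $F\cdot L=0$ and $F\cdot T=2$ then $\Exc\sigma$ is vertical, the fibration descends to $Y$, and $F\cdot D_Y\le F\cdot D-2\le\height(\bar Y)-1$ gives a contradiction. But the clean case is not what you need to force, and the two mechanisms you propose for reducing to it do not close. First, the inductive step for $\#\cT\ge 3$: contracting $L$ does produce $\bar X'$ with $\#\cT'=\#\cT-1$ and the same descendant, but the fibration $p$ pushes down to a $\P^1$-fibration of $X'$ only when $L$ is \emph{vertical}; if $L$ is horizontal, $|\tau_{*}F|$ is a pencil with a base point at $\tau(L)$ and gives no bound on $\height(\bar X')$. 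The paper proves as its first claim that for any fibration of height $\le\height(\bar Y)+1$ the tie $L$ is necessarily \emph{horizontal}, so your inductive step never applies in the situation where it is needed. In the paper the contraction of $L$ is used only to dispose of the sub-case ``$L$ vertical'', by a minimality argument whose terminal case is $\#\cT'=1$, i.e.\ Lemma \ref{lem:GK-ht-T=1}; your proposal never invokes that lemma, and without it even the vertical sub-case does not terminate (for $\#\cT=2$, contracting a vertical $L$ lands you at $\#\cT'=1$, outside both your induction and your base case).

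Second, the cases $(\ell,t)=(1,0),(1,1)$ — $L$ a $1$-section — are not a routine enumeration; they are where essentially all of the paper's proof lives, uniformly for all $\#\cT\ge 2$. The tools you list do not suffice: Lemma \ref{lem:GK-nodal-basics}\ref{item:GK-nodal-neg-def} only constrains $(-1)$-curves meeting $T$, whereas by \ref{lem:GK-nodal-basics}\ref{item:GK-nodal-K} a $(-1)$-curve in a degenerate fiber may instead meet $L$ or $D_T-T$; for those one needs the bound $A\cdot D_Y\ge\height(\bar Y)$, which for $\#\cT\ge 2$ is not Lemma \ref{lem:GK-nodal-basics}\ref{item:GK-nodal-T=1} but requires a separate swap argument (again resting on Lemma \ref{lem:GK-ht-T=1} and on the horizontality of $L$). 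On top of that the paper needs a structure theorem for the degenerate fibers meeting $D_T$ (which in particular forces $\height(\bar Y)=2$), the facts that $T$ is vertical while $L$ and $(D_T)\hor$ are $1$-sections, and a final analysis of how $(D_Y)\hor$ meets the fiber containing $T$, producing a configuration violating \ref{lem:GK-nodal-basics}\ref{item:GK-nodal-neg-def}. Your sketch for $(\ell,t)=(2,0)$ is closer to the mark (the paper rules it out by showing $D_T+L-T$ would be a chain $[V\trp,1,(V^{*})\trp]$ inside a fiber, which contracts to a $0$-curve rather than to the smooth point $\Sing\bar T$), but as written the hardest cases remain at the level of a plan with a missing key estimate.
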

\begin{proof}
	Suppose the contrary. Then $\height(\bar{X})\leq \height(\bar{Y})+1$ by Lemma \ref{lem:GK_intro}\ref{item:GK-intro-ht}. Assume that the number $\#\cT\geq 2$ is minimal possible. Table \ref{table:canonical} shows that $\bar{X}$ is not canonical, so the elliptic tie $L$ meets exactly one $(-2)$-curve in $D$, call it $C$. Fix a $\P^1$-fibration of $X$ whose fiber $F$ satisfies $F\cdot D\leq \height(\bar{Y})+1$. 
	
\begin{claim}\label{cl:L-horizontal}
	The elliptic tie $L$ is horizontal.
\end{claim}	
\begin{proof} 
	Suppose the contrary. Let $\tau\colon X\to X'$ be the contraction of $L$, and let $D'=\tau_{*}(D-C)$, $F'=\tau_{*}F$. Then $(X,D)\sqto (X',D')$ is a swap onto a minimal resolution of another del Pezzo surface $\bar{X}'$ of rank one, with the same descendant $(\bar{Y},\bar{T})$. Let $\cT',L'$ etc.\ be as in Notation \ref{not:GK} for the surface $\bar{X}'$, so $L'=\tau(C)$. The linear system $|F'|$ induces a $\P^1$-fibration such that $F'\cdot D'=F\cdot D-F'\cdot L'\leq \height(\bar{Y})+1-F'\cdot L'$. In particular, $\height(\bar{X}')\leq \height(\bar{Y})+1$, so $\#\cT'=1$ by the minimality of $\#\cT$. By Lemma \ref{lem:GK-ht-T=1}, the elliptic tie $L'$ is horizontal and $\height(\bar{X}')\geq \height(\bar{Y})=2$, so the above inequality $F'\cdot D'\leq \height(\bar{Y})+1-F'\cdot L'$ gives $F'\cdot L'=1$ and $F'\cdot D'=2$. By Lemma \ref{lem:GK-nodal-basics}\ref{item:GK-nodal-F} we have $F'\cdot T'=1$. Let $F''\subseteq X'$ be a degenerate fiber. By Lemma \ref{lem:delPezzo_fibrations}\ref{item:-1_curves} $F''$ consists of $(-2)$-curves in $D'$ and $(-1)$-curves off $D'$. By Lemma \ref{lem:GK-nodal-basics}\ref{item:GK-nodal-K}, every such $(-1)$-curve meets $T'$ or $L'$, which are $1$-sections, so it has multiplicity $1$ in $F''$. Thus $F''=[1,2,\dots,2,1]$, $\ftip{F''}$ meets $L'$ and $\ltip{F''}$ meets $T'$. By Lemma \ref{lem:GK-nodal-basics}\ref{item:GK-nodal-T=1} $\ftip{F''}$ meets $(D'_{Y})\hor$, which is a $1$-section because $F''\cdot D'=2$. Hence $F''-\ftip{F''}=[2,\dots,2,1]$ meets $D-F''\wedge D$ only in $T$, a contradiction with Lemma \ref{lem:GK-nodal-basics}\ref{item:GK-nodal-neg-def}.
\end{proof}

	\begin{claim}\label{cl:AD-bound} 
		Assume that $A$ is a $(-1)$-curve meeting $L+D_{T}-T$. Then $A\cdot D_Y\geq \height(\bar{Y})$.
\end{claim}
\begin{proof}
	By Lemma \ref{lem:GK-nodal-basics}\ref{item:GK-nodal-K} we have $A\cdot (D_T+L)=1$. 
	Contracting $L$ and $(-1)$-curves in subsequent images of $D_{T}$ until the image of $A$ meets a $(-1)$-curve $L'$ in the image of $L+D_{T}$, we get a swap $(X,D)\sqto (X',D')$ onto a minimal log resolution of a del Pezzo surface $\bar{X}'$ with the same descendant $(\bar{Y},\bar{T})$. If $\#\cT'=1$ then the claim follows from Lemma \ref{lem:GK-nodal-basics}\ref{item:GK-nodal-T=1}. Assume $\#\cT'\geq 2$. The linear system $|L'+A'|$  induces a $\P^1$-fibration of $X'$ such that the elliptic tie $L'$ is vertical, so by Claim \ref{cl:L-horizontal} we have $\height(\bar{Y})+2\leq (A'+L')\cdot D'=A\cdot D_{Y}+2$, as needed.
\end{proof} 

\begin{claim}\label{cl:F}
	We have $\height(\bar{Y})=2$, and for any fiber $F$ containing a component of $D_T$, one of the following holds.
	\begin{enumerate}
		\item\label{item:F-with-T} $F=U+G_{1}+\dots G_{r}$, where $U=[V,T,V^{*}]$ or $U=T$, and each $G_i$ is a chain $[1,2,\dots,2]$, meeting $(D_Y)\hor$, whose first tip meets $T$ and the remaining components are contained in $D_Y$.
		\item\label{item:F-with-C} $F\redd=[2,1,2]$, $F\redd \wedge (D_T)\vert=C$, $(D_T)\hor$ is a $1$-section meeting $C$, and $(D_Y)\hor$ is a $2$-section meeting the $(-1)$-curve in $F$, once. 
	\end{enumerate} 	
\end{claim}	
\begin{proof}
	Since $D_Y$ consists of $(-2)$-curves, every $(-1)$-curve in $F$ meets $D_T$. Let $A_{1},\dots, A_{k}$ be the $(-1)$-curves in $F$ which do not meet $T$. Let $\mu_{i}$ be the multiplicity of $A_{i}$ in $F$, and let $\beta_{i}\de \beta_{F\redd}(A_i)-1\in \{0,1\}$. 
	Put $G\de F\redd-D_T\wedge F\redd$. Let $r\geq 0$ be the number of connected components of $G$ meeting $T$. Since $D_Y$ consists of $(-2)$-curves, every such connected component is of type $[1,2,\dots,2]$, so it meets $(D_Y)\hor$ by Lemma \ref{lem:GK-nodal-basics}\ref{item:GK-nodal-neg-def}. 
	
	Assume $k=0$. Then after the contraction of $G$ the image of $F$ is either a $0$-curve, or a chain meeting the image of  $(L+D_T)\hor$ in tips. Since $F\cdot (L+D_T)\hor=2$, those tips have multiplicity one, so the image of $F$ is of type $[0]$ or $[V,1,V^{*}]$ for some admissible chain $V$. Since the above contraction is an isomorphism in a neighborhood of $D_{T}-T$, we conclude that $F$ is as in \ref{item:F-with-T}. Suppose $\height(\bar{Y})\neq 2$.  Since by assumption $\height(\bar{Y})\geq 2$, Table \ref{table:canonical} shows that $\cS_{Y}\in \{4\rA_2,8\rA_1\}$, so every $(-1)$-curve in $F$ meets $T$ and a tip of a connected component of $D_Y$ which is a chain. This contradicts  Lemma \ref{lem:GK-nodal-basics}\ref{item:GK-nodal-neg-def}. Hence $\height(\bar{Y})=2$, as needed. 
	
	Assume $k\geq 1$. We have $F\cdot D\geq \sum_{i=1}^{k}\mu_{i} A_{i}\cdot (D_Y)\hor+F\cdot (D_T)\hor +r$. Claim \ref{cl:AD-bound} shows that for each $i=1,\dots,k$ we have $A_i\cdot (D_Y)\hor=A_{i}\cdot D_{Y}-\beta_{i}\geq \height(\bar{Y})-\beta_{i}$, so the inequality $F\cdot D\leq \height(\bar{Y})+1$ gives
	\begin{equation}\label{eq:F-bound}
		\height(\bar{Y})+1\geq \sum_{i=1}^{k}\mu_{i}(\height(\bar{Y})-\beta_{i})+F\cdot (D_T)\hor +r.
	\end{equation}
	Suppose $\beta_{1}=0$, i.e.\ $A_{1}$ is a tip of $F$. Since $\height(\bar{Y})\geq 2$, inequality \eqref{eq:F-bound} implies that $\mu_{1}=1$, so $F$ contains another $(-1)$-curve, call it $A'$. If $A'$ does not meet $T$, say $A'=A_2$, then inequality \eqref{eq:F-bound} implies that $\mu_{2}=1$ and $\beta_{2}=1$, which is impossible. Thus $A'$ meets $T$, so $r\geq 1$. Inequality \eqref{eq:F-bound} gives $r=1$, i.e.\ $A'$ is unique, and $F\cdot (D_T)\hor=0$, i.e.\ $F$ contains $D_T$. We conclude that $F=[1,2,\dots,2,T,1,(2)_{-T^2-2}]$, and $F_{T}-A_1$ meets $D\hor$ only in the last tip, call it $V$. The chain $L+F\redd-A=[1,T,1,(2)_{-T^2-2}]$ supports a fiber $F'$ of another $\P^1$-fibration of $X$. We have $F'\cdot D=3$, because $F'\redd$ meets $D-F'\redd\wedge D$ only once on $L$, once on $T$, and once on $V$; and $L,T,V$ have multiplicity $1$ in $F'$. Thus $F'\cdot D\leq \height(\bar{Y})+1$, and for this new $\P^1$-fibration the elliptic tie $L$ is vertical, a contradiction with Claim \ref{cl:L-horizontal}.
	
	We conclude that for all $i$ we have $\beta_{i}=1$, i.e.\ the $(-1)$-curve $A_i$ is not a tip of $F\redd$, and therefore $\mu_{i}\geq 2$. Inequality \eqref{eq:F-bound} implies that $\sum_{i}\mu_{i}\leq 3$, so $k=1$ and $\mu_1\leq 3$. 
	
	Suppose $F\cdot (D_T)\hor=0$. Then $D_{T}\subseteq F$. If $r=0$ then $A_1$ is the unique $(-1)$-curve in $F$, so the conditions $\beta_1=1$, $\mu_1\leq 3$ give $F\redd=[3,1,2,2]$ or $[2,1,2]$, contrary to the assumption $\#\cT\geq 2$. Thus $r\geq 1$, and inequality \eqref{eq:F-bound} gives $r=1$ and $\mu_1=2$. Moreover, the connected component of $F\redd-D_T$ meeting $T$, which is a chain $[1,2,\dots,2]$, meets $D\hor$ in a component of multiplicity $1$. We conclude that $F\redd=[2,1,3,2,\dots,2]*[T,1,2,\dots,2]$, so the tips of $D_{T}=[3,2,\dots,2]*T$ are not $(-2)$-curves. This is a contradiction, as one of those tips is $C$.
	
	Thus $F\cdot (D_T)\hor\geq 1$, so by Lemma \ref{lem:GK-nodal-basics}\ref{item:GK-nodal-F} and Claim \ref{cl:L-horizontal} the curves $(D_T)\hor$ and $L$ are $1$-sections. Inequality \eqref{eq:F-bound} gives $\height(\bar{Y})=2$, $\mu_{1}=2$ and $r=0$, so $F\redd=[2,1,2]$ and $F\redd\wedge D_T$ is a $(-2)$-curve meeting $L$, called $C$. Since $(D_Y)\hor$ meets $A_1$, we conclude that $(D_Y)\hor$ is a $2$-section, so \ref{item:F-with-C} holds, as needed.
\end{proof}

\begin{claim}\label{cl:not-vertical}
	The curves $L$ and $H_T\de (D_T)\hor$ are $1$-sections.
\end{claim}
\begin{proof}
	By Lemma \ref{lem:GK-nodal-basics}\ref{item:GK-nodal-F} any fiber $F$ satisfies $F\cdot D_{T}=2-F\cdot L\leq 1$ by Claim \ref{cl:L-horizontal}. If $F\cdot D_{T}=1$ then the claim follows, so assume $F\cdot D_{T}=0$, i.e.\ $D_T$ is contained in some fiber $F_{T}$. Such $F_{T}$ is as in Claim \ref{cl:F}\ref{item:F-with-T}. Since $\#D_{T}\geq 2$, we get $D_{T}=[V,T,V^{*}]$, so $D_{T}+L-T=[V\trp,1,(V^{*})\trp]=[V\trp,1,(V\trp)^{*}]$, see \cite[\sec 3.9]{Fujita-noncomplete_surfaces}, which blows down to a $0$-curve and not to a smooth point, a contradiction.
\end{proof}

\begin{claim}\label{cl:other-F}
	Let $F$ be a degenerate fiber which does not contain any component of $D_T$. Then $F=[1,2,\dots,2,1]$, $\ftip{F}$ meets $L$ and $(D_Y)\hor$, and $\ltip{F}$ meets $H_T$ and $T+(D_Y)\hor$.
\end{claim}	
\begin{proof}
	Let $A$ be a $(-1)$-curve in $F$. By Lemma \ref{lem:GK-nodal-basics}\ref{item:GK-nodal-K}, $A$ meets $D_T+L$. Since $L$ is horizontal and $F$ is disjoint from $(D_T)\vert$, the curve $A$ meets $(D_T+L)\hor$, which by Claim \ref{cl:not-vertical} consists of $1$-sections. Thus $A$ has multiplicity $1$ in $F$. Since $F\redd\wedge D$ consists of $(-2)$-curves, we get $F=[1,2,\dots,2,1]$. Since both $(-1)$-curves in $F$ meet $D_{T}+L$, we can order $F$ in such a way that $\ftip{F}$ meets $L$ and $\ltip{F}$ meets $H_T$. By Claim \ref{cl:AD-bound} if a tip of $F$ does not meet $T$ then it meets $(D_Y)\hor$, which ends the proof.
\end{proof}

\begin{claim}\label{cl:T-vertical}
	The curve $T$ is contained in a fiber $F_{T}$ as in Claim \ref{cl:F}\ref{item:F-with-T}.
\end{claim}
\begin{proof}
	Suppose the contrary. Then $T$ is horizontal, and $D_{T}-T$ is contained in a fiber $F$ as in Claim \ref{cl:F}\ref{item:F-with-C}, so $D_{T}=T+C$, $H_2\de (D_Y)\hor$ is a $2$-section, and $F\redd\wedge D_Y=[2]$. By Claim \ref{cl:other-F} any degenerate fiber $F'\neq F$ is a chain $[1,2,\dots,2,1]$, so by Lemma \ref{lem:delPezzo_fibrations}\ref{item:Sigma} there is exactly one such $F'$. By Claim \ref{cl:other-F} $A\de \ftip{F'}$ meets $L$ and $H_2$. If $A\cdot H_2=2$ then the chain $F'-A=[1,2,\dots,2]$ meets $T$ and is disjoint from $H_2$, contrary to Lemma \ref{lem:GK-nodal-basics}\ref{item:GK-nodal-neg-def}. Thus $H_2$ meets the first and $l$-th component of $F'$ for some $l\geq 2$. Let $\tau\colon X\to \P^2$ be the contraction of $L+(F\redd-C)+(F'-A)$. Then $\tau(H_2)$ is a conic, so $4=\tau(H_2)^2=-2+2+l$, hence $l=4$. We conclude that $\cS_{Y}\in\{\rA_1+\rA_5,\rA_1+\rD_6,\rA_1+\rE_7\}$, so $\height(\bar{Y})=1$, see Table \ref{table:canonical}, a contradiction.
\end{proof}

	Recall that by Claim \ref{cl:F} we have $\height(\bar{Y})=2$, so $F\cdot D=3$. By Claim \ref{cl:not-vertical},  $D\hor$ is a sum of a $1$-section $H_T$ and $(D_Y)\hor$, so $(D_Y)\hor$ is either a $2$-section or two $1$-sections. We claim that every component of $(D_Y)\vert$ meeting $(D_Y)\hor$ lies in the fiber $F_{T}$ from Claim \ref{cl:T-vertical}. Indeed, suppose that a component $V$ of $D_Y$ meeting $(D_Y)\hor$ lies in a fiber $F\neq F_T$. By Claim \ref{cl:F}, $F$ does not contain any component of $D_{T}$. Since $T$ is vertical, Claim \ref{cl:other-F} implies that both tips of $F$ meet $(D_Y)\hor$, so $2=F\cdot (D_Y)\hor \geq 2+V\cdot (D_Y)\hor\geq 3$, a contradiction. 
	
	Let $G_i=[2,\dots,2,1]$ for $i=1,\dots,r$ be the connected components of $(F_{T})\redd-D_T\wedge (F_T)\redd$, see Claim  \ref{cl:F}\ref{item:F-with-T}. Let $A_{i}$ be the $(-1)$-curve in $G_{i}$. Note that $G_{i}\cp{j}$ has multiplicity at least $j$ in $F_T$. 
	
	Suppose $r\geq 2$. Since each $G_{i}$ meets $(D_Y)\hor$, we conclude that $r=2$ and $(D_Y)\hor$ meets $G_{1}\cp{1},G_{2}\cp{1}$. Hence the sum $G_{1}+(D_Y)\hor+G_{2}-A_2$ or $G_{1}+(D_Y)\hor$ is a chain $[1,2,\dots, 2]$ meeting the remaining part of $A_{1}+D$ only in $A_1\cap T$. This is a contradiction with Lemma \ref{lem:GK-nodal-basics}\ref{item:GK-nodal-neg-def}.
	
	Thus $r=1$. Since $D_Y$ has no circular subdivisor, we see that either $(D_Y)\hor$ is a $2$-section meeting $G_{1}\cp{1}$ or $G_{1}\cp{2}$, or two $1$-sections meeting $G_{1}\cp{1}$. Thus $G_{1}+(D_Y)\hor$ is a chain $[1,2,\dots, 2]$ or a fork $\langle 2;[1,2,\dots,2],[2],[2]\rangle$ meeting the remaining part of $A_1+D$ only in $A_{1}\cap T$, which again contradicts Lemma \ref{lem:GK-nodal-basics}\ref{item:GK-nodal-neg-def}.
\end{proof}

\begin{lemma}
	\label{lem:GK-ht-T=3}
	Assume that $\#\cT=3$, $\bar{T}$ is cuspidal, $\height(\bar{Y})\geq 2$ and $\height(\bar{X})\neq \height(\bar{Y})+1$. Then the following hold.
	\begin{enumerate}
		\item\label{item:T=3-L-horizontal} For every $\P^1$-fibration of $X$ of height at most $\height(\bar{Y})+1$ with respect to $D$, the elliptic tie $L$ is horizontal.
		\item\label{item:T=3-exceptions} The surface $\bar{X}$ is as Proposition \ref{prop:GK-ht-exceptions}\ref{item:GK-ht-exceptions-T=3}, in particular, $\height(\bar{X})=\height(\bar{Y})+1=3$.
	\end{enumerate}
\end{lemma}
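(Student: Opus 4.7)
First I would establish the adjunction identity on $X$. By Lemma~\ref{lem:cuspidal_resolution}\ref{item:C_3}, $D_T = T + E_1 + E_2$ with $T = [s-3]$, $E_1 = [3]$, $E_2 = [2]$, and $L$ is the branching component of the fork $T + L + E_1 + E_2$ (so $T \cdot L = E_1 \cdot L = E_2 \cdot L = 1$ are the only pairwise intersections among the four curves). Since $\bar{Y}$ is canonical we have $K_Y = \alpha^{*} K_{\bar{Y}}$, and by Lemma~\ref{lem:GK_intro}\ref{item:GK-intro-Y} $-K_{\bar{Y}} \sim \bar{T}$; pulling back through $\sigma$ shows that $-K_X$ is supported on $T + E_1 + E_2 + L$. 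Solving the four adjunction equations $K_X \cdot C + C^2 = -2$ for $C \in \{L, E_1, E_2, T\}$ pins down the multiplicities uniquely, giving
\begin{equation*}
-K_X \;=\; T + E_1 + E_2 + 2L.
\end{equation*}
For any fiber $F$ of any $\P^1$-fibration of $X$, adjunction $F \cdot (-K_X) = 2$ then reads
\begin{equation*}
F \cdot T + F \cdot E_1 + F \cdot E_2 + 2\, F \cdot L \;=\; 2,
\end{equation*}
which forces $F \cdot L \in \{0,1\}$; moreover $F \cdot L = 1$ forces $F \cdot D_T = 0$, i.e.\ $D_T$ is vertical.

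To prove (i), I would assume for contradiction that some $\P^1$-fibration with $F \cdot D \leq \height(\bar{Y}) + 1$ has $L$ vertical, so $F \cdot D_T = 2$. The strategy, parallel to the proof of Lemma~\ref{lem:GK-ht-nodal}, is to produce an elementary swap $(X,D) \sqto (X',D')$ by contracting an appropriate $(-1)$-curve $A \neq L$ meeting $L + D_T - T$, yielding a minimal log resolution of another del Pezzo surface $\bar{X}'$ of rank one sharing the descendant $(\bar{Y},\bar{T})$ but with smaller $\#\cT' \leq 2$. Lemma~\ref{lem:GK-ht-nodal} (or an analogue of Lemma~\ref{lem:GK-ht-T=1}) applied to $\bar{X}'$ then gives $\height(\bar{X}') = \height(\bar{Y}) + 2$; tracking the height of the swap-pulled-back fibration on $X$ then contradicts $F \cdot D \leq \height(\bar{Y}) + 1$. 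The key ingredient, analogous to the estimate on $A \cdot D_Y$ appearing in the proof of Lemma~\ref{lem:GK-ht-nodal}, is that every $(-1)$-curve $A \neq L$ meeting $L + D_T - T$ satisfies $A \cdot D_Y \geq \height(\bar{Y})$, obtained by pushing $\sigma(A)$ down to a $0$-curve on $Y$ whose linear system induces a $\P^1$-fibration of height at least $\height(\bar{Y})$ on $(Y, \Exc\alpha)$.

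For (ii), with $L$ a $1$-section, $D_T$ is vertical. Since $T, E_1, E_2$ are pairwise disjoint but each meets the $1$-section $L$ at exactly one point, the three intersection points $L \cap T$, $L \cap E_1$, $L \cap E_2$ lie in three distinct fibers $F_T, F_1, F_2$ of the fibration, each containing $T$, $E_1$, $E_2$ respectively. Pushing each $F_\bullet$ forward through $\sigma$ gives a fiber of a $\P^1$-fibration on $Y$, so $\sigma_{*} F_\bullet \cdot \sigma_{*} D_Y \geq \height(\bar{Y})$; combined with the decomposition $F_\bullet \cdot D = F_\bullet \cdot D_Y + 1$ (the $+1$ coming from $L$) and the bound $F_\bullet \cdot D \leq \height(\bar{Y}) + 1$, this forces $\height(\bar{Y}) = 2$ and $F_\bullet \cdot D_Y = 2$ for each $\bullet$. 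Using Lemma~\ref{lem:degenerate_fibers} to describe the fibers $F_T, F_1, F_2$ containing the specific boundary components $T = [s-3]$, $E_1 = [3]$, $E_2 = [2]$ together with vertical parts of $D_Y$, and matching the resulting combinatorial constraints with the list in Table~\ref{table:canonical} of canonical del Pezzo surfaces of rank one admitting cuspidal anticanonical members in their smooth locus, I would conclude $\cS_Y \in \{3\rA_2, \rA_2 + \rA_5, \rA_2 + \rE_6\}$. Cuspidality of $\bar{T}$ in each of these three cases forces $\cha\kk = 3$ by Table~\ref{table:canonical}. The converse direction is Lemma~\ref{lem:GK-ht-exceptions-ht=3}.

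The main obstacle will be carrying out part (i) cleanly. Contracting $L$ itself is not an elementary swap, since $L \cdot D = 3$ and the three adjacent components of $D$ have self-intersections $-(s-3), -3, -2$; none of these configurations matches the setup of Definition~\ref{def:vertical_swap}\ref{item:def-swap-elementary}. Instead, one must exhibit an auxiliary $(-1)$-curve in the fiber $F_L$ containing $L$, whose contraction produces a descendant of nodal type or with $\#\cT' \leq 2$, and then carefully track the height of the transformed fibration so as to derive the contradiction with $F \cdot D \leq \height(\bar{Y}) + 1$. This parallels the structure of the proof of Lemma~\ref{lem:GK-ht-nodal}, but with additional casework arising from the more intricate three-leaved-fork configuration $D_T + L$ and the factor of $2$ at $L$ in the adjunction identity.
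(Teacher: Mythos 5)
Your adjunction identity $-K_X = T + E_1 + E_2 + 2L$ is exactly the paper's starting point, and your overall skeleton (reduce (i) to the case $\#\cT\leq 2$; analyze the fibers containing the components of $D_T$ for (ii)) is the right one. But both parts have genuine gaps in the execution.

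For (i), the obstacle you flag is a phantom. The paper simply contracts $L$ itself, setting $D'=\tau_*(D-T_2)$ where $T_2=[2]$: nothing requires this to be an \emph{elementary swap} in the sense of Definition \ref{def:vertical_swap}. One only needs $(X',D')$ to be a minimal log resolution of a del Pezzo surface of rank one with the same descendant $(\bar Y,\bar T)$ and $\#\cT'=2$, which is immediate ($D_T'$ becomes the admissible chain $[2,s-4]$ and $\cf(T')<1$, so Lemma \ref{lem:GK_intro}\ref{item:GK-intro-cf} applies); then Lemma \ref{lem:GK-ht-nodal} gives $\height(\bar X')=\height(\bar Y)+2$, while $|\tau_*F|$ has height $F\cdot(D-T_2)\leq\height(\bar Y)+1$ with respect to $D'$, a contradiction. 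Your substitute — an elementary swap at an auxiliary $(-1)$-curve $A\neq L$ — is not carried out, and its key step fails: contracting a curve $A\not\subseteq\Exc\phi$ does not in general yield a surface sharing the descendant $(\bar Y,\bar T)$, since $\phi$ need not factor through that contraction.

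For (ii), the central inequality is derived from two incorrect claims. First, $\sigma_*F_\bullet$ is \emph{not} a fiber of a $\P^1$-fibration on $Y$: $\sigma$ contracts the horizontal curve $L$ (together with $E_1,E_2$), so $(\sigma_*F_\bullet)^2>0$ and no fibration descends. Second, $L\not\subseteq D$, so there is no "$+1$ coming from $L$" in $F_\bullet\cdot D$; for a fiber containing a component of $D_T$ one has $F_\bullet\cdot D=F_\bullet\cdot D_Y$, and your inequality then only gives the vacuous $\height(\bar Y)\leq\height(\bar Y)+1$. The working argument applies the height estimate not to the fiber but to each $(-1)$-curve $A$ in it: adjunction against $-K_X=T+E_1+E_2+2L$ forces $A\cdot L=0$ and $A\cdot E_1=1$ (for $A$ in the fiber containing $E_1=[3]$), so $\sigma(A)$ is a $0$-curve on $Y$ and $A\cdot D_Y\geq\height(\bar Y)$. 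One then enumerates the possible shapes of that fiber ($[2,2,1,3]$, $[2,1,3,1]$, or $\langle 3;[1],[1],[1]\rangle$) and uses the multiplicities of the $(-1)$-curves — e.g.\ multiplicity $3$ in $[2,2,1,3]$, giving $F\cdot D\geq 3(\height(\bar Y)-1)$ — to force $\height(\bar Y)=2$, identify $D\hor$ as a $3$-section, and pin down the remaining fibers, whence $\cS_Y\in\{3\rA_2,\rA_2+\rA_5,\rA_2+\rE_6\}$ and $\cha\kk=3$. Without the multiplicity analysis your plan cannot reach these conclusions.
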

\begin{proof}
	The divisor $D_{T}$ is a disjoint sum of $T=[s-3]$, $T_{3}=[3]$, and $T_{2}=[2]$, see  \ref{lem:cuspidal_resolution}\ref{item:C_3}. By Lemmas \ref{lem:GK_exceptions} and  \ref{lem:GK_intro}\ref{item:GK-intro-ht} we have $s\geq 6$ and $\height(\bar{X})\leq \height(\bar{Y})+1$. Fix a $\P^1$-fibration of $X$ whose fiber $F$ satisfies $F\cdot D\leq \height(\bar{Y})+1$.
	
	\ref{item:T=3-L-horizontal} Suppose that $L$ is vertical. Let $\tau\colon X\to X'$ be the contraction of $L$ and let $D'=\tau_{*}(D-T_2)$. Then $(X',D')$ is a minimal log resolution of a del Pezzo surface $\bar{X}'$ of rank one with the same descendant $(\bar{Y},\bar{T})$, and the corresponding number $\#\cT'$ equals $\#\cT-1=2$. By Lemma \ref{lem:GK-ht-nodal} we have $\height(\bar{X}')=\height(\bar{Y})+2$. On the other hand, $|\tau_{*}F|$ induces a $\P^1$-fibration of $X'$ such that $\tau_{*}F \cdot D'=F\cdot (D-T_{2})\leq \height(\bar{Y})+1$, a contradiction. 
	
	\ref{item:T=3-exceptions} Pulling back the linear equivalence $-K_{\bar{Y}}=\bar{T}$ we get a linear equivalence $-K_{X}=T+T_2+T_3+2L$. The adjunction formula gives $2=F\cdot (T+T_2+T_3+2L)$, so since $F\cdot L\geq 1$ by \ref{item:T=3-L-horizontal}, we infer that $L$ is a $1$-section and each $T_i$ is vertical. Since $T_1,T_2,T_3$ meet the $1$-section $L$, they lie in different fibers, call them $F,F_2,F_3$. Since $F_3$ consists of some $(-1)$-curves off $D$, some $(-2)$-curves in $D_Y$, and a $(-3)$-curve $T_3$ of multiplicity $1$, it is supported on a chain $[2,2,1,3]$ or  $[2,1,3,1]$, or on a fork $\langle 3;[1],[1],[1]\rangle$. Let $A$ be a $(-1)$-curve in $F_3$. The image $\sigma(A)$ of $A$ on $Y$ is a $0$-curve, so $\height(\bar{Y})\leq \sigma(A)\cdot \Exc\alpha=A\cdot D_{Y}$. We get $F_3\cdot D\geq 3(\height(\bar{Y})-1)$ if $(F_3)\redd=[2,2,1,3]$, $F_3\cdot D\geq 2(\height(\bar{Y})-1)+\height(\bar{Y})$ if $(F_3)\redd=[2,1,3,1]$, and $F_3\cdot D\geq 3 \height(\bar{Y})$ if $(F_3)\redd=\langle 3;[1],[1],[1]\rangle$. Since $F_3\cdot D\leq \height(\bar{Y})+1$ and $\height(\bar{Y})\geq 2$, we conclude that $(F_3)\redd=[2,2,1,3]$, $\height(\bar{Y})=2$, $A\cdot D\hor=1$ and $(F_3)\redd-A$ is disjoint from $D\hor$. Thus $D\hor$ is a $3$-section. By Lemma \ref{lem:delPezzo_fibrations}\ref{item:Sigma} every degenerate fiber  has exactly one $(-1)$-curve, so $F\redd=[(2)_{s-2},1,s-3]$, $(F_2)\redd=[2,1,2]$. The $3$-section $D\hor$ meets exactly one component of $F$: indeed, otherwise the components of $F$ meeting $D\hor$ have multiplicity at most $2$ in $F$, so since $s-3\geq 3$, they lie in $D$ and therefore $D$ has a circular subdivisor, which is false. Thus $D\hor$ meets the unique component of multiplicity $3$ in $F$, which is the third component of $F\redd=[(2)_{s-2},1,s-3]$. Similarly, we conclude that $D\hor$ meets $F_2$ in its $(-1)$-curve, with multiplicity $2$, and once in its $(-2)$-tip $(F_2)\redd\wedge D_Y$. Therefore, $\cS_{Y}=3\rA_2,\rA_2+\rA_5,\rA_2+\rE_6$ if $s=6,7,8$, respectively, as needed.
\end{proof}

\begin{lemma}
	\label{lem:GK-ht-cuspidal}
	Assume that $\height(\bar{Y})\geq 2$, $\#\cT\geq 4$ and $\bar{T}$ is cuspidal. Then $\height(\bar{X})=\height(\bar{Y})+2$.
\end{lemma}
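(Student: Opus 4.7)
The plan is to follow the induction-on-$\#\cT$ scheme already used in Lemmas \ref{lem:GK-ht-T=1}, \ref{lem:GK-ht-nodal}, and \ref{lem:GK-ht-T=3}. Arguing by contradiction, I would assume that $\bar{X}$ satisfies the hypotheses but $\height(\bar{X})\leq \height(\bar{Y})+1$, and choose $\bar{X}$ with $\#\cT$ minimal among such counterexamples; this allows me to invoke those four lemmas whenever a construction produces a del Pezzo surface with the same descendant $(\bar{Y},\bar{T})$ but strictly smaller $\#\cT'$. Throughout, I fix a $\P^1$-fibration of $X$ whose fiber $F$ satisfies $F\cdot D\leq \height(\bar{Y})+1$.

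The first step is to show that the elliptic tie $L$ is horizontal. Inspecting the types in Lemma \ref{lem:cuspidal_resolution}\ref{item:T_cuspidal} for $\#\cT\geq 4$, one sees that $L$ always meets a $(-2)$-curve $C\subseteq D_T$ occupying a tip position in the weighted graph of $D_T+L$. If $L$ were vertical, contracting $L$ would be an elementary swap and by Lemma \ref{lem:swap_lc} would produce a minimal log resolution of a log canonical del Pezzo surface $\bar{X}'$ of rank one with the same descendant $(\bar{Y},\bar{T})$ but $\#\cT'=\#\cT-1$; the pushed-forward fibration would then satisfy $F'\cdot D'\leq F\cdot D\leq \height(\bar{Y})+1$, contradicting either the minimality of $\#\cT$ or, when $\#\cT=4$, Lemma \ref{lem:GK-ht-T=3}. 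An analogous swapping trick will give the bound $A\cdot D_Y\geq \height(\bar{Y})$ for every $(-1)$-curve $A$ meeting $L+D_T-T$ exactly once, by iteratively contracting until $A$ becomes the new elliptic tie on some del Pezzo surface with strictly smaller $\#\cT$ for which the preceding lemmas or the induction hypothesis already give the desired height inequality.

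With $L$ horizontal and the bound in hand, I would run a fiber-by-fiber analysis parallel to the one in the proof of Lemma \ref{lem:GK-ht-nodal}. The key numerical input is the pullback relation $\phi^{*}\bar{T}=T+\sum_{i}m_i E_i$ from the cuspidal resolution, combined with $-K_X=\phi^{*}\bar{T}-(\text{discrepancies})$; this produces an explicit expression for $F\cdot K_X=-2$ in terms of $F\cdot L$, $F\cdot T$, $F\cdot(D_T-T)$ and the multiplicities $m_i$, and hence strong restrictions on the decomposition of $F$ as a sum of $(-1)$-curves off $D$ and boundary components.

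The hard part will be the case analysis itself. Because $\bar{T}$ is cuspidal and $\#\cT\geq 4$, the connected component of $D_T$ containing $T$ is one of the more elaborate configurations listed in Lemma \ref{lem:cuspidal_resolution}\ref{item:C_chain_[2]}--\ref{item:C_smooth}, and the possible intersection patterns of degenerate fibers with $T$, $L$, $D_T-T$ and $(D_Y)\hor$ proliferate. For each pattern I would contract the degenerate fibers down to a minimal ruled surface and compare the resulting self-intersection of $\phi_{*}L$ or of $(D_Y)\hor$ with what the singularity type $\cS_Y$ permits; combined with $\height(\bar{Y})\geq 2$ and the classification of canonical del Pezzo surfaces of rank one in Table \ref{table:canonical}, this should force $\bar{Y}$ into a type already excluded by Lemma \ref{lem:GK_exceptions} or already handled in the preceding cases of Proposition \ref{prop:GK-ht-exceptions}, yielding the desired contradiction.
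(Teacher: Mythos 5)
Your opening moves are correct and coincide with the paper's: assume $\height(\bar{X})\leq\height(\bar{Y})+1$, take $\#\cT$ minimal among counterexamples, and show the elliptic tie $L$ is horizontal by swapping $L$ for the adjacent $(-2)$-curve to produce a surface with the same descendant and $\#\cT'=\#\cT-1$, then invoking minimality together with Lemma \ref{lem:GK-ht-T=3} in the base case $\#\cT'=3$. The gap is in everything after that. You correctly name the numerical input — the linear equivalence $-K_{X}=T+E$ with $E$ effective and supported on $D_{T}+L-T$, and $2=-K_X\cdot F=F\cdot(T+E)$ — but you do not extract its decisive consequence. Since $L$ is \emph{horizontal}, its coefficient $\mu$ in $E$ contributes at least $\mu$ to $F\cdot(T+E)$, so $\mu\leq 2$; tracking how the coefficients of $-K_X-T$ evolve under the blowups over the cusp, this single inequality forces $\sigma'$ to be a composition of outer blowups on the successive $(-1)$-curves, i.e.\ it pins $\cT$ down to the type of Lemma \ref{lem:cuspidal_resolution}\ref{item:C_smooth} with $k=2$, eliminating in one stroke the entire list of configurations you propose to enumerate. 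Moreover equality $\mu=2$ then forces $F\cdot T=0$ and $F\cdot L=1$ with all remaining terms vanishing, so $D_T$ is vertical and $L$ is a $1$-section; the contradiction is purely local to the fiber containing the connected divisor $D_T$, whose branching $(-2)$-curve $B$ meets the $1$-section $L$ and hence has multiplicity $1$, giving $0=F\cdot B\geq\beta_{F\redd}(B)+B^2\geq 1$.

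Two concrete problems with your plan as written. First, the case analysis over Lemma \ref{lem:cuspidal_resolution}\ref{item:C_chain_[2]}--\ref{item:C_smooth} is deferred rather than carried out — it is exactly the part you label the hard part — and nothing in the proposal indicates how it would close. Second, your proposed endgame is misdirected: the lemma holds for \emph{every} canonical $\bar{Y}$ with $\height(\bar{Y})\geq 2$ and every cuspidal $\bar{T}$ with $\#\cT\geq 4$, so one cannot hope to conclude by forcing $\bar{Y}$ into a type excluded by Lemma \ref{lem:GK_exceptions} or covered by the earlier cases of Proposition \ref{prop:GK-ht-exceptions} (whose exceptions all have $\#\cT\in\{1,3\}$). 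The hypothesis $\height(\bar{Y})\geq 2$ enters only through the swap and minimality step; the final contradiction must come, and in the paper does come, from the assumed low-height fibration interacting with the structure of $D_T$ itself, not from the singularity type of $\bar{Y}$. Relatedly, the bound $A\cdot D_Y\geq\height(\bar{Y})$ that you set up as a second tool is not needed in this case at all.
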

\begin{proof}
	Suppose the contrary. Then $\height(\bar{X})\leq \height(\bar{Y})+1$ by Lemma \ref{lem:GK_intro}\ref{item:GK-intro-ht}. Fix a $\P^1$-fibration of $X$ whose fiber $F$ satisfies $F\cdot D\leq \height(\bar{X})+1$. We assume that the number $\#\cT\geq 4$ is minimal possible, and exactly as in the proof of Claim \ref{cl:L-horizontal} of Lemma \ref{lem:GK-ht-nodal}, we conclude that the curve $L$ is horizontal. Indeed, suppose $L$ is vertical. Then a swap $\tau\colon (X,D)\sqto (X',D')$ of $L$ for a $(-2)$-curve $C\subseteq D_T$ meeting $L$ gives a minimal log resolution of a del Pezzo surface $\bar{X}'$ of rank one, with the same descendant $(\bar{Y},\bar{T})$, with  $\#\cT'=\#\cT-1$, and with a $\P^1$-fibration whose fiber $F'=\tau_{*}F$ satisfies  $F'\cdot D'+F'\cdot L'=F\cdot D\leq \height(\bar{Y})+1$. Thus $F'\cdot D'\leq \height(\bar{Y})+1$, so $\#\cT'=3$ by minimality of $\#\cT$, and Lemma \ref{lem:GK-ht-T=3} implies that $F'\cdot D'=\height(\bar{Y})+1$ and $F'\cdot L'\geq 1$, a contradiction. 
	
	Pulling back the linear equivalence $-K_{\bar{Y}}=\bar{T}$ we get a linear equivalence $-K_{X}=T+E$, where $E$ is an effective divisor supported on $D_{T}+L-T$. Since $L$ is horizontal and $2=-K_{X}\cdot F=F\cdot (T+E)$, the multiplicity of $L$ in $E$ equals at most $2$. Writing $\phi=\alpha'\circ \sigma'$, where $\alpha'$ is the minimal log resolution of $(\bar{Y},\bar{T})$, we conclude that  $\sigma'$ is a composition of outer blowups on the $(-1)$-curves in the subsequent total transforms of $\bar{T}$. Thus  $\cT$ is as in Lemma \ref{lem:cuspidal_resolution}\ref{item:C_smooth} for $k=2$. Moreover, the multiplicity of $L$ in $E$ is exactly $2$, so the equality $2=F\cdot(T+E)$ implies that $D_{T}$ is vertical and $L$ is a $1$-section. Since $D_T$ is connected, it is contained in some fiber $F$. Let $B$ be the branching $(-2)$-curve in $D_T$, see Lemma \ref{lem:cuspidal_resolution}\ref{item:C_smooth}. Clearly, $B$ is branching in $F\redd$, too, and $B$ has multiplicity $1$ in $F$ because it meets $L$. Hence $0=F\cdot B\geq \beta_{F\redd}(B)+B^2\geq 1$, a contradiction.
\end{proof}

\begin{proof}[Proof of Proposition \ref{prop:GK-ht-exceptions}] Assume $3\leq\height(\bar{X})\leq \height(\bar{Y})+1$. By Lemmas \ref{lem:GK-ht-nodal}, \ref{lem:GK-ht-cuspidal} we have $\#\cT=1$ or $\#\cT=3$ and $\bar{T}$ is cuspidal; so Lemmas \ref{lem:GK-ht-T=1}, \ref{lem:GK-ht-T=3} imply that the singularity type of $\bar{X}$ is as in \ref{prop:GK-ht-exceptions}\ref{item:GK-ht-exceptions-T=1} or \ref{prop:GK-ht-exceptions}\ref{item:GK-ht-exceptions-T=3}, respectively. Conversely, if $\bar{X}$ has one of these singularity types then $\height(\bar{X})=3$ by Lemma \ref{lem:GK-ht-exceptions-ht=3}, as needed.
\end{proof}

\subsection{Uniqueness of a descendant}

Lemmas \ref{lem:cuspidal_resolution} and \ref{lem:GK_exceptions} complete the proof of Theorem \ref{thm:GK}\ref{item:GK-Y},\ref{item:GK-types}, that is, they give a list of possible singularity types of $\bar{X}$. Theorem \ref{thm:GK}\ref{item:GK-ht}, which computes $\height(\bar{X})$, is proved in Proposition \ref{prop:GK-ht-exceptions}. The remaining part of Theorem~\ref{thm:GK} addresses the question whether the singularity type determines $\bar{X}$ uniquely, up to an isomorphism. As a first result in this direction we prove Theorem \ref{thm:GK}\ref{item:GK-unique}, which asserts that $\bar{X}$ determines its descendant uniquely, provided $\height(\bar{X})\geq 3$. More precisely, we prove the following statement. 

\begin{lemma}
	\label{lem:leash-is-unique}
	Let $\bar{X}$ be as in Notation \ref{not:GK}. Assume that $\height(\bar{X})\geq 3$. Then the following hold.
	\begin{enumerate}
		\item \label{item:leash-phi} The morphism $\phi$ from Definition \ref{def:GK} is unique up to an automorphism of the source and target.
		\item \label{item:leash-type} The singularity types of $\bar{Y}$ and $\bar{T}$ are uniquely determined by the singularity type of $\bar{X}$.
	\end{enumerate}
\end{lemma}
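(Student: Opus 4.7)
The plan is to first observe that part (b) follows from part (a): once the morphism $\phi$ is unique up to automorphism of source and target, the singularities of $\bar{Y}$ are recovered as the images of the connected components of $\sigma_*D_Y$ under $\alpha$ (these being intrinsic to the pair $(Y,\sigma_*D_Y)$ as a minimal resolution), and the singularity of $\bar{T}$ is encoded in the combinatorics of $\sigma^{-1}(\Sing\bar{T})$. So I focus on (a).

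The reduction I would make: given two descendants $\phi_i\colon X\to(\bar{Y}_i,\bar{T}_i)$ with associated data $(L_i, T_i, D_{Y,i}, D_{T,i}, \sigma_i)$, it suffices to show $L_1=L_2$ and $T_1=T_2$. Indeed, knowing these, the contraction $\sigma_i$ is uniquely determined (it contracts $D_{T,i}+L_i-T_i$), and then $\alpha_i$ is the minimal resolution of $\bar{Y}_i$, so $\bar{Y}_i$ and $\phi_i$ are recovered up to isomorphism. In turn, uniqueness of $T_i$ follows once we identify the decomposition $D=D_{Y,i}+D_{T,i}$ combinatorially from the singularity type $\cS$ of $\bar{X}$: here $D_{Y,i}$ is a disjoint union of $(-2)$-chains and $(-2)$-forks (because $\bar{Y}$ is canonical), while $\cT_i$ is one of the types listed in Lemma~\ref{lem:cuspidal_resolution}. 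Each such $\cT$ contains a distinguished component of self-intersection derived from $\bar{T}^2=9-s$, typically strictly less than $-2$, and a specific ``cascade'' of attached curves; this allows one to read off $\cT$ (and hence $T$) from $\cS$ by inspecting which connected components of $D$ contain curves incompatible with the $(-2)$-pattern of $D_Y$.

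Having isolated $D_T$ inside $D$, the elliptic tie $L$ is characterized as a $(-1)$-curve off $D$ meeting $D$ exactly at the component(s) marked $\adec{1}$ (or $\adec{1,1}$ in the nodal case) in Lemma~\ref{lem:cuspidal_resolution}, with specific intersection multiplicity. Suppose $L_1\neq L_2$ are both such curves. By Lemma~\ref{lem:GK_intro}\ref{item:GK-intro-L} each $L_i$ satisfies $L_i\cdot D\leq 2$, and because $\phi_i^*\bar{T}_i$ has $L_i$ with multiplicity bounded by the cusp/node multiplicity, one obtains $L_1\cdot L_2\geq 0$ and in fact sharp intersection bounds via $-K_X\cdot L_i=1$. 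I would then exhibit a $\P^1$-fibration of $X$ coming from the pencil generated by $L_1+L_2$ together with an appropriate vertical completion in $D_T$, and compute that a general fiber $F$ satisfies $F\cdot D\leq 2$, contradicting $\height(\bar{X})\geq 3$.

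The main obstacle will be Step~2 in the degenerate cases where $\cT$ contains only $(-2)$-curves (for instance \ref{lem:cuspidal_resolution}\ref{item:C_1} with $s=7$, or short twigs inside $\cT$ that ``look like'' additional $D_Y$-components), because then the combinatorial decomposition $D=D_Y+D_T$ is not manifestly unique from $\cS$ alone. Here the hypothesis $\height(\bar{X})\geq 3$ is indispensable: precisely these ambiguous configurations correspond to entries in Lemma~\ref{lem:GK_exceptions}, where $\height(\bar{X})\leq 2$, so they are excluded. This interplay with Lemma~\ref{lem:GK_exceptions} is the key leverage that turns the combinatorial accounting into a clean uniqueness statement.
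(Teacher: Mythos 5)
There are two genuine gaps. First, part \ref{item:leash-type} does not follow from part \ref{item:leash-phi}: uniqueness of $\phi$ for a \emph{fixed} surface $\bar{X}$ only shows that $(\bar{Y},\bar{T})$ is determined by $\bar{X}$, whereas the lemma asserts determination by the singularity \emph{type} of $\bar{X}$, i.e.\ that the decomposition $\cS=\cS_Y+\cT$ of the weighted graph is unique. The paper proves this by a separate combinatorial argument (two decompositions would force $\cT$ to share a connected component with $\cS_Y'$, which pins $\cT$ down to type \ref{lem:cuspidal_resolution}\ref{item:C_3} or \ref{lem:cuspidal_resolution}\ref{item:C_smooth} and then yields $\cT=\cT'$). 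Your Step~2 would supply this statement if it worked, but framing \ref{item:leash-type} as a corollary of \ref{item:leash-phi} inverts the logic.

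Second, and more seriously, your escape hatch for the ambiguous configurations fails. You claim that the cases where $D_T$ is not combinatorially distinguished inside $D$ (for instance when $\cT$ consists of $(-2)$-curves) are excluded by $\height(\bar{X})\geq 3$ via Lemma \ref{lem:GK_exceptions}. This is false: the canonical surfaces of types $4\rA_2$ and $8\rA_1$ have height $4$, do admit descendants with elliptic boundary, and there \emph{any} connected component of $D$ can play the role of $D_T$. Uniqueness of $\phi$ in these cases is not a combinatorial fact; the paper has to invoke transitivity of $\Aut(\bar{X})$ on $\Sing\bar{X}$, a genuinely geometric input from \cite{PaPe_MT}. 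Similarly, when $s=6$ and $\cT=[3]+[3]+[2]$ or $\langle k;[2],[3],[3]\rangle+[(2)_{k-2}]$, two $(-3)$-curves compete for the role of $T$, the corresponding surfaces have height $3$, and the paper resolves this by an explicit check resting on the uniqueness of the log surface $(\bar{Y},\bar{T})$ up to isomorphism. Finally, for the uniqueness of $L$ once $T$ is fixed, the pencil generated by $L_1+L_2$ is not developed enough to assess (it is not clear it moves, nor that a vertical completion of the required height exists); the paper's mechanism is different and cleaner: on the extraction $X_T$ of $T$ the cone of curves has exactly two extremal rays, forcing $\pi_T(L)=\pi_T(L')$ and hence $L=L'$; and when $L,L'$ meet the same components of $D$, the fact that $K_X$ and the components of $D$ generate $\NS_{\Q}(X)$ gives $L\equiv L'$, so $L=L'$. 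You should rebuild the argument around these case distinctions rather than around a purely combinatorial identification of $(T,L)$ from $\cS$.
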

\begin{proof}
	\ref{item:leash-phi} Let $\phi'\colon (X,D+L')\to (\bar{Y}',\bar{T}')$ be another such morphism. As in Notation \ref{not:GK} write $T=(\phi')^{-1}_{*}\bar{T}'$ and  $D=D_{T}'+D_{Y}'$, where $D_{Y}'=(\phi')^{-1}(\Sing \bar{Y}')$, $D_{T}'+L'=(\phi')^{*}\bar{T}'\redd$. To prove the lemma, it is enough to find an automorphism $\iota\in \Aut(X,D)$ such that $\iota(L)=L'$ and $\iota(T)=T'$.
	
	\begin{casesp}
	\litem{$T=T'$}\label{case:T=T'} Let $\pi_{T}\colon X\to X_{T}$ be the contraction of $D-T$, in other words, $X_{T}\to \bar{X}$ is the extraction of $T$, see \cite[1.39]{Kollar_singularities_of_MMP} or Section \ref{sec:Lacini}.  
	Then $\rho(X_{T})=2$. The cone $\operatorname{NE}(X_T)$ has two extremal rays: one spanned by $\pi_{T}(T)$, which corresponds to the contraction $X_{T}\to \bar{X}$, and the other spanned by $\pi_{T}(L)$, corresponding to the contraction $X_{T}\to \bar{Y}$. But there is also another extremal contraction $X_{T}\to \bar{Y}'$, corresponding to $\pi_{T}(L')$. Since $\pi_{T}(L')$ contracts to a smooth point and $\pi_{T}(T)$ does not, we infer that  $\pi_{T}(L')$ spans the same extremal ray as $\pi_{T}(L)$. Since $\pi_{T}(L)^2<0$ we get $\pi_{T}(L)=\pi_{T}(L')$, and as a consequence $L=L'$, as needed.
	
	\litem{$L=L'$} Then $D_{T}=D_{T}'$ and $D_{Y}=D_{Y}'$, in particular $\bar{Y}$ and $\bar{Y}'$ have the same singularity type $\cS_Y$. Write $\phi=\phi_{1}\circ\phi_0$, $\phi'=\phi_{1}'\circ \phi_{0}$ such that $\#\Exc\phi_0$ is maximal possible. If $\phi_{0}=\phi$ then $T=T'$, as needed, so we can assume $\phi_0\neq \phi$, hence $\phi_{1},\phi_{1}'\neq \id$. Let $E$ and $E'$ be the unique $(-1)$-curve in $\Exc\phi_{1}$, and $\Exc\phi_{1}'$, respectively. Since $\#\Exc\phi_0$ is maximal, $E$ meets $E'$ and $E=\phi_{0}(T')$, $E'=\phi_{0}(T)$.  In particular, $-1=\phi_{0}(T)^2\leq \bar{T}^2-4\overset{\mbox{\tiny{\eqref{eq:Noether}}}}{=}5-s\leq -1$ by Lemma \ref{lem:GK_exceptions}, so $\phi_{1}$ (respectively, $\phi_{1}'$) is a single blowup at the singular point of $\bar{T}$ (respectively, $\bar{T}'$), and $s=6$. Moreover, if $\cS_Y=\rE_6$ then $\bar{T}$ is nodal by Lemma \ref{lem:GK_exceptions}\ref{item:GK2_Ek}. Thus by \cite[Proposition 1.5]{PaPe_MT} the singularity type $\cS_Y$ determines the log surface $(\bar{Y},\bar{T})$ uniquely, up to an isomorphism, see Table \ref{table:canonical}. We conclude that there is an isomorphism $\alpha\colon (\bar{Y}',\bar{T}')\to (\bar{Y},\bar{T})$ such that $\alpha\circ\phi'=\phi$; so $\iota\de \phi^{-1}\circ \alpha \circ \phi'$ extends to the required automorphism of $(X,D)$.
	
	\litem{$T\neq T'$, $L\neq L'$} Suppose $D_{T}=D_{T'}$. Then the description of $D_T+L$ in Lemma \ref{lem:cuspidal_resolution} shows that $L$ and $L'$ meet the same components of $D$. Since $\NS_{\Q}(X)$ is generated by $K_X$ and the components of $D$, the curves $L$ and $L'$ are numerically equivalent, hence equal because $L^2<0$; contrary to our assumptions. Thus $D_{T}\neq D_{T}'$. Let $R$ be the sum of common components of $D_T$ and $D_{T}'$, and let $B=D_{T}-R$. Then $B\subseteq D'_Y$, so each connected component of $B$ is a $(-2)$-chain or a $(-2)$-fork which is also a connected component of $D_{T}$.
	
	Lemma \ref{lem:cuspidal_resolution} shows that such connected components occur only if $R=0$ or if $D_T$ is as in  \ref{lem:cuspidal_resolution}\ref{item:C_3} or \ref{lem:cuspidal_resolution}\ref{item:C_smooth}. Consider the last two cases. Then $R=[s-3]+[3]$ or $\langle k;[s-3],[3],[2]\rangle$. Since $T\neq T'$, we get $s=6$ and $T=[3]$, $T'=[3]$. Now we check directly that after the contraction of $D-T$, the image of $L'$ can still be contracted; so as in case \ref{case:T=T'} above we get $L=L'$, as needed.
	
	Consider now the case $R=0$, so $\bar{X}$ is canonical. Since $\height(\bar{X})\geq 3$, by \cite[Proposition 6.1]{PaPe_MT} $\bar{X}$ is of type $4\rA_{2}$ or $8\rA_{1}$. Now $D_{T}$ and $D_{T}'$ are different connected components of $D$, and the $(-1)$-curve $L$ (respectively, $L'$) meets $D$ in both tips of $D_{T}$ (respectively, $D_{T}'$). This condition uniquely determines the numerical classes of $L$ and $L'$, hence it uniquely determines $L$ and $L'$. The result now follows from the fact that $\Aut(\bar{X})$ acts transitively on $\Sing \bar{X}$ by \cite[Lemmas 7.2(d), 8.4(d)]{PaPe_MT}, cf.\ \cite[Corollary 5.2 and Proposition 5.22(4)]{KN_Pathologies}.
	\end{casesp}
	
	\ref{item:leash-type} We claim that $\cS$ determines $\cS_Y$ uniquely. If $\bar{X}$ is canonical, then since $\height(\bar{X})\geq 3$ we have $\cS=4\rA_2$ or $8\rA_1$, see Table \ref{table:canonical}, so Lemma \ref{lem:cuspidal_resolution} yields $\cS_Y=3\rA_2$ or $7\rA_1$, respectively, as needed. Suppose $\bar{X}$ is not canonical and $\cS_{Y}$ is not uniquely determined by $\cS$. We have two decompositions $\cS=\cS_Y+\cT=\cS'_Y+\cT'$, where $\cS_Y\neq \cS'_Y$ are canonical singularity types, and $\cT\neq \cT'$ are as in Lemma \ref{lem:cuspidal_resolution}. It follows that, say, $\cT$ has a common connected component with $\cS_Y'$. As before, we infer from  Lemma \ref{lem:cuspidal_resolution} that $\cT=[s-3]+[3]+[2]$ or $\langle k,[2],[3],[s-3]\rangle +[(2)_{k-2}]$ for some $k\geq 3$; and applying Lemma \ref{lem:cuspidal_resolution} again we get $\cT=\cT'$, a contradiction.
	
	In turn, $\cS_Y$ uniquely determines the singularity type of $\bar{T}$. Indeed, Lemma \ref{lem:GK_exceptions} implies that $\#\cS_Y\geq 6$ and $\bar{T}$ is nodal if  $\cS_Y\in\{\rE_6,\rE_7,\rE_8\}$, so the claim follows from the classification in Table \ref{table:canonical}.
\end{proof}
%
%\begin{remark}[The descendant is not unique if $\height\leq 2$]
%The assumption $\height(\bar{X})\geq 3$ is Lemma \ref{lem:leash-is-unique} is necessary: for instance, the surface $\bar{X}$ of type $\rA_1+\rA_2+\rA_5$, constructed in Example \ref{ex:ht=2}\ref{item:A1+A2+A5_construction} has height $2$ and three  descendants with elliptic boundaries, whose underlying surfaces are of types $\rA_1+\rA_2$, $\rA_1+\rA_5$, or $\rA_2+\rA_5$. To see this, it is enough to find elliptic ties, see Notation \ref{not:GK}. In the notation of Example \ref{ex:ht=2}\ref{item:A1+A2+A5_construction}, these are, respectively: the curve $A_2$, the proper transform of the conic tangent to $\ll_2$ at $p_2$, to $\ll_3$ at $q_3$, and passing through $p_1$; and the proper transform of a rational cubic with a singular point at $q_3$, and tangent to $\ll_1$ at $p_0$, to $\ll_2$ at $p_2$, and to $\cc$ at $p_1$. To construct the latter cubic, let $\eta\colon \P^2\map \P^2$ be a quadratic transformation centered at $q_3$, $p_0$, and the point infinitely near to $p_0$ at the proper transform of $\ll_1$, let $\ll'_{1}$ be the exceptional line over $p_1$, and let $q$ be its common point with the line $\eta_{*}\ll_2$. Let $q_{1}$, $q_2$ be the common points of the conic $\eta_{*}\cc$ with $\ll_1'\setminus \{q\}$ and $\eta_{*}\ll_2\setminus \{q\}$, respectively. Then the required cubic is the proper transform of the conic tangent to $\eta_{*}\cc$ at $q_1$ with multiplicity $3$ and tangent to $\eta_{*}\ll_2$ at $q_2$. 
%\end{remark}

\subsection{Computation of moduli}

Recall that $\Pdeb(\cS)$ denotes the set of isomorphisms classes of del Pezzo surfaces of rank one and type $\cS$, admitting descendants with elliptic boundary; and $\Pnode(\cS_Y)$, $\Pcusp(\cS_Y)$ denotes the set of isomorphism classes of log surfaces $(\bar{Y},\bar{T})$, where $\bar{Y}$ is a canonical surface of rank one and type $\cS_Y$; and $\bar{T}\subseteq \bar{Y}\reg$ is a nodal or cuspidal member of $|-K_{\bar{Y}}|$, respectively. We assume that type $\cS$ is such that $\Pdeb(\cS)$ contains a surface $\bar{X}$ with $\height(\bar{X})\geq 3$. By Lemma \ref{lem:GK_exceptions} the same inequality holds for all surfaces in $\Pdeb(\cS)$. Lemma \ref{lem:leash-is-unique} implies that we have a map from $\Pdeb(\cS)$ to $\Pnode(\cS_Y)$ or $\Pcusp(\cS_Y)$, associating to a surface $\bar{X}$ its descendant $(\bar{Y},\bar{T})$. In the first case the entry in Table \ref{table:canonical} corresponding to $\cS_Y$ is $\rN$; in the second it is $\rC$ or $\rC_d$, where $d$ is the moduli dimension of $\Pcusp(\cS_Y)$, see \cite[Proposition 1.5]{PaPe_MT} or Lemma \ref{lem:GK_hi-3} below. In all the other cases we put $d=0$.
\smallskip

With this information at hand, we can now compute the numbers $\#\Pdeb(\cS)$ and thus prove the remaining parts \ref{item:GK_uniqueness-nodal},\ref{item:GK_uniqueness-cuspidal} of Theorem \ref{thm:GK}. The proof is organized as follows. The case when $\bar{T}$ is nodal is settled in Lemma \ref{lem:nodal}, where we use the fact that $\#\Pnode(\cS_Y)=1$ to prove that $\#\Pdeb(\cS)=1$. If $\bar{T}$ is cuspidal (so $\cha\kk\in \{2,3,5\}$, see Table \ref{table:canonical} and Lemma \ref{lem:GK_exceptions}) then the proof splits in two cases, depending on whether $(X,D)$ dominates the minimal log resolution of $(\bar{Y},\bar{T})$ or not, i.e.\ whether $\#\cT\geq 3$ or $\#\cT\in \{1,2\}$. In the first case we take the almost universal family of dimension $d$ representing $\Pcusp(\cS_Y)$ and lift it to one representing $\Pdeb(\cS)$ following Observation \ref{obs:blowups}: this is done in Lemma \ref{lem:GK-h1}\ref{item:GK-h1-moduli}. In case $\#\cT\leq 2$ we still get an almost faithful family representing $\Pdeb(\cS)$ by blowing down the above one. Nonetheless, it can happen that $h^1(\lts{X}{D})>d$, so an almost \emph{universal} family may no longer exist, see Proposition \ref{prop:GK-small}. 
\smallskip

First we make an elementary observation which will frequently allow us to assume that $\bar{Y}$ is primitive.

\begin{lemma}[Simplifying $\bar{Y}$]\label{lem:GK-swap}
	Assume that $\bar{Y}$ swaps to a canonical surface $\bar{Y}'$ of type $\cS_Y'$ such that  $w\de T^2+s-\#\cS_{Y}'\leq -2$. Let $\cS'=\cS_Y'+\cT'$, where type $\cT'$ is obtained from $\cT$ by replacing the weight of a vertex corresponding to $T$ with $w$. Then $\bar{X}$ swaps to a surface $\bar{X}'\in \Pdeb(\cS')$. More precisely, there is an inner morphism $(X,D+L+A)\to (X',D'+L'+A')$ where $A$ and $A'$ are $(-1)$-curves, $(X',D')$ is a minimal log resolution of $\bar{X}'\in \Pdeb(\cS')$, and $L'$ is an elliptic tie, see Notation \ref{not:GK}. In particular, $h^i(\lts{X}{D})=h^i(\lts{X'}{D'})$.
\end{lemma}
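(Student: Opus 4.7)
The plan is to argue by induction on the number $k$ of elementary swaps in the decomposition $\bar{Y}\sqto\bar{Y}'$, reducing to $k=1$. Under a single elementary swap, $\#\cS_Y$ drops by one while the self-intersection of the proper transform of $T$ rises by one (as we shall see), so the quantity $w = T^2 + s - \#\cS_Y'$ is preserved along the induction, and in each step $T^2\leq -3$ will follow from $w\leq -2$.

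For the base case $k=1$, consider an elementary swap $\mu\colon (Y,\Exc\alpha)\sqto (Y',\Exc\alpha')$ contracting a $(-1)$-curve $A_Y$. Since $\Exc\alpha$ consists entirely of $(-2)$-curves, Definition \ref{def:vertical_swap}\ref{item:def-swap-elementary} forces $A_Y$ to meet a unique component $C_Y\subseteq\Exc\alpha$ transversally at one point. Because $\bar{Y}$ is canonical, the proper transform $T_Y:=\alpha^{-1}_*\bar{T}$ equals $-K_Y$, so adjunction gives $A_Y\cdot T_Y=1$. The crucial geometric observation is that this transverse intersection cannot occur at $\Sing T_Y$: indeed, $T_Y$ has multiplicity $2$ at its cusp or node, so any curve through $\Sing T_Y$ meets $T_Y$ with multiplicity at least $2$. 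Consequently $\sigma$, which is a composition of blowups over $\Sing T_Y$, is an isomorphism in a neighbourhood of $A_Y$, and $A:=\sigma^{-1}_*A_Y$ is a $(-1)$-curve on $X$ meeting $C:=\sigma^{-1}_*C_Y$ once and $T$ once, and disjoint from $L$ and from $D_T-T$. The hypothesis $w=T^2+1\leq -2$ reads $T^2\leq -3$, so $T$ is not a $(-2)$-curve, and the triple $(A,C,T)$ satisfies the elementary-swap condition on $X$ verbatim.

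Let $\nu\colon X\to X'$ contract $A$, and set $D':=\nu_*(D-C)$, $L':=\nu_*L$, $T':=\nu_*T$, and $A':=\nu_*C$. Then $A'$ is a $(-1)$-curve, $(T')^2=T^2+1=w$, and the weighted graph of $D'$ splits as $\cS_Y'+\cT'$: the part $\nu_*(D_Y-C)$ is isomorphic as a weighted graph to $\Exc\alpha'$ (since $\sigma$ is an isomorphism near $D_Y$ and $\mu$ is the parallel operation on $Y$), while $\nu_*D_T$ reproduces $\cT$ with the weight of $T$ updated to $w$. The factorisation $X'\to Y'\to\bar{Y}'$ exhibits a descendant with elliptic boundary of $\bar{X}'$ (the contraction of $D'$), with elliptic tie $L'$; and $\bar{X}'$ is del Pezzo by Lemma \ref{lem:GK_intro}\ref{item:GK-intro-cf} via $\cf_{D'}(T')\leq \cf_D(T)<1$ (monotonicity from Lemma \ref{lem:swap_lc}\ref{item:swap_lc_cf}, applicable because the surfaces of interest are log canonical). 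Hence $\bar{X}'\in\Pdeb(\cS')$. Finally, $\nu$ is inner since its inverse blows up the node $A'\cap T'$ of $D'+L'+A'$, so Lemma \ref{lem:blowup-hi}\ref{item:blowup-hi-inner} combined with Lemma \ref{lem:h1}\ref{item:h1-1_curve} applied to the $(-1)$-curves $L,A,L',A'$ yields $h^i(\lts{X}{D})=h^i(\lts{X'}{D'})$.

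The main subtleties I expect are (i) the multiplicity-two argument forcing $A_Y$ to miss $\Sing T_Y$ — without this the lift $A$ would fail to be a $(-1)$-curve — and (ii) the observation that $C$ is the \emph{unique} $(-2)$-curve of $D$ met by $A$, which is precisely where the hypothesis $T^2\leq -3$ enters to separate the role of $T$ from that of the components of $D_Y$.
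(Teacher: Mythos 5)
Your proof is correct and follows the same route as the paper's (much terser) argument: reduce to a single elementary swap, use the definition of an elementary swap plus adjunction against $T_Y=-K_Y$ to see that $A_Y$ meets $\sigma_*(D_Y)$ and $T_Y$ once each, lift $A_Y$ to a $(-1)$-curve $A$ on $X$, and contract it. The details you add — that $A_Y$ must avoid $\Sing T_Y$ because $T_Y$ has multiplicity two there, that $T^2\le -3$ makes $C$ the unique $(-2)$-curve met by $A$, and the bookkeeping showing $w$ is an invariant of the induction — are exactly the points the paper leaves implicit, and they check out.
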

\begin{proof}
	We can assume that the swap $\bar{Y}\sqto \bar{Y}'$ is elementary, i.e.\ it is a contraction of a $(-1)$-curve $A_Y$. Since $\sigma_{*}(D_Y)$ consists of $(-2)$-curves, by Definition \ref{def:vertical_swap}\ref{item:def-swap-elementary}  we have $A_Y\cdot \sigma_{*}(D_Y)=1$. By adjunction, we have $A_Y\cdot \sigma_{*}(T_Y)=A_{Y}\cdot (-K_{Y})=1$. Thus the proper transform $A$ of $A_Y$ on $X$ satisfies $A\cdot (D+L)=2$, and meets $D+L$ in $T$ and in a $(-2)$-curve in $D_Y$. The required morphism is given by the contraction of $A$.  
\end{proof}

\begin{lemma}[Uniqueness in the nodal case, see Theorem \ref{thm:GK}\ref{item:GK_uniqueness-nodal}]\label{lem:nodal} 
	Assume that $\bar{T}$ is nodal. Then $\#\Pdeb(\cS)=1$ and the minimal log resolution $(X,D)$ of the unique surface $\bar{X}\in \Pdeb(\cS)$ satisfies $h^{i}(\lts{X}{D})=0$ for all $i$.
\end{lemma}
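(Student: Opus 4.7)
The plan splits into two parts: establishing the uniqueness of $\bar{X}$ via its descendant, and then reducing the cohomology computation from $X$ to $Y$.

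For the uniqueness, I would argue as follows. By Lemma \ref{lem:leash-is-unique}\ref{item:leash-type}, the singularity type $\cS$ determines $\cS_Y$ and the singularity type of $\bar{T}$. Since $\bar{T}$ is nodal, Table \ref{table:canonical} asserts that the log surface $(\bar{Y},\bar{T})$ is unique up to isomorphism (the entry corresponding to $\cS_Y$ is $\rN$, meaning $\Pnode(\cS_Y)$ is a singleton). It remains to reconstruct $\bar{X}$ from $(\bar{Y},\bar{T})$. Consider $\sigma\colon (X,D+L)\to (Y, D_Y'+T_Y')$ from diagram \eqref{eq:GK-diagram}, where $T_Y'=\alpha^{-1}_{*}\bar{T}$ is nodal and $D_Y'=\Exc\alpha$. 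By Lemma \ref{lem:cuspidal_resolution}\ref{item:T_nodal}, $\sigma$ consists of the initial blowup at the unique node of $T_Y'$, followed by blowups at intersection points of the exceptional $(-1)$-curve $L$ with the strict transform of $T_Y'$; the combinatorial recipe is determined by $\cT$. The first center is a singular point of the reduced boundary $D_Y'+T_Y'$, so this blowup is inner; each subsequent center is likewise a nodal intersection of the current boundary, hence inner. By Lemma \ref{lem:inner}, $(X,D+L)$ is unique up to isomorphism, and so is $\bar{X}$.

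For the cohomology, I would first reduce to a computation on $Y$. Since $L$ is a $(-1)$-curve with $L\not\subseteq D$, Lemma \ref{lem:h1}\ref{item:h1-1_curve} gives $h^i(\lts{X}{D})=h^i(\lts{X}{D+L})$. All blowups composing $\sigma$ are inner with respect to the boundary $D_Y'+T_Y'$, so by Lemma \ref{lem:blowup-hi}\ref{item:blowup-hi-inner} we obtain
\[
h^i(\lts{X}{D+L})=h^i(\lts{Y}{D_Y'+T_Y'}).
\]
It therefore suffices to prove $h^i(\lts{Y}{D_Y'+T_Y'})=0$ for $i=0,1,2$. Here we may use that $\bar{T}$ avoids $\Sing\bar{Y}$ and $\bar{T}\in|-K_{\bar{Y}}|$, so $T_Y'\sim -K_Y$; combined with the fact that $\bar{Y}$ is canonical, we get $2K_Y+D_Y'+T_Y'\sim K_Y+D_Y'\sim \alpha^{*}K_{\bar{Y}}$, which is anti-nef. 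This identity feeds into Lemma \ref{lem:h1}\ref{item:h1-h2=h0} to address $h^2$.

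The main obstacle is the systematic vanishing of $h^0$ and $h^1$ of $\lts{Y}{D_Y'+T_Y'}$. My approach would be to use Lemma \ref{lem:GK-swap} (or analogous swap moves at the level of $\bar{Y}$) to induce inner birational maps that preserve $h^i$, and to keep swapping until the underlying canonical surface is reduced to a very simple model such as $\P^2$ with a nodal anti-canonical cubic or $\F_0$ with a nodal $(2,2)$-curve. After adding auxiliary $(-1)$-curves (which does not change $h^i$ by Lemma \ref{lem:h1}\ref{item:h1-1_curve}) and performing further inner blowdowns, the target simplifies to one of the configurations in Lemma \ref{lem:h1}\ref{item:h1-easy-vanishing}, whose logarithmic tangent cohomology is known to vanish. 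Making this reduction work uniformly for every $\cS_Y$ appearing with the entry $\rN$ in Table \ref{table:canonical} is the delicate step: one must check that at each stage the swap-reduction hypothesis $w\leq -2$ of Lemma \ref{lem:GK-swap} is available, or handle the borderline cases by an explicit infinitesimal rigidity argument for $(\bar{Y},\bar{T})$ based on the moduli dimension $0$ of $\Pnode(\cS_Y)$ recorded in Table \ref{table:canonical}.
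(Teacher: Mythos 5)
Your uniqueness argument is sound and takes a slightly different route from the paper: you invoke Lemma \ref{lem:leash-is-unique}, the uniqueness of the nodal pair $(\bar{Y},\bar{T})$ recorded in Table \ref{table:canonical}, and the inner-ness of the reconstruction over the node, whereas the paper proves uniqueness and the cohomology vanishing simultaneously by descending all the way to explicit planar configurations. Both work for uniqueness; your version is arguably cleaner there, modulo the routine check that the inner-blowup recipe is determined by $\cT$ up to the symmetry of the circular graph.

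The cohomology vanishing, however, is the real content of the lemma and your plan for it has concrete gaps. First, your intermediate target is wrong: a nodal anti-canonical cubic in $\P^2$ (or its log resolution) is \emph{not} one of the configurations of Lemma \ref{lem:h1}\ref{item:h1-easy-vanishing}, and the passage from it to four general lines or the $3\times 3$ grid is precisely where the work lies. The paper does this by choosing the witnessing $\P^1$-fibration on the vertically primitive model, adjoining the vertical $(-1)$-curves (including the elliptic tie over the node and the fiber through it), checking the result is snc --- a step that genuinely uses the characteristic restrictions, e.g.\ $\cha\kk\neq 5$ in the $2\rA_4$ case --- and then contracting to a conic-plus-tangent-lines configuration before adding specific auxiliary lines. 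Your proposal does not engage with any of these characteristic-dependent verifications. Second, your fallback for the borderline cases, ``an explicit infinitesimal rigidity argument based on the moduli dimension $0$ of $\Pnode(\cS_Y)$,'' is a non sequitur: uniqueness up to isomorphism does not imply $h^1=0$ (Example \ref{ex:h1} exhibits the unique surface of type $\rA_7$ in characteristic $2$ with $h^1=1$), so no rigidity of the pair can be extracted from $\#\Pnode(\cS_Y)=1$. Third, the identity you feed into Lemma \ref{lem:h1}\ref{item:h1-h2=h0} is false as written: since $\bar{Y}$ is canonical one has $K_Y=\alpha^{*}K_{\bar{Y}}$, hence $2K_Y+D_Y'+T_Y'\sim \alpha^{*}K_{\bar{Y}}+D_Y'$, which is not $\alpha^{*}K_{\bar{Y}}$ and not anti-nef, so the intended $h^2$ argument does not go through. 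Finally, working with the non-snc pair $(Y,D_Y'+T_Y')$ puts you outside the hypotheses of Lemma \ref{lem:blowup-hi}; you should stop the descent at the minimal log resolution of $(\bar{Y},\bar{T})$, where the node is already resolved.
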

\begin{proof}
	Let $\bar{Y}\sqto \bar{Y}'$ be a swap to a vertically primitive surface as in Remark \ref{rem:canonical_ht=1} or Example \ref{ex:remaining_canonical}. Since $\bar{T}$ is nodal, looking at Table \ref{table:canonical} we see that $\bar{Y}'$ is of one of the types: $\rA_1+\rA_2$, $2\rA_1+\rA_3$, $\rA_1+2\rA_3$ if $\cha\kk\neq 2$; $3\rA_2$ if $\cha\kk\neq 3$;  $2\rA_4$ if $\cha\kk\neq 5$; or $\rA_1+\rA_2+\rA_5$ if $\cha\kk\neq 2,3$. Let $\bar{T}'\subseteq \bar{Y}'$ be the image of $\bar{T}$, and let $(Y',B')$ be the minimal log resolution of $(\bar{Y}',\bar{T}')$. Consider the $\P^1$-fibrations of $Y'$ and $X$ induced by the ones in Figures \ref{fig:ht=1} or \ref{fig:ht=2_w=1-basic}, \ref{fig:remaining_canonical}, and let $\check{B}'$ be the sum of $B'$ and all vertical $(-1)$-curves. Those $(-1)$-curves are the proper transforms of those shown in Figures \ref{fig:ht=1}, \ref{fig:ht=2_w=1-basic}, \ref{fig:remaining_canonical}, the elliptic tie $L'$ over the node of $\bar{T}'$, and the proper transform of the fiber passing through the node, call it $V$. The divisor $\check{B}'$ is snc: to see this, we need to check that $V$ meets $\check{B}-V$ normally, which is obvious in all cases except $\cS_Y=2\rA_4$, when one needs to check that $V$ does not pass through the common point of the horizontal $(-2)$-curves. This follows from a direct computation using the construction of $(\bar{Y},\bar{T})$ in \cite[\sec 9]{PaPe_MT}, relying on the fact that $\cha\kk\neq 5$. 
	
	Let $\check{D}'$ be the reduced total transform of $\check{B}'$ on $X$: again, it is a sum of $D$ and $(-1)$-curves. We have an inner morphism $(X,\check{D})\to (Y',\check{B}')$, so by Lemma \ref{lem:blowup-hi}\ref{item:blowup-hi-inner}  and \ref{lem:adding-1}\ref{item:adding-1-faithful} it is enough to prove that $h^{i}(\lts{Y'}{\check{B}'})=0$ and that the combinatorial type $\cS'$ of $(Y',\check{B}')$ has $\#\cP_{+}(\check{\cS}')=1$. 

	Let $\phi\colon Y'\to Z$ be the contraction of $V$. Put $B_Z=\phi_{*}\check{B}'$ if $\height(\bar{Y})=1$ and $B_{Z}=\phi_{*}(\check{B}-L')-H'$, where $H'$ is one of the horizontal $(-1)$-curves in $\phi_{*}B$. Furthermore, in case $\height(\bar{Y})=1$ replace $Z$ with a blowup at one of the two common points of the proper transforms of $\bar{T}$ and $L$, and $B_Z$ by its reduced total transform. In any case, we conclude as before that it is enough to prove the required statements for $(Z,B_Z)$.
	
	Let $\tau\colon Z\to \P^2$ be the contraction of the horizontal $(-1)$-curve in $B_Z$ and all vertical curves disjoint from it. Then $\tau_{*}B_Z$ consists of a conic $\cc$ and lines $\ll_{1},\dots, \ll_{\nu}$ for some $\nu\in \{2,3\}$ meeting at a point off $\cc$, such that $\ll_2,\dots,\ll_{\nu}$ are tangent to $\cc$ and $\ll_1$ is not. Let $p$ be the base point of $\tau^{-1}$ in $\ll_1\cap \cc$, let $\ll$ be the line joining $p$ with $\ll_2\cap \cc$, and if $\nu=3$ let $\ll'$ be the line joining $\ll_2\cap \cc$ with $\ll_3\cap \cc$. The proper transforms of $\ll$ and (in case $\nu=3$) $\ll'$ are $(-1)$-curves meeting $B_Z$ normally, so as before we can add it to $B_Z$. In case $\nu=2$ we have an inner morphism $(Z,B_Z)\to (\P^2,\pp)$, where $\pp$ is a sum of four lines in a general position, so $(\P^2,\pp)$ is unique up to an isomorphism and $h^{i}(\lts{\P^2}{\pp})=0$ by Lemma \ref{lem:h1}\ref{item:h1-P2}, which ends the proof. In case $\nu=3$ we have an inner morphism $(Z,B_Z)\to (Z',B_{Z}')$ such that the proper transform $T'$ of $\bar{T}$ on $Z'$ is a $(-1)$-curve, so we can replace $(Z,B_Z)$ by $(Z',B_{Z}'-T')$. The latter admits an inner morphism onto $(\P^1\times \P^1,B)$, where $B$ is a sum of $3$ vertical and $3$ horizontal lines. Like before, the result follows since $(\P^1\times \P^1,B)$ is unique up to an isomorphism and $h^i(\lts{\P^1\times \P^1}{B})=0$ for all $i$ by Lemma \ref{lem:h1}\ref{item:h1-grid}. 
\end{proof}

From now on we assume that $\bar{T}$ is cuspidal, so $\cha\kk\in \{2,3,5\}$. The corresponding entry in Table \ref{table:canonical} is $\rC_{d}$ or $\rC$. In the latter case we put $d=0$. Moreover, we put $h^i\de h^i(\lts{X}{D})$ for a minimal log resolution $(X,D)$ of some fixed $\bar{X}\in \Pdeb(\cS)$. We begin with case $\#\cT=3$, i.e.\ when $(X,D+L)$ is the minimal log resolution of $(\bar{Y},\bar{T})$. We note that in case $d\geq 1$ the result essentially follows from \cite[Proposition 1.5(c)]{PaPe_MT}, but since the argument in case $d=0$ works similarly in case $d\geq 1$, too, we sketch a proof for any $d$.

\begin{lemma}[Moduli in case $\bar{T}$ cuspidal, $\#\cT=3$]\label{lem:GK_hi-3}
	Assume 
	$\#\cT=3$. Then $\Pdeb(\cS)$ has moduli dimension $d$. Moreover, we have $(h^0,h^1,h^2)=(0,d,d+1)$ for any $\bar{X}\in \Pdeb(\cS)$.
\end{lemma}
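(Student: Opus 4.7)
My plan is to reduce the computation to the already-understood moduli of $\Pcusp(\cS_Y)$, via a canonical bijection $\Pdeb(\cS)\leftrightarrow\Pcusp(\cS_Y)$. Injectivity is immediate from Lemma \ref{lem:leash-is-unique}\ref{item:leash-phi}. For surjectivity, given $(\bar{Y},\bar{T})\in\Pcusp(\cS_Y)$, I perform the minimal embedded resolution of the cusp of the proper transform $\bar{T}_Y$ on the canonical resolution $Y$ of $\bar{Y}$: for an $A_2$-cusp this is a canonical sequence of three blowups producing $(X,D+L)$ with $\cT=[s-3]+[3]+[2]$ as in Lemma \ref{lem:cuspidal_resolution}\ref{item:C_3}. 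Contracting $D$ yields a del Pezzo $\bar{X}\in\Pdeb(\cS)$ by Lemma \ref{lem:GK_intro}\ref{item:GK-intro-cf}, since the connected components of $D_T$ are all admissible chains, so $\cf(T)<1$.

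Next, to realize this bijection on families, I would start from an almost universal family $(\bar{\cY},\bar{\cT})\to B$ of dimension $d$ representing $\Pcusp(\cS_Y)$ and perform the three blowups fibrewise. Each center---the cusp of $\bar{\cT}$, then the unique singular point of the reduced total transform at each subsequent stage---forms a canonical section of the total family, so the lifting is well-defined and produces a smooth family $(\cX,\cD+\cL)\to B$. Dropping $\cL$ from the boundary is harmless by Lemma \ref{lem:h1}\ref{item:h1-1_curve}, and almost faithfulness descends from the downstairs family via Lemma \ref{lem:leash-is-unique}. Formal semiuniversality at each fibre follows because every infinitesimal deformation of $(X,D+L)$ canonically contracts the chain $D_T+L-T$ to yield a deformation of $(\bar{Y},\bar{T})$. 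Hence $\Pdeb(\cS)$ has moduli dimension $d$ and $h^1(\lts{X}{D})=d$.

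The vanishing $h^0(\lts{X}{D})=0$ will follow because $\Aut(\bar{X})$ is finite: the rich singular structure (at least $s+3\geq 9$ exceptional components of distinct types in $D$) together with the uniqueness of $L$ forces any global vector field on $X$ tangent to $D$ to descend to one on $\bar{Y}$ tangent to $\bar{T}$, and such a field vanishes because the automorphism group of any fibre of the almost universal family representing $\Pcusp(\cS_Y)$ is finite. For the equality $h^2(\lts{X}{D})=d+1$, I would apply Hirzebruch--Riemann--Roch to the rank-two bundle $\lts{X}{(D+L)}$, using $\det\lts{X}{(D+L)}=\cO_X(-K_X-D-L)$, $\chi(\cO_X)=1$, and Noether's formula $K_X^2=9-\rho(X)$. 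Combined with $h^0=0$ and $h^1=d$, the resulting $\chi(\lts{X}{D})=-1$ would yield $h^2=d+1$.

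The main obstacle is the $h^2$ computation. While the Riemann--Roch setup is routine, isolating the exact answer requires careful bookkeeping of $c_2(\lts{X}{(D+L)})$ in terms of $c_2(X)$ and the nodes of $D+L$, together with the discrepancy formula $-K_X=T+\bar{E}_1+E_2+2L$ (obtained by pulling back $-K_{\bar{Y}}=\bar{T}$ along $\sigma$ and comparing coefficients in the three-step cuspidal resolution). A cleaner, more conceptual route would be to identify $h^2(\lts{X}{(D+L)})$ directly with an obstruction space for the deformation functor of $(\bar{Y},\bar{T})$---the cusp being expected to contribute exactly one extra direction in obstruction theory beyond the $d$-dimensional smooth moduli---but making this precise appears to require at least as much work as the direct Riemann--Roch calculation.
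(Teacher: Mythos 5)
Your reduction to $\Pcusp(\cS_Y)$ is, in the case $\#\cT=3$, essentially a tautology rather than a reduction: here the morphism $\phi$ of Definition \ref{def:GK} \emph{is} the minimal log resolution of $(\bar{Y},\bar{T})$, so $(X,D+L)$ is literally that minimal log resolution, and by Lemma \ref{lem:h1}\ref{item:h1-1_curve} dropping the $(-1)$-curve $L$ changes no cohomology. Hence ``$\Pdeb(\cS)$ has moduli dimension $d$'' and ``$\Pcusp(\cS_Y)$ has moduli dimension $d$'' are the same statement, and the latter is not available as an input: what is known beforehand is only that $\Pcusp(\cS_Y)$ is represented by an \emph{almost faithful} family of dimension $d$, which yields $h^1\geq d$. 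The whole content of the lemma is the reverse inequality $h^1\leq d$ together with formal semiuniversality, and your proposal assumes it (your ``semiuniversality follows because deformations contract canonically'' only transports the question back to $(\bar{Y},\bar{T})$, where it is equally open). The paper instead proves $h^1\leq d$ by reducing, via Lemma \ref{lem:GK-swap} and the inner/outer blowup calculus of Lemmas \ref{lem:blowup-hi}, \ref{lem:inner}, \ref{lem:outer}, to explicitly computable configurations: grids of lines on $\P^1\times\P^1$ or $\P^2$ for $\cha\kk=3,5$, and the Keel--McKernan family of Lemma \ref{lem:7A1-family} for $\cha\kk=2$.

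Two further problems. The claim that $h^0=0$ because $\Aut(\bar{X})$ is finite is not valid here: the cuspidal case forces $\cha\kk\in\{2,3,5\}$, and in positive characteristic neither finiteness of the automorphism group nor uniqueness of the surface implies vanishing of $H^0$ of the log tangent sheaf --- compare Example \ref{ex:h1}, where the unique surface of type $\rA_7$ in characteristic $2$ has $h^0=2$. Finally, your Riemann--Roch bookkeeping is inconsistent: the target $(h^0,h^1,h^2)=(0,d,d+1)$ forces $\chi=1$, not $-1$, and with $\chi=-1$ your own relation $\chi=-d+h^2$ would give $h^2=d-1$. Since the $c_2$ computation is also left undone, neither $h^1\leq d$ nor $h^2=d+1$ is actually established by the proposal.
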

\begin{proof} 
%	We note that if $d\geq 1$ then the fact that $\Pdeb(\cS)$ has moduli dimension $d$ follows from \cite[Proposition 1.5(c)]{PaPe_MT}. This implies that $h^1=d$, and the lemma follows from the vanishing of $h^0$, which is elementary, and the Riemann--Roch theorem. Nonetheless, the argument for $d=0$ works similarly in case $d\geq 1$, too, so we give a proof for any $d$.
%	
	By Lemma \ref{lem:GK-swap} we can assume that $\bar{Y}$ is primitive. Assume $\cha \kk=2$, so $\cS_Y=2\rA_1+\rA_3, 3\rA_1+\rD_4, 7\rA_1$ or $8\rA_1$, and $d=0,1,2$ or $3$, respectively. Consider a $\P^1$-fibration of $X$ which is a pullback of the one of $Y$ shown in Figure \ref{fig:2A1+A3}, \ref{fig:3A1+D4}, \ref{fig:KM_surface} or \cite[Figure 6(b)]{PaPe_MT}, see Figure \ref{fig:8A1-GK} for the latter. Let $V$ be the proper transform of the fiber passing through the cusp of $\bar{Y}$, and let $\phi$ be the contraction of $V$. In cases $\cS_Y=2\rA_1+\rA_3$ and $3\rA_1+\rD_4$ we get this way an inner morphism $\phi\colon (X,D)\to (X',D')$ onto a log surface as in Lemma \ref{lem:7A1-family} for $\nu=d+1$, so the result follows from Lemma \ref{lem:inner}. Consider cases $\cS_Y=7\rA_1$ or $8\rA_1$. Put $B=D-(D_Y)\hor$. We have a composition of $d-1$ outer blowdowns $(X,B)\to (X',D')$ onto a log surface as in  Lemma \ref{lem:7A1-family} for $\nu=4$, so $h^i(\lts{X'}{D'})=0,1,2$ for $i=0,1,2$. By Lemma \ref{lem:blowup-hi}\ref{item:blowup-hi-outer} we get $h^{i}(\lts{X}{B})=0,d,2$ for $i=0,1,2$. By Lemma \ref{lem:h1}\ref{item:h1_exact} we get $h^1\leq d$ and $h^2=h^1+1$. Moreover, by  Lemma \ref{lem:outer}\ref{item:outer-trivial} we get an almost universal family representing the image of $\cP_{+}(\check{\cS})$ in $\cP_{+}(\check{\cS}')$, where $\check{\cS}$, $\check{\cS}'$ are the combinatorial types of $(X,D+L)$ and $(X,B+L)$, respectively. By construction, this family can be extended (by adding boundary components) to an almost faithful family representing $\Pdeb(\cS)$. It follows that $h^1\geq d$. Therefore, we have $h^1=d$ and the natural map between deformations of $(X,D)$ and $(X,B)$ is an isomorphism. Thus our family is almost universal, as needed. For an alternative construction of such families see \cite[Lemmas 8.2(a), 8.4(a)]{PaPe_MT}.
	\smallskip 
	
	Consider the case $\cha\kk=3$, $\cS_Y=3\rA_2$. We keep the notation from Example \ref{ex:ht=2}\ref{item:3A2_construction}, and denote the proper transforms on $X$ of the curves in Figure \ref{fig:3A2} by the same letters. The image of $T$ on $\P^2$ is a cuspidal cubic $\qq$  tangent to $\cc,\ll_2$ at $p_1,p_0$ with multiplicity $3$, so since $\cha\kk=3$, the line $\ll'$ joining $p_2$ with the cusp $q\in \qq$ is tangent to $\qq$ at $q$, too, see \cite[Lemma 5.5(b)]{PaPe_MT}. For $i\in \{0,1\}$ let $\ll_i'$ be the line joining $q$ with $p_i$, and let $L',L_i'$ be the proper transforms of $\ll',\ll_i'$ on $X$, they are $(-1)$-curves. Let $D'=D+L+L_1+L_1'-C$. Let $G_i$ for $i\in \{1,2\}$ be the connected components of $D_Y$ meeting $L_1$ and $A_i$. The contraction of $(L_0'+G_0)+(A_1+G_1)+(L'+L_2)$ is an inner morphism $(X,D')\to (\P^1\times \P^1,B)$, where $B$ is the sum of three horizontal and three vertical lines. Lemmas \ref{lem:h1}\ref{item:h1-grid} and \ref{lem:inner} show that $h^{i}(\lts{X}{(D-C)})=0$ for all $i$, and the combinatorial type $\cS'$ of $(X,D')$ satisfies $\#\cP_{+}(\cS')=1$, so $\#\Pdeb(\cS)=1$. Since $C=[2]$ we get $(h^0,h^1,h^2)=(0,0,1)$ by Lemma \ref{lem:h1}\ref{item:h1_exact}.
	
	In case $\cha\kk=3$, $\cS_Y=4\rA_2$ the first statement is proved in \cite[Lemma 7.2(b)]{PaPe_MT} and the second is elementary. For a direct argument we proceed as follows. Recall that $\bar{Y}$ has a descendant of type $3\rA_2$ with cuspidal elliptic boundary, and the corresponding $(-1)$-curve over the cusp, call it $L'$, has $L'\cdot D_Y=2$ and meets $D_Y$ in a node. By adjunction $L'\cdot T=1$, so $L'\cdot D=3$. Let $G,H$ be the components of $D_Y$ meeting $L'$, and let $(X,D-H)\to (X',D')$ be a contraction of $L'+G$. It is a composition of an inner and outer blowup, and the resulting log surface $(X',D')$ is a minimal log resolution of a del Pezzo surface as in the previous paragraph, i.e.\ in $\Pdeb(3\rA_2+[3]+[3]+[2])$. Thus $h^{i}(\lts{X'}{D'})=0,0,1$ for $i=0,1,2$, so Lemma \ref{lem:blowup-hi} implies that $h^i(\lts{X}{(D-H)})=0,1,1$, so $h^0=0$ and $h^1=h^2-1\geq 1$ by Lemma \ref{lem:h1}\ref{item:h1_exact}. Moreover, Lemma \ref{lem:outer}\ref{item:outer-trivial} implies that $\Pdeb(\cS)$ is represented by an almost faithful family over a curve, so as in cases $7\rA_1$, $8\rA_1$ above we conclude that $\Pdeb(\cS)$ has moduli dimension $h^1=1$, as needed.
	\smallskip
	
	It remains to consider case  $\cha\kk=5$, $\cS_Y=2\rA_4$. Again, we denote the curves in Figure \ref{fig:2A4} and their proper transforms on $X$ by the same letters. For $i=0,2$ let $L_i'=[1]$ be the proper transform of the line joining $p_i$ with the cusp of the image of $T$ on $\P^2$. Write $D_T=T+U+V$, where $U=[3]$, $V=[2]$. Let $G_T=[(2)_{4}]$, $G_U=[2,2]$ and $G_V=[2]$ be the connected components of $D_Y-C$. Then $T+A_1+G_T=[5,1,(2)_{4}]$ supports a fiber a $\P^1$-fibration such that $D\hor=C$ is a $5$-section, and $U+L_0'+G_U$ supports another degenerate fiber, see Figure \ref{fig:2A4-GK}. It follows from Lemma \ref{lem:fibrations-Sigma-chi} that the remaining degenerate fiber is supported on $V+A+G_V=[2,1,2]$ for some $(-1)$-curve $A$. Let $D'=D+L+A_2+L_2'-C$. The contraction of $(A_1+G_T)+(L_0'+G_U)+(A+G_V)$ is an inner morphism $(X,D')\to (\P^1\times \P^1,B)$, where $B$ is a sum of three horizontal and three vertical lines, so $h^i(\lts{X'}{D'})=0$ for $i=0,1,2$. We conclude exactly as in the case $\cS_Y=3\rA_2$. 
\end{proof}

\begin{figure}[htbp]\vspace{-0.5em}
		\subcaptionbox{$\cS_Y=8\rA_1$, $\cha\kk=2$, cf.\ \cite[Figure 6(b)]{PaPe_MT} \label{fig:8A1-GK}}[.45\linewidth]{	
			\begin{tikzpicture}[scale=0.8]
				\draw[thick] (-0.1,1.7) -- (2.3,1.7) to[out=0,in=-100] (3.05,2.3) to[out=80,in=180] (3.6,2.8)-- (5,2.8) to[out=0,in=-100] (5.15,2.9) to[out=80,in=-180] (5.3, 3) -- (5.6,3);
				\draw[thick] (-0.1,1.3) -- (2.3,1.3) to[out=0,in=100] (3.05,0.7) to[out=-80,in=180] (3.4,0.2) to[out=0,in=180] (4.8,2.5) -- (5,2.5) to[out=0,in=-100] (5.1,2.6) to[out=80,in=-180] (5.2, 2.7) -- (5.6,2.7);
				\draw[thick] (-0.1,1.5) -- (4,1.5);
				\draw[thick] (4.2,1.5) -- (4.6,1.5) to[out=0,in=180] (5,1) -- (5.6,1);
				\node at (3.5,1.7) {\small{$-5$}};
				\draw (0.2,3.1) -- (0,1.9);
				\draw[dashed] (0,2.1) -- (0.2,0.9);
				\draw (0.2,1.1) -- (0,-0.1);
				\draw (1.2,3.1) -- (1,1.9);
				\draw[dashed] (1,2.1) -- (1.2,0.9);
				\draw (1.2,1.1) -- (1,-0.1);
				\draw (2.2,3.1) -- (2,1.9);
				\draw[dashed] (2,2.1) -- (2.2,0.9);
				\draw (2.2,1.1) -- (2,-0.1);
				\draw[dashed] (3.2,3.1) -- (2.9,1.4);
				\draw[dashed] (2.9,1.6) -- (3.2,-0.1);
				\draw[dashed] (5.2,3.1) -- (5,2.1);
				\draw (5,2.3) -- (5.2,1.3);
				\node at (4.8,1.8) {\small{$-3$}};
				\draw[dashed] (5.2,1.5) -- (5,0.5);
				\draw (5,0.7) -- (5.2,-0.3);			
			\end{tikzpicture}	
		}
		\subcaptionbox{$\cS_Y=2\rA_4$, $\cha\kk=5$, cf.\ \cite[p.\ 12]{PaPe_MT} \label{fig:2A4-GK}}[.45\linewidth]{	
			\begin{tikzpicture}[scale=0.8]
			\draw[dashed] (-0.1,4.8) -- (3,4.8);
			\draw (0.2,5) -- (0,3.8);
			\node at (-0.2,4.4) {\small{$-5$}};
			\draw[dashed] (0,4) -- (0.2,3.2);
			\draw (0.2,3.4) -- (0,2.6);
			\draw (0,2.8) -- (0.2,2);
			\draw (0.2,2.2) -- (0,1.4);
		%	\draw (-0.2,1.6) -- (0.6,1.4);
			\draw (0,1.6) -- (0.2,0.8);
			\draw (1.4,5) -- (1.2,3.8);
			\node at (1,4.4) {\small{$-3$}};
			\draw[dashed] (1.2,4) -- (1.4,3);
			\draw (1.4,3.2) -- (1.2,2.2);
			\draw (1.2,2.4) -- (1.4,1.4);
			\draw (2.6,5) -- (2.4,3.8);
			\draw[dashed] (2.4,4) -- (2.6,2.8);
			\draw (2.6,3) -- (2.4,1.8);
			\draw[thick] (-0.2,3.6) -- (0.4,3.6) to[out=0,in=180] (0.8,3.1) -- (2.2,3.1) to[out=0,in=100] (2.57,2.9) to[out=-80,in=180] (2.7,2.7) -- (3,2.7);
			\end{tikzpicture} 	
		}\vspace{-0.5em}
	\caption{Minimal log resolutions of some log surfaces $(\bar{Y},\bar{T})$ with $\bar{T}$ cuspidal.}\vspace{-1em}
	\label{fig:GK-special}	
\end{figure}

\begin{lemma}[Moduli in the cuspidal case, see Theorem \ref{thm:GK}\ref{item:GK_uniqueness-cuspidal}]\label{lem:GK-h1} 
	Let $t$ be as in Theorem \ref{thm:GK}\ref{item:GK-t}.
	\begin{enumerate}
		\item\label{item:GK-h1-family}	The set $\Pdeb(\cS)$ is represented by an almost faithful family of dimension $d+t$.
		\item\label{item:GK-h1-moduli}	Assume 
		$\#\cT\geq 3$. Then $\Pdeb(\cS)$ has moduli dimension $d+t$.
		\end{enumerate}
\end{lemma}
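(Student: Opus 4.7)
The strategy is to lift the almost universal family of dimension $d$ representing $\Pdeb(\cS_Y+\cT_0)$ for $\cT_0\de [s-3]\adec{1}+[3]\adec{1}+[2]\adec{1}$ (case \ref{lem:cuspidal_resolution}\ref{item:C_3}), furnished by Lemma \ref{lem:GK_hi-3}, to one representing $\Pdeb(\cS)$, by applying Observation \ref{obs:blowups} through Lemmas \ref{lem:inner} and \ref{lem:outer}. The key fact is that every $(X,D+L)$ with $\#\cT\geq 3$ is obtained from this base case by a sequence of blowups entirely supported within $D_T+L$, whose combinatorics is dictated by the shape of $\cT$ listed in Lemma \ref{lem:cuspidal_resolution}\ref{item:T_cuspidal}.

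For part (b) I would then carry out a case analysis on $\cT$. In cases \ref{lem:cuspidal_resolution}\ref{item:C_chain_[2]}--\ref{lem:cuspidal_resolution}\ref{item:C_[rest]} (all with $t=0$), every blowup in this decomposition is inner — its center is the unique edge of the weighted graph forced by the combinatorics — so Lemma \ref{lem:inner} preserves both the moduli dimension and universality, yielding the desired moduli dimension $d=d+t$. The one remaining case \ref{lem:cuspidal_resolution}\ref{item:C_smooth} (where $t=1$) is the crucial one: here the decomposition factors as a single outer blowup on the branching $(-1)$-curve $L_0$ of the base case at a point of $L_0^{\circ}\de L_0\setminus D_0$, followed by inner blowups. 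Lemmas \ref{lem:outer}\ref{item:outer-fixed-point}, \ref{lem:outer}\ref{item:outer-h1-grows}, and \ref{lem:blowup-hi}\ref{item:blowup-hi-outer} then produce the required almost universal family of dimension $d+1$, provided that every vector field on a generic base fiber $(X_0,D_0+L_0)$ tangent to the boundary vanishes along $L_0^{\circ}$.

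Verifying this vanishing is the main obstacle. The three points $L_0\cap D_0$ are the tips of twigs of types $[2]$, $[3]$, $[s-3]$, which are pairwise distinct unless $s=6$; hence the induced action of $\Aut(X_0,D_0+L_0)$ on $L_0\cong \P^1$ factors through a finite subgroup of $\Aut(\P^1)$, and no nontrivial algebraic group can act on $L_0$ via lifted tangent vector fields, forcing the required vanishing. For part (a) in the missing case $\#\cT\leq 2$ (types \ref{lem:cuspidal_resolution}\ref{item:C_1}--\ref{lem:cuspidal_resolution}\ref{item:C_2}), $D_T+L$ is circular and the morphism $\phi$ is a prescribed sequence of one or two blowups at the cusp of $\bar T$ whose combinatorial structure is entirely determined by $\cT$, so each $(\bar Y,\bar T)\in\Pcusp(\cS_Y)$ determines $\bar X\in\Pdeb(\cS)$ canonically; conversely by Lemma \ref{lem:leash-is-unique} the map $\bar X\mapsto(\bar Y,\bar T)$ is a bijection. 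Realizing this bijection at the level of smooth families — by performing the same blowup on the section of cusps in the family representing $\Pcusp(\cS_Y)$ — gives an almost faithful family for $\Pdeb(\cS)$ of dimension $d=d+t$ (with $t=0$).
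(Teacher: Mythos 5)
Your route is the paper's: reduce to the base case $\#\cT=3$ (the minimal log resolution of $(\bar Y,\bar T)$, handled by Lemma \ref{lem:GK_hi-3}), observe that the tower over it is inner exactly when $t=0$ and consists of a single outer blowup on the elliptic tie followed by inner blowups when $t=1$, control the automorphism action on $L'\setminus D'$, and derive the case $\#\cT\le 2$ from the case $\#\cT=3$. (For the last step the paper \emph{contracts} the $\#\cT=3$ family — removing $L$ and the $(-2)$-curve meeting it — rather than blowing up a section of cusps in a family of singular pairs; this is the same idea expressed inside the log-smooth-family formalism, and your bijection via Lemma \ref{lem:leash-is-unique} is the right justification.)

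Two points in your $t=1$ step do not stand as written. First, the item of Lemma \ref{lem:outer} you need is \ref{item:outer-trivial}, not \ref{item:outer-fixed-point}: to gain a full extra modulus you must show that the groups $H_b$ act on $L_0^{\circ}$ with \emph{finite} orbits (trivially for $s\ge 7$, through $\Z/2$ for $s=6$, where the two $(-3)$-curves may be interchanged); a transitive-plus-fixed-point action as in \ref{item:outer-fixed-point} would only produce finitely many new isomorphism classes. Second, and more seriously, your inference ``the automorphism group acts on $L_0$ through a finite group, hence every tangent vector field vanishes along $L_0^{\circ}$'' is invalid in the present setting: the cuspidal case forces $\cha\kk\in\{2,3,5\}$, and in positive characteristic vector fields need not integrate to one-parameter subgroups — this is precisely the failure mode the paper isolates in Remark \ref{rem:outer-C} and Example \ref{ex:h1}, and the source of several exceptional entries in its tables. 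The required vanishing is nevertheless true, by a direct argument: a vector field tangent to $D'+L'$ restricts on $L'\cong\P^1$ to a section of $\cT_{\P^1}\cong\cO_{\P^1}(2)$ vanishing at the three points $L'\cap D'$, hence is identically zero there, and its normal component along $L'$ vanishes because the field is tangent to $L'$. With these two repairs your argument closes.
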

\begin{proof}
	Let $\check{\cS}$ be the combinatorial type of $(X,D+L)$. By Lemmas \ref{lem:leash-is-unique} and \ref{lem:adding-1}\ref{item:adding-1-faithful}, it is enough to prove the above properties for $\#\cP_{+}(\check{\cS})$. First, we prove part \ref{item:GK-h1-moduli}. Since $\#\cT\geq 3$, the morphism $\phi\colon (X,D+L)\to (\bar{Y},\bar{T})$ factors through $\psi\colon (X,D+L)\to (X',D'+L')$, where $(X',D'+L')$ is the minimal log resolution of $(\bar{Y},\bar{T})$, and $L'=[1]$. If $t=0$ then $\psi$ is inner, so the claim follows from Lemmas \ref{lem:inner} and \ref{lem:GK_hi-3}. 
	
	Assume $t=1$. Then $\psi=\gamma\circ \psi'$, where $\psi'\colon (X,D+L)\to(X'',D''+L'')$ is inner, and $\gamma\colon (X'',D''+L'')\to (X',D'+L')$ is a single outer blowup, centered on $L'$, which is  the branching component of the fork $\psi_{*}(D_{T}+L)=\langle 1;[2],[3],[s-3]\rangle$, see Lemma \ref{lem:cuspidal_resolution}\ref{item:C_smooth}. Since every vector field on $X'$ tangent to $D'+L'$ vanishes at three points of $L'$, it vanishes identically along $L'$. If $s\geq 7$ then $\Aut(X',D'+L')$ fixes each of the three points in $L'\cap D'$, so it acts trivially on $L'$. In this case, applying Lemma \ref{lem:outer}\ref{item:outer-trivial} we conclude that $\cP_{+}(\check{\cS})$ has moduli dimension $d+1$, as claimed. Assume $s=6$. Then $d=0$, see Table \ref{table:canonical}, so $\#\Pcusp(\cS_Y)=1$. Still, the restriction of $\Aut(X',D'+L')$ to $L'\setminus D'\cong \P^1\setminus \{3\mbox{ points}\}$ is a finite group (in fact, it is $\Z/2$ interchanging the common points of $L'$ with $(-3)$-curves in $D'$), so applying Lemmas \ref{lem:outer}\ref{item:outer-trivial}  and \ref{lem:inner} as before we conclude that  $\cP_{+}(\check{\cS})$ has moduli dimension $1$.
	
	To prove \ref{item:GK-h1-family} it remains to construct almost faithful families in case $\#\cT\leq 2$. To do this, we take one constructed above for $\#\cT=3$ and blow down divisors corresponding to $L$ and the $(-2)$-curve meeting it.
\end{proof}

\begin{remark}[Bases and symmetry groups in Lemma \ref{lem:GK-h1}\ref{item:GK-h1-family}, see Table \ref{table:symmetries}]\label{rem:GK-bases}
	Let $B_0,G_0$ be the base and symmetry group of an almost faithful family representing $\Pcusp(\cS_Y)$: if $d>0$ then $B_0,G_0$ are listed in \cite[Table 2]{PaPe_MT}, which we repeat in Table \ref{table:symmetries} below. If $d=0$ then $B_0=\{\pt\}$, $G_0=\{\id\}$. Let $f\colon (\cX,\cD)\to B$ be the almost faithful family representing $\Pdeb(\cS)$ constructed in Lemma \ref{lem:GK-h1}\ref{item:GK-h1-family}, and let $G$ be its symmetry group. The proof of Lemma \ref{lem:GK-h1}\ref{item:GK-h1-family} shows the following. If $t=0$ then $B=B_0$ and $G=G_0$. If $t=1$ then $B\cong B_0 \times L^{\circ}$, where $L^{\circ}\cong \P^1\setminus \{3\mbox{ points}\}$ parametrizes the center of the outer blowup, and either $\#\cS_Y\geq 7$ and  $G=G_0$ acts trivially on $L^{\circ}$, or we have $\#\cS_Y=6$, $B\cong L^{\circ}$ and $G\cong \Z/2$.
\end{remark} 

\begin{table}[htbp]
{\renewcommand{\arraystretch}{1.3}
	\begin{tabular}{r||c|c|c|c|c|c}
		$\cha\kk$ & 3 & \multicolumn{5}{c}{$2$} \\ \hline 
		type $\cS$ of $\bar{Y}$ & $4\rA_2$ & $3\rA_1+\rD_4$ & $2\rA_1+\rD_6$ & $7\rA_1$ & $4\rA_1+\rD_4$ & $8\rA_1$  \\
		\hline 
		base $B$ & $\P^1\setminus \P^1_{\F_3}$ & \multicolumn{2}{c|}{$\P^1\setminus \P^1_{\F_2}$} & \multicolumn{2}{c|}{$ \P^2\setminus \{\mbox{all } \F_2\mbox{-rational lines}\}$} & $3$-fold \cite[(8)]{PaPe_MT}  \\ \hline 
		symmetry group $G$	& $\PGL_{2}(\F_{3})$ 
		& $S_3$  & $\Z/2$ & $\PGL_{3}(\F_2)$ & $S_4$ & $\PGL_{3}(\F_2)$ 
	\end{tabular}
}
	\caption{Remark \ref{rem:GK-bases}: bases and symmetry groups of almost faithful families in Lemma \ref{lem:GK-h1}\ref{item:GK-h1-family}, case $t=0$, $d>0$. In case $d=t=0$ the base $B$ is a point and $G=\{\id\}$. In case $t=1$ the base is $B\times (\P^1\setminus \{3 \mbox{ points}\}$) and the symmetry group is $G$ if $\#\cS_Y\geq 7$ and $\Z/2$ if $\#\cS_Y=6$.}
	\label{table:symmetries}
\end{table}

To prove Theorem \ref{thm:GK} it remains to study the case when $\bar{T}$ is cuspidal and $\#\cT\leq 2$. The result is summarized  in Proposition \ref{prop:GK-small} below. Its proof relies on direct computations made in Lemmas \ref{lem:GK-small-moduli} and \ref{lem:GK-small-fails}.

\begin{proposition}[The special case $\#\cT\leq 2$ of Theorem \ref{thm:GK}\ref{item:GK_uniqueness-cuspidal}]\label{prop:GK-small}
	We keep Notation \ref{not:GK} and assume $\height(\bar{X})\geq 3$. Assume furthermore that $\bar{T}$ is cuspidal and $\#\cT\leq 2$. Put $h^i=h^i(\lts{X}{D})$. Then the following hold. 
	\begin{enumerate}
		\item\label{item:GK-small-h0} We have $h^0=0$ and $h^1=h^2\in \{d,d+1\}$.
		\item\label{item:GK-small-moduli} Assume that $\cha\kk=2$ or $\cha\kk=3$, $\#\cT=1$. Then $h^1=d$ and $\Pdeb(\cS)$ has moduli dimension $d$.
		\item\label{item:GK-small-fails} Assume that $\cha\kk=5$ or $\cha\kk=3$, $\#\cT=2$ and $\cS_Y\neq 4\rA_2$. Then $h^1=1=d+1$.  
	\end{enumerate}
\end{proposition}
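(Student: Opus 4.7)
The plan is to compute $h^{i}(\lts{X}{D})$ by reducing to the case $\#\cT\geq 3$ settled in Lemmas \ref{lem:GK_hi-3} and \ref{lem:GK-h1}, combined with Hirzebruch--Riemann--Roch. First I would establish $h^{0}=0$. Any $\xi\in H^{0}(\lts{X}{D})$ preserves $L$ by the uniqueness in Lemma \ref{lem:leash-is-unique}, hence is tangent to $D+L$; pushing it down through $\phi$ to a vector field on $\bar Y$ tangent to $\bar T$ and using the rigidity of the cuspidal pair $(\bar Y,\bar T)$ (as in the proof of Lemma \ref{lem:GK_hi-3}), we obtain $\xi=0$.

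Next I would show $h^{1}=h^{2}$ via $\chi(\lts{X}{D})=0$. By the short exact sequence of Lemma \ref{lem:h1}\ref{item:h1_exact} and Hirzebruch--Riemann--Roch,
\begin{equation*}
	\chi(\lts{X}{D})=\chi(T_{X})-\sum_{C\subseteq D}\chi(\cN_{C})=2K_{X}^{2}-10-\sum_{C\subseteq D}(C^{2}+1),
\end{equation*}
using $\chi(\cO_{X})=1$ and Noether's formula. Substituting $K_{X}^{2}=9-s-\#D_{T}$ (tracking $K_{Y}^{2}=9-s$ through the $\#D_T$ blowups of $\sigma$) and the explicit chain structure from Lemma \ref{lem:cuspidal_resolution}\ref{item:T_nodal} (so $\sum_{D}C^{2}=-3s+5$ for $\#\cT=1$ and $-3s+2$ for $\#\cT=2$, with $V=[2]$), a direct computation gives $\chi=0$ in both cases; together with $h^{0}=0$ this yields $h^{1}=h^{2}$.

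For the range $\{d,d+1\}$, the lower bound $h^{1}\geq d$ comes from the Kodaira--Spencer map of the almost faithful family of dimension $d+t=d$ (note $t=0$ when $\#\cT\leq 2$) constructed in Lemma \ref{lem:GK-h1}\ref{item:GK-h1-family}: finiteness of the symmetry group forces injectivity at a general point, and upper semicontinuity extends the bound. For the upper bound $h^{1}\leq d+1$, reduce $(X,D)$ to a log surface $(\tilde X,\tilde D)$ with $\#\tilde\cT=3$ (of type \ref{lem:cuspidal_resolution}\ref{item:C_3}) by an explicit sequence of blowups: a single inner blowup at a node of $D+L$ in case $\#\cT=2$ (where $D+L$ is snc by the transverse decoration), or an outer blowup of $(X,D)$ at the tangent point $L\cap T$ followed by an inner blowup at the resulting triple intersection in case $\#\cT=1$. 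By Lemma \ref{lem:blowup-hi}\ref{item:blowup-hi-inner} the inner steps preserve $h^{i}$, while Lemma \ref{lem:blowup-hi}\ref{item:blowup-hi-outer} allows $h^{1}$ to increase by at most one at the single outer step; comparison with Lemma \ref{lem:GK_hi-3} gives $h^{1}\leq d+1$.

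The distinction between (b) and (c) then reduces, via Lemma \ref{lem:blowup-hi}\ref{item:blowup-hi-outer} and Lemma \ref{lem:outer}\ref{item:outer-h1-stays}, to whether there exists a vector field on the intermediate log surface tangent to its boundary which does not vanish at the outer blowup center. In case (b), such a nonvanishing vector field exists (produced from the $\G_m$- or $\G_a$-actions on $\bar Y$ preserving $\bar T$, in a manner parallel to the computations in Lemma \ref{lem:ht=1_uniqueness} cases \ref{lem:ht=1_reduction}\ref{item:uniq_5} and \ref{item:uniq_Ek}), yielding $h^{1}=d$; together with Lemma \ref{lem:GK-h1}\ref{item:GK-h1-moduli} applied to the $\#\tilde\cT=3$ reduction, this promotes the family of Lemma \ref{lem:GK-h1}\ref{item:GK-h1-family} to an almost universal one of moduli dimension $d$. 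In case (c), the local computation near the cusp in characteristics $5$ (for $\cS_Y=2\rA_4$ with $\#\cT=1$) or $3$ (for $\#\cT=2$, $\cS_Y\neq 4\rA_2$) forces every such vector field to vanish identically along the relevant tangent direction, so the outer blowup raises $h^1$ by one and gives $h^{1}=d+1$, while the moduli dimension stays $d$ because the extra infinitesimal deformation is absorbed by an $\Aut(\bar Y,\bar T)$-orbit. The main obstacle is precisely this characteristic-specific vanishing calculation, carried out by writing $\sigma$ in local coordinates adapted to the cusp $y^{2}=x^{3}$ and tracking the pullback of vector fields through the blowups, in the spirit of the Jacobian-matrix arguments already used in Example \ref{ex:h1} and throughout the proof of Lemma \ref{lem:ht=1_uniqueness}.
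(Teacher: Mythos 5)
Your treatment of part \ref{item:GK-small-h0} is essentially sound and even a little cleaner than the paper's: the identity $h^1=h^2$ does follow from $h^0=0$ together with $\chi(\lts{X}{D})=0$, and your Riemann--Roch computation checks out (the paper instead tracks the exact sequences through the blowdown chain $(X_3,D_3)\to(X_2,D_2)\to(X_1,D_1)$, getting the same $\chi=0$ implicitly). The bounds $d\leq h^1\leq d+1$ via the almost faithful family and the reduction to $\#\cT=3$ also match the paper's route.

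The gap is in the mechanism you propose for separating \ref{item:GK-small-moduli} from \ref{item:GK-small-fails}, and it is fatal as stated. You want the dichotomy to be governed by Lemma \ref{lem:blowup-hi}\ref{item:blowup-hi-outer} at the single outer blowup, i.e.\ by whether a vector field on the target, tangent to the boundary, is nonvanishing at the center; and you propose to produce such a field from $\G_m$- or $\G_a$-actions on $(\bar Y,\bar T)$. But Lemma \ref{lem:GK_hi-3} gives $h^0(\lts{X_3}{D_3})=0$ for the cuspidal pair, so $(\bar Y,\bar T)$ admits no nontrivial infinitesimal automorphisms and no such actions exist; moreover the target of the outer blowdown is $(X_1,D_1)$ with $h^0=0$, so Lemma \ref{lem:blowup-hi}\ref{item:blowup-hi-outer} forces $h^1$ to jump by exactly $1$ at that step in \emph{every} characteristic, and cannot distinguish (b) from (c). Worse, for $\#\cT=2$ the chain down from the minimal log resolution of $(\bar Y,\bar T)$ contains no outer blowup at all (the step $(X_3,D_3-C_3)\to(X_2,D_2)$ is inner), yet the dichotomy still occurs there ($\cha\kk=2$ gives $h^1=d$, $\cha\kk=3$ with $\cS_Y\neq 4\rA_2$ gives $h^1=d+1$); the ambiguity sits in the connecting map $H^1(\cN_{C_3})\to H^2(\lts{X_3}{D_3})$ when the $(-2)$-curve $C_3$ is removed, not in any blowup center. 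What is actually needed --- and what the paper supplies in Lemmas \ref{lem:GK-small-moduli} and \ref{lem:GK-small-fails} --- is a direct computation of $h^2$ via Serre duality (Lemma \ref{lem:h1}\ref{item:h1-h2=h0}): in case (c) one exhibits a nonzero section of $\lts{X}{\check D}\otimes\cO_X(2K_X+\check D)$ coming from a vector field on $\P^2$ tangent to a characteristic-specific configuration (the strange cuspidal cubic $\{y^3=x^2z\}$ in $\cha\kk=3$, or the quintic $\{y^5=xz^3(2z-x)\}$ totally tangent to a pencil of lines in $\cha\kk=5$), while in case (b) one bounds $h^2$ above by explicit birational reductions to rigid models ($\P^2$ with four lines, $\P^1\times\P^1$ with a grid, the $7\rA_1$ surface). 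These are global Frobenius-type phenomena of $\bar T$ and of the $2$-section, and a local analysis of the cusp $y^2=x^3$ in coordinates, as you suggest, will not detect them.
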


\begin{remark}
	The only case missing from Proposition \ref{prop:GK-small}\ref{item:GK-small-moduli},\ref{item:GK-small-fails} is $\cha\kk=3$, $\cS=4\rA_2+[4,2]$. In this case, by Lemma \ref{lem:GK-h1}\ref{item:GK-h1-moduli} the set $\Pdeb(4\rA_2+[4,2])$ is represented by an almost faithful family of dimension $d=1$, so $h^1\in \{1,2\}$ by Proposition \ref{prop:GK-small}\ref{item:GK-small-h0}. We do not know which of these two possibilities holds.
\end{remark}

\begin{lemma}\label{lem:GK-small-moduli}
	Let $\cS$ be as in Proposition \ref{prop:GK-small}\ref{item:GK-small-moduli}. Then $h^0=0$ and $h^2\leq d$.
\end{lemma}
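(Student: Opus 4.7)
The plan is to adapt the case-by-case strategy of Lemma \ref{lem:GK_hi-3} to the setting $\#\cT \leq 2$. By Lemma \ref{lem:GK-swap}, we may reduce to the case where $\bar{Y}$ is primitive; under the characteristic assumptions of the lemma, the relevant types of $\bar{Y}$ are read off from Table \ref{table:canonical} and Remark \ref{rem:primitive_ht=2}.

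For the vanishing $h^{0} = 0$, any $\xi \in H^{0}(\lts{X}{D})$ pushes forward via $\sigma$ to an element of $H^{0}(\lts{Y}{(\Exc \alpha + T_{Y})})$. This latter space vanishes for every primitive canonical del Pezzo surface $\bar{Y}$ of rank one carrying a cuspidal elliptic $\bar{T} \in |-K_{\bar{Y}}|$ in its smooth locus, by the rigidity of the planar configurations used in the explicit constructions of \cite[\S 7-9]{PaPe_MT}. Since $\sigma$ contracts only rational curves of negative self-intersection and all its exceptional components are simply connected, the push-forward is injective and $\xi = 0$.

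For the bound $h^{2} \leq d$, I would compare $(X, D)$ to the minimal log resolution $(X', D')$ of $(\bar{Y}, \bar{T})$ constructed in Lemma \ref{lem:cuspidal_resolution}\ref{item:C_3}, for which $h^{2}(\lts{X'}{D'}) = d + 1$ by Lemma \ref{lem:GK_hi-3}. There is a birational morphism $\rho\colon X' \to X$ over $\bar{Y}$ contracting $3 - \#\cT \in \{1,2\}$ curves over the cusp of $\bar{T}$; in particular, the branching elliptic tie $L'$ on $X'$ is always among the contracted curves. For each primitive $\bar{Y}$, I would exhibit an explicit factorization of $\rho$ through an snc intermediate log surface---obtained by first adding $L'$ to the boundary $D'$ at no cohomological cost via Lemma \ref{lem:h1}\ref{item:h1-1_curve}---so that each step of the factorization is inner or outer in the sense of Lemma \ref{lem:blowup-hi}. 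The contraction of one specific component from this snc boundary, combined with the exact sequence of Lemma \ref{lem:h1}\ref{item:h1_exact} (applied to the normal bundle of a suitable boundary curve), then drops $h^{2}$ by exactly one, giving the bound $h^{2}(\lts{X}{D}) \leq d$.

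The main obstacle is precisely that the natural morphism $\rho$ passes through log surfaces with non-snc boundary: after contracting $L'$, the three twigs of the fork $\langle L'; [s-3],[3],[2] \rangle$ meet at a triple point, and further contraction produces a tangency $L \cdot T = 2$ on $X$ (case $\#\cT = 1$) or a nodal cycle $D_{T} + L$ (case $\#\cT = 2$). Lemma \ref{lem:blowup-hi} does not apply directly to either step because its hypotheses require log smooth targets. The resolution is to perform the contractions in the opposite order and absorb the non-snc complexity into auxiliary $(-1)$-curves as above, and then to identify---separately in each of the (few) primitive cases allowed by the characteristic hypothesis---which single contraction is \emph{outer} with respect to an appropriate snc boundary, for that is the one accounting for the required drop in $h^{2}$.
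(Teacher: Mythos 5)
Your proposal diverges from the paper's argument and has genuine gaps in both halves. For $h^{0}=0$: pushing a section of $\lts{X}{D}$ forward to $H^{0}(\lts{Y}{(\Exc\alpha+T_{Y})})$ only helps if you can prove that space vanishes, and your justification ("rigidity of the planar configurations") is not an argument. The boundary $\Exc\alpha+T_{Y}$ is not snc ($T_{Y}$ is cuspidal), so none of the tools of Section 2F apply to it, and tangency to a cusp is delicate in small characteristic (in $\cha\kk=2$ the field $\tfrac{\d}{\d y}$ is tangent to $y^{2}=x^{3}$ and does not vanish at the cusp). More seriously, several surfaces $\bar{Y}$ in the scope of the lemma have $h^{0}(\lts{Y}{\Exc\alpha})>0$: for instance $\cS_{Y}=\rA_{7}$ in $\cha\kk=2$ has $h^{0}=2$ by Example \ref{ex:h1}, and $\rA_{7}+[2,3]$ is not excluded by Lemma \ref{lem:GK_exceptions}, so it occurs here. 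You would therefore have to show by explicit computation that none of these vector fields is tangent to the cuspidal anticanonical curve — exactly the kind of work your outline omits. (Also, Lemma \ref{lem:GK-swap} does not always reduce to a primitive $\bar{Y}$: the condition $w\leq -2$ fails before primitivity is reached, e.g.\ the chain of swaps for $\rA_{7}+[2,3]$ stops at $\rA_{1}+\rA_{5}+[2,2]$.) The paper instead maps $(X,D)$, or $(X,D)$ with a suitable component removed, \emph{onto} explicit snc models (a grid on $\P^1\times\P^1$, the $7\rA_1$ surface, a triangle in $\P^2$) and uses the monotonicity of $h^{0}$ and $h^{2}$ under birational morphisms of log smooth pairs.

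For $h^{2}\leq d$ the gap is structural. By Lemma \ref{lem:blowup-hi}\ref{item:blowup-h2}, $h^{2}$ is unchanged by \emph{every} blowup, inner or outer, so your concluding step — "identify which single contraction is outer, for that is the one accounting for the drop in $h^{2}$" — rests on a confusion: outer versus inner affects $h^{0}$ and $h^{1}$, never $h^{2}$. The only mechanism in your outline that can lower $h^{2}$ is removing a boundary component $C$ via the sequence of Lemma \ref{lem:h1}\ref{item:h1_exact}, but that sequence gives a surjection $H^{2}(\lts{}{D})\twoheadrightarrow H^{2}(\lts{}{(D-C)})$, i.e.\ the drop is \emph{at most} $h^{1}(\cN_{C})=1$; it gives no lower bound. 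Whether the drop actually occurs is equivalent to the nonvanishing of a connecting map, and this is precisely the content of the lemma: Lemma \ref{lem:GK-small-fails} shows that in $\cha\kk=5$ (and in $\cha\kk=3$ with $\#\cT=2$) the drop does \emph{not} occur and $h^{2}=d+1$ persists. Any characteristic-blind bookkeeping of the type you describe would "prove" $h^{2}\leq d$ in those cases as well, which is false. The descent from $(X_{3},D_{3})$ only yields $h^{2}\leq d+1$ (that is Proposition \ref{prop:GK-small}\ref{item:GK-small-h0}); the improvement to $h^{2}\leq d$ requires the independent, case-by-case computation on the explicit models that the paper carries out.
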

\begin{proof}
	We note that by Lemma \ref{lem:blowup-hi} for any birational morphism $(X_1,D_1)\to (X_2,D_2)$ between log smooth surfaces we have $h^i(\lts{X_1}{D_1})\leq h^i(\lts{X_2}{D_2})$ for $i=0,2$, 
	so we can replace $(X,D)$ by its images. 
	
	Assume first that $\cha\kk=2$. 
	Then $\bar{Y}$ swaps to a surface $\bar{Y}'$ of type $\rA_1+\rA_5$, $3\rA_1+\rD_4$, $7\rA_1$ or $8\rA_1$. In the first case, by Lemma \ref{lem:GK_exceptions} we have $\#\cT=2$, so by Lemma \ref{lem:GK-swap} $\bar{X}$ swaps to the surface of type $(\rA_1+\rA_5)+[2,2]$, and $h^i=h^i(\lts{Z}{D_Z})$, where $(Z,D_Z)$ is its minimal log resolution, see Figure \ref{fig:A5+A2+A1}. We keep the notation from Example \ref{ex:ht=2}\ref{item:A1+A2+A5_construction}. Let $V$ be the proper transform of the line joining $q$ with $p_2$, and let $\check{D}_Z$ be the sum of $D_Z+V$ and all $(-1)$-curves in Figure \ref{fig:A5+A2+A1}. We have an inner morphism $(Z,\check{D}_Z)\to (\P^1\times \P^1,B)$, where $B$ is a sum of three vertical and three horizontal lines, so $h^i=h^i(\lts{\P^1\times \P^1}{B})=0$, as needed. 
	
	In the remaining cases we argue as in the proof of Lemma \ref{lem:GK_hi-3}. By Lemma \ref{lem:GK-swap} we can assume that $\cS_Y=\in \{3\rA_1+\rD_4,7\rA_1,8\rA_1\}$. Consider a pullback to $X$ of the $\P^1$-fibration shown in Figures \ref{fig:3A1+D4}, \ref{fig:KM_surface} (for $\nu=3$) or \cite[Figure 6(b)]{PaPe_MT}, and let $V$ be the proper transform of the fiber passing through the cusp of $\sigma(T)$. 
	
	Assume $\cS_Y=3\rA_1+\rD_4$. Let $\phi\colon (X,D)\to (Z,D_Z+H)$ be the contraction of $V$, where $H=[1]$ is the image of $(D_Y)\hor$. Then $h^i=h^i(\lts{Z}{(D_Z+H)})=h^i(\lts{Z}{D_Z})$ by Lemma \ref{lem:h1}\ref{item:h1-1_curve}. Contracting the image of $D_T-T$ we get a minimal log resolution of a surface of type $7\rA_1$, so $h^0=0$ and $h^2=1$ by Lemma \ref{lem:7A1-family}\ref{item:7A1-family-h2}. 
	
	Assume $\cS_Y=7\rA_1$ or $8\rA_1$, so $d=2$ or $3$, respectively. Let $B=D-(D_Y)\hor$, $\tilde{h}^i=h^i(\lts{X}{B})$. Since $D-B$ is a sum of $d-1$ $(-2)$-curves, by Lemma \ref{lem:h1}\ref{item:h1_exact} we have $h^0=\tilde{h}^0$ and $h^2\leq \tilde{h}^2+d-1$.  The contraction of $V+D_T-T$ and, in case $\cS_Y=8\rA_1$, of a vertical $(-1)$-curve disjoint from $(D_Y)\vert$, cf.\ Figure \ref{fig:8A1-GK}, maps $(X,B)$ onto a minimal log resolution of the surface of type $7\rA_1$, so $\tilde{h}^0=0$, $\tilde{h}^2=1$ by Lemma \ref{lem:7A1-family}\ref{item:7A1-family-h2}, as needed. 
	
	\smallskip
	
	Assume now that $\cha\kk=3$, so $\#\cT=1$ by assumption \ref{prop:GK-small}\ref{item:GK-small-moduli}. In case $\cS_Y=4\rA_2$ let $H$ be a component of $D_Y$ and let $B=D-H$; in other cases put $B=D$. By Lemma \ref{lem:h1}\ref{item:h1_exact} it is enough to prove $h^i(\lts{X}{B})=0$ for $i=0,2$. We have a birational morphism $(X,B)\to (X',D')$, where $(X',D')$ is the minimal log resolution of a surface $\bar{Y}'$ of type $3\rA_2$ blown up in a point away from the boundary. Indeed, if $\cS_Y\neq 4\rA_2$ then $\bar{Y}$ swaps to $\bar{Y}'$ and we get the required morphism as in Lemma \ref{lem:GK-swap}. In turn, if $\cS_Y=4\rA_2$ then, since $\bar{Y}$ has a descendant with elliptic boundary, see \cite[Example 7.1]{PaPe_MT}, there is a $(-1)$-curve $L'$ meeting $D_Y$ only in $H\cap (D_Y-H)$, so the required morphism is the contraction of $L'$ and the component of $D_Y-H$ meeting it. 
	
	It remains to prove that $\tilde{h}^i\de h^i(\lts{X'}{D'})=0$ for $i=0,2$. Example \ref{ex:ht=2}\ref{item:3A2_construction} shows that there is an inner morphism $(X',D')\to (\F_1,\Delta')$, where $\Delta'$ is the sum of the negative section $\Sec_1$ and three sections disjoint from $\Sec_1$ meeting normally, so $\tilde{h}^i=h^i(\lts{\F_1}{\Delta'})$. The contraction of $\Sec_1$ maps $\Delta'$ to a triangle $\Delta\subseteq \P^2$, so by Lemma \ref{lem:blowup-hi} we have $\tilde{h}^i=h^i(\lts{\P^2}{\Delta})=0$ for $i>0$ and $\tilde{h}^0=h^0(\lts{\P^2}{\Delta})-2=0$, as needed.
\end{proof}

\begin{lemma}\label{lem:GK-small-fails}
	Let $\cS$ be as in Proposition \ref{prop:GK-small}\ref{item:GK-small-fails}. If $\cha\kk=5$ assume further that $\#\cT=1$. Then $h^2\geq 1$.
\end{lemma}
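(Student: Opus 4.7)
The plan is to reduce $h^2(\lts{X}{D})$ to the already-computed $h^2(\lts{X^*}{D^*}) = d+1$ from Lemma \ref{lem:GK_hi-3}, where $(X^*, D^* + L^*)$ is the minimal log resolution of the descendant $(\bar Y, \bar T)$ and $\cS^* = \cS_Y + [s-3]+[3]+[2]$ has $\#\cT^* = 3$. Since $\#\cT \leq 2$, the reduced pullback of $\bar T$ on $X$ fails to be snc: for $\#\cT = 1$ the tie $L$ meets $T$ tangentially ($L \cdot T = 2$ at one point), and for $\#\cT = 2$ the curves $T$, $C = L_1^{(2)}$, and $L$ all pass through a common triple point while $D = D_Y + T + C$ itself remains snc. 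First I would construct the morphism $\eta \colon X^* \to X$ as the sequence of blowups resolving these non-snc configurations: two blowups (first outer at $L\cap T$, then inner at the ensuing triple point) for $\#\cT = 1$, or a single inner blowup at the triple point $T\cap C$ (a node of $D$) for $\#\cT = 2$. One checks that $(X^*, D^*)$ is the minimal log resolution of a del Pezzo $\bar X^* \in \Pdeb(\cS^*)$, using Lemma \ref{lem:GK_intro}\ref{item:GK-intro-cf} and $\cf([s-3])=\tfrac{s-5}{s-3}<1$ for $s \geq 6$.

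Next I would transport $h^2$ through $\eta$. Each blowup in $\eta$ is centered at a point of the snc boundary of the preceding surface, so taking the reduced total transform $D^{(*)} = (\eta^{*}D)\redd$ and applying Lemma \ref{lem:blowup-hi}\ref{item:blowup-h2} iteratively yields $h^2(\lts{X^*}{D^{(*)}}) = h^2(\lts{X}{D})$. Comparing $D^{(*)}$ with $D^* = D_Y + T^* + L_1^{(3)} + E_{Y_2}^{(3)}$: they agree on $D_Y$ and $T^*$, but $D^{(*)}$ contains the new $(-1)$-curve $L^*$ in place of either $L_1^{(3)}$ (in the $\#\cT = 2$ case) or both $L_1^{(3)}$ and $E_{Y_2}^{(3)}$ (in the $\#\cT = 1$ case). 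Removing $L^*$ via Lemma \ref{lem:h1}\ref{item:h1-1_curve} reduces the computation to $h^2(\lts{X^*}{D^{**}})$ where $D^{**}$ equals $D^* - L_1^{(3)}$ or $D^* - L_1^{(3)} - E_{Y_2}^{(3)}$ respectively.

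I would then apply Lemma \ref{lem:h1}\ref{item:h1_exact} once (or twice) to add the missing twig(s) back in. For each removed $(-k)$-twig $C$ with $k \in \{2,3\}$, the normal bundle $\cN_C \cong \cO_{\P^1}(-k)$ has $h^0(\cN_C) = 0$ and $h^1(\cN_C) = k-1$. The resulting long exact sequence, combined with $h^1(\lts{X^*}{D^*}) = d$ and $h^2(\lts{X^*}{D^*}) = d+1$ from Lemma \ref{lem:GK_hi-3}, pins $h^2(\lts{X}{D})$ down to $\{d, d+1\}$, with the exact value determined by whether the connecting map $H^1(\cN_C) \to H^2(\lts{X^*}{D^*})$ vanishes.

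The main obstacle is precisely this last step: showing the connecting map is zero so that $h^2(\lts{X}{D})$ attains the upper bound $d+1 = 1$. I plan to handle it via Serre duality (Lemma \ref{lem:h1}\ref{item:h1-h2=h0}): the dual map sends a section of $\lts{X^*}{D^*} \otimes \cO(2K_{X^*}+D^*)$ to its restriction along the removed twig, and an explicit computation using $-K_{X^*} = T^* + L_1^{(3)} + E_{Y_2}^{(3)} + 2L^*$ (derived step by step from the three cusp blowups, as in the multiplicity calculation $\phi^{*}\bar T = T^* + 2L_1^{(3)} + 3E_{Y_2}^{(3)} + 6L^*$) gives $2K_{X^*} + D^* = D_Y - T^* - L_1^{(3)} - E_{Y_2}^{(3)} - 4L^*$. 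The negative coefficients on $L_1^{(3)}$ and $E_{Y_2}^{(3)}$ constrain the unique (up to scalar) generator of $H^0(\lts{X^*}{D^*} \otimes \cO(2K_{X^*}+D^*))$ to vanish along these twigs, forcing the restriction map to be zero and thus establishing $h^2(\lts{X}{D}) \geq 1$.
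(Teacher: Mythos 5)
Your reduction of $h^2(\lts{X}{D})$ to a computation on the minimal log resolution $(X^*,D^*)$ of $(\bar{Y},\bar{T})$ is reasonable as a setup (modulo some bookkeeping: for $\#\cT=2$ the twig missing from $(\eta^{*}D)\redd-L^*$ is the $(-2)$-curve, namely the proper transform of $L$, and for $\#\cT=1$ only the $(-3)$-curve is missing, since the other twig is $\eta$-exceptional and hence contained in the reduced total transform). The fatal problem is the step you yourself flag as the main obstacle: the vanishing of the connecting map $\delta\colon H^1(\cN_C)\to H^2(\lts{X^*}{D^*})$. Your argument for it never uses $\cha\kk\in\{3,5\}$, but the statement is genuinely characteristic-dependent. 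Indeed, by Proposition \ref{prop:GK-small}\ref{item:GK-small-h0},\ref{item:GK-small-moduli}, in characteristic $2$ with $\#\cT=2$ and $\bar{T}$ cuspidal one has $h^2(\lts{X}{D})=h^1=d$, while Lemma \ref{lem:GK_hi-3} gives $h^2(\lts{X^*}{D^*})=d+1$ in every characteristic; so in that case $\delta\neq 0$. Any characteristic-free proof that $\delta=0$ is therefore impossible, and the whole content of the lemma sits exactly in this step.

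The concrete error is in the Serre-duality argument. The dual of $\delta$ is the residue map $H^0(\Omega^1(\log D^*)\otimes\cO_{X^*}(K_{X^*}))\to H^0(\cO_C\otimes\cO_{X^*}(K_{X^*})|_C)$ arising from $0\to\Omega^1(\log D^{**})\to\Omega^1(\log D^*)\to\cO_C\to 0$, not a naive restriction. Your computation $2K_{X^*}+D^*=D_Y-T^*-E_1^*-E_2^*-4L^*$ is correct and does force a global section, viewed as a rational log vector field $\xi$, to vanish to order one along the twig $C$; but in local coordinates with $C=\{x=0\}$ the residue of the corresponding log $1$-form is (up to a unit) the coefficient of $x\frac{\d}{\d y}$ in $\xi$ restricted to $C$, i.e.\ the leading term of $\xi$ along $C$, which is generically nonzero precisely when $\xi$ vanishes to order exactly one there. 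So ``negative coefficient $\Rightarrow$ the restriction map is zero'' does not follow, and cannot, by the characteristic-$2$ counterexample above. The paper's proof of this lemma is entirely different: it uses Lemma \ref{lem:h1}\ref{item:h1-h2=h0} to reduce to exhibiting a nonzero section of $\lts{X}{D'}\otimes\cO_{X}(2K_{X}+D')$ for a suitable enlargement $D'$ of $D$, and constructs that section explicitly by lifting a vector field on $\P^2$ tangent to a concrete configuration containing the (cuspidal) image of $T$ --- a cubic for $\cha\kk=3$, a quintic for $\cha\kk=5$ --- the required tangency existing only because in these characteristics the cuspidal curve meets every line through a suitable point with multiplicity divisible by $p$. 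Some input of this explicit, characteristic-specific kind is unavoidable here.
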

\begin{proof}
	Assume $\cha\kk=3$, so $\cS_{Y}\neq 4\rA_2$ and $\#\cT=2$. By Lemma \ref{lem:GK-swap} we can assume that 
	$\cS_{Y}=3\rA_2$, so $\cS=4\rA_2$. Denote the proper transforms on $X$ of the curves in Figure \ref{fig:3A2} by the same letters. Let $\check{D}=D+A_0+A_1+L_1$, so $h^2=h^2(\lts{X}{\check{D}})$. Let $G=D_T-T$. We have $2K_{X}+\check{D}=G+L$, so by Lemma \ref{lem:h1}\ref{item:h1-h2=h0}, $h^2=h^0(\lts{X}{\check{D}}\otimes \cO_{X}(G+L))$. We will construct a nonzero global section of the latter sheaf.
	
	Let $\tau\colon (X,\check{D})\to (\P^2,\pp)$ be the composition of $\sigma\colon X\to Y$ with the morphism from Example \ref{ex:ht=2}\ref{item:3A2_construction}. Then $\pp$ is a sum of the triangle $\ll_1+\ll_2+\cc$ and a cuspidal cubic $\qq$ tangent to $\ll_2$ and $\cc$ at their common points with $\ll_1$, with multiplicity $3$. Since $\cha\kk=3$, the cubic $\qq$ is tangent with multiplicity $3$ to every line passing through $\ll_2\cap \cc$, see cf.\ \cite[Lemma 5.5(b)]{PaPe_MT}. Moreover, every vector field on $\P^2$, which is tangent to those lines, is tangent to $\qq$, too. Let $\xi$ be such a vector field which vanishes along $\ll_1$: explicitly, choosing coordinates such that $\qq=\{y^3=x^2z\}$, $\cc=\{z=0\}$, $\ll_1=\{z=y\}$, $\ll_2=\{z=x\}$ we can take $\xi=(y-z)\frac{\d}{\d z}$. Write $\tau=\eta\circ \psi$, where $\eta$ is a composition of two blowups over the cusp of $\qq$, so $E\de \Exc\eta=\psi_{*}(G+L)$. A direct computation shows that $\xi$ lifts to a section of $\cT_{\psi(X)}(-\log \psi_{*}\check{D})\otimes \cO_{X}(E)$. Since the morphism $\psi$ is inner and is an isomorphism near $G+L$, $\xi$ further lifts to a global section of $\lts{X}{\check{D}}\otimes \cO_{X}(G+L)$, as needed.
\smallskip

	Assume now that $\cha\kk=5$, so $\cS=2\rA_4+[3]$. We proceed as in the previous case, this time using a pencil of lines coming from the $\P^1$-fibration constructed in the proof of Lemma \ref{lem:GK_hi-3}, see Figure \ref{fig:2A4-GK}.
	
	Let $(\tilde{X},\tilde{D})$ be the minimal log resolution of $(\bar{Y},\bar{T})$. We denote the proper transforms on $\tilde{X}$ of the curves on $X$ and those in Figure \ref{fig:2A4} by the same letters, so $\tilde{D}=D+V+W+H$, where $H=[1]$ meets $T=[5]$, $V=[3]$, $W=[2]$. The $(-1)$-curve $A_1$ meets $D$ in $T$, in $C$ and in a tip of a connected component $G_T$ of $D_Y$. The chain $T+A_1+G_T=[5,1,(2)_{4}]$ supports a fiber of a $\P^1$-fibration $f\colon \tilde{X}\to \P^1$. We have seen in the proof of Lemma \ref{lem:GK_hi-3} (or in  the proof of \cite[Corollary 1.4]{PaPe_MT}), that the horizontal part of $\tilde{D}$ consists of a $1$-section $H$ and a $5$-section $C$, and the degenerate fibers are supported on: $T+A_1+G_T=[5,1,(2)_{4}]$,  $V+A_V+G_V=[3,1,2,2]$ and $W+A_W+G_W=[2,1,2]$ for some $(-1)$-curves $A_V,A_W$. The Hurwitz formula shows that $f|_{C}\colon C\to \P^1$ is totally ramified. 
	Let $\tau\colon \tilde{X}\to \P^2$ be the contraction of $H+(A_1+G_T)+(A_V+G_V)+(A_W+G_W)$. It maps fibers to concurrent lines, and $C$ to a quintic tangent to each of those lines with multiplicity $5$. We can choose coordinates on $\P^2$ so that $\tau(T)=\{x=0\}$, $\tau(V)=\{x=z\}$, $\tau(W)=\{z=0\}$, $\tau(C)= \{y^5=xz^3(2z-x)\}$. Let $\xi$ be a vector field on $\P^2$ which is tangent to those lines and vanishes along $\tau(W)$, say $\xi=z\frac{\d}{\d y}$. It is tangent to $\tau(C)$, too, and a direct computation shows that it lifts to a global section of $\lts{\tilde{X}}{D'}\otimes \cO_{\tilde{X}}(H+A_1+G_T+A_V+G_V-A_W)\cong \lts{\tilde{X}}{D'}\otimes \cO_{\tilde{X}}(2K_{\tilde{X}}+D')$, where $D'=D+A_1$. Hence $h^{2}(\lts{\tilde{X}}{D'})\neq 0$ by Lemma \ref{lem:h1}\ref{item:h1-h2=h0}. We have $h^2=h^{2}(\lts{\tilde{X}}{D'})$ by Lemmas \ref{lem:h1}\ref{item:h1-1_curve}, and \ref{lem:blowup-hi}, so $h^2\neq 0$, as needed.
\end{proof}

\begin{proof}[Proof of Proposition \ref{prop:GK-small}]
	For $j\in \{1,2,3\}$ let $\cS_j=\cS_Y+\cT_j$ be a type as in  \ref{prop:GK-small} with $\#\cT_j=j$, i.e.\ $\cT_1=[s-5]$, $\cT_2=[2,s-4]$, $\cT_3=[s-3]+[3]+[2]$, cf.\ Lemma \ref{lem:cuspidal_resolution}. For $j\in \{1,2,3\}$ let $(X_j,D_j)$ be the minimal log resolution of $\bar{X}_j\in \Pdeb(\cS_j)$ such that the morphism $(X_3,D_3)\to (\bar{Y},\bar{T})$ factors through $(X_3,D_3+L_3)\to (X_2,D_2+L_2)\to (X_1,D_1+L_1)$, where $L_j$ is the elliptic tie. Let $h^{i}_{j}\de h^i(\lts{X_j}{D_j})$, let $T_j$ be the proper transform of $\bar{T}$ on $X_j$, and for $j\in \{2,3\}$ let $C_j$ for be the $(-2)$-curve in $D_j$ meeting $L_j$.
	
	By Lemma \ref{lem:GK_hi-3} we have $(h^0_3,h^1_3,h^2_3)=(0,d,d+1)$. Moreover, we have $h^0_1=0$. Indeed, if $\cS_Y\neq 2\rA_4$ then $h^0_1=0$ by Lemma  \ref{lem:GK-small-moduli}. In case $\cS_Y=2\rA_4$ we have $h^0_1\leq h^0(\lts{X_1}{(D_1-T_1)})$ by Lemma \ref{lem:h1}\ref{item:h1_exact}, and we see from Example \ref{ex:ht=2_meeting} that there is a birational morphism $X_1\to \P^1\times \P^1$ mapping $D_1-T_1$ to the sum of two vertical lines, one horizontal line and a diagonal, so $h^0_1=0$ by Lemmas \ref{lem:blowup-hi} and \ref{lem:h1}\ref{item:h1-grid}, as claimed.
	
	The contraction of $L_3$ is an inner blowup $(X_3,D_3-C_3)\to (X_2,D_2)$, so $h^{i}_2=h^{i}(\lts{X_3}{D_3-C_3})$. By Lemmas \ref{lem:GK_hi-3} and \ref{lem:h1}\ref{item:h1_exact} we get $h^{0}_2=0$ and $h^1_2=h^2_2\in \{d,d+1\}$. Moreover, if $h^1_2=d$ then the natural map between infinitesimal deformations of $(X_2,D_2)$ and $(X_2,D_2-C_2)$ is an isomorphism, so the almost faithful family representing $\Pdeb(\cS_2)$ obtained by blowing down one representing $\Pdeb(\cS_3)$ is in fact almost universal.

	In turn, the contraction of $L_2$ is an outer blowup $(X_2,D_2-C_2)\to (X_1,D_1)$, so since $h^0_1=0$, Lemma \ref{lem:blowup-hi} implies that $h^1_1=h^1(\lts{X_2}{D_2-C_2})-1$ and $h^2_1=h^2(\lts{X_2}{D_2-C_2})$. By Lemma \ref{lem:h1}\ref{item:h1_exact} we get $h^2_1=h^1_1$ and $h^j_1\leq h^j_2$ for $j=1,2$. On the other hand, by Lemma \ref{lem:GK-h1}\ref{item:GK-h1-moduli} the class $\Pdeb(\cS_1)$ is represented by an almost faithful family of dimension $d$, which in particular is non-trivial along any germ of a curve, so $h^1_1\geq d$; and as before we see that if $h^1_1=d$ then $\Pdeb(\cS_1)$ the above almost faithful family is almost universal.
	 
	This proves \ref{prop:GK-small}\ref{item:GK-small-h0} and shows that if $h^2_j=d$ for all $\bar{X}_j\in \Pdeb(\cS_j)$ then $\Pdeb(\cS_j)$ has moduli dimension $d$. By Lemma \ref{lem:GK-small-moduli} under the assumptions of \ref{prop:GK-small}\ref{item:GK-small-moduli} we have $h^2_j\leq d$, so by \ref{prop:GK-small}\ref{item:GK-small-h0} $h^1_j=d$ and thus  \ref{prop:GK-small}\ref{item:GK-small-moduli} holds. Under the  assumptions of Lemma \ref{lem:GK-small-fails} we have $h^2_j\geq 1=d+1$, so $h^1_j=h^2_j=1$ and \ref{prop:GK-small}\ref{item:GK-small-fails}  holds.
\end{proof}

\begin{proof}[Proof of Theorem \ref{thm:GK}]
	Part \ref{thm:GK}\ref{item:GK-unique} is proved in Lemma \ref{lem:leash-is-unique}\ref{item:leash-phi}. Part \ref{thm:GK}\ref{item:GK-Y} follows from Lemma \ref{lem:GK_exceptions}. The list of singularity types in \ref{thm:GK}\ref{item:GK-types} can be read off from Table \ref{table:canonical} (part $\cS(\bar{Y})$) and Lemma \ref{lem:cuspidal_resolution} (part $\cT$). Part \ref{thm:GK}\ref{item:GK_uniqueness-nodal} is proved in Lemma  \ref{lem:GK-h1}. Part \ref{thm:GK}\ref{item:GK_uniqueness-cuspidal} follows from Lemma \ref{lem:GK-h1} and Proposition \ref{prop:GK-small}. Part \ref{thm:GK}\ref{item:GK-ht} is proved in Proposition \ref{prop:GK-ht-exceptions}. Eventually, Lemma \ref{lem:GK_exceptions} gives a list of exceptions where $\#\cS(\bar{Y})\geq 6$ but $\height(\bar{X})\leq 2$. 
\end{proof}

\subsection{Primitive models}
	
As a last result of this section, we prove an analogue of Theorem \ref{thm:ht=1,2}\ref{part:swaps} announced in Remark \ref{rem:GK-cascades}. That is, we show that del Pezzo surfaces of rank one admitting descendants with elliptic boundary swap to particularly simple primitive surfaces, which are either listed in Theorem \ref{thm:ht=1,2}\ref{part:swaps} or in the following lemma.

\begin{lemma}[Primitivity criterion]\label{lem:GK_prim}
	Let $\bar{X}$ be a rational, non-canonical del Pezzo surface of rank one, having a descendant $(\bar{Y},\bar{T})$ with elliptic boundary. Let $\phi\colon (X,D)\to (\bar{Y},\bar{T})$ be the morphism from Definition \ref{def:GK}, 
	and let $\eta\colon (\tilde{Y},\tilde{D})\to (\bar{Y},\bar{T})$ be the minimal log resolution. Then $\bar{X}$ is primitive if the following hold.
	\begin{enumerate}
		\item\label{item:GK-prim_log-res} The birational map $\phi^{-1} \circ \eta\colon\tilde{Y}\map X$ is regular, i.e.\ $\#\cT=1$ if $\bar{T}$ is nodal and $\#\cT\leq 3$ if $\bar{T}$ is cuspidal.
		\item\label{item:GK-prim_Y-prim} The canonical surface $\bar{Y}$ is primitive.
		\item\label{item:GK-prim_Y-ht} Either $\height(\bar{Y})\geq 2$, or $\bar{X}$ is of type $3\rA_1+\rD_4+[3,2]$ and $\bar{T}$ is cuspidal.
	\end{enumerate}
	As a consequence, $\bar{X}$ is primitive if and only if its singularity type is one of the following:
	\begin{longlist}
		\item\label{item:GK-prim-cha-any} $2\rA_4+[3]$;
		\item\label{item:GK-prim-cha-gen} If $\cha\kk\not\in \{2,3\}$: $\rA_1+\rA_2+\rA_5+[3]$;
		\item\label{item:GK-prim-cha-5} If $\cha\kk=5$: $2\rA_4+[4,2]$, $2\rA_4+[5]+[3]+[2]$;
		\item\label{item:GK-prim-cha-3} If $\cha\kk=3$: 
		$3\rA_2+[3]+[3]+[2]$, 
		$4\rA_2+[3]$, $4\rA_2+[4,2]$, $4\rA_2+[5]+[3]+[2]$;
		\item\label{item:GK-prim-cha-2} If $\cha\kk=2$: 
		$3\rA_1+\rD_4+[3,2]$, 
	 	$7\rA_1+[3,2]$, $7\rA_1+[4]+[3]+[2]$, 
		$8\rA_1+[3]$, $8\rA_1+[4,2]$, $8\rA_1+[5]+[3]+[2]$.
	\end{longlist}
\end{lemma}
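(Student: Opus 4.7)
The statement splits into proving the sufficient criterion (i)--(iii) and then deriving the explicit list. For the criterion, I would argue by contradiction: suppose $\bar{X}$ admits an elementary swap via a $(-1)$-curve $A\subseteq X$ with $A\not\subseteq D$, $A\cdot D\le 2$, and $A\cdot C=1$ for a unique $(-2)$-curve $C\subseteq D$. The first step is to pull back $-K_{\bar{Y}}\sim\bar{T}$ along $\phi=\alpha\circ\sigma$: since $\bar{Y}$ is canonical, this yields $-K_X\sim T+E$, where $E$ is an explicit effective divisor supported on $(D_T-T)+L$ whose multiplicities are read off from Lemma \ref{lem:cuspidal_resolution}. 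Adjunction on $A$ then gives $1=-K_X\cdot A=T\cdot A+E\cdot A$, which strongly restricts how $A$ can meet $T$, $L$, and $D_T-T$.

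The main case split is on the position of $A$. If $A$ is disjoint from $L+D_T$, then $\sigma(A)$ is a $(-1)$-curve on $Y$ meeting a unique $(-2)$-curve of $D_Y$ in one point, producing an elementary swap of $\bar{Y}$ and contradicting (ii). If $A=L$, condition (i) forces the intersection pattern of $L$ with $D$ to be one of the first few rows of Lemma \ref{lem:cuspidal_resolution}: inspection shows that $L$ either meets $T$ (which is not a $(-2)$-curve under $\#\cS_Y\ge 6$) or meets two distinct $(-2)$-curves in $D_T$, so $L$ fails the single-$(-2)$-curve requirement of an elementary swap. The remaining, hardest case is $A\ne L$ with $A$ intersecting $T+L+(D_T-T)$ nontrivially: here I would trace the effect of contracting $A$ through $\phi$, showing it either modifies $(\bar{Y},\bar{T})$ to a simpler descendant and contradicts minimality of $\#\cT$ from (i), or descends to a $(-1)$-curve on $Y$ meeting $D_Y$ in a swapping position and contradicts (ii). Condition (iii) is needed for the $\height(\bar{Y})=1$ regime: a generic fiber of a witnessing $\P^1$-fibration on $Y$ pulls back to a fiber on $X$ whose vertical $(-1)$-curves (other than $L$) typically satisfy the swap criterion, and a direct case-by-case check through the height-$1$ entries of Table \ref{table:canonical} singles out $3\rA_1+\rD_4+[3,2]$ as the only surviving primitive exception. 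The main technical obstacle will be the third subcase, because $A$ interacts simultaneously with the $\bar{Y}$-part and the $\bar{T}$-part, and the bookkeeping of multiplicities in $E$ row by row through Lemma \ref{lem:cuspidal_resolution} is substantial.

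For the explicit list, I would enumerate the triples $(\cS_Y,\cT)$ satisfying (i)--(iii). The primitive canonical $\bar{Y}$ of rank one with $\#\cS_Y\ge 6$ and $\height(\bar{Y})\ge 2$ come from Remark \ref{rem:primitive_ht=2}: namely $3\rA_2$, $\rA_1+2\rA_3$, $\rA_1+\rA_2+\rA_5$, $2\rA_4$, $2\rA_1+2\rA_3$, $7\rA_1$, $4\rA_2$, $8\rA_1$, with the characteristic constraints recorded there. For each, condition (i) restricts $\cT$ to $[s-5]$ (nodal, $\#\cT=1$) or to the shapes of Lemma \ref{lem:cuspidal_resolution}\ref{item:C_1}--\ref{item:C_3} with $\#\cT\le 3$ (cuspidal). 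Cross-referencing with the $\rN/\rC_d/\rC$ entries of Table \ref{table:canonical} in each characteristic eliminates the incompatible combinations, leaving exactly the items \ref{item:GK-prim-cha-any}--\ref{item:GK-prim-cha-2}; the $\height(\bar{Y})=1$ exception contributes the final item $3\rA_1+\rD_4+[3,2]$ in characteristic~$2$. For the converse (primitive implies listed) I would verify the contrapositive: failure of (i) leaves a $(-1)$-curve in $\Exc\sigma\setminus L$ which provides a swapping curve; failure of (ii) lifts a swap of $\bar{Y}$ to a swap of $\bar{X}$; and failure of (iii), outside the listed exception, yields a swapping vertical $(-1)$-curve via the witnessing $\P^1$-fibration of height $1$ on $Y$.
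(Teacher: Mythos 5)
Your setup coincides with the paper's: pulling back $K_{\bar{Y}}+\bar{T}=0$ gives $-K_X=D_T+\epsilon L$ (with $\epsilon=2$ exactly when $\#\cT=3$), adjunction on a putative swap curve $A$ gives $1=A\cdot D_T+\epsilon\,A\cdot L$, and the proof splits on how $A$ meets $L$ and $D_T$. Your derivation of the explicit list (enumerate $(\cS_Y,\cT)$ against Remark \ref{rem:primitive_ht=2} and the $\rN/\rC_d$ column of Table \ref{table:canonical}) is also exactly what the paper does. However, your plan for what you correctly identify as the hardest subcase does not go through. When $A\cdot L=1$ and $A\cdot D_T=0$ (forcing $\epsilon=1$, so $\#\cT\in\{1,2\}$), contracting $A$ does not "modify $(\bar{Y},\bar{T})$ to a simpler descendant", and condition \ref{lem:GK_prim}\ref{item:GK-prim_log-res} is only an upper bound on $\#\cT$, so there is no "minimality of $\#\cT$" to contradict; nor does $A$ descend to a $(-1)$-curve on $Y$ — since $\sigma$ contracts $L$ first, the image of $A$ on $Y$ is a $0$-curve, so no elementary swap of $\bar{Y}$ is produced and (b) is not directly violated. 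The paper's actual mechanism for $\#\cT=2$ is to contract $L$ itself, obtaining a del Pezzo surface $\bar{Z}$ of rank one with the \emph{same} descendant; the image of $A$ then spans a $\P^1$-fibration exhibiting $\height(\bar{Z})\leq 2$ with two horizontal $1$-sections, Lemma \ref{lem:GK_exceptions} (combined with primitivity of $\bar{Y}$) pins $\bar{Z}$ down to type $2\rA_1+2\rA_3$ or $4\rA_1+\rD_4$, and Lemma \ref{lem:ht=2,untwisted} says these have width $1$ — contradiction. This essential import of the height-$\leq 2$ classification is absent from your sketch and would not emerge from the "row-by-row bookkeeping of multiplicities in $E$" you propose.

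Two smaller points. In the case $A\cdot L=0$, $A\cdot D_T=1$, the step you need but do not name is that $A$ must meet $D_Y$: this follows from \cite[Lemma 2.11(a)]{PaPe_MT} because $\phi(A)$ cannot lie in $\bar{Y}\reg$ once $\#\cS_Y\geq 5$; only then can you conclude that the component of $D_T$ met by $A$ is not a $(-2)$-curve and run the dichotomy ($A$ meets $T$ contradicts \ref{item:GK-prim_Y-prim}; otherwise $\#\cT=3$, $A$ meets the $(-3)$-curve, and $\height(\bar{Y})=1$ contradicts \ref{item:GK-prim_Y-ht}). Also, your disposal of $A=L$ is imprecise: for $\#\cT=2$ the elliptic tie $L$ meets exactly one $(-2)$-curve once and has $L\cdot D=2$, so it fails to be a swap curve only because it passes through the node $T\cap(D_T-T)$ and hence does not meet $D$ normally — not because it meets two $(-2)$-curves. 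The case-disjoint-from-$L+D_T$ is in fact vacuous by the adjunction identity, so your appeal to (b) there is harmless but unnecessary.
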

\begin{proof}
It is enough to prove that $\bar{X}$ satisfying conditions \ref{item:GK-prim_log-res}--\ref{item:GK-prim_Y-ht} is primitive; the subsequent list follows from the classification 
in Table \ref{table:canonical}. Suppose the contrary, and let $A\subseteq X$ be a $(-1)$-curve as in Definition \ref{def:vertical_swap}\ref{item:def-swap-elementary}. 

As 	in Notation \ref{not:GK} let $L$ be the elliptic tie, write $D=D_{T}+D_{Y}$ where $D_{T}=\phi^{*}\bar{T}\redd-L$ and $D_Y=\phi^{-1}(\Sing\bar{Y})$, let  $T=\phi^{-1}_{*}\bar{T}$ and let $\sigma\colon X\to Y$ be the morphism onto minimal resolution $Y$ of the canonical surface $\bar{Y}$. Note that the Picard rank of $\sigma$ equals $\#\cT$.

By condition \ref{item:GK-prim_log-res} we have $\#\cT=1$ if $\bar{T}$ is nodal and $\#\cT\leq 3$ if $\bar{T}$ is cuspidal. In particular, $-2\geq T^2\geq \bar{T}^2-6=3-\#\cS_Y$ by formula \eqref{eq:Noether}, so $\#\cS_Y\geq 5$. Pulling back the linear equivalence $K_{\bar{Y}}+\bar{T}=0$ we get $-K_{X}=D_{T}+\epsilon L$, where $\epsilon=1$ if $\#\cT\in \{1,2\}$ and $\epsilon=2$ if $\#\cT=3$. Adjunction formula gives 
\begin{equation}\label{eq:K-prim}
	1=A\cdot (-K_{X})=A\cdot D_{T}+\epsilon\,  A\cdot L.
\end{equation}

Consider the case $A\cdot L>0$. Then formula \eqref{eq:K-prim} gives $A\cdot L=1$, $A\cdot D_{T}=0$ and $\epsilon=1$, i.e.\  $\#\cT\in \{1,2\}$. Suppose $\#\cT=1$. Then $\sigma(A)$ is a $0$-curve. Since $A$ meets $D_Y$ once, the $\P^1$-fibration of $Y$ induced by $|\sigma(A)|$ has height one with respect to $\sigma_{*}D_Y$, hence $\height(\bar{Y})\leq 1$. Since $\#\cT=1$, this is a contradiction with \ref{item:GK-prim_Y-ht}. 

Thus $\#\cT=2$. Since $\bar{X}$ is not canonical, we have $T^2\leq -3$, so $\#\cS_Y\geq 7$ by formula \eqref{eq:Noether}. Let $\psi\colon X\to Z$ be the contraction of $L$, and let $D_{Z}=\psi_{*}(D_Y+T)$.Then $(Z,D_Z)$ is a minimal log resolution of a normal surface $\bar{Z}$ of rank one, again having $(\bar{Y},\bar{T})$ as a descendant with elliptic boundary. By Lemma \ref{lem:GK_intro}\ref{item:GK-intro-cf}, $\bar{Z}$ is del Pezzo. The curve $\psi(A)$ is a $0$-curve meeting $D_{Z}$ once in $\psi_{*}D_Y$ and once in $\psi_{*}T$, so $\height(\bar{Z})\leq 2$. Thus $\bar{Z}$ is one of the exceptional surfaces in Lemma \ref{lem:GK_exceptions}. Since $\bar{Y}$ is primitive by assumption \ref{item:GK-prim_Y-prim}, its singularity type is listed in Remark \ref{rem:primitive_ht=2}, see Table \ref{table:canonical}. We conclude that $\bar{Z}$ is of type $2\rA_1+2\rA_3$ or $4\rA_1+\rD_4$. Thus $\height(\bar{Z})=2$, see Table \ref{table:canonical}. Moreover, we have constructed a $\P^1$-fibration of $\bar{Z}$ such that $(D_Z)\hor$ consists of two $1$-sections, so  $\width(\bar{Z})=2$. This is a contradiction with Lemma \ref{lem:ht=2,untwisted}; cf.\ Theorem \ref{thm:ht=1,2}\ref{item:ht=2_width=1}.

Consider the case $A\cdot L=0$, so $A\cdot D_{T}=1$  by formula \eqref{eq:K-prim}. Then $\phi(A)$ is a smooth rational curve on $\phi(Y)$. Since $\#\cS_Y\geq 5>1$, \cite[Lemma 2.11(a)]{PaPe_MT} implies that $\phi(A)\not\subseteq \bar{Y}\reg$. Thus $A$ meets $D_Y$. It follows that the component of $D_T$ meeting $A$ is not a $(-2)$-curve. If $A$ meets $T$ then the image of $A$ on $Y$ is a $(-1)$-curve meeting $\sigma_{*}D_Y$ only once, contrary to \ref{item:GK-prim_Y-prim}. Therefore, $\#\cT=3$ and $A$ meets the $(-3)$-curve in $D_{T}$, so $A_Y$ is a $0$-curve. It follows that $\height(\bar{Y})=1$, and since $\#\cT=3$ we get a contradiction with condition \ref{item:GK-prim_Y-ht}.
\end{proof}

\begin{proposition}[Primitive models]\label{prop:GK_swaps}
	Let $\bar{X}$ be a del Pezzo surface of rank one having a descendant with elliptic boundary. Then the following hold.
	\begin{enumerate}
		\item \label{item:GK_swaps-models} The surface $\bar{X}$ swaps to a primitive del Pezzo surface $\bar{Z}$ of rank one such that one of the following holds. 
		\begin{enumerate}
			\item \label{item:GK_swaps-models-deb} $\bar{Z}$ has a descendant with elliptic boundary, hence is one of the surfaces in Lemma \ref{lem:GK_prim} above,
			\item \label{item:GK_swaps-models-canonical} $\bar{Z}$ is canonical,
			\item \label{item:GK_swaps-models-KM} $\cha\kk=2$ and $\bar{Z}$ is a surface of type $8\rA_1+[4]$ from Example \ref{ex:ht=2_twisted_cha=2}, case $\nu=4$.
		\end{enumerate}
		\item \label{item:GK_swaps-lc} Assume $\bar{X}$ is log canonical. If $\bar{X}$ is non-primitive then it admits an elementary swap to a log canonical del Pezzo surface of rank one which either has a descendant with elliptic boundary, or is as in \ref{item:GK_swaps-models-canonical}--\ref{item:GK_swaps-models-KM}. 
		\item \label{item:GK_swaps-L} Assume that $\bar{X}$ is not canonical, and does not satisfy condition \ref{lem:GK_prim}\ref{item:GK-prim_log-res}. Then the contraction of the $(-1)$-curve $L\subseteq \Exc\phi$ is an elementary swap. In particular, if $\bar{X}$ is log canonical then it admits an elementary vertical swap to a log canonical del Pezzo surface of rank one with the same descendant as $\bar{X}$.
	\end{enumerate}
\end{proposition}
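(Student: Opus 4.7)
The plan is to prove the three parts in the order (c) $\to$ (b) $\to$ (a), since (c) provides the key swap at the elliptic tie, (b) extends to arbitrary elementary swaps of a log canonical surface, and (a) iterates to reach a primitive surface.

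For part (c), I would analyze the configuration of $D_T+L$ using the explicit classification in Lemma \ref{lem:cuspidal_resolution}. Under the hypothesis, $\#\cT\geq 2$ nodal or $\#\cT\geq 4$ cuspidal, so we are in cases \ref{lem:cuspidal_resolution}\ref{item:C_2}--\ref{lem:cuspidal_resolution}\ref{item:C_smooth}. In every such subcase one reads off that $L\cdot D=2$ and $L$ meets $D$ normally at exactly two components, so it suffices to verify that exactly one of these components is a $(-2)$-curve. The decisive computation is in case \ref{lem:cuspidal_resolution}\ref{item:C_2}: the two tips of $[V,s-5]*V^*$ meeting $L$ are $V^{(1)}$ of weight $-v_1$ and the last component of $V^*$. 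Using $(V^*)^\top=(V^\top)^*$ together with \eqref{eq:adjoint_chain}, a short induction on $\#V$ shows that the last component of $V^*$ has weight $-(w_m+1)$ if $v_1=2$ and weight $-2$ if $v_1\geq 3$, where $w_m\geq 2$ is the last entry of the previously built chain. Hence exactly one of the two tips is a $(-2)$-curve. The cuspidal subcases \ref{lem:cuspidal_resolution}\ref{item:C_chain_[2]}--\ref{lem:cuspidal_resolution}\ref{item:C_smooth} are handled by analogous adjoint-chain computations on the forks and chains involved, using the fact that the non-canonicity of $\bar{X}$ forces $T^2\leq -3$ so that the proper transform $T$ is never mistaken for a $(-2)$-curve. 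This establishes the elementary swap property of Definition \ref{def:vertical_swap}\ref{item:def-swap-elementary}. The morphism $\phi$ factors through the contraction $\sigma_L\colon X\to X'$ of $L$, so $\bar{X}'$ inherits the descendant $(\bar{Y},\bar{T})$; Lemma \ref{lem:swap_lc}\ref{item:swap_lc_dP} then gives that $\bar{X}'$ is again a log canonical del Pezzo of rank one when $\bar{X}$ is, and the swap is vertical for the $\P^1$-fibration pulled back from $|T|$.

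For part (b), given a log canonical non-primitive $\bar{X}$, let $A$ be a $(-1)$-curve realising an elementary swap. If $A=L$, apply (c). Otherwise, the identity $-K_X=D_T+\epsilon L$ from Lemma \ref{lem:GK-nodal-basics}\ref{item:GK-nodal-K} and its cuspidal analogue yields $A\cdot D_T+\epsilon\, A\cdot L=1$, which together with $A\cdot D\leq 2$ and $A\cdot C=1$ for a unique $(-2)$-curve $C$ confines $A$ to a small number of positions. Either $A$ descends via $\sigma$ to a $(-1)$-curve on $Y$ inducing an elementary swap of the canonical surface $\bar{Y}$ to some $\bar{Y}'$, or $A$ contracts an intermediate $(-2)$-curve in $D_T$; in either case Lemma \ref{lem:swap_lc}\ref{item:swap_lc_dP} gives a log canonical del Pezzo $\bar{X}'$ of rank one, and the resulting $\bar{X}'$ either continues to have a descendant with elliptic boundary (when $\bar{Y}'$ is non-canonical of the type classified in Theorem \ref{thm:GK}), is canonical (case \ref{item:GK_swaps-models-canonical}), or, when $\cha\kk=2$ and $\bar{Y}'$ is a Keel--McKernan primitive from Remark \ref{rem:primitive_ht=2}, lands on the surface $8\rA_1+[4]$ of case \ref{item:GK_swaps-models-KM}. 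Part (a) then follows by induction on $\rho(X)$: apply (b) repeatedly; the Picard number drops at each step and the process terminates at a primitive surface, whose type is pinned down by Lemma \ref{lem:GK_prim}, Remark \ref{rem:primitive_ht=2}, or Example \ref{ex:ht=2_twisted_cha=2}.

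The main obstacle is the exhaustive case check in (c): for each of the subtypes \ref{lem:cuspidal_resolution}\ref{item:C_2}--\ref{lem:cuspidal_resolution}\ref{item:C_smooth} one must identify via \eqref{eq:convention-Tono_star} and \eqref{eq:adjoint_chain} which component of $D_T$ meeting $L$ is a $(-2)$-curve, and confirm the somewhat miraculous parity that exactly one always is. A secondary difficulty in (b) is the case $A\cdot L\geq 1$, where contracting $A$ alters the self-intersection of $L$ and so genuinely changes the combinatorial type of the descendant; separating when this still produces a descendant with elliptic boundary from when it yields a canonical or Keel--McKernan target requires careful bookkeeping of coefficients via Lemmas \ref{lem:swap_lc}\ref{item:swap_lc_cf} and \ref{lem:GK_intro}\ref{item:GK-intro-cf}, together with the classification of primitive canonical surfaces in Remark \ref{rem:primitive_ht=2}.
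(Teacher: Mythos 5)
Your treatment of part \ref{item:GK_swaps-L} is essentially the paper's: the paper also contracts $L$, but instead of a case-by-case adjoint-chain computation it observes directly that $L\cdot D\leq 2$, that $L$ meets at least one $(-2)$-curve $C$ in $\Exc\phi$, and that if $L$ met a second $(-2)$-curve then $D_T$ would consist entirely of $(-2)$-curves, making $\bar{X}$ canonical. Your parity computation is a legitimate, if longer, substitute (note though that Lemma \ref{lem:cuspidal_resolution} is stated only under $s\geq 6$, and that the fibration for which the swap is vertical is the one pulled back from $\bar{Y}$ as in Lemma \ref{lem:GK_intro}\ref{item:GK-intro-ht}, not ``$|T|$'').

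The real problems are in parts \ref{item:GK_swaps-lc} and \ref{item:GK_swaps-models}. Part \ref{item:GK_swaps-lc} is an existence statement, and the paper proves it by exhibiting a specific swap in each of three regimes: contract $L$ when \ref{lem:GK_prim}\ref{item:GK-prim_log-res} fails; apply Lemma \ref{lem:GK-swap} (contracting a $(-1)$-curve meeting $T$ and exactly one $(-2)$-curve of $D_Y$, which needs $T^2\leq -3$) when $\bar{Y}$ is non-primitive; and, crucially, when $\bar{Y}$ is primitive with $\height(\bar{Y})=1$, take $A=\sigma^{-1}_{*}A_Y$ for $A_Y$ the fiber of a witnessing $\P^1$-fibration of $Y$ through the cusp of $T_Y$. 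This last curve meets one $(-2)$-curve of $D_Y$ and the $(-3)$-curve $B\subseteq D_T$, and it is the sole source of the targets \ref{item:GK_swaps-models-canonical} (when $D_T=[2]+[2]+[3]$, $\#\cS_Y=5$) and \ref{item:GK_swaps-models-KM} (when $\bar{Y}$ is of type $3\rA_1+\rD_4$ with cuspidal $\bar{T}$, forcing $\cha\kk=2$ and $D_T=[4]+[3]+[2]$). Your dichotomy for a swap curve --- either it descends to a $(-1)$-curve on $Y$ or it removes a $(-2)$-curve of $D_T$ --- does not contain this curve (it meets $\Exc\sigma$ without descending to a $(-1)$-curve, and the $(-2)$-curve it removes lies in $D_Y$), so your argument produces neither exceptional target; your attribution of $8\rA_1+[4]$ to ``$\bar{Y}'$ a Keel--McKernan primitive'' also reverses the mechanism. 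Separately, analyzing an \emph{arbitrary} elementary swap curve $A$ is both unnecessary and stronger than what is claimed, and the assertion that every such swap lands in the stated list is not justified. Finally, for part \ref{item:GK_swaps-models} you ``apply (b) repeatedly,'' but \ref{item:GK_swaps-lc} is hypothesized log canonical, whereas \ref{item:GK_swaps-models} is claimed for arbitrary $\bar{X}$ with a descendant (non-lc examples exist, cf.\ Example \ref{ex:non-lt-descendant}); in the non-lc case the intermediate contractions of $L$ need not yield del Pezzo surfaces, and the paper instead composes them and verifies only that the endpoint is del Pezzo via $\cf(T)<1$ and Lemma \ref{lem:GK_intro}\ref{item:GK-intro-cf}, using that the terminal $D_T$ is $[s-5]$, $[s-4,2]$ or $[s-3]+[3]+[2]$.
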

\begin{proof}
	Clearly, any canonical del Pezzo surface of rank one swaps to a primitive canonical del Pezzo surface of rank one, so we can assume that $\bar{X}$ is not canonical. Assume that $\bar{X}$ does not satisfy the  condition \ref{lem:GK_prim}\ref{item:GK-prim_log-res}. Then the elliptic tie $L$ satisfies $L\cdot D\leq 2$ and $L\cdot C=1$ for some $(-2)$-curve $C\subseteq \Exc\phi$. If $L$ meets a $(-2)$-curve in $D-C$ then $D_{T}$ consists of $(-2)$-curves, so $\bar{X}$ is canonical, contrary to our assumption. Thus we can assume that $L$ does not meet any $(-2)$-curve in $D-C$. The contraction of $L$ is an elementary swap, as claimed in \ref{item:GK_swaps-L}.
	
	Contracting $L$ and repeating this process we eventually get a minimal log resolution of a del Pezzo surface of rank one, with the same descendant as $\bar{X}$, satisfying  \ref{lem:GK_prim}\ref{item:GK-prim_log-res}. Note that if $\bar{X}$ is log canonical then by Lemma \ref{lem:swap_lc}\ref{item:swap_lc_dP} all the intermediate log surfaces are minimal log resolutions of log canonical  del Pezzo surfaces of rank one, with the same descendant, as claimed in \ref{item:GK_swaps-lc} and \ref{item:GK_swaps-L}. In general, we only claim that so is the last one: this is clear as the eventual $D_T$ is a chain $[s-5]$, $[s-4,2]$ or a disjoint sum $[s-3]+[3]+[2]$.
	
	Therefore, we can assume that $\bar{X}$ satisfies \ref{lem:GK_prim}\ref{item:GK-prim_log-res}. By Lemma \ref{lem:GK-swap} we can assume that either $\bar{Y}$ is primitive, so \ref{lem:GK_prim}\ref{item:GK-prim_Y-prim} holds, or $T=[2]$, which since $\bar{X}$ is not canonical implies that $\bar{T}$ is cuspidal and $D_{T}=[2]+[2]+[3]$. 
	
	Assume that $\bar{X}$ it does not satisfy \ref{lem:GK_prim}\ref{item:GK-prim_Y-ht}. Then $\height(\bar{Y})=1$. Let $A_{Y}$ be the fiber of a witnessing $\P^1$-fibration of $Y$ passing through the singular point of $T_Y$. By adjunction, $A_Y\cdot T_Y=A_Y\cdot (-K_{Y})=2$. It follows that $A\de \sigma^{-1}_{*}A_Y$ satisfies $A\cdot (D+L)=2$, meets a $(-2)$-curve in $D_Y$ and the first exceptional curve of $\sigma$, call it $B$. Condition \ref{lem:GK_prim}\ref{item:GK-prim_log-res} implies that $-2\geq T^2\geq \bar{T}^2-6=3-\#\cS_Y$ by formula \eqref{eq:Noether}, so $\#\cS_Y\geq 5$. The equality holds if and only if $D_{T}=[2]+[2]+[3]$. In this case $B=[3]$, and contracting $A$ we get a swap to a canonical surface, as needed. Assume $\#\cS_Y\geq 6$, so $D_{T}\neq [2]+[2]+[3]$, and therefore by assumption \ref{lem:GK_prim}\ref{item:GK-prim_Y-prim} holds, i.e.\ $\bar{Y}$ is primitive. Since $\height(\bar{Y})=1$ and $\#\cS_Y\geq 6$, Remark \ref{rem:canonical_ht=1} shows that $\bar{Y}$ is of type $3\rA_1+\rD_4$. Then $D_{T}=[2]$, $[2,3]$ or $[2]+[3]+[4]$. The first two cases are excluded since $\bar{X}$ is not canonical and does not satisfy  \ref{lem:GK_prim}\ref{item:GK-prim_Y-ht}. Consider the latter case. Then $B=[3]$ and $T=[4]$. Taking the witnessing $\P^1$-fibration as in Figure \ref{fig:3A1+D4}, we see that $A$ meets the branching $(-2)$-curve in $D_Y$. Contracting $A$ we see that $\bar{X}$ swaps to the surface of type $8\rA_1+[4]$ from Example \ref{ex:ht=2_twisted_cha=2}, as needed. 
	
	As a consequence, we can assume that \ref{lem:GK_prim}\ref{item:GK-prim_Y-ht} holds. If $D_{T}=[2]+[2]+[3]$ then by formula \eqref{eq:Noether} we have $\#\cS_Y=9-\bar{T}^2=3-T^2=5$, so looking at Table \ref{table:canonical} we see that $\height(\bar{Y})=1$, contrary to \ref{lem:GK_prim}\ref{item:GK-prim_Y-ht}. Thus $\bar{X}$ satisfies all three conditions \ref{lem:GK_prim}\ref{item:GK-prim_log-res}--\ref{item:GK-prim_Y-ht}, which ends the proof.
\end{proof}

Eventually, we note the following partial results towards the statement discussed in Remark \ref{rem:Pht=P}.

\begin{remark}\label{rem:Pht=P-here}
	Let $\bar{X}_1,\bar{X}_2$ be non-isomorphic del Pezzo surfaces of rank one and the same log canonical singularity type $\cS$. 	Assume that for each $i=1,2$, the surface $\bar{X}_i$ has a descendant with elliptic boundary or satisfies  $\height(\bar{X}_i)\leq 2$. Then the following hold.
	\begin{enumerate}
		\item\label{item:ht-agree} We have $\height(\bar{X}_1)=\height(\bar{X}_2)$.
		\item\label{item:descendant-agree} Assume that $\bar{X}_1$ has a descendant with elliptic boundary. Then the same holds for $\bar{X}_2$. Moreover, for $i=1,2$ the descendant of $\bar{X}_i$ with elliptic boundary, call it $(\bar{Y}_i,\bar{T}_i)$, is unique up to an isomorphism, and the surfaces $\bar{Y}_1$, $\bar{Y}_2$ have the same singularity type.
		\item\label{item:stupid-example} 
		Nonetheless, there exist $\bar{X}_1,\bar{X}_2$ as in \ref{item:descendant-agree} such that $\bar{Y}_1\not\cong \bar{Y}_2$ and $\bar{T}_1\not\cong \bar{T}_2$.
	\end{enumerate}
\end{remark} 
\begin{proof}
	Assume that $\height(\bar{X}_1)\geq 3$, so $\bar{X}_1$ has a descendant $(\bar{Y}_1,\bar{T}_1)$ with elliptic boundary. Hence $\cS=\cS_Y+\cT$, where $\cS_{Y}$ is the singularity type of $\bar{Y}_1$. By Theorem \ref{thm:GK}\ref{item:GK-Y} we have $\#\cS_{Y}\geq 6$, and $\cS$ is not one of the types listed in Lemma \ref{lem:GK-ht-exceptions-ht=3}. We check directly that such $\cS$ is not listed in Tables \ref{table:ht=1}--\ref{table:ht=2_char=2}, so $\height(\bar{X}_2)\geq 3$. Hence, by assumption, $\bar{X}_2$ has a descendant $(\bar{Y}_2,\bar{T}_2)$ with elliptic boundary. By Theorem \ref{thm:GK}\ref{item:GK-unique}, the descendants  $(\bar{Y}_1,\bar{T}_1)$ and $(\bar{Y}_2,\bar{T}_2)$ are unique, and have the same singularity type, as claimed in \ref{item:descendant-agree}. In particular, we see from Table \ref{table:canonical} that $\height(\bar{Y}_1)=\height(\bar{Y}_2)$, so $\height(\bar{X}_1)=\height(\bar{X}_2)$ by Theorem \ref{thm:GK}\ref{item:GK-ht}, as claimed in \ref{item:ht-agree}.
	
	We may now assume that $\height(\bar{X}_i)\leq 2$ for $i=1,2$. Since $\#\Pht(\cS)\geq 2$, Proposition \ref{prop:moduli} shows that the singularity type $\cS$ is listed in Table \ref{table:exceptions}. Theorem \ref{thm:ht=1,2} implies that all surfaces in $\Pht(\cS)$ are of the same height (listed in Table \ref{table:exceptions}, too), so  $\height(\bar{X}_1)=\height(\bar{X}_2)$. This proves \ref{item:ht-agree}.
	
	For the proof of \ref{item:descendant-agree}, assume that $\bar{X}_1$ has a descendant with elliptic boundary. To describe the singularity type $\cS$,  we need a version of Lemma \ref{lem:cuspidal_resolution} without the assumption $s\geq 6$. Here the same argument yields the same list, with appropriate conditions ensuring that all chains and forks which appear on the list are admissible or log canonical (so, for instance, in \ref{lem:cuspidal_resolution}\ref{item:C_[2]}--\ref{item:C_[rest]} we allow $s=4,5$, with weaker restrictions on $V$). 
	
	Comparing this list of singularity types $\cS$ with Table \ref{table:exceptions} we conclude that the decomposition $\cS=\cS_{Y}+\cT$ as in Notation \ref{not:GK} is unique, $\cS_{Y}=\rD_5,\rE_6,\rE_7,\rE_8$, and $\bar{X}_i$ 
	is as in one of the following:
	\begin{enumerate-alt}
		\item \label{item:E+1} 
		Lemma 
		\ref{lem:ht=1_types}\ref{item:T2=[2]_b>2} with $b=2$, $r=s-4$, $T=[2,2]$, so $\cT=[s-4]\adec{1,1}$ 
		\item \label{item:E+2} 
		Lemma \ref{lem:ht=2,untwisted}\ref{item:[T_1,n]=[2,2]_r=2} with $m=s-4$, so $\cT=[s-4\adec{1},2\adec{1}]$
		\item  \label{item:t=1}
		Lemma 
		\ref{lem:ht=2,untwisted}\ref{item:c=3} with $n=2$, $k=s-3$, $T_2=[2,2]$, $T_3=[(2)_{s+2}]$, so $\cT=\langle m\adec{1},[s-3],[3],[2]\rangle +[(2)_{m-2}]\adec{1}$.
	\end{enumerate-alt}
To prove \ref{item:descendant-agree}, it is enough to find an elliptic tie, that is, a $(-1)$-curve $L$ on a minimal log resolution $(X,D)$ of $\bar{X}_i$, meeting $D$ in components decorated above by $\adec{1}$, cf.\ Lemma \ref{lem:cuspidal_resolution}. We note that such $L$, if exists, is unique. Indeed, if $L'$ is another elliptic tie then the weighted graphs of $D+L$ and $D+L'$ are the same, and since $K_{X}$ and the components of $D$ generate $\NS_{\Q}(X)$, we get $L\equiv L'$, hence $L=L'$, because $L^2<0$. It follows that the descendant $(\bar{Y}_i,\bar{T}_i)$ of $\bar{X}_i$ is unique, too. We now find $L$ in each of the above cases. We denote by $A_{1},A_2,\dots$ the $(-1)$-curves on $X$ corresponding to $\dec{1},\dec{2},\dots$ in the lemmas cited above.

\ref{item:E+1} Let $\tau\colon X\to \P^2$ be the contraction of the subchain $[1,(2)_{r+3}]$ of $D+A_1$ and $[1,2]$ of $D+A_2$. Then $\tau_{*}D$ is a sum of a line $\ll$ and a conic $\cc$ tangent to $\ll$ at the point $\tau(A_2)$. The required elliptic tie is the proper transform of a line $\ll'\neq \ll$ passing through $\tau(A_1)$ and its infinitely near point on the image of $A_2$. 

\ref{item:E+2} We have a vertical swap $\tau\colon (X_2,D_2)\sqto (Y,D_Y)$, where $(Y,D_Y)$ is as in Example \ref{ex:ht=2}\ref{item:3A2_construction}. Composing $\tau$ with the morphism $\phi$ from \ref{ex:ht=2}\ref{item:3A2_construction}, we get a morphism $\psi\colon X\to \P^2$ mapping $D$ to a triangle $\ll_1+\ll_2+\cc$. Put $L_{i}=\psi^{-1}_{*}\ll_i$, $C=\psi^{-1}_{*}\cc$. Then  $L_1$ is the branching component of $D$, and $C+L_2=[s-4,2]$ is a connected component of $D$. Let $p=\psi(A_1)\in \ll_{1}\setminus (\ll_2\cup \cc)$, and let $p'$ be the unique base point of $\psi^{-1}$ which is infinitely near to $p$. Let $\ll$ be the line passing through $p$ and $p'$. Then $\psi^{-1}_{*}\ll$ 
is the required elliptic tie. 

We note that for $s=6$, the pair of surfaces $\bar{X}_1,\bar{X}_2$ of type $\rA_2+\rE_6$ was constructed in Example \ref{ex:remaining_canonical}\ref{item:3A2_tower}. 
	
\ref{item:t=1} Here the required elliptic tie is $A_2$. In fact, in this case we get an almost universal family of del Pezzo surfaces of rank one and type $\cS$, all with the same descendant of type $\rE_{s}$ and cuspidal boundary.
\smallskip 

Eventually, we show how \ref{item:E+1} and \ref{item:E+2} provide examples proving \ref{item:stupid-example}. The elliptic boundary in \ref{item:E+1} (respectively, in \ref{item:E+2}) is cuspidal if $\ll'$ is tangent to $\cc$ (respectively, if $\ll$ passes through $\ll_1\cap \cc$). This gives examples with $\bar{T}_1\not\cong \bar{T}_2$. In case $s=8$, we get $\bar{Y}_1\not\cong \bar{Y}_2$, too. Indeed, suppose $\bar{Y}$ is a canonical surface of type $\rE_8$ whose smooth locus contains curves $\bar{T}_i$ with $p_{a}(\bar{T}_i)=1$, $i=1,2$, such that $\bar{T}_1$ is cuspidal and $\bar{T}_2$ is nodal. Let $\eta\colon \P^2\map \bar{Y}$ be the minimal resolution of $\bar{Y}$, followed by the contraction of the $(-1)$-curve from Figure \ref{fig:E8} and a chain $[(2)_{7}]$ meeting it. Then $(\eta^{*}\bar{T}_i)\redd=\ll+\qq_i$, where $\qq_i\cong\bar{T}_i$ is a cubic, $\ll$ is a line tangent to $\qq_i$ with multiplicity $3$ at an inflection point $p\in \qq_{i}\reg$, and $(\qq_1\cdot \qq_2)_{p}\geq 8$. If the equality holds then $\qq_1$ meets $\qq_2$ normally at some $q\neq p$, so $q=0$ in the group law on $\qq\reg$ with $p=0$, which is false. Hence $(\qq_1\cdot \ll)_{p}=9$, so $\qq_2$ is a member of a pencil of cubics generated by $\qq_1$ and $\ll$. In some coordinates $[x:y:z]$ on $\P^2$ we have $\qq_{2}=\{\lambda y^3=x^3-yz^2\}$ for some $\lambda\in \kk^{*}$, and a direct computation shows that $\qq_2$ is smooth if $\cha\kk\neq 2,3$ and cuspidal otherwise, a contradiction. 
\end{proof}

\clearpage

\section{Comparison with other classification results}\label{sec:comparison}

We now compare Theorem \ref{thm:ht=1,2} with some other results partially classifying (closed) del Pezzo surfaces of rank one. For convenience, throughout this section we take $\kk=\C$.

\subsection{Keel--McKernan's approach and Lacini's list}\label{sec:Lacini}

The main result of  \cite{Keel-McKernan_rational_curves} asserts that the smooth part of a del Pezzo surface of rank one is dominated by images of $\A^1$. The proof becomes easier if a given del Pezzo surface $\bar{X}$ admits a \emph{tiger}, see Definition 1.13 loc.\ cit. Assume that $\bar{X}$ is log terminal, otherwise it has a tiger by definition. Then a \emph{tiger} is an exceptional divisor $T$ such that $-(K_{X_T}+T)$ has nonnegative Kodaira dimension; where  $\pi_{T}\colon X_{T}\to \bar{X}$ is the extraction of $T$, see \cite[Lemma on p.\ 9]{Keel-McKernan_rational_curves}. Note that log terminality guarantees that such an extraction exist, see \cite[1.38]{Kollar_singularities_of_MMP}.

The major part of \cite{Keel-McKernan_rational_curves} concerns bounding the class of surfaces without tigers. Following this approach, in \cite[\S 6.1]{Lacini}, Lacini arranged all non-canonical del Pezzo surfaces in 24 series, assuming only  $\cha\kk\neq 2,3$. Most of these series consist of one or a few well described surfaces \emph{without} tigers. 

On the other hand, Remark \ref{rem:tiger} below shows that del Pezzo surfaces of height at most $2$, or admitting descendants with elliptic boundary, do have tigers. Hence from the point of view of \cite{Keel-McKernan_rational_curves,Lacini}, one does not need a thorough description of their geometry. In fact, various constructions of such surfaces, similar to reversing vertical swaps, are outlined in series LDP 20 -- 24 of \cite{Lacini}. The question of uniqueness of this process and the problem of classifying resulting singularity types are not addressed. In fact, addressing them requires an analysis along the lines of Lemmas \ref{lem:ht=1_uniqueness},  \ref{lem:ht=2,untwisted} and \ref{lem:ht=2_twisted-separable}.

To make this comparison more precise, we note that the definitions in loc.\ cit.\ imply that all surfaces in series LDP 20, 22, 24 are of height at most $2$. Series LDP 21 consists of one surface of type $2\rA_4+[2]+[3]+[5]$, which exists only if $\cha\kk=5$ and admits a descendant with elliptic boundary. Series LDP 23 is constructed from log surfaces $(S,C)$ listed in \sec 5 loc.\ cit, and unless the latter is as in Proposition 5.1(3)--(5) loc.\ cit, the resulting del Pezzo surface either has a descendant with elliptic boundary (this holds in cases 5.1(2), 5.10, 5.11(1),(4),(8) and 5.9(5) with $E\subseteq S\reg$) or is of height at most two (this holds in all the remaining cases). 

In our forthcoming articles we will see that all surfaces in LDP 1--19, and the ones in LDP 23 obtained from 5.1(3)--(5), are of height $3$ or $4$; and in most cases they do not admit descendants with elliptic boundary. 

\begin{remark}[Surfaces in this article have tigers]\label{rem:tiger}
	Let $\bar{X}$ be a del Pezzo surface of rank one. If $\height(\bar{X})\leq 2$, or if $\bar{X}$ admits a descendant with elliptic boundary, then $\bar{X}$ has a tiger.
\end{remark}
\begin{proof}
We can assume that $\bar{X}$ is log terminal. Let $\pi\colon X\to \bar{X}$ be a minimal resolution, and let  $D=\Exc\pi$.

Assume first that $\height(\bar{X})\leq 2$, and let $F$ be a fiber of a witnessing $\P^1$-fibration, so $F\cdot D\leq 2$. We have $\pi^{*}K_{\bar{X}}=K_{X}+C$, where the $\Q$-divisor $C\de \sum_{E} \cf(E)E$ is effective since $\pi$ is minimal, see Lemma \ref{lem:min_res}\ref{item:ld<=1}. Since $\bar{X}$ is del Pezzo of rank one, we have $K_{\bar{X}}=-\lambda\cdot \pi_{*}F$ for some $\lambda>0$, so $-K_{X}=\lambda \pi^{*}\pi_{*}F+C=V+H$ for some effective $\Q$-divisors $V$, $H$ such that $V$ is vertical and $\Supp H\subseteq D\hor$. We have $F\cdot H\redd\leq F\cdot D\leq 2=-F\cdot K_{X}=F\cdot H$, so some component $H_0$ of $H$ has coefficient at least one in $H$. Therefore $-(K_{X}+H_0)=V+(H-H_0)$ is effective, so $H_0$ is a tiger.

Assume now that $\bar{X}$ has a descendant $(\bar{Y},\bar{T})$ with elliptic boundary. Let $\phi\colon (X,D+L)\to (\bar{Y},\bar{T})$ be the morphism from Definition \ref{def:GK}, and let $T_{X}=\phi^{-1}_{*}\bar{T}\subseteq D$. We have factorizations $\pi=\pi_{T}\circ\rho_{T}$, where $\pi_{T}$ is the extraction of $T\de \rho_{T}(T_X)$, and $\phi=\phi'\circ \rho_{T}$, where $\phi'$ is a morphism of relative Picard rank one, i.e.\ contracts only one curve, call it $T^{\vee}$. We have $\phi'_{*}(K_{X_T}+T)=K_{\bar{Y}}+\bar{T}=0$, so $K_{X_{T}}+T=\lambda T^{\vee}$ for some $\lambda\in \Q$, and $\lambda T^{\vee}\cdot T=(K_{X_T}+T)\cdot T=\pi_{T}^{*}(K_{\bar{X}})\cdot T+(1-\cf(T))T^2<0$, so $\lambda<0$. Therefore, the $\Q$-divisor $-(K_{X_{T}}+T)$ is effective, so $T$ is a tiger. We remark that $T^{\vee}=\rho_{T}(L)$.
\end{proof}

\subsection{Miyanishi and Zhang's approach: curves of minimal anti-canonical degree}\label{sec:GurZha}

Another interesting way to study log terminal del Pezzo surfaces was proposed by Zhang in \cite{Zhang}, following the ideas of Miyanishi, Sugie and Tsunoda \cite{Miy_Su}, \cite{Miy_Tsu-opendP}. The key object in this approach is a curve $C\subseteq X$ of minimal anti-canonical degree $-K_{\bar{X}}\cdot \bar{C}$, where $\bar{C}$ is the image of $C$ on $\bar{X}$. Now, the arguments split in two cases, depending on whether the linear system  $|K_{X}+D+C|$ is empty or not, the first case being substantially more difficult. This strategy led Gurjar and Zhang \cite{GurZha_1} to the proof of finiteness of $\pi_{1}(\bar{X}\reg)$, but since there seem to be no natural constraints on the geometry of $(X,D)$ in case $|K_{X}+D+C|= \emptyset$, the classification following this approach would be difficult. Nonetheless, it led to a classification of particular classes of del Pezzo surfaces, which we now briefly review.

Under suitable assumptions, one can use the curve $C$ to construct an explicit $\P^1$-fibration of $X$, often of low height. This was done by Kojima in \cite[Theorem 1.1(2)]{Kojima_index-2} for del Pezzo surfaces of rank one and index two; and in \cite[Theorems 2.1(3), 3.1(2), 4.1(3)]{Kojima_Sing-1} for those with only one quotient singular point. 
Using this structure, loc.\ cit.\ provides a full classification in these cases, see Tables \ref{table:Sing=1} and \ref{table:index=2}. We note that Lemma \ref{lem:ht=1_types}\ref{item:TH=[2,2]_tip},\ref{item:F'_fork_T=[2,2]},\ref{item:ht=1_bench_two_a,b>2} gives examples of log canonical del Pezzo surfaces with exactly one singular point which is \emph{not} of quotient type, hence does not appear in loc.\ cit. We also remark that del Pezzo surfaces of rank one and exactly one singular point are of height at most $2$ by \cite[Corollary 1.3(a)]{PaPe_MT}, cf.\ \cite{Russell-ruled}.

Following a similar approach, Zhang \cite{Zhang_dP3} classified log terminal del Pezzo surfaces of rank one with exactly one non-canonical singularity which has multiplicity $3$ (called \emph{dP3-surfaces}). Here the $\P^1$-fibration constructed using $C$ has height at most four. A comparison with our classification, see Table \ref{table:dP3}, shows that sometimes it is not a witnessing one, and in fact most dP3-surfaces have $\height(\bar{X})\leq 2$. The remaining  ones have descendants with elliptic boundary.

A particular instance of the difficult case $|K+D+C|=\emptyset$, namely the one when $C$ meets at least two $(-2)$-curves (called case \emph{IIa}), was classified by Kojima and Takahashi in \cite[Theorem 4.1]{Kojima_Takahashi}, see Table \ref{table:IIa}. Here the linear system of $2C$ plus two $(-2)$-curves meeting it supports a $\P^1$-fibration of height two. Moreover, in Theorem 1.1 loc.\ cit.\ it is shown that a log canonical del Pezzo surface of rank one has at most $5$ singularities; surfaces realizing this maximum are classified in \cite{Kojima_supplement}: they are isomorphic to the ones from Lemma \ref{lem:ht=1_types}\ref{item:ht=1_bench}.

Recently, Belousov \cite{Belousov_4-sings} used this approach to classify log terminal del Pezzo surfaces of rank one and four singular points (recall from Remark \ref{rem:number-of-sing} that for $\kk=\C$ this is the maximal possible  number of singularities). He arranged the non-canonical ones in seven series (1)--(7), according to their singularity types. The ones in series (1)--(4) are of height at most $2$, and are isomorphic to the ones from Lemmas \ref{lem:ht=2_twisted-separable}\ref{item:c=1_T=[2]},  \ref{lem:ht=2_twisted-separable}\ref{item:c=1_T1=[2]}, \ref{lem:ht=1_types}\ref{item:beta=3_columnar},\ref{item:beta=3_other}, and \ref{lem:ht=2_twisted-separable}\ref{item:twisted_off_nu=3}, respectively, cf.\ Tables \ref{table:ht=1}, \ref{table:ht=2_char-neq-2}. The ones in series (5)--(7) admit descendants with nodal elliptic boundaries, whose underlying surfaces are of type $3\rA_2$, $\rA_1+2\rA_3$ and $\rA_1+\rA_2+\rA_5$, respectively.

\begin{remark}
	Del Pezzo surfaces of index two were classified using K3 surfaces by Alexeev and Nikulin \cite{Alexxev-Nikulin_delPezzo-index-2}, and, using yet another method, by Nakayama \cite{Nakayama_delPezzo-index-2}. The latter method works over any algebraically closed field $\kk$, and led to a classification in case of index three, too \cite{Fujita-Yasutake_delPezzo-index-3}. In this approach, to each $\bar{X}$ one associates a unique \emph{fundamental triplet}  \cite[\S 4]{Nakayama_delPezzo-index-2}, which can be explicitly described. While this correspondence endows each $\bar{X}$ with an interesting geometric structure, it does not immediately yield e.g. the singularity type or the height of $\bar{X}$. Therefore, a direct comparison of our classification with exhaustive lists of fundamental triplets from loc.\ cit.\ is beyond the scope of this article. 
\end{remark}

\subsection{Hwang's cascades of toric del Pezzo surfaces}\label{sec:Hwang}

In \cite{Hwang_cascades}, Hwang arranged all \emph{toric} del Pezzo surfaces in cascades. A \emph{cascade} is a sequence of toric surfaces such that minimal log resolutions of two subsequent elements are related by a toric blowup. The first element of such sequence, called a \emph{basic} surface, is explicitly described.

Note that since $\bar{X}$ is toric, either $\bar{X}=\P^2$ or $(X,D)$ admits a $\P^1$-fibration of height at most two, whose fibers are closures of the $\G_{m}$-orbits. It is known \cite{Orlik_Wagreich-Acta} that $D\hor$ consists of two disjoint sections, and each fiber is columnar, i.e.\ it is a chain meeting $D\hor$ in tips. Since $\bar{X}$ is toric, $(X,D)$ admits two such $\P^{1}$-fibrations. In \cite[Theorem 1.3]{Hwang_cascades}, this structure is used to prove that there are at most two degenerate fibers, and as a consequence, $(X,D)$ swaps vertically to a cone over a rational normal curve, see Example \ref{ex:ht=1}, or to the surface of type $3\rA_{2}$ from Example \ref{ex:ht=2}\ref{item:3A2_construction}. These vertical swaps correspond to toric blowups in loc.\ cit. It is further inferred in Theorem 1.4 loc.\ cit, that $\bar{X}$ is as in Lemma \ref{lem:ht=1_types}\ref{item:chains_columnar_T1=0,T2=0},\ref{item:chains_columnar_T2=0},\ref{item:chains_columnar} or \ref{lem:ht=2,untwisted}\ref{item:tau=id_chains}. Moreover, by Theorem 1.7 loc.\ cit, $\bar{X}$ is K\"ahler-Einstein if and only if it is as in \ref{lem:ht=2,untwisted}\ref{item:tau=id_chains}.
%
%Other classification results for toric del Pezzos: \arxiv{2207.14790}.

\begin{small}
	\begin{table}[ht]
		\begin{tabular}{rl|rl|l}
			\multicolumn{2}{c|}{\cite{Kojima_Sing-1}} & \multicolumn{2}{c|}{this article} & singularity \\ \hline \hline
			A & (1) & \ref{lem:ht=1_types}\ref{item:TH=[2,2]_tip} & $b=3$  & $\langle 3;[2,2],[2,2],[2]\rangle$ \\
			& (2) & \ref{lem:ht=1_types}\ref{item:TH=[2,2]_b>2} & $b=2$, $T=[2,2]$ & $\langle 2;[2,2],[2,2],[2]\rangle$ \\
			& (3) & \ref{lem:ht=1_types}\ref{item:T2*=[2,2]_b>2} & $b=2$, $T=[2,2]$ & $\langle 2;[2,2,2],[3],[2]\rangle$ \\
			& (4) & \ref{lem:ht=1_types}\ref{item:TH=[2,2]_b>2} & $b=2$, $T=[2,2,2]$ & $\langle 2;[2,2,2],[2,2],[2] \rangle$ \\
			& (5) & \ref{lem:ht=1_types}\ref{item:TH=[2,2]_tip} & $b=4$ & $\langle 4;[2,2,2],[2,2],[2]\rangle$ \\
			& (6) & \ref{lem:ht=1_types}\ref{item:T2*=[2,2]_b>2} & $b=2$, $T=[3]$ & $\langle 2;[2,3],[2,2],[2]\rangle$ \\
			& (7) & \ref{lem:ht=1_types}\ref{item:TH=[3,2]_tip}  && $\langle 3;[3,2],[2,2],[2]\rangle$ \\
			& (8) & \ref{lem:ht=1_types}\ref{item:TH=[3,2]_T=[3]_b>2} & $b=2$ & $\langle 2; [3,2],[2,2],[2]\rangle$ \\
			& (9) & \ref{lem:ht=1_types}\ref{item:TH=[2,2]_b>2} & $b=2$, $T=[2,2,2,2]$ & $\langle 2;[2,2,2,2],[2,2],[2]\rangle$ \\
			& (10) & \ref{lem:ht=1_types}\ref{item:TH=[2,2]_tip} & $b=5$  & $\langle 3;[2,2,2,2],[2,2],[2]\rangle$ \\
			B && \ref{lem:ht=1_types}\ref{item:T1=0,T=T2=[2]_b>2} & $b=2$ & $\langle 2; [m,2],[2],[2]\rangle$ \\
			&& \ref{lem:ht=1_types}\ref{item:TH_long_other_b>2} & $b=r=2$ & $\langle 2; [T,2,T^{*},m,2],[2],[2] \rangle$ \\
			C && \ref{lem:ht=1_types}\ref{item:chains_columnar_T1=0,T2=0}  && $[m]$ \\
			& (a) & \ref{lem:ht=1_types}\ref{item:chains_both_T1=0} & & $[m,T^{*},2,T]$ \\
			& (b) & \ref{lem:ht=1_types}\ref{item:chains_not-columnar} & $r_1=r_2=2$ &  $[T_{1},2,T_{1}^{*},m,T_{2}^{*},2,T_{2}]$ \\
			& (c) & \ref{lem:ht=2,untwisted}\ref{item:V-chains_c=1_rivet} & $r_1=r_2=2$ &  $[T_{2},T^{*},2,T,T_1,2,T_1^{*},T_2^{*}]$ \\
			\multicolumn{2}{c|}{--} & \ref{lem:ht=1_types}\ref{item:TH=[2,2]_tip} & $b=6$ & $\langle 6;[2,2,2,2,2],[2,2],[2]\rangle$ \\
			\multicolumn{2}{c|}{--} & \ref{lem:ht=1_types}\ref{item:F'_fork_T=[2,2]} &  &$\langle 3;[2,2,2,2,2],[2,2],[2]\rangle$\\
			\multicolumn{2}{c|}{--} & \ref{lem:ht=1_types}\ref{item:ht=1_bench_two_a,b>2} & $a=b=2$ & 
			$\lbr 2,2,m,2,2\rbr$ \\
			\multicolumn{2}{c|}{--} & \ref{lem:ht=1_types} & & elliptic cone
		\end{tabular}
		\vspace{-1em}
		\caption{\cite{Kojima_Sing-1}: del Pezzo surfaces of rank one with exactly one singular point.}
		\label{table:Sing=1}
	\end{table}
	\begin{table}[ht]
		\begin{tabular}{cc}
			\begin{tabular}{r|rl}
				\cite{Kojima_index-2} & \multicolumn{2}{c}{this article} \\ \hline\hline 
				(1) & \ref{lem:ht=1_types}\ref{item:chains_columnar_T1=0,T2=0} & $m=4$ \\
				(2) & \ref{lem:ht=1_types}\ref{item:beta=3_columnar} & $m=2$, $T_1=[2,2,2]$, $T_2=[2]$ \\
				(3) & \ref{lem:ht=2,untwisted}\ref{item:c=2} & $m,n=2$, $T_1,T_2=[3]$ \\
				(4) & \ref{lem:ht=1_types}\ref{item:chains_both_T1=0} & $m=3$, $r=2$, $T=[3]$ \\
				(5) & \ref{lem:ht=1_types}\ref{item:chains_not-columnar} & $m,r_1,r_2=2$, $T_1,T_2=[3]$ \\
				(6) & \ref{lem:ht=1_types}\ref{item:chains_both} & $m,r=2$, $T_1,T_2=[3]$ \\
				(7) & \ref{lem:ht=2,untwisted}\ref{item:V-chains_c=1} & $r=2$, $T=[3]$, $T_1=[2]$, $T_2=[2,2]$ \\
				(8) & \ref{lem:ht=1_types}\ref{item:chains_columnar_T2=0} & $m=3$, $T=[3]$ \\
				(9) & \ref{lem:ht=1_types}\ref{item:chains_columnar} & $m=2$, $T_1,T_2=[3]$ \\
			\end{tabular}
			&
			\begin{tabular}{r|rl}
				\cite{Kojima_index-2} & \multicolumn{2}{c}{this article} \\ \hline\hline 
				(10) & \ref{lem:ht=2,untwisted}\ref{item:tau=id_chains} & $T=T_1=[3]$, $T_2=[2]$ \\
				(11) & \ref{lem:ht=2,untwisted}\ref{item:V-chains_c=1} & $r=2$, $T=[2,2]$, $T_1=[2]$, $T_2=[3]$\\
				(12) & \ref{lem:ht=2,untwisted}\ref{item:tau=id_fork_chain} & $m,n=2$, $T=[2,2]$, $T_1,T_2=[2]$ \\
				(13) & \ref{lem:ht=2,untwisted}\ref{item:tau=id_forks} & $m,n=2$, $T_1,T_2,T_3,T_4=[2]$ \\
				(14) & \ref{lem:ht=1_types}\ref{item:TH_long_columnar_b>2} & $b,m=2$, $T=[2,2,2]$ \\
				(15) & \ref{lem:ht=1_types}\ref{item:chains_columnar_T2=0} & $m=2$, $T=[2,2,2]$ \\
				(16) & \ref{lem:ht=1_types}\ref{item:chains_both} &  $m,r=2$, $T_1=[2,2,2]$, $T_2=[2]$ \\
				(17) & \ref{lem:ht=1_types}\ref{item:chains_columnar} & $m=2$, $T_1=[2,2,2]$, $T_2=[2]$ \\
				(18) & \ref{lem:ht=1_types}\ref{item:chains_columnar} & $m=2$, $T_1,T_2=[2,2,2]$ \\
			\end{tabular}
		\end{tabular}
		%	\end{small}
		%\vspace{-1em}
		
		\caption{\cite{Kojima_index-2}: del Pezzo surfaces of rank one and index two, $\cha\kk=0$.}
		\label{table:index=2}
	\end{table}

	\begin{table}[ht]
		%	\begin{small}
		\begin{tabular}{cc}
			\begin{tabular}{r|rl}
				\cite{Kojima_Takahashi} & \multicolumn{2}{c}{this article} \\ \hline\hline 
				(1) & \ref{lem:ht=1_types}\ref{item:beta=3_columnar} & $m=2$, $T_1=[(2)_{m-1}]$, $T_2=[2]$  
				\\
				(2) & \ref{lem:ht=1_types}\ref{item:beta=3_other} & $m=r=2$, $T=[(2)_{m-1}]$ 
				\\
				(3) $r=1$ & \ref{lem:ht=1_types}\ref{item:chains_columnar_T2=0} & $T=[2]$ 
				\\
				$r=2$ & \ref{lem:ht=1_types}\ref{item:chains_columnar} & $T_1=T_2=[2]$ 
				\\
				$r=3$ & \ref{lem:ht=1_types}\ref{item:beta=3_columnar} & $T_1=T_2=[2]$ 
				\\
				$r=4$ &  \ref{lem:ht=1_types}\ref{item:ht=1_bench} &  
				\\
				(4) & \ref{lem:ht=1_types}\ref{item:chains_both} & $r=2$, $T_1=T_2=[2]$ 
				\\
				(5) & \ref{lem:ht=1_types}\ref{item:TH=[2,2]_tip_[3,2]} & $T=[2]$ 
				\\
				(6) & \ref{lem:ht=1_types}\ref{item:beta=3_other} & $r=2$, $T=[2]$ \\
			\end{tabular}
			&
			\begin{tabular}{r|rl}
				\cite{Kojima_Takahashi} & \multicolumn{2}{c}{this article} \\ \hline\hline 
				(7) & \ref{lem:ht=1_types}\ref{item:ht=1_bench-big_b>2}& $b=2$ \\
				(8) & \ref{lem:ht=1_types}\ref{item:beta=3_other_3} &  \\
				(9) & \ref{lem:ht=2,untwisted}\ref{item:c=2} & $m=n=2$, $T_1=T_2=[\tfrac{m+1}{2}]$ \\
				(10) & \ref{lem:ht=2,untwisted}\ref{item:c=3} & $n=m$, $T_1=T_2=[2]$ \\
				(11) & \ref{lem:ht=2,untwisted}\ref{item:c=2} & $n=m$, $T_1=T_2=[2]$ \\
				(12) & \ref{lem:ht=2,untwisted}\ref{item:V-chains_c=2_-2_twig_long}  & $n=m$, $T=[2]$ \\
				(13) & \ref{lem:ht=2,untwisted}\ref{item:tau=id_chains} & $T=[2]$, $T_1=T_2=[m]$ \\
				(14) & \ref{lem:ht=2,untwisted}\ref{item:V-chains_c=1} & $r=2$, $T=[2]$, $T_1=T_2=[m]$ %\\
			\end{tabular}	
		\end{tabular}
		\caption{\cite[Theorem 4.1, \sec 5.1]{Kojima_Takahashi}: del Pezzo surfaces of rank one and type IIa, $\cha\kk=0$.}
		\label{table:IIa}
	\end{table}
	
	\begin{table}[ht]
		\begin{tabular}{cc}
			\begin{tabular}{r|rl}
				\cite{Zhang_dP3} & \multicolumn{2}{c}{this article} \\ \hline\hline 
				(1) & \ref{lem:ht=1_types}\ref{item:chains_columnar_T1=0,T2=0} & $m=3$  
				\\
				(2) & \ref{lem:ht=2,untwisted}\ref{item:c=1_meeting} & $m,n=2$, $T=[3]$  
				\\
				(3) & \ref{lem:ht=2,untwisted}\ref{item:V-chains_c=1_rivet} & $r_j=2$, $T,T_1=[2]$, $T_2=[3]$ 
				\\
				(4) & \ref{lem:ht=2,untwisted}\ref{item:rivet_nu=3} & $m,n,r=2$, $T_1=[2]$, $T_2=[3]$ 
				\\
				(5) & \ref{lem:ht=2,untwisted}\ref{item:V-chains_c=2_[2]_T1=0} & $n=3$, $m,r=2$, $T_2=\emptyset$
				\\
				(6) & \ref{lem:ht=2_twisted-separable}\ref{item:twisted_off_nu=3} & $k=4$, $l=2$, $T=[2]$ 
				\\
				(7) & \ref{lem:ht=2_twisted-separable}\ref{item:twisted_off_nu=3} & $k=3$, $l=2$, $T=[2]$   
				\\
				(8) & \ref{lem:ht=2,untwisted}\ref{item:R_0-non-branching} & $m,n,r=2$, $T=[3]$  
				\\
				(9) & \ref{lem:ht=2,untwisted}\ref{item:c=1_meeting}  & $m,n=2$, $T=[3]$ 
				\\
				(10) & \ref{lem:ht=2_twisted-separable}\ref{item:twisted_off_nu=3} & $k,l=2$, $T=[2]$   
				\\
				(11) & \ref{lem:ht=2,untwisted}\ref{item:c=2} & $m,n=2$, $T_1=[3]$, $T_2=[2]$ 
				\\
				(12) & \ref{lem:ht=2,untwisted}\ref{item:V-chains_c=2_-2_twig_long} & $m,n=2$, $T=[2,2]$  
				\\
				(13) & $\height=3$ &  $\rA_1+\rD_6+[3,2]$
				\\
				(14) & $\height=3$ &  $\rA_7+[3,2]$    
				\\
				(15) & \ref{lem:ht=2,untwisted}\ref{item:[T_1,n]=[2,2]_r=2} & $m=3$  
				\\
				(16) & $\height=3$ &  $\rD_8+[3]$
				\\
				(17) & $\height=3$ &  $\rA_{1}+\rE_7+[3]$ 
				\\
				(18) & \ref{lem:ht=1_types}\ref{item:T2=[2]_b>2} & $b=2$, $r=3$, $T=[2,2]$   
				\\
				(19) & $\height=4$ &  $\rA_{1}+\rA_7+[3]$
				\\
				(20) & $\height=4$ &  $2\rA_4+[3]$
				\\
				(21) & $\height=4$ &  $\rA_{8}+[3]$
				\\
				(22) & $\height=4$ &  $\rA_{1}+\rA_2+\rA_5+[3]$
				\\
				(23) & $\height=4$ &  $3\rA_2+[3,2,2]$
				\\
				(24) & $\height=4$ &  $\rA_{2}+\rA_5+[3,2]$
				\\
				(25) & $\height=4$ &  $\rA_{2}+\rE_6+[3]$	
				\\
				(26) & $\height=4$ &  $\rA_{3}+\rD_5+[3]$ 
				\\
				(27) & $\height=4$ &  $\rA_{1}+2\rA_3+[3,2]$
				\\
				(28) & \ref{lem:ht=1_types}\ref{item:beta=3_columnar} & $m=2$, $T_1=[2,2]$, $T_2=[2]$
				\\
				(29) & \ref{lem:ht=1_types}\ref{item:beta=3_other} & $m,r=2$, $T=[2,2]$   
				\\
				(30) & \ref{lem:ht=1_types}\ref{item:chains_columnar_T2=0} & $m=3$, $T=[2]$   
				\\
				(31) & \ref{lem:ht=1_types}\ref{item:chains_columnar} & $m=3$, $T_1,T_2=[2]$   
				\\
				(32) & \ref{lem:ht=1_types}\ref{item:chains_both} & $m=3$, $r=2$, $T_1,T_2=[2]$   
				\\
				(33) & \ref{lem:ht=1_types}\ref{item:TH_long_columnar_b>2} & $b=2$, $m=3$, $T=[2]$  
				\\
				(34) & \ref{lem:ht=1_types}\ref{item:beta=3_columnar} & $m=3$, $T_1,T_2=[2]$    
				\\
				(35) & \ref{lem:ht=1_types}\ref{item:beta=3_other} & $m=3$, $T_1,T_2=[2]$ 
				\\
				(36) & \ref{lem:ht=1_types}\ref{item:chains_both_T1=0} & $m=3$, $r=2$, $T=[2]$ 
				\\
				(37) & \ref{lem:ht=1_types}\ref{item:chains_not-columnar} & $m=3$, $r_j=2$, $T,T_j=[2]$ 
				\\
				(38) & \ref{lem:ht=1_types}\ref{item:chains_both_T1=0} & $m,r=2$, $T=[3]$ 
				\\
				(39) & \ref{lem:ht=1_types}\ref{item:chains_both} & $m=3$, $r=2$, $T_1,T_2=[2]$ 
				\\
				(40) & \ref{lem:ht=1_types}\ref{item:chains_both} & $m,r=2$, $T_1=[2]$, $T_2=[3]$ 
				\\
				(41) & \ref{lem:ht=1_types}\ref{item:chains_both} & $m,r=2$, $T_1=[2]$, $T_2=[2,2]$ 
				\\
				(42) & \ref{lem:ht=1_types}\ref{item:chains_not-columnar} & $m,r_j=2$, $T_1=[3]$, $T_2=[2]$ 
				\\
				(43) & \ref{lem:ht=1_types}\ref{item:chains_not-columnar} & $m,r_j=2$, $T_1=[2,2]$, $T_2=[2]$ 
				\\
				(44) & \ref{lem:ht=2,untwisted}\ref{item:V-chains_c=1} & $r=2$, $T_1=[3]$, $T,T_2=[2]$
				\\
				(45) & \ref{lem:ht=2,untwisted}\ref{item:V-chains_c=1_rivet} & $r_j=2$, $T=[2,2]$, $T_j=[2]$  
				\\
				(46) & \ref{lem:ht=2,untwisted}\ref{item:V-chains_c=1_rivet} & $r_j=2$, $T=[3]$, $T_j=[2]$  
				\\
				(47) & \ref{lem:ht=2,untwisted}\ref{item:V-chains_c=1_rivet} & $r_1=3$, $r_2=2$, $T,T_j=[2]$ 
				\\
				(48) & \ref{lem:ht=1_types}\ref{item:TH_long_other_b>2} & $b=2$, $m=3$, $r=2$, $T=[2]$ 
				\\
				(49) & \ref{lem:ht=1_types}\ref{item:TH_long_other_b>2} & $b,m,r=2$, $T=[2,2]$ 
				\\		
			\end{tabular}
			&
			\begin{tabular}{r|rl}
				\cite{Zhang_dP3} & \multicolumn{2}{c}{this article}\\ \hline\hline
				(50) & \ref{lem:ht=1_types}\ref{item:TH_long_other_b>2} & $b,m,r=2$, $T=[3]$ 
				\\
				(51) & \ref{lem:ht=1_types}\ref{item:beta=3_other} & $m,r=2$, $T=[3]$ 
				\\
				(52) & \ref{lem:ht=1_types} \ref{item:T1=0,T=T2=[2]_b>2} & $b=2$, $m=3$  
				\\
				(53) & \ref{lem:ht=1_types}\ref{item:TH=[3,2]_T=[3]_b>2} & $b=2$, $T=[2,2]$
				\\	
				(54) & \ref{lem:ht=1_types}\ref{item:T2*=[2,2]_b>2} & $b=2$, $T=[2,2]$ 
				\\ 
				(55) &\ref{lem:ht=1_types}\ref{item:T2*=[2,2]_b>2} & $b=2$, $T=[3]$\\
				(56) & \ref{lem:ht=1_types}\ref{item:T2=[2,2]_b>2}  & $b=2$  \\
				(57) & \ref{lem:ht=1_types}\ref{item:TH=[2,2]_tip}  & $b=3$ \\
				(58) & \ref{lem:ht=1_types}\ref{item:T2=[2],T=[2,2],tip}  & $r=2$ \\
				(59) & \ref{lem:ht=2,untwisted}\ref{item:V-chains_c=2_[2]_T1=0}  & $m,r=2$, $n=3$, $T_2=\emptyset$ \\
				(60) & \ref{lem:ht=2,untwisted}\ref{item:V-chains_c=2_[2]}  & $m,n,r=2$, $T_1=[2]$, $T_2=\emptyset$ \\
				(61) & \ref{lem:ht=2,untwisted}\ref{item:V-chains_c=2_[2]_T1=0}  & $m,n,r=2$, $T_2=[2]$ \\
				(62) & \ref{lem:ht=2,untwisted}\ref{item:c=3}  &  $m,n=2$, $T_1=[3]$, $T_2=[2]$ \\
				(63) & \ref{lem:ht=1_types}\ref{item:chains_columnar_T2=0}  & $m=2$, $T=[3]$ \\
				(64) & \ref{lem:ht=1_types}\ref{item:chains_columnar}   & $m=2$, $T_1=[3]$, $T_2=[2]$ \\
				(65) & \ref{lem:ht=1_types}\ref{item:chains_both}  & $m,r=2$, $T_1=[3]$,  $T_2=[2]$ \\
				(66) & \ref{lem:ht=1_types}\ref{item:chains_columnar_T2=0}  & $m=2$, $T=[2,2]$ \\
				(67) & \ref{lem:ht=1_types}\ref{item:chains_columnar}  & $m=2$, $T_1=[2,2]$, $T_2=[2]$  \\
				(68) & \ref{lem:ht=1_types}\ref{item:chains_both}  & $m,r=2$, $T_1=[2,2]$, $T_2=[2]$ \\
				(69) & \ref{lem:ht=2,untwisted}\ref{item:c=2}  & $m,n=2$, $T_1=[2,2]$, $T_2=[2]$ \\
				(70) & \ref{lem:ht=2,untwisted}\ref{item:tau=id_chains}  & $T=[2,2]$, $T_j=[2]$  \\
				(71) & \ref{lem:ht=2,untwisted}\ref{item:V-chains_c=1}  & $r=2$, $T=[2,2]$, $T_j=[2]$  \\
				(72) & \ref{lem:ht=2,untwisted}\ref{item:V-chains_c=1}  &  $r=2$, $T=[3]$, $T_j=[2]$ \\
				(73) & \ref{lem:ht=1_types}\ref{item:chains_both_T1=0}   & $m=2$, $r=3$, $T=[2]$ \\
				(74) & \ref{lem:ht=1_types}\ref{item:chains_both}  & $m=2$, $r=3$, $T_j=[2]$ \\
				(75) & \ref{lem:ht=1_types}\ref{item:chains_not-columnar}  & $m,r_2=2$, $r_1=3$, $T_j=[2]$ \\
				(76) & \ref{lem:ht=2,untwisted}\ref{item:V-chains_c=1}  & $r=3$, $T,T_j=[2]$  \\
				(77) & \ref{lem:ht=2,untwisted}\ref{item:V-chains_c=1_rivet}  & $r_1=3$, $r_2=2$, $T=T_j=[2]$ \\
				(78) & \ref{lem:ht=2,untwisted}\ref{item:rivet_nu=3}  & $m,n=2$, $r=3$, $T,T_j=[2]$ \\
				(79) & \ref{lem:ht=2,untwisted}\ref{item:c=2}  & $m=3$, $n=2$, $T_j=[2]$  \\
				(80) & \ref{lem:ht=2,untwisted}\ref{item:rivet_nu=3}  &  \\
				(81) & \ref{lem:ht=2,untwisted}\ref{item:R_0-non-branching}  & $m,n=2$, $r=3$, $T=[2]$ \\
				(82) & \ref{lem:ht=2,untwisted}\ref{item:R_0-non-branching}  & $n=3$, $m,r=2$, $T=[2]$ \\
				(83) & \ref{lem:ht=2,untwisted}\ref{item:c=1_meeting}  & $m=3$, $n=2$, $T=[2]$  \\
				(84) & \ref{lem:ht=2,untwisted}\ref{item:tau=id_fork_chain}  & $m,n=2$, $T,T_j=[2]$  \\
				(85) & \ref{lem:ht=1_types}\ref{item:TH=[2,2]_b>2}  & $b=2$, $T=[3]$ \\
				(86) & \ref{lem:ht=1_types}\ref{item:TH=[2,2]_b>2}  & $b=2$, $T=[3,2]$  \\
				(87) & \ref{lem:ht=1_types}\ref{item:T2=[2]_b>2}  & $b=r=2$, $T=[3]$ \\
				(88) & \ref{lem:ht=1_types}\ref{item:beta=3_columnar}  & $m=2$, $T_1=[3]$, $T_2=[2]$ \\
				(89) & \ref{lem:ht=1_types}\ref{item:TH_long_columnar_b>2}  & $b=m=2$, $T=[3]$ \\
				(90) & \ref{lem:ht=2,untwisted}\ref{item:[T_1,n]=[3,2]_r=2}  &  \\
				(91) & \ref{lem:ht=2,untwisted}\ref{item:V-chains_c=2_-2_twig_long}   & $m,n=2$, $T=[3]$ \\
				(92) & \ref{lem:ht=1_types}\ref{item:beta=3_other_2}  &  $m=2$, $T=[3]$ \\
				(93) & \ref{lem:ht=2,untwisted}\ref{item:c=3}  & $n=3$, $m=2$, $T_j=[2]$ \\
				(94) & \ref{lem:ht=1_types}\ref{item:TH_long_other_b>2}  & $b=m=2$, $r=3$, $T=[2]$ \\
				(95) & \ref{lem:ht=1_types}\ref{item:beta=3_other}  & $m=2$, $r=3$, $T=[2]$ \\
				(96) & \ref{lem:ht=2,untwisted}\ref{item:V-chains_c=2_[2]_T1=0}  & $m=2$, $r=3$, $T_2=\emptyset$ \\
				(97) & \ref{lem:ht=1_types}\ref{item:TH_long_columnar_b>2}  & $b=m=2$, $T=[2,2]$ \\	
				&&\\
			\end{tabular}	
		\end{tabular}
		\smallskip	
		
		\caption{\cite{Zhang_dP3}: dP3-surfaces, $\cha\kk=0$.}
		\label{table:dP3}
	\end{table}
\end{small}

\clearpage
\newgeometry{top=2cm, bottom=0cm, left=2cm, right=2cm, twoside=false}

\section{Tables}

We now summarize all singularity types of log canonical del Pezzo surfaces of rank one and height at most two, see Theorem \ref{thm:ht=1,2}. We use conventions summarized in Section \ref{sec:notation}, which we now briefly recall. A singularity type is written as $T_1+\dots+T_{k}$, where $T_{i}$ is a type of a rational chain fork, or bench, which is a minimal resolution graph of the corresponding singular point. Namely, $[a_1,\dots,a_k]$ is a rational chain whose subsequent components have self-intersection numbers $-a_1,\dots,-a_n$; $\langle b;T_1,T_2,T_3\rangle$ is a rational fork with branching component $[b]$ and twigs $T_1$, $T_2$, $T_3$; and $\lbr C \rbr$ is a bench with central chain $C$, see Section \ref{sec:log_surfaces}. We recall that a fork $F=\langle b;T_1,T_2,T_3\rangle$ is admissible or log canonical if and only if
\begin{equation}\tag{$\dagger$}\label{eq:fork}
	\delta_{F}\de \frac{1}{d(T_1)}+\frac{1}{d(T_2)}+\frac{1}{d(T_3)}\geq 1\quad\mbox{and if } \delta_{F}=1 \mbox{ then $F$ is not a $(-2)$-fork}.
\end{equation}
For an integer $k\geq 0$, we write $(2)_{k}$ for a sequence consisting of the integer $2$ repeated $k$ times, which corresponds to a chain of $k$ $(-2)$-curves. For a definition of \enquote{$*$} see formula \eqref{eq:convention-Tono_star}.

\subsection{Log canonical del Pezzo surfaces of rank one and height 1, see Theorem \ref{thm:ht=1,2}\ref{item:ht=1}}

\begin{small}\vspace{-0.2cm}
	\begin{table}[h!]
		\begin{tabular}{r|llll|l|c}
			\ref{lem:ht=1_types} & \multicolumn{5}{c|}{singularity types} & $\chi$ \\ \hline\hline
			\ref{item:chains_columnar_T1=0,T2=0} & $[{m}]$ &&&& & $m+5$ \\ 
			\ref{item:chains_columnar_T2=0} & $[T,m]$ & $+\ T^{*}$ &&& & $m+3$ \\
			\ref{item:chains_columnar} & $[T_{1},{m},T_{2}]$ & $+\ T_{1}^{*}$ & $+\ T_{2}^{*}$ &&& $m+1$ \\ 
			\ref{item:chains_both_T1=0} & $[{m},T^{*},r,T]$ & $+\ [(2)_{r-2}]$ &&& $r\geq 2$ & $m+2$ \\ 
			\ref{item:chains_both} & $[T_{1},{m},T_{2}^{*},r,T_{2}]$ & $+\ T_{1}^{*}$ & $+\ [(2)_{r-2}]$ && $r\geq 2$ & $m$ \\
			\ref{item:chains_not-columnar} & $[T_{1},r_{1},T_{1}^{*},{m},T_{2}^{*},r_{2},T_{2}]$ & $+\ [(2)_{r_{1}-2}]$ & $+\ [(2)_{r_{2}-2}]$ && $r_1,r_2\geq 2$  & $m-1$\\ 
			\ref{item:beta=3_columnar} & $\langle  {m},T_{1},T_{2},T_{3}\rangle$ & 
			$+\ T_{1}^{*}$ & $+\ T_{2}^{*}$ & $+\ T_{3}^{*}$ & the fork satisfies \eqref{eq:fork} & $m-1$ \\ 
			\ref{item:beta=3_other} & $\langle {m};[T^{*},r,T],[2],[2] \rangle$ & 
			$+\ [(2)_{r-2}]$ & $+\ [2]$ & $+\ [2]$ & $r\geq 2$ & $m-2$ \\
			\ref{item:beta=3_other_2} & $\langle {m}; T,[2,2,2],[2]\rangle$ & $+\ (T^{*})$ & $+\ [2]$ && $d(T)\in \{3,4\}$, & $m-2$ \\
			&&&&& $m\geq 3$ if $d(T)=4$ & \\
			\ref{item:beta=3_other_3} &$\langle m; [2],[2,2,2],[2,2,2]\rangle$ & $+\ [2]$ &&&  $m\geq 3$ & $m-3$ \\
			%%%%%%%%%%%%%%%%%%%%%%%%%%%%%%%%%%%%%%%	
			\ref{item:T2*=[2,2]_b>2} & $\langle b;[2,3],[2,2],[2]\rangle$ & $+\ [3,(2)_{b-3}]$ &&& & $4$ \\
			 & $\langle 2;[2,3],[2,2],[2]\rangle$ &&&& & $6$ \\
			 & $\langle b;[2,2,2],[3],[2]\rangle$& $+\ [3,(2)_{b-3}]$ &&& & $4$ \\
			 & $\langle 2;[2,2,2],[3],[2]\rangle$ &&&& & $5$\\
			\ref{item:T1=0,T=T2=[2]_b>2} & $\langle b; [{m},2],[2],[2]\rangle$ & $+\ [3,(2)_{b-3}]$ &&&& $m+2$ \\
			 & $\langle 2; [m,2],[2],[2]\rangle$ &&&&& $m+1$ \\
			\ref{item:TH=[3,2]_T=[3]_b>2}  & $\langle b;[3,2],T,[2]\rangle$ & $+\ [(2)_{b-2}]*T^{*}$ &&& $d(T)=3$ & $5$ \\
			 & $\langle 2;[3,2],[3],[2]\rangle$ & $+\ [2]$ &&& & $4$ \\
			 & $\langle 2;[3,2],[2,2],[2]\rangle$ &&&& & $3$ \\
			 \ref{item:TH=[2,2]_b>2} & $\langle b;T, [{2},2],[2]\rangle$ & $+\ [(2)_{b-2}]*T^{*}$ &&& $d(T)\in \{3,4,5,6\}$ & $4$ \\
			 & $\langle 2;[2,3], [{2},2],[2]\rangle$ & $+\ [3]$  &&&& $3$  \\
			 & $\langle 2;[3,2], [{2},2],[2]\rangle$ & $+\ [2]$ &&&& $2$ \\
			 & $\langle 2;[(2)_{k}],[2,2],[2]\rangle$ &&&& $k\in \{2,3,4\}$ & $4-k$  \\
			 & $\langle 2;[k],[{2},2],[2]\rangle$ & $+\ [(2)_{k-2}]$ &&& $k\in \{3,4,5,6\}$ & $3$ \\
			\ref{item:TH=[3,2]_tip} & $\langle 3;[3,2],[2,2],[2]\rangle$ &&&& & $4$ \\
			\ref{item:TH=[2,2]_tip} & $\langle b;[(2)_{b-1}],[2,2],[2]\rangle$ &&&& $b\in \{3,4,5,6\}$ & $3$  \\
			\ref{item:TH=[2,2]_tip_[3,2]} & $\langle 3;[2,3],[{2},2],[2]\rangle$ & $+\ [2]$ &&& & $3$ \\
			%%%%%%%%%%%%%%%%%%%%%%%%%%%%%%%%%%%%%%%	
			\ref{item:TH_long_columnar_b>2} & $\langle b;[T,{m},2],[2],[2]\rangle$ & $+\ T^{*}$ & $+\ [3,(2)_{b-3}]$ &&& $m$ \\
			& $\langle 2;[T,{m},2],[2],[2]\rangle$ & $+\ T^{*}$ &&& & $m-1$ \\
			\ref{item:TH_long_other_b>2} & $\langle b; [T,r,T^{*},{m},2],[2],[2]\rangle$ & $+\ [(2)_{r-2}]$ & $+\ [3,(2)_{b-3}]$ && $r\geq 2$ & $m-1$  \\
			& $\langle 2; [T,r,T^{*},{m},2],[2],[2]\rangle$ & $+\ [(2)_{r-2}]$ &&& $r\geq 2$ & $m-2$  \\
			\ref{item:F'_fork_T=[3]_b>2} & $\langle b,[2,2,2,2,2],[3],[2]\rangle$ & $+\ [(2)_{b-3},3,2]$ &&&& $1$ \\
			& $\langle 2,[2,2,2,2,2],[3],[2]\rangle$ & $+\ [2]$ &&& & $0$ \\
			\ref{item:F'_fork_T=[2,2]} & $\langle 3,[2,2,2,2,2],[2,2],[2]\rangle$ &&&&& $0$ \\
			\ref{item:T2=[2,2]_b>2} & $\langle b;[(2)_{r+2}],[3],[2]\rangle$ & $+\ [r]$ & $+\ [3,(2)_{b-3}]$ && $r\in \{2,3\}$ & $2$\\
			 & $\langle 2;[(2)_{r+2}],[3],[2]\rangle$ & $+\ [r]$ &&& $r\in \{2,3\}$ & $1$ \\
			\ref{item:T2=[2]_b>2} & $\langle b;[(2)_{r+1}],T,[2]\rangle$ & $+\ [(2)_{b-2}]*T^{*}$ & $+\ [r]$ && $r\in \{2,3,4\}$, $d(T)=3$  & $2$ \\ 
			& $\langle 2;[(2)_{r+1}],[3],[2]\rangle$ & $+\ [r]$ & $+\ [2]$ && $r\in \{2,3,4\}$ & $1$ \\
			& $\langle 2;[(2)_{r+1}],[2,2],[2]\rangle$ & $+\ [r]$ &&& $r\in \{2,3\}$  & $1$ \\
			\ref{item:T2=[2],T=[2,2],tip} & $\langle 3;[(2)_{r+1}],[2,2],[2]\rangle$ & $+\ [r]$ &&& $r\in \{2,3\}$  & $1$  \\
			\ref{item:ht=1_bench}   & $\lbr m\rbr$ & $+\ 4\cdot [2]$ &&& $m\geq 3$ & $m-3$ \\
			\ref{item:ht=1_bench-big_b>2} & $\lbr b,2, m\rbr$ & $+\ [3,(2)_{b-3}]$ & $+\ [2]$ & $+\ [2]$ & & $m-2$ \\ 			
			& $\lbr 2,2, m\rbr$ & $+\ [2]$ & $+\ [2]$ && $m\geq 3$ & $m-3$\\ 
			\ref{item:ht=1_bench_two_a,b>2} & $\lbr a,2, m,2,b\rbr$ & $+\ [3,(2)_{a-3}]$  & $+\ [3,(2)_{b-3}]$ && & $m-1$ \\
			& $\lbr 2,2, m,2,b\rbr$ & $+\ [3,(2)_{b-3}]$ &&& & $m-2$\\
			& $\lbr 2,2, m,2,2\rbr$ &&&& $m\geq 3$ & $m-3$ \\
%			--- & elliptic cone &&&& \\
		\end{tabular}
		\caption{Rational log canonical del Pezzo surfaces of rank one and height $1$, see Lemma \ref{lem:ht=1_types}. Here $m\geq 2$, $a,b\geq 3$; $T,T_j$ are admissible chains; and $\chi\de \chi(\lts{X}{D})$.}%: we have $h^2=0$, but $(h^0,h^1)$ depend on the particular surface and on $\cha\kk$.} 
		\label{table:ht=1}
	\end{table}
\end{small} 

\clearpage
\subsection{Log canonical del Pezzo surfaces of rank one and height 2, see Theorem \ref{thm:ht=1,2}\ref{item:ht=2_width=2},\ref{item:ht=2_width=1}}

\begin{small}	
{\renewcommand{\arraystretch}{1.2}	
	\begin{table}[h!]\vspace{-1em}
		\addtolength{\leftskip} {-2cm} 
		\addtolength{\rightskip}{-2cm}
		\begin{tabular}{r|lll|l|c|c|c}
			  & \multicolumn{4}{c|}{singularity types}  & $\#\Pht$ & swaps to & $\chi$ \\ \hline\hline
			 \ref{lem:ht=2,untwisted}\ref{item:tau=id_chains} & 
			$[{T_2},T^{*}]$ & $+\ [T,{T_1}]$ & $+\ [T_1^{*},T_2^{*}]$ && 1 &  \ref{ex:ht=2}\ref{item:3A2_construction} 
			& 2 \\   \ref{item:V-chains_c=1} & $[T,{T_1},r,T_1^{*},T_{2}^{*}]$ & $+\ [{T_{2}},T^{*}]$ & $+\ [(2)_{r-2}]$ & $r\geq 2$ & 
			& &1\\   \ref{item:V-chains_c=1_rivet} & $[{T_{2}},T^{*},r_2,T,{T_1},r_1,T_1^{*},T_2^{*}]$ & $+\ [(2)_{r_1-2}]$ &  $+\ [(2)_{r_2-2}]$ & $r_1,r_2\geq 2$ &
			& & 0\\  \ref{item:lc_rivet} & $\langle r,[2,2,2,2,2],[2,2],[2]\rangle$ & $+\ [(2)_{r-3},3]$ &&  $r\geq 3$ &  
			& & 0\\   \ref{item:[T_1,n]=[2,2]_r>2} &
			$\langle r; [(2)_{m}],[2,2],[2]\rangle$ & $+\ [m,2]$ & $+\ [(2)_{r-3},3]$ &  $r\geq 3$, $m\leq 5$ &  
				& & 1 \\   \ref{item:[T_1,n]=[3,2]_r>2}  & 
			$\langle r; [3,{2}],[2,2],[2]\rangle$ & $+\ [{2},2,2]$ & $+\ [(2)_{r-3},3]$ &  $r\geq 3$ &  
			& & 1\\ \ref{item:lc_T=[2]} & $\langle r;[2,2,2],[2,2,2],[2]\rangle$  & $+\  [3,3]$ & $+\ [(2)_{r-3},3]$ &  $r\geq 3$ & 
			& & 1\\  \ref{item:lc_T=[2,2]} & $\langle r;[2,2],[2,2],[2,2]\rangle$ & $+\ [2,2]$ &   $+\ [(2)_{r-3},4]$ & $r\geq 3$ & 
			& & 1\\\cline{2-6}   \ref{item:[T_1,n]=[2,2]_r=2} &
			$\langle 2; [2,2],[(2)_{m}],[2]\rangle$ & $+\ [m,2]$&& $m \leq 4$ & 2
			& & 0\\   \ref{item:[T_1,n]=[3,2]_r=2} &
			$\langle 2; [3,{2}],[2,2],[2]\rangle$ & $+\ [{2},2,2]$ &&& 
			%%%%%%%%%%%%%%%%%%%%%%%%%%%
			& & 0 \\	\cline{2-8} 
			 \ref{item:c=2} &
			$[T_{2}^{*},{m},T_{1}^{*}]$ & $+\ [T_{1},{n},T_{2}]$ & $+\ [(2)_{m+ n-3}]$ & $m,n\geq 2$ & 1 & \ref{ex:ht=2}\ref{item:A1+2A3_construction}
			%%% RIVET %%%
			 & 1\\   \ref{item:rivet_nu=3} &
			$[T_{2},{m},T_{1},r,T_{1}^{*},{n},T_{2}^{*}]$ & $+\ [(2)_{m+ n-3}]$ & $+\ [(2)_{r-2}]$ & &   
			& & 0\\   
			\ref{item:rivet_nu=3-fork-1} & $\langle [2,n,T^{*},r,T],
				[2],[2]\rangle$ & $+\ [3,(2)_{m+n-4}]$ & $+\ [(2)_{r-2}]$ & $r\geq 2$ & & & 0 \\
			\ref{item:rivet_nu=3-fork-2} & 
			 $\langle m,
				[2,2,2,2,2]
				,[3],[2]\rangle$ & $+\ [4,(2)_{m-2}]$ &&&&& 0 \\
			&  $\langle m,
			[2,2,2,2,2]
			,[2,2],[2]\rangle$ & $+\ [2,3,(2)_{m-2}]$ && $m\geq 3$ &&& 0
			\\
			\ref{item:tau=id_fork_chain} &
			$\langle {n},T_{1},T_{2}\trp,T\trp \rangle$ & $+\ 
			[T_{1}^{*},{m},T_{2}^{*}]$ & $+\ T^{*}*[(2)_{n+ m-3}]$ 
			&&  
			%$d(T_{1})^{-1}+\ d(T_{2})^{-1}+\ d(T)^{-1}>1$ 
			%& \\&&&&&&
			& & 1 \\   
			\ref{item:V-chains_c=2_[2]_T1=0}  &
			$\langle {n}; T,[2],[2]\rangle$ & $+\ 
			[2,{m},2]$ & $+\ [(2)_{r-2}]$& $r\geq 2$ & 
			&& 0 \\[-0.3em] 
			& \multicolumn{3}{l|}{\hspace{1cm} where $T=[(2)_{m+ n-3}]*[T^{*}_2,r,T_2]$ or  $[(2)_{m+ n-3},r]$} &&
			%& \\&&&&&&
			%%%%%%%%%%%%%%%%%%%%%%%%%%%%%%%%%%%%%%%%%%%%%%%%%%%%%%%%
			%%%%%%%%%%%%%%%%%%%%%%%%%%%%%%%%%%%%%%%%%%%%%%%%%%%%%%%%
			&& \\   \ref{item:V-chains_c=2_[2,3]} &
			$\langle {2}; T,[2,3],[2]\rangle$ & $+\ [2,{2},T^{*}]$ & $+\ [2]$ &  $d(T)=3$ &   
			&& 0 \\   \ref{item:V-chains_c=2_-2_twig_long} & 
			$\langle {n}; T,[(2)_{m+ n-2}],[2]\rangle$ & $+\ [2,{m},T^{*}]$ &&&
			&& 0 \\& \multicolumn{3}{l|}{\hspace{1cm} where  $m+ n\leq 7$, $m\leq 4$, and $d(T)=3$} &&
			%& \\&&&&& $m\leq 4$, $d(T)=3$ &
			&& \\   \ref{item:V-chains_c=2_-2_twig}  &
			$\langle {2}; T,[2,2],[2]\rangle$ & $+\ [2,{2},T^{*}]$ && $d(T)\in \{4,5\}$ & 
			&& 0 \\   \ref{item:tau=id_forks} & 
			$\langle {n},T_{1}\trp,T_{2}\trp,T_{3}\trp \rangle$ & $+\ \langle {m};T_{1}^{*},T_{2}^{*},T_{4}^{*}\rangle$ & $+\ (T_{3}^{*}*[(2)_{n+ m-3}]*T_{4})$ & & & & 0 \\[-0.3em]
			& \multicolumn{3}{l|}{\hspace{1cm} and $\sum_{i=1}^{3}\frac{1}{d(T_i)}<1$, i.e.\ the first fork is admissible} && %& \\ &&&&&&
			%	$d(T_{1})^{-1}$ & $+\ d(T_{2})^{-1}$ & $+\ d(T_{j})^{-1}>1$ for both $j\in \{3,4\}$; $d(T_3)\geq d(T_4)$ 
			&& \\   \ref{item:V-chains_c=2_[2]}  & 
			$\langle {n}; T,[2],[2]\rangle$ & 
			$+\ \langle {m};T_1\trp,[2],[2]\rangle$ & $+\ [(2)_{r-2}]$ & $r\geq 2$ & 	
			&& 0 \\[-0.3em]
			& \multicolumn{3}{l|}{\hspace{1cm} where $T=T_1^{*}*[(2)_{m+ n-3}]*[T^{*}_2,r,T_2]$ or  $T_{1}^{*}*[(2)_{m+ n-3},r]$} &&	
			&& \\ \cline{2-8}
			 \ref{item:R_0-non-branching} &
			$[T,{n},r,{m},T^{*}]$ & $+\ [(2)_{m+ n-2}]$ & $+\ [(2)_{r-1}]$ & $r\geq 2$ & 1 & 
			\ref{ex:ht=2}\ref{item:A1+A2+A5_construction} 
			& 0 \\  \ref{item:R_0-lc}  & $\langle  r,[2,2],[2,2],[2,2]\rangle$& $+\ [(2)_{r-2},4]$ & $+\ [2,2]$ & $r\geq 3$ &
			&& 0 \\ \ref{item:R_0_branching} &
			$\langle r; [2,{m}],[2,{2}],[2]\rangle$ & $+\ [(2)_{r-2},3]$ & $+\ [(2)_{m}]$ & $r\geq 2$, $m\leq 3$ &  
			%%%%%%%%%%% 
			%
			& & 0\\	\cline{2-8}
			 \ref{item:c=1_meeting} &
			$[T^{*},{m},{n},T]$ & $+\ [(2)_{n+ m}]$ &&& 1 &  \ref{ex:ht=2_meeting}
			& 0 \\
			\ref{item:2A4_[2]} & $\langle n,[2],[2],[2,m]\rangle$ & $+\ [(2)_{n+m-1},3]$ &&&&& 0\\
			\ref{item:2A4_m=3} & $\langle n,[2],T,[2,3]\rangle$ & $+\ [(2)_{n+3}]*T^{*}$ && $d(T)=3$ &&& 0\\
			\ref{item:2A4_m=2}& $\langle n,[2],T,[2,2]\rangle$ & $+\ [(2)_{n+2}]*T^{*}$ && $3\leq d(T)\leq 6$ & 
			&& 0 \\ \cline{2-7}
			 \ref{item:c=3} &
			$\langle {n},T_{1}\trp,T_{2}\trp,[2]\rangle$ & $+\ \langle {m},T_{1}^{*},T_{2}^{*},[2]\rangle$ & $+\ [(2)_{m+ n-4}]$ &&  $\cM^{1}$ & \ref{ex:ht=2}\ref{item:2D4_construction}
			& 0 \\  \ref{item:ht=2_bench} & $\langle m; [2],[2],[2]\rangle$ & $+\ \lbr n \rbr$ & $+\ [(2)_{m+n-5},3]$ &  $n\geq 3$ &
			&& 0 \\ \cline{1-8}
			%%%%%%%%%%%%%%%%%%%%%%%%%%%%%%
			%%%%%%%%%%%%%%%%%%%%%%%%%%%%%%
			 \ref{lem:ht=2_twisted-separable}\ref{item:twisted_off_nu=2} &
			$\langle 2;[(2)_{k}],[2],[2]\rangle$ & $+\ [T,{k},T^{*}]$ && $T\neq [2]$, $k\geq 2$ &  1 & \ref{ex:ht=2_twisted}\ref{item:A3+D5_construction}
			%%%%%%%%%%%	
			& 0 \\  \ref{item:c=1_T=[2]_l=0} & 
			$\langle {m}; [2,2],[3],[2] \rangle$ & $+\ \langle 2;[3,(2)_{m+ 2}],[2],[2]\rangle$ &&&&& 0
		\end{tabular}
%	\vspace{-0.5em}
	\caption{Log canonical del Pezzo surfaces of rank 1 and height $2$ occurring for any $\cha\kk$.  
		Here $m,n\geq 2$, $T,T_j$ are admissible chains, and all forks satisfy \eqref{eq:fork}, i.e.\ are admissible or lc. }
	\label{table:ht=2_char-any}
	\end{table}
\vfill
\begin{table}
			\begin{tabular}{r|lll|l}
			%%%%%%%%%%%%
			\ref{lem:ht=2_twisted-separable} & \multicolumn{4}{c}{singularity types}   \\ \hline\hline
			  \ref{item:twisted_off_nu=3} &
			$[T,{k+\ l-3},T^{*}]$ &$+\ \langle 2;[(2)_{k-3}],[2],[2]\rangle$ & $+\ \langle 2;[(2)_{l-3}],[2],[2]\rangle $ & $k,l\geq 4$ 
			\\ &
			$[T,{k},T^{*}]$ &$+\ \langle 2;[(2)_{k-3}],[2],[2]\rangle$ & $+\ [2,2,2]$ & $k\geq 4$ 
			  \\ &
			$[T,{k-1},T^{*}]$ &$+\ \langle 2;[(2)_{k-3}],[2],[2]\rangle$ & $+\ [2]+\ [2]$ & $k\geq 4$ 
			\\ &
			$[T,3,T^{*}]$ &$+\ [2,2,2]$ & $+\ [2,2,2]$ & 
			\\ &
			$[T,2,T^{*}]$ &$+\ [2,2,2]$ & $+\ [2]+\ [2]$ & 
			%%%%%%%%%%
			\\  \ref{item:k2=2} &
			$\langle {k}; [2,n],[2],[2] \rangle$& $+\ [3,(2)_{n-2},3]$ & $+\ \langle 2;[(2)_{k-3}],[2],[2]\rangle$ & $k\geq 4$ 
			\\ & 
			$\langle 3; [2,n],[2],[2] \rangle$& $+\ [3,(2)_{n-2},3]$ & $+\ [2,2,2]$ &
			%%%%%%%%%%
			\\  \ref{item:c=1_T1=[2]} & 
			$\langle {n+ k-2}; [2],[2],T\trp \rangle$ & 
			$+\ \langle 2;T^{*}*[(2)_{n-2}];[2],[2] \rangle$ & 
			$+\ \langle 2;[(2)_{k-3}],[2],[2]\rangle$ & $k\geq 4$
			\\  & 
			$\langle {n+1}; [2],[2],T\trp \rangle$ & 
			$+\ \langle 2;T^{*}*[(2)_{n-2}];[2],[2] \rangle$ & 
			$+\ [2,2,2]$ &
			\\  & 
			$\langle {n+1}; [2],[2],T\trp \rangle$ & 
			$+\ \langle 2;T^{*}*[(2)_{n-2}];[2],[2] \rangle$ & 
			$+\ [2]+\ [2]$ &
			%%%%%%%%%%%
			\\  \ref{item:c=1_T=[2]} &
			$\langle {k+\ l-2}; [2,2],[3],[2] \rangle$ &
			$+\ \langle 2;[3,(2)_{k-3}],[2],[2] \rangle$ &
			$+\ \langle 2;[(2)_{l-3}],[2],[2]\rangle$ & $k\geq 3$, $l\geq 4$ 
			\\ &
			$\langle {l}; [2,2],[3],[2] \rangle$ &
			$+\ [2,3,2]$ &
			$+\ \langle 2;[(2)_{l-3}],[2],[2]\rangle$ & $l\geq 4$ 
			\\ &
			$\langle {k+1}; [2,2],[3],[2] \rangle$ &
			$+\ \langle 2;[3,(2)_{k-3}],[2],[2] \rangle$ &
			$+\ [2,2,2]$ & $k\geq 3$ 
			\\ &
			$\langle {3}; [2,2],[3],[2] \rangle$ &
			$+\ [2,3,2]$ &
			$+\ [2,2,2]$ & 
			\\ &
			$\langle {k}; [2,2],[3],[2] \rangle$ &
			$+\ \langle 2;[3,(2)_{k-3}],[2],[2] \rangle$ &
			$+\ [2]+\ [2]$& $k\geq 3$
			\\ &
			$\langle {2}; [2,2],[3],[2] \rangle$ &
			$+\ [2,3,2]$ &
			$+\ [2]+\ [2]$ & 	
		\end{tabular}
	%	\vspace{-0.em}
		\caption{Del Pezzo surfaces of rank one and  height $2$, occurring only in case $\cha\kk\neq 2$. They swap vertically to the surface from Example \ref{ex:ht=2_twisted}\ref{item:2A1+2A3_construction}. Each singularity type $\cS$ has $\#\Pht(\cS)=1$. In each case we have $h^i(\lts{X}{D})=0$ for $i=0,1,2$.}
		\label{table:ht=2_char-neq-2}
	\end{table}
\medskip

	\begin{table}
		\addtolength{\leftskip} {-2cm} 
		\addtolength{\rightskip}{-2cm}	
		\begin{tabular}{r|lllll|l|c}
			\ref{lem:ht=2_twisted-inseparable} & \multicolumn{6}{c|}{singularity types} & $\#\Pht$ \\ \hline\hline
			\ref{item:beta=0} & 
			$[{k_{1}^{\geq}-4}]$ & $+\ \sum_{j=1}^{\nu}\rD_{k_{j}} $ &&&& & $\cM^{\nu-3}$
			\\ \ref{item:beta=1_k=2} & 
			$ [ {k_{2}^{\geq}-2},l ,2]$ & $+\ [3,(2)_{l-2}]$ & $+\ \sum_{j=2}^{\nu}\rD_{k_{j}} $ &&&&
			\\ \ref{item:beta=1_k>2} &
			$ [ {k_{1}^{\geq}-3},T]$ & $+\ 
			\gfork{T^{*}*[(2)_{k_{1}-1}]}$ & $+\ \sum_{j=2}^{\nu}\rD_{k_{j}} $ &&&&
			%%%%%%%%
			\\ \ref{item:beta=2_k1=k2=2} &
			$[2,l_{1} , {k_{3}^{\geq}},l_2,2]$ & $+\  \sum_{j=1}^{2}[(2)_{l_{j}-2},3]$ & $+\ \sum_{j=3}^{\nu}\rD_{k_{j}} $ &&&&
			\\ \ref{item:beta=2_k1=2,k2>2} &
			$[2,l , {k_{2}^{\geq}-1},T]$ & $+\ 
			\gfork{T^{*}*[(2)_{k_2-1}]}$ & $+\  [(2)_{l-2},3]$ & $+\ \sum_{j=3}^{\nu}\rD_{k_{j}} $ &&&
			\\ \ref{item:beta=2_k1>2,k2>2} &
			$ [T_{1}\trp, {k_{1}^{\geq}-2},T_{2}]$ & $+\ 
			\sum_{j=1}^{2}\gfork{T_{j}^{*}*[(2)_{k_{j}-1}]}$ & $
			+\ \sum_{j=3}^{\nu}\rD_{k_{j}} $ &&&&
			%%%%%%%%%%%
			\\ \ref{item:beta=3_notD_k1=k2=2_V1=V2=[2,2]} &
			$\langle  {k_{3}^{\geq}+1};[2,2] ,[2,2],[2]\rangle$ & $+\ 
			\gfork{[3,(2)_{k_3-2}]}$ 
			& $+\ [3]+\ [3]$ & $+\ \sum_{j=4}^{\nu}\rD_{k_{j}} $ &&&
			\\ \ref{item:beta=3_notD_k1=k2=2_V1=[3,2],V2=[2,2]} &
			$\langle  {k_{3}^{\geq}+1};[2,3] ,[2,2],[2]\rangle$ 
			& $+\ \gfork{[3,(2)_{k_3-2}]}$ &
			$+\ [3,2]+\ [3]$& $+\ \sum_{j=4}^{\nu}\rD_{k_{j}} $ &&&
			\\ \ref{item:beta=3_U2=U3=[2]_k1=2} & 
			$\langle  {k_{2}^{\geq}};[2,l] ,[2],[2]\rangle$ & $+\ 
			\sum_{j=2}^{3}\gfork{[3,(2)_{k_j-2}]}$ & $+\  [(2)_{l-2},3]$  & $+\ \sum_{j=4}^{\nu}\rD_{k_{j}} $ &&&
			%%%%%%%%
			%%%%%%%
			\\ \ref{item:beta=3_notD_k1=2_V1=[3,2]_k2>2} &
			$\langle  {k_{2}^{\geq}};[2,3] ,T\trp,[2]\rangle$ & $+\  
			\gfork{T^{*}*[(2)_{k_{2}-1}]}$ & $+\ \gfork{[3,(2)_{k_3-2}]}+ 
			[3,2]\!\!\!\!\! $ & $+\ \sum_{j=4}^{\nu}\rD_{k_{j}} $ && $d(T)=3$,& \\
	%		\\ 
			\ref{item:beta=3_notD_k1=2_V1=[2,2]_k2>2} & 
			$\langle  {k_{2}^{\geq}};[2,2] ,T\trp,[2]\rangle$ & 
			$+\ \gfork{T^{*}*[(2)_{k_{2}-1}]}$ 
			& $+\ \gfork{[3,(2)_{k_3-2}]}+[3]$ & $+\ \sum_{j=4}^{\nu}\rD_{k_{j}} $  && $3\leq d(T)\leq 5$, & %\\
			%%%
			\\ \ref{item:beta=3_notD_kj>2} &
			$\langle  {k_{1}^{\geq}-1}; T_{1}\trp,T_2\trp,[2]\rangle$ & $+\ 
			\sum_{j=1}^{2}
			\gfork{T_{j}^{*}*[(2)_{k_{j}-1}]}$ & $+\ \gfork{[3,(2)_{k_3-2}]}$ & $+\ \sum_{j=4}^{\nu}\rD_{k_{j}} $ && $\tfrac{1}{d(T_{1})}+\tfrac{1}{ d(T_{2})}>\tfrac{1}{2}$& 
			%%%%%%%%%%%%%%%
			\\ \ref{item:C_[2]_L2H=1} & $ \langle k;[2] ,[2],[ {k_{2}^{\geq}-2}] \rangle$ &
			$+\ [3,(2)_{k-2}]*[2]$ &  $+\ \sum_{j=2}^{\nu}\rD_{k_{j}} $ &&&&
			\\ \ref{item:C_[2]_k2=2} & 
			$\langle k;[2] ,[2],[2,l, {k_{3}^{\geq}}]\rangle$ &
			$+\ [3,(2)_{k-2}]*[2]$ & $+\ 
			[(2)_{l-2},3]$ & $
			+\ \sum_{j=3}^{\nu}\rD_{k_{j}} $ &&&
			\\ \ref{item:C_[2]_k2>2} & 
			$\langle k;[2] ,[2],[T\trp, {k_{2}^{\geq}-1}]\rangle$ & $+\ [3,(2)_{k-2}]*[2]$ & $+\ 
			\gfork{T^{*}*[(2)_{k_2-2}]}$ & 
			$+\ \sum_{j=3}^{\nu}\rD_{k_{j}} $ &&&
			\\ 
			\hline
			\ref{item:C_columnar_nu=5} & $\langle k;[2],T\trp,[6]\rangle$ & $+\ [3,(2)_{k-2}]*(T^{*})$ & $+\ 6 \cdot [2]$ &&& $d(T)=3$, $k\geq 3$ & $\cM^{2}$
			%%%%%%%%%%%%%%%
			\\ \hline
			\ref{item:C_columnar_nu=4_k2=3} &
			$\langle k;[2], T\trp,[ {5}]\rangle$ & $+\ [3,(2)_{k-2}]*(T^{*})$ & $+\ [2,2,2]$ & $+\ 4\cdot [2]$ && $d(T)=3$ & $\cM^{1}$%, $\nu=4$
			\\ \ref{item:C_columnar_nu=4_k2=2} &
			$\langle k;[2], T\trp,[ {4}]\rangle$ & $+\ [3,(2)_{k-2}]*(T^{*})$ & $+\ 6\cdot [2]$ &&& $d(T)\in \{3,4\}$&  %,  $\nu=4$
			\\ \hline
			\ref{item:C_columnar_nu=3_k2=3,k3=2} &
			$\langle k;[2], T\trp,[ {3}]\rangle$ & $+\ [3,(2)_{k-2}]*(T^{*})$ & $+\ [2,2,2]$ & $+\ [2]+\ [2]$ && $3\leq d(T)\leq 6$ & $1$
			\\ \ref{item:C_columnar_nu=3_k2=k3=2} &
			$\langle k;[2], T\trp,[ {2}]\rangle$ & $+\ [3,(2)_{k-2}]*(T^{*})$ & $+\ 4\cdot [2]$ &&& $T\neq [2]$ & 
			\\ \ref{item:C_columnar_nu=3_k2+k3>5} &
			$\langle k;[2], T\trp,[ {k_2+\ k_3-2}]\rangle$ & $+\ [3,(2)_{k-2}]*(T^{*})$ & $+\ \sum_{j=2}^{3}\rD_{k_j} $ &&& $d(T)= 3$, & \\
			&&&&&&$k_2+\ k_3\in \{6,7\}$ & 
			%%%%
			\\ \ref{item:C_columnar_L2H=0_k2=2} &
			$\langle k;[2], T\trp,[2,2, {2}]\rangle$ & $+\ [3,(2)_{k-2}]*(T^{*})$ & $+\ [2]$ & $+\ [2]+\ [2]$ && $d(T)=3$ or & \\
			&&&&&& $d(T)=4$, $k\geq 3$  &
			% & $[3,(2)_{k-2}]*(T^{*})$ & $+\ \langle k;[2], T\trp,[2,2, {2}]\rangle$ & $+\ [2,3]$ & $+\ [2]+\ [2]$ && $d(T)=4$, $k\geq 3$,
			\\ \ref{item:C_columnar_L2H=0_k2=3} & $\langle k;[2], T\trp,[2, {3}]\rangle$ & $+\ [3,(2)_{k-2}]*(T^{*})$ & $+\ [2,3,2]$ & $+\ [2]+\ [2]$ && $d(T)=3$ &
			%%%%%%%%%%%%%%%%%%%
			\\ 
			\hline
			\ref{item:C_fork_L2H=1} &
			$\langle k ;[2],[2],[ {k_{2}^{\geq}-2}] \rangle$&  $+\ [(2)_{k-2}]$ & $+\ \sum_{j=2}^{\nu}\rD_{k_{j}} $ &&&& $\cM^{\nu-2}$
			\\ \ref{item:C_fork_k2=2} &
			$\langle k ;[2],[2],[2,l, {k_{3}^{\geq}}]\rangle$  & $+\ 
			[(2)_{l-2},3]$&  $+\ [(2)_{k-2}]$ & $+\ \sum_{j=3}^{\nu}\rD_{k_{j}} $&&&
			\\ \ref{item:C_fork_k2>2} &
			$\langle k ;[2],[2],[T\trp, {k_{2}^{\geq}-1}]\rangle$ & $+\ 
			\gfork{T^{*}*[(2)_{k_2-2}]}$ &  $+\ [(2)_{k-2}]$ & $
			+\ \sum_{j=3}^{\nu}\rD_{k_{j}} $ &&&
			\\
			\hline 
			\ref{item:twisted-bench} & $\lbr k \rbr$ & $+\ [(2)_{k-3},3]$ & $+\ 4\cdot [2]$ &&&  $k\geq 3$ & $\cM^{1}$
			\\
		\end{tabular}
%\vspace{-0.5em}
		\caption{Del Pezzo surfaces of rank one, height $2$, occurring only in case $\cha\kk=2$. They all swap vertically to one of the surfaces from Example \ref{ex:ht=2_twisted_cha=2}. 
			The numbers $h^i\de h^i(\lts{X}{D})$ are as follows: $h^0=0$, $h^1$ is the moduli dimension listed above, and $h^2=\nu-2$. 
			\\  Here $\nu\geq 3$, $k,l,k_j\geq 2$; $k^{\geq}_{i}\de \sum_{j=i}^{\nu}k_j$, and we use symbols $\rD_2\de 2\rA_1$, $\rD_3\de \rA_3$ and $\gfork{\cdot}$ introduced  in Notation \ref{not:Dk,gfork}. We also use the convention $(2)_{-1}$ introduced in formula \eqref{eq:convention_2-1}.}
		\label{table:ht=2_char=2}
	\end{table}
}
\end{small}

\clearpage
\subsection{Non-uniqueness and moduli, see Propositions \ref{prop:moduli}, \ref{prop:moduli-hi}}

\begin{small}
{\renewcommand{\arraystretch}{1.1}		
	\begin{table}[htbp]\vspace{-1.5em}
		\addtolength{\leftskip} {-2cm} 
		\addtolength{\rightskip}{-2cm}
		\begin{tabular}{c|r|llll|cccc}	
			\multirow{2}{*}{$\height$} & \multirow{2}{*}{construction} & \multicolumn{4}{c|}{\multirow{2}{*}{singularity type}} & \multicolumn{4}{c}{$\#$ iso classes if $\cha \kk$} \\ 
			&&&&&& $\neq 2,3,5$ & $=5$ & $=3$ & $=2$ \\ \hline\hline
%%%%%%%%%%%%%%%%%%%%%%%%%%%%%%
			$2$ &  \ref{lem:ht=2,untwisted}\ref{item:[T_1,n]=[2,2]_r=2} &
			$\langle 2; [(2)_{r}],[2,2],[2]\rangle$ & $+\ [r,2]$&& $4\geq r\geq 2$ & 2 & 2 & 2 & 2 \\
			& \ref{item:[T_1,n]=[3,2]_r=2} &
			$\langle 2; [3,{2}],[2,2],[2]\rangle$ & $+\ [{2},2,2]$ &&& 2& 2 & 2 & 2 \\
			& \ref{item:c=3} &
			$\langle {n},T_{1}\trp,T_{2}\trp,[2]\rangle$ & $+\ \langle {m},T_{1}^{*},T_{2}^{*},[2]\rangle$ & $+\ [(2)_{m+ n-4}]$ & $m,n\geq 2$ &
			$\cM^{1}$&
			$\cM^{1}$ & $\cM^{1}$ & $\cM^{1}$
			\\
			& \ref{item:ht=2_bench} & $\lbr n \rbr$ &$+\ \langle m,[2],[2],[2]\rangle$ & $+\ [3,(2)_{m+n-5}]$ & $m\geq 2$, $n\geq 3$ &
			$\cM^{1}$&
			$\cM^{1}$ & $\cM^{1}$ & $\cM^{1}$ \\
			\cline{2-6}
			&\ref{lem:ht=2_twisted-inseparable}\ref{item:beta=0}--\ref{item:C_columnar_L2H=0_k2=3}  & \multicolumn{4}{c|}{\multirow{2}{*}{types from  Table \ref{table:ht=2_char=2}; here $\nu\geq 3$ can be arbitrary }} & -& - & - & $\cM^{\nu-3}$ \\
			& \ref{item:C_fork_L2H=1}--\ref{item:twisted-bench} &&&& & -& - & - & $\cM^{\nu-2}$ \\[0.3em]
			\hline
			%%%%%%%%%%%%%
			$1$ 
			& \ref{lem:ht=1_types}\ref{item:beta=3_other_3} & $\langle m; [2],[2,2,2],[2,2,2]\rangle$ & $+\ [2]$ & & $m\geq 3$ & $\cM^{1}$ & $\cM^{1}$ & $\cM^{1}$ & $\cMst^{1}$ \\ 
			& \ref{item:T2*=[2,2]_b>2} & $\langle 2;[2,3],[2,2],[2]\rangle$ &&&& 2& 2 & 2 & 2 \\
			&  & $\langle 2;[2,2,2],[3],[2]\rangle$ &&&& 1& 1 & 2 & 1 \\
			& \ref{item:T1=0,T=T2=[2]_b>2} & $\langle 2; [m,2],[2],[2]\rangle$ &&& $m\geq 2$ & 1& 1 & 1 & 2 \\
			& \ref{item:TH=[3,2]_T=[3]_b>2} & $\langle 2;[3,2],[3],[2]\rangle$ & $+\ [2]$ &&&1& 1 & 1 & 2 \\
			&  & 
			$\langle 2;[3,2],[2,2],[2]\rangle$ &&&& 1 & 2 & 1 & 2 \\
			&  \ref{item:TH=[2,2]_b>2}  & $\langle 2;[2,3],[2,2],[2] \rangle$ & $+\ [3]$ &&& 1 & 1 & 1 & 2 \\
			& &$\langle 2;[3,2], [2,2],[2]\rangle$ & $+\ [2]$ &&&1& 1 & 2 & 2 \\
			& & $\langle 2;[{2},2],[2,2],[2]\rangle$ &&&& 1& 1 & 2 & 2 \\
			& & $\langle 2;[2,2,2],[2,2],[2]\rangle$ &&&& 1 & 1 & 2 & 3 \\
			& & $\langle 2;[2,2,2,2],[2,2],[2]\rangle$ &&&& 2 & 2 & 3 & $3_{\star}$ \\
			& & $\langle 2;[k],[2,2],[2]\rangle$ & $+\ [(2)_{k-2}]$ && $6\geq k\geq 3$ &1& 1 & 1 & 2 \\
			& \ref{item:TH_long_columnar_b>2} & $\langle 2;[T,m,2],[2],[2] \rangle$ & $+\ T^{*}$ && $m\geq 2$& 1 & 1 & 1 & 2 \\
			&\ref{item:TH_long_other_b>2} & $\langle 2; [T,r,T^{*},{m},2],[2],[2]\rangle$ & $+\ [(2)_{r-2}]$ && $r,m\geq 2$ & 1 & 1 & 1 & $\cMst^{1}$ \\
			& \ref{item:F'_fork_T=[3]_b>2} & $\langle 2,[2,2,2,2,2],[3],[2]\rangle$ & $+\ [2]$ &&& 1 & 1 & 1 & $\cMst^{1}$ \\
			& \ref{item:F'_fork_T=[2,2]} & $\langle 3,[2,2,2,2,2],[2,2],[2]\rangle$ &&&& $\cM^{1}$ & $\cM^{1}$ & $\cM^{1}$ & $\cM^{1}$ \\ 
			& \ref{item:T2=[2,2]_b>2} & $\langle 2;[2,{2},2,2],[3],[2]\rangle$ & $+\ [2]$ &&& 1& 1 & 2 & 1 \\
			& \ref{item:T2=[2]_b>2} & $\langle 2;[(2)_{r+1}],[3],[2] \rangle$ & $+\ [r]$ & $+\ [2]$ &  $4\geq r \geq 2$ & 1 & 1 & 1& 2 \\	
			& & $\langle 2;[(2)_{r+1}],[2,2],[2]\rangle$ & $+\ [r]$ && $3\geq r \geq 2$ & 2& 2 & 2 & 2 \\
			& \ref{item:ht=1_bench} & $\lbr m \rbr $ & $+\ [2]+\ [2]$ & $+\ [2]+\ [2]$ &  $m\geq 3$ & $\cM^{1}$ & $\cM^{1}$ & $\cM^{1}$ & $\cM^{1}$ \\
			& \ref{item:ht=1_bench-big_b>2} & $\lbr 2,2, m \rbr $ & $+\ [2]$ & $+\ [2]$ & $m\geq 3$  & 1 & 1 & 1 & $\cMst^1$ \\
			& \ref{item:ht=1_bench_two_a,b>2}  & $\lbr 2,2, m,2,b\rbr$ & $+\ [3,(2)_{b-3}]$ && $b\geq 3$, $m\geq 2$ & 1 & 1 & 1 & $\cMst^1$ \\
			& & $\lbr 2,2, m,2,2\rbr$ &&& $m\geq 3$ & 1 & 1 & 1 & $\cMst^2$ 
		\end{tabular}
%\vspace{-0.8em}		
		\caption{Proposition \ref{prop:moduli}: rational log canonical del Pezzo surfaces of rank one and height at most two, which, for some $\cha\kk$, are \textbf{not} uniquely determined by their singularity types. Symbol \enquote{$\cM^{d}$} means that  $\Pht(\cS)$ is represented by an almost faithful family of dimension $d$, see Definition  \ref{def:moduli}\ref{item:def-family-faithful}, and subindex \enquote{$\star$} marks exceptions in Proposition \ref{prop:moduli-hi}\ref{item:moduli-hi-infinite},\ref{item:moduli-hi-finite}, see Table \ref{table:exceptions-to-moduli}.}
		\label{table:exceptions}	
	\end{table}
}
\vspace{-2em}
{\renewcommand{\arraystretch}{1.2}	
\begin{table}[htpb]
	\addtolength{\leftskip} {-2cm} 
	\addtolength{\rightskip}{-2cm}
	\begin{tabular}{r|c|c|rlll|c|c}
		\ref{prop:moduli-hi} & $\height$ & $\cha \kk$ divides & \multicolumn{4}{c|}{singularity type} &  $\#\Pht$ & $h^{1}$ \\ \hline \hline 
		\ref{item:moduli-hi-infinite} & 1 & 2 & \ref{lem:ht=1_types}\ref{item:beta=3_other_3} & $\langle m,[2],[2,2,2],[2,2,2]\rangle$ & $+\ [2]$ & & $\cM^{1}$ & $2$ \\ \cline{4-9} 
		&&& \ref{lem:ht=1_types}\ref{item:TH_long_other_b>2} & $\langle 2; [T,r,T^{*},{m},2],[2],[2]\rangle$ & $+\ [(2)_{r-2}]$ & $2|d(T)$ & $\cM^1$ & $1,2$  \\
		&&& \ref{item:F'_fork_T=[3]_b>2} & $\langle 2,[2,2,2,2,2],[3],[2]\rangle$ &$+\ [2]$& & &  \\ 		
		&&& \ref{item:ht=1_bench_two_a,b>2} & $\lbr 2,2, m,2,b\rbr$ & $+\ [3,(2)_{b-3}]$&  & & \\
		&&& \ref{item:ht=1_bench-big_b>2} & $\lbr 2,2, m \rbr$ & $+\ [2]$ & $+\ [2]$ & &    \\
		\cline{4-9} 
		&&& \ref{lem:ht=1_types}\ref{item:ht=1_bench_two_a,b>2} & $\lbr 2,2, m,2,2\rbr$ &&& $\cM^2$ & 2, 3 \\
		\hline 
		\ref{item:moduli-hi-finite} & 1 & 2 & \ref{lem:ht=1_types}\ref{item:TH=[2,2]_b>2} & $\langle 2;[2,2,2,2],[2,2],[2]\rangle$ &&& $3$ & $0,1,3$ \\
		\hline
		\ref{item:moduli-h1} & 1 & $d([T_{1}^{*},m,T_{2}^{*}])$ & \ref{lem:ht=1_types}\ref{item:chains_not-columnar} & $[T_{1},r_{1},T_{1}^{*},{m},T_{2}^{*},r_{2},T_{2}]$ & $+\ [(2)_{r_{1}-2}]$ & $+\ [(2)_{r_{2}-2}]$ & 1 & 1 \\		 
		& & $d(T)$ & 
		\ref{item:beta=3_other} & $\langle {m};[T^{*},r,T],[2],[2] \rangle$ & $+\ [(2)_{r-2}]$ & $+\ [2]+\ [2]$ & & \\ \cline{3-7}
		&& $d([2,m,T^{*}])$ & \ref{lem:ht=1_types}\ref{item:TH_long_other_b>2} & $\langle b; [T,r,T^{*},{m},2],[2],[2]\rangle$ & $ +\ [(2)_{r-2}]$ & $+\ [3,(2)_{b-3}]$ && \\ 
		&& && $\langle 2; [T,r,T^{*},{m},2],[2],[2]\rangle$ & $+\ [(2)_{r-2}]$ & $2\nmid d(T)$ & &  \\  \cline{3-7}
		&& $2(m-1)$ & \ref{lem:ht=1_types}\ref{item:ht=1_bench_two_a,b>2} & $\lbr a,2, m,2,b\rbr$ & $+\ [(2)_{a-3},3]$ & $+\ [3,(2)_{b-3}]$ && \\
		\cline{3-7}
		&& $m-1$ & \ref{lem:ht=1_types}\ref{item:ht=1_bench_two_a,b>2} & $\lbr 2,2, m,2,b\rbr$ & $+\ [3,(2)_{b-3}]$ &&&  \\
		&& but not $2$ &  & $\lbr 2,2, m,2,2\rbr$ &&&&  \\
		\cline{3-7}		 
		&& 2 & \ref{lem:ht=1_types}\ref{item:beta=3_other_2} & $\langle {m}; T,[2,2,2],[2]\rangle$ & $+\ (T^{*})$ & $+\ [2]$ && \\
		&&  & \ref{item:ht=1_bench-big_b>2} & $\lbr b,2, m\rbr$ & $+\ [(2)_{b-3},3]$ & $+\ [2]+\ [2]$ && \\   
		&&  & \ref{item:F'_fork_T=[3]_b>2} & $\langle b,[2,2,2,2,2],[3],[2]\rangle$ & $+\ [(2)_{b-3},3,2]$ &&& \\ 
		\cline{2-7}
		& 2 & $d([T,T_1])$ & \ref{lem:ht=2,untwisted}\ref{item:V-chains_c=1_rivet} & $[{T_{2}},T^{*},r_2,T,{T_1},r_1,T_1^{*},T_2^{*}]$ & $+\ [(2)_{r_1-2}]$ & $+\ [(2)_{r_2-2}]$ && \\
		\cline{3-7}
		&& $3$ & \ref{lem:ht=2,untwisted}\ref{item:lc_rivet} & $\langle r,[2,2,2,2,2],[2,2],[2]\rangle$ & $+\ [(2)_{r-3},3]$ &&& \\ \cline{3-7}
		&& $d(T_1)$ & \ref{lem:ht=2,untwisted}\ref{item:rivet_nu=3} &
		$[T_{2},{m},T_{1},r,T_{1}^{*},{n},T_{2}^{*}]$ & $+\ [(2)_{m+ n-3}]$ & $+\ [(2)_{r-2}]$ &&  \\  
		&&&
		\ref{item:rivet_nu=3-fork-1} & $\langle [2,n,T^{*},r,T],
		[2],[2]\rangle$ & $+\ [3,(2)_{m+n-4}]$ & $+\ [(2)_{r-2}]$ &&  \\
		&&&
		\ref{item:rivet_nu=3-fork-2} & 
		$\langle m,
		[2,2,2,2,2]
		,[3],[2]\rangle$ & $+\ [4,(2)_{m-2}]$ &&& \\
		&&&&  $\langle m,
		[2,2,2,2,2]
		,[2,2],[2]\rangle$ & $+\ [2,3,(2)_{m-2}]$ &&&
		\\
		\cline{3-7}
		&& $d(T_2)$ &  \ref{lem:ht=2,untwisted}\ref{item:V-chains_c=2_[2]_T1=0} &
		$\langle {n}; [(2)_{m+ n-3}]*[T^{*}_2,r,T_2],[2],[2]\rangle$ & $+\ 
		[2,{m},2]$ & $+\ [(2)_{r-2}]$& & \\ 
		&&& \ref{item:V-chains_c=2_[2]}  & 
		$\langle {n}; T_1^{*}*[(2)_{m+ n-3}]*[T^{*}_2,r,T_2],[2],[2]\rangle$ & $+\ \langle {m};T_1\trp,[2],[2]\rangle$ & $+\ [(2)_{r-2}]$ & &
	\end{tabular}
%	\vspace{-0.8em}
	\caption{Exceptions in Proposition \ref{prop:moduli-hi}, cf.\ Table \ref{table:exceptions-to-exceptions}.}
	\label{table:exceptions-to-moduli}
\end{table}
}
\end{small}

\clearpage
\restoregeometry

\subsection{Canonical del Pezzo surfaces of rank one}

\begin{small}
\begin{table}[htbp]
	\begin{tabular}{c|c|c|ccc| cccc}	
		\multirow{2}{*}{$\height$} &\multirow{2}{*}{construction}& {\multirow{2}{*}{singularity types}} & \multicolumn{3}{c|}{$\#$ iso.\ classes if $\cha \kk$} & \multicolumn{4}{c}{rational elliptic boundaries if  $\cha \kk$} \\ 
		&&& $\neq 2,3$ & $=3$ & $=2$ & $\neq 2,3,5$ & $\ \ =5\ \ $ & $\ \ =3\ \ $ & $\ \ =2\ \ $  \\ \hline\hline
		1 & \ref{rem:canonical_ht=1} & $\uline{\rA_{1}}$, $\uline{\rA_{1}+\rA_{2}}$, $\rA_{4}$ & 1 & 1 & 1 & C, N & C, N & C, N & C, N  \\
		&& $\uline{2\rA_{1}+\rA_{3}}$, $\rA_{1}+\rA_{5}$, $\rA_{7}$ & 1 & 1 & 1 & N & N & N & C\\
		&& $\uline{3\rA_{1}+\rD_{4}}$, $2\rA_{1}+\rD_{6}$ & 1 & 1 & 1 & - & - & - & $\rC_1$ \\
		&& $\rD_{5}$ & 1 & 1 & 2 & C, N & C, N & C, N & C, N \\
		&& $\rA_1+\rD_6$ & 1 & 1 & 2 & N & N & N & C\\
		&& $\rE_6$ & 1 & 2 & 2 & C, N & C, N & C, N & C, N \\
		&& $\rE_7$ & 1 & 2 & 3 & C, N & C, N & C, N & C, N \\
		&& $\rE_8$ & 2 & 3 & 3 & C, N & C, N & C, N & C, N \\
		&& $\rD_8$ & 1 & 1 & $\cM^{1}$ & N & N & N & C\\
		&& $\rA_1+\rE_7$ & 2 & 2 & 2 & N & N & N & C\\
		\hline
		2 & \ref{ex:ht=2}\ref{item:3A2_construction} & $\uline{3\rA_2}$ & 1 & 1 & 1 & N & N & C & N\\
		& \ref{ex:ht=2}\ref{item:A1+2A3_construction} & $\uline{\rA_1+2\rA_3}$ & 1 & 1 & 1 & N & N & N & - \\
		& \ref{ex:ht=2}\ref{item:A1+A2+A5_construction} & $\uline{\rA_1+\rA_2+\rA_5}$ & 1 & 1 & 1 & N & N & - & - \\
		& \ref{ex:ht=2}\ref{item:2D4_construction} & $2\rD_4$ & $\cM^1$ & $\cM^1$ & $\cM^1$ & - & - & - & - \\
		& \ref{ex:ht=2_meeting} & $\uline{2\rA_4}$ & 1 & 1 & 1 & N & C & N & N \\
		& \ref{ex:ht=2_twisted}\ref{item:2A1+2A3_construction} & $\uline{2\rA_1+2\rA_3}$ & 1 & 1 & - & - & - & - & - \\
		& \ref{ex:ht=2_twisted_cha=2} & $\uline{7\rA_1}$ & - & - & 1 & - & - & - & $\rC_{2}$\\ 
		& \ref{ex:remaining_canonical}\ref{item:3A2_tower}  & $\rA_2+\rA_5$, $\rA_8$ & 1 & 1 & 1 & N & N & C & N \\
		& & $\rA_2+\rE_6$ & 2 & 2 & 2 & N & N & C & N \\
		& \ref{ex:remaining_canonical}\ref{item:A1+2A3_tower} & $\rA_3+\rD_5$, $\rA_1+\rA_7$ & 1 & 1 & 1 & N & N & N & - \\
		& \ref{ex:remaining_canonical}\ref{item:7A1_tower} & $4\rA_1+\rD_4$ & - & - & $\cM^1$ & - & - & - & $\rC_{2}$\\
		\hline 
		4 & \cite[Example 7.1]{PaPe_MT} & $\uline{4\rA_2}$ & 1 & 1 & 1  & - & - & $\rC_{1}$ & - \\
		& \cite[Example 8.1(c)]{PaPe_MT} & $\uline{8\rA_1}$ & - & - & $\cM^{2}$ & - & - & - & $\rC_{3}$  
	\end{tabular}
%\vspace{-0.5em}	
	\caption{Canonical del Pezzo surfaces of rank one, other than $\P^2$. Underlined types refer to primitive surfaces, see Definition \ref{def:vertical_swap}\ref{item:def-swap-basic} and Remark \ref{rem:primitive_ht=2}.}
	\label{table:canonical}
\end{table}
\end{small}

\bibliographystyle{amsalpha}
\bibliography{bibl}

\providecommand{\bysame}{\leavevmode\hbox to3em{\hrulefill}\thinspace}
\providecommand{\MR}{\relax\ifhmode\unskip\space\fi MR }
% \MRhref is called by the amsart/book/proc definition of \MR.
\providecommand{\MRhref}[2]{%
  \href{http://www.ams.org/mathscinet-getitem?mr=#1}{#2}
}
\providecommand{\href}[2]{#2}
\begin{thebibliography}{CTW17}

\bibitem[AN06]{Alexxev-Nikulin_delPezzo-index-2}
V.~Alexeev and V.~Nikulin, \emph{Del {P}ezzo and {$K3$} surfaces}, MSJ Memoirs,
  vol.~15, Math. Soc. Japan, Tokyo, 2006.

\bibitem[Art62]{Artin-Contractibility}
M.~Artin, \emph{Some numerical criteria for contractability of curves on
  algebraic surfaces}, Amer. J. Math. \textbf{84} (1962), 485--496.

\bibitem[Art69]{Artin_algebraization_I}
\bysame, \emph{Algebraization of formal moduli. {I}}, Global {A}nalysis
  ({P}apers in {H}onor of {K}. {K}odaira), Univ. Tokyo Press, Tokyo, 1969,
  pp.~21--71.

\bibitem[Art77]{Artin_coindices}
\bysame, \emph{Coverings of the rational double points in characteristic
  {$p$}}, Complex analysis and algebraic geometry, Iwanami Shoten Publishers,
  Tokyo, 1977, pp.~11--22.

\bibitem[B{\u{a}}d83]{Badescu_non-rational}
L.~B{\u{a}}descu, \emph{Anticanonical models of ruled surfaces}, Ann. Univ.
  Ferrara Sez. VII (N.S.) \textbf{29} (1983).

\bibitem[B{\u{a}}d01]{Badescu}
\bysame, \emph{Algebraic surfaces}, Universitext, Springer-Verlag, New York,
  2001, Translated from the 1981 Romanian original by Vladimir Ma\c{s}ek and
  revised by the author.

\bibitem[BBD84]{BBD_canonical}
D.~Bindschadler, L.~Brenton, and D.~Drucker, \emph{Rational mappings of del
  {P}ezzo surfaces, and singular compactifications of two-dimensional affine
  varieties}, Tohoku Math. J. (2) \textbf{36} (1984), no.~4, 591--609.

\bibitem[Bel09]{Belousov_Bogomolov-bound}
G.~Belousov, \emph{The maximal number of singular points on log del {P}ezzo
  surfaces}, J. Math. Sci. Univ. Tokyo \textbf{16} (2009), no.~2, 231--238.

\bibitem[Bel24]{Belousov_4-sings}
\bysame, \emph{Del {P}ezzo surfaces with four log terminal singularities},
  \arxiv{2411.01957}, 2024.

\bibitem[Bla12]{Blanc_Lukecin}
J.~Blanc, \emph{Finite subgroups of the {C}remona group of the plane}, notes
  for the 35th {A}utumn {S}chool in {A}lgebraic {G}eometry, {Ł}ukęcin, Poland
  \url{https://www.mimuw.edu.pl/~jarekw/EAGER/pdf/FiniteSubgroupsCremona.pdf},
  2012.

\bibitem[Che97]{Cheltsov_non-rational}
I.~Chel'tsov, \emph{Del {P}ezzo surfaces with nonrational singularities}, Mat.
  Zametki \textbf{62} (1997), no.~3, 451--467.

\bibitem[CTW17]{CTW}
P.~Cascini, H.~Tanaka, and J.~Witaszek, \emph{On log del {P}ezzo surfaces in
  large characteristic}, Compos. Math. \textbf{153} (2017), no.~4, 820--850.

\bibitem[DO88]{DO}
I.~Dolgachev and D.~Ortland, \emph{Point sets in projective spaces and theta
  functions}, Ast\'{e}risque (1988), no.~165.

\bibitem[FKZ07]{FKZ-weighted-graphs}
H.~Flenner, S.~Kaliman, and M.~Zaidenberg, \emph{Birational transformations of
  weighted graphs}, Affine algebraic geometry, Osaka Univ. Press, Osaka, 2007,
  pp.~107--147.

\bibitem[Fuj82]{Fujita-noncomplete_surfaces}
T.~Fujita, \emph{On the topology of noncomplete algebraic surfaces}, J. Fac.
  Sci. Univ. Tokyo Sect. IA Math. \textbf{29} (1982), no.~3, 503--566.

\bibitem[Fuj95]{Fujisawa}
T.~Fujisawa, \emph{On non-rational numerical del {P}ezzo surfaces}, Osaka J.
  Math. \textbf{32} (1995), no.~3, 613--636.

\bibitem[FY17]{Fujita-Yasutake_delPezzo-index-3}
K.~Fujita and K.~Yasutake, \emph{Classification of log del {P}ezzo surfaces of
  index three}, J. Math. Soc. Japan \textbf{69} (2017), no.~1, 163--225.

\bibitem[FZ94]{FZ-deformations}
H.~Flenner and M.~Zaidenberg, \emph{{$\mathbb{Q}$}-acyclic surfaces and their
  deformations}, Classification of algebraic varieties ({L}'{A}quila, 1992),
  Contemp. Math., vol. 162, Amer. Math. Soc., Providence, RI, 1994,
  pp.~143--208.

\bibitem[GD74]{DG-example}
M.~H. Gizatullin and V.~I. Danilov, \emph{Examples of nonhomogeneous
  quasihomogeneous surfaces}, Izv. Akad. Nauk SSSR Ser. Mat. \textbf{38}
  (1974), 42--58.

\bibitem[GZ94]{GurZha_1}
R.~V. Gurjar and D.-Q. Zhang, \emph{{$\pi_1$} of smooth points of a log del
  {P}ezzo surface is finite. {I}}, J. Math. Sci. Univ. Tokyo \textbf{1} (1994),
  no.~1, 137--180.

\bibitem[Har77]{Hartshorne_AG}
R.~Hartshorne, \emph{Algebraic geometry}, Graduate Texts in Mathematics,
  vol.~51, Springer, New York, 1977.

\bibitem[Har10]{Hartshorne_deformations}
\bysame, \emph{Deformation theory}, Graduate Texts in Mathematics, vol. 257,
  Springer, New York, 2010.

\bibitem[Hwa24]{Hwang_cascades}
D.~Hwang, \emph{Cascades of toric log del {P}ezzo surfaces of {P}icard number
  one}, J. Korean Math. Soc. \textbf{61} (2024), no.~5, 837--851.

\bibitem[Kap93]{Kapranov-moduli}
M.~M. Kapranov, \emph{Veronese curves and {G}rothendieck-{K}nudsen moduli space
  {$\overline M_{0,n}$}}, J. Algebraic Geom. \textbf{2} (1993), no.~2,
  239--262.

\bibitem[Kat96]{Kato_deformation-theory}
F.~Kato, \emph{Log smooth deformation theory}, Tohoku Math. J. (2) \textbf{48}
  (1996), no.~3, 317--354.

\bibitem[Kaw78]{Kawamata_deformations}
Y.~Kawamata, \emph{On deformations of compactifiable complex manifolds}, Math.
  Ann. \textbf{235} (1978), no.~3, 247--265.

\bibitem[KM98]{KollarMori-bir_geom}
J.~Koll{\'a}r and S.~Mori, \emph{Birational geometry of algebraic varieties},
  Cambridge Tracts in Mathematics, vol. 134, Cambridge University Press,
  Cambridge, 1998, with the collaboration of C. H. Clemens and A. Corti.

\bibitem[KM99]{Keel-McKernan_rational_curves}
S.~Keel and J.~McKernan, \emph{Rational curves on quasi-projective surfaces},
  Mem. Amer. Math. Soc. \textbf{140} (1999), no.~669.

\bibitem[KN20]{Kawakami_Nagaoka_canonical_dP-in-char>0}
T.~Kawakami and M.~Nagaoka, \emph{Du {V}al del {P}ezzo surfaces in positive
  characteristic}, \arxiv{2008.07700}, 2020.

\bibitem[KN22]{KN_Pathologies}
\bysame, \emph{Pathologies and liftability of {D}u {V}al del {P}ezzo surfaces
  in positive characteristic}, Math. Z. \textbf{301} (2022), no.~3, 2975--3017.

\bibitem[Koj03]{Kojima_index-2}
H.~Kojima, \emph{Rank one log del {P}ezzo surfaces of index two}, J. Math.
  Kyoto Univ. \textbf{43} (2003), no.~1, 101--123.

\bibitem[Koj13]{Kojima_supplement}
\bysame, \emph{Supplement to ``{N}ormal del {P}ezzo surfaces of rank one with
  log canonical singularities'' by {H}. {K}ojima and {T}. {T}akahashi [{J}.
  {A}lgebra 360 (2012) 53--70]}, J. Algebra \textbf{377} (2013), 312--316.

\bibitem[Koj14]{Kojima_Sing-1}
\bysame, \emph{Normal log canonical del {P}ezzo surfaces of rank one with
  unique singular points}, Nihonkai Math. J. \textbf{25} (2014), no.~2,
  105--118.

\bibitem[Koj20]{Kojima_non-lt}
\bysame, \emph{Singularities of normal log canonical del {P}ezzo surfaces of
  rank one}, Polynomial rings and affine algebraic geometry, Springer Proc.
  Math. Stat., vol. 319, Springer, Cham, 2020, pp.~199--208.

\bibitem[Kol92]{Flips_and_abundance}
J.~Koll\'ar (ed.), \emph{Flips and abundance for algebraic threefolds - {A}
  summer seminar at the university of {U}tah ({S}alt {L}ake {C}ity, 1991)},
  Ast\'erisque, no. 211, Soci\'et\'e math\'ematique de France, 1992.

\bibitem[Kol13]{Kollar_singularities_of_MMP}
J.~Koll\'{a}r, \emph{Singularities of the minimal model program}, Cambridge
  Tracts in Mathematics, vol. 200, Cambridge University Press, Cambridge, 2013,
  with a collaboration of S. Kov\'{a}cs.

\bibitem[Kov15]{Kovalenko_Gizatullin-surfaces}
S.~Kovalenko, \emph{Transitivity of automorphism groups of {G}izatullin
  surfaces}, Int. Math. Res. Not. IMRN (2015), no.~21, 11433--11484.

\bibitem[KSB88]{KSB}
J.~Koll\'{a}r and N.~I. Shepherd-Barron, \emph{Threefolds and deformations of
  surface singularities}, Invent. Math. \textbf{91} (1988), no.~2, 299--338.

\bibitem[KT12]{Kojima_Takahashi}
H.~Kojima and T.~Takahashi, \emph{Normal del {P}ezzo surfaces of rank one with
  log canonical singularities}, J. Algebra \textbf{360} (2012), 53--70.

\bibitem[Lac24]{Lacini}
J.~Lacini, \emph{On rank one log del {P}ezzo surfaces in characteristic
  different from two and three}, Advances in Mathematics \textbf{442} (2024),
  109568.

\bibitem[Lip69]{Lipman}
J.~Lipman, \emph{Rational singularities, with applications to algebraic
  surfaces and unique factorization}, Inst. Hautes \'{E}tudes Sci. Publ. Math.
  (1969), no.~36, 195--279.

\bibitem[LS23]{Liu-Shokurov}
J.~Liu and V.~Shokurov, \emph{Optimal bounds on surfaces}, \arxiv{2305.19248},
  2023.

\bibitem[Miy01]{Miyan-OpenSurf}
M.~Miyanishi, \emph{Open algebraic surfaces}, CRM Monograph Series, vol.~12,
  American Mathematical Society, Providence, RI, 2001.

\bibitem[MS80]{Miy_Su}
M.~Miyanishi and T.~Sugie, \emph{Affine surfaces containing cylinderlike open
  sets}, Journal of Mathematics of Kyoto University \textbf{20} (1980), 11--42.

\bibitem[MT84]{Miy_Tsu-opendP}
M.~Miyanishi and S.~Tsunoda, \emph{Logarithmic del {P}ezzo surfaces of rank one
  with noncontractible boundaries}, Japan. J. Math. (N.S.) \textbf{10} (1984),
  no.~2, 271--319.

\bibitem[MZ88]{MZ_canonical}
M.~Miyanishi and D.-Q. Zhang, \emph{Gorenstein log del {P}ezzo surfaces of rank
  one}, J. Algebra \textbf{118} (1988), no.~1, 63--84.

\bibitem[Nak07]{Nakayama_delPezzo-index-2}
N.~Nakayama, \emph{Classification of log del {P}ezzo surfaces of index two}, J.
  Math. Sci. Univ. Tokyo \textbf{14} (2007), no.~3, 293--498.

\bibitem[Né22]{Nemethi_book}
A.~Némethi, \emph{Normal surface singularities}, Ergebnisse der Mathematik und
  ihrer Grenzgebiete. 3. Folge. A Series of Modern Surveys in Mathematics,
  vol.~74, Springer, Cham, 2022.

\bibitem[OW77]{Orlik_Wagreich-Acta}
P.~Orlik and P.~Wagreich, \emph{Algebraic surfaces with {$k\sp*$}-action}, Acta
  Math. \textbf{138} (1977), no.~1-2, 43--81.

\bibitem[Pal15]{Palka-AMS_LZ}
K.~Palka, \emph{A new proof of the theorems of {L}in-{Z}aidenberg and
  {A}bhyankar-{M}oh-{S}uzuki}, J. Alg. Appl. \textbf{14} (2015), no. 9.

\bibitem[Pal24]{Palka_almost_MMP}
\bysame, \emph{Almost minimal models of log surfaces}, \arxiv{2402.07187},
  2024.

\bibitem[PP24]{PaPe_MT}
K.~Palka and T.~Pełka, \emph{On the structure of open del {P}ezzo surfaces},
  \arxiv{2412.07458}, 2024.

\bibitem[Rus81]{Russell-ruled}
P.~Russell, \emph{On affine-ruled rational surfaces}, Math. Ann. \textbf{255}
  (1981), no.~3, 287--302.

\bibitem[Sch01]{Schroer_non-rational}
S.~Schr\"{o}er, \emph{Normal del {P}ezzo surfaces containing a nonrational
  singularity}, Manuscripta Math. \textbf{104} (2001), no.~2, 257--274.

\bibitem[Ser06]{Sernesi_deformations}
E.~Sernesi, \emph{Deformations of algebraic schemes}, Grundlehren der
  mathematischen Wissenschaften [Fundamental Principles of Mathematical
  Sciences], vol. 334, Springer-Verlag, Berlin, 2006.

\bibitem[Tei77]{Tessier_simult-res}
B.~Teissier, \emph{R\'esolution simultan\'ee : {I} - {Familles} de courbes},
  S\'eminaire sur les singularit\'es des surfaces (1976-1977), talk:8.

\bibitem[Ton12]{Tono_nie_bicuspidal}
K.~Tono, \emph{On a new class of rational cuspidal plane curves with two
  cusps}, \arxiv{1205.1248}, 2012.

\bibitem[Wah76]{Wahl_deformation-theory}
J.~Wahl, \emph{Equisingular deformations of normal surface singularities. {I}},
  Ann. of Math. (2) \textbf{104} (1976), no.~2, 325--356.

\bibitem[Ye02]{Ye}
Q.~Ye, \emph{On {G}orenstein log del {P}ezzo surfaces}, Japan. J. Math. (N.S.)
  \textbf{28} (2002), no.~1, 87--136.

\bibitem[Zha88]{Zhang}
D.-Q. Zhang, \emph{Logarithmic del {P}ezzo surfaces of rank one with
  contractible boundaries}, Osaka J. Math. \textbf{25} (1988), no.~2, 461--497.

\bibitem[Zha89]{Zhang_dP3}
\bysame, \emph{Logarithmic del {P}ezzo surfaces with rational double and triple
  singular points}, Tohoku Math. J. (2) \textbf{41} (1989), no.~3, 399--452.

\end{thebibliography}

\end{document}